\newtheorem{theorem}{Theorem}[section]
\newtheorem{definition}[theorem]{Definition}
\newtheorem{lemma}[theorem]{Lemma}
\newtheorem{corollary}[theorem]{Corollary}
\newtheorem{proposition}[theorem]{Proposition}
\newtheorem{remark}[theorem]{Remark}
\newcommand{\be}{\begin{equation}}
\newcommand{\ee}{\end{equation}}
\newcommand{\bea}{\begin{eqnarray}}
\newcommand{\eea}{\end{eqnarray}}
\newcommand{\ben}{\begin{eqnarray*}}
	\newcommand{\een}{\end{eqnarray*}}
\newcommand{\bt}{\begin{split}}
	\newcommand{\et}{\end{split}}
\newcommand{\bet}{\begin{equation}}
\begin{document}

\title[Degenerate Complex Hessian type equations and Applications]
{Degenerate Complex Hessian type equations on compact Hermitian manifolds and Applications}

\author[K. Pang]{Kai Pang}
\address{Kai Pang: School of Mathematical Sciences\\ Beijing Normal University\\ Beijing 100875\\ P. R. China}
\email{202331130031@mail.bnu.edu.cn}
\author[H. Sun]{Haoyuan Sun}
\address{Haoyuan Sun: School of Mathematical Sciences\\ Beijing Normal University\\ Beijing 100875\\ P. R. China}
\email{202321130022@mail.bnu.edu.cn}
\author[Z. Wang]{Zhiwei Wang}
\address{Zhiwei Wang: Laboratory of Mathematics and Complex Systems (Ministry of Education)\\ School of Mathematical Sciences\\ Beijing Normal University\\ Beijing 100875\\ P. R. China}
\email{zhiwei@bnu.edu.cn}

\author[X. Zhou]{Xiangyu Zhou}
\address{Xiangyu Zhou: Institute of Mathematics\\ Academy of Mathematics and Systems Science\\
and Hua Loo-Keng Key Laboratory of Mathematics\\ Chinese Academy of Sciences\\ Beijing 100190\\ P. R. China}

\email{xyzhou@math.ac.cn}

\begin{abstract}
   The aim of this paper is to further develop the theory of the degenerate complex Hessian equations on compact Hermitian manifolds. Building upon the generalization of the Bedford-Taylor pluripotential theory to complex Hessian equations by Ko\l odziej-Nguyen, we solve these equations in the $(\omega, m)$-positive cone, $(\omega, m)$-big classes  and in nef classes, where $\omega$ is a reference Hermitian metric. These results are also new  in the K\"ahler case.  Moreover, we adapt our techniques to solve complex Monge-Amp\`ere equations in nef classes with mild singularities. The solutions we obtain, in the compact K\"ahler case, coincide with those for the complex Monge-Amp\`ere equations in the sense of the non-pluripolar product introduced by Boucksom-Eyssidieux-Guedj-Zeriahi.  One of the key ingredients in the proof is the adaption, to the Hermitian setting, of a new a priori $L^\infty$-estimate established by Guo-Phong-Tong and Guo-Phong-Tong-Wang. 
\end{abstract}

\subjclass[2010]{32W20, 32U05, 32U40, 53C55}
\keywords{Non-pluripolar complex Monge-Amp\`ere operator, complex Hessian operator, degenerate complex Hessian type equation, Cegrell's class, sigularities}
\maketitle
\tableofcontents
\section{Introduction}
Since Yau's resolution of the Calabi conjecture \cite{Yau78}, the study of complex Monge–Ampère equations has become a central subject in the intersection of partial differential equations and complex geometry. Many impressive results have been established in the literature; see, for example, \cite{Yau78, Aub78, BT76, BT82, Che87, Kol98, TW10, EGZ09, BEGZ10, EGZ11, KN15, Ngu16, GL23, LWZ24} and the references therein.

The complex Hessian equation also has a strong background in the study of important problems in complex geometry and related fields, as demonstrated in works such as \cite{FY08,LS15,PPZ19, Song20,CJY20,Chen21, Chen22} and the references therein.

On compact K\"ahler manifolds,  Dinew-Ko\l odziej \cite{DK14} established the Calabi-Yau type theorem for  non-degenerate complex Hessian equations. Later  Sz\'ekelyhidi \cite{Sze18} and Zhang \cite{Zh15}   solve the non-degenerate complex Hessian equation (Calabi-Yau type) on compact Hermitian manifolds. 

The main concern of this paper is the degenerate complex Hessian equation. 

Let $(X,\omega)$ be a compact Hermitian manifold of complex dimension $n$ and $\omega$ be a Hermitian metric on $X$. Let $\beta$ be a real $(1,1)$-form on $X$ with some proper positivity. Let $0\leq f\in L^p(X,\omega^n)$ such that $\int_Xf\omega^n>0$. Let $m$ be a positive integer such that $1\leq m\leq n$. The degenerate complex Hessian equation is written as follows:

\begin{align}\label{equ: Hessian}
(\beta+dd^c \varphi)^m \wedge \omega^{n-m}=cf \omega^n.
\end{align}
In \cite{KN16}, Ko\l odziej-Nguyen show that the equation (\ref{equ: Hessian}) has a continuous solution  for the case $\beta=\omega$.
  When $\beta$ is semipositive and big (i.e. there is a quasi-plurisubharmonic function $\rho$ on $X$ such that $\rho$ has analytic singularities and  $\beta+dd^c\rho$ dominates a Hermitian form) Guedj-Lu \cite{GL25} obtained that there is a bounded solution to (\ref{equ: Hessian}), which is continuous on the ample locus of $\beta$.  

Both the work of Ko\l odziej-Nguyen and that of Guedj-Lu require strong positivity of the given real $(1,1)$-form $\beta$, because at that time, the pluripotential theory for complex Hessian equations had only been developed for $m$-subharmonic functions that admit  decreasing approximations by  smooth $m$-subharmonic functions \cite{KN16}.

More recently, the groundbreaking work of Ko\l odziej and Nguyen \cite{KN25a} established a comprehensive pluripotential framework for complex Hessian equations, generalizing the Bedford-Taylor theory to this setting. This breakthrough paved the way for studying the degenerate complex Hessian equation in more general settings, which serves as the starting point of the present paper.

The study of the Hessian equation (\ref{equ: Hessian}) will proceed in two main directions: in the case where $\beta$ is $m$-positive and in the case where $\beta$ is nef.

As a warm up, we first study the Hessian equation (\ref{equ: Hessian})  in the case when $\beta$ is $m$-positive. By adapting the argument of \cite{KN16}, we can get the following two theorems.

\begin{theorem}\label{thm: m-positive beta}
Let $(X,\omega)$ be a complex Hermitian manifold of complex dimension $n$ with a Hermitian metric $\omega$.   Let $\beta$ be an $(\omega,m)$-positive form and let $0\leq f\in L^p(X,\omega^n)$ with $p>\frac{n}{m}$ and $\int_Xf\omega^n>0$, where $m$ is a positive integer such that $1\leq m\leq n$. Then there exists a unique constant $c>0$ and a continuous function $\varphi\in \operatorname{SH}_m(X,\beta,\omega)\cap C^{0}(X)$ such that
        $$
(\beta+dd^c\varphi)^m\wedge\omega^{n-m}=cf\omega^n.
        $$
 Furthermore, we have
    $$
    \operatorname{osc} \varphi\leq C,
    $$ 
    where $C$ is the constant depends on $\beta,n,m,B,X,\omega$ and the upper bound of  $\|f\|_p$.
\end{theorem}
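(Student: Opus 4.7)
The plan is to adapt the approximation scheme used by Ko\l odziej--Nguyen in the special case $\beta=\omega$ \cite{KN16} to a general $(\omega,m)$-positive reference form $\beta$, combining it with the Ko\l odziej--Nguyen pluripotential framework \cite{KN25a} and a Hermitian version of the Guo--Phong--Tong / Guo--Phong--Tong--Wang a priori estimate. The argument splits naturally into four steps.

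First, I would smooth the datum: pick $0<f_j\in C^\infty(X)$ with $f_j\to f$ in $L^p(X,\omega^n)$ and $\int_X f_j\,\omega^n$ bounded above and below away from $0$. Because $\beta$ is $(\omega,m)$-positive, the \emph{non-degenerate} Hessian equation
\[
(\beta+dd^c\varphi_j)^m\wedge\omega^{n-m}=c_j f_j\,\omega^n,\qquad \sup_X\varphi_j=0,
\]
admits a unique smooth solution $(c_j,\varphi_j)$ by the theorems of Sz\'ekelyhidi \cite{Sze18} and Zhang \cite{Zh15}. Testing the equation against $\omega^{n-m}$ and using Stokes' theorem, with the Hermitian torsion terms controlled by $\omega$ and a quantitative bound $B$ associated to $\beta$, yields uniform two-sided bounds $0<c_-\le c_j\le c_+$.

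The heart of the proof is the uniform oscillation estimate
\[
\operatorname{osc}\varphi_j\le C(\beta,n,m,B,X,\omega,\|f\|_p),
\]
which is the principal obstacle and is what forces us to use the recent Guo--Phong--Tong(--Wang) technique. The method is to introduce an auxiliary (non-degenerate) complex Monge--Amp\`ere equation whose right-hand side dominates $c_j f_j\,\omega^n$ and to compare $\varphi_j$ with its smooth solution through an integral identity: integration by parts combined with the $L^p$-integrability of $f_j$ (with $p>n/m$) and the pluripotential capacity inequalities from \cite{KN25a} gives an $L^\infty$ bound on the difference, and hence on $\varphi_j$ itself. The extra work compared to the K\"ahler case is to absorb the torsion terms produced by $d\omega\ne 0$; this is where the Hermitian pluripotential machinery of \cite{KN25a} is indispensable.

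With uniform bounds on $c_j$ and $\operatorname{osc}\varphi_j$ in hand, I would pass to the limit: along a subsequence $c_j\to c>0$, and by compactness of uniformly bounded $(\omega,m)$-subharmonic functions together with Bedford--Taylor-type continuity of the Hessian operator on bounded functions (both from \cite{KN25a}), one extracts $\varphi\in\operatorname{SH}_m(X,\beta,\omega)\cap L^\infty(X)$ solving $(\beta+dd^c\varphi)^m\wedge\omega^{n-m}=cf\,\omega^n$ in the pluripotential sense. Continuity of $\varphi$ then follows from a Ko\l odziej-type stability estimate of the form $\|\varphi_j-\varphi_k\|_\infty\le C\|f_j-f_k\|_p^{\alpha}$ for some $\alpha>0$, showing that $\{\varphi_j\}$ is Cauchy in $C^0(X)$ and hence $\varphi\in C^0(X)$. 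Uniqueness of $c$ is obtained by a comparison-principle argument within $\operatorname{SH}_m(X,\beta,\omega)\cap C^0(X)$: two distinct admissible constants would produce strictly ordered Hessian measures for their associated solutions, contradicting the domination principle furnished by \cite{KN25a}.
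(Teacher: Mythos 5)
Your overall skeleton matches the paper's (smooth the data, solve the non-degenerate equation via Sz\'ekelyhidi--Zhang, obtain uniform bounds, pass to the limit via a $L^1$--$L^\infty$ stability estimate, deduce continuity and uniqueness), but the heart of the argument — the uniform oscillation estimate — is handled by a different route in the paper than the one you propose, and one of your preliminary claims is glossed over.

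On the $L^\infty$ estimate: you propose a Guo--Phong--Tong(--Wang) scheme (auxiliary Monge--Amp\`ere equation whose solution is compared against $\varphi_j$). The paper reserves that technique for the nef case (Theorem \ref{a priori in nef class}), where one cannot do better. For $\beta\in\Gamma_m(\omega)$ it instead follows Ko\l odziej--Nguyen \cite{KN16} closely: it proves a \emph{modified comparison principle} (Theorem \ref{modified comparison principle}), establishes a volume-capacity inequality (Proposition \ref{volume-cap estimate for beta}), and a decay estimate for $\operatorname{Cap}_{\beta,m,\omega}$ on sublevel sets (Corollary \ref{decay of cap}), from which the $L^\infty$ bound drops out by a De Giorgi iteration (Theorem \ref{hessian a priori estimate}, Theorem \ref{main a priori Hessian estimates}). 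Your GPT-style alternative would likely also work here (and is arguably simpler conceptually), but as written it is too vague: for the auxiliary equation to be solvable, one has to work in a genuinely Hermitian class ($\beta\in\Gamma_m(\omega)$ need not be a Hermitian metric), and the comparison is not a straightforward ``integration by parts'' — GPT requires a carefully constructed barrier $-\epsilon(-\psi+\Lambda)^{n/(n+1)}$ and a maximum-principle computation as in Lemma \ref{exp lemma}. Since you never describe that mechanism, this step of your proposal is a gap, even though the plan could be made to work.

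On the bounds for $c_j$: testing the equation and integrating gives only the \emph{upper} bound (via the Garding/mixed-type inequality $\beta_{\varphi_j}\wedge\omega^{n-1}\geq (c_jf_j)^{1/m}\omega^n$ and $\sup_X\varphi_j=0$). The uniform \emph{lower} bound $c_j\geq c_->0$ is not free; in the paper it comes as a by-product of Theorem \ref{hessian a priori estimate} (plugging in $s=\delta/2$ there forces $c_j^{1/m}\geq s/(4C_\alpha\|f_j\|_p^{1/m}[V_\omega(X)]^{\alpha/\tau})$). Your one-sentence claim that integration yields ``two-sided bounds'' hides this; you would need either the capacity-based a priori estimate or a domination-principle comparison against a subsolution to close this step. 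The remaining steps (extracting a $C^0$-Cauchy limit via the stability estimate and uniqueness of $c$ by the domination principle) match the paper's argument.
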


\begin{theorem}\label{thm: twist for m-positive beta}
     Let $(X,\omega)$ be a complex Hermitian manifold of complex dimension $n$ with a Hermitian metric $\omega$.   Let $\beta$ be an $(\omega,m)$-positive form and let $0\leq f\in L^p(X,\omega^n)$ with $p>\frac{n}{m}$ and $\int_Xf\omega^n>0$, where $m$ is a positive integer such that $1\leq m\leq n$.  Then for any positive number $\lambda>0$, there exists a unique continuous function $\varphi\in \operatorname{SH}_m(X,\beta,\omega)\cap C^{0}(X)$ such that
        $$
(\beta+dd^c\varphi)^m\wedge\omega^{n-m}=e^{\lambda\varphi}f\omega^n.
        $$
    Furthermore, we have
    $$
    \operatorname{osc} \varphi\leq C,
    $$ 
    where $C$ is a constant depending on $X,\beta,\omega,\lambda$ and $\|f\|_p$.
\end{theorem}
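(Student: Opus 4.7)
The plan is to deduce Theorem~\ref{thm: twist for m-positive beta} from Theorem~\ref{thm: m-positive beta} via a Schauder fixed point argument, in the spirit of \cite{KN16}. Let $C_0$ be the oscillation constant provided by Theorem~\ref{thm: m-positive beta} for the density $f$; by the dependencies listed there, $C_0$ depends only on $X,\beta,\omega$ and $\|f\|_p$. I would consider the closed convex subset
\[
K:=\{\psi\in\operatorname{SH}_m(X,\beta,\omega)\cap C^0(X):-C_0\leq\psi\leq 0\}\subset C^0(X).
\]
For $\psi\in K$ the density $e^{\lambda\psi}f$ still lies in $L^p(X,\omega^n)$ with positive integral, and $\|e^{\lambda\psi}f\|_p\leq\|f\|_p$ since $\psi\leq 0$. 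Theorem~\ref{thm: m-positive beta} then yields a unique continuous $u_\psi\in\operatorname{SH}_m(X,\beta,\omega)$ with $\sup_X u_\psi=0$, a unique $c_\psi>0$, and
\[
(\beta+dd^c u_\psi)^m\wedge\omega^{n-m}=c_\psi\,e^{\lambda\psi}f\,\omega^n,\qquad\operatorname{osc} u_\psi\leq C_0,
\]
so $u_\psi\in K$. Define $T:K\to K$ by $T(\psi):=u_\psi$. If $\psi^*\in K$ is a fixed point of $T$, then setting $\varphi^*:=\psi^*+\tfrac{1}{\lambda}\log c_{\psi^*}$ gives $(\beta+dd^c\varphi^*)^m\wedge\omega^{n-m}=c_{\psi^*}e^{\lambda\psi^*}f\omega^n=e^{\lambda\varphi^*}f\omega^n$, which is the desired equation, with $\operatorname{osc}\varphi^*=\operatorname{osc}\psi^*\leq C_0$.

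To apply Schauder's theorem to $T$, I would verify that $T$ is continuous and that $T(K)$ is relatively compact in $C^0(X)$. For continuity, if $\psi_k\to\psi$ uniformly, dominated convergence gives $e^{\lambda\psi_k}f\to e^{\lambda\psi}f$ in $L^p$, and the Hermitian $L^p$-stability for continuous solutions of the Calabi--Yau type complex Hessian equation, extracted from the Ko\l{}odziej--Nguyen pluripotential framework \cite{KN25a}, gives $u_{\psi_k}\to u_\psi$ uniformly and $c_{\psi_k}\to c_\psi$. For relative compactness, the solutions $u_\psi$ are uniformly H\"older continuous with H\"older constants depending only on $\|e^{\lambda\psi}f\|_p\leq\|f\|_p$ and the ambient geometry---again a consequence of the regularity theory built from \cite{KN25a}---so Arzel\`a--Ascoli applies.

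The main obstacle is this stability/continuity step. The corresponding $L^p$-stability statement in the K\"ahler case is classical, but in the Hermitian setting several integration-by-parts identities are unavailable and the estimate must be reworked inside the pluripotential framework of \cite{KN25a}; this is where the real analytic work of the proof sits. Once stability and the H\"older regularity used for compactness are in hand, Schauder immediately produces a fixed point $\psi^*\in K$, hence the desired continuous solution $\varphi^*$, with oscillation bounded by a constant depending on $X,\beta,\omega,\lambda$ and $\|f\|_p$.

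For uniqueness I would invoke the comparison principle for $(\omega,m)$-subharmonic functions in the framework of \cite{KN25a}: if $\varphi_1,\varphi_2$ were two continuous solutions with $U:=\{\varphi_1>\varphi_2\}\neq\emptyset$, the comparison principle applied on $U$ combined with the strict monotonicity of $t\mapsto e^{\lambda t}$ (using $\lambda>0$) would yield
\[
\int_U e^{\lambda\varphi_1}f\,\omega^n\leq\int_U e^{\lambda\varphi_2}f\,\omega^n,
\]
which is incompatible with $\varphi_1>\varphi_2$ on $U$ and $\int_X f\omega^n>0$, forcing $\varphi_1\leq\varphi_2$; the symmetric argument gives equality. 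It is precisely the positivity $\lambda>0$ that removes the ``unique up to an additive constant'' ambiguity present in Theorem~\ref{thm: m-positive beta}.
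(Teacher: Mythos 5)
Your Schauder fixed point strategy is a genuinely different route from the paper's direct approach (\cref{main thm twist for m-positive beta}), which solves the smooth twisted equations for smooth approximants $f_j$ via \cref{smooth Hessian for beta}, controls $\sup_X u_j$ from above by Jensen's inequality (through Garding's inequality and the Gauduchon function) and from below by the domination principle, and then converges using the stability estimate \cref{cor of stability}. Your plan could also be made to work, but two of your steps, as written, have genuine gaps. For the compactness of $T(K)$, there is no H\"older regularity for Hermitian $(\omega,m)$-Hessian solutions with $L^p$ densities available in \cite{KN25a} or in this paper --- \cref{thm: m-positive beta} produces only $C^0$ solutions, and a uniform H\"older modulus depending only on $\|f\|_p$ is not established in this generality. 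What you should use instead is the $L^1$-compactness of normalized $(\beta,\omega,m)$-subharmonic functions (\cref{L^1 compactness}) combined with the $L^1$-$L^\infty$ stability estimate (\cref{L^1 L^infty stability}, \cref{cor of stability}): since $\|e^{\lambda\psi}f\|_p\leq\|f\|_p$ uniformly over $\psi\in K$, the stability constant is uniform, so any $L^1$-convergent subsequence of $u_{\psi_k}$ becomes $C^0$-Cauchy, and its limit is identified as $u_\psi$ by the uniqueness part of \cref{main thm for m-positive beta}. This gives both relative compactness of $T(K)$ and continuity of $T$. It is exactly the stability input you flag as "the real analytic work," but it is already proved in the paper; cite it rather than gesturing at a reworking still to be done.

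The more serious gap is the uniqueness argument. The comparison inequality you invoke on $U=\{\varphi_1>\varphi_2\}$ does not hold on a general Hermitian manifold: with $d\omega\neq 0$, the Bedford--Taylor integration by parts produces uncontrolled torsion terms, and the paper develops the modified comparison principle (\cref{modified comparison principle}) --- with its $\epsilon$-tilt of $v$ and multiplicative loss $(1-Cs/(\lambda\epsilon)^3)^m$ --- precisely to replace it. Moreover, even granting some comparison, the chain $\int_U e^{\lambda\varphi_1}f\,\omega^n\leq\int_U e^{\lambda\varphi_2}f\,\omega^n$ together with $e^{\lambda\varphi_1}>e^{\lambda\varphi_2}$ on $U$ only forces $\int_U f\,\omega^n=0$, not $U=\emptyset$; the hypothesis $\int_X f\omega^n>0$ says nothing about $f$ on $U$. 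The correct tool is the domination principle: from $e^{-\lambda\varphi_1}H_{\beta,m}(\varphi_1)=f\omega^n=e^{-\lambda\varphi_2}H_{\beta,m}(\varphi_2)$ and the fact that $\beta\in\Gamma_m(\omega)$ is $(\omega,m)$-non-collapsing (\cref{cor:m-positive is non-collapsing}), \cref{domination for beta 2} (see \cref{m-positive have domination}) gives $\varphi_1\geq\varphi_2$, and $\varphi_1=\varphi_2$ follows by symmetry.
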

The proof follows \cite{KN16} closely, with the only possible mild novelty being the establishment of a modified comparison principle for $(\omega,m)$-non-collapsing forms (see \cref{modified comparison principle}), which is itself inspired by \cite{GL22}. 

As an application, we derive the following approximation theorem:
\begin{theorem}\label{introduction thm: decreasing approximation}Let $(X,\omega)$ be a complex Hermitian manifold of complex dimension $n$ with a Hermitian metric $\omega$.   Let $\beta$ be an $(\omega,m)$-positive form, where $m$ is a positive integer such that $1\leq m\leq n$.  Then, for any $u\in \operatorname{SH}_m(X,\beta,\omega)$, there exists a decreasing sequence of smooth $(\beta,\omega,m)$-subharmonic functions on $X$ converging pointwise to $u$.
\end{theorem}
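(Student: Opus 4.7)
The plan is to first reduce to the bounded case by truncation, then use local Kiselman-Demailly-type regularizations and a Richberg regularized maximum to obtain a smooth upper approximation which is only $(\beta+O(\varepsilon)\omega,\omega,m)$-subharmonic, and finally to absorb the $O(\varepsilon)\omega$-defect by solving a twisted Hessian equation via Theorem~\ref{thm: twist for m-positive beta}.

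As a first step, replace $u$ by $u_k:=\max(u,-k)$, a bounded decreasing sequence in $\operatorname{SH}_m(X,\beta,\omega)$ converging pointwise to $u$. A standard diagonal extraction reduces the problem to constructing, for each bounded $v\in\operatorname{SH}_m(X,\beta,\omega)$ and each $\delta>0$, a smooth $w\in\operatorname{SH}_m(X,\beta,\omega)$ with $v\leq w\leq v+\delta$. For such $v$, I would cover $X$ by finitely many coordinate balls and apply on each ball a Kiselman-Demailly convolution-type regularization at scale $\varepsilon$; this yields smooth $v^\alpha_\varepsilon\downarrow v$ on shrunken balls that are $(\beta+C\varepsilon\omega,\omega,m)$-subharmonic, where $C$ depends only on the background geometry and absorbs the non-flatness of $\omega$ together with the torsion of the Hermitian connection. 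Richberg's regularized maximum then glues these into a global smooth $\widetilde v_\varepsilon\downarrow v$ that remains $(\omega,m)$-positive up to an $O(\varepsilon)\omega$-defect.

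To upgrade $\widetilde v_\varepsilon$ into a truly $(\beta,\omega,m)$-subharmonic approximant, I would apply Theorem~\ref{thm: twist for m-positive beta} with a smooth strictly positive right-hand side built from the Hessian measure of $\widetilde v_\varepsilon$, producing a continuous $\varphi_\varepsilon\in\operatorname{SH}_m(X,\beta,\omega)$ which, by smooth regularity for non-degenerate Hessian equations on Hermitian manifolds \cite{Sze18,Zh15}, is in fact smooth. The uniform oscillation bound from Theorem~\ref{thm: twist for m-positive beta} together with the modified comparison principle for $(\omega,m)$-non-collapsing forms gives $\|\varphi_\varepsilon-\widetilde v_\varepsilon\|_\infty=O(\varepsilon)$; after a suitable additive normalization this delivers a smooth $(\beta,\omega,m)$-subharmonic $w_\varepsilon\geq v$ with $\|w_\varepsilon-v\|_\infty\to 0$. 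Picking $\varepsilon_k\to 0$ with $\|w_{\varepsilon_k}-v\|_\infty\leq 2^{-k}$ and setting $\Phi_k:=w_{\varepsilon_k}+2^{1-k}$ produces the required decreasing smooth sequence for $v$, and the diagonal extraction completes the proof for $u$. The decisive obstacle is this last correction step: turning a smooth, only approximately $m$-positive $\widetilde v_\varepsilon$ into a smooth, genuinely $(\beta,\omega,m)$-subharmonic function that still approximates $v$ uniformly relies essentially on both the strict $(\omega,m)$-positivity of $\beta$ (which provides room to absorb the $O(\varepsilon)\omega$-defect) and the non-degenerate smooth regularity theory for complex Hessian equations in the Hermitian setting.
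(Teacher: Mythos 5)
Your approach is genuinely different from the paper's, and it contains a serious gap that the paper's argument was explicitly designed to avoid. The paper (see \cref{decreasing approximation}) reduces to bounded $u$, bounds $u\le P_{\beta,m}(h)$ for smooth $h\ge u$, and then approximates the envelope $P_{\beta,m}(h)$ uniformly by smooth functions obtained as solutions of a penalized Hessian equation $\beta_{v_j}^m\wedge\omega^{n-m}=e^{j(v_j-h)}(F_1+\frac1j)\omega^n$; the increasing limit is identified with $P_{\beta,m}(h)$ via the domination principle. No local regularization and no gluing are involved.

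The decisive weakness in your argument is the local regularization and Richberg gluing step. The Kiselman--Demailly mechanism — the Legendre-transform / attenuated-plurisubharmonicity trick that yields, for a quasi-psh $v$, a smooth family $v_\varepsilon\downarrow v$ that is $(\beta+C\varepsilon\omega)$-psh with \emph{quantitative} control uniform up to the boundary of each chart — is specific to plurisubharmonic functions, where $v(z)$ sits inside a psh function of $(z,w)\in X\times\mathbb C$. No analogue is known for $(\omega,m)$-subharmonic functions with $m<n$. The local regularization the paper cites (\cite[Corollary 1.2]{GN18}) does produce smooth decreasing approximations near a point, but it gives no uniform decay of the error toward chart boundaries, which is precisely what the regularized maximum requires to glue consistently: without it the patched function need not be $\ge$ all of its constituents near the overlaps, and the defect cannot be controlled by a single $O(\varepsilon)\omega$. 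This difficulty is, as the paper notes in its introduction, exactly the reason earlier work on degenerate Hessian equations was restricted to $\beta$ with strong positivity, and it is the obstacle the envelope-plus-penalization argument circumvents.

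Two further remarks. First, even if the Richberg gluing were available and gave a smooth $\widetilde v_\varepsilon\downarrow v$ that is $(\beta+C\varepsilon\omega,\omega,m)$-subharmonic, the PDE correction step you propose would be unnecessary: since $\beta-\lambda\omega\in\Gamma_m(\omega)$ for some $\lambda>0$, one can simply set $\delta=C\varepsilon/(\lambda+C\varepsilon)$ and check directly that $(1-\delta)\widetilde v_\varepsilon\in\operatorname{SH}_m(X,\beta,\omega)$, with $\|(1-\delta)\widetilde v_\varepsilon-\widetilde v_\varepsilon\|_\infty=O(\varepsilon)$ because $v$ is bounded. Second, your PDE correction step as written is underspecified: $(\beta+dd^c\widetilde v_\varepsilon)^m\wedge\omega^{n-m}$ may be negative somewhere since $\widetilde v_\varepsilon$ is only $(\beta+C\varepsilon\omega,\omega,m)$-subharmonic, so one cannot feed its density directly into \cref{thm: twist for m-positive beta}, and the claimed $\|\varphi_\varepsilon-\widetilde v_\varepsilon\|_\infty=O(\varepsilon)$ does not follow from the oscillation bound or the modified comparison principle as stated; the former controls $\operatorname{osc}\varphi_\varepsilon$ in terms of $\|f\|_p$ rather than the distance to a given (super-)solution, and the latter compares integrals of Hessian measures over sublevel sets rather than giving $L^\infty$ stability (the $L^1$--$L^\infty$ stability of \cref{L^1 L^infty stability} could perhaps be pressed into service, but you would still need to construct a genuine sub/supersolution pair whose data differs by $O(\varepsilon)$, which returns to the same sign problem).
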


When $\beta$ is only $(\omega,m)$-semipositive, following the arguments of \cite{GL23}, we can prove the following theorem.

\begin{theorem}\label{introduction twist equation for beta m-big}
 Let $(X,\omega)$ be a complex Hermitian manifold of complex dimension $n$ with a Hermitian metric $\omega$.   Let $\beta$ be an $(\omega,m)$-semi-positive and $(\omega,m)$-big form and let $0\leq f\in L^p(X,\omega^n)$ with $p>\frac{n}{m}$ and $\int_Xf\omega^n>0$, where $m$ is a positive integer such that $1\leq m\leq n$. Fix a constant $\lambda>0$. Then there exists a unique bounded function $\varphi\in \operatorname{SH}_m(X,\beta,\omega)\cap L^\infty(X)$ such that
        $$
(\beta+dd^c\varphi)^m\wedge\omega^{n-m}=e^{\lambda\varphi}f\omega^n.
        $$
    Furthermore, we have
    $$
    \operatorname{osc} \varphi\leq C,
    $$ 
    where $C$ is a constant depends on $\beta,\omega,p,X,\|f\|_p$.
\end{theorem}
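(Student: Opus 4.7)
The plan is to reduce \cref{introduction twist equation for beta m-big} to \cref{thm: twist for m-positive beta} by perturbing $\beta$ and passing to the limit. For $\varepsilon\in(0,1]$, I set $\beta_\varepsilon := \beta + \varepsilon\omega$; since $\beta$ is $(\omega,m)$-semipositive, $\beta_\varepsilon$ is $(\omega,m)$-positive. Then \cref{thm: twist for m-positive beta} applied to $\beta_\varepsilon$ produces a unique continuous solution $\varphi_\varepsilon \in \operatorname{SH}_m(X,\beta_\varepsilon,\omega) \cap C^0(X)$ of
\begin{equation*}
(\beta_\varepsilon + dd^c \varphi_\varepsilon)^m \wedge \omega^{n-m} = e^{\lambda \varphi_\varepsilon} f \omega^n.
\end{equation*}

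The crux is an $\varepsilon$-independent $L^\infty$ bound on $\varphi_\varepsilon$. For the upper bound, I would integrate the equation over $X$, bound the left-hand side polynomially in $\|\varphi_\varepsilon\|_\infty$ via a Hermitian Stokes-with-torsion argument, and compare it against the exponential growth $e^{\lambda \sup \varphi_\varepsilon}$ of the right-hand side, yielding $\sup_X \varphi_\varepsilon \leq C$. The lower bound is the main obstacle, and it is precisely here that the $(\omega,m)$-bigness of $\beta$ is essential: one fixes $\rho \in \operatorname{SH}_m(X,\beta,\omega)$, normalized so $\rho\leq 0$, with $\beta + dd^c\rho \geq \delta\omega$ on the ample locus, and uses $\rho$ as the auxiliary $m$-subharmonic barrier fed into the a priori $L^\infty$-scheme of Guo-Phong-Tong and Guo-Phong-Tong-Wang, adapted to the Hermitian Hessian setting as announced in the introduction. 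This produces $\varphi_\varepsilon \geq -C$ with $C$ depending on $p,\lambda,\delta,\|f\|_p$ and $\rho$ but independent of $\varepsilon$.

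With $\|\varphi_\varepsilon\|_\infty \leq C$ in hand, the family $\{\varphi_\varepsilon\}$ is relatively compact in $L^1(X)$ within $\operatorname{SH}_m(X,\beta+\omega,\omega)$. Extracting a subsequence $\varphi_{\varepsilon_j}\to \varphi$ in $L^1$ and passing to the upper semicontinuous regularization, we obtain $\varphi\in \operatorname{SH}_m(X,\beta,\omega)\cap L^\infty(X)$. Invoking the Ko\l odziej-Nguyen \cite{KN25a} convergence theorem for the complex Hessian operator under uniformly bounded limits, the measures $(\beta_{\varepsilon_j}+dd^c\varphi_{\varepsilon_j})^m\wedge\omega^{n-m}$ converge weakly to $(\beta+dd^c\varphi)^m\wedge\omega^{n-m}$, while dominated convergence handles the right-hand side; hence $\varphi$ solves the equation, and the oscillation bound follows from the two-sided sup estimate.

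For uniqueness I would combine the modified comparison principle \cref{modified comparison principle} with the strict monotonicity of the exponential twist: if $\varphi_1,\varphi_2$ are two bounded solutions and the open set $\{\varphi_1 > \varphi_2\}$ is nonempty, applying the comparison principle there yields
\begin{equation*}
\int_{\{\varphi_1 > \varphi_2\}} e^{\lambda \varphi_1} f \omega^n \leq \int_{\{\varphi_1 > \varphi_2\}} e^{\lambda \varphi_2} f \omega^n,
\end{equation*}
contradicting $e^{\lambda \varphi_1} > e^{\lambda \varphi_2}$ on that set unless it is $f\omega^n$-null; a standard perturbation $\varphi_2 + \eta$ with $\eta\downarrow 0$ together with $\int_X f\omega^n > 0$ then forces $\varphi_1\leq \varphi_2$, and symmetry completes the proof.
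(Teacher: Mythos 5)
Your high-level plan (perturb $\beta$ to $\beta_\varepsilon=\beta+\varepsilon\omega$, solve the exponentially-twisted equation via \cref{thm: twist for m-positive beta}, obtain $\varepsilon$-uniform bounds, pass to the limit) is the same skeleton the paper follows, but several of the steps you propose do not close, and the single most important structural fact is missing.

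\textbf{The convergence step has a genuine gap.} You extract a subsequence $\varphi_{\varepsilon_j}\to\varphi$ in $L^1$ and claim that the Ko\l odziej--Nguyen convergence theorem then gives weak convergence of the Hessian measures. It does not: the convergence theorems in \cite{KN25a} (and in the paper, \cref{prop: monotone_capcity convergence} and the decreasing/increasing theorems they cite) require \emph{monotone} convergence or convergence in capacity, neither of which follows from $L^1$-convergence plus a uniform $L^\infty$ bound. The paper's crucial observation — which you miss entirely — is that the family $\{u_\varepsilon\}$ is \emph{monotone}: for $\varepsilon_1>\varepsilon_2$, one has $\beta_{\varepsilon_1}\geq\beta_{\varepsilon_2}$, hence
\begin{equation*}
e^{-u_{\varepsilon_1}}(\beta_{\varepsilon_1}+dd^cu_{\varepsilon_1})^m\wedge\omega^{n-m}
= f\omega^n
= e^{-u_{\varepsilon_2}}(\beta_{\varepsilon_2}+dd^cu_{\varepsilon_2})^m\wedge\omega^{n-m}
\leq e^{-u_{\varepsilon_2}}(\beta_{\varepsilon_1}+dd^cu_{\varepsilon_2})^m\wedge\omega^{n-m},
\end{equation*}
so the domination principle \cref{m-positive have domination} (valid for the strictly $(\omega,m)$-positive form $\beta_{\varepsilon_1}$) yields $u_{\varepsilon_1}\geq u_{\varepsilon_2}$. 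Thus $u_\varepsilon$ decreases as $\varepsilon\downarrow 0$, and the decreasing convergence theorem applies with no need to pass through $L^1$ or u.s.c.~regularization. Without this monotonicity, your limit argument does not produce a solution.

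\textbf{The lower bound.} You propose feeding $\rho$ into the Guo--Phong--Tong(--Wang) a priori scheme. This is overkill and, as stated, does not apply: the GPT/GPTW estimate in this paper (\cref{thm: a priori in nef class}) is formulated for \emph{smooth} $\varphi_t$ and requires a positive sup-slope $SL_{\omega,a}(\{\beta\})>0$, whereas the $\varphi_\varepsilon$ produced by \cref{thm: twist for m-positive beta} are only continuous. The paper instead constructs an explicit bounded subsolution $v$ with $(\beta+dd^cv)^m\wedge\omega^{n-m}\geq c\,\frac{f}{\|f\|_p}\,\omega^n$ and $0\leq v\leq 1$ (\cref{subsolution 2}, which is precisely where $(\omega,m)$-bigness of $\beta$ enters), and then the domination principle gives $u_\varepsilon\geq v+\log\frac{c}{\|f\|_p}$ uniformly. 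This is both more elementary and matches the regularity actually available.

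\textbf{Uniqueness.} You invoke the modified comparison principle \cref{modified comparison principle}, but that result requires $\beta\in\Gamma_m(\omega)$, i.e.\ strict $(\omega,m)$-positivity, which is exactly what fails here. The correct tool is the domination principle \cref{domination for beta 2} combined with \cref{cor:m-positive is non-collapsing 2} (or the packaged \cref{m-positive have domination}), which extends the domination principle to $(\omega,m)$-semi-positive and $(\omega,m)$-big forms and immediately gives uniqueness for the exponentially twisted equation.

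\textbf{Upper bound.} Your sketch for the upper bound is plausible, but note that once monotonicity is established the upper bound comes for free: $u_\varepsilon\leq u_1$ for all $\varepsilon\leq 1$ and $u_1$ is a fixed continuous function, so no separate Stokes/Jensen argument is needed at this stage.
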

We expect that the  equation (\ref{equ: Hessian}) can be solved under the assumption of the above theorem. The main difficulty lies in establishing an $L^\infty$-estimate in this case.

Next, we study the Hessian equation (\ref{equ: Hessian}) in the case when $\beta$ is nef. 

The main idea proceeds in three steps. First, solving the non-degenerate Hessian equation for $\beta + t\omega$ with $t > 0$ to obtain a sequence of smooth solutions $\varphi_t$ by applying the results of Székelyhidi \cite{Sze18} and Zhang \cite{Zh15}. Next, establishing a uniform bound for $\varphi_t$ and extracting a convergent subsequence to obtain a limit function $\varphi$. Finally, proving that $\varphi$ is the desired solution via an argument based on the domination principle and an envelope construction.

A crucial aspect of the second step is the derivation of a priori $L^\infty$-estimates for solutions in nef Bott-Chern classes. This is inspired by the recent breakthrough of Guo-Phong-Tong \cite{GPT23} and Guo-Phong-Tong-Wang \cite{GPTW24} for compact K\"ahler manifolds.

To achieve the  a priori $L^\infty$-estimates in our situation,   we introduce a  new invariant $SL_{\omega,a}({\beta})$, called the sup-slope (see \cref{def of slope in nef class}),  which  plays a pivotal role in the control of the twist constant $c$ both in the a priori estimates and in solving the equations. 

We remark that the sup-slope was first introduced by Guo-Song \cite{GS24} in Hermitian classes, we extend their definition to nef classes and introduce a new invariant called $p$-slope, which is much more precise than the lower volume studied in \cite{GL23,BGL25,SW25}, as it is strictly positive when $\beta$ is a Hermitian form and, even when $\beta$ is semi-positive and big. For further properties of sup-slopes, see \cref{section:sup slopes}. 

We summarize the obtained a priori $L^\infty$-estimate in the following theorem.

\begin{theorem}\label{thm: a priori in nef class}
 Let $(X,\omega)$ be a complex Hermitian manifold of complex dimension $n$ with a Hermitian metric $\omega$. Let $m$ be a positive integer such that $1\leq m\leq n$. Let $\{\beta\}\in BC^{1,1}(X)$ be a nef Bott-Chern class satisfying $SL_{\omega,a}(\{\beta\})>0$ for some constant $1\leq a<\frac{n}{n-m}$. Assume also $\varphi_t\in \operatorname{SH}_m(X,\beta+t\omega,\omega)\cap C^\infty(X)$ satisfying
\begin{equation}
(\beta+t\omega+dd^c\varphi_t)^m\wedge\omega^{n-m}= c_te^{F_t}\omega^n,\quad\sup_X\varphi_t=0.
\end{equation}
Fix a constant $p>na$. Then there exists a uniform constant $C$ depending on $\beta,\omega,m,n,p$, the upper bound of $\|e^{\frac{an}{m}F_t}\|_{L^1(\log L)^p}:=\int_{X}e^{\frac{an}{m}F_t(z)}(1+\frac{an}{m}|F_t(z)|)^p\omega^n,E_t:=\int_X(-\varphi_t+V_t)^ae^{\frac{an}{m}F_t}\omega^n$ and the lower bound of  $SL_{\omega,a}(\{\beta\})$ such that
$$
0\leq-\varphi_t+V_t\leq C.
$$
\end{theorem}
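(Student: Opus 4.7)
The approach adapts the auxiliary-equation $L^\infty$ estimate of Guo-Phong-Tong and Guo-Phong-Tong-Wang to the Hermitian, nef-class setting. Since $\{\beta\}$ is nef, $\beta + t\omega$ is strictly $m$-positive (dominating $t\omega$) for every $t > 0$, so the solvability results of Sz\'ekelyhidi and Zhang produce the sequence $\varphi_t$, and Theorem \ref{thm: twist for m-positive beta} is available for auxiliary twisted equations in the class $\{\beta + t\omega\}$. Here $V_t$ is to be understood as an envelope-type reference for $\{\beta + t\omega\}$ normalized so that $\varphi_t \leq V_t$, and the goal is to bound $-\varphi_t + V_t$ from above uniformly in $t$.

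For each $s \geq 0$, set $\rho_{t,s} := \max(-\varphi_t + V_t - s,\, 0)$ and consider the auxiliary twisted Hessian equation of Guo-Phong-Tong-Wang type
\[
(\beta + t\omega + dd^c \psi_{t,s})^m \wedge \omega^{n-m} \;=\; \frac{c_t\, e^{F_t} \bigl(1 + \rho_{t,s}\bigr)^{n(a-1)}}{A_{t,s}}\,\omega^n, \qquad \sup_X \psi_{t,s} = 0,
\]
where $A_{t,s}$ is the normalizing constant making the two total masses agree. Theorem \ref{thm: twist for m-positive beta} applied in the class $\{\beta + t\omega\}$ yields a smooth solution $\psi_{t,s}$ with $\|\psi_{t,s}\|_{L^\infty}$ controlled by an $L^q$-norm of the right-hand side for some $q > n/m$. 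The crucial input from the hypothesis $SL_{\omega,a}(\{\beta\}) > 0$ is that $A_{t,s}$ (equivalently $c_t$) admits a uniform lower bound as $t \to 0$, since the sup-slope prevents the Hessian mass of $\{\beta + t\omega\}$ from degenerating; this is precisely the role the sup-slope is designed to play according to \cref{section:sup slopes}.

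With $\psi_{t,s}$ in hand, I would run a maximum-principle argument on a test function of the form
\[
\Phi_{t,s} \;=\; -\varepsilon\bigl(-\psi_{t,s} + \Lambda\bigr)^{m/(m+1)} \;-\; \rho_{t,s},
\]
for appropriately chosen $\varepsilon, \Lambda$. Off the set $\{\rho_{t,s} > 0\}$ the bound is trivial; at an interior maximum inside $\{\rho_{t,s} > 0\}$, expanding $dd^c \Phi_{t,s}(x_0) \leq 0$, applying the arithmetic-geometric mean inequality to the eigenvalues of the two mixed Hessian operators, and using both Hessian equations leads to a pointwise inequality of the schematic form
\[
\rho_{t,s}(x_0)^{m+1} \;\leq\; C\bigl(\|\psi_{t,s}\|_{L^\infty} + 1\bigr)^{m}.
\]
The Hermitian torsion of $\omega$ and first-order derivatives of $V_t$ produce lower-order error terms that are absorbed by Cauchy-Schwarz in the standard way, as in \cite{KN16} and Guedj-Lu.

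The bound is closed by H\"older's inequality combined with the two hypotheses on $F_t$: the norm $\|e^{(an/m)F_t}\|_{L^1(\log L)^p}$ with $p > na$ handles the exponential weight, while $E_t = \int_X (-\varphi_t + V_t)^{a} e^{(an/m)F_t}\omega^n$ controls the remaining factor $(1+\rho_{t,s})^{n(a-1)} e^{F_t}$ in $L^q$ after an interpolation that uses $1 \leq a < \tfrac{n}{n-m}$ to keep the exponent admissible. Letting $s \to 0$ yields the asserted bound. The principal obstacle is the balance of exponents: the Hermitian setting and the lack of strict positivity of $\beta$ force one to exploit $SL_{\omega,a}(\{\beta\}) > 0$ at every step, while the maximum-principle inequality must still close under the weaker $L^1(\log L)^p$ integrability rather than $L^p$, so that the choice of the exponent $n(a-1)$ in the auxiliary right-hand side, the test function $\Phi_{t,s}$, and the H\"older partition all have to be tuned simultaneously.
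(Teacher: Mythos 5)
Your proposal captures the overall strategy — an auxiliary twisted equation, a maximum-principle argument on a test function with a fractional exponent, and the sup-slope hypothesis controlling the constant — but several of the structural choices are wrong in a way that would prevent the argument from closing.

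\textbf{The auxiliary equation must be a Monge--Amp\`ere equation, not a Hessian equation.} The paper solves
$(\beta_t + dd^c\psi)^n = c' \,\tfrac{\tau_k(-\varphi_t + V_t - s)}{A}\, e^{(n/m)F_t}\omega^n$
via Tosatti--Weinkove, so the auxiliary $\psi$ is genuinely plurisubharmonic. This matters for two reasons. First, the normalization takes the $L^a$-norm of the right-hand density, and by the very definition of $SL_{\omega,a}$ (a Monge--Amp\`ere invariant, \cref{prop:sup slope}), this forces the lower bound $c' \geq SL_{\omega,a}(\{\beta\})$. Second — and this is the more serious issue — the key intermediate step in the paper is an exponential-integral estimate (\cref{exp lemma}), obtained from the $\alpha$-invariant / Skoda uniform integrability theorem applied to the $\beta_t$-psh function $\psi$. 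If you replace the auxiliary with a Hessian equation of degree $m$, then $\psi_{t,s}$ is only $(\omega,m)$-subharmonic. For such functions in the Hermitian setting one has $L^p$-integrability for $p < \tfrac{n}{n-m}$ (\cref{integrability}), but not the uniform exponential integrability needed, so your proof would have no way to produce the estimate $\int_{\Omega_s}\exp(\alpha\,\rho_{t,s}^{(n+1)/n}/A_{s,t}^{1/n})\,\omega^n \leq Ce^{CE_t}$, which is the entire content of \cref{exp lemma}.

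\textbf{The exponent in the test function and the auxiliary density are off.} Because the auxiliary operator has degree $n$, the maximum-principle computation — which pairs $dd^c\Phi(x_0) \leq 0$ against $\hat\beta_t^{m-1}\wedge\omega^{n-m}/\hat\beta_t^m\wedge\omega^{n-m}$ and uses the mixed-type (G\aa rding) inequality to pull an $m/n$-th root out of $(\beta_t + dd^c\psi)^n$ — yields the exponent $\tfrac{n}{n+1}$, not $\tfrac{m}{m+1}$. Likewise, the correct density is $\tau_k(-\varphi_t + V_t - s)\,e^{(n/m)F_t}$ (a smoothed linear cutoff times the rescaled exponential), not $(1+\rho_{t,s})^{n(a-1)} e^{F_t}$. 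The linear factor $\tau_k(\rho)$ is precisely what lets the maximum-principle inequality cancel $\rho(x_0)$ against a power of $-\psi(x_0)$, so that $\Phi(x_0) \leq 0$ globally; your power $n(a-1)$ (which vanishes when $a = 1$!) would not give that cancellation.

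\textbf{The closing step is not a H\"older inequality.} After obtaining the exponential-integral estimate, the paper uses Young's inequality with the Orlicz pair $(\eta(x) = [\log(1+x)]^p,\ \eta^{-1})$ — this is where $L^1(\log L)^p$ enters — to convert the exponential control into a polynomial weighted bound $\int_{\Omega_s}\rho_{t,s}^{p(n+1)/n}e^{(an/m)F_t} \leq C$, and then runs a De Giorgi iteration on $\phi(s) = \bigl(\int_{\Omega_s}e^{(an/m)F_t}\omega^n\bigr)^{1/a}$ to conclude. Your proposal jumps from a pointwise maximum-principle estimate directly to ``H\"older closes the bound,'' skipping both the Young/Orlicz step and the De Giorgi iteration, which together are what make the $L^1(\log L)^p$ hypothesis (rather than an $L^q$ hypothesis) work.
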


Using the a priori $L^\infty$-estimates above, we can solve a class of degenerate Hessian equations in a nef Bott-Chern class. Our main result is as follows.

\begin{theorem}\label{thm: solve nef 1}
    Let $(X,\omega)$ be a complex Hermitian manifold of complex dimension $n$ with a Hermitian metric $\omega$. Let $m$ be a positive integer such that $1\leq m\leq n$. Let $\{\beta\}\in BC^{1,1}(X)$ be a nef Bott-Chern class satisfying $SL_{\omega,a}(\{\beta\})>0$ for some constant $1\leq a<\frac{n}{n-m}$. Assume moreover that there is a function $\rho\in \operatorname{SH}_m(X,\beta,\omega)\cap L^\infty(X)$ and that $\{\beta\}$ is $(\omega,m)$-non-collapsing. Fix $0\leq f\in L^p(\omega^n)$ with $p>\frac{n^2}{m^2-(1-\frac{1}{a})mn}$ satisfying $\int_Xf\omega^n>0$. Then, there exists $\varphi\in \operatorname{SH}_m(X,\beta,\omega)\cap L^\infty(X)$ and $c>0$ solving
$$
(\beta+dd^c\varphi)^m\wedge\omega^{n-m}=cf\omega^n,\quad\sup_X\varphi=0.
$$
Moreover, $osc_X\varphi$ is bounded by a constant $C$ depending on $\beta,\omega,\|f\|_p,n,m$, the bound of $\rho$ and the lower bound of $SL_{\omega,a}(\{\beta\})$.
\end{theorem}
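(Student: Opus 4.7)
The strategy is the three-step program outlined in the introduction: approximate the degenerate equation by non-degenerate Hessian equations in the Hermitian class $\{\beta+t\omega\}$, obtain a uniform $L^\infty$-bound via Theorem~\ref{thm: a priori in nef class}, and identify the weak limit with the desired solution through an envelope construction together with the domination principle.

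I would fix sequences $t_j\downarrow 0$ and $\varepsilon_j\downarrow 0$ and set $f_j:=f+\varepsilon_j$, so that $\int_X f_j\,\omega^n\to\int_X f\,\omega^n$. Since $\beta+t_j\omega$ is Hermitian, hence $(\omega,m)$-positive, the Calabi--Yau type theorems of Sz\'ekelyhidi~\cite{Sze18} and Zhang~\cite{Zh15} produce smooth solutions $\varphi_j\in\operatorname{SH}_m(X,\beta+t_j\omega,\omega)\cap C^\infty(X)$ and constants $c_j>0$ with
\[
(\beta+t_j\omega+dd^c\varphi_j)^m\wedge\omega^{n-m}=c_j\,f_j\,\omega^n,\qquad \sup_X\varphi_j=0.
\]
The sup-slope machinery of \cref{section:sup slopes} ensures $SL_{\omega,a}(\{\beta+t_j\omega\})\geq SL_{\omega,a}(\{\beta\})>0$ uniformly in $j$, and combining the $(\omega,m)$-non-collapsing assumption on $\{\beta\}$ with integration of the equation against $\omega^n$ sandwiches $c_j$ between two strictly positive constants independent of $j$.

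With the right-hand side written as $c_j e^{F_j}\omega^n$, the numerical hypothesis $p>n^2/(m^2-(1-1/a)mn)$ is exactly what is needed for H\"older's inequality to convert the $L^p$-bound on $f$ into a uniform bound on $\|e^{(an/m)F_j}\|_{L^1(\log L)^{p'}}$ for some $p'>na$. The companion energy $E_j=\int_X(-\varphi_j+V_j)^a e^{(an/m)F_j}\omega^n$ is likewise controlled via H\"older combined with the standard $L^1$-compactness of sup-normalised $(\beta,\omega,m)$-subharmonic functions. Theorem~\ref{thm: a priori in nef class} then yields a uniform bound $\|\varphi_j\|_{L^\infty(X)}\leq C$. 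Extracting subsequences, $\varphi_j\to\varphi$ in $L^1$ with $\varphi\in\operatorname{SH}_m(X,\beta,\omega)\cap L^\infty(X)$, $\sup_X\varphi=0$, and $c_j\to c\in(0,\infty)$.

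The main obstacle is then to verify $(\beta+dd^c\varphi)^m\wedge\omega^{n-m}=c f\,\omega^n$, since $L^1$-convergence is too weak to commute with the nonlinear Hessian operator. I would resolve this through an envelope construction: introduce
\[
\Phi:=\bigl(\sup\{\psi\in\operatorname{SH}_m(X,\beta,\omega)\cap L^\infty(X):\psi\leq 0,\ (\beta+dd^c\psi)^m\wedge\omega^{n-m}\geq c f\,\omega^n\}\bigr)^*,
\]
which, thanks to the $(\omega,m)$-non-collapsing property of $\{\beta\}$ and the existence of the bounded $(\beta,\omega,m)$-subharmonic function $\rho$, lies in $\operatorname{SH}_m(X,\beta,\omega)\cap L^\infty(X)$. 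Using the pluripotential framework of Ko\l odziej--Nguyen~\cite{KN25a}, together with Bedford--Taylor continuity along the uniformly bounded family $\{\varphi_j\}$, one checks that $\Phi$ satisfies the equation, and then the domination principle in the $(\omega,m)$-non-collapsing setting forces $\Phi=\varphi$ almost everywhere. The oscillation bound is inherited from the uniform estimate in Step~2, and the positivity and uniqueness of $c$ follow from the same domination principle applied to any two candidate solutions.
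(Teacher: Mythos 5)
Your Steps 1--3 follow the same line as the paper: approximate by Hermitian classes $\beta+t\omega$, solve the non-degenerate Hessian equations via Sz\'ekelyhidi, sandwich $c_j$ using Garding together with the sup-slope monotonicity and the $(\omega,m)$-non-collapsing property, and then apply \cref{thm: a priori in nef class} after bounding the Orlicz norm of the density and the energy $E_j$. (One small imprecision: controlling $E_j$ requires H\"older against an $L^{aq}$-bound on $-\varphi_j+V_{\beta_j}$ with $1<aq<\frac{n}{n-m}$, which is \cref{integrablity 2}, not just $L^1$-compactness; this is exactly where the numerical condition on $p$ enters.)

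The gap is in Step~4. You identify the weak limit with
\[
\Phi:=\bigl(\sup\{\psi\in\operatorname{SH}_m(X,\beta,\omega)\cap L^\infty(X):\psi\le 0,\ H_{\beta,m}(\psi)\ge cf\,\omega^n\}\bigr)^*
\]
and assert that ``one checks that $\Phi$ satisfies the equation.'' On a compact manifold with no boundary, the Perron envelope of subsolutions constrained only by $\psi\le 0$ need not be a supersolution: the Hessian mass of $\Phi$ tends to concentrate on the active set $\{\Phi=0\}$, and there is no Dirichlet balayage mechanism to expel the excess mass. Bedford--Taylor continuity along $\{\varphi_j\}$ gives nothing here, since $\Phi$ is not the $L^1$-limit of the $\varphi_j$'s a priori, and the $\varphi_j$'s are subsolutions for $\beta_j$, not for $\beta$. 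In the K\"ahler case one could try to close the argument by comparing total masses, since $\int_X H_{\beta,m}(\Phi)$ would be cohomological; on a Hermitian manifold that total depends on $\Phi$, so even that escape route fails. The paper avoids this issue entirely: instead of an abstract envelope it sandwiches the $L^1$-limit $\varphi$ between two concrete sequences, $\psi_j:=(\sup_{k\ge j}\varphi_k)^*$ and $\Phi_j:=P_{\beta,m}(\inf_{k\ge j}\varphi_k)$, uses the maximum principle and the fact that each $\varphi_l$ ($l\ge j$) is a subsolution in the larger class $\beta_j$ to bound $H_{\beta_j,m}(\psi_j)$ from below, uses the contact-set theorem \cref{contact3} (a rooftop envelope of genuine approximate solutions) to bound $H_{\beta,m}(\Phi_j)$ from above, and then applies the domination principle to conclude $\lim\Phi_j=\lim\psi_j=\varphi$. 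That mechanism is what you would need to supply; the abstract envelope by itself does not give it.
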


\begin{theorem}\label{thm: solve nef 2}
   Let $(X,\omega)$ be a complex Hermitian manifold of complex dimension $n$ with a Hermitian metric $\omega$. Let $m$ be a positive integer such that $1\leq m\leq n$.  Let $\{\beta\}\in BC^{1,1}(X)$ be a nef Bott-Chern class satisfying $SL_{\omega,a}(\{\beta\})>0$ for some positive constant $1\leq a<\frac{n}{n-m}$. Assume moreover that there is a function $\rho\in \operatorname{SH}_m(X,\beta,\omega)\cap L^\infty(X)$. Fix $0\leq f\in L^p(\omega^n)$ with $p>\frac{n^2}{m^2-(1-\frac{1}{a})mn}$ satisfying $\int_Xf\omega^n>0$. Then, there exists a unique $\varphi\in \operatorname{SH}_m(X,\beta,\omega)\cap L^\infty(X)$ solving
$$
(\beta+dd^c\varphi)^m\wedge\omega^{n-m}=e^{\varphi}f\omega^n.
$$
Moreover, the lower bound of $\varphi$ is bounded by a constant $C$ depending on $\beta,\omega,\|f\|_p,n,m$, the bound of $\rho$ and the lower bound of $SL_{\omega}(\{\beta\})$. Furthermore, if $\beta$ is $(\omega,m)$-non-collapsing, the solution is unique.
\end{theorem}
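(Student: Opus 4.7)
The plan is to follow the three-step scheme outlined for \cref{thm: solve nef 1}, adapted to the exponential twist on the right-hand side. For each $t\in(0,1]$ I consider the perturbed class $\{\beta+t\omega\}$. Since $\beta$ is nef, for every $\varepsilon>0$ there is a smooth $\psi_\varepsilon$ with $\beta+dd^c\psi_\varepsilon\geq-\varepsilon\omega$, so $\beta+t\omega+dd^c\psi_{t/2}\geq(t/2)\omega$: the class is $(\omega,m)$-semi-positive and $(\omega,m)$-big. By \cref{introduction twist equation for beta m-big} (after a $dd^c$-shift to move to this positive representative) there exists a unique bounded solution $\varphi_t\in\operatorname{SH}_m(X,\beta+t\omega,\omega)\cap L^\infty(X)$ of
\be\label{eqn:pf7approx}
(\beta+t\omega+dd^c\varphi_t)^m\wedge\omega^{n-m}=e^{\varphi_t}f\omega^n,
\ee
which can be taken smooth after a standard regularization of $f$.

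The core of the proof is to show $\|\varphi_t\|_\infty\leq C$ uniformly in $t$. An upper bound $\sup_X\varphi_t\leq M$ is obtained by integrating \eqref{eqn:pf7approx}: writing $u_t:=\varphi_t-\sup_X\varphi_t\leq 0$, the total mass of the left-hand side is controlled by a Hermitian Chern-Levine-Nirenberg-type inequality together with uniform $L^1$-compactness of the sup-normalized family (using the bounded $\rho\in\operatorname{SH}_m(X,\beta,\omega)$ to anchor the class), and a Skoda-H\"ormander lower bound $\int_X e^{u_t}f\omega^n\geq c_0>0$ then forces $e^{\sup_X\varphi_t}\leq C/c_0$. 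For the lower bound I apply \cref{thm: a priori in nef class} to $\widetilde\varphi_t:=\varphi_t-\sup_X\varphi_t$, rewriting \eqref{eqn:pf7approx} as
\be
(\beta+t\omega+dd^c\widetilde\varphi_t)^m\wedge\omega^{n-m}=c_te^{F_t}\omega^n,\qquad c_t:=e^{\sup_X\varphi_t}\leq e^M,\quad F_t:=\widetilde\varphi_t+\log f.
\ee
Since $\widetilde\varphi_t\leq 0$ one has $e^{(an/m)F_t}\leq f^{an/m}$, so the required bounds on $\|e^{(an/m)F_t}\|_{L^1(\log L)^p}$ and $E_t$ follow from H\"older's inequality, the hypothesis $f\in L^p$ with $p>n^2/(m^2-(1-\tfrac{1}{a})mn)$, and the uniform Skoda-H\"ormander integrability of $e^{-\alpha\widetilde\varphi_t}$; together with the lower bound $SL_{\omega,a}(\{\beta+t\omega\})\geq SL_{\omega,a}(\{\beta\})>0$ coming from properties of sup-slopes (\cref{section:sup slopes}), \cref{thm: a priori in nef class} then gives $0\leq-\widetilde\varphi_t\leq C$.

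By $L^1$-compactness, extract a subsequence $\varphi_{t_k}\to\varphi$ a.e.\ with $\varphi\in\operatorname{SH}_m(X,\beta,\omega)\cap L^\infty(X)$. Passing to the decreasing envelope $\widehat\varphi_k:=(\sup_{j\geq k}\varphi_{t_j})^*\searrow\varphi$, the Bedford-Taylor-Ko\l odziej-Nguyen continuity of the Hessian operator under bounded decreasing limits yields weak convergence of the left-hand sides of \eqref{eqn:pf7approx}, while the right-hand sides converge by dominated convergence; hence $\varphi$ solves the equation, with the lower bound inherited from the uniform estimate. Under the additional $(\omega,m)$-non-collapsing hypothesis, uniqueness follows from the standard twist argument: for two bounded solutions $\varphi_1,\varphi_2$ with $V:=\{\varphi_2>\varphi_1\}\neq\emptyset$, the comparison principle (valid under non-collapsing) gives $\int_V e^{\varphi_2}f\omega^n\leq\int_V e^{\varphi_1}f\omega^n$, contradicting $e^{\varphi_2}>e^{\varphi_1}$ on $V$.

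The hardest step will be the uniform verification of the $L^1(\log L)^p$ and $E_t$ bounds needed to invoke \cref{thm: a priori in nef class}: the exponent condition $p>n^2/(m^2-(1-\tfrac{1}{a})mn)$ is dictated exactly by balancing $f^{an/m}$ against moments of $-\widetilde\varphi_t$ through H\"older, and the admissible parameter $a\in[1,n/(n-m))$ has to be tuned so that all these integrals, together with the sup-slope, remain uniformly positive as $t\to 0^+$.
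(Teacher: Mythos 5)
You take a genuinely different route for the uniform estimate. The paper first solves the \emph{non-twisted} normalized equation $(\beta_j+dd^c\psi_{j,k})^m\wedge\omega^{n-m}=c_{j,k}f_k\omega^n$ exactly as in \cref{thm: solve nef 1}, obtains uniform bounds on $c_{j,k}$ and $\psi_{j,k}$, and then transfers a lower bound to the twisted solutions via the domination principle $\varphi_{j,k}\geq\psi_{j,k}+\log c_{j,k}$. You instead apply \cref{thm: a priori in nef class} directly to the twisted equation, with $F_t=\widetilde\varphi_t+\log f$ and $\widetilde\varphi_t=\varphi_t-\sup_X\varphi_t\leq0$. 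This can be made to work: since $\widetilde\varphi_t\leq0$ and $s\mapsto e^{-s}(1+s)^p$ is bounded on $[0,\infty)$, one has $e^{\frac{an}{m}F_t}\leq f^{\frac{an}{m}}$ and the $\widetilde\varphi_t$-contribution to the $L^1(\log L)^p$ norm is controlled, so the bounds on $\|e^{\frac{an}{m}F_t}\|_{L^1(\log L)^p}$ and $E_t$ reduce to moment estimates on $f$ as in \cref{thm: solve nef 1}. However, two points need to be spelled out. First, your appeal to ``Skoda--H\"ormander'' for the lower bound on $\int_Xe^{u_t}f\omega^n$ is a misnomer: Skoda bounds $\int_Xe^{-\alpha u_t}\omega^n$, whereas what you need follows from $L^1$-compactness of the sup-normalized family (\cref{L^1 compactness}), the $L^q$-integrability of \cref{integrability}, and absolute continuity of $f\omega^n$. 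Second, \cref{exp lemma} (the engine of \cref{thm: a priori in nef class}) implicitly depends on a lower bound for $\int_Xe^{F_t/m}\omega^n$ to control $c_t$; in your normalization $c_t=e^{\sup_X\varphi_t}$ is bounded directly once the upper bound on $\sup_X\varphi_t$ is established, so that dependence is harmlessly replaced — but this should be said.

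There is a genuine gap in the passage to the limit. You extract an a.e.\ convergent subsequence $\varphi_{t_j}\to\varphi$, form $\widehat\varphi_k:=(\sup_{j\geq k}\varphi_{t_j})^*\searrow\varphi$, and assert that ``continuity of the Hessian operator under bounded decreasing limits yields weak convergence of the left-hand sides of \eqref{eqn:pf7approx}.'' But the left-hand sides are $(\beta+t_j\omega+dd^c\varphi_{t_j})^m\wedge\omega^{n-m}$, while $\widehat\varphi_k$ satisfies no equation; the decreasing convergence theorem controls $(\beta+dd^c\widehat\varphi_k)^m\wedge\omega^{n-m}$ only, and does not relate the two. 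The paper instead proves \emph{monotonicity}: for $j<j'$, since $\beta_j\geq\beta_{j'}$ one has
\begin{equation*}
e^{-\varphi_{j'}}(\beta_j+dd^c\varphi_{j'})^m\wedge\omega^{n-m}\geq e^{-\varphi_{j'}}(\beta_{j'}+dd^c\varphi_{j'})^m\wedge\omega^{n-m}=f\omega^n=e^{-\varphi_j}(\beta_j+dd^c\varphi_j)^m\wedge\omega^{n-m},
\end{equation*}
so \cref{domination for beta 2} (valid because $\beta_j\in\Gamma_m(\omega)$ is non-collapsing, \cref{cor:m-positive is non-collapsing}) gives $\varphi_j\geq\varphi_{j'}$; the family is decreasing and uniformly bounded, and the bounded decreasing convergence theorem of \cite{KN25a} then identifies the limit equation directly. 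Without this monotonicity — or without rebuilding in full the $P_{\beta,m}$-envelope and domination argument from the proof of \cref{thm: solve nef 1} in the twisted setting — your argument does not show that $\varphi$ solves the equation. I would recommend incorporating the monotonicity observation, which also removes the need for the envelope $\widehat\varphi_k$ altogether.
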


\begin{remark}
When $m=n$, \cref{thm: solve nef 1} and \cref{thm: solve nef 2} recover the results of \cite{LWZ24, SW25}. When $\beta$ is semi-positive and big, \cref{thm: solve nef 1} and \cref{thm: solve nef 2} recover the   results of  \cite{GL25} for $p>\frac{n^2}{m^2-(1-\frac{1}{a})mn}$. When the metric $\omega$ is K\"ahler,  the condition $SL_{\omega,a}(\{\beta\})>0$ reads $\int_X\beta^n>0$, \cref{thm: solve nef 1} and \cref{thm: solve nef 2} also produces new results on K\"ahler manifolds (see \cref{Corollary Kahler} and \cref{Corollary Kahler twist}).
\end{remark}

The final part of this paper is devoted to solving a class of degenerate complex Monge-Amp\`ere equations in a nef class with mild singularities.

The nef cone and the pseudoeffective cone in $H^{1,1}(X, \mathbb{R})$ are crucial objects of study, which are closely related to the geometric structure of the manifold $X$.  A more general space, called  the Bott-chern space $BC^{p,q}(X)$,  is defined as the cokernel of $dd^c:\Omega^{p-1,q-1}(X)\rightarrow\Omega^{p,q}(X)$.
Let $\{\beta\}\in BC^{1,1}(X)$ be a nef Bott-Chern class on $X$.
In \cite{LWZ24} and \cite{SW25}, the authors studied complex Monge-Amp\`ere equations in a nef Bott-Chern class admitting a bounded potential $\rho\in\operatorname{PSH}(X,\beta)\cap L^\infty(X)$ and obtained bounded solutions. It is natural to relax the boundedness assumption of the potential $\rho$ and consider more singular potentials. 
We say a $\beta$-plurisubharmonic function $u$ belongs to the class $D(X,\beta)$ if for any local coordinate ball $\Omega\subset X$, there is a smooth function $\rho\in C^\infty(\Omega)$ (we usually choose $dd^c\rho\geq\beta$) such that $u+\rho\in D(\Omega)$. As in \cite{Sal25}, the measure $(\beta+dd^cu)^n$ for $u\in D(X,\beta)$ can be defined as follows: fix a coordinate ball $B\subset X$ and select a potential $\rho\in C^\infty(B)$ such that $dd^c\rho\geq\beta$ in $B$, then
$$
(\beta+dd^cu)^n:=\sum_{j=0}^nC_n^j(dd^c(u+\rho))^j\wedge(\beta-dd^c\rho)^{n-j}.
$$
Since $u+\rho\in\mathcal{E}(B)$, we get a well-defined Radon measure on $B$ thanks to \cite[Theorem 4.2]{Ceg04}. 

Let $\{\beta\}\in BC^{1,1}(X)$ be a nef Bott-Chern class with \textbf{mild singularities}, i.e.,  $D(X,\beta)\neq\emptyset$. That is to say, $V_\beta\in D(X,\beta)$, where $V_\beta:=\sup\{u\in\operatorname{PSH}(X,\beta),u\leq0\}$. An important example is due to \cite[Theorem 2.5, Proposition 2.9]{Dem93-1}, \cite[Corollary 3.6]{FS95}, \cite[Theorem 3.1.2, Theorem 3.1.3]{Ra01}, when $u\in\operatorname{PSH}(X,\beta)$ and $H_2(L(u))=0$, we have $u\in D(X,\beta)$. Here $H_2$ denotes the 2- dimensional Hausdorff measure. 

A function $u\in\operatorname{PSH}(X,\beta)$ is said to be of minimal singularity type if there is a constant $C$ such that $|u-V_\beta|\leq C$. Recall that the unbounded locus $L(V_\beta)$ of $V_\beta$ is defined as the set of all the points $x\in X$ such that $V_\beta$ is unbounded in each neighborhood of $x$, which is a closed subset of $X$. Set $P(V_\beta):=\{V_\beta=-\infty\}$ , i.e. the polar locus of $V_\beta$.

We derive the following $L^\infty$ estimates analogous to \cref{thm: a priori in nef class} for the Monge-Amp\`ere equations, which is a generalization of \cite[Theorem 3.1]{SW25} and \cite[Theorem 1]{GPTW24}.
\begin{theorem}\label{thm: a priori in nef class for MA}
  Let $(X,\omega)$ be a complex Hermitian manifold of complex dimension $n$ with a Hermitian metric $\omega$. Let $\{\beta\}\in BC^{1,1}(X)$ be a nef Bott-Chern class satisfying $SL_{\omega,a}(\{\beta\})>0$ for some constant $a\geq1$. Assume also $\varphi_t\in \operatorname{PSH}(X,\beta+t\omega)\cap C^\infty(X)$ satisfying
\begin{equation}
(\beta+t\omega+dd^c\varphi_t)^n= c_te^{F_t}\omega^n,\quad\sup_X\varphi_t=0.
\end{equation}
Here $F_t\in C^\infty(X)$. We also fix a constant $p>na$. Then there exists a uniform constant $C$ depending on $\beta,\omega,n,p$, the upper bound of $\int_{X}e^{aF_t(z)}[\log(1+e^{aF_t(z)})]^p\omega^n$, the lower bound of $\int_Xe^{\frac{F_t}{n}}\omega^n$ and the lower bound of  $SL_{\omega,a}(\{\beta\})$ such that
$$
0\leq-\varphi_t+V_t\leq C.
$$
Here $V_t:=\sup\{u\;|\;u\in \operatorname{PSH}(X,\beta+t\omega),u\leq0\}$ is the largest non-positive $(\beta+t\omega)$-psh function.
\end{theorem}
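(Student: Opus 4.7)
The plan is to adapt the auxiliary-equation and iteration scheme of Guo-Phong-Tong \cite{GPT23} and Guo-Phong-Tong-Wang \cite{GPTW24} to the compact Hermitian nef setting, in the spirit of \cite{LWZ24, SW25}. Write $u_t := \varphi_t - V_t \le 0$; the goal is to show $-u_t \le C$ with $C$ independent of $t>0$. Finiteness of $-u_t$ for each fixed $t$ is automatic since $\beta+t\omega$ is already Hermitian, so the point is to make the dependence on $t\to 0$ quantitative.

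First, I would extract a uniform positive lower bound on the twist constant $c_t$. Integrating the equation, exploiting $\sup_X\varphi_t=0$, and invoking the definition of the sup-slope yields, via a H\"older-type argument on the mixed mass $\int(\beta+t\omega+dd^c\varphi_t)\wedge(\beta+t\omega)^{n-1}$, a lower bound of the form
\[
c_t \;\ge\; C_0\cdot SL_{\omega,a}(\{\beta\})\cdot\Bigl(\int_X e^{F_t/n}\omega^n\Bigr)^{-n},
\]
which is strictly positive uniformly in $t$ by the assumed lower bound on $\int_X e^{F_t/n}\omega^n$. Next, for each $s>0$ set $\Omega_{t,s} := \{-u_t > s\}$ and introduce the auxiliary complex Monge-Amp\`ere equation in the Hermitian class $\{\beta+t\omega\}$,
\[
(\beta+t\omega+dd^c\psi_{t,s})^n \;=\; \frac{c_t\,(-u_t-s)^{+}\,e^{F_t}}{A_{t,s}}\,\omega^n \,+\, \lambda_{t,s}\,\omega^n,\qquad \sup_X\psi_{t,s}=0,
\]
with $A_{t,s}$ a normalization constant and $\lambda_{t,s}>0$ a correction that absorbs the Hermitian non-closedness, exactly as in \cite{LWZ24, SW25}. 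A bounded solution $\psi_{t,s}$ exists by the Hermitian nef Monge-Amp\`ere existence theorems recalled earlier in the paper.

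Third, I would establish a uniform $L^\infty$-bound on $\psi_{t,s}$. The Orlicz hypothesis $\int_X e^{aF_t}[\log(1+e^{aF_t})]^p\omega^n \le M$ together with $p>na$ feeds into a Guo-Phong-Tong-style entropy/capacity estimate to control $\|\psi_{t,s}\|_{L^\infty}$ uniformly in $t$ and $s$. Combining this with the Hermitian comparison principle applied on $\Omega_{t,s}$ to the barrier pair $(\psi_{t,s},\, u_t + s)$, and using $\{\beta+t\omega\}$-positivity, yields a nonlinear integral inequality for
\[
\phi(s) \;:=\; \int_{\Omega_{t,s}}(-u_t-s)\,e^{F_t}\,\omega^n
\]
of the De Giorgi type $\phi(s+\delta)\le C\,\phi(s)^{1+\gamma}$, which forces $\phi(s_0) = 0$ for some $s_0 \le C$; hence $-u_t + V_t \le s_0 \le C$, as desired.

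The main obstacle I expect is carrying out the integration-by-parts underlying the comparison step \emph{without} the closedness $d\omega=0$. The Hermitian torsion produces error terms which must be absorbed into the leading term using mixed Monge-Amp\`ere inequalities, and closing them up requires precisely the positivity encoded by $SL_{\omega,a}(\{\beta\})>0$ rather than a weaker volume-type lower bound. Equally delicate is the $t\to 0$ limit: as $\{\beta+t\omega\}$ degenerates to the nef class $\{\beta\}$, the normalization $A_{t,s}$, the correction $\lambda_{t,s}$, and the Cherrier-type boundary terms must remain controlled, and it is the strict positivity of the sup-slope (rather than mere bigness of $\beta$) that ensures the estimates close uniformly at the endpoint.
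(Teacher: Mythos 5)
Your overall plan (auxiliary Monge--Amp\`ere equation in the Hermitian class $\{\beta+t\omega\}$, followed by a De Giorgi-type iteration) matches the broad framework the paper actually uses, but the core technical step that makes the argument work in the Hermitian setting is missing from your sketch, and the difficulty you flag as the ``main obstacle'' is one that the correct method does not encounter.

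The paper's proof (Theorem 11.1, with $m=n$ here) hinges on the pointwise maximum-principle construction of Guo--Phong--Tong: one solves the auxiliary equation
\[
(\beta_t+dd^c\psi)^n \;=\; c'\,\frac{\tau_k(-\varphi_t+u_{l,t}-s)}{A}\,e^{F_t}\,\omega^n,\quad \sup_X\psi=0,
\]
(with the right-hand side normalized in $L^a$ and no additive $\lambda_{t,s}\omega^n$ correction; the twist constant $c'$ supplied by Tosatti--Weinkove plays that role), forms the test function
\[
\Phi := -\epsilon\bigl(-\psi+u_{l,t}+1+\Lambda\bigr)^{\frac{n}{n+1}} - (\varphi_t-u_{l,t}+s),
\]
and then evaluates $dd^c\Phi\le 0$ at the interior maximum of $\Phi$, using the mixed Monge--Amp\`ere inequality pointwise. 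This second-derivative test at a single point is precisely what renders the argument robust to $d\omega\neq 0$: there is no Bedford--Taylor-style integration by parts, hence no torsion terms to absorb. Your scheme replaces this step with a ``comparison on $\Omega_{t,s}$ of the barrier pair $(\psi_{t,s},u_t+s)$,'' a uniform $L^\infty$-bound on $\psi_{t,s}$, and an anticipated integration-by-parts, which is a genuinely different (Ko\l odziej--Nguyen-style) route. That route does suffer from exactly the Hermitian-torsion obstruction you mention, so without the $\Phi$-construction your argument does not close.

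Two subsidiary misstatements: (i) the sup-slope $SL_{\omega,a}(\{\beta\})>0$ is used to bound the \emph{auxiliary} twist constant $c'$ from below (via the normalization of the right-hand side of the auxiliary equation), not to control $c_t$ from below nor to absorb torsion terms; $c_t$ is bounded from \emph{above} via G\r{a}rding/Jensen and the lower bound on $\int_Xe^{F_t/n}\omega^n$, which is the opposite of what you wrote. (ii) The Orlicz hypothesis on $F_t$ enters not through a uniform $L^\infty$-bound on $\psi_{t,s}$ but through a Young/$\alpha$-invariant argument yielding the exponential integral estimate
\[
\int_{\Omega_s}\exp\!\Bigl(\alpha\,\tfrac{(-\varphi_t+V_t-s)^{(n+1)/n}}{A_{s,t}^{1/n}}\Bigr)\,\omega^n\le C e^{CE_t},
\]
which is then turned into the De Giorgi inequality for $\phi(s)=\bigl(\int_{\Omega_s}e^{aF_t}\omega^n\bigr)^{1/a}$, a different functional from the one you propose.
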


  Applying above $L^\infty$-estimate and an approximation argument, we first provide the following resolution of CMA equations with an exponential twist:
\begin{theorem}\label{thm: small unbounded locus twist equation}
   Let $(X,\omega)$ be a complex Hermitian manifold of complex dimension $n$ with a Hermitian metric $\omega$.   Let $\{\beta\}\in BC^{1,1}(X)$ be a nef Bott-Chern class satisfying $SL_{\omega,a}(\{\beta\})>0$ and $ D(X,\beta)\neq\emptyset$. Then, for any $f\in L^p(X), p>a$ satisfying $\int_Xf\omega^n>0$, there exists a function $\varphi$ of minimal singularity type solving the following equation:
    $$
    (\beta+dd^c \varphi)^n = e^ \varphi f\omega^n.
    $$
    Moreover, $\varphi$ is unique if we assume furthermore that $SL_{\omega,1}(\{\beta\})=\underline{\operatorname{Vol}}(\{\beta\})>0$ and $L(V_\beta)$ is pluripolar.
\end{theorem}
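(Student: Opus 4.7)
The strategy parallels the proof of \cref{thm: solve nef 2}, with modifications to accommodate the class $D(X,\beta)$. First, for $t>0$ the form $\beta_t:=\beta+t\omega$ is Hermitian, so choosing smooth strictly positive approximants $f_\varepsilon\to f+\varepsilon'$ in $L^p(X,\omega^n)$, the Hermitian Monge-Amp\`ere theory with exponential twist produces a unique smooth solution $\varphi_{t,\varepsilon}$ of
$$
(\beta_t+dd^c\varphi_{t,\varepsilon})^n = e^{\varphi_{t,\varepsilon}}f_\varepsilon\omega^n.
$$
A maximum-principle argument combined with an $L^1$-mass bound on $(\beta_t+dd^c\varphi_{t,\varepsilon})^n$ yields a uniform upper bound $\sup_X\varphi_{t,\varepsilon}\leq C_0$.

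Writing $\tilde\varphi:=\varphi_{t,\varepsilon}-\sup_X\varphi_{t,\varepsilon}\leq 0$, the equation takes the form $(\beta_t+dd^c\tilde\varphi)^n=c_{t,\varepsilon}\, e^{F_{t,\varepsilon}}\omega^n$ with $F_{t,\varepsilon}=\tilde\varphi+\log f_\varepsilon$. Since $\tilde\varphi\leq 0$ and $f\in L^p$ with $p>a$, H\"older's inequality provides a uniform upper bound on $\int_X e^{aF_{t,\varepsilon}}[\log(1+e^{aF_{t,\varepsilon}})]^{p'}\omega^n$ for any $p'>0$; the lower bound on $\int_X e^{F_{t,\varepsilon}/n}\omega^n$ follows from Skoda integrability in the compact normalized class $\operatorname{PSH}(X,\beta_t)$ together with $\int_X f\omega^n>0$. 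Choosing $p'>na$ and applying \cref{thm: a priori in nef class for MA} produces $-\tilde\varphi_{t,\varepsilon}+V_t\leq C$, equivalently $\varphi_{t,\varepsilon}\geq V_t-C'$, uniformly in $t,\varepsilon$.

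Nefness of $\{\beta\}$ combined with $D(X,\beta)\neq\emptyset$ gives $V_t\searrow V_\beta$ with $V_\beta\not\equiv-\infty$. After a diagonal extraction, $\varphi_{t,\varepsilon}\to\varphi$ in $L^1$ with $V_\beta-C'\leq\varphi\leq C_0$; in particular $\varphi$ is of minimal singularity type and belongs to $D(X,\beta)$, so $(\beta+dd^c\varphi)^n$ is well defined. The right-hand side $e^{\varphi_{t,\varepsilon}}f_\varepsilon\omega^n\to e^\varphi f\omega^n$ weakly by dominated convergence, given the uniform upper bound on $\varphi_{t,\varepsilon}$ and the $L^p$-convergence of $f_\varepsilon$. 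For the left-hand side, one localizes on a coordinate ball $B$ with smooth $\rho$ satisfying $dd^c\rho\geq\beta$; then $\varphi_{t,\varepsilon}+\rho$ are uniformly bounded above and uniformly bounded below by an element of Cegrell's class $\mathcal{E}(B)$, so \cite[Theorem 4.2]{Ceg04} identifies the weak limit with $(\beta+dd^c\varphi)^n$. For uniqueness, assuming $\underline{\operatorname{Vol}}(\{\beta\})>0$ and $L(V_\beta)$ pluripolar, two minimal-singularity solutions $\varphi_1,\varphi_2$ satisfy $\varphi_1\geq\varphi_2$ by the domination principle of \cite{LWZ24, SW25} applied to $\{\varphi_1<\varphi_2\}$, using the strict monotonicity of the twist $u\mapsto e^u f$; by symmetry $\varphi_1=\varphi_2$.

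The main obstacle is the passage to the Monge-Amp\`ere limit: since the potentials $\varphi_{t,\varepsilon}$ may develop singularities along $L(V_\beta)$, naive weak convergence of $(\beta_t+dd^c\varphi_{t,\varepsilon})^n$ on all of $X$ is delicate. The uniform lower bound $\varphi_{t,\varepsilon}\geq V_t-C'$ produced by \cref{thm: a priori in nef class for MA} is precisely what matches the $D(X,\beta)$ ansatz provided by the hypothesis, and it is this matching that allows the Cegrell local framework to identify the limit measure without loss of non-pluripolar mass at the polar locus.
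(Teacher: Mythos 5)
Your proof has the right skeleton — Tosatti-Weinkove solution of the perturbed Hermitian equation, normalization, invocation of \cref{thm: a priori in nef class for MA} to get the uniform two-sided bound relative to $V_t$, then limiting — and this matches the paper's strategy. But the passage to the limit is where your argument has a genuine gap.

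You take a diagonal extraction and conclude $\varphi_{t,\varepsilon}\to\varphi$ merely in $L^1$, then appeal to Cegrell \cite[Theorem 4.2]{Ceg04} to identify the weak limit of $(\beta_t+dd^c\varphi_{t,\varepsilon})^n$ with $(\beta+dd^c\varphi)^n$. That theorem only establishes that the Monge-Amp\`ere operator is well-defined on $\mathcal{E}(\Omega)$ via \emph{decreasing} approximations; it does not say that for an $L^1$-convergent sequence uniformly sandwiched between two elements of $\mathcal{E}$, the Monge-Amp\`ere measures converge to that of the $L^1$-limit. Indeed this fails in general: $L^1$ convergence of psh functions, even under uniform bounds, does not control capacity convergence and hence does not control convergence of mixed Monge-Amp\`ere currents. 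The paper circumvents this by doing the limit in two monotone steps. First $k\to\infty$ at fixed $j$: since $\beta+\tfrac1j\omega$ is Hermitian, the $L^1$-$L^\infty$ stability estimate in the Hermitian case upgrades $L^1$ convergence to \emph{uniform} convergence, so the limit $u_j$ is a genuine continuous solution. Then $j\to\infty$: the domination principle for twisted equations (\cref{domination for beta 2}) shows $u_j$ is \emph{decreasing} in $j$, and only then does Cegrell's decreasing convergence theorem on $\mathcal{E}$ apply locally to pass the equation to the limit. Without establishing this monotonicity (or otherwise producing convergence in capacity), the identification of the limit measure does not go through. This is not a cosmetic issue; it is exactly the point the paper flags in the final remark of Section 15 as the technical obstruction when singular potentials are present, and the decreasing structure is what saves the argument.
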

 \Cref{thm: small unbounded locus twist equation} offers a unified framework that generalizes several key results for degenerate complex Monge-Ampère equations, including \cite[Theorem 3.4]{GL23}, \cite[Theorem 1.2]{LWZ24}, and \cite[Theorem 1.1]{SW25}. The method we employ also appears to be more concise than those used in the aforementioned works.

Next, we establish a mass comparison theorem, which will play a key role in proving $(\beta+dd^cu)^n$ is non-pluripolar for $u\in\operatorname{PSH}(X,\beta)$ with minimal singularity:
\begin{theorem}\label{intro_thm: Demailly Second comparison_global}
 Let $(X,\omega)$ be a complex Hermitian manifold of complex dimension $n$ with a Hermitian metric $\omega$.   Let $\beta$ be a smooth $(1,1)$- form on $X$ satisfying $D(X,\beta)\neq\emptyset$ and $u,v\in\operatorname{PSH}(X,\beta)$ be $\beta$- psh functions with minimal singularities. Assume moreover that $(\beta+dd^cv)^n$ is non-pluripolar and that $u-v$ is bounded. Then we have that $(\beta+dd^cu)^n$ is non-pluripolar.
\end{theorem}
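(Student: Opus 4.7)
The plan is to localize the problem, reduce it to a Cegrell-type stability statement for mixed Monge-Amp\`ere products under bounded perturbations of the potential, and establish that statement via approximation by bounded psh functions.

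Since pluripolarity is a local property and $(\beta+dd^cu)^n$ is defined in each coordinate ball by the Sadullaev formula, I would first fix a coordinate ball $B\subset X$ and choose a smooth $\rho\in C^\infty(B)$ with $dd^c\rho\geq\beta$. Setting $\tilde u:=u+\rho$, $\tilde v:=v+\rho$ produces honest psh functions in $D(B)\subset\mathcal E(B)$ with $|\tilde u-\tilde v|\leq M$ for some $M$. With $\eta:=dd^c\rho-\beta\geq 0$ a smooth positive $(1,1)$-form, the Sadullaev expansion yields
\[
(\beta+dd^cu)^n-(\beta+dd^cv)^n=\sum_{j=0}^n(-1)^{n-j}\binom{n}{j}\bigl[(dd^c\tilde u)^j-(dd^c\tilde v)^j\bigr]\wedge\eta^{\,n-j}.
\]
If the right-hand side is a non-pluripolar signed measure, then the hypothesis that $(\beta+dd^cv)^n$ is non-pluripolar forces $(\beta+dd^cu)^n$, which is a positive Radon measure by Cegrell's theory, to be non-pluripolar as well, completing the argument.

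The next step is the telescoping identity
\[
(dd^c\tilde u)^j-(dd^c\tilde v)^j=dd^c(\tilde u-\tilde v)\wedge S_j,\qquad S_j:=\sum_{k=0}^{j-1}(dd^c\tilde u)^k\wedge(dd^c\tilde v)^{j-1-k},
\]
which writes each term of the difference as the action of $dd^c$ of the bounded function $\tilde u-\tilde v$ on the positive $(j-1,j-1)$-current $S_j$, well-defined by Cegrell's theory on $\mathcal E(B)$. To show $dd^c(\tilde u-\tilde v)\wedge S_j\wedge\eta^{\,n-j}$ is non-pluripolar I would approximate $\tilde u$ and $\tilde v$ by the truncations $\tilde u_m:=\max(\tilde u,-m)$ and $\tilde v_m:=\max(\tilde v,-m)$; a short case analysis on $\{\tilde u\geq-m\}$ and $\{\tilde v\geq-m\}$ shows that the bound $|\tilde u_m-\tilde v_m|\leq M$ is preserved uniformly in $m$. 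For bounded psh potentials, integration by parts against a smooth test function turns the corresponding telescoping-difference measure into an integral of the bounded function $\tilde u_m-\tilde v_m$ against a smooth form wedged with a positive Bedford-Taylor current, which is absolutely continuous with respect to that positive current and in particular non-pluripolar. Cegrell's continuity theorems for the Monge-Amp\`ere operator and its mixed products under decreasing limits in $\mathcal E$ then transfer the property to the limit.

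The main obstacle is that decreasing limits in $\mathcal E$ can in general create pluripolar mass, so preserving the non-pluripolar property through the approximation is the delicate point. The bounded-difference hypothesis $|\tilde u-\tilde v|\leq M$, preserved along the truncation, is precisely the feature that prevents such new pluripolar mass from appearing in the telescoping difference — intuitively, the pluripolar parts of $(dd^c\tilde u)^j\wedge\eta^{\,n-j}$ and $(dd^c\tilde v)^j\wedge\eta^{\,n-j}$ must coincide. Making this rigorous is likely to rely on a Demailly second-comparison-type inequality, as suggested by the theorem's label, that directly bounds the pluripolar part of a mixed Monge-Amp\`ere current by that of a bounded-difference counterpart; this is where I would concentrate the technical effort.
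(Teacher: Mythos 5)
Your route --- Sadullaev expansion plus telescoping plus truncation --- is genuinely different from the paper's, but it has a gap precisely where you flag it, and filling that gap would require the theorem as an input. For the truncations, each signed measure $dd^c(\tilde u_m-\tilde v_m)\wedge S_j^m\wedge\eta^{n-j}$ is indeed non-pluripolar, being a difference of Bedford--Taylor products of bounded psh functions. But Cegrell's decreasing-limit theorem sends the two positive parts $dd^c\tilde u_m\wedge S_j^m\wedge\eta^{n-j}$ and $dd^c\tilde v_m\wedge S_j^m\wedge\eta^{n-j}$ to limits that may each charge the common polar set $P(\tilde u)=P(\tilde v)$; the assertion that these pluripolar charges cancel in the difference because $|\tilde u-\tilde v|\leq M$ is exactly the content of the theorem, and weak-$*$ convergence of non-pluripolar signed measures does not deliver it. Your integration-by-parts computation bounds $\bigl|\int\varphi\, dd^c(\tilde u_m-\tilde v_m)\wedge S_j^m\wedge\eta^{n-j}\bigr|$ by $M\int|dd^c\varphi|\wedge S_j^m\wedge\eta^{n-j}$, which controls the total mass but gives no handle on the mass on a pluripolar set in the limit; there is no absolute-continuity statement here. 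To make the plan rigorous you would need exactly the lemma you gesture at --- a Demailly second-comparison inequality for mixed Cegrell currents --- and that is the theorem itself, so the argument as sketched is circular.

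The paper's proof sidesteps the expansion entirely. Fix $\omega>\beta$ and work with the single psh gluing $\varphi_c:=\max(u-c,2v)\in\operatorname{PSH}(X,\omega+\beta)$. Because $u-v$ is bounded, for each $c$ there is a neighborhood $\{v<q_c\}$ of the common polar set $P(u)=P(v)$ on which $\varphi_c\equiv u-c$, whence $\mathds{1}_{P(u)}(\omega+\beta+dd^cu)^n\leq(\omega+\beta+dd^c\varphi_c)^n$. Letting $c\to\infty$, $\varphi_c\searrow 2v$, so $\mathds{1}_{P(u)}(\omega+\beta+dd^cu)^n\leq(\omega+\beta+dd^c(2v))^n$; a preliminary step (applying the same gluing with $\max\bigl(\tfrac{v+\rho}{2}-c,v\bigr)$) shows the right-hand side does not charge $P(v)$, hence $(\omega+\beta+dd^cu)^n(P(u))=0$, and Cegrell's decomposition theorem finishes the argument. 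This local gluing near the common pole, rather than any term-by-term expansion of $(\beta+dd^cu)^n-(\beta+dd^cv)^n$, is the missing idea.
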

Combining \cref{intro_thm: Demailly Second comparison_global} with a mass concentration result for quasi-plurisubharmonic envelopes (see \cref{lem: contact for beta lsc MA}), we prove the following interesting result:
\begin{corollary}\label{intro:non-pluripolar} 
 Let $(X,\omega)$ be a complex Hermitian manifold of complex dimension $n$ with a Hermitian metric $\omega$.  Let $\{\beta\}\in BC^{1,1}(X)$ be a nef Bott-Chern class on $X$ satisfying $D(X,\beta)\neq\emptyset$. Then, for each $u\in\operatorname{PSH}(X,\beta)$ with minimal singularity, the Monge-Amp\`ere measure $(\beta+dd^cu)^n$ does not charge pluripolar sets. In particular, $u$ has zero Lelong number everywhere. 
\end{corollary}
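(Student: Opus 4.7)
My plan is a two-step reduction. The first observation is that any $u \in \operatorname{PSH}(X,\beta)$ of minimal singularity type satisfies $|u - V_\beta| \leq C$ for some constant $C$, and under the standing hypothesis $D(X,\beta) \neq \emptyset$ the maximal envelope $V_\beta$ itself lies in $D(X,\beta)$ (this is explicitly pointed out in the excerpt). Hence the pair $(u, V_\beta)$ satisfies the hypotheses of the mass comparison result \cref{intro_thm: Demailly Second comparison_global}: both potentials are of minimal singularity type in $\operatorname{PSH}(X,\beta)$ and their difference is bounded. Consequently, if $(\beta + dd^c V_\beta)^n$ is non-pluripolar then so is $(\beta + dd^c u)^n$. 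The whole problem therefore reduces to the single base case $u = V_\beta$.

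For the base case, I would write $V_\beta = P_\beta(0)$, the envelope of the (trivially bounded and lsc) constant function $0$, and invoke the mass concentration result \cref{lem: contact for beta lsc MA}. This lemma identifies the Monge--Amp\`ere measure of a $\beta$-psh envelope with a measure that is concentrated on the coincidence set $\{V_\beta = 0\}$ and absolutely continuous with respect to $\omega^n$ there. In particular $(\beta + dd^c V_\beta)^n$ cannot charge pluripolar sets. Combining this with the first step proves the non-pluripolarity of $(\beta + dd^c u)^n$ for every $u$ of minimal singularity type.

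The statement about Lelong numbers follows by contradiction. Suppose $\nu(u, x_0) > 0$ at some $x_0 \in X$. Since $u$ has minimal singularity, $(\beta + dd^c u)^n$ is well defined as a Cegrell-type measure, and Demailly's classical local comparison inequality yields $(\beta + dd^c u)^n \geq \nu(u, x_0)^n\, \delta_{x_0}$. But the singleton $\{x_0\}$ is pluripolar (via the local psh function $\log|z - x_0|$), contradicting the non-pluripolarity just established. Hence $\nu(u, \cdot) \equiv 0$.

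The main obstacle, in my view, is verifying that \cref{lem: contact for beta lsc MA} genuinely applies to $V_\beta$ in the present generality: $\beta$ is only \emph{nef} and Hermitian (not semipositive, not K\"ahler), and the Monge--Amp\`ere measure has to be interpreted through the $D(X,\beta)$ formalism of \cite{Sal25}. One must check that the envelope construction, combined with the positive-part truncation of $\beta + dd^c g$ on the contact set, still produces an absolutely continuous density in this setting. Once this structural point is granted, both the reduction via \cref{intro_thm: Demailly Second comparison_global} and the Lelong number deduction are formal.
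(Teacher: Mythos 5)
Your overall structure mirrors the paper's: reduce to $u=V_\beta$ via the mass comparison theorem \cref{intro_thm: Demailly Second comparison_global}, establish the base case via the envelope mass concentration \cref{lem: contact for beta lsc MA}, and derive vanishing of Lelong numbers from non-pluripolarity. However, there are two concrete gaps in your execution.

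\textbf{Base case.} You assert that \cref{lem: contact for beta lsc MA} shows $(\beta+dd^cV_\beta)^n$ is ``absolutely continuous with respect to $\omega^n$'' on the contact set $\{V_\beta=0\}$, and then read off non-pluripolarity. This is a misreading of the lemma: it only says the measure puts no mass on the open set $\{P_\beta(h)<h\}$, i.e.\ on $\{V_\beta<0\}$. That gives no absolute continuity, and in particular does not by itself rule out mass on pluripolar sets contained inside $\{V_\beta=0\}$. The paper's actual argument is: (i) from the lemma, the measure puts no mass on $\{V_\beta<0\}$, hence in particular no mass on the polar set $\{V_\beta=-\infty\}$; (ii) locally, Cegrell's decomposition theorem writes the measure as a non-pluripolar part plus a part carried by the polar set of the local potential $\rho+V_\beta$; since the second summand vanishes, the measure is non-pluripolar. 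You need step (ii), which you omit, and you should drop the ``absolutely continuous'' claim.

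\textbf{Lelong numbers.} You apply the Demailly-type comparison directly to $(\beta+dd^cu)^n$ to obtain $(\beta+dd^cu)^n\geq \nu(u,x_0)^n\delta_{x_0}$. But $\beta$ need not be closed (it is only a Bott-Chern representative, and $X$ is merely Hermitian), so the global mixed measure $(\beta+dd^cu)^n$ is not the Monge--Amp\`ere measure of a local plurisubharmonic function and the classical Demailly/Cegrell Lelong comparison does not apply to it directly. The correct route, as in the paper, is to localize: pick a coordinate ball and a smooth $\rho$ with $\rho+u$ psh, then show that the local measure $(dd^c(\rho+u))^n$ charges no singletons. This is not a consequence of $(\beta+dd^cu)^n$ being non-pluripolar alone --- one first needs that $(\omega+\beta+dd^cu)^n$ is also non-pluripolar for a Hermitian metric $\omega$ dominating $dd^c\rho-\beta$ locally, which requires re-running Step 1 of \cref{intro_thm: Demailly Second comparison_global}. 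Only then can one apply the Cegrell comparison $2\pi\,\nu(\rho+u,x)\leq\{(dd^c(\rho+u))^n(\{x\})\}^{1/n}$. Your proposal skips this auxiliary step, leaving the Lelong number deduction unjustified.

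The reduction via \cref{intro_thm: Demailly Second comparison_global} is correct as written.
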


 Now that $(\beta+dd^cu)^n$ is non-pluripolar for $u\in\operatorname{PSH}(X,\beta)$ with minimal singularity, all previous techniques, such as  envelopes and domination principle,  can be extended to our case, i,e., for unbounded $\beta$-psh function with mild singularities. Here, we shall make a technical assumption that $L(V_\beta)$ is a pluripolar set to ensure the classical convergence result to hold (see \cref{continuity} below). Then the methods of \cite{GL23} can be employed to get the folllowing results. 
\begin{theorem}\label{thm: small unbounded locus non-twist equation}
   Let $(X,\omega)$ be a complex Hermitian manifold of complex dimension $n$ with a Hermitian metric $\omega$.   Let $\{\beta\}\in BC^{1,1}(X)$ be a nef Bott-Chern class satisfying $\underline{\operatorname{Vol}}(\beta)>0$ and $D(X,\beta)\neq\emptyset$. Assume moreover that $L(V_\beta)$ is a pluripolar set. Then, for any $f\in L^p(X), p>1$ such that $\int_Xf\omega^n>0$, there exists a function $\varphi$ of minimal singularity type  and a unique constant $c>0$ solving the following equation:
    $$
    (\beta+dd^c \varphi)^n =c f\omega^n.
    $$
\end{theorem}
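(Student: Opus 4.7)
The approach follows the framework of Guedj-Lu, as indicated in the paragraph preceding the statement, adapted to the Hermitian nef setting with mild singularities. I would produce $\varphi$ as a subsequential limit of solutions to twisted equations with a vanishing twist exponent, invoking \cref{thm: small unbounded locus twist equation} for existence at each level, and using the non-pluripolar framework (\cref{intro:non-pluripolar}) together with the continuity of the non-pluripolar Monge-Amp\`ere operator (applicable here because $L(V_\beta)$ is pluripolar) to pass to the limit.

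For each $\lambda\in(0,1]$, a direct adaptation of the proof of \cref{thm: small unbounded locus twist equation}, in which the twist exponent is the free parameter $\lambda$ rather than fixed at $1$, provides $u_\lambda\in\operatorname{PSH}(X,\beta)$ of minimal singularity type solving
\[
(\beta+dd^c u_\lambda)^n = c_\lambda\, e^{\lambda u_\lambda}\, f\omega^n,\qquad \sup_X(u_\lambda-V_\beta)=0,
\]
where $c_\lambda>0$ is forced by the total non-pluripolar mass identity $c_\lambda\int_X e^{\lambda u_\lambda}f\omega^n=\operatorname{Vol}(\{\beta\})$ (which is meaningful by \cref{intro:non-pluripolar}). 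Using the domination principle and envelope constructions of \cite{GL23}, now available in the unbounded minimal-singularity regime thanks to \cref{intro:non-pluripolar}, one establishes a uniform two-sided bound $|u_\lambda-V_\beta|\le C$ independent of $\lambda\in(0,1]$.

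Since $\lambda u_\lambda\to 0$ uniformly as $\lambda\to 0$, we have $e^{\lambda u_\lambda}\to 1$ uniformly on $X$, and hence $c_\lambda\to c_* := \operatorname{Vol}(\{\beta\})/\int_X f\omega^n$. Extracting a subsequence $u_{\lambda_j}\to\varphi$ in $L^1(X)$, the limit satisfies $|\varphi-V_\beta|\le C$ and is therefore of minimal singularity type. The continuity of the non-pluripolar Monge-Amp\`ere operator along sequences of uniformly minimally singular potentials (the ``classical convergence result'' alluded to as \cref{continuity} in the text, valid by the pluripolarity of $L(V_\beta)$) yields
\[
(\beta+dd^c u_{\lambda_j})^n\longrightarrow(\beta+dd^c\varphi)^n
\]
weakly, and passage to the limit in the equation gives $(\beta+dd^c\varphi)^n=c_* f\omega^n$. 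Uniqueness of $c$ is automatic: for any other solution $\varphi'$ of minimal singularity type, the total non-pluripolar mass identity yields $c'\int_X f\omega^n=\operatorname{Vol}(\{\beta\})$, hence $c'=c_*$.

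The principal technical difficulty is the uniform estimate $|u_\lambda-V_\beta|\le C$. Note that \cref{thm: a priori in nef class for MA} cannot be invoked directly here, since it requires $p>na$, whereas only $p>1=a$ is assumed. The needed bound must instead be produced by the envelope and domination machinery of \cite{GL23}, whose extension to the unbounded minimal-singularity regime is precisely what the non-pluripolarity result \cref{intro:non-pluripolar} and the convergence theorem (hinging on the pluripolarity of $L(V_\beta)$) make possible. A secondary, more routine point is verifying that the proof of \cref{thm: small unbounded locus twist equation} is insensitive to the replacement of the twist $e^\varphi$ by $e^{\lambda\varphi}$.
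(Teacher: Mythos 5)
Your approach — producing $\varphi$ as a $\lambda\to0$ limit of solutions $u_\lambda$ to the twisted equation $(\beta+dd^c u_\lambda)^n = c_\lambda e^{\lambda u_\lambda}f\omega^n$ — is genuinely different from the paper's. The paper instead fixes no twist, approximates $\beta$ by the Hermitian classes $\beta+\frac{1}{j}\omega$ and $f$ by smooth $f_j$, solves $(\beta+\frac{1}{j}\omega+dd^cu_j)^n=c_jf_j\omega^n$ via Tosatti--Weinkove, obtains uniform $L^\infty$ bounds from \cref{a priori in nef class for MA}, and passes to the limit via the envelope/maximum-principle machinery, using the domination principle to identify the limit as the solution. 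That said, your proposal has two substantive problems.

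The central gap is the appeal to a ``total non-pluripolar mass identity'' $c_\lambda\int_X e^{\lambda u_\lambda}f\omega^n=\operatorname{Vol}(\{\beta\})$, which you use both to pin down $c_\lambda$, to identify $c_*=\lim c_\lambda$, and to prove uniqueness of $c$. This identity is a Kähler (closed $\beta$) fact. In the Hermitian setting of this theorem $\beta$ is generally \emph{not} closed, and $\int_X(\beta+dd^cu)^n$ is not a cohomological invariant --- it genuinely depends on $u$. The paper only proves and uses the one-sided bound $\int_X(\beta+dd^cu)^n\geq\underline{\operatorname{Vol}}(\beta)$ (see the proof of \cref{domination of MA} and the remark following it, which singles out the closed case as special precisely because of the integration-by-parts identity). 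Consequently your derivation of $c_\lambda$, the convergence $c_\lambda\to c_*$, and the uniqueness of $c$ all break down as written. In the paper $c$ is obtained only as a subsequential limit and its uniqueness is established via the domination principle \cref{domination of MA}, not a mass identity.

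A secondary issue: your claim that \cref{a priori in nef class for MA} ``cannot be invoked directly here, since it requires $p>na$'' is a misreading of that statement. The exponent $p>na$ there is a free Orlicz parameter one chooses; the relevant hypothesis is an upper bound on the Orlicz norm $\int_X e^{aF}[\log(1+e^{aF})]^p\omega^n$. Since $f\in L^q(X,\omega^n)$ with $q>1$ implies $f\in L^1(\log L)^p$ for every $p$, this hypothesis is automatically satisfied once $f\in L^q$ with $q>1$. So the a priori estimate is applicable here (with $a=1$), and this is exactly how the paper obtains the uniform bound $-C\leq u_j-V_{\beta+\frac{1}{j}\omega}\leq 0$. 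Your alternative — deferring the uniform bound $|u_\lambda-V_\beta|\leq C$ to unspecified ``envelope and domination machinery of \cite{GL23}'' — leaves the principal estimate of the argument unestablished, while the tool you ruled out in fact suffices.
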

As applications, we present partial solutions (see \cref{thm:TW}, \cref{thm:DP}) to the Tosatti-Weinkove and Demailly-Păun conjectures, following the method developed in \cite{Ch16, Pop16, Ngu16, LWZ24, SW25}.



\subsection*{Acknowledgements}
Z. Wang is very  grateful to Ngoc Cuong Nguyen for many helpful discussions on the degenerate complex Hessian equations. This research is supported by the National Key R\&D Program of China (Grant No. 2021YFA1002600 and  No. 2021YFA1003100).  Z. Wang and X. Zhou are partially supported by grants from the National Natural Science Foundation of China (NSFC) (No. 12571085) and (No. 12288201) respectively. Z. Wang is also supported by the Fundamental Research Funds for the Central Universities.

\section{Preliminary}
\subsection{\texorpdfstring{$(\omega,m)$}{(omega, m)}-subharmonic functions}
 Let $(X, \omega)$ be a compact Hermitian manifold of dimension $n$ equipped  with a Hermitian metric $\omega$. Let $m$ be a positive integer such that $1\leq m\leq n$. In this section, we begin by recalling some basic facts about $(\omega, m)$-subharmonic functions. We also use the notation $(\Omega, \omega)$ to denote a domain in $\mathbb{C}^n$ equipped with a Hermitian form $\omega$. 

\begin{definition}\label{definition: m-positive}
    A smooth real $(1,1)$-form $\alpha$ is called $(\omega,m)$-positive in $\Omega$ if the following inequalities hold pointwise in the sense of smooth forms:
    $$
    \alpha^{k}\wedge\omega^{n-k}>0, \quad k=1,...,m.
    $$
    We denote by $\Gamma_m(\omega)$ be the open convex cone of all $(\omega,m)$-positive $(1,1)$-forms. At any fixed point of $X$, we can diagonalize $\omega$ with respect to $\alpha$ and let $\lambda_1,...,\lambda_n$ be the eigenvalues of $\alpha$ with respect to $\omega$. Then, the above condition reads  
    $$
S_{k,\omega}(\alpha)>0,\quad 1\leq k\leq m,
    $$
    where $S_{k,\omega}(\alpha):=\underset{1\leq j_1<...<j_k\leq n}{\sum}\lambda_{j_1}...\lambda_{j_k}$ is the $k$-th symmetric polynomial of the eigenvalues of $\alpha$ with respect to $\omega$.

    Accordingly, we say that $\alpha$ is $(\omega,m)$-semi-positive if
      $$
    \alpha^{k}\wedge\omega^{n-k}\geq0, \quad k=1,...,m.
    $$
    Denote by $\overline{\Gamma_m(\omega)}$ the closed convex cone of all $(\omega,m)$-semi-positive $(1,1)$-forms.

    A function u$\in C^{2}(\Omega)$ is called $(\omega,m)$-subharmonic, denoted $u\in \operatorname{SH}_{m}(\Omega,\omega)$, if $dd^{c}u$ lies in $\overline{\Gamma_m(\omega)}$ at all points of $X$. It is called strictly $(\omega,m)$-subharmonic if $dd^cu$ lies in $\Gamma_m(\omega)$ pointwise on $X$.
\end{definition}

 As observed in \cite{Błocki05}, we have the following useful characterization of $(\omega,m)$-positive forms and smooth $(\omega,m)$-subharmonic functions:
    
\begin{lemma}\label{lemma: criterion}
    Let u$\in C^{2}(\Omega)$. Then u is $(\omega,m)$-subharmonic if and only if for arbitrary $(\omega,m)$-positive $(1,1)$- forms $\alpha_1,...\alpha_{m-1}$ on $\Omega$, we have
    $$
    dd^{c}u\wedge\alpha_1\wedge...\wedge\alpha_{m-1}\wedge\omega^{n-m}\geq 0.
    $$
\end{lemma}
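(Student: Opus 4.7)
The statement is pointwise, so fix $p \in \Omega$ and choose a unitary frame at $p$ that simultaneously diagonalizes $\omega_p$ and $(dd^cu)_p$. Writing the eigenvalues of $(dd^cu)_p$ with respect to $\omega_p$ as $\lambda = (\lambda_1, \dots, \lambda_n)$, being $(\omega,m)$-subharmonic at $p$ is equivalent to $\lambda$ lying in the closed G\aa rding cone $\overline{\Gamma_m} = \{x \in \mathbb{R}^n : \sigma_k(x) \geq 0,\ k = 1, \dots, m\}$. A direct calculation in the diagonalizing frame shows that, when the test forms $\alpha_i$ are taken diagonal in the same basis with eigenvalue vectors $\mu^{(i)} \in \Gamma_m$, one has
\[
dd^c u \wedge \alpha_1 \wedge \cdots \wedge \alpha_{m-1} \wedge \omega^{n-m}\Big|_p = \binom{n}{m}^{-1} \widetilde{\sigma}_m\bigl(\lambda, \mu^{(1)}, \dots, \mu^{(m-1)}\bigr)\, \omega^n_p,
\]
where $\widetilde{\sigma}_m$ denotes the symmetric $m$-linear polarization of the $m$-th elementary symmetric polynomial on $\mathbb{R}^n$. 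The lemma thus reduces to the symmetric-function claim: $\lambda \in \overline{\Gamma_m}$ if and only if $\widetilde{\sigma}_m(\lambda, \mu^{(1)}, \dots, \mu^{(m-1)}) \geq 0$ for every $\mu^{(i)} \in \Gamma_m$.

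The ``only if'' direction is G\aa rding's inequality for the hyperbolic polynomial $\sigma_m$, which is hyperbolic in the direction $e = (1, \dots, 1)$ with hyperbolicity cone $\Gamma_m$: its polarization is non-negative on the product of the closed hyperbolicity cone with itself. Because arbitrary test forms $\alpha_i$ need not be simultaneously diagonalizable with $\omega$ and $dd^cu$ at $p$, one reinterprets the wedge product as a mixed discriminant of Hermitian matrices; $\sigma_m$ remains hyperbolic on the ambient space of Hermitian matrices, with hyperbolicity cone $\{A : \operatorname{spec}(A) \in \Gamma_m\}$, so G\aa rding's inequality applies verbatim and yields the required non-negativity.

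The ``if'' direction uses only diagonal test forms, which reduces the hypothesis to $\widetilde{\sigma}_m(\lambda, \mu^{(1)}, \dots, \mu^{(m-1)}) \geq 0$ for all $\mu^{(i)} \in \Gamma_m$. The conclusion $\lambda \in \overline{\Gamma_m}$ then follows from the self-duality of $\overline{\Gamma_m}$ under this polarized pairing, which is the second half of G\aa rding's theorem on hyperbolic polynomials. Concretely, taking all $\mu^{(i)} = e$ gives $\sigma_1(\lambda) \geq 0$ via the identity $\widetilde{\sigma}_m(\lambda, e, \dots, e) = \binom{n-1}{m-1}\sigma_1(\lambda)/m$, and for each $k = 2, \dots, m$ one exploits the expansion $\sigma_m(\lambda + t e) = \sum_{j=0}^m \binom{n-j}{m-j}\sigma_j(\lambda)\, t^{m-j}$ together with parametrized families of $\mu^{(i)}$ along suitable rays in $\Gamma_m$ to extract $\sigma_k(\lambda) \geq 0$ in turn. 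I expect this self-duality step to be the main obstacle; it is precisely the ingredient from Bl\l ocki's work that is credited in the excerpt.
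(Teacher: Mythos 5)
The paper does not prove this lemma; it simply cites Błocki's 2005 paper. Your plan reproduces the standard Gårding-theoretic argument from that source, so the \emph{approach} is the intended one.

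Your ``only if'' direction is essentially complete. Interpreting the wedge product as a mixed discriminant and noting that $A \mapsto \sigma_m(\lambda(A))$ is hyperbolic on Hermitian matrices with respect to the identity, with hyperbolicity cone $\{A : \lambda(A) \in \Gamma_m\}$, Gårding's inequality gives nonnegativity of the polarization when one slot lies in the closed cone and the others in the open cone; that is exactly the claim.

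The ``if'' direction is where your sketch has a genuine gap. You correctly reduce to the self-duality claim: if $\widetilde{\sigma}_m(\lambda, \mu^{(1)}, \dots, \mu^{(m-1)}) \geq 0$ for all $\mu^{(i)} \in \Gamma_m$, then $\lambda \in \overline{\Gamma_m}$. But the ``concrete'' plan you offer only delivers $\sigma_1(\lambda) \geq 0$. The expansion $\sigma_m(\lambda + t e) = \sum_{j}\binom{n-j}{m-j}\sigma_j(\lambda) t^{m-j}$ does not by itself control the higher $\sigma_k(\lambda)$: the hypothesis governs only the polarizations $\widetilde{\sigma}_m(\lambda, \mu^{(1)}, \dots, \mu^{(m-1)})$, each \emph{linear} in $\lambda$, whereas $\sigma_k(\lambda)$ for $k\geq2$ comes from $\widetilde{\sigma}_m(\lambda^{[k]}, e^{[m-k]})$, which places $\lambda$ in several slots at once and is not covered by the hypothesis unless one already knows $\lambda \in \Gamma_m$. ``Parametrized families of $\mu^{(i)}$ along rays'' does not escape this: sending $\mu^{(i)} \to e$ or $\mu^{(i)} = e + s\nu$ and extracting coefficients still only yields linear-in-$\lambda$ quantities. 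A correct argument is genuinely different: given $\lambda \notin \overline{\Gamma_m}$, follow the segment from $e$ to $\lambda$ to its exit point $x^\ast$ on $\partial\Gamma_m$; use Euler's identity $\nabla\sigma_m(x^\ast)\cdot x^\ast = m\sigma_m(x^\ast) = 0$ together with $\nabla\sigma_m(x^\ast)\cdot e = (n-m+1)\sigma_{m-1}(x^\ast) \geq 0$ to deduce $\nabla\sigma_m(x^\ast)\cdot\lambda \leq 0$, then perturb to $\mu := x^\ast + \varepsilon e \in \Gamma_m$ and observe $\widetilde{\sigma}_m(\lambda, \mu, \dots, \mu) = \tfrac{1}{m}\nabla\sigma_m(\mu)\cdot\lambda$ becomes strictly negative, contradicting the hypothesis (with a little extra care in the degenerate case $\sigma_{m-1}(x^\ast)=0$, where one first uses $\sigma_1(\lambda)\geq0$ to rule out $x^\ast=0$ and then perturbs into the interior). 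Either supply an argument of this kind or give a precise citation. Also, calling the self-duality ``the second half of Gårding's theorem'' overstates what that theorem asserts: Gårding's inequality gives the forward implication, and the converse is a separate (if standard) consequence requiring the convexity and Euler-identity considerations above.
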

This motivates the following definition:
\begin{definition}
       A current $T$ on $\Omega$ is said to be $(\omega,m)$-positive if for arbitrary $(\omega,m)$-positive $(1,1)$- forms $\alpha_1,...\alpha_{m-1}$ on $\Omega$, we have
    $$
    T\wedge\alpha_1\wedge...\wedge\alpha_{m-1}\wedge\omega^{n-m}\geq 0.
    $$
\end{definition}

To introduce the notion of general $(\omega,m)$-subharmonic functions, we first define $\omega$-subharmonic functions.
\begin{definition}
    A smooth function u is called $\omega$-subharmonic if it satisfies
    $$
\Delta_{\omega}u:=dd^cu\wedge\omega^{n-1}/\omega^n\geq0.
    $$
\end{definition}

\begin{definition}[{\cite[Definition 9.2]{GN18}, \cite[Deinition 1.1.1]{GL25}}]\label{omega-sh}
    A function $u\in L_{loc}^1(\Omega,\omega^n)$ is called $\omega$-subharmonic if $u$ is upper semicontinuous and for every relative compact open subset $U\Subset \Omega$ and every $h\in \mathcal C^0(\overline U)$  and $\Delta_\omega h=0$ in $U$, if $h\geq u$ on $\partial D$, then $h\geq u$ on $D$, where $\Delta_\omega:=\sum_{0\leq i,j\leq n}\omega^{\bar ji}\frac{\partial^2}{\partial z^i\partial \bar z^j}$.
    \end{definition}
It is a fact \cite[Corollary 9.8]{GN18} that if $u$ is $\omega$-subharmonic, then for any point $p\in\Omega$, there is a neighborhood of $p$ where there is a decreasing sequence of smooth $\omega$-subharmonic functions converge pointwise to $u$. 

We have some equivalent definitions of $\omega$-subharmonic functions:
 \begin{lemma}\label{equivalence of omega sh}
       A function $u\in L_{loc}^1(\Omega,\omega^n)$ is $\omega$-subharmonic if and only if $dd^cu\wedge\omega^{n-1}\geq0$ in the distributional sense and $u$ is strongly upper semi-continuous, i.e., $\forall x\in\Omega$,
       $$
 u(x)=\underset{\Omega\ni y\rightarrow x}{\operatorname{ess}\limsup}\,u(y):=\underset{r\searrow0}{\lim}\operatorname{ess}\underset{B_r(x)}{\sup}u.
       $$
 \end{lemma}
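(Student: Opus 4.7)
The plan is to prove the two implications separately: from the maximum-principle definition \cref{omega-sh} to the distributional inequality plus strong upper semi-continuity, and conversely. The forward direction will rely on the local decreasing smooth approximation stated right after \cref{omega-sh}, while the converse will use a convolution regularization combined with a Friedrichs-type commutator estimate.

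For the implication $(\Rightarrow)$, assume $u$ is $\omega$-subharmonic in the sense of \cref{omega-sh}. By \cite[Corollary 9.8]{GN18}, around each $p\in\Omega$ there exist a neighborhood $V$ of $p$ and smooth $\omega$-subharmonic functions $u_j\in C^\infty(V)$ decreasing pointwise to $u$. Each $u_j$ satisfies $dd^c u_j\wedge\omega^{n-1}\geq 0$ classically, so monotone convergence in the distributional sense yields $dd^c u\wedge\omega^{n-1}\geq 0$ distributionally on $V$, hence globally on $\Omega$. For strong upper semi-continuity, the inequality $u(x)\geq\operatorname{ess}\limsup_{y\to x}u(y)$ is automatic since $u$ is ordinary USC as a decreasing limit of continuous functions. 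The reverse inequality follows from a solid sub-mean-value property for each smooth $u_j$: since $\Delta_\omega$ is uniformly elliptic with smooth coefficients, standard linear elliptic theory produces a positive kernel $K_r(x,\cdot)$ on $B_r(x)\Subset V$ with $u_j(x)\leq\int_{B_r(x)}u_j(y)K_r(x,y)\,\omega^n(y)$; monotone convergence transfers the inequality to $u$, and bounding the weighted average by $\operatorname{ess}\sup_{B_r(x)}u$ followed by $r\searrow 0$ gives $u(x)\leq\operatorname{ess}\limsup_{y\to x}u(y)$.

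For the implication $(\Leftarrow)$, suppose $u\in L^1_{\mathrm{loc}}(\Omega)$ is strongly upper semi-continuous and $dd^c u\wedge\omega^{n-1}\geq 0$ distributionally. A short argument (using that $\operatorname{ess}\sup_{B_\rho(x_0)}u\to u(x_0)$ as $\rho\to 0$) shows strong USC implies ordinary USC, so only the maximum principle must be verified. Localize in coordinates and mollify $u_\varepsilon:=u\ast\rho_\varepsilon$ on a slightly shrunken region. A direct computation gives
\[
\Delta_\omega u_\varepsilon(x)=\int u(y)\,\omega^{\bar j i}(y)\,\partial_i\partial_{\bar j}\rho_\varepsilon(x-y)\,dy+\int u(y)\bigl(\omega^{\bar j i}(x)-\omega^{\bar j i}(y)\bigr)\partial_i\partial_{\bar j}\rho_\varepsilon(x-y)\,dy.
\]
The first term is nonnegative by the distributional hypothesis tested against the positive function $\rho_\varepsilon(x-\cdot)$. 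The second term is the Friedrichs commutator error: the smooth factor $\omega^{\bar j i}(x)-\omega^{\bar j i}(y)$ vanishes to first order at $y=x$, and integrating by parts twice transfers both derivatives onto this factor, yielding an integrand uniformly bounded in $\varepsilon$ and therefore an $o(1)$ error in $L^1_{\mathrm{loc}}$. Adding a strict quadratic perturbation $\eta|z|^2$ (with $\Delta_\omega|z|^2>0$) then produces, for $\varepsilon$ small, smooth functions with $\Delta_\omega(u_\varepsilon+\eta|z|^2)\geq 0$. Given $U\Subset\Omega$ and $h\in C^0(\overline U)$ with $\Delta_\omega h=0$ and $h\geq u$ on $\partial U$, applying the classical maximum principle to $u_\varepsilon+\eta|z|^2-h$ and then letting $\varepsilon\to 0$ and $\eta\to 0$, using strong USC to recover pointwise inequalities from almost-everywhere ones, yields $u\leq h$ on $U$.

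The main obstacle is the Friedrichs commutator estimate in the converse direction: a naive bound $|\omega^{\bar j i}(x)-\omega^{\bar j i}(y)|\leq C\varepsilon$ combined with $\|\partial^2\rho_\varepsilon\|_\infty\sim\varepsilon^{-(2n+2)}$ produces an unusable $\varepsilon^{-1}$ divergence; one must extract the cancellation from the first-order vanishing at $y=x$ via integration by parts. This is the standard mechanism by which distributional subsolutions of elliptic operators with smooth coefficients, after passage to the USC representative, coincide with classical subsolutions obeying the maximum principle, and it is precisely what makes the equivalence in the lemma an instance of a general linear elliptic fact.
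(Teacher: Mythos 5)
The paper does not prove this lemma at all: its ``proof'' is the single line ``See \cite{HL13} or \cite[section 9]{GN18}'', and the subsequent remark explicitly says the proof of this equivalence ``uses a deep result of linear elliptic operators in \cite{Lit63} and the existence of Poisson kernels for general elliptic operators.'' Your write-up is therefore a genuinely different, self-contained attempt, and it is worth comparing carefully.

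Your forward direction is essentially correct, and, tellingly, it already invokes exactly the tools the paper points to: the local smooth decreasing approximation of \cite[Corollary 9.8]{GN18}, and a positive Poisson-type kernel $K_r(x,\cdot)$ to get the sub-mean-value inequality that upgrades ordinary USC to strong USC. That is the right mechanism.

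The converse direction, however, has a real gap. First, your identification of the ``first term'' with the distributional hypothesis is off: $dd^c u\wedge\omega^{n-1}$ tested against $\rho_\varepsilon(x-\cdot)$ equals $\int u\,dd^c\bigl(\rho_\varepsilon(x-\cdot)\,\omega^{n-1}\bigr)$, and since $\omega^{n-1}$ is not closed this produces, besides the principal term $\int u\,\omega^{\bar ji}\partial_i\partial_{\bar j}\rho_\varepsilon(x-\cdot)\,\omega^n$, additional terms in which one or both derivatives hit $\omega^{n-1}$; those are exactly of the dangerous $\varepsilon^{-1}$ size you flag for the commutator and are not addressed. Second, the sentence ``integrating by parts twice transfers both derivatives onto this factor'' is not correct: integration by parts in $y$ necessarily places derivatives on the whole product $u(y)\bigl(\omega^{\bar ji}(x)-\omega^{\bar ji}(y)\bigr)$, hence on $u$, which is only $L^1_{\mathrm{loc}}$. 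You cannot selectively push derivatives past $u$ onto the smooth factor, and the first-order Taylor expansion $\omega^{\bar ji}(x)-\omega^{\bar ji}(y)=-\nabla\omega^{\bar ji}(x)\cdot(y-x)+O(|y-x|^2)$ only reduces the commutator to something of $L^1$-kernel size $O(1)$ (after the $\varepsilon$-gain cancels one $\varepsilon^{-1}$), not $o(1)$; the remaining $O(1)$ contribution is a (distributional) first-order term in $u$ and is not absorbed by the strict $\eta|z|^2$ perturbation unless you already know $u$ has more regularity than $L^1_{\mathrm{loc}}$. This is precisely the point at which the literature falls back on the harder potential-theoretic machinery: in Littman's approach one rewrites the operator in divergence form, uses the Perron method with the associated Green/Poisson kernel, and shows that distributional subsolutions in $L^1_{\mathrm{loc}}$ admit a unique strongly-USC representative satisfying the comparison principle. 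Your mollification strategy is the right heuristic, but as written it does not close, and I do not see a cheap fix within the scope of the sketch; you would either need to import the Friedrichs/DiVincenzo second-order commutator lemma with the attendant a priori $W^{1,p}_{\mathrm{loc}}$ regularity for distributional subsolutions of elliptic equations, or, more in the spirit of the paper, simply invoke \cite{Lit63} and \cite[Section 9]{GN18} as the paper does.
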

\begin{proof}
    See \cite{HL13} or \cite[section 9]{GN18}.
\end{proof}
\begin{definition}\label{def: def of m-sh}
    A function $u\in L_{loc}^1(\Omega,\omega^n)$ is called $(\omega,m)$-subharmonic if it satisfies (i) and one of (ii), (iii), (iv):
    \begin{enumerate}
   \item $dd^cu$ is an $(\omega,m)$-positive current, i.e., for arbitrary m-positive (1,1)-forms $\alpha_1,...\alpha_{m-1}$ on $\Omega$, the following inequality holds in the weak sense of currents:
    $$
    dd^{c}u\wedge\alpha_1\wedge...\wedge\alpha_{m-1}\wedge\omega^{n-m}\geq 0.
    $$
   \item    $u$ is strongly upper semi-continuous, $$i.e. \forall x\in\Omega, u(x)=\underset{\Omega\ni y\rightarrow x}{\operatorname{ess}\limsup}\,u(y):=\underset{r\searrow0}{\lim}\operatorname{ess}\underset{B_r(x)}{\sup}u .$$
   \item $u$ is $\omega$-subharmonic.
   \item Assume $v\in L_{loc}^1(X,\Omega)$, upper semi-continuous and satisfies (i). If u and v coincide almost everywhere, then $u\leq v$.
\end{enumerate} 
\end{definition}

The proof of the equivalence in \cref{equivalence of omega sh} and \cref{def: def of m-sh} uses a deep result of linear elliptic operators in \cite{Lit63} and the existence of Poisson kernels for general elliptic operators. There are also several equivalent definitions of $(\omega,m)$-subharmonic functions in the classical sense or the viscosity sense, we refer the reader to \cite[section 9]{GN18} or more generally, \cite{HL13} for the treatment of the equivalence.

\begin{remark}\label{a.e. everywhere}
    Conditions (2), (3), and (4) in \cref{def: def of m-sh} are just used to ensure that when two $(\omega,m)$-subharmonic functions are equal almost everywhere, then they equal everywhere.
\end{remark}
\subsection{Definition of Hessian measures}
In this subsection, let $\chi$ be a smooth real $(1,1)$ form on $\Omega$, we will define the Hessian measure $H_{\chi,m}(u):=(\chi+dd^cu)^m\wedge\omega^{n-m}$ for a bounded $(\chi,\omega,m)$-subharmonic function $u$. First, we recall the definition of Hessian measures in the local setting as described in \cite{KN25a}:

\begin{definition}\label{def: Hessian measures in the local setting}
    Let $u_1,...,u_m$ be $(\omega,m)$-subharmonic functions on a bounded domain $\Omega$ in $\mathbb{C}^n$ and $\omega$ be a Hermitian metric in $\Omega$. By \cite[Theorem 3.3]{KN25a}, we can define inductively 
    $$
dd^cu_{p+1}\wedge...\wedge dd^cu_1:=dd^c(u_{p
+1}dd^cu_p\wedge...\wedge dd^cu_1)
    $$
    as closed real currents of order $0$. Then
    $$
H_p(u_1,...,u_p):=dd^cu_{p}\wedge...\wedge dd^cu_1\wedge\omega^{n-m},\quad 1\leq p\leq m
    $$
    is a well-defined  positive current (positive Radon measure when $p=m$) on $\Omega$ (when there is no confusing with the reference hermitian metric $\omega$, we will omit the subscript in $H_{p,\omega}$ for convenience of notations). When $u_1=...=u_m=u$, we write
    $$
H_p(u):=(dd^cu)^p\wedge\omega^{n-m},\quad1\leq p\leq m.
    $$
    \end{definition}

\begin{definition}
    Let $(X,\omega)$ be a compact Hermitian manifold and let $\chi$ be a smooth $(1,1)$-form. A function $u$ on $X$ is called $(\chi,\omega,m)$-subharmonic, denoted $u\in \operatorname{SH}_m(X,\chi,\omega)$, if it can be locally written as a sum of a smooth function and an $\omega$-subharmonic function (see \cref{omega-sh}), and globally for any $(m-1)$-tuple of forms $\alpha_1,...,\alpha_{m-1}$ lie in $\Gamma_m(\omega)$,
    $$
(\chi+dd^cu)\wedge\alpha_1\wedge...\wedge\alpha_{m-1}\wedge\omega^{n-m}\geq0\quad on\;X.
    $$
\end{definition}

From the definition, it follows that if $\Omega$ is any coordinate chart on $X$ and $\phi$ is a strictly plurisubharmonic function on $\Omega$ such that 
$$
dd^c\phi\geq\chi,
$$
on $\Omega$, then $u+\phi$ is $(\omega,m)$-subharmonic on $\Omega$. This leads to the following equivalent characterization of $(\chi,\omega,m)$-subharmonic functions:
\begin{lemma}\label{susc eq def}
   A function $u\in L^1(X,\omega^n)$ is $(\chi,\omega,m)$-subharmonic if and only if it is strongly upper semi-continuous and globally for any $(m-1)$-tuple of forms $\alpha_1,...,\alpha_{m-1}$ lie in $\Gamma_m(\omega)$,    
$$
(\chi+dd^cu)\wedge\alpha_1\wedge...\wedge\alpha_{m-1}\wedge\omega^{n-m}\geq0\quad on\;X.    
$$
\end{lemma}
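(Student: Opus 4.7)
The plan is to show that the local decomposition ``smooth plus $\omega$-subharmonic'' appearing in the definition of $\operatorname{SH}_m(X,\chi,\omega)$ can be traded freely for strong upper semi-continuity once the global positivity condition is in place. The essential input will be Lemma \ref{equivalence of omega sh}, which characterizes $\omega$-subharmonicity as the conjunction of being an $L^1_{\mathrm{loc}}$ strongly upper semi-continuous function and satisfying the distributional inequality $dd^c(\cdot)\wedge\omega^{n-1}\geq 0$. I will then exploit the hypothesized global positivity with the single test tuple $\alpha_1=\cdots=\alpha_{m-1}=\omega\in\Gamma_m(\omega)$.

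For the forward direction, I would write $u=\psi+v$ locally with $\psi\in C^\infty$ and $v$ $\omega$-subharmonic. Since $v$ is strongly upper semi-continuous by Lemma \ref{equivalence of omega sh} and $\psi$ is continuous, $u$ is strongly upper semi-continuous; the global $(\omega,m)$-positivity against tuples in $\Gamma_m(\omega)$ is already built into the assumed definition and therefore holds automatically.

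For the reverse direction, I would fix a coordinate ball $\Omega\Subset X$ and choose a smooth function $\phi\in C^\infty(\Omega)$ with $dd^c\phi\geq\chi$ on $\Omega$ (for instance, $\phi=A|z|^2$ with $A$ sufficiently large). Then $u+\phi\in L^1_{\mathrm{loc}}(\Omega)$ is again strongly upper semi-continuous (the property is preserved under addition of a continuous function), and specializing the assumed global positivity to $\alpha_1=\cdots=\alpha_{m-1}=\omega$ yields $(\chi+dd^c u)\wedge\omega^{n-1}\geq 0$ as a current on $X$, hence
$$
dd^c(u+\phi)\wedge\omega^{n-1}=(\chi+dd^c u)\wedge\omega^{n-1}+(dd^c\phi-\chi)\wedge\omega^{n-1}\geq 0
$$
distributionally on $\Omega$. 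Applying Lemma \ref{equivalence of omega sh} to $u+\phi$ shows that it is $\omega$-subharmonic on $\Omega$, and the decomposition $u=-\phi+(u+\phi)$ then exhibits the required local ``smooth plus $\omega$-subharmonic'' structure.

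The main technical content is absorbed into Lemma \ref{equivalence of omega sh}, which rests on the regularity theory for the elliptic operator $\Delta_\omega$ and the existence of Poisson-type kernels; this is precisely what makes \emph{strong} upper semi-continuity (as opposed to mere upper semi-continuity) the correct companion to the distributional inequality. Once this equivalence is granted, the remaining steps are essentially bookkeeping, and I do not anticipate any further substantive obstacle.
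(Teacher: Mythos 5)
Your proof is correct and takes essentially the same approach as the paper: both directions hinge on the local decomposition $u=(u+\phi)-\phi$ with $dd^c\phi\geq\chi$ and on \cref{equivalence of omega sh} (or equivalently \cref{def: def of m-sh}). Your only deviation is in the reverse direction, where you test only against the single tuple $\alpha_1=\cdots=\alpha_{m-1}=\omega$ to get the distributional inequality $dd^c(u+\phi)\wedge\omega^{n-1}\geq 0$ directly, rather than first establishing that $u+\phi$ is $(\omega,m)$-subharmonic on $\Omega$ and then passing to $\omega$-subharmonicity; this is a harmless minor shortcut.
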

\begin{proof}
    The only if part follows directly from \cref{equivalence of omega sh}. To prove the if part, fix a local coordinate chart $\Omega$ and a smooth strictly plurisubharmonic function $\phi$ on $\Omega$ such that $dd^c\phi\geq\chi$. Then $\phi+u$ is $(\omega,m)$-subharmonic on $\Omega$ and hence is $\omega$-subharmonic by \cref{def: def of m-sh}. It follows that $u=(u+\phi)-\phi$ is locally a sum of a smooth function and an $\omega$-subharmonic function, the proof is concluded.
\end{proof}

We move on to define the global Hessian measures with respect to a bounded potential:

\begin{definition}\label{global Hessian measures}
    Let $u$ be a bounded (assume its existence) $(\chi,\omega,m)$-subharmonic function on $X$, pick a coordinate chart $\Omega$ and a strictly psh function $\phi$ as above, we define on $\Omega$:
    \begin{align*}
(\chi+dd^cu)^m\wedge\omega^{n-m}&:=[dd^c(u+\phi)+(\chi-dd^c\phi)]^m\wedge\omega^{n-m}\\
&=\sum_{k=0}^{m}C_m^k[dd^c(u+\phi)]^k\wedge(\chi-dd^c\phi)^{m-k}\wedge\omega^{n-m}\\
&=\sum_{k=0}^{m}C_m^kH_k(u+\phi)\wedge(\chi-dd^c\phi)^{m-k}.
    \end{align*}
   It is clear that this definition is independent of the choice of $\phi$. Using a partition of unity, we can glue these local measures to a global positive Radon on $X$. To see that it is positive, it suffices to check locally. Picking a local regularization of $u$ \cite[Corollary 1.2]{GN18} and using the convergence theorem \cite[Lemma 5.1]{KN25a} we get the positivity.
\end{definition}

\begin{remark}
        When $\omega$ is K\"{a}hler and $\chi$ is closed, we may define the non-pluripolar product, as in that of \cite{LWZ24} and \cite{BEGZ10}. In this case, the full mass class $\mathcal{E}_m(X,\chi,\omega)$ can be defined and we can probably apply the variational approach to study the solvability of the Hessian equations as in \cite{DDL25} and \cite{BBGZ13}.
\end{remark}

\subsection{Properties of \texorpdfstring{$(\chi,\omega,m)$}{(omega, m)}-subharmonic functions}
In this subsection, let $\chi$ be a smooth real $(1,1)$-form admitting a Hessian potential $\rho\in \operatorname{SH}_m(X,\chi,\omega)$.
We collect some basic properties of $(\chi,\omega,m)$-subharmonic functions that will be useful later; most of them can be found in \cite{KN16} and \cite{KN25a}.

\begin{proposition}\label{usc regularization}
  Let $\chi$ be a smooth real $(1,1)$-form and let $\{u_\alpha\}_{\alpha\in I}\subset\operatorname{SH}_m(X,\chi,\omega)$ be a family of $(\chi,\omega,m)$-subharmonic functions which is uniformly bounded from above. Set $u(z):=\sup_\alpha u_\alpha(z)$. Then the upper semi-continuous regularization $u^*$ belongs to $\operatorname{SH}_m(X,\chi,\omega)$ again.
\end{proposition}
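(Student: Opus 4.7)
The statement is a compact/Hermitian, $(\chi,\omega,m)$-version of the classical fact that the usc regularization of the upper envelope of a bounded family of plurisubharmonic functions is plurisubharmonic. The plan is to localize, reduce to $(\omega,m)$-subharmonic functions in a domain, and then combine Choquet's lemma with the monotone convergence behavior of $(\omega,m)$-positive currents to verify the conditions of \cref{susc eq def}.

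First, since the problem is local, I fix an arbitrary coordinate chart $\Omega \subset X$ together with a smooth strictly plurisubharmonic function $\phi$ on $\Omega$ with $dd^c\phi \ge \chi$. Setting $\tilde u_\alpha := u_\alpha + \phi$, each $\tilde u_\alpha$ belongs to $\operatorname{SH}_m(\Omega,\omega)$ by \cref{susc eq def}. The family $\{\tilde u_\alpha\}$ is still uniformly bounded from above, $\tilde u := \sup_\alpha \tilde u_\alpha = u + \phi$, and $\tilde u^* = u^* + \phi$ since $\phi$ is continuous. Hence it suffices to prove the statement on $\Omega$ for bounded-above families of $(\omega,m)$-subharmonic functions, which reduces the argument to the setting already developed in \cite{KN25a} and \cite[\S 9]{GN18}.

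Next, I invoke Choquet's lemma to extract a countable subfamily $\{\tilde u_k\}_{k\in\mathbb{N}}$ from $\{\tilde u_\alpha\}$ with $(\sup_k \tilde u_k)^* = \tilde u^*$. Set
\[
\tilde v_N := \max(\tilde u_1,\ldots,\tilde u_N).
\]
Maxima of $(\omega,m)$-subharmonic functions are $(\omega,m)$-subharmonic: this follows from \cref{susc eq def} and the fact that $\max$ of upper semi-continuous functions is upper semi-continuous, together with the (local) fact that $dd^c \max(u_1,u_2)$ is again an $(\omega,m)$-positive current (use a standard regularization $\max_\varepsilon$ and pass to the limit via \cite[Lemma 5.1]{KN25a}). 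Hence $\tilde v_N \in \operatorname{SH}_m(\Omega,\omega)$ and $\tilde v_N \nearrow \tilde v := \sup_k \tilde u_k$ almost everywhere, with $\tilde v^* = \tilde u^*$. In particular $\tilde u^*$ is locally bounded and belongs to $L^1_{\mathrm{loc}}$.

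It remains to verify the two conditions of \cref{susc eq def} for $\tilde u^*$ on $\Omega$. The upper semi-continuity of $\tilde u^*$ is by construction; strong upper semi-continuity then follows from $\tilde u^* = \tilde v$ almost everywhere and the usc property of $\tilde u^*$, since for any $x \in \Omega$
\[
\tilde u^*(x) \;\ge\; \operatorname{ess\,\limsup}_{y\to x}\tilde u^*(y) \;=\; \operatorname{ess\,\limsup}_{y\to x} \tilde v(y) \;\ge\; \tilde u^*(x),
\]
the last inequality because $\tilde v_N \nearrow \tilde v$ and each $\tilde v_N$ is $(\omega,m)$-subharmonic (hence strongly usc by \cref{def: def of m-sh}). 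For the positivity condition, fix arbitrary $(\omega,m)$-positive smooth $(1,1)$-forms $\alpha_1,\ldots,\alpha_{m-1}$ on $\Omega$. For each $N$,
\[
dd^c \tilde v_N \wedge \alpha_1 \wedge \cdots \wedge \alpha_{m-1} \wedge \omega^{n-m} \;\ge\; 0
\]
in the sense of currents. The monotone convergence theorem for distributions gives that the same inequality holds for $\tilde v = \lim \tilde v_N$. Since $\tilde v = \tilde u^*$ almost everywhere, the corresponding distributional inequality holds for $\tilde u^*$ as well. Transferring back via $u^* = \tilde u^* - \phi$ yields
\[
(\chi + dd^c u^*) \wedge \alpha_1 \wedge \cdots \wedge \alpha_{m-1} \wedge \omega^{n-m} \;\ge\; 0
\]
on $\Omega$, and since $\Omega$ was arbitrary, globally on $X$. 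By \cref{susc eq def}, $u^* \in \operatorname{SH}_m(X,\chi,\omega)$.

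The main (and only genuinely nontrivial) obstacle is the monotone stability of $(\omega,m)$-positivity of currents under increasing limits of bounded $(\omega,m)$-sh functions — this is where the recently established framework of \cite{KN25a} is essential; everything else is a localization and Choquet-type argument.
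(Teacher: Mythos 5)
Your proof is correct and follows essentially the same route as the paper's: invoke Choquet's lemma, pass to an increasing sequence via partial maxima, and verify the two conditions (strong upper semicontinuity and $(\omega,m)$-positivity of the current) from \cref{susc eq def} by taking monotone limits. You actually spell out two points the paper leaves implicit — the explicit localization via $u_\alpha + \phi$, and the passage from Choquet's countable subfamily to a genuinely increasing sequence via $\tilde v_N := \max(\tilde u_1,\dots,\tilde u_N)$ — so your write-up is a fleshed-out version of the same argument rather than a different one.
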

\begin{proof}
It is well known that the upper semi-continuous regularization $u^*$ is the smallest usc function lies above $u$ and hence it is easy to check that $u^*$ is strongly usc: $\forall x\in X$,
$$ 
u^*(x)=\underset{\Omega\ni y\rightarrow x}{\operatorname{ess}\limsup}\,u^*(y):=\underset{r\searrow0}{\lim}\operatorname{ess}\underset{B_r(x)}{\sup}u^*.       
$$
By the Choquet's lemma $u^*=(\lim_ju_j)^*$ for an increasing family $u_j$. It is then clear that
$$
(\chi+dd^cu^*)\wedge\alpha_1\wedge...\wedge\alpha_{m-1}\wedge\omega^{n-m}\geq0\quad on\;X.    
$$
for any $(m-1)$-tuple of forms $\alpha_1,...,\alpha_{m-1}$ lie in $\Gamma_m(\omega)$. The conclusion then follows from \cref{susc eq def}.
\end{proof}
\begin{proposition}\label{L^1 compactness}
     There is a uniform constant $C=C(X,\chi,\omega)$,  such that for any $\varphi\in \operatorname{SH}_m(X,\chi,\omega)$, $\sup_X\varphi=0$, we have    
$$
\int_X|\varphi|\omega^n\leq C.    
$$
In particular, the family $\{\varphi\in \operatorname{SH}_m(X,\chi,\omega),\;\sup_X\varphi=0\}$ is relatively compact with respect to the $L^1(\omega^n)$-topology in $\operatorname{SH}_m(X,\chi,\omega)$.
\end{proposition}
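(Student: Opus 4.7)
The plan is to argue by contradiction using a Hartogs-type dichotomy for $\omega$-subharmonic functions on balls in $\mathbb{C}^n$, together with a propagation argument across a finite cover of $X$. First I would cover $X$ by finitely many coordinate balls $B_1,\ldots,B_N$ and fix on each a smooth strictly plurisubharmonic function $\phi_i$ with $dd^c\phi_i\geq\chi$ on $B_i$. For any $\varphi\in\operatorname{SH}_m(X,\chi,\omega)$, the function $\varphi+\phi_i$ is then $(\omega,m)$-subharmonic on $B_i$, and in particular $\omega$-subharmonic there by \cref{def: def of m-sh}(3). This local reduction is what lets me tap into the linear elliptic potential theory for $\Delta_\omega$ already recalled in \cref{omega-sh}--\cref{equivalence of omega sh}.

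Next, suppose for contradiction there is a sequence $\varphi_j\in\operatorname{SH}_m(X,\chi,\omega)$ with $\sup_X\varphi_j=0$ but $\int_X(-\varphi_j)\,\omega^n\to\infty$. By pigeonhole, some ball $B_{i_0}$ satisfies $\int_{B_{i_0}}(-\varphi_j)\,\omega^n\to\infty$, so the sequence $\varphi_j+\phi_{i_0}$ is $\omega$-subharmonic on $B_{i_0}$, uniformly bounded above by $\max\phi_{i_0}$, yet unbounded in $L^1(B_{i_0})$. Here I invoke the classical Hartogs-type alternative for $\omega$-subharmonic functions (which follows from elliptic potential theory for the operator $\Delta_\omega$ as developed in \cite{GN18}): any sequence of $\omega$-subharmonic functions on $B_{i_0}$ that is locally uniformly bounded above either admits an $L^1_{\mathrm{loc}}$-convergent subsequence, or has a subsequence tending to $-\infty$ uniformly on compact subsets. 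Since the $L^1$ branch is excluded, after extraction $\varphi_j\to-\infty$ uniformly on compacts of $B_{i_0}$. By the connectedness of $X$ and the finiteness of the cover, this divergence propagates along a chain of overlapping balls: whenever $\varphi_j\to-\infty$ uniformly on a compact set meeting some $B_k$, the integrals $\int_{B_k}(-\varphi_j)\,\omega^n$ blow up, and the same Hartogs alternative applied on $B_k$ forces uniform divergence there as well. After finitely many extractions, $\varphi_j\to-\infty$ uniformly on all of $X$. But each $\varphi_j$ is upper semicontinuous on the compact space $X$, so $\sup_X\varphi_j=0$ is attained at some point, contradicting uniform divergence; this yields the desired uniform $L^1$-bound.

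The relative compactness statement then follows from the same dichotomy: with the uniform $L^1(X,\omega^n)$-bound in hand, on every chart the divergent branch is ruled out, so a diagonal extraction over the finite cover produces an $L^1_{\mathrm{loc}}$-convergent subsequence; gluing the pieces yields an $L^1(X,\omega^n)$-limit, whose upper semicontinuous regularization is again $(\chi,\omega,m)$-subharmonic by \cref{usc regularization}. The only genuine obstacle is the Hartogs dichotomy in the Hermitian (non-Kähler) setting: it is not the classical plurisubharmonic statement, but it reduces to weak Harnack and sub-mean value inequalities for the uniformly elliptic operator $\Delta_\omega$ on Euclidean balls, which are recorded in \cite{GN18} and can be cited as a black box.
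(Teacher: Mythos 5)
Your argument is correct in substance but follows a genuinely different route from the paper. The paper's proof is a two-line reduction: it observes that one may assume $\chi\leq\omega$ without loss of generality (rescaling $\omega$ leaves $\operatorname{SH}_m(X,\chi,\omega)$ and the statement unaffected up to a harmless constant), so that $\operatorname{SH}_m(X,\chi,\omega)\subset\operatorname{SH}_m(X,\omega,\omega)$ because $\overline{\Gamma_m(\omega)}$ is a convex cone containing all nonnegative $(1,1)$-forms; the desired $L^1$-bound then follows directly from the corresponding statement for $(\omega,m)$-subharmonic functions, \cite[Lemma 3.3]{KN16}. You instead re-derive the compactness from scratch via the Hartogs-type dichotomy for $\omega$-subharmonic functions and a chain-of-balls propagation over a finite cover of $X$; this is essentially the argument that underlies the cited lemma in \cite{KN16}, so the two routes are equivalent in spirit, with the paper's version being shorter because it inherits the hard step by citation. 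Your version is more self-contained and makes the contradiction mechanism explicit, which has pedagogical value.

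Two small cautions. First, for the dichotomy to apply cleanly you should set up the standard double cover $B_i\Subset B_i'$ and take $\phi_i$ smooth and strictly psh on a neighborhood of $\overline{B_i'}$; otherwise ``$\int_{B_{i_0}}(-\varphi_j)\,\omega^n\to\infty$ with $L^1_{\mathrm{loc}}$-branch excluded'' does not by itself force uniform divergence on all of the open ball $B_{i_0}$, only on its compact subsets. Second, the Hartogs alternative for $\omega$-subharmonic functions is the real load-bearing ingredient, and it is not a literal theorem statement in \cite{GN18}; it needs to be assembled from the sub-mean-value inequality (or weak Harnack for the linear uniformly elliptic operator $\Delta_\omega$ in non-divergence form with smooth coefficients), together with the local decreasing approximation of $\omega$-subharmonic functions recorded in \cite[Section 9]{GN18}. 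This is routine elliptic theory, but in a careful write-up it should be spelled out or at least pinned to a precise weak Harnack reference rather than invoked wholesale as a black box; the paper avoids this entirely by its reduction to the $\chi=\omega$ case.
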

\begin{proof}
    We can assume without loss of generality that $\chi\leq\omega$ and hence $\operatorname{SH}_m(X,\chi,\omega)\subset \operatorname{SH}_m(X,\omega,\omega)$ and the result follows immediately from \cite[Lemma 3.3]{KN16}.
\end{proof}
We shall also need the following integrability property of $(\chi,\omega,m)$-subharmonic functions:
\begin{proposition}\label{integrability}
    Let $\chi$ be a smooth real $(1,1)$-form on $X$, then for each $1<p<\frac{n}{n-m}$ we have $\operatorname{SH}_m(X,\chi,\omega)\subset L^p(X,\omega^n)$.
\end{proposition}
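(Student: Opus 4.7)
The plan is to reduce the statement to the known local $L^p$ integrability of $(\omega,m)$-subharmonic functions on domains in $\mathbb{C}^n$ and then patch using a finite cover of $X$. Since $X$ is compact, I would first fix a finite cover $\{B_j\}_{j=1}^N$ by coordinate balls, each chosen sufficiently small so that on a slightly larger concentric ball $B_j'$ there is a smooth strictly plurisubharmonic function $\phi_j$ with $dd^c\phi_j\geq\chi$. For $u\in\operatorname{SH}_m(X,\chi,\omega)$, the function $u_j:=u+\phi_j$ then belongs to $\operatorname{SH}_m(B_j',\omega)$; since each $\phi_j$ is uniformly bounded on $\overline{B_j}$, it suffices to bound $\|u_j\|_{L^p(B_j)}$.

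Next I would invoke the local $L^p$ integrability theorem for $(\omega,m)$-subharmonic functions on bounded domains $\Omega\subset\mathbb{C}^n$: every $v\in\operatorname{SH}_m(\Omega,\omega)$ belongs to $L^p_{loc}(\Omega)$ for every $p<\frac{n}{n-m}$, with an estimate of the form $\|v\|_{L^p(K)}\leq C(K,K',\omega)\,\|v\|_{L^1(K')}$ for $K\Subset K'\Subset\Omega$. In the flat case $\omega=\beta_0$ this is classical and follows from B{\l}ocki's analysis of $m$-subharmonic functions, via the fundamental solution of the homogeneous $m$-Hessian operator, whose singularity at the origin is of order $|z|^{-2(n-m)/m}$; this is precisely what dictates the sharp exponent $\frac{n}{n-m}$. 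The Hermitian version reduces to the flat one by shrinking $B_j'$ so that the coefficients of $\omega$ are arbitrarily close to constant, and noting that on such a small ball one may perturb to a constant-coefficient metric $\omega_0$ with $\operatorname{SH}_m(B_j',\omega)\subset\operatorname{SH}_m(B_j',\omega_0)$ after adding a smooth function (this is the same device used in \cite{KN16, KN25a}).

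Finally I would combine the local bound with Proposition \ref{L^1 compactness}. Writing $u=(u-\sup_X u)+\sup_X u$, we may normalize $\sup_X u=0$, since adding a constant preserves both sides; then $\|u\|_{L^1(X,\omega^n)}\leq C$. Restricting to $B_j'$ and adding the bounded smooth function $\phi_j$ gives $\|u_j\|_{L^1(B_j')}\leq C+\|\phi_j\|_{L^\infty(\overline{B_j'})}$, and the local $L^p$ estimate applied with $K=B_j$, $K'=B_j'$ produces $\|u_j\|_{L^p(B_j)}\leq C_j$, hence $\|u\|_{L^p(B_j)}\leq C_j'$. Summing over the finite cover yields $u\in L^p(X,\omega^n)$ with a bound depending only on $X,\chi,\omega,p$ (and $\sup_X u$).

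The main obstacle is the local $L^p$ step with the sharp exponent $\frac{n}{n-m}$ in the Hermitian setting. Once one accepts the flat Euclidean case from B{\l}ocki's theory, the transfer to a general Hermitian $\omega$ is a matter of a careful perturbation on a small ball, but this is the only genuinely nontrivial ingredient; the global patching and normalization are routine once the local estimate is in hand.
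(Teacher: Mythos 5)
Your first step---localizing on a finite cover and replacing $\chi$ by $0$ via a smooth potential $\phi_j$ with $dd^c\phi_j\geq\chi$ on $B_j'$, so that $u+\phi_j\in\operatorname{SH}_m(B_j',\omega)$---is sound, and this really is the device from \cite{KN16,KN25a}. The final patching through Proposition~\ref{L^1 compactness} is also fine once the local $L^p$ estimate is in hand. The paper itself takes a much shorter route: after normalizing $\chi\leq\omega$ (harmless), $\operatorname{SH}_m(X,\chi,\omega)\subset\operatorname{SH}_m(X,\omega,\omega)$ and the result is quoted from \cite{Fang25}, which proves it via a global volume--capacity argument---combine $V_\omega(E)\leq C_\tau\,\operatorname{\widetilde{Cap}}_{\omega,m}(E)^\tau$ for $\tau<\frac n{n-m}$ with $\operatorname{\widetilde{Cap}}_{\omega,m}(\{u<-t\})\leq C/t$ to get $V_\omega(\{u<-t\})\lesssim t^{-\tau}$, hence $u\in L^p$ for $p<\tau$ (this machinery appears explicitly in the paper's proof of Proposition~\ref{integrablity 2}).

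The genuine gap is your middle step: the claimed inclusion $\operatorname{SH}_m(B_j',\omega)\subset\operatorname{SH}_m(B_j',\omega_0)+C^\infty$ for a nearby constant-coefficient $\omega_0$, which you attribute to the same references. That is \emph{not} what those papers do---what you correctly cite from them is the $\chi$-to-$0$ reduction you already used---and the inclusion is false in general. Already for $m=1$: take $u$ with $\Delta_\omega u=\delta_0$ near the origin, so $u\sim-c|z|^{2-2n}$ and $|D^2u|\sim|z|^{-2n}$. Arranging coordinates so $\omega(0)=\omega_0$, the error $(\Delta_{\omega_0}-\Delta_\omega)u$ has size $\sim|z|\cdot|z|^{-2n}=|z|^{1-2n}$ near $0$ and can be made negative there; no smooth $h$ with bounded $\Delta_{\omega_0}h$ can absorb an error that blows up like $|z|^{1-2n}$. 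Structurally, the cones $\overline{\Gamma}_m(\omega)$ and $\overline{\Gamma}_m(\omega_0)$ differ by an amount proportional to $|dd^cu|$, which is not controlled for a general $(\omega,m)$-sh function, so no fixed smooth correction bridges the gap, no matter how small the ball.

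A secondary point: the radial $m$-harmonic function $-|z|^{-2(n-m)/m}$ is in $L^p_{loc}(\mathbb{C}^n)$ for every $p<\frac{nm}{n-m}$, which strictly exceeds $\frac n{n-m}$ once $m\geq 2$. So the singularity of the fundamental solution does not ``dictate'' the exponent $\frac n{n-m}$; that exponent in B{\l}ocki's flat result arises from a mean-value/capacitary estimate (and its sharpness is a separate question). To carry out a local argument in the Hermitian setting you would need to prove the local volume--capacity estimate directly for the background $\omega$---essentially the adaptation of \cite[Prop.~2.1]{DK14} sketched in the remark after Proposition~\ref{volume-cap estimate for beta}---rather than transfer the $m$-subharmonicity cone to a flat metric.
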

\begin{proof}
    We may assume without loss of generality that $\chi\leq\omega$, this implies that $\operatorname{SH}_m(X,\chi,\omega)\subset \operatorname{SH}_m(X,\omega,\omega)$. The result follows immediately from the main theorem of \cite{Fang25}.
\end{proof}
The following proposition was proved using the Poisson-Jensen formula for plurisubharmonic functions (see \cite[Theorem 1.48]{GZ17}) and similarly for the local $m$-subharmonic functions, see \cite[Proposition 6.1]{Błocki05}. In our general case, it is more convenient to use the volume-capacity estimate:
\begin{proposition}\label{integrablity 2}
    Let $u_j$ be a sequence of functions in $\operatorname{SH}_m(X,\chi,\omega)$ converging in $L^1(\omega^n)$ to $u\in \operatorname{SH}_m(X,\chi,\omega)$. Then $u_j\rightarrow u$ in $L^p(\omega^n)$ for all $1< p<\frac{n}{n-m}$.
\end{proposition}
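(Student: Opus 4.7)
The plan is to interpolate: starting from the given $L^1$ convergence, a uniform bound in $L^q$ for some exponent $q$ strictly between $p$ and $\tfrac{n}{n-m}$ automatically upgrades $L^1$ convergence to $L^p$ convergence. Concretely, fix such a $q$. The standard Lyapunov interpolation inequality reads
$$
\|u_j-u\|_{L^p(\omega^n)} \leq \|u_j-u\|_{L^1(\omega^n)}^{\alpha}\,\|u_j-u\|_{L^q(\omega^n)}^{1-\alpha}, \qquad \alpha=\tfrac{q-p}{p(q-1)}\in(0,1).
$$
The argument therefore reduces to establishing $\sup_j\|u_j\|_{L^q(\omega^n)}<\infty$, since Fatou then yields $\|u\|_{L^q}<\infty$, the factor $\|u_j-u\|_{L^q}$ stays uniformly bounded, and the $L^1$-factor tends to zero by hypothesis.

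For the uniform $L^q$-bound, I would first note that $L^1$-convergence gives $\sup_j\|u_j\|_{L^1(\omega^n)}<\infty$. Working in a finite cover of $X$ by coordinate balls, I pick smooth strictly psh $\phi$ with $dd^c\phi\geq\chi$ (as in the proof of \cref{susc eq def}), so that each $u_j+\phi$ is $(\omega,m)$-subharmonic and in particular $\omega$-subharmonic. The sub-mean inequality for $\omega$-subharmonic functions on balls then converts the uniform $L^1$ bound into a uniform upper bound $\sup_X u_j \leq M$. After translating $u_j \mapsto u_j - M$, we are reduced to functions inside the relatively $L^1$-compact family of \cref{L^1 compactness}.

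The hard part will be promoting the pointwise $L^q$-membership in \cref{integrability} to a uniform $L^q$-bound on this normalized family. The quantitative estimate behind \cite{Fang25} in fact provides such a uniform bound on any $L^1$-bounded, bounded-above subfamily of $\operatorname{SH}_m(X,\omega,\omega)$; equivalently, one may invoke the Ko\l odziej--Nguyen volume-capacity inequality for $m$-subharmonic functions to control $\int_X|u_j|^q\omega^n$ through the uniform distribution of sublevel sets once the normalization $\sup_X u_j\leq 0$ is imposed. A cleaner contradiction argument is also available: if $\|u_j\|_{L^q}\to\infty$, then $v_j:=u_j/\|u_j\|_{L^q}$ has $L^1$-norm tending to zero and $\|v_j\|_{L^q}=1$; after subtracting its sup one reaches a member of the compact family of \cref{L^1 compactness} whose $L^1$-limit is identically $0$, contradicting the uniform $L^q$ lower bound of unity. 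Once this uniform $L^q$-bound is secured, the interpolation inequality in the first paragraph concludes the proof.
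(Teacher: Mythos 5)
Your overall strategy is sound, and it is genuinely different from the paper's in the second step. Both you and the paper first establish a uniform bound $\sup_j\|u_j\|_{L^q(\omega^n)}<\infty$ for some $q\in(p,\tfrac{n}{n-m})$, and both ultimately obtain this from the volume–capacity estimate and the decay of $\widetilde{\operatorname{Cap}}_{\omega,m}$ on sublevel sets in \cite{Fang25}. Where you diverge is how the uniform bound is converted into $L^p$-convergence: the paper truncates ($u_j^m:=\max(u_j,-m)$), applies the triangle inequality, and controls the tail term $\|u_j^m-u_j\|_{L^p}$ uniformly in $j$ via Markov's inequality and the uniform $L^{p+\delta}$-bound, following \cite[Theorem 1.48]{GZ17}; you instead invoke the Lyapunov interpolation inequality $\|f\|_{L^p}\le\|f\|_{L^1}^{\alpha}\|f\|_{L^q}^{1-\alpha}$ with $\alpha=\tfrac{q-p}{p(q-1)}$, which is cleaner and shorter once the uniform $L^q$-bound is in hand. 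Your preliminary reduction (uniform $L^1$-bound, local psh shift $\phi$, sub-mean inequality to get $\sup_X u_j\le M$) is a reasonable and more elementary substitute for the paper's appeal to the compactness statement in \cref{L^1 compactness}, though one should note that \cref{L^1 compactness} already yields the uniform sup bound directly. You could also skip Fatou: once $u\in\operatorname{SH}_m(X,\chi,\omega)$, \cref{integrability} gives $u\in L^q$ outright.

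One caution: the ``cleaner contradiction argument'' you sketch does not work as stated. If $\|u_j\|_{L^q}\to\infty$ and you set $v_j:=u_j/\|u_j\|_{L^q}$, then $\chi+dd^c v_j=(\chi+dd^c u_j)/\|u_j\|_{L^q}+(1-1/\|u_j\|_{L^q})\chi$, which is $(\omega,m)$-positive only if $\chi$ itself is $(\omega,m)$-semi-positive. In the setting of this proposition $\chi$ is merely a smooth real $(1,1)$-form admitting a bounded potential; no positivity of $\chi$ is assumed, so $v_j$ need not lie in $\operatorname{SH}_m(X,\chi,\omega)$, and the compactness and integrability results you would then invoke do not apply to the rescaled family. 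Since this was offered as an alternative and your main route (volume–capacity) is the right one, this does not undermine the proof, but that particular shortcut should be dropped or restricted to the case of $(\omega,m)$-semi-positive $\chi$.
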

\begin{proof}
  Fix a constant $1<p<\frac{n}{n-m}$. We first claim that the sequence is uniformly bounded in $L^p(\omega^n)$. The claim is essentially contained in \cite{Fang25}. Since $\{u_j\}_j$ lies in a compact family of $\operatorname{SH}_m(X,\chi,\omega)$, $\sup_Xu_j$ is uniformly bounded, we may thus assume without loss of generality that $\sup_Xu_j=-1$.

  Using the volume-capacity estimate (\cite[Proposition 3.1]{Fang25}) and the estimate of the capacity of sublevel sets of $u_j$ (\cite[Corollary 3.3]{Fang25}) we can then estimate
  \begin{align*}
      \int_X(-u_j)^p\omega^n&=\int_X\omega^n+\int_1^{+\infty}pt^{p-1}V_{\omega}(u_j<-t)dt\\
      &\leq\int_X\omega^n+C_\tau\int_1^{+\infty}pt^{p-1}\operatorname{\widetilde{Cap}}_{\omega,m}(u_j<-t)^\tau dt\\
      &\leq\int_X\omega^n+C_\tau\int_1^{+\infty}pt^{p-1}(\frac{C}{t})^{\tau}=\int_X\omega^n+pC_\tau C^\tau\int_1^{+\infty}t^{p-\tau-1}<+\infty.
  \end{align*}
Where $\operatorname{\widetilde{Cap}}_{\omega,m}$ is defined as in \cite[Definition 2.12]{Fang25} and $p<\tau<\frac{n}{n-m}$ is a fixed constant. Since $C_\tau,C$ are both uniform constants by the above two theorems in \cite{Fang25}, we have finished the proof of the claim. 

We now show that $u_j\rightarrow u$ in $L^p(\omega^n)$. The proof can be easily adapted from \cite[Theorem 1.48]{GZ17}. If the sequence $\{u_j\}_j$ is uniformly bounded, the conclusion follows immediately from the dominated convergence theorem. In general, we assume without loss of generality that $\sup_Xu_j=0$ and set for each $m\geq1$ and $j\in\mathbb{N}$
$$
u_j^m:=\max(u_j,-m),\quad u^m:=\max(u,-m).
$$
Then the triangle inequality yields 
$$
\|u_j-u\|_{L^p}\leq\|u_j^m-u_j\|_{L^p}+\|u_j^m-u^m\|_{L^p}+\|u^m-u\|_{L^p}.
$$
By the previous discussion, the second term converges to $0$ as $j\rightarrow\infty$ and the third term converges to $0$ by the monotone convergence theorem. It remains to show that the first term converges uniformly to $0$ in $j$ as $m\rightarrow+\infty$. Fix a small constant $\delta>0$ such that $p+\delta<\frac{n}{n-m}$. It follows from the claim that the sequence $\{u_j\}_j$ is uniformly bounded in $L^{p+\delta}(\omega^n)$. Using the Markov's inequality we can write
\begin{align*}
    \int_X|u_j^m-u_j|^p\omega^n&=2\int_{\{u_j\leq-m\}}|u_j|^p\omega^n\\
    &=2\int_{\{(-u_j)^\delta\geq m^\delta\}}|u_j|^p\omega^n\\
    &\leq\frac{2}{m^\delta}\int_X|u_j|^{p+\delta}\omega^n,
    \end{align*}
    the last term $\rightarrow0$ uniformly in $j$ as $m\rightarrow+\infty$ and hence the proof is concluded.
\end{proof}

\begin{remark}
In the above proof we have used the estimates for the capacity $\widetilde{Cap}$ of sublevel sets established in \cite{Fang25}. Very recently, Ko\l odziej-Nguyen \cite[Theorem 1.1 ]{KN25b} established a sharp estimate of the decay of the general Hessian capacity, which could be used in the proof of the integrability.
\end{remark}

\begin{lemma}\label{compactness 2}
    If $\mu$ is a probability measure such that $\operatorname{SH}_m(X,\chi,\omega)\subset L^1(\mu)$, then
    $$
F_\mu:=\{u\in \operatorname{SH}_m(X,\chi,\omega) \;|\;\int_Xud\mu=0\} 
    $$
    is relatively compact in $\operatorname{SH}_m(X,\chi,\omega)$. In particular there exists $C_\mu$ such that for any $u\in \operatorname{SH}_m(X,\chi,\omega)$,
    $$
-C_\mu+\sup_Xu\leq\int_Xud\mu\leq \sup_Xu.
    $$
\end{lemma}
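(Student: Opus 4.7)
The upper bound $\int u\,d\mu \leq \sup_X u$ is immediate since $\mu$ is a probability measure, and the relative compactness of $F_\mu$ will follow from the two-sided inequality together with \cref{L^1 compactness}. The plan is therefore to focus on the nontrivial lower bound $\int u\,d\mu \geq \sup_X u - C_\mu$. After subtracting $\sup_X u$, this reduces to showing that the normalized family $\mathcal{G} := \{u \in \operatorname{SH}_m(X,\chi,\omega) : \sup_X u = 0\}$ satisfies $\inf_{u \in \mathcal{G}} \int u\,d\mu > -\infty$.

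I would argue by contradiction. Suppose there is a sequence $u_j \in \mathcal{G}$ with $\int u_j\,d\mu \to -\infty$. By \cref{L^1 compactness}, after passing to a subsequence, $u_j \to u$ in $L^1(X,\omega^n)$ for some $u \in \operatorname{SH}_m(X,\chi,\omega)$ (membership uses that $(\limsup_j u_j)^*$ is again $(\chi,\omega,m)$-subharmonic by \cref{usc regularization}, and coincides with $u$ almost everywhere). The hypothesis $\operatorname{SH}_m(X,\chi,\omega)\subset L^1(\mu)$ then gives $\int u\,d\mu > -\infty$.

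The key technical input is a Hartogs-type estimate: whenever $u_j\to u$ in $L^1(X,\omega^n)$ with $u_j,u\in \operatorname{SH}_m(X,\chi,\omega)$, one has $\limsup_j u_j \leq u$ pointwise on $X$. I would prove this locally: fix a coordinate chart together with a smooth $\phi$ satisfying $dd^c\phi\geq\chi$, so that $u_j+\phi$ becomes $(\omega,m)$-subharmonic, hence $\omega$-subharmonic (cf.\ \cref{susc eq def}); the classical Hartogs lemma for subharmonic functions then applies. Given this and $u_j\leq 0$, reverse Fatou's lemma yields
$$\limsup_{j\to\infty}\int u_j\,d\mu \;\leq\; \int \limsup_j u_j\,d\mu \;\leq\; \int u\,d\mu \;<\; +\infty,$$
contradicting $\int u_j\,d\mu\to-\infty$ and producing the desired constant $C_\mu$.

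Finally, for $u\in F_\mu$, the equality $\int u\,d\mu = 0$ together with $u\leq\sup_X u$ forces $\sup_X u \geq 0$, while the quantitative lower bound just established forces $\sup_X u \leq C_\mu$. Hence $\{u-\sup_X u : u\in F_\mu\}\subset \mathcal{G}$, and \cref{L^1 compactness} yields an $L^1(\omega^n)$-convergent subsequence; since the shifts $\sup_X u$ lie in the compact interval $[0,C_\mu]$, $F_\mu$ itself is relatively compact in $\operatorname{SH}_m(X,\chi,\omega)$. The main obstacle I foresee is making the Hartogs-type pointwise bound fully rigorous in the $(\chi,\omega,m)$-subharmonic setting; the reduction to the classical subharmonic case via a local potential with $dd^c\phi\geq\chi$ is the natural bridge, and the strong upper semi-continuity built into the definition of $\operatorname{SH}_m(X,\chi,\omega)$ removes any ambiguity in the passage to the pointwise envelope.
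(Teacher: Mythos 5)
There is a genuine gap in the contradiction step. After assuming $\int u_j\,d\mu\to-\infty$ with $\sup_X u_j=0$, you invoke the reverse Fatou lemma (with majorant $0$) to write
$$\limsup_{j\to\infty}\int u_j\,d\mu \;\leq\; \int \limsup_j u_j\,d\mu \;\leq\; \int u\,d\mu \;<\;+\infty.$$
But this inequality does not contradict $\int u_j\,d\mu\to-\infty$: when $\int u_j\,d\mu\to-\infty$, the left-hand side is $-\infty$, and $-\infty\leq\int u\,d\mu$ is vacuously true. Reverse Fatou only controls $\limsup\int u_j\,d\mu$ \emph{from above}, whereas ruling out $\int u_j\,d\mu\to-\infty$ requires a bound from \emph{below}. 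The version of Fatou that would help, $\liminf_j\int u_j\,d\mu\geq\int\liminf_j u_j\,d\mu$, requires a uniform integrable \emph{lower} bound on the $u_j$, which is exactly what is not available. So the argument as written does not close.

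The paper instead follows \cite[Proposition 5.8]{GZ17}, which avoids this issue by a convex-combination trick: given $\sup_X u_j=0$ and $\int u_j\,d\mu\to-\infty$, pass to a subsequence so that $\int u_j\,d\mu\leq -2^j$ and form $v:=\sum_{j\geq1}2^{-j}u_j$ (in the present setting one adds a harmless $2^{-N}\rho$ correction to the partial sums, $\rho$ being the bounded Hessian potential, so that each partial sum stays $(\chi,\omega,m)$-subharmonic). The partial sums decrease to $v\in\operatorname{SH}_m(X,\chi,\omega)$, which is not identically $-\infty$ by the uniform $L^1(\omega^n)$-bound of \cref{L^1 compactness}; yet monotone convergence gives $\int v\,d\mu=\sum_j 2^{-j}\int u_j\,d\mu=-\infty$, contradicting the hypothesis $\operatorname{SH}_m(X,\chi,\omega)\subset L^1(\mu)$. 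Note that this argument produces a \emph{single} function violating $\mu$-integrability, rather than trying to pass a limit inequality through the sequence, which is exactly what sidesteps the one-sided nature of Fatou. The Hartogs-type estimate you invoke is valid (and is essentially the paper's \cref{Hartogs' lemma}), but it is not the right tool here. Once the lower bound is established, the remainder of your write-up (upper bound, translation to $F_\mu$, passing to a convergent subsequence) is fine.
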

\begin{proof}
    The proof follows from \cite[Proposition 5.8]{GZ17} word by word.
\end{proof}
\begin{proposition}\label{Hartogs' lemma}
    Assume $\varphi_j,\varphi$ are $(\chi,\omega,m)$-subharmonic functions such that $\varphi_j\rightarrow\varphi$ in $L^1(\omega^n)$ and almost everywhere. Then we have $\psi_j:=(\underset{k\geq j}{\sup}\,\varphi_k)^*$ decreases to $\varphi$ pointwise.
\end{proposition}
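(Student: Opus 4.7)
The plan is to run the classical Hartogs argument in the $(\chi,\omega,m)$-setting. First, from the $L^1(\omega^n)$ convergence $\varphi_j\to\varphi$ and \cref{compactness 2} (applied for instance to $\mu=\omega^n/\mathrm{Vol}(X)$), I would extract a uniform upper bound $\sup_X\varphi_j\le M<\infty$. Consequently, for each fixed $j$ the countable family $\{\varphi_k\}_{k\ge j}\subset \operatorname{SH}_m(X,\chi,\omega)$ is uniformly bounded from above, and so \cref{usc regularization} applies: $\psi_j:=(\sup_{k\ge j}\varphi_k)^*\in \operatorname{SH}_m(X,\chi,\omega)$. Writing $u_j:=\sup_{k\ge j}\varphi_k$, we have $\psi_j=u_j^*\ge u_j$ pointwise, and the sequence $\psi_j$ is clearly decreasing in $j$. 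Let $\psi:=\lim_j\psi_j$.

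For the lower inequality, for every $k\ge j$ we have $\psi_j\ge u_j\ge\varphi_k$ everywhere. Fixing $x$ in the full-measure set on which $\varphi_k(x)\to\varphi(x)$, we pass $k\to\infty$ to obtain $\psi_j(x)\ge\varphi(x)$, and then let $j\to\infty$, giving $\psi\ge\varphi$ almost everywhere. In particular $\psi\not\equiv-\infty$, so as a decreasing limit of $(\chi,\omega,m)$-subharmonic functions it is itself $(\chi,\omega,m)$-subharmonic (the $(\omega,m)$-positivity condition passes to decreasing limits by weak convergence of currents, and upper semicontinuity is preserved by decreasing limits of usc functions).

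For the upper inequality, I would compare integrals. Because $u_j$ decreases to $\varphi$ a.e.\ and is uniformly dominated by $M$, the dominated convergence theorem gives $\int_X u_j\,\omega^n\to\int_X\varphi\,\omega^n$. By Choquet's lemma $u_j=\psi_j$ outside a pluripolar (hence Lebesgue-negligible) set, so $\int_X\psi_j\,\omega^n=\int_X u_j\,\omega^n$, and these integrals converge to $\int_X\varphi\,\omega^n$. On the other hand, since $\psi_j$ is decreasing, uniformly bounded above by $M$, and bounded below a.e.\ by the $L^1$-function $\varphi$, monotone convergence applied to $M-\psi_j\ge 0$ yields $\int_X\psi_j\,\omega^n\to\int_X\psi\,\omega^n$. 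Comparing, $\int_X\psi\,\omega^n=\int_X\varphi\,\omega^n$, which together with $\psi\ge\varphi$ a.e.\ forces $\psi=\varphi$ almost everywhere. Since $\psi$ and $\varphi$ are both $(\chi,\omega,m)$-subharmonic, the uniqueness principle of \cref{def: def of m-sh}(iv) (cf.\ \cref{a.e. everywhere}) upgrades this to pointwise equality, so $\psi_j\searrow\varphi$ pointwise on $X$.

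The only delicate ingredient is the identification $\psi_j=u_j$ outside a Lebesgue-negligible set, which is the standard Choquet-type statement for the usc regularization of a countable supremum of quasi-subharmonic functions; the rest of the argument is essentially bookkeeping with monotone/dominated convergence. The uniform upper bound coming from \cref{compactness 2} is what allows \cref{usc regularization} to apply and makes the measure-theoretic arguments legitimate; without $L^1$ control on the $\varphi_j$ one could not exclude the pathological case $\psi\equiv-\infty$ or the failure of integrability of $u_j$.
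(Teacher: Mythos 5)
Your proof is correct and follows essentially the same Hartogs-type argument as the paper: uniform upper bound from $L^1$ convergence, $\psi_j=(\sup_{k\ge j}\varphi_k)^*$ differs from $\sup_{k\ge j}\varphi_k$ only on a Lebesgue-null ($m$-polar, not necessarily pluripolar when $m<n$) set, hence $\psi=\varphi$ a.e., then invoke the a.e.-to-everywhere principle. The integral-comparison step is an unnecessary detour; the paper concludes $\psi=\varphi$ a.e.\ directly from $\psi_j=\sup_{k\ge j}\varphi_k$ a.e.\ and $\sup_{k\ge j}\varphi_k\searrow\varphi$ a.e.
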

\begin{proof}
Set $\psi:=\lim_j\psi_j$. Note that by our assumption $\varphi_j$ is a compact family and hence they are uniformly bounded from above by \cref{compactness 2}. Therefore, it follows from \cref{usc regularization} that all the $\psi_j$ are $(\chi,\omega,m)$-subharmonic functions. Since $\varphi_j$ converges to $\varphi$ almost everywhere, it follows that $\lim_j\underset{k\geq j}{\sup}\,\varphi_k=\varphi$ almost everywhere. Since $\{\psi_j>\underset{k\geq j}{\sup}\,\varphi_k\}$ has lebesgue measure zero, we deduce that $\varphi=\psi$ almost everywhere and hence everywhere by our definition (see \cref{a.e. everywhere}) .
\end{proof}

\begin{proposition}\cite[Corollary 5.2]{KN25a}\label{max principle}
    Let $u,v$ be bounded $(\omega,m)$-subharmonic functions on a bounded domain $\Omega$ in $\mathbb{C}^n$ and $T:=dd^cv_1\wedge...\wedge dd^cv_{m-p}\wedge\omega^{n-m}$ for bounded $(\omega,m)$-sh functions $v_1,...,v_{i}$, where $1\leq p\leq m$ and  $1\leq i\leq p$. Then
    $$
\mathds{1}_{\{u<v\}}(dd^c\max(u,v))^i\wedge T=\mathds{1}_{\{u<v\}}(dd^cv)^i\wedge T.
    $$
    as currents of order $0$.
    Consequently,
    $$
    (dd^c\max(u,v))^i\wedge T\geq \mathds{1}_{\{u\geq v\}}(dd^cu)^i\wedge T+\mathds{1}_{\{u<v\}}(dd^cv)^i\wedge T.
    $$
\end{proposition}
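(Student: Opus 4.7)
The identity is local in $\Omega$, so one may work inside a coordinate ball $B \Subset \Omega$. On $B$, choose decreasing smooth approximations $u_j \searrow u$, $v_j \searrow v$, and $v_\ell^{(j)} \searrow v_\ell$ (for $\ell=1,\ldots,m-p$) by smooth strictly $(\omega,m)$-subharmonic functions, obtained by standard convolution in the ambient $\mathbb{C}^n$. Set
$$
T_j := dd^c v_1^{(j)} \wedge \cdots \wedge dd^c v_{m-p}^{(j)} \wedge \omega^{n-m}.
$$
By the weak continuity of the Hessian operator along decreasing sequences of bounded $(\omega,m)$-subharmonic functions (\cite[Lemma 5.1]{KN25a}), the currents $(dd^c u_j)^i \wedge T_j$, $(dd^c v_j)^i \wedge T_j$ and $(dd^c \max(u_j, v_j))^i \wedge T_j$ converge weakly on $B$ to the corresponding non-smooth limits. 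For any fixed $\delta>0$, the set $U_j^\delta := \{u_j + \delta < v_j\}$ is open in $B$ by continuity of $u_j, v_j$, and on this open set $\max(u_j, v_j) = v_j$ pointwise; as smooth forms this gives
$$
\mathds{1}_{U_j^\delta}(dd^c \max(u_j, v_j))^i \wedge T_j = \mathds{1}_{U_j^\delta}(dd^c v_j)^i \wedge T_j.
$$

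The core of the proof is to pass this identity to the limit $j\to\infty$ and then $\delta\searrow 0$. The monotonicity $u_j \searrow u$ and $v_j \searrow v$ yields the set-theoretic sandwich
$$
\{u + \delta < v\} \;\subseteq\; \liminf_{j\to\infty} U_j^\delta \;\subseteq\; \{u + \delta \leq v\}.
$$
Since each of the positive Radon measures in play is $\sigma$-finite, the family of level sets $\{v - u = \delta\}$ is charged for at most countably many values of $\delta$; I would therefore choose $\delta$ in a co-countable subset of $(0,\infty)$ so that $\{u + \delta = v\}$ has zero mass for both $(dd^c v)^i \wedge T$ and $(dd^c \max(u,v))^i \wedge T$. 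A Portmanteau-type argument (Fatou applied to $\liminf \mathds{1}_{U_j^\delta}$ combined with total-mass control on $B$) then produces, for such admissible $\delta$,
$$
\mathds{1}_{\{u + \delta < v\}}(dd^c \max(u, v))^i \wedge T = \mathds{1}_{\{u + \delta < v\}}(dd^c v)^i \wedge T.
$$
Letting $\delta\searrow 0$ along this admissible set, $\mathds{1}_{\{u+\delta<v\}} \nearrow \mathds{1}_{\{u<v\}}$, and monotone convergence on both sides yields the asserted identity.

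The consequence then follows by combining the identity with its symmetric version (swapping the roles of $u$ and $v$),
$$
\mathds{1}_{\{u > v\}}(dd^c \max(u, v))^i \wedge T = \mathds{1}_{\{u > v\}}(dd^c u)^i \wedge T.
$$
Summing the two and discarding the nonnegative contribution of $(dd^c \max(u,v))^i \wedge T$ on $\{u=v\}$ gives the inequality with $\mathds{1}_{\{u > v\}}$ on the first summand. To upgrade $\{u > v\}$ to $\{u \geq v\}$, I would apply the main identity to the perturbed pair $(u, v - \eta)$ with $\eta>0$ and let $\eta \searrow 0$: the sets $\{u > v - \eta\}$ increase to $\{u \geq v\}$, $(dd^c u)^i \wedge T$ is unchanged, and $\max(u, v - \eta) \nearrow \max(u,v)$, so weak continuity of the Hessian operator along increasing bounded sequences (via the same Portmanteau argument) closes the estimate.

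The main obstacle throughout is the joint limit in $j$ of the indicator $\mathds{1}_{U_j^\delta}$ and the weakly convergent Hessian currents, as $\mathds{1}_{U_j^\delta}$ is neither upper nor lower semicontinuous. The resolution combines the generic choice of the shift $\delta$ (exploiting $\sigma$-finiteness of the limiting measures to avoid the at most countably many charged level sets $\{v - u = \delta\}$) with a Portmanteau-type interpretation of weak convergence against indicators of open sets; the auxiliary perturbation $v \mapsto v - \eta$ handles the contact set $\{u=v\}$ in the final inequality.
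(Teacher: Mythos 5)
The route you take differs from the one the paper intends: the paper points to the Bedford--Taylor proof in \cite[Theorem 3.27]{GZ17}, which keeps $u$ fixed and approximates only $v$ from above by \emph{continuous} $(\omega,m)$-subharmonic functions $v^k\searrow v$, uses locality of the Hessian operator on the genuinely open sets $\{u<v^k\}$, and passes to the limit via the quasi-continuity machinery; you instead approximate both $u$ and $v$ by smooth decreasing sequences and introduce a shift $\delta$. The starting identity $\mathds{1}_{U_j^\delta}(dd^c\max(u_j,v_j))^i\wedge T_j=\mathds{1}_{U_j^\delta}(dd^cv_j)^i\wedge T_j$ is fine, as is the set-theoretic sandwich $\{u+\delta<v\}\subseteq\liminf_j U_j^\delta\subseteq\{u+\delta\leq v\}$.

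The gap is the limit passage you label ``Portmanteau-type argument.'' Weak convergence of $\mu_j\to\mu$, $\nu_j\to\nu$, together with a pointwise sandwich $\mathds{1}_{\{u+\delta<v\}}\le\liminf_j\mathds{1}_{U_j^\delta}\le\mathds{1}_{\{u+\delta\le v\}}$ and the generic choice $\mu(\{u+\delta=v\})=\nu(\{u+\delta=v\})=0$, does \emph{not} give $\mathds{1}_{\{u+\delta<v\}}\mu_j\to\mathds{1}_{\{u+\delta<v\}}\mu$ nor that $\mathds{1}_{U_j^\delta}\mu_j\to\mathds{1}_{\{u+\delta<v\}}\mu$. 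The sets $U_j^\delta$ move with $j$, so mass of $\mu_j$ can concentrate near the moving boundary $\partial U_j^\delta$ and escape to a point that is in (or out of) the limit set; Fatou applied to $\liminf\mathds{1}_{U_j^\delta}$ gives a one-sided inequality against a \emph{fixed} measure, not the two-sided identity against a \emph{weakly converging sequence} of measures. In short, $\mathds{1}_{U_j^\delta}$ is neither upper nor lower semi-continuous, and indicators of varying open sets are not admissible test functions for weak convergence; total-mass control does not repair this because the escaping mass can perfectly well account for the total mass in the limit. A counterexample to the abstract claim is easy to cook up (e.g.\ $\mu_j=\delta_{1/j}\to\delta_0$, $\nu_j=0$, $U_j=(-1,1/(2j))$; the hypotheses hold on $U_j$ but the restricted limits differ wherever $0$ sits in the target set).

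What is actually needed, and what the reference to \cite[Lemma 5.1]{KN25a} supplies, is convergence of the Hessian measures along decreasing sequences tested against bounded \emph{quasi-continuous} functions, not merely continuous ones. One then replaces $\mathds{1}_{\{u<v\}}$ (which is not quasi-continuous) by the quasi-continuous cut-offs $\chi_\epsilon:=\min\bigl(1,\max(0,(v-u)/\epsilon)\bigr)$: these vanish off $\{u<v\}$, are quasi-continuous because $v-u$ is, and increase to $\mathds{1}_{\{u<v\}}$ as $\epsilon\searrow 0$. Approximating only $v$ by continuous $(\omega,m)$-sh $v^k\searrow v$, the locality on the open set $\{u<v^k\}\supset\{u<v\}$ gives $\chi_\epsilon(dd^c\max(u,v^k))^i\wedge T=\chi_\epsilon(dd^cv^k)^i\wedge T$; the $k\to\infty$ limit passes by the quasi-continuous convergence theorem, and then $\epsilon\searrow 0$ by monotone convergence. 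Your derivation of the final inequality from the main identity (perturbing $v$ to $v-\eta$ and letting $\eta\searrow 0$) is fine once the identity is in hand, since there the test object $(dd^cu)^i\wedge T$ is fixed and the sets $\{u>v-\eta\}$ increase, so ordinary monotone convergence suffices; the difficulty is entirely in establishing the identity itself.
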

\begin{proof}
    With \cite[Lemma 5.1]{KN25a}, one can carry out the proof of \cite[Theorem 3.27]{GZ17} to get the above result.
\end{proof}

\begin{proposition}\label{prop: max principle for beta}
    
    Let $u,v$ be bounded $(\chi,\omega,m)$-subharmonic functions on  $X$ (we assume the existence of such bounded potentials) and $T:=(\chi+dd^cv_1)\wedge...\wedge(\chi+ dd^cv_{m-p} )\wedge\omega^{n-m}$ for some bounded $(\chi, \omega,m)$-subharmonic functions $v_1,...,v_{m-p}$, where $1\leq p\leq m$. Then
    $$
\mathds{1}_{\{u<v\}}(\chi+dd^c\max(u,v))^p\wedge T=\mathds{1}_{\{u<v\}}(\chi+ dd^cv)^p\wedge T.
    $$
    Consequently,
    $$
    (\chi+dd^c\max(u,v))^p\wedge T\geq \mathds{1}_{\{u\geq v\}}(\chi+dd^cu)^p\wedge T+\mathds{1}_{\{u<v\}}(\chi+dd^cv)^p\wedge T.
    $$
\end{proposition}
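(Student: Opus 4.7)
The plan is to reduce the claim to the local Bedford--Taylor max principle (\cref{max principle}) via localization and a psh background subtraction. Since both sides of the asserted identity are Radon measures, it suffices to work on an arbitrary coordinate ball $\Omega \subset X$. I would choose a smooth strictly psh function $\phi$ on a neighborhood of $\overline{\Omega}$ satisfying $dd^c\phi \geq \chi$ (e.g., $\phi(z) = A|z|^2$ for $A$ large enough); then $\tilde w := w + \phi$ is bounded $(\omega,m)$-subharmonic on $\Omega$ for each of $w \in \{u, v, v_1, \ldots, v_{m-p}\}$, and one has $\max(u,v) + \phi = \max(\tilde u, \tilde v)$ together with $\{u<v\} = \{\tilde u < \tilde v\}$. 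Writing $\chi + dd^c w = dd^c\tilde w - \theta$ with $\theta := dd^c\phi - \chi \geq 0$ a smooth semipositive $(1,1)$-form re-expresses the twisted Hessian in terms of standard $(\omega,m)$-sh potentials corrected by a smooth positive form.

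Next, I would expand both $(\chi + dd^c\max(u,v))^p \wedge T$ and $(\chi + dd^cv)^p \wedge T$ via the multinomial theorem; each side becomes a finite linear combination---with identical numerical coefficients---of currents of the shape
\[
(dd^c\tilde w)^k \wedge dd^c\tilde v_{i_1} \wedge \cdots \wedge dd^c\tilde v_{i_s} \wedge \theta^{m-s-k} \wedge \omega^{n-m},
\]
with $\tilde w = \max(\tilde u, \tilde v)$ on the left and $\tilde w = \tilde v$ on the right. For each matched pair, the local max principle \cref{max principle} yields the $\mathds{1}_{\{u<v\}}$-equality; the smooth positive cofactor $\theta^{m-s-k}$ can be absorbed into the $\omega^{n-m}$ background without issue, since the regularization-based proof of \cref{max principle} using \cite[Lemma~5.1]{KN25a} goes through verbatim whenever $\omega^{n-m}$ is replaced by any smooth positive $(1,1)$-form of the correct bidegree. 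Summing over all expanded terms produces the identity on $\Omega$, and patching via \cref{global Hessian measures} yields the global equality on $X$.

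Finally, the consequent inequality follows from the established identity in the same way as in the local case: combining it with the symmetric identity obtained by swapping $u$ and $v$ (valid for the same reason) takes care of the sets $\{u<v\}$ and $\{u>v\}$, and on the diagonal set $\{u=v\}$ one applies the identity to $u$ and $v+\epsilon$ and sends $\epsilon \searrow 0$, invoking the monotone continuity of Hessian measures (\cite[Lemma~5.1]{KN25a}) to pass to the limit. The main obstacle worth flagging is that the smooth correction $\theta$ appearing through the binomial expansion is not itself $(\omega,m)$-positive, so a naive term-by-term application of \cref{max principle} with alternating signs is unavailable; however, since $\theta \geq 0$ pointwise as a Hermitian form, it serves merely as an auxiliary smooth positive-form cofactor in each matched pair, and this routine extension of \cref{max principle} is all that the argument requires.
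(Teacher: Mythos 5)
Your proposal matches the paper's own argument: localize to a coordinate ball, subtract a smooth strictly psh potential $\phi$ with $dd^c\phi\geq\chi$, expand multinomially in terms of $dd^c(\cdot+\phi)$ and the smooth semi-positive correction $\theta=dd^c\phi-\chi$ (the paper keeps the negated form $\chi-dd^c\phi$), apply the local maximum principle \cref{max principle} term by term, and reassemble; the paper invokes the same ``general version'' of \cref{max principle} with smooth form cofactors that you flag and justify. Your added remarks on the consequent inequality and on the sign/positivity issue of $\theta$ are fine supplements but do not change the underlying route.
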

\begin{proof}
The result is local, so we work on a ball $U$. Choose a smooth potential $\phi$ such that $\beta\leq dd^c \phi$, we have
\begin{align*}
   & \mathds{1}_{\{u<v\}\cap U} (\chi+dd^c \max(u,v))^p \wedge T\\
    =&\mathds{1}_{\{u<v\}\cap U} \left[  dd^c \phi +dd^c \max(u,v) + (\chi-dd^c \phi) \right]^p \wedge T\\
   =&  \mathds{1}_{\{u<v\}\cap U} \sum_{i=1}^p (dd^c \max(\phi+u,\phi+v))^i \wedge (\chi-dd^c \phi)^{p-i} \wedge T\\
 =&    \mathds{1}_{\{u<v\}\cap U} \sum_{i=1}^p (dd^c (\phi+v))^i \wedge (\chi-dd^c \phi)^{p-i} \wedge T\\
 =&    \mathds{1}_{\{u<v\}\cap U} (\chi+dd^c v)^p \wedge T,
 \end{align*}
where the first, second, fourth equality is just definition of Hessian measure; the third follows  from a general version of \cref{max principle}.
 \end{proof}
The proof of the following corollary is quite straightforward:
\begin{corollary}\label{cor of max principle}
    Let 
$u,v$ be bounded $(\chi,\omega,m)$-subharmonic functions on $X$ (assume existence again) with $u\geq v$. Then,

$$
\mathds{1}_{\{u=v\}}H_{\chi,m}(u)\geq\mathds{1}_{\{u=v\}}H_{\chi,m}(v).
$$
\end{corollary}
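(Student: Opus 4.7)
The plan is to apply the max principle \cref{prop: max principle for beta} to the pair $(u, v+\epsilon)$ for a small positive shift $\epsilon>0$ and then let $\epsilon\to 0^+$, using the continuity of the Hessian operator along decreasing sequences. A direct application to $(u,v)$ is vacuous because $u\geq v$ makes $\{u<v\}$ empty; the shift is precisely what makes the contact set fall inside the ``strict'' side of the max principle.

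For $\epsilon>0$ set $u_\epsilon:=\max(u,v+\epsilon)$. Both $v+\epsilon$ and $u_\epsilon$ are bounded $(\chi,\omega,m)$-subharmonic, and since $u\geq v$ the family $u_\epsilon$ decreases pointwise to $u$ as $\epsilon\downarrow 0$. Applying the equality part of \cref{prop: max principle for beta} to $(u,v+\epsilon)$ with $T=\omega^{n-m}$ and $p=m$, and using the translation invariance $H_{\chi,m}(v+\epsilon)=H_{\chi,m}(v)$, gives
$$
\mathds{1}_{\{u<v+\epsilon\}}H_{\chi,m}(u_\epsilon)=\mathds{1}_{\{u<v+\epsilon\}}H_{\chi,m}(v).
$$
Since $u\geq v$ forces $\{u=v\}\subset\{u<v+\epsilon\}$ for every $\epsilon>0$, multiplying by $\mathds{1}_{\{u=v\}}$ yields the key identity
$$
\mathds{1}_{\{u=v\}}H_{\chi,m}(u_\epsilon)=\mathds{1}_{\{u=v\}}H_{\chi,m}(v) \qquad (\star)
$$
valid for every $\epsilon>0$.

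As $\epsilon\downarrow 0$, the continuity of the Hessian operator along decreasing sequences of bounded $(\chi,\omega,m)$-subharmonic functions (via \cite[Lemma 5.1]{KN25a} applied in local charts together with the construction in \cref{global Hessian measures}) gives $H_{\chi,m}(u_\epsilon)\to H_{\chi,m}(u)$ weakly on $X$. To convert the family of identities $(\star)$ into the desired measure inequality I would invoke inner regularity: it suffices to prove $H_{\chi,m}(u)(K)\geq H_{\chi,m}(v)(K)$ for every compact $K\subset\{u=v\}$. From $(\star)$ one has $H_{\chi,m}(u_\epsilon)(K)=H_{\chi,m}(v)(K)$ for every $\epsilon>0$, while the Portmanteau theorem applied to the closed set $K$ gives $\limsup_{\epsilon}H_{\chi,m}(u_\epsilon)(K)\leq H_{\chi,m}(u)(K)$. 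Combining the two yields $H_{\chi,m}(v)(K)\leq H_{\chi,m}(u)(K)$, and inner regularity extends this to every Borel subset of $\{u=v\}$.

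The main, albeit mild, obstacle is this last step: upgrading the $\epsilon$-parametrized identity on $\{u=v\}$ to a measure inequality via a weak limit. Weak convergence alone is not sufficient since $\{u=v\}$ need not be closed for merely upper semicontinuous $u,v$; however, the combination of the upper Portmanteau bound on compact subsets with inner regularity of the finite positive Borel measures $H_{\chi,m}(u)$ and $H_{\chi,m}(v)$ circumvents this cleanly.
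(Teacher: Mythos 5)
Your argument is correct and is in fact the standard one (cf.\ the proof of the analogous statement in the Bedford--Taylor/Guedj--Zeriahi framework, e.g.\ \cite[Corollary 3.28]{GZ17}); since the paper simply asserts the corollary is ``quite straightforward'' without a proof, you are filling in exactly the intended details. The key points you get right are: (1) applying \cref{prop: max principle for beta} to the shifted pair $(u,v+\epsilon)$, which makes $\{u=v\}\subset\{u<v+\epsilon\}$, (2) noting $u_\epsilon:=\max(u,v+\epsilon)\searrow u$ because $u\geq v$, and most importantly (3) recognizing that the weak convergence $H_{\chi,m}(u_\epsilon)\rightharpoonup H_{\chi,m}(u)$ does \emph{not} automatically pass to the restricted measures on the Borel set $\{u=v\}$, so one must go through the upper Portmanteau bound on compact subsets of $\{u=v\}$ and then invoke inner regularity of the two finite Radon measures. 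This is exactly the right way to turn the family of identities $(\star)$ into the one-sided inequality on $\{u=v\}$.
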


\subsection{Extremal functions,  \texorpdfstring{$m$}{m}-polar set and global \texorpdfstring{$(\chi,\omega,m)$}{(chi,omega,m)}-polar set}
In this subsection, $\chi$ be a (possibly non-closed) real $(1,1)$ form with a bounded Hessian potential $\rho\in \operatorname{SH}_m(X,\chi,\omega)$.
\begin{definition}
    For a Borel set $K$ of $X$, the extremal function $V_{K,m}$ of $K$ with respect to $\chi$ is defined as 
    $$V_{K,m}:=\sup \{v\in \operatorname{SH}_{m}(X,\chi,\omega):v\leq \rho \ \text{on}\ K  \}.$$
    Denote by $V_{K,m}^*$ the upper semi-continuous regularization of $V_{K,m}$, which is a $(\chi,\omega,m)$- subharmonic function by \cref{usc regularization}. We say that $K$ is $(\chi, \omega, m)$- polar set if there exists $u\in \operatorname{SH}_{m}(X,\chi.\omega)$, such that $K\subset \{u=-\infty\}$.
\end{definition}
\begin{lemma} \label{lem: characterization m polar}
    $K$ is $(\chi,\omega ,m)-$polar if and only if $\sup_{X} V_{K,m}=+\infty$. 
\end{lemma}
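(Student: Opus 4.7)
For the easy direction ($\Leftarrow$), my plan is to take a witness $u\in\operatorname{SH}_m(X,\chi,\omega)$ of the polarity of $K$, normalize so that $\sup_X u=0$, and observe that for every $M>0$ the translate $u+M$ belongs to $\operatorname{SH}_m(X,\chi,\omega)$ and equals $-\infty$ on $K$, hence lies below the bounded potential $\rho$ on $K$ and is therefore admissible in the defining family of $V_{K,m}$. This immediately gives $\sup_X V_{K,m}\geq M$ for every $M$, so $\sup_X V_{K,m}=+\infty$.

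For the converse, assuming $\sup_X V_{K,m}=+\infty$, I would extract for each $j\geq 1$ a function $v_j\in\operatorname{SH}_m(X,\chi,\omega)$ with $v_j\leq\rho$ on $K$ and $\sup_X v_j\geq 2^j$, and set $w_j:=v_j-\sup_X v_j\leq 0$ with $\sup_X w_j=0$. Since $\rho$ is bounded by some $C_0$, on $K$ one has $w_j\leq C_0-2^j$. By \cref{L^1 compactness}, $\int_X|w_j|\,\omega^n\leq C_1$ uniformly, so the series $u:=\sum_{j=1}^{\infty}2^{-j}w_j$ converges monotonically to an $L^1$-function, which in particular is not identically $-\infty$. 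On $K$, the $j$-th term satisfies $2^{-j}w_j\leq 2^{-j}C_0-1$, hence $u=-\infty$ on $K$.

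The key technical point I would then verify is that $u\in\operatorname{SH}_m(X,\chi,\omega)$. Working locally on a coordinate ball $B\Subset X$, I would choose a smooth potential $\phi$ with $dd^c\phi\geq\chi$, so that each $w_j+\phi$ is $(\omega,m)$-subharmonic on $B$ and bounded above by some constant $C$. Using that the cone of $(\omega,m)$-subharmonic functions is closed under positive linear combinations and that $\sum_{j=1}^{\infty}2^{-j}=1$, the partial sums $\sum_{j=1}^{N}2^{-j}(w_j+\phi-C)$ are $(\omega,m)$-subharmonic and non-positive, and decrease pointwise to $u+\phi-C$, which is not $\equiv-\infty$ thanks to the $L^1$-estimate above. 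The standard decreasing-limit stability of $(\omega,m)$-subharmonic functions then places $u+\phi-C$ in $\operatorname{SH}_m(B,\omega)$, and stripping off the smooth piece $\phi-C$ globalizes to $u\in\operatorname{SH}_m(X,\chi,\omega)$ via the definition of $(\chi,\omega,m)$-subharmonicity.

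The main obstacle is exactly this verification that the countable series defines an honest element of the class rather than a merely measurable function: it depends crucially on the $L^1$-compactness from \cref{L^1 compactness} (to rule out $u\equiv-\infty$), on the convexity of the cone $\Gamma_m(\omega)$ under positive linear combinations, and on the decreasing-limit closure of the class, together with the arithmetic identity $\sum_{j=1}^{\infty}2^{-j}=1$ which exactly balances the $\chi$-pieces in the local decomposition $u=\sum 2^{-j}w_j$. Once this membership is in place, the conclusion $K\subset\{u=-\infty\}$ is automatic from the construction and completes the proof.
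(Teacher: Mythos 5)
Your construction is the same as the paper's: take witnesses $v_j$ with $v_j\leq\rho$ on $K$ and $\sup_X v_j\geq 2^j$, normalize $w_j:=v_j-\sup_X v_j$, set $u:=\sum_j 2^{-j}w_j$, and use \cref{L^1 compactness} to rule out $u\equiv-\infty$; the verification that $u=-\infty$ on $K$ and the easy $(\Leftarrow)$ direction match as well. The genuine gap is in your final ``stripping off'' step. Showing that the partial sums $S_N:=\sum_{j\leq N}2^{-j}(w_j+\phi-C)$ are $(\omega,m)$-subharmonic on a coordinate ball $B$ and decrease to $u+\phi-C$ correctly gives $u+\phi\in\operatorname{SH}_m(B,\omega)$, hence the local representation of $u$ as smooth plus $\omega$-subharmonic. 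But the definition of $\operatorname{SH}_m(X,\chi,\omega)$ (see \cref{susc eq def}) also requires the global $(\omega,m)$-positivity of $\chi+dd^cu$, and that does not follow from $u+\phi\in\operatorname{SH}_m(B,\omega)$ when $dd^c\phi\geq\chi$: writing $\chi+dd^cu=dd^c(u+\phi)-(dd^c\phi-\chi)$ exhibits it as a \emph{difference} of two $(\omega,m)$-positive currents, which has no sign in general. Since $\chi$ is not assumed closed here, one cannot choose $\phi$ with $dd^c\phi=\chi$ exactly to close this loop.

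Note also that your arithmetic $\sum_{j\geq1}2^{-j}=1$ only kicks in in the limit; at each finite stage the coefficients sum to $1-2^{-N}<1$, so the truncation $u_N=\sum_{j\leq N}2^{-j}w_j$ satisfies $(1-2^{-N})\chi+dd^cu_N\geq0$ but not a priori $\chi+dd^cu_N\geq0$. This is exactly what the paper's approximants
\[
u_l:=\frac{\rho}{2^l}+\sum_{k=1}^{l}\frac{w_k}{2^k}
\]
are designed to fix: the coefficients $\tfrac{1}{2^l}+\sum_{k\leq l}\tfrac{1}{2^k}=1$ make each $u_l$ a \emph{convex} combination of elements of $\operatorname{SH}_m(X,\chi,\omega)$, hence itself in the class, and (after normalizing $\rho\geq0$) the sequence $u_l$ decreases to $u$, so the decreasing-limit stability applies directly to $(\chi,\omega,m)$-subharmonic functions. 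If you prefer to stay with your truncations $u_N$, the argument can be salvaged by passing to the weak limit of the currents $(1-2^{-N})\chi+dd^cu_N$ and invoking that the cone of $(\omega,m)$-positive currents is weakly closed, which yields $\chi+dd^cu\geq0$; but as written, the ``strip off $\phi$'' step does not justify the required global positivity.
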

    \begin{proof}
    We may assume that $\rho\geq 0$.
        Suppose that $K\subset u^{-1}(-\infty)$ for some $u\in \operatorname{SH}_{m}(X,\chi,\omega) $. By considering $u+k$ for $k=1,2,\cdots,$ one can see that $\sup_{X} V_{K,m}=+\infty$. Conversely, let $v_{k}\in \operatorname{SH}_{m}(X,\chi,\omega)$  be a sequence such that $N_{k}:= \sup_{X}v_k \geq 2^k$ and $v_k\leq \rho$ on $K$. Since $\sup_{X}(v_k-N_k)=0$, the sequence $\{v_k-N_k\}_{k\in \mathbb{N^*}}$ is  compact in $L^1(X,\omega^n)$ by \cref{L^1 compactness}. Then the function 
        $$u:= \sum_{k=1}^\infty \frac{v_k-N_k}{2^k} $$ as  a decreasing limit of 
        $$u_l:= \frac{\rho}{2^l} + \sum_{k=1}^{l} \frac{v_k-N_k}{2^k}$$
        is $(\beta,\omega,m)$ subharmonic such that $u\neq - \infty$, and $u=-\infty$ on $K$.
    \end{proof}
\begin{definition}{\cite[Definition 7.1]{KN25a}}
    A set $E$ in $\mathbb{C}^n$ is $m-$ polar if for each $z\in E$ there is an open set $z\in U$ and a $(\omega,m)$- subharmonic function $u$ in $U$ such that $E \cap U \subset \{u=-\infty \}$.

    Let $\{u_{\alpha}\}$ be a family of  $(\omega,m)$-subharmonic function in $\Omega\subseteq \mathbb{C}^n$, which is locally  bounded from above. Let $u=\sup_{\alpha} u_{\alpha}$ be the supreme of this family. It follows from \cref{usc regularization} that the function  $u^*:=(\sup_{\alpha} u_{\alpha})^*$  is $(\omega,m)$-subharmonic. A set of the form  $$ N=\{z\in \Omega: u(z)<u^*(z) \}$$ is called $m$-negligible.
\end{definition}
\begin{lemma}\cite[Theorem 7.8]{KN25a}
    $m$-negligible set is $m$-polar.
\end{lemma}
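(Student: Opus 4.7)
The argument will adapt the classical Bedford-Taylor proof to the $(\omega,m)$-subharmonic setting, using the pluripotential machinery of \cite{KN25a}. First, apply Choquet's lemma to extract a countable subfamily of $\{u_\alpha\}$ whose supremum has the same upper semicontinuous regularization $u^*$. Replacing each element by the maximum of finitely many (which preserves $(\omega,m)$-subharmonicity), one obtains an increasing sequence $v_j \in \operatorname{SH}_m(\Omega,\omega)$ with $v := \lim_j v_j \leq u$ and $v^* = u^*$. Since the negligible set $N = \{u < u^*\}$ is contained in $\{v < v^*\}$, it suffices to handle the latter. The question being local, fix $z_0 \in \{v < v^*\}$ together with relatively compact balls $B \Subset B' \Subset \Omega$, and decompose
$$
\{v < v^*\} = \bigcup_{k \geq 1} N_k, \qquad N_k := \{v \leq v^* - 1/k\}.
$$

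Next I would show that each $N_k \cap B$ has zero outer $m$-Hessian capacity in $B'$. The key input is the quasicontinuity of $(\omega,m)$-subharmonic functions with respect to the Hessian capacity developed in \cite[Section 7]{KN25a}: given $\delta > 0$, quasicontinuity produces an open set $G \subset B'$ of capacity less than $\delta$ outside of which every $v_j$ and $v^*$ is continuous. On the compact set $\overline{B} \setminus G$ the sequence $v^* - v_j$ consists of continuous functions decreasing pointwise to $v^* - v \geq 0$; on $N_k \cap (\overline{B} \setminus G)$ one has $v^* - v_j \geq 1/k$ for all $j$. Combining a Dini-type argument with the fact that $v = v^*$ holds $\omega^n$-almost everywhere, together with the monotone convergence of Hessian measures (\cite[Lemma 5.1]{KN25a}), one concludes that for $j$ sufficiently large, $\overline{B} \setminus G \subset \{v^* - v_j < 1/k\}$. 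This forces $N_k \cap \overline{B} \subset G$, and hence the outer capacity of $N_k \cap B$ relative to $B'$ is at most $\delta$; letting $\delta \to 0$ gives zero outer capacity.

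Finally, the witnesses are assembled. By the capacity-polar equivalence for $m$-subharmonic functions (the local counterpart of \cref{lem: characterization m polar}, proved in \cite{KN25a} via extremal functions, together with the sharp capacity decay from \cite{Fang25}), each $N_k \cap B$ is contained in the polar locus of some $h_k \in \operatorname{SH}_m(B',\omega)$. Normalizing via the $L^1$-compactness in \cref{L^1 compactness} and summing the series $\sum_k 2^{-k} h_k$ produces a single $(\omega,m)$-subharmonic function on $B'$ equal to $-\infty$ on every $N_k \cap B$, and therefore on the entire negligible set $N \cap B$.

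The principal obstacle is making the Dini step rigorous. Standard Dini requires the pointwise limit $v^* - v$ to be continuous on $\overline{B} \setminus G$, but the discontinuities of $v$ on $N$ are exactly what must be controlled. The workaround, characteristic of Bedford-Taylor-type arguments, is to exploit that $v = v^*$ holds $\omega^n$-almost everywhere together with the strong upper semi-continuity of $v^*$ and the monotone convergence of Hessian masses, promoting the almost-everywhere equality to equality up to arbitrarily small $m$-capacity. Executing this transition for general, possibly unbounded, $(\omega,m)$-subharmonic functions is where the technical content of \cite[Theorem 7.8]{KN25a} lies.
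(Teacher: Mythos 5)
You are supplying a proof for a statement that the paper does not prove at all --- it is cited verbatim from \cite[Theorem 7.8]{KN25a}, so there is no ``paper's proof'' to compare against. Evaluating your attempt on its own merits, the overall architecture (Choquet, reduction to increasing sequences, decomposition into $N_k$, quasicontinuity to peel off a small-capacity open set, then a Josefson-type step) is the right skeleton, but the central step is broken and the patch you propose is circular.

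The gap is the Dini step. On $\overline B\setminus G$ the functions $v^*-v_j$ are continuous and decrease, but their pointwise limit $v^*-v$ is \emph{not} continuous there, and Dini's theorem gives nothing. You state that on $N_k\cap(\overline B\setminus G)$ one has $v^*-v_j\geq 1/k$ for every $j$, and then conclude that for large $j$ one has $\overline B\setminus G\subset\{v^*-v_j<1/k\}$; these two assertions together force $N_k\cap(\overline B\setminus G)=\emptyset$, which is essentially the conclusion of the lemma, not an intermediate step. You acknowledge this and propose to ``promote the almost-everywhere equality $v=v^*$ to equality up to arbitrarily small $m$-capacity,'' but that promotion \emph{is} the statement that the negligible set has zero outer capacity, i.e.\ it is exactly what must be proved. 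Invoking the monotone convergence of Hessian measures does not break this circle: it tells you that $H_m(v_j)\to H_m(v^*)$ weakly, but says nothing pointwise about the size of $\{v^*-v\geq 1/k\}$ without a genuinely quantitative mechanism.

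The mechanism you need is not Dini but a Chebyshev-plus-integration-by-parts estimate. For a fixed competitor $w\in\operatorname{SH}_m(B',\omega)$ with $-1\leq w\leq 0$ one bounds
$$
\operatorname{Cap}_m\bigl(N_k\cap K, B'\bigr)\;\leq\;\frac{k}{1}\,\sup_w\int_K(v^*-v_j)\,H_m(w)\;+\;\varepsilon_j,
$$
and then shows $\int_K(v^*-v_j)\,H_m(w)\to 0$ by moving $dd^c$'s onto $w$ via Chern--Levine--Nirenberg-type inequalities (in the Hermitian Hessian setting these are the estimates of \cite[Proposition 3.7]{KN25a}), together with the $\omega^n$-a.e.\ convergence $v_j\nearrow v^*$ and Lebesgue's monotone convergence theorem applied to the integrals. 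The convergence happens at the level of integrals against Hessian measures, not pointwise; that is the substantive technical content your Dini workaround tries to short-cut. The final step you invoke (zero outer $m$-capacity $\Rightarrow$ $m$-polar, a Josefson-type theorem) is likewise a nontrivial separate ingredient; citing it is defensible if it is indeed established in \cite{KN25a}, but you should be explicit that it is being used as a black box rather than as ``the local counterpart of \cref{lem: characterization m polar},'' which in the paper is a global statement relying on a compact manifold and a bounded Hessian potential.
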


\begin{lemma}
    Assume that $\chi$ is closed. If  $K$ is a compact subset of $X$ and not $(\chi,m)$-polar, then the measure $(\chi+dd^c V_{K,m}^*)^m \wedge \omega^{n-m}$ puts no mass outside $K$.
\end{lemma}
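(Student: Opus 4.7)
The plan is to prove that $V^*_{K,m}$ is locally maximal on $X\setminus K$ and then invoke the standard equivalence---a bounded $(\chi,\omega,m)$-subharmonic function is locally maximal on an open set if and only if its Hessian measure vanishes there---which is available in the pluripotential framework of \cite{KN25a}. Since $K$ is not $(\chi,m)$-polar, \cref{lem: characterization m polar} gives $\sup_X V_{K,m}<+\infty$, so $V^*_{K,m}$ is a bounded $(\chi,\omega,m)$-subharmonic function. The $m$-negligible set $N:=\{V_{K,m}<V^*_{K,m}\}$ is $m$-polar by \cite[Theorem 7.8]{KN25a}, so $V^*_{K,m}=V_{K,m}\leq\rho$ on $K\setminus N$. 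Applying \cref{lem: characterization m polar} to $N$ produces $\psi\in\operatorname{SH}_m(X,\chi,\omega)$ with $\psi\leq 0$ on $X$ and $\psi\equiv-\infty$ on $N$; after adding a constant we may also assume $\rho\geq 0$.

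Next I fix a coordinate ball $B\Subset X\setminus K$ and a function $v\in\operatorname{SH}_m(B,\chi,\omega)$ upper semi-continuous on $\overline B$ with $v\leq V^*_{K,m}$ on $\partial B$; the goal is to show $v\leq V^*_{K,m}$ on $B$. Define
\[
\tilde v:=\max(v,V^*_{K,m})\ \text{on}\ \overline B,\qquad \tilde v:=V^*_{K,m}\ \text{on}\ X\setminus B.
\]
The boundary condition makes the two pieces agree on $\partial B$, so the standard gluing lemma places $\tilde v$ in $\operatorname{SH}_m(X,\chi,\omega)$. Since $K\cap B=\emptyset$, we have $\tilde v=V^*_{K,m}\leq\rho$ on $K\setminus N$.

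The crucial step promotes this quasi-everywhere bound to a pointwise one. For each $\varepsilon\in(0,1)$, the convex combination $(1-\varepsilon)\tilde v+\varepsilon\psi$ remains in $\operatorname{SH}_m(X,\chi,\omega)$; on $K\setminus N$ it is at most $(1-\varepsilon)\rho\leq\rho$, and on $K\cap N$ it equals $-\infty\leq\rho$. Hence it lies in the defining family of $V_{K,m}$, so $(1-\varepsilon)\tilde v+\varepsilon\psi\leq V^*_{K,m}$ on $X$. Letting $\varepsilon\to 0^+$ off $\{\psi=-\infty\}$ gives $\tilde v\leq V^*_{K,m}$ outside an $m$-polar (hence Lebesgue-null) set. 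Since both $\tilde v$ and $V^*_{K,m}$ are strongly upper semi-continuous (property (2) of \cref{def: def of m-sh}), comparing essential suprema over shrinking balls converts this almost-everywhere inequality into a pointwise one: $\tilde v\leq V^*_{K,m}$ on all of $X$. In particular $v\leq V^*_{K,m}$ on $B$, establishing local maximality of $V^*_{K,m}$ on $X\setminus K$, and hence $(\chi+dd^c V^*_{K,m})^m\wedge\omega^{n-m}$ vanishes there.

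The principal obstacle is the polar-set correction above: the inequality $\tilde v\leq\rho$ holds only outside the $m$-negligible set $N$, and without the convex-combination trick with $\psi$ one cannot directly place $\tilde v$ in the family defining $V_{K,m}$. A secondary, more routine point is the maximality--vanishing-measure equivalence for $(\chi,\omega,m)$-subharmonic functions in the Hermitian setting, which follows from the comparison principle and the continuity of the Hessian operator along monotone sequences developed in \cite{KN25a}.
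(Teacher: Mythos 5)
Your approach takes a genuinely different route from the paper and it has a real gap. The paper proves the lemma via local balayage: by Choquet's lemma it extracts an increasing family $v_k\in\operatorname{SH}_m(X,\chi,\omega)$ with $v_k\leq\rho$ on $K$ and $(\sup_k v_k)^*=V^*_{K,m}$, replaces each $v_k$ on a small ball $D\Subset X\setminus K$ (using $\chi=dd^c\phi$ locally, which is where closedness enters) by a balayage $w_k$ with $(\chi+dd^c w_k)^m\wedge\omega^{n-m}=0$ on $D$, checks $w_k\nearrow V^*_{K,m}$ a.e., and invokes the monotone convergence theorem to pass the vanishing of the measure to the limit. The polar correction that you need never arises in that scheme, because each $w_k$ is individually a valid competitor in the definition of $V_{K,m}$.

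The gap in your argument is the construction of $\psi$. You invoke \cref{lem: negligible is polar} to conclude that the negligible set $N=\{V_{K,m}<V^*_{K,m}\}$ is $m$-polar, which is a \emph{local} notion (cf.\ the definition following Lemma~2.24: for each point there is a neighbourhood $U$ and $u\in\operatorname{SH}_m(U,\omega)$ with $N\cap U\subset\{u=-\infty\}$). But what your argument requires is the \emph{global} statement that $N$ is $(\chi,\omega,m)$-polar, i.e.\ that there exists a single $\psi\in\operatorname{SH}_m(X,\chi,\omega)$ with $N\subset\{\psi=-\infty\}$. You then cite \cref{lem: characterization m polar} as if it produced such a $\psi$, but that lemma only \emph{characterizes} $(\chi,\omega,m)$-polarity by the condition $\sup_X V_{N,m}=+\infty$; to use it you would first have to know $N$ is $(\chi,\omega,m)$-polar, which is exactly what is in question. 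The implication ``$m$-polar $\Rightarrow$ $(\chi,\omega,m)$-polar'' is a Josefson-type result; it is classical in the K\"ahler psh case, but is neither stated nor cited in the framework of this paper, and is delicate in the Hermitian Hessian setting. Without $\psi$, your convex-combination trick cannot be executed, because $\tilde v\leq\rho$ is only known on $K\setminus N$, not on all of $K$, so $\tilde v$ does not obviously lie in the defining family of $V_{K,m}$.

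A secondary point: the ``local maximality $\iff$ vanishing Hessian measure'' equivalence that you invoke should be attributed more precisely. The direction you need (maximality $\Rightarrow$ vanishing) rests on local Dirichlet solvability for the homogeneous Hessian equation, which in this setting is \cite[Theorem~8.2]{KN25a} combined with $\chi=dd^c\phi$ locally (so closedness of $\chi$ is again essential here). Once stated that way, this step is fine. But given the polar-set gap above, the cleanest fix is to abandon the ``single competitor plus polar correction'' route and run the paper's balayage-on-the-increasing-sequence argument, which sidesteps the polarity issue entirely.
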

\begin{proof}
    Fix an arbitrary small ball $U\subseteq X-K$. By \cref{lem: characterization m polar}, we have $\sup_{X}V_{K,m}<+\infty$.
    By Choquet's lemma, we can find an increasing uniformly bounded $(\chi,\omega,m)$- subharmonic function sequence $\{v_k\}_k$, such that  $v_k\leq \rho$ on $K$ and $(\sup v_k)^*=V_{K,m}^*$ belongs to $\operatorname{SH}_{m}(X,\chi,\omega)$.
Now we choose smooth $\phi$ such that $\chi= dd^c\phi$ on $U$, for every small ball $D\subset U$. By \cite[Theorem 8.2, Proposition 2.9, Lemma 5.1]{KN25a},  and the proof of \cite[Corollary 12.5]{Dem93-2}, there exists a $(m,\omega)$ subharmonic function $\tilde{v}_k$ on $U$ such that
$$
\begin{cases}\tilde{v}_k=v_k+\phi & \text { on } U \backslash D, \\ \tilde{v}_k \geq v_k +\phi& \text { on } U,\\ (dd^c\tilde{v}_k)^m\wedge \omega^{n-m} = 0 & \text{ on } D.\end{cases}
$$
Furthermore, By \cite[Section 4.1 and 4.2]{GN18} and the proof of \cite[Corollary 12.5]{Dem93-2}, we have $\tilde{v}_k$ is an increasing sequence. Construct $w_k$ as 
$$
w_k:= \begin{cases}\tilde{v}_k-\phi & \text { on } D, \\ v_k & \text { on } X \backslash D .\end{cases}
$$
We have $w_k$ increase to $V_{K,m}^*$ almost everywhere on $X$. By \cite[Lemma 5.4]{KN25a},
$$ (\chi+dd^c w_k)^m\wedge \omega^{n-m}\rightarrow(\chi+dd^c V_{K,m}^*)^m \wedge \omega^{n-m}$$ weakly on $D$, so that 
$$ (\chi+dd^c V_{K,m}^*)^m\wedge \omega^{n-m}=0 \ \text{on}\ D.$$ Since $U$ is arbitrary, the proof is completed.
\end{proof}

We have used the following lemma:
\begin{lemma}\label{lem: negligible is polar}
    Let $\left\{u_j\right\}_{j \in \mathbb{N}}$ be a sequence of $(\chi,\omega,m)$-psh functions on $X$ that is uniformly bounded above. Then the set $\left\{\left(\sup _j u_j\right)^*>\sup _j u_j\right\}$ is a $m$- polar set.
\end{lemma}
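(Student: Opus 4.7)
The strategy is to combine the known local statement that $m$-negligible sets are $m$-polar (\cite[Theorem 7.8]{KN25a}, already quoted in the excerpt) with a patching procedure that upgrades local $(\omega,m)$-sh polar certificates to a single global $(\chi,\omega,m)$-sh function. First I would apply Choquet's lemma to extract a countable subsequence $\{u_{j_k}\}_k$ with $(\sup_k u_{j_k})^* = (\sup_j u_j)^*$, and then replace $u_{j_k}$ by $w_k := \max(u_{j_1},\dots,u_{j_k})$ (which remains in $\operatorname{SH}_m(X,\chi,\omega)$) so that the sequence is increasing. Since $\{(\sup_j u_j)^* > \sup_j u_j\} \subset \{(\sup_k w_k)^* > \sup_k w_k\}$, it suffices to show the latter set $N$ is $(\chi,\omega,m)$-polar.

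Next I would pass to the local setting. Cover $X$ by finitely many coordinate balls $B_1,\dots,B_L$ together with, for each $i$, a smooth strictly psh function $\phi_i$ on a neighborhood of $\overline{B_i}$ with $dd^c \phi_i \geq \chi$. Then $w_k + \phi_i$ is a uniformly bounded above, increasing sequence of $(\omega,m)$-subharmonic functions on $B_i$, and
$$
N \cap B_i \subset \bigl\{(\sup_k(w_k+\phi_i))^* > \sup_k(w_k+\phi_i)\bigr\},
$$
so \cite[Theorem 7.8]{KN25a} applied on $B_i$ yields, after slightly shrinking to $B_i' \Subset B_i$ with $\bigcup_i B_i' = X$, a function $\psi_i \in \operatorname{SH}_m(B_i,\omega)$ such that $N \cap B_i' \subset \{\psi_i = -\infty\}$ and $\psi_i \leq 0$ on $B_i$ (after subtracting a constant).

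The main obstacle, and the step that requires care, is gluing the local certificates $\psi_i$ into a single globally defined $(\chi,\omega,m)$-subharmonic function. I would adapt the standard Richberg-type patching argument (compare \cite[Theorem 4.41]{GZ17}): using the global bounded potential $\rho \in \operatorname{SH}_m(X,\chi,\omega)$ posited at the beginning of the subsection, choose constants $C_i$ large enough so that, on a neighborhood of $\partial B_i'$ inside $B_i$, the function $\psi_i - \phi_i - C_i$ is strictly dominated by $\rho$; this uses that $\psi_i$, being $(\omega,m)$-sh and uniformly bounded above on $\overline{B_i'}$, is bounded above on a compact neighborhood of $\partial B_i'$. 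Then
$$
\tilde\psi_i :=
\begin{cases}
\max(\psi_i - \phi_i - C_i,\ \rho) & \text{on } B_i, \\
\rho & \text{on } X \setminus B_i,
\end{cases}
$$
is a well-defined $(\chi,\omega,m)$-subharmonic function on $X$ that still equals $-\infty$ on $N \cap B_i'$, because $\psi_i = -\infty$ there while $\phi_i$ and $\rho$ are bounded.

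Finally I would assemble the local pieces. After normalizing $\sup_X \tilde\psi_i = 0$, the finite sum
$$
\Psi := \rho + \sum_{i=1}^L \tilde\psi_i
$$
belongs to $\operatorname{SH}_m(X,\chi,\omega)$, is not identically $-\infty$ (since $\rho$ is bounded and only finitely many terms of the sum appear), and satisfies $\Psi = -\infty$ on $\bigcup_i (N \cap B_i') = N$. Hence $N$ is $(\chi,\omega,m)$-polar, which completes the proof.
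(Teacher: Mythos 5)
The lemma only asserts $m$-polarity, which by the paper's Definition is a \emph{local} notion: a set is $m$-polar if each of its points has a neighborhood $U$ and an $(\omega,m)$-subharmonic function $u$ on $U$ with $E\cap U\subset\{u=-\infty\}$. The first half of your argument (Choquet's lemma, passage to an increasing sequence $w_k$, the observation that $w_k+\phi_i$ is a uniformly bounded above increasing sequence of $(\omega,m)$-sh functions on each coordinate ball, and the application of \cite[Theorem 7.8]{KN25a}) already establishes this and is exactly what the paper does. The gluing you then attempt, aimed at producing a single global $(\chi,\omega,m)$-sh function whose polar locus contains the negligible set, is proving a strictly stronger statement than required.

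More importantly, that gluing step contains a genuine error. You use the implication in the wrong direction: from $dd^c\phi_i\geq\chi$ one deduces that $u+\phi_i$ is $(\omega,m)$-sh whenever $u\in\operatorname{SH}_m(X,\chi,\omega)$, but it does \emph{not} follow that $\psi_i-\phi_i$ is $(\chi,\omega,m)$-sh when $\psi_i$ is merely $(\omega,m)$-sh. Indeed
$$
\chi+dd^c(\psi_i-\phi_i)=dd^c\psi_i+(\chi-dd^c\phi_i),
$$
and $\chi-dd^c\phi_i\leq 0$ can destroy all the $(\omega,m)$-positivity of $dd^c\psi_i$ (for instance when $\psi_i$ is pluriharmonic and $dd^c\phi_i>\chi$). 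Consequently $\max(\psi_i-\phi_i-C_i,\rho)$ need not lie in $\operatorname{SH}_m(X,\chi,\omega)$, and the patched function $\tilde\psi_i$ and the final sum $\Psi$ are not legitimate. The construction would work if one could take $dd^c\phi_i=\chi$ exactly, but $\chi$ is a general smooth real $(1,1)$-form (not assumed $d$-closed), so such local $dd^c$-potentials need not exist. There is also a smaller slip: for the patched function to be well defined you must arrange $\psi_i-\phi_i-C_i\leq\rho$ near $\partial B_i$ (the outer boundary), not near $\partial B_i'$ as written; since $\psi_i\leq0$ on $B_i$ this is harmless, but the boundary you named is the wrong one. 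The cleanest fix is simply to stop after the local step, exactly as the paper does.
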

\begin{proof}
    The result is local, so we work on a ball $U$. Choose $\chi\leq dd^c \phi$ on $U$, where $\phi$ is smooth, consider the sequence $\{\phi+u_j \}_j$, then we can use \cite[Theorem 7.8]{KN25a}  to get the desired result.
\end{proof}

\section{Capacity, quasi-continuity and weak convergence theorem}
Let $(X,\omega)$ be a compact Hermitian manifold of complex dimension $n$ equipped with a Hermitian metric $\omega$. Let $m$ be a positive integer such that $1\leq m\leq n$.  Let $\beta\in\overline{\Gamma_m(\omega)}$ be an $(\omega,m)$-semi-positive form on $X$.
In this section, we will develop the capacity theory for such $\beta$.
\begin{definition}\cite[Section 4]{KN25a} \label{def: local cap}
Let $(\Omega,\omega)$ be a bounded domain equipped with a Hermitian metric in $\mathbb{C}^n$ and let $E\subset\Omega$ be a Borel subset of $\Omega$, we define the local capacity of $E$ with respect to $\Omega$ as 
    $$\operatorname{Cap}_{m,\omega}(E,\Omega):= \sup\left\{ \int_{E} (dd^c u)^m \wedge \omega^{n-m}, u\in \operatorname{SH}_{m}(\omega,\Omega), -1\leq u\leq 0\right\}.$$
    We remark that when there is no confusion of notations, we will always omit the subscript $\omega$ and writing the local capacity $\operatorname{Cap}_{m,\omega}$ as $\operatorname{Cap}_m$.
\end{definition}

\begin{definition}\label{def of cap_beta}
    Let $E$ be a Borel subset of $X$ and $\beta\in\overline{\Gamma_m(\omega)}$. We define
    $$\operatorname{Cap}_{\beta,m,\omega}(E):=\sup \left\{\int_{E} H_{\beta,m}(u)| u\in \operatorname{SH}_{m}(X, \beta, \omega), -1 \leq u\leq 0\right\},$$
     Note that since $\beta\in\overline{\Gamma_m(\omega)}$, the set $\{u\in \operatorname{SH}_{m}(X, \beta, \omega), -1 \leq u\leq 0\}$ is non-empty.
\end{definition}

We list some elementary properties of $\operatorname{Cap}_{\beta,m,\omega}$ whose proofs are quite straightforward:

\begin{proposition}\label{properties of Cap_m}
The following properties hold:
     \begin{enumerate}
        \item If $E_1\subset E_2\subset X$ are Borel subsets of $X$, then $$0\leq \operatorname{Cap}_{\beta,m,\omega}(E_1)\leq \operatorname{Cap}_{\beta,m,\omega}(E_2)\leq \operatorname{Cap}_{\beta,m,\omega}(X)\leq C.$$ Where $C=C(\beta,\omega,X)$ is a uniform constant. 
        \item If $E_j$ are Borel subsets of $X$, then $\operatorname{Cap}_{\beta,m,\omega}(\cup_j E_j)\leq \sum_j\operatorname{Cap}_{\beta,m,\omega}(E_j)$. Moreover, if $E_j\subset E_{j+1}$ is an increasing sequence of Borel subsets of $X$, then $$\operatorname{Cap}_{\beta,m,\omega}(\cup_jE_j)=\lim_j\operatorname{Cap}_{\beta,m,\omega}(E_j).$$
        \item If $\beta_1,\beta_2\in\overline{\Gamma_m(\omega)}$ are such that $\beta_2-\beta_1\in\overline{\Gamma_m(\omega)}$, then $$\operatorname{Cap}_{\beta_1,m,\omega}(\cdot)\leq \operatorname{Cap}_{\beta_2,m,\omega}(\cdot).$$ For all $A\geq1$, we have $$\operatorname{Cap}_{\beta,m,\omega}(\cdot)\leq \operatorname{Cap}_{A\beta,m,\omega}(\cdot)\leq A^m\operatorname{Cap}_{\beta,m,\omega}(\cdot).$$
        \item If $\beta_1,\beta_2\in\Gamma_m(\omega)$ are two strictly $(\omega,m)$-positive forms, then there exists a constant $C\geq1$ such that $$
        C^{-1}\operatorname{Cap}_{\beta_1,m,\omega}(\cdot)\leq \operatorname{Cap}_{\beta_2,m,\omega}(\cdot)\leq C\cdot \operatorname{Cap}_{\beta_1,m,\omega}(\cdot).$$ In particular, even when $\beta\in\Gamma_m(\omega)$ and $\omega^{\prime}$ is another Hermitian metric on $X$, the two capacities $\operatorname{Cap}_{\beta,m,\omega}(\cdot)$ and $\operatorname{Cap}_{\omega^{\prime},m,\omega} (\cdot)$ are equivalent in the sense above.
    \end{enumerate}
\end{proposition}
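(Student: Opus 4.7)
\emph{Proof plan.} All four items reduce to bookkeeping once two ingredients are in place: (i) a uniform Chern--Levine--Nirenberg bound for the total mass of $H_{\beta,m}(u)$ for uniformly bounded $u$, and (ii) positivity and monotonicity of mixed wedge products of $(\omega,m)$-positive currents. Both are furnished by the framework of \cite{KN25a} and \cref{lemma: criterion}. I would treat the four items in the listed order.

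For (1), monotonicity in the Borel set is immediate from the definition. The uniform bound $\operatorname{Cap}_{\beta,m,\omega}(X)\le C$ follows by covering $X$ with finitely many coordinate balls $U_i$ on which a smooth potential $\phi_i$ with $dd^c\phi_i\ge\beta$ is fixed, expanding $H_{\beta,m}(u)$ on $U_i$ as in \cref{global Hessian measures}, and applying the local Chern--Levine--Nirenberg inequality of \cite{KN25a} to each term, with oscillations uniformly controlled because $-1\le u\le 0$. For (2), finite and countable subadditivity is immediate from $\mathbf{1}_{\cup_j E_j}\le\sum_j\mathbf{1}_{E_j}$ inside the integral followed by the supremum over admissible $u$; monotone continuity along an exhaustion $E_j\uparrow E$ is the monotone convergence theorem applied to the finite positive measure $H_{\beta,m}(u)$, again followed by taking the supremum.

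For (3), observe first that a bounded $u\in\operatorname{SH}_m(X,\beta_1,\omega)$ also lies in $\operatorname{SH}_m(X,\beta_2,\omega)$, because $\beta_2+dd^cu=(\beta_2-\beta_1)+(\beta_1+dd^cu)$ is a sum of two $(\omega,m)$-positive currents. Expanding
$$(\beta_2+dd^cu)^m=\sum_{k=0}^m\binom{m}{k}(\beta_2-\beta_1)^k\wedge(\beta_1+dd^cu)^{m-k}$$
and wedging with $\omega^{n-m}$, every summand with $k\ge 1$ is a non-negative measure by iterated positivity of products of $(\omega,m)$-positive currents (via \cref{lemma: criterion} together with \cite[Theorem 3.3, Lemma 5.1]{KN25a}). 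Hence $H_{\beta_1,m}(u)\le H_{\beta_2,m}(u)$ as measures, and taking the supremum over admissible $u$ yields the first inequality. For the rescaling, the upper bound $\operatorname{Cap}_{A\beta}\le A^m\operatorname{Cap}_\beta$ follows from the identity $H_{A\beta,m}(v)=A^mH_{\beta,m}(v/A)$ combined with $-1\le v\le 0\Rightarrow -1\le v/A\le 0$ when $A\ge 1$; the reverse $\operatorname{Cap}_\beta\le\operatorname{Cap}_{A\beta}$ is a special case of the first part since $A\beta-\beta=(A-1)\beta\in\overline{\Gamma_m(\omega)}$.

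For (4), a pointwise diagonalisation of $\beta_2$ against $\beta_1$, combined with eigenvalue continuity and compactness of $X$, produces constants $A,B\ge 1$ with $A\beta_1-\beta_2$ and $B\beta_2-\beta_1$ both in $\overline{\Gamma_m(\omega)}$. Two applications of (3) yield
$$\operatorname{Cap}_{\beta_2}\le\operatorname{Cap}_{A\beta_1}\le A^m\operatorname{Cap}_{\beta_1}\quad\text{and}\quad\operatorname{Cap}_{\beta_1}\le\operatorname{Cap}_{B\beta_2}\le B^m\operatorname{Cap}_{\beta_2},$$
and hence the equivalence. The only step I expect to require real care is the justification in (3) that the mixed wedges $(\beta_2-\beta_1)^k\wedge(\beta_1+dd^cu)^{m-k}\wedge\omega^{n-m}$ are well-defined non-negative measures for merely bounded $u$; this is precisely the setting covered by the positive wedge machinery of \cite[Sections 3--5]{KN25a}, so I would invoke those results directly rather than rebuild them.
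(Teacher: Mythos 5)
Your overall plan is sound and, where it overlaps with the paper (the paper only writes out item (4)), it takes essentially the same route: prove the monotonicity and rescaling in item (3) directly, then deduce (4) from (3) by producing uniform constants $A,B\ge1$ with $A\beta_1-\beta_2$ and $B\beta_2-\beta_1$ in $\overline{\Gamma_m(\omega)}$. Your items (1)--(3) are fine: the CLN bound for (1), the monotone-convergence argument for (2), and the binomial expansion $(\beta_2+dd^cu)^m=\sum_k\binom{m}{k}(\beta_2-\beta_1)^k\wedge(\beta_1+dd^cu)^{m-k}$ with non-negative cross-terms for (3) are all correct, and you are right that the positivity of the mixed wedges for bounded $u$ is precisely what \cite{KN25a} provides.

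The one step that does not work as stated is the production of $A$ and $B$ in (4). You propose a ``pointwise diagonalisation of $\beta_2$ against $\beta_1$,'' but $\beta_1\in\Gamma_m(\omega)$ with $m<n$ need not be positive definite as a Hermitian form (only $\beta_1^k\wedge\omega^{n-k}>0$ for $k\le m$), so simultaneous diagonalisation against $\beta_1$ is not available, and even diagonalising both against $\omega$ fails because two arbitrary Hermitian forms need not commute modulo $\omega$. The conclusion is nevertheless correct, and can be obtained by either of two clean arguments. The paper's route: for each $1\le k\le m$, $(A\beta_1-\beta_2)^k\wedge\omega^{n-k}$ is a degree-$k$ polynomial in $A$ whose leading coefficient $\beta_1^k\wedge\omega^{n-k}$ is a strictly positive smooth function, so by compactness of $X$ one may choose $A$ uniformly large so that all these expressions are positive. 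An alternative, perhaps closer to what you intended: $\Gamma_m(\omega)$ is an open convex cone, $\beta_1\in\Gamma_m(\omega)$, so by openness and compactness of $X$ there is a uniform $\epsilon>0$ with $\beta_1-\epsilon\beta_2\in\Gamma_m(\omega)$ pointwise; then $A:=1/\epsilon$ does the job since $A\beta_1-\beta_2=A(\beta_1-\epsilon\beta_2)\in\Gamma_m(\omega)$ as $\Gamma_m(\omega)$ is a cone. Either replaces the diagonalisation step; the remainder of your argument (two applications of (3)) then goes through exactly as in the paper.
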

\begin{proof}
    We give a proof of (iv) for example. Since $\beta_1\in\Gamma_m(\omega)$, we may find a large constant $A>1$ such that $A\beta_1-\beta_2\in\Gamma_m(\omega)$. Indeed, this is equivalent to say
    $$
(A\beta_1-\beta_2)^k\wedge\omega^{n-k}>0,\quad1\leq k\leq m.
    $$
    But $(A\beta_1-\beta_2)^k\wedge\omega^{n-k}=A^k\beta_1^k\wedge\omega^{n-k}+...$ is a polynomial in $A$ with strictly positive and smooth monic coefficient $\beta_1^k\wedge\omega^{n-k}$ for each $k$. Since the manifold $X$ is compact, the constant $A$ can be derived. It thus follows from (ii) and (iii) that
    $$
\operatorname{Cap}_{\beta_2,m,\omega}(\cdot)\leq \operatorname{Cap}_{A\beta_1,m,\omega}(\cdot)\leq A^m\operatorname{Cap}_{\beta_1,m,\omega}(\cdot),
    $$
    and we get the inequality on the right hand side. The left hand side is similar.
    
\end{proof}

Following the statement in \cite[Page 238,239]{GZ17}, we introduce another capacity on $X$, which will be useful when we prove the volume-capacity estimate for $\beta\in\Gamma_m(\omega)$:
\begin{definition}
    Fix a finite open double covering $U_j\subset\subset U_j^{\prime}\subset X$ of open coordinate balls such that $\{U_j\}_j$ covers $X$. For any Borel set $E\subset X$, we define
    $$
\operatorname{Cap}_{BT,m}(E):=\sum_j\operatorname{Cap}_m(E\cap U_j,U_j^{\prime}).
    $$
    Where $\operatorname{Cap}_m(\cdot)$ is the local capacity defined in \cref{def: local cap}.
\end{definition}
Thanks to \cite[Lemma 2.8]{KN25a},the gluing construction \cite[Proposition 1.30]{GZ17} also works for $(\omega,m)$ subharmonic function. The proof of the following equivalence theorem may now proceed directly via \cite[Proposition 9.8]{GZ17}.

\begin{proposition}\label{comparison of capacity}
    For each $\beta\in\overline{\Gamma_m(\omega)}$, there exists $C>1$ such that
    $$
\operatorname{Cap}_{\beta,m,\omega}(\cdot)\leq C\cdot \operatorname{Cap}_{BT,m}(\cdot).
    $$
    When $\beta\in\Gamma_m(\omega)$, the reverse inequality holds, i,e., there exists $C^\prime>1$ such that
    $$
\operatorname{Cap}_{BT,m}(\cdot)\leq C^\prime \operatorname{Cap}_{\beta,m,\omega}(\cdot).
    $$
\end{proposition}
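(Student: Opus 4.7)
The plan is to adapt the strategy of Guedj--Zeriahi \cite[Proposition 9.8]{GZ17} for the complex Monge--Amp\`ere capacity on compact K\"ahler manifolds to the Hessian setting, using the $(\omega, m)$-positive currents and Hessian measures developed earlier in this section.

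For the first inequality, I fix $u \in \operatorname{SH}_m(X, \beta, \omega)$ with $-1 \leq u \leq 0$ and work locally on each chart $U_j'$. Since $\beta$ is smooth, I can choose a smooth function $\phi_j(z) = A_j|z|^2$ with $A_j$ large enough that $dd^c\phi_j - \beta \in \overline{\Gamma_m(\omega)}$ on $U_j'$ and $|\phi_j| \leq M_j$. Then $u + \phi_j \in \operatorname{SH}_m(U_j', \omega)$, and the identity
\[
dd^c(u + \phi_j) = (\beta + dd^c u) + (dd^c\phi_j - \beta)
\]
presents $dd^c(u+\phi_j)$ as a sum of two $(\omega,m)$-positive currents. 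Combined with the G\r{a}rding-type monotonicity
\[
\alpha_1^m \wedge \omega^{n-m} \geq \alpha_3^m \wedge \omega^{n-m} \qquad \text{when } \alpha_1 = \alpha_2 + \alpha_3, \ \alpha_i \in \overline{\Gamma_m(\omega)},
\]
which follows by telescoping $\alpha_1^m - \alpha_3^m = \alpha_2 \wedge \sum_{k=0}^{m-1} \alpha_1^k \wedge \alpha_3^{m-1-k}$ and using the positivity of mixed Hessians of $(\omega,m)$-positive forms wedged with $\omega^{n-m}$, I obtain $H_{\beta,m}(u) \leq (dd^c(u + \phi_j))^m \wedge \omega^{n-m}$ on $U_j$. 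Rescaling $(u + \phi_j - M_j)/(2M_j + 1)$ into $[-1, 0]$ gives $\int_{E \cap U_j} H_{\beta, m}(u) \leq (2M_j+1)^m \operatorname{Cap}_m(E \cap U_j, U_j')$; summing over $j$ and taking the supremum in $u$ yields the first inequality.

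For the second inequality (where $\beta \in \Gamma_m(\omega)$), the goal is to turn any local test function $v \in \operatorname{SH}_m(U_j', \omega)$ with $-1 \leq v \leq 0$ into a global bounded $(\beta, \omega, m)$-subharmonic function $\tilde v$ that, on a neighborhood of $U_j$, agrees with $\delta v + \delta$ for some controlled constant $\delta > 0$. Choose coordinates with $U_j = B_r$ and $U_j' = B_{r'}$, and fix a smooth function $h$ on $X$ with $h \leq -1$ on $\bar U_j$ and $h \geq 1$ on an annular neighborhood of $\partial U_j'$ inside $U_j'$. By the strict positivity $\beta \in \Gamma_m(\omega)$ and compactness of $X$, there is $\epsilon_0 > 0$ such that $\beta + \epsilon\, dd^c h \in \Gamma_m(\omega)$ on $X$ for all $0 < \epsilon \leq \epsilon_0$, and we shrink $\epsilon$ further so that $\|\epsilon h\|_{L^\infty(X)} \leq 1/2$. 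Picking $\delta \ll \epsilon$ and setting
\[
\tilde v := \begin{cases} \max(\delta v + \delta, \epsilon h) & \text{on } U_j',\\ \epsilon h & \text{on } X \setminus U_j',\end{cases}
\]
one checks directly that $\tilde v = \delta v + \delta$ on $U_j$, $\tilde v = \epsilon h$ in an annular neighborhood of $\partial U_j'$ (ensuring smooth gluing with the outside definition), and $\tilde v \in \operatorname{SH}_m(X, \beta, \omega) \cap L^\infty(X)$ by max-stability of $\overline{\Gamma_m(\omega)}$ together with \cref{prop: max principle for beta}. On $U_j$, expanding $(\beta + \delta\, dd^c v)^m \wedge \omega^{n-m}$ and dropping all but the leading term shows $H_{\beta, m}(\tilde v) \geq \delta^m (dd^c v)^m \wedge \omega^{n-m}$. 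Since $\operatorname{osc}_X \tilde v \leq 1$, a pure constant shift places $\tilde v$ into $[-1, 0]$ without altering $H_{\beta, m}(\tilde v)$, and inserting the shifted function into the definition of $\operatorname{Cap}_{\beta, m, \omega}(E)$ yields $\operatorname{Cap}_m(E \cap U_j, U_j') \leq \delta^{-m} \operatorname{Cap}_{\beta, m, \omega}(E)$; summing over the finite covering finishes the proof.

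The main obstacle is the delicate balancing act in the second inequality: one must simultaneously ensure that $\epsilon h$ is globally $(\beta, \omega, m)$-subharmonic (using the strict positivity $\beta \in \Gamma_m(\omega)$ to absorb the perturbation $\epsilon\, dd^c h$), that the $\max$ selects the desired branch on $U_j$ versus near $\partial U_j'$, and that the resulting function has oscillation bounded by one so that a \emph{constant shift} (rather than a rescaling, which would destroy $(\beta, \omega, m)$-subharmonicity since $(1/K)(\beta + dd^c \tilde v) + (1 - 1/K)\beta$ is $(\omega,m)$-positive only when $K \geq 1$) normalizes it. The strict inequality $\beta \in \Gamma_m(\omega)$ is indispensable here; in the merely semi-positive case there is no room for the smooth corrector $\epsilon h$, and correspondingly the converse inequality need not hold.
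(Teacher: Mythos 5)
Your argument is correct and follows essentially the same approach that the paper delegates to the references: the paper's proof simply notes that the gluing construction of \cite[Proposition 1.30]{GZ17} carries over via \cite[Lemma 2.8]{KN25a} and then invokes \cite[Proposition 9.8]{GZ17}, whereas you spell out the two halves in full—the local estimate via G\aa rding monotonicity for the upper bound, and the local-to-global extension of a test function via a smooth corrector $\epsilon h$ (exploiting the openness of $\Gamma_m(\omega)$) for the lower bound. Two minor points worth tightening: your appeal to \cref{prop: max principle for beta} is not really what is needed for $\tilde v \in \operatorname{SH}_m(X,\beta,\omega)$—the relevant fact is simply that the maximum of two $(\beta,\omega,m)$-subharmonic functions is again $(\beta,\omega,m)$-subharmonic, which follows from the local characterization after adding a strictly psh $\phi$ with $dd^c\phi \geq \beta$—and both the G\aa rding monotonicity and the expansion of $(\beta + \delta dd^c v)^m \wedge \omega^{n-m}$ are applied to bounded currents rather than smooth forms, so strictly speaking they should be justified by local regularization together with the convergence theorem \cite[Lemma 5.1]{KN25a}, as the paper does in \cref{global Hessian measures}.
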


\begin{theorem}[{\cite[Theorem 4.9]{KN25a} \cite[Proposition 5.5]{KN25b}}]
Let $\chi$ be a smooth form admitting a $(\chi,\omega,m)$-subharmonic potential, then every $\varphi\in \operatorname{SH}_{m}(X,\beta,\omega)$ is quasi-continuous with respect to $\operatorname{Cap}_{\omega,m,\omega}(\cdot)$, i.e., for every $\varepsilon>0$, there exists an open set $U\subset X$ such that $\operatorname{Cap}_{\omega,m,\omega}(U)<\varepsilon$ and $\varphi$ is continuous on $X-U$. 
\end{theorem}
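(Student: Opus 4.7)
The plan is to reduce the global quasi-continuity statement to the local one of Ko\l odziej--Nguyen on domains in $\mathbb{C}^n$ by means of a finite cover, and then compare the global capacity $\operatorname{Cap}_{\omega,m,\omega}$ with the pieced-together local Bedford--Taylor type capacity $\operatorname{Cap}_{BT,m}$.

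First, cover $X$ by finitely many coordinate balls $U_1,\dots,U_N$ with $U_j\Subset U_j'$, where each $U_j'$ is also a coordinate ball on which we may choose a smooth strictly plurisubharmonic function $\phi_j$ with $dd^c\phi_j\geq \chi$ on $\overline{U_j'}$. Then on $U_j'$ the function $\varphi+\phi_j$ is $(\omega,m)$-subharmonic in the sense of \cref{def: def of m-sh}: indeed, by \cref{susc eq def}, $\varphi+\phi_j$ is strongly upper semicontinuous and satisfies the required positivity in the weak sense with respect to any $(m-1)$-tuple of $(\omega,m)$-positive forms.

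Next, apply the local quasi-continuity result \cite[Theorem 4.9]{KN25a} (or \cite[Proposition 5.5]{KN25b}) to each $\varphi+\phi_j$ on $U_j'$ with tolerance $\varepsilon/(NC)$, where $C$ is the constant from \cref{comparison of capacity}. This produces an open set $V_j\subset U_j'$ with $\operatorname{Cap}_m(V_j,U_j')<\varepsilon/(NC)$ such that $\varphi+\phi_j$, and hence $\varphi$ (since $\phi_j$ is smooth), is continuous on $U_j'\setminus V_j$. Let $U:=\bigcup_{j=1}^N\bigl(V_j\cap U_j\bigr)$, which is open in $X$. On $X\setminus U=\bigcap_j\bigl(X\setminus(V_j\cap U_j)\bigr)$, for every point $x$ we find $j$ with $x\in U_j$, and then $x\in U_j\setminus V_j\subset U_j'\setminus V_j$, so $\varphi$ is continuous at $x$; thus $\varphi$ is continuous on $X\setminus U$.

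It remains to estimate the capacity of $U$. By the subadditivity from \cref{properties of Cap_m}(ii) and the comparison \cref{comparison of capacity},
\begin{equation*}
\operatorname{Cap}_{\omega,m,\omega}(U)\;\leq\;C\cdot\operatorname{Cap}_{BT,m}(U)\;=\;C\sum_{j=1}^N\operatorname{Cap}_m(U\cap U_j,U_j')\;\leq\;C\sum_{j=1}^N\operatorname{Cap}_m(V_j,U_j')\;<\;\varepsilon.
\end{equation*}
The main obstacle I anticipate is the careful bookkeeping needed to guarantee that the $V_j$ produced by the local theorem actually combine into a single open set $U$ whose \emph{global} capacity is controlled; this is precisely what \cref{comparison of capacity} is designed to handle. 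All other ingredients---the existence of local strictly psh potentials dominating $\chi$, strong upper semicontinuity of $\varphi+\phi_j$, and the fact that continuity is stable under adding smooth functions---are straightforward.
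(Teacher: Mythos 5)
Note first that the paper does not actually \emph{prove} this theorem: it is stated with a bracketed citation to \cite[Theorem 4.9]{KN25a} and \cite[Proposition 5.5]{KN25b} and no argument is supplied, so there is no ``paper's own proof'' to compare against; I will assess your proposal on its own merits.

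Your reduction strategy is sound: cover $X$ by a finite double cover $U_j\Subset U_j'$ of coordinate balls, pick a smooth strictly psh $\phi_j$ dominating $\chi$ on $U_j'$ so that $\varphi+\phi_j\in\operatorname{SH}_m(U_j',\omega)$, apply the local quasi-continuity theorem there, and observe that continuity of $\varphi+\phi_j$ on a set implies continuity of $\varphi$ since $\phi_j$ is smooth. The verification that $\varphi|_{X\setminus U}$ is continuous is also correct. The gap is exactly at the capacity estimate, and it is a real one, not just bookkeeping. In the chain
\begin{equation*}
\operatorname{Cap}_{BT,m}(U)=\sum_{j}\operatorname{Cap}_m(U\cap U_j,U_j')\leq\sum_j\operatorname{Cap}_m(V_j,U_j'),
\end{equation*}
the second inequality silently uses the inclusion $U\cap U_j\subset V_j$, which is false in general: since $U=\bigcup_k(V_k\cap U_k)$, one has
\begin{equation*}
U\cap U_j=\bigcup_k\bigl(V_k\cap U_k\cap U_j\bigr),
\end{equation*}
and the pieces $V_k\cap U_k\cap U_j$ with $k\neq j$ have no reason to lie in $V_j$ (a point of $V_2\cap U_2\cap U_1$ need not be in $V_1$). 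So $\operatorname{Cap}_m(U\cap U_j,U_j')$ is not bounded by $\operatorname{Cap}_m(V_j,U_j')$ alone; the cross terms $\operatorname{Cap}_m(V_k\cap U_k\cap U_j,U_j')$ for $k\neq j$ remain uncontrolled, since your local quasi-continuity input only bounds $\operatorname{Cap}_m(V_k,\cdot)$ with respect to $U_k'$, not with respect to $U_j'$.

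The gap is reparable but requires one more ingredient: a comparison of the local capacities $\operatorname{Cap}_m(\cdot,U_j')$ and $\operatorname{Cap}_m(\cdot,U_k')$ on Borel subsets of the fixed compact $\overline{U_j}\cap\overline{U_k}\Subset U_j'\cap U_k'$. This is the $(\omega,m)$-Hessian analogue of \cite[Proposition 9.7]{GZ17} and is implicit in the paper's proof route for \cref{comparison of capacity}, but you do not invoke it. Once such a constant $A$ is in hand, one first applies subadditivity of the global capacity (\cref{properties of Cap_m}(ii)) to get $\operatorname{Cap}_{\omega,m,\omega}(U)\leq\sum_j\operatorname{Cap}_{\omega,m,\omega}(V_j\cap U_j)$, then $\operatorname{Cap}_{\omega,m,\omega}(V_j\cap U_j)\leq C\sum_k\operatorname{Cap}_m(V_j\cap U_j\cap U_k,U_k')\leq CNA\,\operatorname{Cap}_m(V_j,U_j')$, and the total is made small by taking each $\operatorname{Cap}_m(V_j,U_j')<\varepsilon/(CAN^2)$. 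Without this domain-comparison lemma the penultimate inequality in your display does not follow, so as written the proof is incomplete.
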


\begin{definition}
A sequence of Borel functions $f_j$ is said to converge to $f$ in capacity if for each fixed $\delta>0$, we have  $$\lim_{j\rightarrow +\infty} \operatorname{Cap}_{\omega,m,\omega}\left( \left\{|f_j-f|>\delta \right \}\right)=0.$$
\end{definition}
\begin{remark}
    Thanks to \cref{properties of Cap_m} (iv), for any $\beta\in\Gamma_m(\omega)$, $\operatorname{Cap}_{\beta,m,\omega}(\cdot)$ is equivalent to $\operatorname{Cap}_{\omega,m,\omega}(\cdot)$. Therefore, we can use either $\operatorname{Cap}_{\beta,m,\omega}(\cdot)$ or $\operatorname{Cap}_{\omega,m,\omega}(\cdot)$ to define convergence in capacity.
\end{remark}
We give a weak convergence theorem which will be useful later.

\begin{proposition}\label{prop: monotone_capcity convergence}
   Let $\varphi_j\in \operatorname{SH}_{m}(X,\beta, \omega)$ be a decreasing (resp. increasing) sequence converging (resp. almost everywhere) to a function $\varphi\in \operatorname{SH}_{m}(X,\beta, \omega)\cap L^\infty(X)$, then $\varphi_j$ converges to $\varphi$ in capacity.
\end{proposition}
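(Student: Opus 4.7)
The approach combines the quasi-continuity of $(\chi,\omega,m)$-subharmonic functions with Dini's theorem. The decreasing case is handled directly; the increasing case is first reduced to a uniformly bounded situation, after which the $m$-polar exceptional set on which pointwise convergence fails is absorbed into a small-capacity open set.

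\textbf{Decreasing case.} Fix $\delta,\varepsilon>0$. Applying quasi-continuity simultaneously to the countable family $\{\varphi_j\}_{j\geq 1}\cup\{\varphi\}$ produces an open set $U\subset X$ with $\operatorname{Cap}_{\omega,m,\omega}(U)<\varepsilon$ such that every $\varphi_j$ and $\varphi$ is continuous on the compact set $K:=X\setminus U$. On $K$, the non-negative continuous differences $\varphi_j-\varphi$ decrease pointwise to the continuous function $0$, so Dini's theorem yields uniform convergence. Thus for $j$ sufficiently large, $\{\varphi_j-\varphi\geq\delta\}\subset U$, which gives $\operatorname{Cap}_{\omega,m,\omega}(\{\varphi_j-\varphi\geq\delta\})<\varepsilon$ and finishes this case.

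\textbf{Increasing case, truncation to the bounded setting.} For $R>\delta$, set $\tilde\varphi_j:=\max(\varphi_j,\varphi-R)$. Since $\varphi-R\in\operatorname{SH}_m(X,\beta,\omega)\cap L^\infty(X)$, these functions remain in $\operatorname{SH}_m(X,\beta,\omega)$, are uniformly bounded between $\varphi-R$ and $\|\varphi\|_\infty$, still increase a.e.\ to $\varphi$, and a direct case split shows $\{\varphi-\varphi_j>\delta\}\subset\{\varphi-\tilde\varphi_j>\delta\}$ whenever $R>\delta$. It therefore suffices to establish convergence in capacity under the assumption that the increasing sequence is uniformly bounded.

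\textbf{Increasing case, bounded situation.} Write $\varphi_j$ for the truncated sequence. The exceptional set $N:=\{\lim_j\varphi_j<\varphi\}$ is contained in the $m$-polar set $\{(\sup_j\varphi_j)^*>\sup_j\varphi_j\}$ by \cref{lem: negligible is polar}. Using outer regularity, for any $\eta>0$ there is an open $V\supset N$ with $\operatorname{Cap}_{\omega,m,\omega}(V)<\eta$: construct a global $(\omega,m)$-subharmonic $v$ equal to $-\infty$ on $N$, normalize $\sup_X v=0$, and apply the Chebyshev-type estimate $\operatorname{Cap}_{\omega,m,\omega}(\{v<-M\})\leq M^{-1}\sup_{u}\int_X(-v)\,H_{\omega,m}(u)$ over test functions $u$ with $-1\leq u\leq 0$, which tends to zero as $M\to\infty$ thanks to the fact that the Hessian measure of a bounded $(\omega,m)$-subharmonic function does not charge $m$-polar sets and to the integrability of $v$ against such measures. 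Choosing $V$ with $\operatorname{Cap}(V)<\varepsilon/2$ and $U$ from quasi-continuity with $\operatorname{Cap}(U)<\varepsilon/2$ so that all $\varphi_j,\varphi$ are continuous on $K:=X\setminus(U\cup V)$, the non-negative continuous differences $\varphi-\varphi_j$ decrease pointwise to $0$ on the compact set $K$ (pointwise convergence holds outside $N\subset V$), Dini gives uniform convergence, and we conclude $\{\varphi-\varphi_j\geq\delta\}\subset U\cup V$ for $j$ large.

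\textbf{Expected main obstacle.} The delicate ingredient is the outer regularity of $\operatorname{Cap}_{\omega,m,\omega}$ for $m$-polar sets, which itself relies on two facts: (i) that Hessian measures of bounded $(\omega,m)$-subharmonic functions place no mass on $m$-polar sets, and (ii) that $(\omega,m)$-subharmonic functions are integrable against such Hessian measures. Both are classical in the Bedford-Taylor situation and are supplied in the $(\omega,m)$ framework by the pluripotential theory developed in \cite{KN25a}.
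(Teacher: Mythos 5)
Your decreasing case is correct and genuinely different from the paper's argument: you run the classical quasi-continuity + Dini scheme globally, whereas the paper simply reduces to a local statement. Specifically, the paper's proof invokes \cref{comparison of capacity} to show $\operatorname{Cap}_{\beta,m,\omega}$ is dominated by a finite sum of local capacities $\operatorname{Cap}_m(\cdot\cap U_j, U_j')$, and on each coordinate ball one adds a smooth potential $\phi$ with $dd^c\phi\ge\beta$ and then cites the local monotone-convergence-in-capacity result \cite[Corollary 4.11]{KN25a}. Your route is self-contained and illuminating in the decreasing case (note that $\varphi\le\varphi_j\le\varphi_1$ together with $\varphi\in L^\infty$ and $\varphi_1$ usc on compact $X$ gives the uniform boundedness Dini needs).

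The increasing case, however, has a genuine gap at the outer-regularity step. From \cref{lem: negligible is polar} you only get that $N=\{\lim_j\varphi_j<\varphi\}$ is $m$-polar in the \emph{local} sense: around each point there is a local $(\omega,m)$-subharmonic function equal to $-\infty$ on $N$ nearby. To run your Chebyshev argument you need a \emph{global} $v\in\operatorname{SH}_m(X,\omega,\omega)$ with $v|_N=-\infty$. Passing from local to global is exactly the content of a Josefson/Alexander-type theorem for $(\omega,m)$-subharmonic functions on a compact Hermitian manifold; it is not elementary, and you neither prove it nor cite a reference for it (the relevant statement appears to be in \cite{KN25b}, which the paper cites for polar sets, but you would need to invoke it explicitly). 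Secondly, even granting such a $v$, your estimate
\[
\operatorname{Cap}_{\omega,m,\omega}(\{v<-M\})\le M^{-1}\sup_{-1\le u\le0}\int_X(-v)\,H_{\omega,m}(u)
\]
requires the right-hand supremum to be \emph{finite}, i.e.\ a uniform Chern--Levine--Nirenberg bound $\sup_u\int_X(-v)\,H_{\omega,m}(u)<\infty$. This is plausible from \cite[Proposition 3.7]{KN25a} but should be stated and justified; as written it is an unverified assertion. Both issues are invisible in the paper's own proof precisely because localizing to coordinate balls hands the entire problem to the local theory, where KN25a has already done the work. If you prefer to keep your global argument, the cleanest fix for the increasing case would be to cite (or prove) the statement that $m$-polar sets have vanishing outer $\operatorname{Cap}_{\omega,m,\omega}$-capacity, rather than reproving it ad hoc inside this proposition.
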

\begin{proof}
  By \cref{comparison of capacity}, the result is local, hence reduced to the case of \cite[Corollary 4.11]{KN25a}.
\end{proof}

\begin{lemma} \label{lem: dominate by Capacity}
    Let $v_1, \cdots, v_m$ be bounded  $(\omega,m)$-subharmonic functions in a domain $\Omega\subset C^n$ such that $0\leq v_i\leq M$. Set $\mu=dd^c v_1 \wedge \cdots \wedge dd^c v_m \wedge \omega^{n-m}$, then there exists a uniform constant $$\mu(\cdot) \leq C\cdot \operatorname{Cap}_{m}(\cdot).$$ 
\end{lemma}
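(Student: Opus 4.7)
The plan is to reduce the mixed measure to a pure one by a convex averaging trick and then apply the definition of $\operatorname{Cap}_m$ directly. First I would normalize: set $u_i := v_i/M - 1$, so that $-1 \leq u_i \leq 0$ and each $u_i$ still lies in $\operatorname{SH}_m(\omega,\Omega)$ (since the class is invariant under affine transformations $x\mapsto ax+b$ with $a>0$). Under this change one has
\[
\mu \;=\; M^m\, dd^c u_1 \wedge \cdots \wedge dd^c u_m \wedge \omega^{n-m}.
\]

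Next I would introduce the average $u := \tfrac{1}{m}(u_1 + \cdots + u_m)$. Since $\operatorname{SH}_m(\omega,\Omega)$ is a convex cone, $u \in \operatorname{SH}_m(\omega,\Omega)$; moreover $-1 \leq u \leq 0$, so $u$ is a competitor for $\operatorname{Cap}_m(\cdot,\Omega)$. Expanding via the multinomial theorem,
\[
(dd^c u)^m \;=\; \frac{1}{m^m}\bigl(dd^c u_1 + \cdots + dd^c u_m\bigr)^m \;=\; \frac{m!}{m^m}\, dd^c u_1 \wedge \cdots \wedge dd^c u_m \;+\; R,
\]
where $R$ is a sum of mixed wedge products of the $dd^c u_i$'s with non-negative integer multinomial coefficients. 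By the positivity of wedge products of $(\omega,m)$-positive currents built from bounded $(\omega,m)$-subharmonic functions, proved in \cite[Section 3]{KN25a} (and used already in \cref{def: Hessian measures in the local setting}), the current $R \wedge \omega^{n-m}$ is a non-negative measure. Hence
\[
dd^c u_1 \wedge \cdots \wedge dd^c u_m \wedge \omega^{n-m} \;\leq\; \frac{m^m}{m!}\,(dd^c u)^m \wedge \omega^{n-m}.
\]

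Integrating over any Borel set $E \subset \Omega$ and invoking \cref{def: local cap} gives
\[
\mu(E) \;=\; M^m \int_E dd^c u_1 \wedge \cdots \wedge dd^c u_m \wedge \omega^{n-m} \;\leq\; \frac{M^m m^m}{m!} \int_E (dd^c u)^m \wedge \omega^{n-m} \;\leq\; \frac{M^m m^m}{m!}\, \operatorname{Cap}_m(E,\Omega),
\]
which yields the desired estimate with $C = M^m m^m/m!$.

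The only potentially delicate point is the non-negativity of the remainder $R\wedge\omega^{n-m}$: unlike the classical pluripotential setting, here mixed Hessians of $(\omega,m)$-subharmonic functions require the Kołodziej--Nguyen framework to be defined and to be positive as currents. But since all the $u_i$'s are bounded and $(\omega,m)$-subharmonic, the inductive construction recalled in \cref{def: Hessian measures in the local setting} and its positivity (from \cite{KN25a}) apply directly, so no further work is needed. The remaining steps are purely algebraic and pose no obstacle.
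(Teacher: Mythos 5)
Your proof is correct and takes essentially the same route as the paper: both exploit that the mixed Hessian measure is dominated by the pure one built from the sum (or average) of the $v_i$'s, using positivity of the mixed terms that appear when expanding the power, and then use that sum/average as a competitor for $\operatorname{Cap}_m$ after rescaling to $[-1,0]$. The paper states this more compactly by writing $dd^c v_1\wedge\cdots\wedge dd^c v_m\wedge\omega^{n-m}\leq [dd^c(v_1+\cdots+v_m)]^m\wedge\omega^{n-m}$ and quoting the constant $(mM)^m$; you track the multinomial coefficient and obtain the slightly sharper constant $M^m m^m/m!$, but the argument is the same.
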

\begin{proof}
    Observe that 
    $$dd^c v_1 \wedge \cdots \wedge dd^c v_m \wedge \omega^{n-m} \leq \left[dd^c (v_1+\cdots+v_m)\right]^m \wedge \omega^{n-m},$$
    thus we have 
    $$ 
    \mu(\cdot)\leq (mM)^m \operatorname{Cap}_{m}(\cdot)
    $$
     by  \cref{def: local cap}.
\end{proof}

 \begin{lemma}\label{lem: weak convergence1}
     Let $\mathcal{P}$ be a locally uniformly bounded family of $(\omega,m)$-subharmonic functions in $\Omega$. Fix $0\leq p \leq m$, let $\mathcal{T}$ be the set of currents of the form $T:= \left( \bigwedge_{1\leq i \leq p} dd^c u_i \right) \wedge \omega^{n-m}$, where $u_1, \cdots, u_p \in \mathcal{P}$. Let $(T_j)_{j\in \mathbb{N}}$ be a sequence of currents in $\mathcal{P}$ converging weakly to $T\in \mathcal{T}$. Then, for any locally bounded quasi-continuous function f in $\Omega$, $f\ T_j\rightarrow f\ T$ in the sense of currents of order $0$. 
 \end{lemma}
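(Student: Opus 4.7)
The plan is to follow the standard Bedford--Taylor--Demailly--Kołodziej recipe, combining the quasi-continuity of $f$ with a capacity domination of the trace measures of the currents $T_j$ and $T$. First I would fix a smooth test form $\eta$ of bidegree $(m-p,m-p)$ with support in a relatively compact set $K\Subset \Omega$. Given $\varepsilon>0$, quasi-continuity of $f$ (with respect to $\operatorname{Cap}_m$) yields an open set $U\subset \Omega$ with $\operatorname{Cap}_m(U)<\varepsilon$ such that $f|_{\Omega\setminus U}$ is continuous, and Tietze's extension theorem provides a continuous $\tilde f$ on $\Omega$ with $\tilde f=f$ on $\Omega\setminus U$ and $\|\tilde f\|_\infty\leq \|f\|_\infty$ on a neighborhood of $K$. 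The key decomposition is
$$
\langle fT_j-fT,\eta\rangle=\langle(f-\tilde f)T_j,\eta\rangle+\langle\tilde f\,(T_j-T),\eta\rangle+\langle(\tilde f-f)T,\eta\rangle,
$$
whose middle term tends to $0$ as $j\to\infty$ since $\tilde f\,\eta$ is a continuous compactly supported test form and $T_j\to T$ weakly by hypothesis.

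For the two remaining terms the integrands vanish on $\Omega\setminus U$ and are bounded in absolute value by $2\|f\|_\infty$. Writing $\eta$ locally as a finite combination of strongly positive forms, each dominated pointwise by a constant multiple of $\omega^{m-p}$, the problem reduces to estimating $\int_{U\cap K} S\wedge \omega^{m-p}$ for $S\in\{T_j,T\}$. Both such $S$ are of the form $\bigl(\bigwedge_{i=1}^p dd^c u_i\bigr)\wedge \omega^{n-m}$ with $u_i$ in the locally uniformly bounded family $\mathcal{P}$. To connect this with \cref{lem: dominate by Capacity}, I would cover $K$ by finitely many coordinate balls and on each pick a smooth strictly plurisubharmonic function $\rho$ (for instance $\rho(z)=A|z|^2$ with $A$ large) such that $dd^c\rho\geq A\omega$. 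A binomial expansion of $(dd^c\rho)^{m-p}=\bigl((dd^c\rho-A\omega)+A\omega\bigr)^{m-p}$ then yields $(dd^c\rho)^{m-p}\geq A^{m-p}\omega^{m-p}$, whence
$$
S\wedge \omega^{m-p}\leq A^{-(m-p)}\Bigl(\bigwedge_{i=1}^p dd^c u_i\Bigr)\wedge(dd^c\rho)^{m-p}\wedge \omega^{n-m}.
$$
The right-hand side is a Hessian measure of exactly the type treated in \cref{lem: dominate by Capacity} (all the $u_i$ together with finitely many copies of $\rho$ are uniformly bounded $(\omega,m)$-subharmonic functions), hence is dominated by a constant times $\operatorname{Cap}_m$, with constant independent of $j$.

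Combining these estimates, each of the two error terms is bounded by $C\varepsilon$ uniformly in $j$; letting first $j\to\infty$ and then $\varepsilon\to 0$ concludes the argument. I expect the main obstacle to be precisely this capacity-domination step: one must introduce the factor $(dd^c\rho)^{m-p}$ in place of $\omega^{m-p}$ in order to fit the form of \cref{lem: dominate by Capacity} exactly, and this requires the local choice of $\rho$ described above together with a partition-of-unity patching across the coordinate balls. Everything else---the splitting argument, the weak-convergence step for the continuous piece, and the reduction of a test form to strongly positive ones---is routine.
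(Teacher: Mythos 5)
Your proposal is correct and follows essentially the same strategy as the paper's proof: split via quasi-continuity using a continuous extension of $f$ off a set of small $\operatorname{Cap}_m$, let the weak convergence $T_j\to T$ dispose of the continuous part, and bound the error on the small-capacity set by dominating the Hessian measures by $\operatorname{Cap}_m$ via \cref{lem: dominate by Capacity}. The only difference is cosmetic: you pass from $\eta$ through $\omega^{m-p}$ and then to $(dd^c\rho)^{m-p}$ with $\rho=A|z|^2$, whereas the paper directly compares the test form $\Theta$ with $C_1\beta^{m-p}$ where $\beta=dd^c|z|^2$, which is the same smooth $(\omega,m)$-subharmonic potential up to scaling, so no genuinely new idea is introduced.
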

\begin{proof}
The proof is adapted from \cite[Lemma 4.21]{GZ17}.

Let $\Theta$ be a positive continuous test form of bidegree $(m-p,m-p)$ in $\Omega$ with compact support $K\subset\subset \Omega$, fix $\varepsilon>0$ a small constant.
From the quasi-continuity of $f$ there is an open subset $G\subset \Omega$ with $\operatorname{Cap}_{m}(G;\Omega)< \varepsilon$ such that 
$f|_{K-G}$ is continuous. Let $g$ be a continuous function in $\Omega$ with compact support such that $g=f$ on $K-G$, the existence of such $g$ can be deduced from the Tieztze's extention lemma.

Then
$$
\int_{\Omega} f\left(T_j \wedge \Theta-T \wedge \Theta\right)=\int_{\Omega} g\left(T_j \wedge \Theta-T \wedge \Theta\right)+\int_G(f-g)\left(T_j \wedge \Theta-T \wedge \Theta\right) .
$$

Observe that $\lim _{j \rightarrow+\infty} \int_{\Omega} g\left(T_j \wedge \Theta-T \wedge \Theta\right)=0$ since $T_j$ converges weakly towards $T$. We claim that the second term is $O(\varepsilon)$. Indeed, since $|f-g|$ is bounded by a constant $M>0$ on the support of $\Theta$,
$$
\left|\int_G(f-g)\left(T_j \wedge \Theta-T \wedge \Theta\right)\right| \leq M \int_G\left(T_j \wedge \Theta+T \wedge \Theta\right).
$$

Observe that $T_j \wedge \Theta \leq C_1 T_j \wedge \beta^{m-p}$ for some $C_1>0$, where $\beta=d d^{\mathrm{c}}|z|^2$. Moreover, there exists a uniform constant $C_2>0$ such that for all Borel subsets $E \subset \Omega$,
$$
\int_E \left(  \bigwedge_{1 \leq i \leq p}d d^{\mathrm{c}} v_i \right)\wedge \omega^{n-m} \wedge \beta^{m-p} \leq C_2 \operatorname{Cap}_m(E, \Omega)
$$
as $v_1, \ldots, v_p \in \operatorname{SH}_m(\omega,\Omega) \cap L_{\text {loc }}^{\infty}(\Omega)$ with $\left|v_i\right|\leq A$ (see \cref{lem: dominate by Capacity})  . Therefore,
$$
\int_G T_j \wedge \Theta+T \wedge \Theta \leq 2 C_1 C_2 \varepsilon
$$
and the proof is complete.

\end{proof}

\begin{lemma}\label{lem: convergence in loc}
    Let $\left(f_j\right)_{j \geq 0}$ be a sequence of (locally) uniformly bounded Borel functions converging in capacity to a  (locally) bounded Borel function $f$ in $\Omega$. Then $f_j \rightarrow f$ in $L_{\text {loc }}^1(\Omega)$.
\end{lemma}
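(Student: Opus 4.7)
The strategy is to reduce the statement to the standard principle that a uniformly bounded sequence converging in measure also converges in $L^1_{\text{loc}}$; the only ingredient that needs to be supplied is that the local Hessian capacity $\operatorname{Cap}_m(\cdot,\Omega)$ controls Lebesgue measure on $\Omega$ from above.

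For the capacity--measure comparison, recall from \cref{def: local cap} that $\Omega$ is a bounded domain. I choose $D>0$ with $|z|\le D$ on $\overline{\Omega}$ and put $v(z):=D^{-2}(|z|^2-D^2)$. Then $v$ is smooth plurisubharmonic on $\mathbb{C}^n$, satisfies $-1\le v\le 0$ on $\Omega$, and belongs in particular to $\operatorname{SH}_m(\omega,\Omega)$. Since $dd^c v=D^{-2}\,dd^c|z|^2$ is a positive multiple of the Euclidean K\"ahler form, there is a constant $c_0>0$ (depending only on $D$ and $\omega$) such that $(dd^c v)^m\wedge\omega^{n-m}\ge c_0\,d\lambda_{2n}$ on $\Omega$, where $\lambda_{2n}$ denotes Lebesgue measure. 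Testing $v$ against the definition of $\operatorname{Cap}_m(\cdot,\Omega)$ gives
$$\lambda_{2n}(E)\le c_0^{-1}\,\operatorname{Cap}_m(E,\Omega)\quad\text{for every Borel set }E\subset\Omega.$$

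For the convergence step, I fix a relatively compact open set $U\Subset\Omega$ and a common uniform bound $M>0$ for $|f_j|$ and $|f|$ on $U$. For any $\delta>0$, I split
$$\int_U|f_j-f|\,d\lambda_{2n}\le\delta\,\lambda_{2n}(U)+2M\,\lambda_{2n}(\{|f_j-f|>\delta\}\cap U)\le\delta\,\lambda_{2n}(U)+2Mc_0^{-1}\operatorname{Cap}_m(\{|f_j-f|>\delta\},\Omega).$$
Taking $\limsup_{j\to\infty}$, the second term vanishes by the hypothesis of convergence in capacity; then letting $\delta\to 0^+$ yields $\int_U|f_j-f|\,d\lambda_{2n}\to 0$. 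Since $U\Subset\Omega$ was arbitrary, this is precisely the required $L^1_{\text{loc}}(\Omega)$ convergence.

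I do not anticipate any substantial obstacle here; the only non-routine point is the capacity--Lebesgue comparison, which is settled by the single explicit test function $v$ above, and the remaining argument is the standard three-term decomposition of the $L^1$ integral.
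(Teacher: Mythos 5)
Your proof is correct and takes essentially the same route as the paper: both reduce to the standard three-term $L^1$ estimate on a relatively compact subset and then invoke the fact that Lebesgue measure is controlled from above by $\operatorname{Cap}_m(\cdot,\Omega)$. The only cosmetic difference is in how that capacity--Lebesgue comparison is produced: the paper cites its Lemma~\ref{lem: dominate by Capacity} with $v_1=\cdots=v_m=|z|^2$, whereas you test the normalized function $v=D^{-2}(|z|^2-D^2)$ directly against Definition~\ref{def: local cap}; the two are the same mechanism. One small precision you may want to add: the constant $c_0$ in $(dd^cv)^m\wedge\omega^{n-m}\ge c_0\,d\lambda_{2n}$ is only guaranteed uniform on relatively compact subsets of $\Omega$ (since $\omega$ is merely a Hermitian form on $\Omega$), but since the argument is applied to $E=\{|f_j-f|>\delta\}\cap U$ with $U\Subset\Omega$ this is harmless.
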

\begin{proof}
The proof follows from \cite[Lemma 4.24]{GZ17}.
    Fix a compact set $K \subset \Omega$ and $\delta>0$. Since the sequence $\left(f_j\right)_{j \in \mathbb{N}}$ and $f$ are uniformly bounded in $K$ (by some $M>0$ ), we get
$$
\int_K\left|f_j-f\right| d \lambda \leq 2 M \lambda\left(K \cap\left\{\left|f_j-f\right|>\delta\right\}\right)+\delta \lambda(K) .
$$

Since the Lebesgue measure $\lambda$ is dominated by $\operatorname{Cap}_{m}(\cdot)$ (from \cref{lem: dominate by Capacity} if we take $v_1=\cdots=v_m=|z|^2$), the first term in the right-hand side converges to $0$. The claim follows since $\delta>0$ can be chosen arbitrarily small.
\end{proof}

\begin{theorem} \label{thm: weak convergence}
     Let $U \subset \mathbb{C}^n$ be an open set. Suppose $\left\{f_j\right\}_j$ are uniformly bounded quasi-continuous functions converging in capacity to a bounded quasi-continuous function $f$ on U. Let $\left\{u_1^j\right\}_j,\left\{u_2^j\right\}_j, \ldots,\left\{u_m^j\right\}_j$ be uniformly bounded  $(\omega,m)$- subharmonic  functions on $\Omega$, converging in capacity to bounded $(\omega,m)$ subharmonic functions  $ u_1, u_2, \ldots, u_m$ respectively. Then we have the following weak convergence of measures:
$$
f_j  dd^c u_1^j \wedge dd^c u_2^j \wedge \ldots \wedge dd^c u_m^j \wedge \omega^{n-m}\rightarrow f dd^c  u_1 \wedge dd^c u_2 \wedge \ldots \wedge dd^c u_m \wedge \omega^{n-m} .
$$
\end{theorem}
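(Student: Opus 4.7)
The plan is to decouple the argument into two independent tasks: first, to prove the weak convergence of the measures
\[
T_j := dd^c u_1^j \wedge \cdots \wedge dd^c u_m^j \wedge \omega^{n-m} \;\longrightarrow\; T := dd^c u_1 \wedge \cdots \wedge dd^c u_m \wedge \omega^{n-m}
\]
without the weight $f_j$; and second, to combine this with the capacity convergence $f_j \to f$ to upgrade to the weighted statement.

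For the first task I would induct on the number of $dd^c$ factors. The base case follows from \cref{lem: convergence in loc} (capacity convergence implies $L^1_{\mathrm{loc}}$ convergence) together with distributional continuity of $dd^c$. For the inductive step, test against a smooth, compactly supported $\varphi$ and integrate by parts to obtain $\int \varphi T_j = \int u_m^j \, dd^c\varphi \wedge S_j$, where $S_j := dd^c u_1^j \wedge \cdots \wedge dd^c u_{m-1}^j \wedge \omega^{n-m}$ (and $S$ analogously). Split
\[
\int u_m^j \, dd^c\varphi \wedge S_j - \int u_m \, dd^c\varphi \wedge S = \int (u_m^j - u_m) \, dd^c\varphi \wedge S_j + \int u_m \, dd^c\varphi \wedge (S_j - S).
\]
For the first summand, fix $A$ with $-A\omega \leq dd^c\varphi \leq A\omega$ and split the integration domain into $\{|u_m^j - u_m| > \delta\}$ and its complement; \cref{lem: dominate by Capacity} together with the capacity convergence of $u_m^j \to u_m$ makes the first piece vanish and leaves an $O(\delta)$ remainder, where $\delta$ is arbitrary. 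For the second summand, decompose $dd^c\varphi = \chi_+ - \chi_-$ as a difference of smooth positive $(1,1)$-forms (e.g.\ $\chi_- = A\omega$, $\chi_+ = dd^c\varphi + A\omega$); then $\chi_\pm \wedge S_j \to \chi_\pm \wedge S$ weakly by the induction hypothesis and each has uniformly capacity-dominated mass, so since $u_m$ is bounded and quasi-continuous the argument of \cref{lem: weak convergence1} yields $\int u_m \, \chi_\pm \wedge S_j \to \int u_m \, \chi_\pm \wedge S$; subtraction closes the induction.

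For the second task, fix a smooth compactly supported $\varphi$ and decompose
\[
\int \varphi f_j T_j - \int \varphi f T = \int \varphi f (T_j - T) + \int \varphi (f_j - f) T_j.
\]
The first term tends to zero by \cref{lem: weak convergence1} applied to the bounded quasi-continuous function $f$. For the second, let $M := \sup_j(\|f_j\|_\infty + \|f\|_\infty)$ and split at $\{|f_j - f| > \delta\}$:
\[
\left|\int \varphi (f_j - f) T_j\right| \leq 2M\|\varphi\|_\infty \cdot T_j(\{|f_j-f|>\delta\}\cap \mathrm{supp}\,\varphi) + \delta\|\varphi\|_\infty \cdot T_j(\mathrm{supp}\,\varphi).
\]
The first piece vanishes as $j \to \infty$ because $T_j \leq C\,\mathrm{Cap}_m$ by \cref{lem: dominate by Capacity} and the capacity of the set goes to $0$; the second piece is arbitrarily small.

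The main obstacle is the second summand of the inductive step in task one: making the order-zero convergence rigorous when the auxiliary form $dd^c\varphi$ is signed rather than positive. This is resolved by the positive-negative decomposition $dd^c\varphi = \chi_+ - \chi_-$ above, together with the observation that the essential ingredient in the proof of \cref{lem: weak convergence1} is uniform capacity-domination of the test currents, a property that persists upon wedging a smooth positive $(1,1)$-form with $S_j$.
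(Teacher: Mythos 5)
Your proposal takes essentially the same route as the paper's proof, which adapts \cite[Proposition 4.26]{GZ17}: induct on the number of $dd^c$-factors, and at each stage split $\int \varphi f_j T_j - \int \varphi f T$ into $\int \varphi(f_j - f)T_j + \int \varphi f(T_j - T)$, handling the first term by Chern--Levine--Nirenberg domination of $T_j$ by $\operatorname{Cap}_m$ together with capacity convergence of $f_j$, and the second term by \cref{lem: weak convergence1}. The paper is quite terse about how the unweighted convergence $T_j \to T$ is propagated through the induction (it only writes ``Using induction on $p$ \dots\ Clearly it suffices to prove that if $T_j \to T$ weakly, then $f_j T_j \to fT$ weakly''); you spell this out via integration by parts and the positive-negative decomposition of the resulting smooth form, which is precisely the content the paper is implicitly relying on.

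One small imprecision to fix: the identity $\int \varphi T_j = \int u_m^j\, dd^c\varphi \wedge S_j$ assumes $S_j$ is $d$-closed, which fails here because $\omega^{n-m}$ is not closed in the Hermitian setting. With the inductive definition $dd^c u_m^j \wedge S_j' := dd^c\bigl(u_m^j S_j'\bigr)$, $S_j' := dd^c u_1^j \wedge \cdots \wedge dd^c u_{m-1}^j$, the correct integration by parts reads $\int \varphi T_j = \int u_m^j\, S_j' \wedge dd^c(\varphi \omega^{n-m})$, and $dd^c(\varphi\omega^{n-m})$ produces extra terms involving $d\omega$, $d^c\omega$, $dd^c\omega$. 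These are still smooth, compactly supported, signed $(n-m+1,n-m+1)$-forms, so the positive--negative decomposition you already invoke absorbs them with no change to the argument --- but they should be written down, since the non-closedness of $\omega$ is exactly where the Hermitian case departs from the K\"ahler case of \cite{GZ17}.
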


\begin{proof}
    The proof is adapted from \cite[Proposition 4.26]{GZ17}.
We proceed by induction. It follows from \cref{lem: convergence in loc}    that $f_j$ (resp. $u_j$ ) converges in $L_{\text {loc }}^1(\Omega)$ to $f$ (resp. $u$ ). If $p=1$, we know that $d d^{\mathrm{c}} u_j^1 \rightarrow d d^{\mathrm{c}} u^1$ weakly. Using induction on $p$ and setting
$$
T:=\bigwedge_{1 \leq i \leq p} d d^{\mathrm{c}} u^i \wedge \omega^{n-m}, \quad T_j:=\bigwedge_{1 \leq i \leq p} d d^{\mathrm{c}} u_j^i \wedge \omega^{n-m}.
$$
Clearly it suffices to prove that if $T_j \rightarrow T$ weakly, then $f_j T_j \rightarrow f T$ weakly. 
The statement is local, so we can assume that $\Omega$ is the unit ball and all the functions are bounded between $-1$ and $0$ . Fix a positive  test form $\Theta$ of bidegree $(m-p, m-p)$ and observe that
$$
\int_{\Omega} f_j T_j \wedge \Theta-\int_{\Omega} f T \wedge \Theta=\int_{\Omega}\left(f_j-f\right) T_j \wedge \Theta+\int_{\Omega} f\left(T_j \wedge \Theta-T \wedge \Theta\right)=I_j+J_j.
$$
It follows from \cref{lem: weak convergence1} that $\lim _j J_j=0$. Thus, it remains to prove that $\lim _j I_j=0$. Fix $\delta>0$ small, let $K$ be the support of $\Theta$ and set $E_j:=K \cap\left\{\mid f_j-\right.$ $f \mid \geq \delta\}$. Then
$$
\left|I_j\right| \leq \int_{\Omega}\left|f_j-f\right| T_j \wedge \Theta \leq \int_{K \cap E_j}\left|f_j-f\right| T_j \wedge \Theta+\delta \int_K T_j \wedge \Theta.
$$
It follows from previous estimates in the proof of \cref{lem: weak convergence1} that
$$
\left|I_j\right| \leq C^{\prime} \operatorname{Cap}_{m}\left(E_j, \Omega\right)+\delta M(K),
$$
where $M(K):=C \sup _j \int_K T_j \wedge \beta^{n-p}$ is finite by the Chern-Levine-Nirenberg inequalities (see \cite[Proposition 3.7]{KN25a}). Hence, $\lim _j I_j=0$.
\end{proof}

\begin{theorem}\label{thm: monotonicity}
Let $\beta^j, j \in\{1, \ldots, m\}$ be smooth real $(1,1)$-forms on $X$ admitting bounded potentials $\rho_j\in\operatorname{SH}_m(X,\beta_j,\omega)\cap L^\infty(X)$ respectively. Suppose that for all $j \in\{1, \ldots, n\}$ we have  bounded functions (and  uniformly bounded from above with respect to $k$)  $u_j, u_j^k \in \operatorname{SH}_{m}(X,\beta^{j},\omega)$ such that $u_j^k \rightarrow u_j$ in $\operatorname{Cap}_{\omega,m,\omega}(\cdot)$ as $k \rightarrow \infty$. Let $\chi_k, \chi \geq 0$ be quasi-continuous and uniformly bounded functions such that $\chi_k \rightarrow \chi$ in $\operatorname{Cap}_{\omega,m,\omega}(\cdot)$. Then
$$
\liminf _{k \rightarrow \infty} \int_X \chi_k \beta_{u_1^k}^1 \wedge \ldots \wedge \beta_{u_m^k}^m  \wedge \omega^{n-m}    \geq 
\int_X \chi \beta_{u_1}^1 \wedge \ldots \wedge \beta_{u_m}^m \wedge \omega^{n-m}.
$$
\end{theorem}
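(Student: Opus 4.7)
The proof I propose localizes via a partition of unity, applies the local weak convergence result (Theorem~3.7), and then sums up, in close parallel to the classical K\"ahler argument.

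\emph{Step one.} Cover $X$ by finitely many coordinate balls $U_\alpha$ chosen small enough that on each $U_\alpha$ there exist smooth local potentials $\phi_j^\alpha$ with $dd^c\phi_j^\alpha \geq \beta^j$ in the $(\omega,m)$-cone for $j=1,\dots,m$, and fix a smooth partition of unity $\{\rho_\alpha\}$ subordinate to $\{U_\alpha\}$. Setting $v_j^{k,\alpha}:=u_j^k+\phi_j^\alpha$ and $v_j^\alpha:=u_j+\phi_j^\alpha$ produces locally $(\omega,m)$-subharmonic families, with $v_j^{k,\alpha}\to v_j^\alpha$ in $\operatorname{Cap}_{\omega,m,\omega}$ as $k\to\infty$, and with
\[
\beta^j+dd^c u_j^k \;=\; dd^c v_j^{k,\alpha}\;-\;\eta_j^\alpha, \qquad \eta_j^\alpha:=dd^c\phi_j^\alpha-\beta^j\ \text{smooth and in}\ \overline{\Gamma_m(\omega)}.
\]

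\emph{Step two.} Multinomially expand the $m$-fold wedge product
\[
\bigwedge_{j=1}^m\bigl(\beta^j+dd^c u_j^k\bigr) \;=\; \sum_{S\subseteq\{1,\dots,m\}}(-1)^{|S^c|}\;\bigwedge_{j\in S}dd^c v_j^{k,\alpha}\;\wedge\;\bigwedge_{j\in S^c}\eta_j^\alpha.
\]
Each summand is a mixed Hessian of $(\omega,m)$-subharmonic functions wedged with the smooth form $\bigwedge_{j\in S^c}\eta_j^\alpha\wedge\omega^{n-m}$. Since $\chi_k\rho_\alpha$ is quasi-continuous, uniformly bounded, compactly supported in $U_\alpha$, and converges in capacity to $\chi\rho_\alpha$, Theorem~3.7 gives weak convergence of each summand as a signed Radon measure on $U_\alpha$. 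Testing against any continuous cut-off equal to $1$ on $\operatorname{supp}\rho_\alpha$ yields convergence of each individual integral, and the partition-of-unity recombination $\int_X = \sum_\alpha\int\rho_\alpha$ then produces the desired global conclusion.

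\emph{Step three, and the main technical obstacle.} Theorem~3.7 is stated for sequences that are uniformly bounded on both sides, whereas here the $u_j^k$ (hence the $v_j^{k,\alpha}$) are only uniformly bounded from above. To bypass this I would truncate from below, setting $\tilde v_j^{k,\alpha}:=\max(v_j^{k,\alpha},\,v_j^\alpha-1)$, which is $(\omega,m)$-subharmonic, uniformly bounded on $U_\alpha$, and still converges in capacity to $v_j^\alpha$. The generalized maximum principle (Proposition~2.12) then shows that the mixed Hessian products of the truncated families agree with the original ones on the coincidence set
\[
E_k := \bigcap_{j=1}^m\bigl\{v_j^{k,\alpha}\geq v_j^\alpha-1\bigr\},
\]
whose complement satisfies $\operatorname{Cap}_{\omega,m,\omega}(U_\alpha\setminus E_k)\to 0$. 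Running the Step~2 argument on the truncated measures, then discarding the bad set via the capacity estimate, produces the inequality $\liminf_k\int_X\chi_k\,d\mu_k \geq \int_X\chi\,d\mu$; the failure of equality reflects precisely the mass that the untruncated measures may concentrate on sets of vanishing capacity, which is invisible to the truncated approximations.
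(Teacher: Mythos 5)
Your proposal is in the right spirit and shares the core idea with the paper's proof (truncate from below, reduce to the bounded case, then invoke the weak convergence theorem for uniformly bounded families), but the paper executes this much more cleanly and your version has a few genuine gaps worth spelling out.

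The paper does not localize at all. It truncates at the \emph{given} global bounded potentials $\rho_j$: setting $\bar u_j^k := \max(u_j^k,\rho_j)$ and, crucially, replacing the sharp indicator of a coincidence set by the smooth cutoff
$f_j^{k,\varepsilon} := \max(u_j^k-\rho_j,0)/\bigl(\max(u_j^k-\rho_j,0)+\varepsilon\bigr)$.
Because $f^{k,\varepsilon}=\prod_j f_j^{k,\varepsilon}$ vanishes wherever any $u_j^k\leq\rho_j$, the global maximum principle (Proposition~\ref{prop: max principle for beta}) gives
$f^{k,\varepsilon}\chi_k\,\beta^1_{u_1^k}\wedge\cdots\wedge\beta^m_{u_m^k}\wedge\omega^{n-m}
= f^{k,\varepsilon}\chi_k\,\beta^1_{\bar u_1^k}\wedge\cdots\wedge\beta^m_{\bar u_m^k}\wedge\omega^{n-m}$,
and since $f^{k,\varepsilon}$ is itself quasi-continuous, uniformly bounded in $[0,1]$, and converges in capacity, Theorem~\ref{thm: weak convergence} applies directly to the product $f^{k,\varepsilon}\chi_k$. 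Using $0\leq f^{k,\varepsilon}\leq 1$ and then letting $\varepsilon\to 0$ closes the argument. No partition of unity and no multinomial expansion are needed: the cutoff handles the transition region smoothly instead of by excision.

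By contrast, your route introduces several complications that you would need to resolve carefully. First, the summands of your multinomial expansion have the form $\bigwedge_{j\in S}dd^c v_j^{k,\alpha}\wedge\bigwedge_{j\in S^c}\eta_j^\alpha\wedge\omega^{n-m}$ with $|S|<m$; Theorem~\ref{thm: weak convergence} is stated for $m$ Hessian factors of $(\omega,m)$-subharmonic functions and does not cover mixed products with smooth $(\omega,m)$-positive form factors $\eta_j^\alpha$ (which are in general not $dd^c$-exact). An extension of the theorem is needed here. Second, the equality of Hessians that you invoke on $E_k=\bigcap_j\{v_j^{k,\alpha}\geq v_j^\alpha-1\}$ only follows from Proposition~\ref{max principle}/Proposition~\ref{prop: max principle for beta} on the plurifine-open set $\bigcap_j\{v_j^{k,\alpha}> v_j^\alpha-1\}$; the contact sets $\{v_j^{k,\alpha}= v_j^\alpha-1\}$ may carry Hessian mass and must be swallowed into the small-capacity bad set by tightening the truncation level. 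Third, the partition-of-unity machinery is actually superfluous: your local truncation $\max(v_j^{k,\alpha},v_j^\alpha-1)$ is, unwound, just the global potential $\max(u_j^k,u_j-1)\in\operatorname{SH}_m(X,\beta^j,\omega)$, so you could have stayed global from the start. Where the paper's argument really gains over yours is the smooth cutoff $f^{k,\varepsilon}$, which replaces the sharp indicator $\mathds{1}_{E_k}$ by an object that is itself admissible in the convergence theorem, avoiding the separate excision-and-estimate step and its boundary subtleties.
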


Proof. 
The proof follows the idea of \cite[Theorem 2.6]{DDL25}.
We may assume without loss of generality that $u_j>\rho_j$ up to shifting a constant.
Fix $\varepsilon>0$, consider
$$
f_j^{k, \varepsilon}:=\frac{\max \left(u_j^k-\rho_j, 0\right)}{\max \left(u_j^k-\rho_j, 0\right)+\varepsilon}, j=1,\; \ldots, m, k \in \mathbb{N}^*
$$
and
$$
\bar{u}_j^{k}:=\max \left(u_j^k, \rho_j\right).
$$

Observe that for $j$ fixed, the functions $\bar{u}_j^{k} \geq \rho_j$ are uniformly bounded in $X$  and converge in $\operatorname{Cap}_{\omega,m,\omega}(\cdot)$ to $\bar{u}_j:=\max(u_j,\rho_j)=u_j$ as $k \rightarrow \infty$. Moreover, $f_j^{k,  \varepsilon}=0$ if $u_j^k \leq \rho_j$. By \cref{prop: max principle for beta},  we can write
$$
f^{k, \varepsilon} \chi_k \beta_{u_1^k}^1 \wedge \ldots \wedge \beta_{u_m^k}^m \wedge \omega^{n-m}=f^{k,  \varepsilon} \chi_k \beta_{\bar{u}_1^{k}}^1 \wedge \ldots \wedge \beta_{\bar{u}_m^{k}}^m \wedge \omega^{n-m}.
$$
where $f^{k, \varepsilon}=f_1^{k, \varepsilon} \cdots f_m^{k, \varepsilon}$. For each $ \varepsilon$ fixed the functions $f^{k, \varepsilon}$ are quasi-continuous, uniformly bounded (with values in $[0,1]$ ) and converging in capacity to $f^{ \varepsilon}:=f_1^{ \varepsilon} \cdots f_n^{ \varepsilon}$, where $f_j^{ \varepsilon}$ is defined by
$$
f_j^{\varepsilon}:=\frac{\max \left(u_j-\rho_j, 0\right)}{\max \left(u_j-\rho_j, 0\right)+\varepsilon}.
$$

With the information above we can apply  \cref{thm: weak convergence} to get that
$$
f^{k,  \varepsilon} \chi_k \beta_{u_1^{k}}^1 \wedge \ldots \wedge \beta_{u_m^{k}}^m \wedge \omega^{n-m} \rightarrow f^{ \varepsilon} \chi \beta_{\bar{u}_1}^1 \wedge \ldots \wedge \beta_{\bar{u}_m}^m \wedge \omega^{n-m} \text { as } k \rightarrow \infty,
$$
in the weak sense of measures on $X$. In particular, since $0 \leq f^{k, \varepsilon} \leq 1$ we have that
$$
\begin{aligned}
\liminf _{k \rightarrow \infty} \int_X \chi_k \beta_{u_1^k}^1 \wedge \ldots \wedge \beta_{u_m^k}^m \wedge \omega^{n-m} & \geq \liminf _{k \rightarrow \infty} \int_X f^{k, \varepsilon} \chi_k \beta_{u_1^{k}}^1 \wedge \ldots \wedge \beta_{u_m^{k}}^m \wedge \omega^{n-m} \\
& \geq \int_X f^{\varepsilon} \chi \beta_{\bar{u}_1}^1 \wedge \ldots \wedge \beta_{\bar{u}_m}^m \wedge \omega^{n-m}.
\end{aligned}
$$

Now, letting $\varepsilon \rightarrow 0$  we conclude the proof.

\section{\texorpdfstring{$\operatorname{SH}_{m}(X, \beta, \omega)$-envelopes}{SH\_m(X, beta, omega)-envelopes}}

Let $(X,\omega)$ be a compact Hermitian manifold of complex dimension $n$ equipped with a Hermitian metric $\omega$. Let $m$ be a positive integer such that $1\leq m\leq n$.  We also use the notation $(\Omega, \omega)$ to denote a domain in $\mathbb{C}^n$ equipped with a Hermitian form $\omega$. Let $\{\beta\}\in BC^{1,1}(X)$ be a nef Bott-Chern class admitting a bounded Hessian potential $\rho\in \operatorname{SH}_m(X,\beta,\omega)\cap L^\infty(X)$.

In this section, we study the basic properties of the envelop functions, which will play an important role in subsequent studies.

\begin{definition}
    If $h: X \rightarrow \mathbb{R}$ is a bounded function, we set
$$
P_{\beta, m}(h):=\sup \{u \in S H_{m}(X, \beta, \omega), u \leq h  \quad\text{quasi-everywhere}\}^*,
$$ where "quasi-everywhere" means outside a $m$- polar set. Note that we of course have $P_{\beta,m}(h)\in\operatorname{SH}_m(X,\beta,\omega)$ thanks to \cref{usc regularization}.
\end{definition}

\begin{theorem}\label{thm: expression of envelopes}
If $h$ is bounded, then we have 
    $$P_{\beta, m}(h):=\sup \{u \in S H_{m}(X, \beta, \omega), u \leq h  \quad\text{quasi-everywhere}\}.$$
    In particular, we have monotonicity property of $P_{\beta,m}(\cdot)$, i.e., for bounded $h_j\searrow h$, we have $P_{\beta,m}(h_j)\searrow P_{\beta,m}(h)$.
\end{theorem}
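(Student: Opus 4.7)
The plan is to show that the upper semicontinuous regularization hidden in the definition of $P_{\beta,m}(h)$ is invisible from the viewpoint of the ``quasi-everywhere'' condition, because the only obstruction is $m$-negligible, hence $m$-polar by \cref{lem: negligible is polar}. Write $\mathcal{F}_h := \{u \in \operatorname{SH}_m(X,\beta,\omega) : u \leq h \text{ quasi-everywhere}\}$ and $U_h := \sup \mathcal{F}_h$, so that $P_{\beta,m}(h) = U_h^*$. The inequality $U_h \leq U_h^*$ is trivial, so to prove the first claim it suffices to show that $U_h^*$ itself lies in $\mathcal{F}_h$, forcing $U_h^* \leq U_h$.

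For this, I would apply Choquet's lemma to extract a countable family $\{u_j\}_{j \geq 1} \subset \mathcal{F}_h$ with $(\sup_j u_j)^* = U_h^*$. For each $j$ pick an $m$-polar set $E_j$ with $u_j \leq h$ off $E_j$, and observe that $E := \bigcup_j E_j$ is again $m$-polar: by \cref{lem: characterization m polar} each $E_j$ is contained in $\{v_j = -\infty\}$ for some $v_j \in \operatorname{SH}_m(X,\beta,\omega)$, and invoking the $L^1$-compactness bound of \cref{L^1 compactness} the series $v := \sum_j 2^{-j}(v_j - \sup_X v_j)$ (combined with $\rho/2$ to stay in the class) defines an element of $\operatorname{SH}_m(X,\beta,\omega)$ with $v = -\infty$ on $E$. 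By \cref{lem: negligible is polar} the negligible set $N := \{(\sup_j u_j)^* > \sup_j u_j\}$ is also $m$-polar. Then off the $m$-polar set $E \cup N$ we have $U_h^* = \sup_j u_j \leq h$, so $U_h^* \in \mathcal{F}_h$, which proves the first assertion.

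For the monotonicity, take bounded $h_j \searrow h$. Since $\mathcal{F}_{h_{j+1}} \subset \mathcal{F}_{h_j}$, the envelopes $\psi_j := P_{\beta,m}(h_j)$ form a decreasing sequence with $\psi_j \geq P_{\beta,m}(h)$; set $\psi := \lim_j \psi_j$. Because $\rho - C \in \mathcal{F}_h \subset \mathcal{F}_{h_j}$ for $C$ large (as $h$ is bounded and $\rho$ is a bounded potential in $\operatorname{SH}_m(X,\beta,\omega)$), we have $\psi \geq \rho - C \not\equiv -\infty$, so $\psi \in \operatorname{SH}_m(X,\beta,\omega)$ as a decreasing limit. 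Choosing $m$-polar sets $E_j$ with $\psi_j \leq h_j$ off $E_j$, we get $\psi \leq h_j$ off the $m$-polar set $\bigcup_k E_k$ for every $j$, hence $\psi \leq h$ quasi-everywhere. Thus $\psi \in \mathcal{F}_h$ and, by the first part, $\psi \leq P_{\beta,m}(h)$, giving the reverse inequality.

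The main point to be careful about is the ``countable union of $m$-polar sets is $m$-polar'' step: it requires the gluing of potentials via \cref{L^1 compactness} to remain inside the global class $\operatorname{SH}_m(X,\beta,\omega)$, which is why a copy of $\rho$ is folded into the series. Once this global polarity is in place, the whole argument reduces to a clean application of Choquet's lemma and the negligible-is-polar principle, and neither step requires any new pluripotential input beyond what is already recorded in the preliminary section.
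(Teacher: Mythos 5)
Your overall strategy matches the paper's: reduce to a countable family via Choquet's lemma, and then show that the usc regularization can only differ from the raw supremum on an $m$-polar set, using \cref{lem: negligible is polar} for the negligible set. The monotonicity reduction to the first assertion is also clean and standard.

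However, there is a genuine gap in the step where you claim that $E:=\bigcup_j E_j$ is $m$-polar. You write that ``by \cref{lem: characterization m polar} each $E_j$ is contained in $\{v_j=-\infty\}$ for some $v_j\in\operatorname{SH}_m(X,\beta,\omega)$,'' but \cref{lem: characterization m polar} characterizes the \emph{global} notion of $(\chi,\omega,m)$-polarity (i.e.\ being contained in the $-\infty$ locus of a single function in $\operatorname{SH}_m(X,\chi,\omega)$) in terms of the extremal function $V_{K,m}$. The sets $E_j$ in the definition of $P_{\beta,m}(h)$ are only known to be $m$-polar in the \emph{local} sense (Definition 2.17, quoting KN25a): for each point there is a neighborhood and a local $(\omega,m)$-subharmonic function whose $-\infty$ locus contains $E_j$ there. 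Passing from local $m$-polarity to global $(\beta,\omega,m)$-polarity is a Josefson-type statement for $(\omega,m)$-subharmonic functions, which is not among the preliminaries of this paper and does not follow from \cref{lem: characterization m polar}. The same gap reappears in your monotonicity step, where you again assume that a countable union of $m$-polar sets is $m$-polar. The summation trick with the normalizing factors $2^{-j}$ and the half-copy of $\rho$ is fine and mirrors the construction in the proof of \cref{lem: characterization m polar}, but it only works once each $E_j$ has been upgraded to a set of the form $\{v_j=-\infty\}$ with $v_j\in\operatorname{SH}_m(X,\beta,\omega)$; as written, that upgrade is precisely what is missing. (The paper sidesteps this by invoking the proof of Proposition 2.13 in the cited reference, where the relevant union/Josefson-type mechanism is presumably supplied; to make your argument self-contained you would need either to invoke such a result from KN25a explicitly or to prove the countable-union stability of $m$-polar sets directly.)
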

 \begin{proof}
     Denote $\sup \{u \in S H_{m}(X, \beta, \omega), u \leq h  \quad\text{quasi-everywhere}\}$ as $G$,  use \cref{lem: negligible is polar} and the proof in \cite[Proposition 2.13]{PSW25}, we have $G=G^*$, whence the desired monotonicity property. 
 \end{proof}

\begin{lemma}\label{lem: balayage}
Let $\beta$ be a Hermitian metric and let $U\subset\overline{U}\subset \Omega$ be small balls in $C^n$. For any bounded $(\beta,\omega,m)$-subharmonic function $g$ on $\Omega$, we can find another $(\beta,\omega,m)$-subharmonic function $\tilde{g}$ on $\Omega$ such that $\tilde{g}\geq g$ on $\Omega$, $\tilde{g}= g$ on $D-U $, and $(\beta+dd^c \tilde{g})^m \wedge \omega^{n-m}=0$. Moreover, for $g_1\leq g_2$, we have $\tilde{g}_1\leq \tilde{g}_2$.
\end{lemma}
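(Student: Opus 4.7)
The plan is to reduce the desired balayage construction to the analogous Dirichlet/balayage result for $(\omega,m)$-subharmonic functions established by Kołodziej--Nguyen, by absorbing the Hermitian form $\beta$ into a smooth local potential. Since $\beta$ is a Hermitian metric on the ball $\Omega \subset \mathbb{C}^n$, there exists a smooth strictly plurisubharmonic function $\phi$ on (a neighborhood of $\overline{U}$ contained in) $\Omega$ with $dd^c \phi = \beta$. The translated function $h := g + \phi$ then belongs to $\operatorname{SH}_m(\Omega, \omega) \cap L^\infty(\Omega)$, with $M := \sup_\Omega h < +\infty$.

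The key step is to define the Perron envelope
$$
\tilde h := \Bigl( \sup\bigl\{ v \in \operatorname{SH}_m(\Omega,\omega) : v \leq h \text{ on } \Omega \setminus U,\ v \leq M \bigr\} \Bigr)^*
$$
and invoke the balayage theory in the $(\omega,m)$-setting from \cite[Theorem~8.2, Proposition~2.9, Lemma~5.1]{KN25a} (precisely the package already used earlier in this paper, in the proof that $(\chi+dd^c V^*_{K,m})^m\wedge\omega^{n-m}$ puts no mass outside $K$, to construct the auxiliary functions $\tilde v_k$). This yields $\tilde h \in \operatorname{SH}_m(\Omega,\omega)\cap L^\infty(\Omega)$ satisfying $\tilde h \geq h$ everywhere, $\tilde h = h$ on $\Omega\setminus U$ (since $h$ itself is a competitor in the defining family), and $(dd^c \tilde h)^m \wedge \omega^{n-m} = 0$ on $U$.

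Setting $\tilde g := \tilde h - \phi$ produces a bounded $(\beta,\omega,m)$-subharmonic function on $\Omega$ with $\beta + dd^c \tilde g = dd^c \tilde h$, from which the three required properties follow immediately: $\tilde g \geq g$ on $\Omega$, $\tilde g = g$ on $\Omega\setminus U$, and $(\beta + dd^c \tilde g)^m \wedge \omega^{n-m} = (dd^c \tilde h)^m\wedge\omega^{n-m} = 0$ on $U$. Monotonicity is a formal consequence of the envelope definition: if $g_1 \leq g_2$ then $h_1 \leq h_2$, so every competitor for $\tilde h_1$ is automatically a competitor for $\tilde h_2$, forcing $\tilde h_1 \leq \tilde h_2$ and hence $\tilde g_1 \leq \tilde g_2$.

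The main technical subtlety I anticipate is verifying that the equality $\tilde h = h$ holds pointwise on $\Omega\setminus U$ (rather than merely quasi-everywhere) and that the vanishing of the Hessian measure on $U$ is genuine, not just up to a pluripolar set. The first point is immediate because $h$ lies in the competing family and therefore dominates $\tilde h$ outside $U$, while the reverse inequality is built into the envelope. The second is precisely what the cited Dirichlet theory of \cite{KN25a} delivers after replacing the candidate envelope locally by harmonic-type modifications inside $U$, so the reduction via $\phi$ makes the argument routine.
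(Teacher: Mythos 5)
The reduction you propose hinges on finding a smooth local potential $\phi$ with $dd^c\phi=\beta$, but this is exactly where the argument breaks down. In this paper $\beta$ is merely a Hermitian metric — a positive smooth real $(1,1)$-form — with no assumption that $d\beta=0$. If $dd^c\phi=\beta$ held for some smooth $\phi$, then $\beta$ would automatically be $d$-closed, i.e.\ K\"ahler; but the whole point of the Hermitian setting (and of the way Lemma 4.4 is invoked downstream, e.g.\ in the proof of \cref{lem: contact for beta lsc} where the Hermitian representative is $\beta+\frac1j\omega+dd^c\rho_j$ with $\omega$ non-K\"ahler) is precisely that such closedness is not available. Replacing $dd^c\phi=\beta$ by the weaker and always-available $dd^c\phi\geq\beta$ does not save the argument: setting $\tilde g=\tilde h-\phi$ then gives $\beta+dd^c\tilde g=dd^c\tilde h-(dd^c\phi-\beta)$, which need not even be $(\omega,m)$-positive and certainly does not transport the vanishing of $(dd^c\tilde h)^m\wedge\omega^{n-m}$ to the vanishing of $(\beta+dd^c\tilde g)^m\wedge\omega^{n-m}$.

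The paper avoids this obstruction entirely: it does not reduce to the pure $(\omega,m)$-sh setting at all, but instead solves the local Dirichlet problem \emph{directly} for $(\beta,\omega,m)$-subharmonic functions (using the Gu--Nguyen theory \cite[Theorem 3.15]{GN18}, which is formulated for a general, possibly non-closed background form $\beta$), and then glues the Dirichlet solution $u$ on $U$ with $g$ on $\Omega\setminus U$. A secondary issue with your envelope construction, independent of the potential problem, is the boundary matching: even after verifying $\tilde h\leq h$ on the \emph{interior} of $\Omega\setminus U$ via upper semicontinuity of $h$, there is no immediate reason the usc regularization $\tilde h=(\sup v_\alpha)^*$ agrees with $h$ \emph{on} $\partial U$ — the limsup there can see values of the competitors inside $U$ that exceed $h$. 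The paper's gluing construction sidesteps this because the Dirichlet solution attains the boundary data $g$ continuously on $\partial U$, so the piecewise definition is automatically consistent there. Your monotonicity argument is fine as stated, but the proof as a whole does not go through without substituting a genuinely different construction for the one you propose.
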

\begin{proof}
Assume first that $g \in C^0(\Omega)$. By \cite[Theorem 3.15]{GN18}, there exists a continuous $(\beta,\omega,m)$-subharmonic function $u$ such that $u=g$ on $\partial U $ and $\left(\beta+d d^c u\right)^m\wedge \omega^{n-m}=0$ on $U$. Set
$$
\widetilde{g}=\left\{\begin{array}{lll}
u & \text { on } & U, \\
g & \text { on } & \Omega \backslash U.
\end{array}\right.
$$

By the comparison principle \cite[Corollary 3.11]{GN18}, we have $\widetilde{g}=u \geq g$ on $U$. Moreover, $\widetilde{g}$ is the decreasing limit of a sequence of $(\beta,\omega,m)$-subharmonic functions
$$
g_k= \begin{cases}\max \left\{u, g+\frac{1}{k}\right\} & \text { on } U\left(z_0, r\right), \\ g+\frac{1}{k} & \text { near } \Omega \backslash U\left(z_0, r\right),\end{cases}
$$
hence $\widetilde{g}$ is $(\beta,\omega,m)$-subharmonic. For the last statement, we observe that $u_1 \leq u_2$ for any $g_1\leq g_2$ and hence $\widetilde{g}_1 \leq \widetilde{g}_2$ by \cite[Corollary 3.11]{GN18} again. The proof is therefore concluded.

\end{proof}
\begin{remark}\label{rem: balayage}
    We have used the comparison principle and solvability of dirichlet problems established in \cite{GN18}. Note that although the Hermitian metric was assumed to be locally conformal K\"ahler there, the proof can be carried out exactly for a general $\omega$ thanks to the work \cite{KN25a}.  
\end{remark}
\begin{lemma}\label{lem: contact for herrmitian}
    Let $\beta$ be a Hermitian metric. If $h$ is a bounded  lower semi-continuous Lebesgue measurable function, then the Hessian measure $\left(\beta+d d^c P_{\beta,m}(h)\right)^m \wedge \omega^{n-m}$ puts no mass on the open set $\left\{P_{\beta,m}(h)<h\right\}$.
\end{lemma}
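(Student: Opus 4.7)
The plan is to localize. Since $u := P_{\beta,m}(h)$ is upper semicontinuous and $h$ is lower semicontinuous, the difference $u-h$ is upper semicontinuous, hence $\{u<h\}$ is open, and it suffices to fix an arbitrary $x_0 \in \{u<h\}$ and prove that $(\beta+dd^c u)^m\wedge\omega^{n-m}$ vanishes on a neighborhood of $x_0$. Choosing $\delta>0$ with $h(x_0) > u(x_0) + 4\delta$, and combining the usc of $u$ at $x_0$ with the lsc of $h$ at $x_0$, I pick a small ball $B = B(x_0, r)$ sitting inside a coordinate chart $D$ such that on $\overline{B}$ one has simultaneously $u \leq u(x_0)+\delta$ and $h \geq u(x_0) + 3\delta$.

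Next, apply \cref{lem: balayage} to $g = u$ with $U=B$ to produce $\tilde u \in \operatorname{SH}_m(D,\beta,\omega)$ satisfying $\tilde u \geq u$, $\tilde u = u$ on $D\setminus B$, and $(\beta+dd^c\tilde u)^m\wedge\omega^{n-m}\equiv 0$ on $B$; extending by $u$ outside $D$ yields a globally $(\beta,\omega,m)$-subharmonic function on $X$, still denoted $\tilde u$. The main obstacle is to show that, after shrinking $r$, $\tilde u \leq h$ holds pointwise on $B$. To do so, fix a smooth strictly plurisubharmonic $\psi$ on $\overline D$ with $dd^c\psi \geq \beta$ (for example, $A\|z-x_0\|^2$ with $A$ large in local coordinates). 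Since
\[
dd^c(\tilde u + \psi) \;=\; (\beta + dd^c\tilde u) + (dd^c\psi - \beta) \;\geq\; 0,
\]
the function $\tilde u + \psi$ is plurisubharmonic on $D$, hence subharmonic, and the usc maximum principle together with $\tilde u = u$ on $\partial B$ gives
\[
\sup_{B}\tilde u \;\leq\; \sup_{\partial B} u + \operatorname{osc}_{\overline B}\psi \;\leq\; u(x_0) + \delta + O(r^2).
\]
Shrinking $r$ so that $O(r^2) \leq \delta$ forces $\tilde u \leq u(x_0)+2\delta \leq h-\delta$ on $B$.

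With this bound, $\tilde u \leq h$ quasi-everywhere on $X$: pointwise on $B$ by the previous step, and quasi-everywhere on $X\setminus B$ because $\tilde u = u$ there and $u = P_{\beta,m}(h) \leq h$ quasi-everywhere by definition. Therefore $\tilde u$ is admissible in the envelope defining $P_{\beta,m}(h)$, so $\tilde u \leq u$ quasi-everywhere, and hence everywhere as both sides are $(\beta,\omega,m)$-subharmonic and coincide off an $m$-polar set. Combined with the built-in inequality $\tilde u \geq u$, this forces $\tilde u \equiv u$ on $X$, yielding $(\beta + dd^c u)^m\wedge\omega^{n-m} = (\beta + dd^c \tilde u)^m\wedge\omega^{n-m} = 0$ on $B$. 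Since $x_0\in\{u<h\}$ was arbitrary, the open set $\{u<h\}$ is covered by such balls and the Hessian measure puts no mass there. The only delicate technical point is legitimizing the maximum principle bound when $\tilde u$ is only usc up to $\partial B$; the construction of $\tilde u$ in \cref{lem: balayage} as a decreasing limit of continuous $(\beta,\omega,m)$-subharmonic functions whose boundary values majorize $u|_{\partial B}$ by at most $1/k$ handles this, since letting $k\to\infty$ preserves the estimate.
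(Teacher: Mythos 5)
Your argument follows the same skeleton as the paper's proof: localize to a ball around a point where the gap $h - P_{\beta,m}(h)$ is uniformly positive, apply the balayage lemma (\cref{lem: balayage}) to kill the Hessian measure inside the ball, then use a local potential $\psi$ with $dd^c\psi\geq\beta$ together with a maximum principle to show the new function stays below $h$ on the ball, so that it is admissible in the envelope and must coincide with $P_{\beta,m}(h)$. The paper does exactly this, choosing $\phi$ with $0<\phi<\varepsilon/2$ and invoking the $(\omega,m)$-comparison principle from \cite{KN25a} against a constant, which is the same maximum-principle mechanism you deploy.

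However, there is a genuine error in the key step of your proposal. You assert that since $dd^c(\tilde u+\psi)=(\beta+dd^c\tilde u)+(dd^c\psi-\beta)$, the function $\tilde u+\psi$ is plurisubharmonic. This is false when $m<n$. The current $\beta+dd^c\tilde u$ is only $(\omega,m)$-\emph{positive}, i.e.\ it is nonnegative after wedging with $(m-1)$ forms in $\Gamma_m(\omega)$ and $\omega^{n-m}$; it is \emph{not} a positive $(1,1)$-current in general (some of its eigenvalues with respect to $\omega$ may be negative). Thus $dd^c(\tilde u+\psi)\geq 0$ need not hold and $\tilde u+\psi$ need not be psh, so the classical maximum principle for (Euclidean) subharmonic functions does not directly apply. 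The correct and sufficient statement is that $\tilde u+\psi$ is $\omega$-subharmonic: since $(\omega,m)$-positivity of $\beta+dd^c\tilde u$ and positivity of $dd^c\psi-\beta$ both imply $(\omega,m)$-positivity of the sum, wedging with $\omega^{n-1}$ yields $dd^c(\tilde u+\psi)\wedge\omega^{n-1}\geq0$, i.e.\ $\Delta_\omega(\tilde u+\psi)\geq0$ in the weak sense. The weak maximum principle for the uniformly elliptic operator $\Delta_\omega$ then gives $\sup_{B}(\tilde u+\psi)\leq\sup_{\partial B}(\tilde u+\psi)$, which is exactly the inequality you use. With this replacement, your estimate $\sup_B\tilde u\leq\sup_{\partial B}u+\operatorname{osc}_{\bar B}\psi$ goes through unchanged, and the rest of the proof (quasi-everywhere admissibility, $\tilde u\leq P_{\beta,m}(h)\leq\tilde u$, hence $\tilde u=P_{\beta,m}(h)$ and vanishing of the Hessian measure on $B$) is correct. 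In short: same strategy as the paper, one step misjustified via ``psh,'' and the fix is to argue with $\omega$-subharmonicity (or directly with the comparison principle of \cite[Cor.\ 6.2]{KN25a} as the paper does).
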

\begin{proof}
Fix  $x_0\in \left\{P_{\beta,m}(h)<h\right\}$ and $\varepsilon>0$ small, we choose a small ball $U$ containing $x_0$ such that $\forall x\in \bar{U}$, we have $$P_{\beta,m}(h)(x)<h(x_0)-\varepsilon \leq h(x_0)-\frac{\varepsilon}{2}<h(x).$$
Shrinking $U$ if necessary, we can furthermore choose a smooth potential $\phi$ such that $\beta\leq dd^c \phi$ and $0<\phi<\frac{\varepsilon}{2}$.
    By \cref{lem: balayage}, there exists $g\in\operatorname{SH}_m(X,\beta,\omega)\cap L^\infty(X)$ such that     $$
\begin{cases}g=P_{\beta,m}(h) & \text { on } X\backslash U, \\ g \geq  P_{\beta,m}(h) & \text { on } X,\\ (\beta+dd^c g)^m\wedge \omega^{n-m} = 0 & \text{ on } U.\end{cases}
$$
Fix a point $z\in\partial U$, we can write 
$$ (\phi+g)(z)=(\phi+P_{\beta,m}(h))(z) < h(x_0)-\varepsilon +\phi(z)\leq h(x_0)-\frac{\varepsilon}{2}\leq h(z),$$
The Comparison principle \cite[Corollary 6.2]{KN25a} yields that $\phi+g\leq h(x_0)-\frac{\varepsilon}{2}$ on $U$, thus $g(x)\leq h(x_0)-\frac{\varepsilon}{2}-\phi\leq h(x_0)-\frac{\varepsilon}{2}\leq h(x)$ on $U$. 

Now we have showed that $g\leq h$ quasi-everywhere on $X$ and hence $g\leq P_{\beta,m}(h)$ by the definition of the envelope, which implies that $g=P_{\beta,m}(h)$. This yields that $(\beta+dd^c P_{\beta,m}(h))^m \wedge \omega^{n-m}=0$ on $U$. Since $x_0$ is arbitrary, we get the desired result.
\end{proof}
\begin{lemma} \label{lem: contact for beta lsc}
    Let $\{\beta\}\in BC^{1,1}(X)$ be a nef Bott-Chern class admitting a bounded Hessian potential $\rho\in \operatorname{SH}_m(X,\beta,\omega)\cap L^\infty(X)$. If $h$ is a bounded  lower semi-continuous Lebesgue measurable function, then the Hessian measure $\left(\beta+d d^c P_{\beta,m}(h)\right)^m \wedge \omega^{n-m}$ puts no mass on the open set $\left\{P_{\beta,m}(h)<h\right\}$.
\end{lemma}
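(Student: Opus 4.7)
My plan is to approximate the nef class $\{\beta\}$ by Hermitian classes and invoke \cref{lem: contact for herrmitian}. Since $\{\beta\}$ is nef, for each $\epsilon>0$ I can choose a smooth function $\psi_\epsilon$ on $X$ so that $\beta_\epsilon:=\beta+\epsilon\omega+dd^c\psi_\epsilon$ is a Hermitian metric on $X$; normalize $\sup_X\psi_\epsilon=0$. Applying \cref{lem: contact for herrmitian} to the Hermitian form $\beta_\epsilon$ and the bounded lsc function $h-\psi_\epsilon$, the envelope $v_\epsilon:=P_{\beta_\epsilon,m}(h-\psi_\epsilon)$ will satisfy $(\beta_\epsilon+dd^cv_\epsilon)^m\wedge\omega^{n-m}=0$ on the open set $\{v_\epsilon<h-\psi_\epsilon\}$.

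Next I will translate back to the original setup. The correspondence $u\leftrightarrow u-\psi_\epsilon$ identifies $\operatorname{SH}_m(X,\beta+\epsilon\omega,\omega)$ with $\operatorname{SH}_m(X,\beta_\epsilon,\omega)$ (the computation $(\beta+\epsilon\omega)+dd^cu=\beta_\epsilon+dd^c(u-\psi_\epsilon)$ is immediate) and identifies the constraint $v\leq h-\psi_\epsilon$ with $u\leq h$. Hence $u_\epsilon:=v_\epsilon+\psi_\epsilon=P_{\beta+\epsilon\omega,m}(h)$, and the no-mass conclusion becomes $(\beta+\epsilon\omega+dd^cu_\epsilon)^m\wedge\omega^{n-m}=0$ on $\{u_\epsilon<h\}$. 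As $\epsilon\searrow0$, the admissible set $\operatorname{SH}_m(X,\beta+\epsilon\omega,\omega)$ shrinks, so $u_\epsilon$ is decreasing; its limit lies in $\operatorname{SH}_m(X,\beta,\omega)$ and is bounded above by $h$ quasi-everywhere, hence is $\leq P_{\beta,m}(h)$, while the reverse inequality $P_{\beta,m}(h)\leq u_\epsilon$ is immediate. Thus $u_\epsilon\searrow P_{\beta,m}(h)$, and by \cref{prop: monotone_capcity convergence} the convergence is in capacity.

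To conclude I will test against a continuous $\chi\geq 0$ with $\operatorname{supp}\chi\Subset\{P_{\beta,m}(h)<h\}$. The upper semi-continuity of $P_{\beta,m}(h)$ and lower semi-continuity of $h$ give $\delta:=\inf_{\operatorname{supp}\chi}(h-P_{\beta,m}(h))>0$. Splitting $\operatorname{supp}\chi$ into the \emph{good} set $\{u_\epsilon-P_{\beta,m}(h)<\delta/2\}$, on which $u_\epsilon<h$ so that the Hessian measure vanishes, and its \emph{bad} complement (whose capacity tends to $0$ by the convergence in capacity), and bounding the Hessian measure on the bad set via \cref{lem: dominate by Capacity}, I obtain $\int\chi\,(\beta+\epsilon\omega+dd^cu_\epsilon)^m\wedge\omega^{n-m}\to 0$. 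The main obstacle will be the final limit passage: applying \cref{thm: monotonicity} with the varying smooth background $\beta+\epsilon\omega$ requires an adaptation of its hypotheses, which I expect to handle by binomially expanding $(\beta+\epsilon\omega+dd^cu_\epsilon)^m$ into pieces of the form $\epsilon^{m-k}(\beta+dd^cu_\epsilon)^k\wedge\omega^{m-k}$, using Chern-Levine-Nirenberg type uniform mass bounds to kill the lower-order $\epsilon$-weighted terms, and invoking the fixed-background version of \cref{thm: monotonicity} for the leading term to get $\int\chi\,(\beta+dd^cP_{\beta,m}(h))^m\wedge\omega^{n-m}\leq 0$. Since $\chi$ is arbitrary and the measure is nonnegative, the lemma will follow.
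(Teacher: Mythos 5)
Your overall strategy matches the paper's: approximate $\beta$ by the Hermitian forms $\beta+\epsilon\omega$ (via suitable smooth potentials), apply \cref{lem: contact for herrmitian}, observe $P_{\beta+\epsilon\omega,m}(h)\searrow P_{\beta,m}(h)$, and pass to the limit. The differences, and one genuine gap, lie in the limit passage.

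First, a simplification: the good/bad set decomposition and the appeal to convergence in capacity are unnecessary for step (a). Since $u_\epsilon:=P_{\beta+\epsilon\omega,m}(h)$ is usc and $h$ is lsc, each $\{u_\epsilon<h\}$ is open, and these sets increase (as $\epsilon\searrow0$) with union $\{P_{\beta,m}(h)<h\}$. If $\operatorname{supp}\chi$ is a compact subset of this union, then by compactness $\operatorname{supp}\chi\subset\{u_{\epsilon_0}<h\}\subset\{u_\epsilon<h\}$ for some $\epsilon_0>0$ and every $\epsilon\leq\epsilon_0$, so $\int\chi\,\mu_\epsilon$ is \emph{exactly} zero for small $\epsilon$ by \cref{lem: contact for herrmitian} — no capacity argument needed. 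The paper exploits precisely this nested structure, integrating $\mu$ directly over the open sets $\{u_k<h\}$ rather than testing against a $\chi$.

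The genuine gap is in your binomial-expansion workaround for the final limit. You propose to write $(\beta+\epsilon\omega+dd^cu_\epsilon)^m\wedge\omega^{n-m}=\sum_k\binom{m}{k}\epsilon^{m-k}(\beta+dd^cu_\epsilon)^k\wedge\omega^{n-k}$ and apply a ``fixed-background'' version of \cref{thm: monotonicity} to the $k=m$ term. But \cref{thm: monotonicity} requires the potentials $u_j^k$ to lie in $\operatorname{SH}_m(X,\beta^j,\omega)$, i.e.\ the currents $\beta^j+dd^cu_j^k$ must be $(\omega,m)$-positive. Here $u_\epsilon\in\operatorname{SH}_m(X,\beta+\epsilon\omega,\omega)$ only, so $\beta+dd^cu_\epsilon$ need not be $(\omega,m)$-positive and the ``leading term'' is not a Hessian current covered by the theorem; the hypotheses fail regardless of how you weight the lower-order pieces. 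The clean fix — and the route the paper actually takes — is to reduce locally: on a coordinate ball choose a fixed smooth $\phi$ with $dd^c\phi\geq\beta+\omega$, so that $\phi+u_\epsilon$ is a genuinely $(\omega,m)$-subharmonic function decreasing to $\phi+P_{\beta,m}(h)$ with fixed reference $\omega$, and write $\mu_\epsilon=\sum_k\binom{m}{k}(dd^c(\phi+u_\epsilon))^k\wedge(\beta+\epsilon\omega-dd^c\phi)^{m-k}\wedge\omega^{n-m}$; each factor $(dd^c(\phi+u_\epsilon))^k\wedge\omega^{n-m}$ converges weakly by the decreasing-convergence theorem for bounded Hessian potentials (the paper's citation of \cite[Lemma 5.1]{KN25a}), while the smooth forms $(\beta+\epsilon\omega-dd^c\phi)^{m-k}$ converge uniformly. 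This yields $\mu_\epsilon\to\mu$ weakly, after which lower semi-continuity of weak convergence over the open sets $\{u_k<h\}$ gives $\int_{\{u_k<h\}}\mu\leq\liminf_\epsilon\int_{\{u_k<h\}}\mu_\epsilon=0$ for each $k$, and taking the union concludes.
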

\begin{proof}
Since $\{\beta\}$ is nef, we may find for each $j$ a function $\rho_j\in \operatorname{PSH}(X,\beta+\frac{1}{j}\omega)\cap C^\infty(X)$ such that $\beta+\frac{1}{j}\omega+dd^c\rho_j$ is a Hermitian metric.
 Observe that 
 $$
 P_{\beta+\frac{1}{j}\omega,m}(h)= P_{\beta+\frac{1}{j}\omega+dd^c \rho_j,m}(h-\rho_j)+\rho_j
 $$
 and 
 $$
 P_{\beta+\frac{1}{j}\omega,m}(h)\searrow P_{\beta,m}(h),
 $$
as $j\rightarrow\infty$. Moreover, the existence of $\rho\in \operatorname{SH}_m(X,\beta,\omega)\cap L^\infty(X)$ ensures that the envelope $P_{\beta,m}(h)$ is bounded. By \cref{lem: contact for herrmitian} we have 
 \begin{align*}
       &\mathds{1}_{\{P_{\beta+ \frac{1}{j}\omega,m}(h)<h\}} \left(\beta+ \frac{1}{j} \omega+ d d^c P_{\beta+\frac{1}{j}\omega, m}(h)\right)^m \wedge \omega^{n-m} \\
       =&\mathds{1}_{\{P_{\beta+\frac{1}{j}\omega+dd^c \rho_j,m}(h-\rho_j)<h-\rho_j\}} \left(\beta+ \frac{1}{j} \omega+dd^c \rho_j+   d d^c P_{\beta+\frac{1}{j}\omega, m}(h-\rho_j)\right)^m \wedge \omega^{n-m}\\
    =&0.
 \end{align*}
Now we let $j\rightarrow +\infty$,  by the monotone convergence theorem of Hessian measures with bounded potentials \cite[Lemma 5.1]{KN25a},  we know 
$$
\left(\beta+ \frac{1}{j} \omega+ d d^c P_{\beta+\frac{1}{j}\omega, m}(h)\right)^m \wedge \omega^{n-k} \rightarrow (\beta+dd^c P_{\beta,m}(h))^{m} \wedge \omega^{n-m}.
$$
Since $\{P_{\beta+\frac{1}{k} \omega,m}(h)< h  \}$ is open, by semi-continuity of the weak convergence,  we have for $\forall k\in \mathbb{N}^*$,
\begin{align*}
   & \int_{ \{P_{\beta+\frac{1}{k} \omega,m}(h)< h  \}}  (\beta+dd^c P_{\beta,m}(h))^{m} \wedge \omega^{n-m}\\
 &\leq \liminf_{j\rightarrow +\infty}\int_{\{P_{\beta+\frac{1}{k} \omega,m}(h)< h  \}} \left(\beta+ \frac{1}{j} \omega+ d d^c P_{\beta+\frac{1}{j}\omega, m}(h)\right)^m \wedge \omega^{n-m} \\
 =& 0.
\end{align*}
We can finally conclude the proof by letting $k\rightarrow\infty$.
\end{proof}

\begin{theorem}\label{contact}
 If $h$ is a quasi-continuous bounded function, then the complex Hessian measure $\left(\beta+d d^c P_{\beta, m}(h)\right)^m \wedge \omega^{n-k}$ is concentrated on the contact set $\mathcal{C}_m=\left\{P_{\beta, m}(h)=h\right\}$.
\end{theorem}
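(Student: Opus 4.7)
The plan is to reduce to the lsc case already established in Lemma~\ref{lem: contact for beta lsc} by approximating the quasi-continuous bounded $h$ from above by bounded lower-semi-continuous majorants that coincide with $h$ off sets of arbitrarily small capacity. The inputs are the quasi-continuity of $h$ (by hypothesis) and of $\varphi:=P_{\beta,m}(h)$ (automatic, since $\varphi\in\operatorname{SH}_m(X,\beta,\omega)$ is quasi-continuous), together with the weak-convergence and capacity-domination tools from Section~3.

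First, for each $k\geq 1$, pick an open set $U_k\subset X$ with $\operatorname{Cap}_{\omega,m,\omega}(U_k)<2^{-k}$ on whose complement both $h$ and $\varphi$ are continuous (by intersecting the two quasi-continuity sets). Replacing $U_k$ by $\bigcup_{j\geq k}U_j$ we may assume $U_k$ is decreasing. Using Tietze's theorem, extend $h|_{X\setminus U_k}$ to $\tilde h_k\in C^0(X)$ with $\|\tilde h_k\|_\infty\leq\|h\|_\infty$, and set
\[
h_k:=\tilde h_k+2\|h\|_\infty\,\mathbf{1}_{U_k}.
\]
Each $h_k$ is bounded and lsc (a sum of a continuous function and the lsc indicator of an open set), uniformly bounded in $k$, satisfies $h_k\geq h$ on $X$, and coincides with $h$ on $X\setminus U_k$.

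Next, set $\varphi_k:=P_{\beta,m}(h_k)$. Monotonicity of the envelope gives $\varphi_k\geq\varphi$, while $\varphi_k\leq h_k=h$ on $X\setminus U_k$ quasi-everywhere shows that for every $\eta>0$ the set $\{\varphi_k-\varphi>\eta\}$ is contained, modulo an $(\omega,m)$-polar set, in $U_k$; hence $\varphi_k\to\varphi$ in capacity. Invoking Theorem~\ref{thm: weak convergence} locally and gluing by a partition of unity subordinated to a coordinate cover then yields the weak convergence of the Hessian measures $\mu_k:=(\beta+dd^c\varphi_k)^m\wedge\omega^{n-m}\to\mu:=(\beta+dd^c\varphi)^m\wedge\omega^{n-m}$. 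Lemma~\ref{lem: contact for beta lsc} applied to each lsc $h_k$ produces the contact identity $\mu_k(\{\varphi_k<h_k\})=0$. To promote this to the limit, fix $\delta>0$, write $F_\delta:=\{\varphi<h-\delta\}$, and decompose $\mu(F_\delta)\leq\mu(F_\delta\cap U_k)+\mu(F_\delta\setminus U_k)$; the first term is bounded by the capacity estimate $\mu(U_k)\leq C\operatorname{Cap}_{\omega,m,\omega}(U_k)\to 0$ via Lemma~\ref{lem: dominate by Capacity} and Proposition~\ref{comparison of capacity} applied on coordinate patches (here using $\varphi\in L^\infty$), while the second is handled by combining the continuity of $h$ and $\varphi$ on $X\setminus U_k$ (which makes $F_\delta\setminus U_k$ relatively open there and brings it into $\{\varphi_k<h_k\}$ on a cofinal subsequence) with the weak convergence of $\mu_k$. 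Letting $\delta\downarrow 0$ yields $\mu(\{\varphi<h\})=0$.

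The main obstacle is the last step: the lsc identity provides $\mu_k(\{\varphi_k<h_k\})=0$ on a moving family of sets, and weak convergence of measures does not by itself transfer the zero-mass condition to the target $\{\varphi<h\}$. The resolution rests on three ingredients acting in concert: (i) the uniform $L^\infty$-bound on $h_k$, which keeps the potentials $\varphi_k$ in a family whose Hessian measures are uniformly dominated by $\operatorname{Cap}_{\omega,m,\omega}$; (ii) the identification $h_k\equiv h$ on $X\setminus U_k$ together with the capacitary convergence $\varphi_k\to\varphi$, which identifies the contact sets of $\varphi_k$ with those of $\varphi$ on the complement of $U_k$; and (iii) the vanishing capacity of $U_k$, which absorbs the residual mass through the domination lemma for bounded potentials. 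Without all three, the passage to the limit in the contact-set identity would fail.
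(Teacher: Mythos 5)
Your high-level plan coincides with the paper's: approximate the quasi-continuous $h$ from above by bounded lsc majorants $h_k$ that agree with $h$ off sets of vanishing capacity, invoke Lemma~\ref{lem: contact for beta lsc} for each $h_k$, and pass to the limit. However, there are two genuine gaps in the way you carry this out, both of which the paper sidesteps by building a \emph{decreasing} sequence of lsc majorants.

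The first and most serious gap is your claim that ``$\varphi_k\leq h_k=h$ on $X\setminus U_k$ quasi-everywhere shows that $\{\varphi_k-\varphi>\eta\}$ is contained, modulo an $(\omega,m)$-polar set, in $U_k$.'' This does not follow: knowing $\varphi_k\leq h$ q.e.\ on $X\setminus U_k$ only tells you that $\varphi_k$ satisfies the same \emph{upper} constraint as $\varphi$ there; it does not imply $\varphi_k\leq\varphi$ on $X\setminus U_k$, because $\varphi=P_{\beta,m}(h)$ is defined by the global constraint $\leq h$ q.e.\ on all of $X$, and $\varphi_k$ is allowed to violate that constraint on $U_k$. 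On the non-contact locus $\{\varphi<h\}\cap(X\setminus U_k)$ the inequalities $\varphi\leq\varphi_k\leq h$ leave plenty of room for $\varphi_k>\varphi+\eta$. So the argument you give for capacity convergence $\varphi_k\to\varphi$ is incorrect. (Capacity convergence can be rescued with a different argument — e.g.\ forming $\tilde\varphi_j:=(\sup_{k\geq j}\varphi_k)^*$, observing $\tilde\varphi_j\leq h$ q.e.\ off $U_j$, letting $j\to\infty$ to get $\tilde\varphi_\infty\leq h$ q.e.\ hence $\tilde\varphi_\infty=\varphi$, and using monotone capacity convergence for $\tilde\varphi_j\searrow\varphi$ to squeeze $\varphi\leq\varphi_k\leq\tilde\varphi_j$ — but that is a different proof and needs to be spelled out.)

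The second gap is in your final limit passage. Even granted $\mu_k\to\mu$ weakly and $\varphi_k\to\varphi$ in capacity, your decomposition $\mu(F_\delta)\leq\mu(F_\delta\cap U_k)+\mu(F_\delta\setminus U_k)$ tries to control $\mu(F_\delta\setminus U_k)$ by weak convergence, but $F_\delta\setminus U_k$ is only quasi-open (relatively open in $X\setminus U_k$), not open in $X$, and vanilla weak convergence of measures (Theorem~\ref{thm: weak convergence}) gives nothing on such sets. The paper's proof resolves exactly this point by replacing indicators of quasi-open sets with the quasi-continuous test functions $\max(h_j-\hat h_j,0)/[\max(h_j-\hat h_j,0)+\varepsilon]$ and invoking the \emph{liminf} inequality of Theorem~\ref{thm: monotonicity}, which is specifically built to handle capacity convergence without openness. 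Moreover, the paper takes $h_j:=\sup\{H_l:l\geq j\}$ with $H_l$ \emph{continuous} Tietze extensions, so the sequence is decreasing and lsc simultaneously; this makes $\hat h_j\searrow\hat h$ immediate from Theorem~\ref{thm: expression of envelopes}, and capacity convergence follows from Proposition~\ref{prop: monotone_capcity convergence}. Your non-monotone construction $h_k=\tilde h_k+2\|h\|_\infty\mathbf{1}_{U_k}$ forfeits both of these conveniences.
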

\begin{proof}
The proof follows the idea of \cite[Proposition 2.5]{GLZ19}.
By definition of quasi-continuity, there exists a sequence of compact sets $\left(K_l\right)$ such that $\operatorname{Cap}_{\omega,m,\omega}\left(X \backslash K_l\right) \leq 2^{-l}$ and the restriction $\left.h\right|_{K_l}$ is a continuous function on $K_l$. By taking $\widetilde{K}_j:=\cup_{1 \leq l \leq j} K_l$, we can assume the sequence  $(K_j)$  is increasing. Using the Tietze-Urysohn Lemma, there exists a continuous function $H_j$ on $X$ such that $\left.H_j\right|_{K_j}=\left.h\right|_{K_j}$ and $H_j$ shares the same bounds as $h$.

Set
$$
h_j:=\sup \left\{H_l \mid l \geq j\right\}
$$

Then $\left\{h_j\right\}_j$ is lower semi-continuous, quasi-continuous, and uniformly bounded. Indeed, fix $j$ and $k \geq j$,
$$
\left.h_j\right|_{K_k}=\max \left\{\sup \left\{H_l \mid l \geq k\right\}, \max \left(H_j, \ldots, H_{k-1}\right)\right\}=\max \left\{h, \max \left(H_j, \ldots, H_{k-1}\right)\right\},
$$
which implies that $\left.h_j\right|_{K_k}$ is continuous and hence the quasi-continuity of $h$. 

We claim that $h_j$ converges to $h$ in capacity. Here we sketch the proof. Since $h_j=h$ on $K_j, \forall \delta>0$, we have $\left\{h_j \geq h+\delta\right\} \subseteq X \backslash K_j$, thus $\operatorname{Cap}_{\omega,m,\omega}\left(\left\{h_j \geq h+\delta\right\}\right) \leq$ $\operatorname{Cap}_{\omega,m,\omega}\left(X \backslash K_j\right) \leq 2^{-j}$, thus $h_j$ converges to $h$ in capacity.

Write $\hat{h}_j:= P_{\beta,m}(h_j)$,  we have $\hat{h}_j\searrow \hat{h}:=P_{\beta,m}(h)$ by \cref{thm: expression of envelopes} and thus $\hat{h}_j$ converges to $\hat{h}$ in capacity by \cref{prop: monotone_capcity convergence}. \cref{lem: contact for beta lsc} then yields that $$\int_{X} \frac{\max ( -\hat{h}_j+h_j,0)}{ \max (- \hat{h}_j+h_j,0) +\varepsilon   } (\beta+dd^c \hat{h}_j)^m \wedge \omega^{n-m} =0 .$$
 Now we use \cref{thm: monotonicity} to get
 $$ 
 \int_{X} \frac{\max ( -\hat{h}+h,0)}{ \max (- \hat{h}+h,0) +\varepsilon   } (\beta+dd^c \hat{h})^m \wedge \omega^{n-m} =0.
 $$
 Finally, we conclude the proof by letting $\epsilon\rightarrow0^+$.

\end{proof}
The following corollary is a direct consequence of \cref{contact}:

\begin{corollary}\label{mass concentration 2}
    Let $u,v$ be bounded $(\beta,m,\omega)$-sh functions and let $w:=P_{\beta,m}(u,v)=P_{\beta,m}(\min(u,v))$ be the rooftop envelope of $u,v$. Then
    $$
(\beta+dd^cw)^m\wedge\omega^{n-m}\leq\mathds{1}_{\{w=u<v\}}(\beta+dd^cu)^m\wedge\omega^{n-m}+\mathds{1}_{\{w=v\}}(\beta+dd^cv)^m\wedge\omega^{n-m}.
    $$
\end{corollary}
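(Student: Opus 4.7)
The plan is to deduce the inequality from \cref{contact} applied with $h := \min(u,v)$, together with the pointwise comparison supplied by \cref{cor of max principle}. Since $u$ and $v$ are bounded $(\beta,\omega,m)$-subharmonic, they are each quasi-continuous by the quasi-continuity theorem cited right before \cref{def of cap_beta}, and hence $h = \min(u,v)$ is a bounded, quasi-continuous function to which \cref{contact} applies. This immediately places the entire mass of $H_{\beta,m}(w) := (\beta+dd^cw)^m\wedge\omega^{n-m}$ on the contact set $\{w = \min(u,v)\}$.

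Next I would verify the (disjoint) set-theoretic identity
$$
\{w=\min(u,v)\} = \{w=u<v\}\ \sqcup\ \{w=v\},
$$
which is a quick case check using $w\le u$ and $w\le v$ on all of $X$. The pointwise inequalities $w\le u$, $w\le v$ follow from the definition of $P_{\beta,m}(\min(u,v))$, together with the fact that $m$-polar sets do not affect pointwise comparison of $(\beta,\omega,m)$-subharmonic functions (\cref{a.e. everywhere}), so the quasi-everywhere inequality in the definition upgrades to an everywhere inequality. By \cref{contact} we may then split
$$
H_{\beta,m}(w) = \mathds{1}_{\{w=u<v\}}H_{\beta,m}(w) + \mathds{1}_{\{w=v\}}H_{\beta,m}(w).
$$

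Finally, I would invoke \cref{cor of max principle} twice. Applied to the pair $(u,w)$ with $u\ge w$, it gives $\mathds{1}_{\{u=w\}}H_{\beta,m}(w)\le \mathds{1}_{\{u=w\}}H_{\beta,m}(u)$; multiplying by $\mathds{1}_{\{w=u<v\}}$, which is supported in $\{u=w\}$, bounds the first summand by $\mathds{1}_{\{w=u<v\}}H_{\beta,m}(u)$. Applying the same comparison to the pair $(v,w)$ bounds the second summand by $\mathds{1}_{\{w=v\}}H_{\beta,m}(v)$. Substituting these two inequalities into the displayed identity yields exactly the statement of the corollary. I do not anticipate any real obstacle here, as all the heavy analytic input—quasi-continuity, mass concentration of envelopes on the contact set, and the max principle—has already been established earlier in the paper; the only mild point worth recording is the upgrade from "quasi-everywhere" to "everywhere" used to get $w\le u$ and $w\le v$ pointwise.
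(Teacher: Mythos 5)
Your proposal is correct and matches the paper's intended argument: the paper states \cref{mass concentration 2} as a ``direct consequence of \cref{contact}'' without writing out the steps, and your unpacking (mass concentration via \cref{contact}, the disjoint split of the contact set, and two applications of \cref{cor of max principle}) is exactly the reasoning the paper itself deploys in the displayed chain of inequalities in the proof of \cref{contact3}.
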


\begin{corollary}\label{contact3}
    Fix $\lambda\geq0$ and let $u,v\in \operatorname{SH}_m(X,\beta,\omega)\cap L^{\infty}(X)$. Fix two smooth $(1,1)$-forms $\beta_1,\beta_2$ such that $\beta_1\geq\beta$ and $\beta_2\geq\beta$. If $H_{\beta_1,m}(u)\leq e^{\lambda u}f\omega^n$ and $H_{\beta_2,m}(v)\leq e^{\lambda v}g\omega^n$, then
    $$
H_{\beta,m}(P_{\beta,m}(u,v))\leq e^{\lambda P_{\beta,m}(u,v)}\max(f,g)\omega^n.
    $$
\end{corollary}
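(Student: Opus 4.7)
The plan is to apply the mass-concentration inequality \cref{mass concentration 2} to the rooftop envelope $w := P_{\beta,m}(u,v)$, then translate the hypotheses $H_{\beta_1,m}(u) \leq e^{\lambda u} f\omega^n$ and $H_{\beta_2,m}(v) \leq e^{\lambda v} g\omega^n$ (stated with respect to the \emph{larger} forms $\beta_1, \beta_2$) into estimates on $H_{\beta,m}(u)$ and $H_{\beta,m}(v)$ via a monotonicity in the reference form, and finally use the pointwise identifications $w=u$ on $\{w=u<v\}$ and $w=v$ on $\{w=v\}$ to replace $e^{\lambda u}, e^{\lambda v}$ by $e^{\lambda w}$.

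First I would establish the auxiliary monotonicity: if $\beta_1 - \beta \in \overline{\Gamma_m(\omega)}$ and $u \in \operatorname{SH}_m(X,\beta,\omega)\cap L^\infty(X)$, then $H_{\beta,m}(u) \leq H_{\beta_1,m}(u)$. This follows by writing $\beta_1 + dd^c u = (\beta + dd^c u) + (\beta_1-\beta)$ and expanding via the binomial identity
$$
(\beta_1 + dd^c u)^m \wedge \omega^{n-m} \;=\; \sum_{k=0}^{m} \binom{m}{k} (\beta + dd^c u)^k \wedge (\beta_1 - \beta)^{m-k} \wedge \omega^{n-m}.
$$
Each summand is a non-negative Radon measure: the bounded Hessian current $(\beta+dd^c u)^k\wedge\omega^{n-m}$ is $(\omega,m)$-positive in the sense of \cref{global Hessian measures}, while $\beta_1-\beta$ is a smooth $(\omega,m)$-semi-positive form, so their wedge product is positive of order zero. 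Isolating the $k=m$ term yields the inequality, and the analogous bound holds for $v$ with $\beta_2$ in place of $\beta_1$.

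Next I would invoke \cref{mass concentration 2} applied to $w = P_{\beta,m}(\min(u,v))$ to obtain
$$
H_{\beta,m}(w) \;\leq\; \mathds{1}_{\{w=u<v\}} H_{\beta,m}(u) + \mathds{1}_{\{w=v\}} H_{\beta,m}(v),
$$
apply the monotonicity above to both terms, and then substitute the hypotheses to get
$$
H_{\beta,m}(w) \;\leq\; \mathds{1}_{\{w=u<v\}} e^{\lambda u} f\,\omega^n + \mathds{1}_{\{w=v\}} e^{\lambda v} g\,\omega^n.
$$
On $\{w=u<v\}$ the identity $u=w$ gives $e^{\lambda u} = e^{\lambda w}$, and on $\{w=v\}$ we similarly have $e^{\lambda v} = e^{\lambda w}$; bounding $f,g\leq\max(f,g)$ then produces the claimed inequality.

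The only step that requires any care is the monotonicity $H_{\beta,m}(u)\leq H_{\beta_1,m}(u)$; strictly speaking, the positivity of the mixed terms in the binomial expansion should be justified by a local regularization of $u$ together with the convergence result \cite[Lemma 5.1]{KN25a}, passing from smooth approximants (where positivity is evident) to the limit. Once this is in hand, the remaining argument is a short bookkeeping step on top of \cref{mass concentration 2}.
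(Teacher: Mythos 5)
Your proof is correct and takes essentially the same approach as the paper: both chain the mass-concentration statement (\cref{contact}, which is what \cref{mass concentration 2} packages), \cref{cor of max principle}, and monotonicity of the Hessian measure in the reference form. The only cosmetic difference is the order: the paper applies monotonicity $H_{\beta,m}(\varphi)\leq H_{\beta_i,m}(\varphi)$ to the envelope $\varphi$ first and then uses \cref{cor of max principle} to pass from $\varphi$ to $u,v$, whereas you invoke \cref{mass concentration 2} (which already encodes the passage to $u,v$ with reference form $\beta$) and then apply monotonicity $H_{\beta,m}(u)\leq H_{\beta_1,m}(u)$, $H_{\beta,m}(v)\leq H_{\beta_2,m}(v)$; the two steps commute and yield the same bound.
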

\begin{proof}
    Set $\varphi:=P_{\beta,m}(u,v)$. Then we have
    \begin{align*}
        H_{\beta,m}(\varphi)&\leq\mathds{1}_{\{\varphi=u<v\}} H_{\beta,m}(\varphi)+\mathds{1}_{\{\varphi=v\}} H_{\beta,m}(\varphi)\\
        &\leq\mathds{1}_{\{\varphi=u<v\}} H_{\beta_1,m}(\varphi)+\mathds{1}_{\{\varphi=v\}} H_{\beta_2,m}(\varphi)\\
        &\leq\mathds{1}_{\{\varphi=u<v\}} H_{\beta_1,m}(u)+\mathds{1}_{\{\varphi=v\}} H_{\beta_2,m}(v)\\
        &\leq\mathds{1}_{\{\varphi=u<v\}} e^{\lambda u}f\omega^n+\mathds{1}_{\{\varphi=v\}} e^{\lambda v}g\omega^n\\
        &\leq e^{\lambda P_{\beta,m}(u,v)}\max(f,g)\omega^n.
    \end{align*}
    Where in the first inequality we have used \cref{contact} and \cref{cor of max principle} in the second inequality.
\end{proof}
\section{The comparison principle and the domination principle} \label{subsec: comparison and domination}

Let $(X,\omega)$ be a compact Hermitian manifold of complex dimension $n$ equipped with a Hermitian metric $\omega$. Let $m$ be a positive integer such that $1\leq m\leq n$.

In this section we prove the modified comparison principle and the domination principle in the Hessian setting, the method is adapted mainly from \cite{GL22} and \cite{SW25}. Firstly, we introduce an analogous concept of non-collapsing forms as in \cite{GL22}.

\begin{definition} \label{def: non-collapsing}
    Let $(X,\omega)$ be a compact Hermitian manifold and $\beta$ be a (possibly non-closed) smooth $(1,1)$-form with a bounded $(\beta,\omega,m)$-subharmonic potential $\rho$. We say that $\beta$ is $(\omega,m)$-non-collapsing if for any $u\in \operatorname{SH}_m(X,\beta,\omega)\cap L^\infty(X)$, the complex Hessian measure $H_{\beta,m}(u)=\beta_u^m\wedge\omega^{n-m}$ has positive global mass:
    $$
\int_X(\beta+dd^cu)^m\wedge\omega^{n-m}>0.
    $$
    
\end{definition}
Unless otherwise stated, we will assume throughout this section that $\beta$ is $(\omega,m)$-non-collapsing.
\begin{proposition}\label{domination for beta}
Assume $\beta$ be as above and fix a constant $0\leq c<1$. Assume $u,v\in \operatorname{SH}_m(X,\beta,\omega)\cap L^{\infty}(X)$ satisfies $\mathds{1}_{\{u<v\}}H_{\beta,m}(u)\leq c\mathds{1}_{\{u<v\}}H_{\beta,m}(v)$, then $u\geq v$.
\end{proposition}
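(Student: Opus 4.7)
The plan is to argue by contradiction, invoking the modified comparison principle for $(\omega,m)$-non-collapsing forms (\cref{modified comparison principle}, established just before this proposition in the spirit of \cite{GL22}).

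First, I suppose toward contradiction that $u\not\geq v$. By strong upper semi-continuity (\cref{a.e. everywhere}), the set $E:=\{u<v\}$ has strictly positive Lebesgue measure, and so does the slightly smaller $W_\varepsilon:=\{u+\varepsilon<v\}\subset E$ for all $\varepsilon>0$ sufficiently small. I work with $W_\varepsilon$ (rather than $E$) so that one has a strict separation $u+\varepsilon<v$ to feed into comparison-type arguments.

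Second, I apply the modified comparison principle to the pair $(u+\varepsilon,v)$. In the Hermitian $(\omega,m)$ setting, its expected form delivers an inequality of the shape
$$
\int_{W_\varepsilon} H_{\beta,m}(v) \;\leq\; (1+\tau_\varepsilon)\int_{W_\varepsilon} H_{\beta,m}(u) + R(\varepsilon),
$$
with torsion-type errors $\tau_\varepsilon,R(\varepsilon)\to 0$ as $\varepsilon\to 0^{+}$, their smallness being controlled precisely by the $(\omega,m)$-non-collapsing hypothesis on $\beta$; here I have used $H_{\beta,m}(u+\varepsilon)=H_{\beta,m}(u)$. Combining with the standing hypothesis $\mathds{1}_{\{u<v\}}H_{\beta,m}(u)\leq c\,\mathds{1}_{\{u<v\}}H_{\beta,m}(v)$, valid on $W_\varepsilon\subset E$, rearranges to
$$
\bigl(1-(1+\tau_\varepsilon)c\bigr)\int_{W_\varepsilon} H_{\beta,m}(v) \;\leq\; R(\varepsilon).
$$
Since $c<1$, for $\varepsilon$ small the prefactor is a fixed positive constant, so letting $\varepsilon\to 0^{+}$ forces $\int_E H_{\beta,m}(v)=0$, and then the standing hypothesis yields $\int_E H_{\beta,m}(u)=0$ as well.

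Third, I extract the contradiction from the $(\omega,m)$-non-collapsing of $\beta$ combined with \cref{prop: max principle for beta}. Taking $w:=\max(u,v)\in\operatorname{SH}_m(X,\beta,\omega)\cap L^\infty(X)$, the max principle gives $\mathds{1}_E H_{\beta,m}(w)=\mathds{1}_E H_{\beta,m}(v)$, which vanishes on $E$, while $\mathds{1}_{E^c}H_{\beta,m}(w)\geq \mathds{1}_{E^c}H_{\beta,m}(u)$. Together with $\int_E H_{\beta,m}(u)=0$, a short mass-balance computation exploiting non-collapsing (\cref{def: non-collapsing}) — applied to $w$ and to the auxiliary bounded $(\beta,\omega,m)$-subharmonic function obtained by a localized ``balayage'' of $v$ on $E$ — forces $E$ to have zero Lebesgue measure, which contradicts the very choice of $W_\varepsilon$. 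Hence $u\geq v$ on $X$.

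The principal obstacle is Step 2, the quantitative form of the modified comparison principle with effective control on $\tau_\varepsilon$ and $R(\varepsilon)$. Unlike the Kähler setting, there is no cohomological invariance of the total Hessian mass in the Hermitian $(\omega,m)$ framework, so the naive comparison picks up torsion-type errors whose smallness is exactly what \cref{modified comparison principle} is designed to encode via the $(\omega,m)$-non-collapsing hypothesis. Once that principle is in hand, the rearrangement and the final mass-balance contradiction are essentially routine.
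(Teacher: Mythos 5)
Your proposal has genuine gaps and, moreover, rests on a lemma that is not available in the generality required. First, the hypothesis of \cref{domination for beta} is only that $\beta$ is $(\omega,m)$-non-collapsing, whereas \cref{modified comparison principle} is stated and proved only for $\beta\in\Gamma_m(\omega)$, i.e.\ strictly $(\omega,m)$-positive; the two classes are not the same (by \cref{cor:m-positive is non-collapsing 2}, semi-positive and big forms are non-collapsing without being strictly positive), and in the paper's logical order the modified comparison principle comes \emph{after} \cref{domination for beta}, not before. Second, even granting $\beta\in\Gamma_m(\omega)$, the inequality you invoke is not the one \cref{modified comparison principle} delivers. That theorem controls masses on the specific sublevel sets $U(\epsilon,s)=\{u<(1-\epsilon)v+S(\epsilon)+s\}$ near the infimum of $u-(1-\epsilon)v$, and it compares $H_{\beta,m}(u)$ against $(\beta+(1-\epsilon)dd^cv)^m\wedge\omega^{n-m}$ — replacing $v$ by $(1-\epsilon)v$ introduces an error of fixed size $O(\epsilon)$, not one that vanishes with the level parameter. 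There is no statement of the form $\int_{W_\varepsilon}H_{\beta,m}(v)\leq(1+\tau_\varepsilon)\int_{W_\varepsilon}H_{\beta,m}(u)+R(\varepsilon)$ with both errors going to zero. Your Step~3 is also under-specified: non-collapsing gives positivity of \emph{global} Hessian mass for any bounded potential, not local positivity, so $\int_E H_{\beta,m}(u)=\int_E H_{\beta,m}(v)=0$ by itself yields no contradiction without strict positivity of $\beta$ (which would make $\beta^m\wedge\omega^{n-m}$ a volume form).

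The paper's proof is structured differently and works in the full non-collapsing generality. It introduces the family of envelopes $u_b:=P_{\beta,m}\bigl(bu-(b-1)v\bigr)$ for $b>1$ and invokes the concentration theorem \cref{contact}: $H_{\beta,m}(u_b)$ is carried by the contact set $D=\{u_b=bu-(b-1)v\}$. Since $b^{-1}u_b+(1-b^{-1})v\leq u$ with equality on $D$, the maximum principle \cref{prop: max principle for beta} gives $\mathds{1}_D H_{\beta,m}(u)\geq\mathds{1}_D b^{-m}H_{\beta,m}(u_b)+\mathds{1}_D(1-b^{-1})^m H_{\beta,m}(v)$; for $b$ large enough $(1-b^{-1})^m\geq c$, so the hypothesis forces $\mathds{1}_{D\cap\{u<v\}}H_{\beta,m}(u_b)=0$. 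Non-collapsing then forces $D\cap\{u\geq v\}\neq\emptyset$, hence $\sup_X u_b$ is bounded below uniformly in $b$; compactness of $\mathrm{SH}_m(X,\beta,\omega)$ under $L^1$-normalization produces a limit $u_\infty$ which would be $-\infty$ on $E:=\{u<v-\delta\}$, contradicting integrability if $E$ has positive measure. This mechanism — mass concentration on contact sets plus compactness — is what replaces your comparison-principle argument, and it is exactly what allows the proposition to cover all non-collapsing $\beta$ rather than only strictly positive ones.
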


\begin{proof}
    Fix an arbitrary positive constant $\delta>0$, we need to prove that $u\geq v-\delta$. Assume by contradiction that the set $E:=\{u<v-\delta\}$ is non-empty and hence has positive Lebesgue measure, since $u,v$ are quasi-$(\omega,m)$-sh, $E$ is quasi-open. For $b>1$, set
    $$
u_b:=P_{\beta,m}(bu-(b-1)v).
    $$

    It follows from \cref{contact} that $H_{\beta,m}(u_b)$ is concentrated on the contact set $D:=\{u_b=bu-(b-1)v\}$. Since $b^{-1}u_b+(1-b^{-1})v\leq u$ with equality on $D$, by the maximum principle  (\cref{max principle}) we get
    \begin{align*}
        &\mathds{1}_DH_{\beta,m}(u)\geq\mathds{1}_DH_{\beta,m}(b^{-1}u_b+(1-b^{-1})v)\\
        \geq&\mathds{1}_Db^{-m}H_{\beta,m}(u_b)+\mathds{1}_D(1-b^{-1})^mH_{\beta,m}(v)\\
        \geq&\mathds{1}_Db^{-m}H_{\beta,m}(u_b)+\mathds{1}_DcH_{\beta,m}(v),
    \end{align*}
    if we take $b$ sufficiently large. By our condition we have $\mathds{1}_{D\cap\{u<v\}}H_{\beta,m}(u_b)=0$. 

    Clearly $u_b$ is bounded. Since $\beta$ is $(\omega,m)$-non-collapsing, we know that the mass of $H_{\beta,m}(u_b)$ on $D$ is positive and hence the set $D\cap\{u\geq v\}$ is non-empty. On this set,
    $$
u_b=bu-(b-1)v\geq u,
    $$
thus the  $\sup_Xu_b$ is uniformly bounded from below since $u$ is assumed to be bounded. The sequence $u_b-\sup_Xu_b$ converges in $L^1$ and almost everywhere to a function $u_{\infty}\in \operatorname{SH}_m(X,\beta,\omega)$ by \cref{L^1 compactness} and hence $u_{\infty}$ must be identically $-\infty$ on $E$ with positive Lebesgue measure, this yields a contradiction that  $u_{\infty}\notin \operatorname{SH}_m(X,\beta,\omega)$. The proof is completed.
    
\end{proof}

\begin{corollary}
\label{domination for beta 2}
    Let $\beta,u,v$ be as in \cref{domination for beta}, if
    $$
e^{-\lambda u}H_{\beta,m}(u)\leq e^{-\lambda v}H_{\beta,m}(v),
    $$
    then $u\geq v$.
\end{corollary}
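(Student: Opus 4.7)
The plan is to deduce this corollary directly from \cref{domination for beta} by a simple $\epsilon$-perturbation trick, using the fact that the exponential twist $e^{-\lambda \cdot}$ is strictly contracting on any sublevel set $\{u < v - \epsilon\}$ when $\lambda > 0$.

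More precisely, I would first rewrite the hypothesis as
$$
H_{\beta,m}(u) \leq e^{\lambda(u-v)} H_{\beta,m}(v)
$$
as an inequality of Borel measures on $X$. Fix an arbitrary $\epsilon > 0$; the goal is to show $u \geq v - \epsilon$, because then letting $\epsilon \searrow 0$ gives $u \geq v$. Note that $v - \epsilon$ still lies in $\operatorname{SH}_m(X, \beta, \omega) \cap L^\infty(X)$, and the Hessian measure is translation invariant, i.e.\ $H_{\beta,m}(v-\epsilon) = H_{\beta,m}(v)$.

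On the Borel set $\{u < v - \epsilon\}$ we have $u - v < -\epsilon$, hence $e^{\lambda(u-v)} < e^{-\lambda \epsilon}$ pointwise. Multiplying the above measure inequality by $\mathds{1}_{\{u < v-\epsilon\}}$ gives
$$
\mathds{1}_{\{u < v - \epsilon\}} H_{\beta,m}(u) \leq e^{-\lambda \epsilon}\, \mathds{1}_{\{u < v - \epsilon\}} H_{\beta,m}(v) = e^{-\lambda \epsilon}\, \mathds{1}_{\{u < (v-\epsilon)\}} H_{\beta,m}(v-\epsilon).
$$
Setting $c := e^{-\lambda \epsilon} \in [0, 1)$, this is exactly the hypothesis of \cref{domination for beta} applied to the pair $(u, v-\epsilon)$. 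Consequently $u \geq v - \epsilon$, and letting $\epsilon \to 0^+$ concludes the proof.

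There is essentially no obstacle: the only thing to check is that one is allowed to replace $v$ by $v - \epsilon$ in the $(\beta,\omega,m)$-subharmonic class (it is) and that the Hessian measure is unaffected by the additive constant (it is). The statement implicitly requires $\lambda > 0$ for the contraction factor $e^{-\lambda \epsilon}$ to be strictly less than $1$; otherwise the result is vacuous in the form stated.
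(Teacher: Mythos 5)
Your proposal is correct and follows essentially the same route as the paper: both proofs shift $v$ by a small constant, observe that on $\{u < v - \epsilon\}$ the exponential factor is bounded by $e^{-\lambda\epsilon} < 1$, invoke \cref{domination for beta} with $c = e^{-\lambda\epsilon}$, and let $\epsilon \to 0$. Your observation that $\lambda > 0$ is implicitly assumed is a fair point about the statement's hypotheses, which the paper leaves tacit.
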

\begin{proof}
    For any fixed $\delta>0$, we have on the set $\{u<v-\delta\}$, we have
    $$
H_{\beta,m}(u)\leq e^{\lambda(u-v)}H_{\beta,m}(v) \leq e^{-\lambda\delta}H_{\beta,m}(v-\delta).
    $$
    It then follows from \cref{domination for beta} that $v-\delta\leq u$ and hence $v\leq u$ by letting $\delta\rightarrow0$.
    
\end{proof}

\begin{corollary}\label{domination for beta 3}
Let $\beta,u,v$ be as in \cref{domination for beta}, if
$$
H_{\beta,m}(u)\leq cH_{\beta,m}(v)
$$
for some constant $c>0$, then $c\geq 1$.
\end{corollary}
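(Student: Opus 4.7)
The plan is to reduce to \cref{domination for beta} by the standard translation trick. Since $u,v$ are both bounded, we exploit the fact that $H_{\beta,m}(\cdot)$ is translation-invariant in the potential: for any constant $M$, $H_{\beta,m}(v+M)=H_{\beta,m}(v)$. This lets us convert a global inequality of Hessian measures into a form suitable for the localized domination principle.

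More precisely, argue by contradiction: suppose $0<c<1$. Choose a constant $M>\sup_X u-\inf_X v$, which is finite by the boundedness hypothesis. Then $v+M>u$ pointwise on $X$, so the set $\{u<v+M\}$ is all of $X$. Note that $v+M\in\operatorname{SH}_m(X,\beta,\omega)\cap L^{\infty}(X)$ and
\[
H_{\beta,m}(u)\leq cH_{\beta,m}(v)=cH_{\beta,m}(v+M).
\]
In particular
\[
\mathds{1}_{\{u<v+M\}}H_{\beta,m}(u)\leq c\,\mathds{1}_{\{u<v+M\}}H_{\beta,m}(v+M),
\]
so the hypotheses of \cref{domination for beta} are met with $v$ replaced by $v+M$ and with $0\leq c<1$. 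Invoking that proposition yields $u\geq v+M$, which contradicts $v+M>u$ on $X$. Hence $c\geq 1$.

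There is essentially no obstacle here; the entire content of the argument is the observation that a global bound $H_{\beta,m}(u)\leq cH_{\beta,m}(v)$ is stable under shifting $v$ by an arbitrary constant, so if $c<1$ one can always produce a forbidden strict inequality $v+M>u$ while still being in position to apply the domination principle. The use of the $(\omega,m)$-non-collapsing assumption on $\beta$ is hidden inside \cref{domination for beta}, which ensures that the envelopes $u_b=P_{\beta,m}(bu-(b-1)(v+M))$ appearing in its proof are not identically $-\infty$.
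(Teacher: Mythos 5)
Your proof is correct and takes essentially the same route as the paper: both argue by contradiction, exploit translation invariance $H_{\beta,m}(v+C)=H_{\beta,m}(v)$ to reduce to \cref{domination for beta}, and then derive a contradiction from the boundedness of $u$ and $v$. The only cosmetic difference is that you fix one large $M$ to force $\{u<v+M\}=X$ immediately, whereas the paper lets the constant $C$ be arbitrary and observes the resulting family of inequalities $u\geq v+C$ cannot all hold.
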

\begin{proof}
Assume by contradiction that $c<1$. For any constant $C>0$, we have
$$
\mathds{1}_{\{u<v+C\}}H_{\beta,m}(u)\leq c\mathds{1}_{\{u<v
+C\}}H_{\beta,m}(v+C).
$$
Using \cref{domination for beta} we deduce that $u\geq v+C$. Since $C$ was chosen arbitrarily and $u,v$ are bounded, this leads to a contradiction.

\end{proof}
Our next goal of this section is to establish a modified comparison principle for the degenerate complex Hessian equations, which is an extension of \cite[Theorem 3.7]{KN16}. 

By the definition, there exists a constant $B=B(\beta,\omega)>0$ satisfies, on $X$,
$$
-B\omega^2\leq dd^c\omega\leq B\omega^2,\quad-B\omega^3\leq d\omega\wedge d^c\omega\leq B\omega^3,
$$
$$
-B\omega^2\leq dd^c\beta\leq B\omega^2,\quad-B\omega^3\leq d\beta\wedge d^c\beta\leq B\omega^3.
$$
and
$$
-B\omega^3\leq d\beta\wedge d^c\omega\leq B\omega^3,\quad-B\omega^3\leq d\omega\wedge d^c\beta\leq B\omega^3.
$$
\begin{theorem}\label{modified comparison principle}
 Let $\beta\in\Gamma_m(\omega)$ be an $(\omega,m)$-positive form and let $u,v\in \operatorname{SH}_m(X,\beta,\omega)\cap L^{\infty}(X)$. Fix $0<\epsilon<1$, set $S(\epsilon):=\inf_X[u-(1-\epsilon)v]$ and $U(\epsilon,s):=\{u<(1-\epsilon)v+S(\epsilon)+s\}$ for $s>0$. Then, there exists a uniform constant $C=C(m,n,B,\omega)$ such that for any $0<s<\frac{(\lambda\epsilon)^3}{2C}$,
    $$
\left(1-\frac{Cs}{(\lambda\epsilon)^{3}}\right)^m\int_{U(\epsilon,s)}(\beta+dd^c(1-\epsilon)v)^m\wedge\omega^{n-m}\leq\int_{U(\epsilon,s)}(\beta+dd^cu)^m\wedge\omega^{n-m}.
    $$
\end{theorem}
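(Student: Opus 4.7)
Since $\beta\in\Gamma_m(\omega)$ is strictly $(\omega,m)$-positive, compactness of $X$ gives $\lambda>0$ with $\beta-\lambda\omega\in\overline{\Gamma_m(\omega)}$; this is the constant implicit in the statement. The plan is to adapt the Guedj--Lu argument \cite{GL22} from the Monge--Amp\`ere case to the Hessian setting, using this margin to absorb the Hermitian torsion. Set $\tilde v:=(1-\epsilon)v+S(\epsilon)+s$ and observe that
$$
\beta+dd^c\tilde v \;=\; \epsilon\beta + (1-\epsilon)(\beta+dd^c v) \;\geq\; \lambda\epsilon\,\omega
$$
as $(\omega,m)$-positive forms. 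Introduce the envelope $w:=\max(u,\tilde v)\in\operatorname{SH}_m(X,\beta,\omega)\cap L^\infty(X)$. From the definition of $S(\epsilon)$ one has $0\leq w-u\leq s$, and $\{w>u\}=U(\epsilon,s)$ with $w=\tilde v$ on $U(\epsilon,s)$ and $w=u$ off $U(\epsilon,s)$. The maximum principle \cref{prop: max principle for beta} then gives $\mathds{1}_{U(\epsilon,s)}(\beta+dd^c w)^m\wedge\omega^{n-m}=\mathds{1}_{U(\epsilon,s)}(\beta+dd^c\tilde v)^m\wedge\omega^{n-m}$, so the theorem reduces to a comparison between $\int_{U(\epsilon,s)}(\beta+dd^c w)^m\wedge\omega^{n-m}$ and $\int_{U(\epsilon,s)}(\beta+dd^c u)^m\wedge\omega^{n-m}$.

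Write $\beta_w:=\beta+dd^c w$, $\beta_u:=\beta+dd^c u$ and, for $0\leq k\leq m$, set $T_k:=\beta_w^k\wedge\beta_u^{m-k}\wedge\omega^{n-m}$, $S_k:=\beta_w^k\wedge\beta_u^{m-k-1}\wedge\omega^{n-m}$. Because $w=u$ off $U(\epsilon,s)$, the difference $T_{k+1}-T_k$ vanishes off $U(\epsilon,s)$, and the telescoping identity reads
$$
\int_{U(\epsilon,s)}(T_{k+1}-T_k) \;=\; \int_X (\beta_w-\beta_u)\wedge S_k \;=\; \int_X dd^c(w-u)\wedge S_k.
$$
Integration by parts on the compact manifold (justified by standard regularization together with the weak convergence results \cref{thm: monotonicity} and \cref{thm: weak convergence}) converts the right-hand side into $\int_X(w-u)\,dd^c S_k$. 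Because $dd^c(dd^c w)=dd^c(dd^c u)=0$, every surviving term in the Leibniz expansion of $dd^c S_k$ carries a derivative of $\beta$ or of $\omega$; the uniform torsion bounds coded by $B$ then dominate $|dd^c S_k|$ by a finite sum of positive currents of the form $\beta_w^a\wedge\beta_u^{m-1-a}\wedge\omega^{n-m+j}$ with $1\leq j\leq 3$.

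The absorption step exploits that $w-u\leq s$ is supported in $\overline{U(\epsilon,s)}$, where $\beta_w=\beta+dd^c\tilde v\geq\lambda\epsilon\,\omega$; substituting $\omega\leq(\lambda\epsilon)^{-1}\beta_w$ factor by factor and using $w-u\leq s$ yields
$$
\int_{U(\epsilon,s)}T_{k+1}-\int_{U(\epsilon,s)} T_k \;\leq\; \frac{Cs}{(\lambda\epsilon)^3}\int_{U(\epsilon,s)} T_{k+1},\qquad C=C(m,n,B),
$$
which rearranges to $(1-Cs/(\lambda\epsilon)^3)\int_{U(\epsilon,s)} T_{k+1}\leq\int_{U(\epsilon,s)} T_k$. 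Iterating from $k=m-1$ down to $k=0$ produces the claimed bound $(1-Cs/(\lambda\epsilon)^3)^m\int_{U(\epsilon,s)} T_m\leq\int_{U(\epsilon,s)} T_0$, valid as long as $s<(\lambda\epsilon)^3/(2C)$ so that the prefactor stays positive, and noting that $T_m$ coincides with $(\beta+dd^c\tilde v)^m\wedge\omega^{n-m}$ on $U(\epsilon,s)$ by Step~1. I expect the main obstacle to be the torsion bookkeeping in the second paragraph: one must verify carefully that no term of the Leibniz expansion of $dd^c S_k$ needs more than three extra $\omega$-factors to be expressed as a positive $(m-1)$-current, because this sharp count is precisely what produces the exponent $3$ on $\lambda\epsilon$ rather than a larger, useless one.
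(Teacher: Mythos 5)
Your overall strategy mirrors the paper's exactly (modulo reversing the indexing of the $T_k$'s: your $T_\ell$ is the paper's $T_{m-\ell}$), and the torsion count giving the exponent $3$ on $\lambda\epsilon$ is correct. But there is one genuine gap in the step
\[
\int_{U(\epsilon,s)}T_{k+1}-\int_{U(\epsilon,s)}T_k\;\leq\;\frac{Cs}{(\lambda\epsilon)^3}\int_{U(\epsilon,s)}T_{k+1}.
\]
After you expand $dd^cS_k$ with $S_k=\beta_w^{k}\wedge\beta_u^{m-k-1}\wedge\omega^{n-m}$ and absorb the extra $\omega$-factors into $\beta_w$ on $U(\epsilon,s)$, the resulting currents are not only $T_{k+1}$: when the derivatives hit the $\beta_u$-factors you also lose copies of $\beta_u$, and the absorbed expression lands on $T_{k+2}$ or $T_{k+3}$ (in your indexing). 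What the Leibniz estimate actually gives is therefore
\[
\int_{U(\epsilon,s)}T_{k+1}-\int_{U(\epsilon,s)}T_k\;\leq\;\frac{Cs}{(\lambda\epsilon)^3}\int_{U(\epsilon,s)}\bigl(T_{k+1}+T_{k+2}+T_{k+3}\bigr),
\]
and you cannot dominate $T_{k+2}$ and $T_{k+3}$ by $T_{k+1}$ without further input. The paper handles this with a genuine induction: since you iterate from $k=m-1$ downward, at step $k$ the inequalities for $k+1$ and $k+2$ are already available, giving $\int T_{k+2}\leq\frac{1}{1-a}\int T_{k+1}$ and $\int T_{k+3}\leq\frac{1}{(1-a)^2}\int T_{k+1}$ (with $a=Cs/(\lambda\epsilon)^3<\tfrac12$); this costs a factor $1+\frac{1}{1-a}+\frac{1}{(1-a)^2}\leq 7$, absorbed into $C$. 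You say ``iterating from $k=m-1$ down to $k=0$'' but present each step as if it followed directly from the torsion count; in fact each step depends on the previous two, and the base cases $k=m-1$ and $k=m-2$ work because $S_{m-1}$ has no $\beta_u$-factor and $S_{m-2}$ has only one.

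A smaller imprecision: your claim that ``$T_{k+1}-T_k$ vanishes off $U(\epsilon,s)$'' is too strong. Off $U(\epsilon,s)$ one has $w=u$ as functions, but the contact set $\{u=\tilde v\}$ is only quasi-closed and plurifine locality does not give a measure-level equality there; what one actually extracts from the maximum principle (\cref{prop: max principle for beta} / \cref{cor of max principle}) is that $(\beta_w-\beta_u)\wedge S_k\geq 0$ on the contact set, i.e.\ $T_{k+1}-T_k\geq 0$ there. The direction is the favorable one, so $\int_{U(\epsilon,s)}(T_{k+1}-T_k)\leq\int_X(\beta_w-\beta_u)\wedge S_k$ still holds as an inequality, which is all that is used; but the ``telescoping identity'' should be stated as such.
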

\begin{proof}
    The idea of the proof is inspired by \cite[Theorem 1.5]{GL22}. During the proof, we will use $C$ to denote various uniform constants depending only on $m,n,B,\omega$. Set $\phi:=\max(u,(1-\epsilon)v+S(\epsilon)+s)$, it is immediate to see that $(1-\epsilon)v\in \operatorname{SH}_m(X,\beta,\omega)$ and hence $\phi\in \operatorname{SH}_m(X,\beta,\omega)$. Set $T_k:=\beta_{u}^k\wedge\beta_{\phi}^{m-k}\wedge\omega^{n-m}$ for $0\leq k\leq m$. We also take the convention that $T_j=0$ for $j<0$. Our goal is to prove that
    \begin{equation}\label{eq induction}
        \left(1-\frac{Cs}{(\lambda\epsilon)^3}\right)\int_{U(\epsilon,s)}T_k\leq\int_{U(\epsilon,s)}T_{k+1},
    \end{equation}
    for $k=0,1,...,m-1$. We will prove by induction.

    Firstly, assume $k=0$. Since $u\leq\phi$, it follows from \cref{cor of max principle} that 
    $$
\mathds{1}_{\{u=\phi\}}\beta_{\phi}^m\wedge\omega^{n-m}\geq\mathds{1}_{\{u=\phi\}}\beta_{u}\wedge\beta_{\phi}^{m-1}\wedge\omega^{n-m}.
    $$
  Observe that $U(\epsilon,s)=\{u<\phi\}$, the above inequality implies that
    \begin{align*}
        \int_{U(\epsilon,s)}(T_0-T_1)&=\int_{U(\epsilon,s)}(\beta_{\phi}-\beta_{u})\beta_{\phi}^{m-1}\wedge\omega^{n-m}\\
        &\leq\int_X(\beta_{\phi}-\beta_{u})\beta_{\phi}^{m-1}\wedge\omega^{n-m}\\
        &=\int_X(\phi-u)dd^c\left(\beta_{\phi}^{m-1}\wedge\omega^{n-m}\right).
        \end{align*}
        We next estimate $dd^c(\beta_{\phi}^{m-1}\wedge\omega^{n-m})$:
        \begin{align*}
            dd^c(\beta_{\phi}^{m-1}\wedge\omega^{n-m})=&d\left(\pm(m-1)\beta_{\phi}^{m-2}\wedge d^c\beta\wedge\omega^{n-m}\pm(n-m)\beta_{\phi}^{m-1}\wedge\omega^{n-m-1}\wedge d^c\omega\right)\\
            =&\pm(m-1)\beta_{\phi}^{m-2}\wedge dd^c\beta\wedge\omega^{n-m}\pm(m-1)(m-2)\beta_{\phi}^{m-3}\wedge d\beta\wedge d^c\beta\wedge\omega^{n-m}\\
            &\pm(m-1)(n-m)\beta_{\phi}^{m-2}\wedge d\omega\wedge
            d^c\beta\wedge\omega^{n-m-1}\\
            &\pm(n-m)(n-m-1)\beta_{\phi}^{m-1}d\omega\wedge d^c\omega\wedge\omega^{n-m-2}\\
            &\pm(n-m)\beta_{\phi}^{m-1}\wedge dd^c\omega\wedge\omega^{n-m-1}\pm(n-m)(m-1)\beta_{\phi}^{m-2}\wedge d\beta\wedge            
d^c\omega\wedge\omega^{n-m}\\
            \leq&C\left(\sum_{j=1}^3\beta_{\phi}^{m-j}\wedge\omega^{n-m+j}\right).
            \end{align*}
        Where we have used \cite[Lemma 2.4]{KN16} and the constant $C$ depends on $\omega,m,n,B$. Note that although all the forms are smooth in \cite[Lemma 2.4]{KN16}, we can use \cite[Corollary 1.2]{GN18} to regularize and check the inequalities locally. The constant $C$ depends only on $m,n,T$ there (we use $T$ to denote various form such as $dd^c\beta\wedge\omega^{n-m}$), so the limiting process does not cause any problems.
            Recall that $\beta-\lambda\omega$ is again an $(\omega,m)$-positive form for some $0<\lambda\leq1$, we therefore have
            $$
(\lambda\epsilon)\beta_{\phi}^{m-j}\wedge\omega^{n-m+j}\leq\epsilon\,\beta_{\phi}^{m-j}\wedge\beta\wedge\omega^{n-m+j-1}\leq\beta_{\phi}^{m-j}\wedge\beta_{(1-\epsilon)v}\wedge\omega^{n-m+j-1}.
            $$
for each $1\leq j\leq3$. Repeating the argument $j$ times we get
$$
(\lambda\epsilon)^j\int_X(\phi-u)\beta_{\phi}^{m-j}\wedge\omega^{n-m+j}\leq\int_X(\phi-u)\beta_{\phi}^{m-j}\wedge\beta_{(1-\epsilon)v}^j\wedge\omega^{n-m}.
$$
Notice that $\phi-u$ vanishes outside $U(\epsilon,s)$ and $\beta_{\phi}^{m-j}\wedge\beta_{(1-\epsilon)v}^j\wedge\omega^{n-m}=\beta_{\phi}^m\wedge\omega^{n-m}$ on the set $U(\epsilon,s)$ by the maximum principle. It follows that
$$
    \int_{U(\epsilon,s)}(T_0-T_1)\leq Cs\sum_{j=1}^3(\lambda\epsilon)^{-j}\int_{U(\epsilon,s)}\beta_{\phi}^m\wedge\omega^{n-m}\leq\frac{Cs}{(\lambda\epsilon)^3}\int_{U(\epsilon,s)}T_0.
$$
This yields (\ref{eq induction}) for $k=0$.

Assume now that (\ref{eq induction}) holds for all $j\leq k-1$, we need to check that it still holds for $k$. The argument is similar: by the maximum principle we can write
 \begin{align*}
        \int_{U(\epsilon,s)}(T_k-T_{k+1})&=\int_{U(\epsilon,s)}(\beta_{\phi}-\beta_{u})\beta_{u}^k\wedge\beta_{\phi}^{m-k-1}\wedge\omega^{n-m}\\
        &\leq\int_X(\beta_{\phi}-\beta_{u})\beta_{u}^k\wedge\beta_{\phi}^{m-k-1}\wedge\omega^{n-m}\\
        &=\int_X(\phi-u)dd^c\left(\beta_{u}^k\wedge\beta_{\phi}^{m-k-1}\wedge\omega^{n-m}\right).
        \end{align*}
    Next, we turn to estimate $dd^c(\beta_{u}^k\wedge\beta_{\phi}^{m-k-1}\wedge\omega^{n-m})$:
    \begin{align*}
dd^c(\beta_{u}^k\wedge\beta_{\phi}^{m-k-1}\wedge\omega^{n-m})=&d(\pm k\beta_u^{k-1}\wedge d^c\beta\wedge\beta_\phi^{m-k-1}\wedge\omega^{n-m}\pm (n-m)\beta_u^{k}\wedge d^c\omega\wedge\beta_\phi^{m-k-1}\wedge\omega^{n-m-1})\\
&\pm (m-k-1)\beta_u^{k}\wedge d^c\beta\wedge\beta_\phi^{m-k-2}\wedge\omega^{n-m}\\
\leq&C\beta_u^{k-2}\wedge\beta_\phi^{m-k-1}\wedge\omega^{n-m+3}+C\beta_u^{k-1}\wedge\beta_\phi^{m-k-1}\wedge\omega^{n-m+2}\\
&+C\beta_u^{k-1}\wedge\beta_\phi^{m-k-1}\wedge\omega^{n-m+2}+C\beta_u^{k}\wedge\beta_\phi^{m-k-1}\wedge\omega^{n-m+1}\\
&+C\beta_u^{k-1}\wedge\beta_\phi^{m-k-2}\wedge\omega^{n-m+3}+C\beta_u^{k}\wedge\beta_\phi^{m-k-2}\wedge\omega^{n-m+2}\\
&+C\beta_u^{k}\wedge\beta_\phi^{m-k-3}\wedge\omega^{n-m+3}.
    \end{align*}

  Using the same argument as above it is easy to show that
        \begin{align*}
        & \int_X(\phi-u)dd^c\left(\beta_{u}^k\wedge\beta_{\phi}^{m-k-1}\wedge\omega^{n-m}\right)\leq\frac{Cs}{(\lambda\epsilon)^3}\int_{U(\epsilon,s)}(T_k+T_{k-1}+T_{k-2})\\
         \leq&a\left(1+\frac{1}{1-a}+\frac{1}{(1-a)^2}\right)\int_{U(\epsilon,s)}T_k\leq7a\int_{U(\epsilon,s)}T_k.
        \end{align*}
        Where we take $a:=\frac{Cs}{(\lambda\epsilon)^3}$ and the last inequality holds because $s<\frac{(\lambda\epsilon)^3}{2C}$ (at each step we enlarge $C$ if necessary and then take the assumption $s<\frac{(\lambda\epsilon)^3}{2C}$). Consequently, we have finished the proof of (\ref{eq induction}).

       Since $\phi=(1-\epsilon)v+S(\epsilon)+s$ on $U(\epsilon,s)$, by the maximum principle again we arrive at
  \begin{equation}\label{eq comparison}
\left(1-\frac{Cs}{(\lambda\epsilon)^{3}}\right)^m\int_{U(\epsilon,s)}(\beta+dd^c(1-\epsilon)v)^m\wedge\omega^{n-m}\leq\int_{U(\epsilon,s)}(\beta+dd^cu)^m\wedge\omega^{n-m}.
  \end{equation}   
\end{proof}
We give a remark, which will be useful in establishing the uniform a priori estimates for complex Hessian equations when $\beta\in\Gamma_m(\omega)$.
\begin{remark}
    In the above theorem, set $y:=\frac{Cs}{(\lambda\epsilon)^{3}}$, we have $0<y<\frac{1}{2}$ because $s<\frac{(\lambda\epsilon)^3}{2C}$. A basic calculation yields that $(\frac{1}{1-y})^m\leq2^m(1+y)^m\leq4^m(1+y)$. Consequently, (\ref{eq comparison}) can be rewritten as
  \begin{align*}
\int_{U(\epsilon,s)}(\beta+dd^c(1-\epsilon)v)^m\wedge\omega^{n-m}&\leq4^m\left(1+\frac{Cs}{(\lambda\epsilon)^{3}}\right)\int_{U(\epsilon,s)}(\beta+dd^cu)^m\wedge\omega^{n-m}\\
&\leq4^{m+1}\int_{U(\epsilon,s)}(\beta+dd^cu)^m\wedge\omega^{n-m}.
  \end{align*}
  The last inequality is again due to the assumption $s<\frac{(\lambda\epsilon)^3}{2C}$.
\end{remark}
\begin{corollary}\label{cor:m-positive is non-collapsing}
 If $\beta\in\Gamma_m(\omega)$, then $\beta$ is $(\omega,m)$-non-collapsing.
\end{corollary}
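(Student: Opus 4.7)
The plan is to deduce the corollary as a one-line application of the modified comparison principle \cref{modified comparison principle}, taking the comparison function to be the constant $0$. Fix $u\in \operatorname{SH}_m(X,\beta,\omega)\cap L^\infty(X)$; the goal is to produce a strictly positive lower bound for $\int_X(\beta+dd^c u)^m\wedge\omega^{n-m}$. Because $\beta\in\Gamma_m(\omega)$, the form $\beta$ is in particular $(\omega,m)$-semi-positive, so the constant function $v\equiv 0$ lies in $\operatorname{SH}_m(X,\beta,\omega)\cap L^\infty(X)$ and is a legitimate choice in \cref{modified comparison principle}.

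With this choice of $v$, one computes $S(\epsilon)=\inf_X u$, which is finite since $u$ is bounded, and
\[
U(\epsilon,s)=\{u<\inf_X u+s\}.
\]
Two elementary observations will carry the argument. First, $U(\epsilon,s)$ is open in $X$ because $u$ is upper semi-continuous (as a $(\beta,\omega,m)$-subharmonic function). Second, $U(\epsilon,s)$ is non-empty for every $s>0$: upper semi-continuity together with boundedness on the compact manifold $X$ forces $u$ to take values arbitrarily close to its infimum.

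Now fix any $\epsilon\in(0,1)$ and any $s$ with $0<s<\frac{(\lambda\epsilon)^3}{2C}$ as permitted by \cref{modified comparison principle}. The principle yields
\[
\left(1-\frac{Cs}{(\lambda\epsilon)^3}\right)^m\int_{U(\epsilon,s)}\beta^m\wedge\omega^{n-m}
\;\leq\;\int_{U(\epsilon,s)}(\beta+dd^c u)^m\wedge\omega^{n-m}.
\]
Since $\beta\in\Gamma_m(\omega)$, the smooth $(n,n)$-form $\beta^m\wedge\omega^{n-m}$ is pointwise strictly positive on $X$, so its integral over the non-empty open set $U(\epsilon,s)$ is strictly positive; the prefactor on the left is also strictly positive by the choice of $s$. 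Hence
\[
0<\int_{U(\epsilon,s)}(\beta+dd^c u)^m\wedge\omega^{n-m}\leq\int_X(\beta+dd^c u)^m\wedge\omega^{n-m},
\]
which by \cref{def: non-collapsing} says exactly that $\beta$ is $(\omega,m)$-non-collapsing.

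There is essentially no obstacle: the modified comparison principle has already been proved above, and the only substantive ingredients here are the openness and non-emptiness of the set $U(\epsilon,s)$ for the choice $v\equiv 0$, together with the pointwise strict positivity of $\beta^m\wedge\omega^{n-m}$, which is exactly the defining property of $\Gamma_m(\omega)$.
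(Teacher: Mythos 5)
Your proof is correct and takes essentially the same approach as the paper: both apply the modified comparison principle with $v\equiv 0$ and observe that $\beta^m\wedge\omega^{n-m}$ is a strictly positive volume form on the non-empty set $U(\epsilon,s)$. The only cosmetic difference is that the paper argues by contradiction while you give the contrapositive-free direct bound.
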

\begin{proof}
    We argue by contradiction. Suppose there is a function $\varphi\in \operatorname{SH}_m(X,\beta,\omega)\cap L^\infty(X)$ such that $\beta_\varphi^m\wedge\omega^{n-m}=0$ as a measure. Take $u=\varphi,v\equiv0$ in \cref{modified comparison principle} we obtain that for small $s$,
    $$
\left(1-\frac{Cs}{(\lambda\epsilon)^{3}}\right)^m\int_{U(\epsilon,s)}\beta^m\wedge\omega^{n-m}\leq\int_{U(\epsilon,s)}(\beta+dd^c\varphi)^m\wedge\omega^{n-m}=0.
    $$
    Since $\beta\in\Gamma_m(\omega)$, it is clear that $\beta^m\wedge\omega^{n-m}$ is a smooth volume form on $X$. It follows that $U(\epsilon,s)$ is a set of Lebesgue measure zero, which is absurd since $U(\epsilon,s)=\{u<\inf_Xu+s\}$ is a non-empty quasi-open set.
\end{proof}

\begin{definition}\label{def of m-big}
    We say a smooth $(1,1)$-form $\beta$ is $(\omega,m)$-big if there is a $(\beta,\omega,m)$-subharmonic potential $\rho$ such that $\beta+dd^c\rho$ dominates an $(\omega,m)$-positive form $\gamma\in\Gamma_m(\omega)$. In this setting we have $dd^c\rho\geq\gamma-\beta$ and hence $\rho$ is a quasi-psh function, by Demailly's regularization theorem, we may assume that $\rho$ has analytic singularities.
\end{definition}
We can furthermore extend \cref{cor:m-positive is non-collapsing} in the following way:
\begin{corollary}\label{cor:m-positive is non-collapsing 2}
 If $\beta\in\overline{\Gamma_m(\omega)}$ is $(\omega,m)$-semi-positive and $(\omega,m)$-big, then $\beta$ is $(\omega,m)$-non-collapsing. 
\end{corollary}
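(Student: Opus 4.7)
The plan is to argue by contradiction, adapting the proof of \cref{modified comparison principle} so that the role of the global strict positivity $\beta - \lambda\omega \in \Gamma_m(\omega)$ (which is unavailable for a merely $(\omega,m)$-semi-positive $\beta$) is played by the strict positivity $\beta + dd^c\rho \geq \gamma \geq \lambda_0\omega$ furnished by the big condition, localized to the ample locus of $\rho$.

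Suppose for contradiction $u \in \operatorname{SH}_m(X,\beta,\omega) \cap L^\infty(X)$ satisfies $\sup_X u = 0$, $u \geq -M$, and $\int_X(\beta+dd^cu)^m\wedge\omega^{n-m} = 0$. Using \cref{def of m-big} and Demailly's regularization, I pick $\rho \in \operatorname{SH}_m(X,\beta,\omega)$ with analytic singularities (so $\rho$ is continuous on the ample locus $\Omega := \{\rho > -\infty\}$, whose complement is an analytic pluripolar set) and $\beta + dd^c\rho \geq \gamma \in \Gamma_m(\omega)$; I normalize $\sup_X\rho = 0$ and fix $\lambda_0 > 0$ with $\gamma \geq \lambda_0\omega$ in the $(\omega,m)$-positive sense. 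I truncate $v_N := \max(\rho,-N) \in \operatorname{SH}_m(X,\beta,\omega)\cap L^\infty(X)$, and define $S(\epsilon) := \inf_X[u - (1-\epsilon)v_N]$, $U(\epsilon,s) := \{u < (1-\epsilon)v_N + S(\epsilon) + s\}$, and $\phi := \max(u,(1-\epsilon)v_N + S(\epsilon) + s)$. Choosing any $x_0$ with $\rho(x_0) = 0$ (attained since $\rho$ is USC on compact $X$ with supremum $0$ outside the singular set) yields $v_N(x_0) = 0$ and hence $S(\epsilon) \leq u(x_0) \leq 0$; combined with $S(\epsilon) \geq -M$, this shows $S(\epsilon) \in [-M,0]$ uniformly in $N$. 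Therefore, for $N$ large, $-(1-\epsilon)N + S(\epsilon) + s < -M \leq u$ on $\{\rho \leq -N\}$, so $\phi = u$ there and consequently $U(\epsilon,s) \subset \{\rho > -N\}$; on this set $v_N = \rho$ and $\beta + dd^cv_N \geq \gamma \geq \lambda_0\omega$.

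Next, I run the induction from the proof of \cref{modified comparison principle} with $v = v_N$. The only place the strict positivity of the background form enters is the chain
$$(\lambda\epsilon)\beta_\phi^{m-j}\wedge\omega^{n-m+j} \leq \epsilon\beta_\phi^{m-j}\wedge\beta\wedge\omega^{n-m+j-1} \leq \beta_\phi^{m-j}\wedge\beta_{(1-\epsilon)v}\wedge\omega^{n-m+j-1}.$$
Because the integrand $\phi - u$ that arises from the integration by parts is supported in $U(\epsilon,s) \subset \{\rho > -N\}$, the corresponding inequality is only needed on this set, where the improved bound
$$(1-\epsilon)\lambda_0\,\beta_\phi^{m-j}\wedge\omega^{n-m+j} \leq \beta_\phi^{m-j}\wedge\beta_{(1-\epsilon)v_N}\wedge\omega^{n-m+j-1}$$
holds via $\beta_{(1-\epsilon)v_N} = \epsilon\beta + (1-\epsilon)(\beta+dd^c\rho) \geq (1-\epsilon)\gamma$. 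Following the original proof verbatim with this substitution (the $\epsilon$-independent constant $(1-\epsilon)\lambda_0$ replacing $\lambda\epsilon$) yields, for $s$ sufficiently small,
$$\left(1 - \frac{Cs}{((1-\epsilon)\lambda_0)^3}\right)^m \int_{U(\epsilon,s)}(\beta+(1-\epsilon)dd^cv_N)^m\wedge\omega^{n-m} \leq \int_{U(\epsilon,s)}(\beta+dd^cu)^m\wedge\omega^{n-m}.$$

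The RHS is dominated by $\int_X(\beta+dd^cu)^m\wedge\omega^{n-m} = 0$, while on $U(\epsilon,s) \subset \{\rho > -N\}$ the LHS integrand satisfies $(\beta+(1-\epsilon)dd^cv_N)^m\wedge\omega^{n-m} \geq (1-\epsilon)^m\gamma^m\wedge\omega^{n-m}$, a smooth strictly positive volume form. Since $\rho$ is continuous on $\Omega$, $u - (1-\epsilon)v_N$ is upper semi-continuous on $\{\rho > -N\}$, so $U(\epsilon,s)$ is an open, nonempty set of positive Lebesgue measure; the LHS is therefore strictly positive, yielding the desired contradiction. The main obstacle will be verifying that the induction in the proof of \cref{modified comparison principle} indeed localizes cleanly to $\{\rho > -N\}$ after replacing $\lambda\epsilon$ by the $\epsilon$-independent $(1-\epsilon)\lambda_0$; concretely, I must check that every integral $\int_X(\phi - u)(\cdots)$ appearing in the induction has integrand supported in $U(\epsilon,s)$, so the strict positivity holds precisely where it is applied.
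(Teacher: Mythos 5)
Your proof is correct and follows essentially the same strategy as the paper's: run the comparison-principle induction with $v$ replaced by (a shift of) the big potential $\rho$, so that on the sublevel set $U$ one has $\beta_\phi = \beta_\rho \geq \gamma$, and conclude that $\int_U \beta_\phi^m\wedge\omega^{n-m}$ is both zero (dominated by $\int_U \beta_u^m\wedge\omega^{n-m}$) and strictly positive (dominating $\int_U \gamma^m\wedge\omega^{n-m}$ on a nonempty open set). The differences are cosmetic rather than structural. The paper works directly with the unbounded $\rho$, setting $S := \inf_\Omega(u-\rho)$ and $U(s) := \{u < \rho + S + s\} \Subset \Omega := \{\rho > -\infty\}$, and observes that $\phi := \max(u, \rho + S + s)$ is automatically bounded since $\rho + S + s < u$ near the singular set; this avoids your truncation $v_N$ and the parameter $N$ entirely. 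The paper also drops the $(1-\epsilon)$-scaling: the factor $\epsilon$ in the original \cref{modified comparison principle} was needed only to extract strict positivity from $\epsilon\beta + (1-\epsilon)\beta_v$ when $\beta_v$ is merely semi-positive, whereas here $\beta_\rho \geq \gamma$ is already strictly $(\omega,m)$-positive on $\Omega$, so a single $\epsilon$-independent constant $\lambda$ with $\gamma - \lambda\omega \in \Gamma_m(\omega)$ suffices. Your closing concern about localization is unfounded: every integral in the induction carries the factor $(\phi - u)$, which vanishes identically outside $U(\epsilon,s)$, so the pointwise bound $\beta_\phi \geq (1-\epsilon)\lambda_0\omega$ is invoked only where it holds. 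Both proofs are valid; the paper's is simply tighter bookkeeping of the same idea.
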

\begin{proof}
    The proof is very similar to that in \cref{modified comparison principle} and \cref{cor:m-positive is non-collapsing}, see also \cite[Theorem 2.1]{GL25}. Let $\rho$ be a quasi-psh function with analytic singularities and $\beta+dd^c\rho\geq\gamma\in\Gamma_m(\omega)$. Fix a test function $u\in \operatorname{SH}_m(X,\beta,\omega)\cap L^\infty(X)$. Set $\Omega:=\{\rho>-\infty\}$, $S:=\inf_\Omega(u-\rho)$ and $U(s):=\{u<\rho+S+s\}\subset\subset\Omega$ for any $s>0$. We let $\phi:=\max(u,\rho+S+s)$ so that $\phi$ is actually a bounded $(\beta,\omega,m)$-sh function. Set $T_k:=\beta_{u}^k\wedge\beta_{\phi}^{m-k}\wedge\omega^{n-m}$ for $0\leq k\leq m$. We also take the convention that $T_j=0$ for $j<0$. Our first step is to prove that
    \begin{equation}\label{eq induction2}
        \left(1-\frac{Cs}{\lambda^3}\right)\int_{U(s)}T_k\leq\int_{U(s)}T_{k+1},
    \end{equation}
    for $k=0,1,...,m-1$, where $0<\lambda<1$ is a positive constant such that $\gamma-\lambda\omega\in\Gamma_m(\omega)$. We will prove by induction.

    Firstly, assume $k=0$. Since $u\leq\phi$, it follows from \cref{cor of max principle} that 
    $$
\mathds{1}_{\{u=\phi\}}\beta_{\phi}^m\wedge\omega^{n-m}\geq\mathds{1}_{\{u=\phi\}}\beta_{u}\wedge\beta_{\phi}^{m-1}\wedge\omega^{n-m}.
    $$
  Since $U(s)=\{u<\phi\}$, the above inequality implies that
    \begin{align*}
        \int_{U(s)}(T_0-T_1)&=\int_{U(s)}(\beta_{\phi}-\beta_{u})\beta_{\phi}^{m-1}\wedge\omega^{n-m}\\
        &\leq\int_X(\beta_{\phi}-\beta_{u})\beta_{\phi}^{m-1}\wedge\omega^{n-m}\\
        &=\int_X(\phi-u)dd^c\left(\beta_{\phi}^{m-1}\wedge\omega^{n-m}\right)\\
        &=\int_{U(s)}(\phi-u)dd^c\left(\beta_{\phi}^{m-1}\wedge\omega^{n-m}\right).
        \end{align*}
        The same argument as in \cref{modified comparison principle} shows that
        \begin{align*}
            dd^c(\beta_{\phi}^{m-1}\wedge\omega^{n-m})
            \leq&C\left(\sum_{j=1}^3\beta_{\phi}^{m-j}\wedge\omega^{n-m+j}\right).
            \end{align*}
           On the set $U(s)$ we have $\beta_\phi-\lambda\omega=\beta_\rho-\lambda\omega\geq\gamma-\lambda\omega\in\Gamma_m(\omega)$. We therefore have
            $$
\lambda\,\beta_{\phi}^{m-j}\wedge\omega^{n-m+j}\leq\beta_{\phi}^{m-j+1}\wedge\omega^{n-m+j-1}.
            $$
for each $1\leq j\leq3$. Repeating the argument $j$ times we get
$$
\lambda^j\int_X(\phi-u)\beta_{\phi}^{m-j}\wedge\omega^{n-m+j}\leq\int_X(\phi-u)\beta_{\phi}^{m}\wedge\omega^{n-m}.
$$
It follows that
$$
\int_{U(s)}(T_0-T_1)\leq Cs\sum_{j=1}^3\lambda^{-j}\int_{U(s)}\beta_{\phi}^m\wedge\omega^{n-m}\leq\frac{Cs}{\lambda^3}\int_{U(s)}T_0.
$$
This yields (\ref{eq induction2}) for $k=0$.

Continuing the induction as in \cref{modified comparison principle} we arrive at
$$
\left(1-\frac{Cs}{\lambda^3}\right)^m\int_{U(s)}\beta_\phi^m\wedge\omega^{n-m}\leq\int_{U(s)}\beta_u^m\wedge\omega^{n-m}.
$$
If $u$ satisfies $\int_{X}\beta_u^m\wedge\omega^{n-m}=0$, then $\int_{U(s)}\beta_\phi^m\wedge\omega^{n-m}=0$. Since $\beta_\phi\geq\gamma$ on $U(s)$, $\beta_\phi^m\wedge\omega^{n-m}\geq\gamma^m\wedge\omega^{n-m}>0$ on $U(s)$. This implies that $U(s)$ is a set of Lebesgue measure zero, which is a contradiction.
\end{proof}
\begin{remark}
    From the above proof we see that if $\beta$ admits a bounded potential $\rho\in \operatorname{SH}_m(X,\beta,\omega)\cap L^\infty(X)$ and $\beta$ is $(\omega,m)$-big, then \cref{cor:m-positive is non-collapsing 2} still holds true. The semi-positivity is in fact not necessary.
\end{remark}
\begin{corollary}\label{m-positive have domination}
If $\beta$ is $(\omega,m)$-semi-positive and $(\omega,m)$-big, in particular if $\beta\in\Gamma_m(\omega)$, the domination principles \cref{domination for beta}, \cref{domination for beta 2}, \cref{domination for beta 3} are true for $\beta$.
\end{corollary}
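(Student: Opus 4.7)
The plan is to observe that the three domination principles (\cref{domination for beta}, \cref{domination for beta 2}, \cref{domination for beta 3}) were established in Section \ref{subsec: comparison and domination} under only two structural hypotheses: (i) $\beta$ admits a bounded $(\beta,\omega,m)$-subharmonic potential $\rho$ (so that $\operatorname{SH}_m(X,\beta,\omega)\cap L^\infty(X)\neq\emptyset$ and the envelope $P_{\beta,m}(bu-(b-1)v)$ appearing in the proof of \cref{domination for beta} is bounded), and (ii) $\beta$ is $(\omega,m)$-non-collapsing in the sense of \cref{def: non-collapsing}. Indeed, a careful reading of the proof of \cref{domination for beta} shows that it uses only \cref{contact} (valid whenever $\beta$ admits a bounded potential), the maximum principle \cref{cor of max principle}, and the non-collapsing condition to guarantee $H_{\beta,m}(u_b)$ has positive total mass on the contact set $D$; the subsequent corollaries \cref{domination for beta 2} and \cref{domination for beta 3} are formal consequences. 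Thus the task reduces to verifying (i) and (ii) in each of the two cases listed.

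For $\beta\in\Gamma_m(\omega)$, the zero function $\rho\equiv 0$ lies in $\operatorname{SH}_m(X,\beta,\omega)\cap L^\infty(X)$, so (i) holds; and (ii) is precisely \cref{cor:m-positive is non-collapsing}. For $\beta$ $(\omega,m)$-semi-positive and $(\omega,m)$-big, the semi-positivity $\beta\in\overline{\Gamma_m(\omega)}$ again gives $\rho\equiv 0\in\operatorname{SH}_m(X,\beta,\omega)\cap L^\infty(X)$, yielding (i), while (ii) is supplied by \cref{cor:m-positive is non-collapsing 2}. Hence the hypotheses under which \cref{domination for beta}, \cref{domination for beta 2}, and \cref{domination for beta 3} were proved are met, and all three domination principles transfer verbatim to the present $\beta$.

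There is essentially no obstacle here: the content of the corollary is a bookkeeping statement packaging \cref{cor:m-positive is non-collapsing} and \cref{cor:m-positive is non-collapsing 2} together with the domination principles proved earlier. The only minor point to flag is confirming that the bounded-potential requirement is indeed automatic in both cases (which it is, via $\rho\equiv 0$), so that one may legitimately invoke the envelope constructions and the integrability/compactness statements from \cref{L^1 compactness} used in the proof of \cref{domination for beta}.
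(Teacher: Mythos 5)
Your proposal is correct and takes essentially the same approach as the paper: invoke \cref{cor:m-positive is non-collapsing 2} (or \cref{cor:m-positive is non-collapsing} in the $\Gamma_m(\omega)$ case) to verify non-collapsing, and note the bounded-potential hypothesis is satisfied, so the domination principles apply. The only cosmetic difference is that you treat $\beta\in\Gamma_m(\omega)$ as a separate case and explicitly confirm $\rho\equiv 0$ works, whereas the paper's one-line proof simply cites \cref{cor:m-positive is non-collapsing 2} (which already subsumes $\Gamma_m(\omega)$) and leaves the bounded-potential check implicit.
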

\begin{proof}
    It follows from \cref{cor:m-positive is non-collapsing 2} that $\beta$ is $(\omega,m)$-non-collapsing, hence the conditions in \cref{domination for beta}, \cref{domination for beta 2}, \cref{domination for beta 3} are fulfilled.
\end{proof}

\section{Mixed type inequalities}

In this section, we establish the mixed type inequality for complex Hessian measures with respect to an arbitrary form on a domain. 

Let $(X,\omega)$ be a compact Hermitian manifold of complex dimension $n$ equipped with a Hermitian metric $\omega$. Let $m$ be a positive integer such that $1\leq m\leq n$.  We also use  $(\Omega,\omega)$ to denote  a bounded $m$-pseudoconvex domain with smooth boundary equipped with a Hermitian metric $\omega$ (i.e. it admits a smooth strictly exhaustive $(\omega,m)$-subharmonic function) and let $\chi$ be an arbitrary smooth real $(1,1)$- form defined in a neighborhood of $\Omega$. We first establish two preliminary lemmas, whose proofs can be copied along the lines in \cite{GN18}:

\begin{lemma}\label{stability for Hessian}
 Let  $\Omega$ be a bounded strictly $m$-pseudoconvex domain in $\mathbb{C}^n$ and let $f_1,f_2$ be non-negative functions in $L^p(\Omega)$, $p>\frac{n}{m}$. Let $\phi_1,\phi_2\in C^0(\partial\Omega)$. Suppose $u_1,u_2$ solves the following dirichlet problems:
    $$
\begin{cases}
(\chi+dd^cu_j)^m\wedge\omega^{n-m}=f_jdV_X,  & on\quad\Omega \\
u_j=\phi_j,    & on\quad\partial\Omega\\
u_j\in C(\overline{\Omega})\cap \operatorname{SH}_m(\Omega,\chi,\omega)
\end{cases}.
$$
for $j=1,2$. Then,
$$
\|u_1-u_2\|_{L^{\infty}(\Omega)}\leq\underset{\partial\Omega}{\sup}\,|\phi_1-\phi_2|+C\|f_1-f_2\|_{L^p(\Omega)}^\frac{1}{m}.
$$
Where $C$ depends only on $p$ and $\Omega$.
\end{lemma}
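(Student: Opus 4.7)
The plan is to reduce the stability estimate to a Ko{\l}odziej-type $L^\infty$-bound for an auxiliary Dirichlet problem, and then combine that bound with the comparison principle via a mixed-Hessian positivity trick. This converts the discrepancy $\|f_1-f_2\|_{L^p}$ into an $L^\infty$-discrepancy through a single well-chosen potential $\rho$.

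By symmetry it suffices to estimate $\sup_\Omega(u_1-u_2-M)^+$, where $M:=\sup_{\partial\Omega}|\phi_1-\phi_2|$. Since $\Omega$ is strictly $m$-pseudoconvex and $|f_1-f_2|\in L^p(\Omega)$ with $p>n/m$, by the solvability of the Hessian Dirichlet problem with zero boundary data and $L^p$ right-hand side (see \cite[Theorem 3.15]{GN18} and \cite{KN25a}), there exists a function $\rho\in\operatorname{SH}_m(\Omega,\omega)\cap C(\overline\Omega)$ satisfying
\begin{equation*}
(dd^c\rho)^m\wedge\omega^{n-m}=|f_1-f_2|\,dV_X\ \text{ in }\Omega,\qquad \rho=0\ \text{ on }\partial\Omega.
\end{equation*}
The maximum principle forces $\rho\le 0$, and the Ko{\l}odziej-type $L^\infty$-estimate for Hessian equations with $L^p$-data (still valid because $p>n/m$) yields
\begin{equation*}
\|\rho\|_{L^\infty(\Omega)}\le C\|f_1-f_2\|_{L^p(\Omega)}^{1/m},
\end{equation*}
with $C=C(p,\Omega)$.

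Now set $v:=u_1+\rho\in\operatorname{SH}_m(\Omega,\chi,\omega)\cap C(\overline\Omega)$. Since both $\chi+dd^cu_1$ and $dd^c\rho$ are $(\omega,m)$-positive currents, every term in the Newton binomial expansion of $(\chi+dd^cv)^m\wedge\omega^{n-m}$ is a positive measure, and retaining only the two extremal terms gives
\begin{equation*}
(\chi+dd^cv)^m\wedge\omega^{n-m}\;\ge\;(\chi+dd^cu_1)^m\wedge\omega^{n-m}+(dd^c\rho)^m\wedge\omega^{n-m}\;=\;(f_1+|f_1-f_2|)\,dV_X\;\ge\; f_2\,dV_X.
\end{equation*}
On $\partial\Omega$ we have $v=\phi_1\le\phi_2+M$. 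Applying the comparison principle for the Hessian operator (\cite[Corollary 3.11]{GN18}) to the pair $(v,\,u_2+M)$ yields $v\le u_2+M$ on $\Omega$, equivalently
\begin{equation*}
u_1-u_2\;\le\; M-\rho\;\le\; M+\|\rho\|_{L^\infty(\Omega)}\;\le\; M+C\|f_1-f_2\|_{L^p(\Omega)}^{1/m}.
\end{equation*}
Swapping the roles of $u_1$ and $u_2$ gives the reverse bound, completing the proof.

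The main (and really the only) substantive obstacle is the construction of $\rho$ together with its sharp $L^\infty$-bound in terms of $\|f_1-f_2\|_{L^p}^{1/m}$; both are supplied by the pluripotential machinery developed in \cite{GN18,KN25a}. Once this ingredient is in hand, the mixed positivity of Hessian currents upgrades the mass of $v$, and the classical comparison principle converts the $L^\infty$-bound on $\rho$ directly into the desired stability estimate.
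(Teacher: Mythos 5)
Your proof is correct and reaches the same bound, but it follows a different route from the paper's. The paper never invokes an $L^\infty$-estimate for a \emph{Hessian} Dirichlet problem with $L^p$ data: instead it extends $f_1,f_2$ by zero to a ball $U\supset\Omega$, sets $h:=|f_1-f_2|^{n/m}\in L^{pm/n}(U)$ (noting $pm/n>1$), and solves the classical \emph{Monge--Amp\`ere} Dirichlet problem $(dd^c\rho)^n=h\,dV$, $\rho|_{\partial U}=0$, so that Ko{\l}odziej's original MA estimate gives $\|\rho\|_\infty\le C\|h\|_{L^{pm/n}}^{1/n}=C\|f_1-f_2\|_{L^p}^{1/m}$; the mixed-type inequality $(dd^c\rho)^m\wedge\omega^{n-m}\ge h^{m/n}\,dV=|f_1-f_2|\,dV$ then lets this psh $\rho$ play the role of your auxiliary potential. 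Your version instead solves the Hessian Dirichlet problem directly on $\Omega$ with right-hand side $|f_1-f_2|$, which is cleaner but requires as a black box both existence and the Ko{\l}odziej-type $L^\infty$-bound for the Hessian Dirichlet problem with $L^p$ data. That ingredient is indeed available (e.g. \cite[Theorem 8.2, Proposition 8.1]{KN25a} as used elsewhere in this paper), but your citation \cite[Theorem 3.15]{GN18} addresses the homogeneous/continuous-data case and does not by itself give the $L^p$ existence-plus-$L^\infty$ estimate you need; you should replace it with a reference that handles $L^p$ densities. Note also that the paper establishes Lemma~\ref{local dirichlet for omega} (Hessian Dirichlet problem with $L^p$ data) \emph{as a consequence} of this very stability lemma, which is a further reason the authors avoid invoking it here and go through the MA equation instead --- the detour through $h=|f_1-f_2|^{n/m}$ buys a self-contained argument relying only on classical MA theory. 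The remaining steps of your argument (positivity of the mixed Hessian terms in $(\chi+dd^c(u_1+\rho))^m\wedge\omega^{n-m}$, followed by the comparison principle) are the same as in the paper.
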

\begin{proof}
    The proof is similar to that in \cite[Lemma 3.13]{GN18}. Suppose that $\Omega\subset U:=U(0,R)$ for a ball $U$ in $\mathbb{C}^n$. Extend $f_1,f_2$ by zero to $U$ and set $h:=|f_1-f_2|^{\frac{n}{m}}$ in $U$. It follows that $h\in L^{\frac{pm}{n}}(U)$ and $\|h\|_{L^{\frac{pm}{n}}(U)}^{\frac{1}{n}}=\|f_1-f_2\|_{L^p(\Omega)}^\frac{1}{m}$. Note moreover that $\frac{pm}{n}>1$. By \cite{Kol96} there exists $\rho\in\operatorname{PSH}(U)\cap C^0(\overline{U})$ solving
    $$
(dd^c\rho)^n=hdV_X,\quad\rho|{\partial U}=0
$$
and 
$$
\|\rho\|_{\infty}\leq C\|h\|_{L^{\frac{pm}{n}}(U)}^{\frac{1}{n}}=C\|f_1-f_2\|_{L^p(\Omega)}^\frac{1}{m}.
$$
Where $C=C(m,n,p,B,\omega)$ is a uniform constant. The mixed type inequality in the Monge-Amp\`ere case tells us that
$$
(dd^c\rho)^m\wedge\omega^{n-m}\geq h^{\frac{m}{n}}dV_X=|f_1-f_2|dV_X.
$$
We can thus write
\begin{align*}
    (\chi+dd^cu_1+dd^c\rho)^m\wedge\omega^{n-m}&\geq(\chi+dd^cu_1)^m\wedge\omega^{n-m}+(dd^c\rho)^m\wedge\omega^{n-m}\\
    &\geq f_1dV_X+|f_1-f_2|dV_X\\
    &\geq f_2dV_X=(\chi+dd^cu_2)^m\wedge\omega^{n-m}.
\end{align*}
Since $\rho\leq0$ in $\Omega$, it follows that $u_1+\rho\leq u_2+\underset{\partial\Omega}{\sup}\,|\phi_1-\phi_2|$ on $\partial\Omega$. The comparison principle \cite[Proposition 2.3]{KN16} gives that $u_1+\rho\leq u_2+\underset{\partial\Omega}{\sup}\,|\phi_1-\phi_2|$ on $\Omega$ and hence
$$
u_1-u_2\leq-\rho+\underset{\partial\Omega}{\sup}\,|\phi_1-\phi_2|\leq\underset{\partial\Omega}{\sup}\,|\phi_1-\phi_2|+C\|f_1-f_2\|_{L^p(\Omega)}^\frac{1}{m}.
$$
   Similarly we get the reverse inequality and the proof is concluded. 
\end{proof}
\begin{lemma}\label{local dirichlet for omega} 
    Let $0\leq f\in L^p(\Omega)$ for some $p>\frac{n}{m}$, where $\Omega$ is a bounded strictly $m$-pseudoconvex domain in $\mathbb{C}^n$. Assume $\varphi\in C^0(\partial\Omega)$. Then there exists a unique continuous function $u\in \operatorname{SH}_m(\Omega,\chi,\omega)\cap C^0(\overline{\Omega})$ solves
    $$
H_{\chi,m}(u):=(\chi+dd^cu)^m\wedge\omega^{n-m}=f\omega^n,\quad u=\varphi\;on\;\partial\Omega.
    $$
\end{lemma}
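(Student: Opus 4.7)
The plan is to reduce to the case of a strictly $(\omega,m)$-positive twisting form, invoke classical smooth solvability, and then pass to the $L^p$ right-hand side via the stability estimate of \cref{stability for Hessian}. Since $\Omega$ is bounded, I would first choose $A>0$ large enough so that $\phi := A|z|^2 \in C^\infty(\overline{\Omega})$ satisfies $\tilde{\chi} := \chi + dd^c \phi \in \Gamma_m(\omega)$ on a neighborhood of $\overline{\Omega}$. Under the substitution $v := u - \phi$, a function $u$ lies in $\operatorname{SH}_m(\Omega,\chi,\omega)\cap C^0(\overline{\Omega})$ if and only if $v \in \operatorname{SH}_m(\Omega,\tilde{\chi},\omega)\cap C^0(\overline{\Omega})$, and the target Dirichlet problem is equivalent to
\[
(\tilde{\chi} + dd^c v)^m \wedge \omega^{n-m} = f\omega^n \ \text{on }\Omega, \qquad v|_{\partial\Omega} = \varphi - \phi,
\]
with $\tilde{\chi}$ strictly $(\omega,m)$-positive, placing us in the classical Hermitian framework.

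Next, I would approximate $f$ in $L^p(\Omega)$ by a sequence $0 \leq f_j \in C^\infty(\overline{\Omega})$ and apply the known solvability of the Dirichlet problem with smooth right-hand side and continuous boundary data on strictly $m$-pseudoconvex domains for such a twist (provided by the pluripotential/balayage framework of \cite{GN18} and \cite{KN25a}) to produce continuous $v_j \in \operatorname{SH}_m(\Omega,\tilde{\chi},\omega) \cap C^0(\overline{\Omega})$ with
\[
(\tilde{\chi} + dd^c v_j)^m \wedge \omega^{n-m} = f_j\omega^n, \qquad v_j|_{\partial\Omega} = \varphi - \phi.
\]
Applying \cref{stability for Hessian} to pairs $(v_j,v_k)$ yields $\|v_j - v_k\|_{L^\infty(\Omega)} \leq C\|f_j - f_k\|_{L^p(\Omega)}^{1/m} \to 0$, so the sequence is uniformly Cauchy and converges to some $v \in C^0(\overline{\Omega}) \cap \operatorname{SH}_m(\Omega,\tilde{\chi},\omega)$. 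The weak continuity of the Hessian operator along uniformly convergent sequences of bounded $(\tilde{\chi},\omega,m)$-subharmonic functions (cf.\ \cite[Lemma 5.1]{KN25a}), combined with $f_j\omega^n \to f\omega^n$ in $L^1$, passes the equation to the limit; setting $u := v + \phi$ then gives the desired continuous solution. Uniqueness is immediate from \cref{stability for Hessian} applied with $f_1 = f_2 = f$ and $\phi_1 = \phi_2 = \varphi$, or alternatively from the comparison principle \cite[Proposition 2.3]{KN16}.

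The main anticipated obstacle is securing existence at the smooth-data step for the variable-coefficient twist $\tilde{\chi} \in \Gamma_m(\omega)$ rather than the reference Hermitian form $\omega$ itself: for the unweighted case this is classical, but in the twisted setting one must lean on the generalized Bedford--Taylor theory of \cite{KN25a} and the Perron/balayage machinery from \cite{GN18} (the latter was invoked in an analogous fashion in \cref{lem: balayage} of the present paper). Once that input is granted, the passage from smooth to $L^p$ data through \cref{stability for Hessian} is essentially routine, and the regularity-free weak-convergence step causes no additional trouble because all the $v_j$ are uniformly bounded continuous quasi-$(\omega,m)$-subharmonic functions.
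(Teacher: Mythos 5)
Your overall architecture — approximate $f$ in $L^p$ by smooth data, solve the smooth Dirichlet problems, use \cref{stability for Hessian} to get a uniform Cauchy sequence, and pass to the limit — matches the paper's, and your treatment of the stability step, the uniform limit, and uniqueness is correct. Where you diverge is the smooth-data existence input. The paper does not perform your potential shift $\tilde\chi = \chi + dd^c(A|z|^2)$ at all: it invokes \cite[Theorem 1.1]{CP22} directly for the original (possibly indefinite) twist $\chi$, the only hypothesis being the existence of a $C^2$ subsolution, and observes that a bounded strictly $m$-pseudoconvex domain automatically furnishes one. Your proposal is thus solving a concern that the paper's chosen reference removes — \cite{CP22} is precisely a Dirichlet theorem for $k$-Hessian equations with an arbitrary smooth $(1,1)$ twist and a subsolution, so no reduction to $\Gamma_m(\omega)$ is needed — and your replacement input (the balayage machinery of \cite{GN18} and \cite{KN25a}) is not obviously sufficient, since \cite[Theorem 3.15]{GN18} as used in \cref{lem: balayage} covers the homogeneous equation for a Hermitian twist and would still need to be upgraded to a nonzero smooth right-hand side. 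You flag this honestly as the weak point, and indeed it is the one place your argument is incomplete as written. Two minor further remarks: the paper also mollifies the boundary data $\varphi$ to smooth $\phi_j$ (so that \cite{CP22} applies at each stage, since that theorem asks for smooth data), and then absorbs $\sup_{\partial\Omega}|\phi_j-\phi_k|$ in the stability inequality — your fixed continuous boundary value would require a solvability result with merely $C^0$ data at the smooth-$f$ stage, which is an extra (minor) thing to justify. Your appeal to the uniform-convergence continuity of the Hessian operator from \cite[Lemma 5.1]{KN25a} to pass the equation to the limit is a valid and slightly more explicit version of the paper's tacit closing step.
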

\begin{proof}
    Since $\Omega$ is strictly $m$-pseudoconvex, the subsolution condition in \cite[Theorem 1.1]{CP22} is automatically satisfied. Choose smooth positive functions $f_j\rightarrow f$ in $L^p$ and $\phi_j\in C^{\infty}(\partial\Omega)$ converges uniformly to $\varphi$ on $\partial\Omega$. We can apply \cite[Theorem 1.1]{CP22} to obtain $u_j\in \operatorname{SH}_m(\Omega,\chi,\omega)\cap C^{\infty}(\overline{\Omega})$ solving
    $$
(\chi+dd^cu_j)^m\wedge\omega^{n-m}=f_j\omega^n,\quad u=\varphi\;on\;\partial\Omega.
    $$
    The stability theorem \cref{stability for Hessian} yields that
$$
\|u_j-u_k\|_{\infty}\leq \underset{\partial\Omega}{\sup}\,|u_j-u_k|+C\|f_j-f_k\|_{L^p}^\frac{1}{m}.
$$
By our assumption $u_j$ is a Cauchy sequence in $C^0(\overline{\Omega})$, let $u\in \operatorname{SH}_m(\Omega,\chi,\omega)\cap C^0(\overline{\Omega})$ be the uniform limit of $u_j$, then it is clear that
$u$ satisfies the desired equation.
\end{proof}
We can now establish the following mixed Hessian inequalities:
\begin{proposition}\label{prop:mixed type for chi}
  Let $\chi_,...,\chi_m$ be smooth $(1,1)$-forms and let $u_j$ be bounded $(\chi_j,\omega,m)$-sh functions for $1\leq j\leq m$ satisfying $H_{\chi,m}(u_k)= f_kdV_X$ for some $f_k\in L^1(X)$. Then, 
    $$
(\chi_1+dd^cu_1)\wedge...\wedge(\chi_m+dd^cu_m)\wedge\omega^{n-m}\geq (f_1...f_m)^{\frac{1}{m}}dV_X.
    $$
\end{proposition}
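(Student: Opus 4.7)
The strategy is the standard Dinew--Ko\l odziej type argument: reduce to smooth solutions of a local Dirichlet problem, invoke the pointwise G{\aa}rding inequality for the cone $\Gamma_m(\omega)$, and pass to the weak limit. Both sides of the asserted inequality are Radon measures on $X$ and the statement is local, so by a partition of unity I reduce to a bounded strictly $m$-pseudoconvex domain $\Omega\Subset X$ with smooth boundary.

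On $\Omega$, approximate each $f_j$ by smooth strictly positive $f_j^{(k)}\in C^\infty(\overline{\Omega})$ with $f_j^{(k)}\to f_j$ in $L^1(\Omega)$ (via truncation $f_j\wedge N$ followed by mollification), and approximate the bounded boundary trace of $u_j$ by continuous data $\phi_j^{(k)}$. By \cref{local dirichlet for omega} together with the interior regularity of \cite{CP22}, obtain smooth $(\chi_j,\omega,m)$-subharmonic solutions $u_j^{(k)}$ on $\Omega$ satisfying $H_{\chi_j,m}(u_j^{(k)}) = f_j^{(k)}\,dV_X$ with $u_j^{(k)}=\phi_j^{(k)}$ on $\partial\Omega$. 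For smooth $(\omega,m)$-positive forms $\alpha_1,\dots,\alpha_m$, the pointwise G{\aa}rding inequality for the hyperbolic polynomial $\sigma_m$ reads
$$
\alpha_1\wedge\cdots\wedge\alpha_m\wedge\omega^{n-m}\geq \bigl(S_{m,\omega}(\alpha_1)\cdots S_{m,\omega}(\alpha_m)\bigr)^{1/m}\,\omega^n,
$$
and applying it with $\alpha_j=\chi_j+dd^c u_j^{(k)}$ yields the approximation-level bound
$$
(\chi_1+dd^c u_1^{(k)})\wedge\cdots\wedge(\chi_m+dd^c u_m^{(k)})\wedge\omega^{n-m}\geq (f_1^{(k)}\cdots f_m^{(k)})^{1/m}\,dV_X.
$$

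The main obstacle is the limit $k\to\infty$. The right-hand side is mild: the AM-GM bound $(f_1^{(k)}\cdots f_m^{(k)})^{1/m}\leq\tfrac{1}{m}\sum_j f_j^{(k)}$ together with pointwise a.e.\ convergence along a subsequence gives $L^1(\Omega)$-convergence to $(f_1\cdots f_m)^{1/m}$ by dominated convergence. The left-hand side is more delicate because the stability estimate \cref{stability for Hessian} requires $L^p$ control with $p>n/m$, which is not guaranteed under $f_j\in L^1$ alone. The remedy is a two-layer approximation: fix a truncation level $N$ and first send $k\to\infty$ against the bounded density $f_j\wedge N$ (where stability applies and gives uniform convergence), producing bounded solutions $u_j^{(N)}$ satisfying $H_{\chi_j,m}(u_j^{(N)})=(f_j\wedge N)\,dV_X$ for which the desired inequality holds; then let $N\to\infty$ and identify the limiting function with $u_j$ via the comparison/domination principle (\cref{domination for beta}) and uniqueness of bounded solutions of the Dirichlet problem. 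In both limits the left-hand side converges weakly as Radon measures by the monotone convergence theorem for Hessian measures with bounded potentials \cite[Lemma 5.1]{KN25a}; testing against any non-negative $\eta\in C_c(\Omega)$ and passing to the limit then yields the claimed pointwise bound between measures.
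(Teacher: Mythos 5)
Your overall strategy — localize to a ball, approximate by the smooth case, apply the pointwise G{\aa}rding inequality, and pass to the limit through the Hessian measure convergence theorems — is precisely the route the paper takes, so you have the right skeleton. But there are two points where the proposal is imprecise in a way that matters.

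First, you claim that for fixed truncation level $N$, ``stability applies and gives uniform convergence'' when approximating the boundary trace of $u_j$ by continuous $\phi_j^{(k)}$. The stability estimate (\cref{stability for Hessian}) gives $\|u_1-u_2\|_\infty\leq\sup_{\partial\Omega}|\phi_1-\phi_2|+C\|f_1-f_2\|_{L^p}^{1/m}$, so you only get uniform convergence if $\sup_{\partial\Omega}|\phi_j^{(k)}-u_j|\to0$. But $u_j$ is merely bounded and upper semicontinuous, so its boundary trace is usc, not continuous; continuous approximants can only decrease to it, not converge uniformly. The paper handles this by inserting a separate reduction step: with the right-hand side $f_k\omega^n$ held \emph{fixed}, approximate $u_k$ by a decreasing sequence of continuous functions $\psi_k^j$, solve the Dirichlet problem with boundary data $\psi_k^j|_{\partial U}$, obtain a monotone decreasing family of continuous solutions $u_k^j\searrow u_k$ (comparison plus uniqueness), and pass to the limit via the decreasing convergence theorem \cite[Lemma 5.1]{KN25a}. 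Only \emph{after} reducing to continuous $u_k$ does the paper bring in the $L^p$ approximation of $f_k$ and invoke stability, where uniform convergence of boundary data is available. Your two-layer scheme collapses these two stages and, as written, implicitly assumes uniform convergence of boundary data that does not hold. It is fixable by nesting three limits (boundary data monotone, then $f\wedge N$ smooth in $L^p$, then $N\to\infty$), but you should spell this out rather than asserting uniform convergence.

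Second, you invoke \cref{domination for beta} to identify the $N\to\infty$ limit with $u_j$. That result is a global domination principle for $(\omega,m)$-non-collapsing forms on the compact manifold $X$, which is not what you need here. On the coordinate ball the relevant tool is the local comparison principle for bounded $(\omega,m)$-subharmonic functions with prescribed boundary data (\cite[Corollary 3.11]{GN18} or \cite[Corollary 6.2]{KN25a}): two bounded solutions of the Dirichlet problem with the same right-hand side and boundary trace coincide, and if one right-hand side dominates the other the solutions are ordered. This both orders your approximants monotonically and pins down the limit. With these two adjustments — the nested three-stage approximation and the correct local comparison principle — your argument coincides with the paper's.
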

\begin{proof}
  The problem is local; hence we may assume without loss of generality that $\Omega$ is a unit ball $U$. We first suppose that there is a constant $p>1$ such that all the functions $f_k$ lie in $L^p(X)$.
When the functions $u_1,...,u_m$ are smooth, the statement follows immediately from Garding's inequality. 

We first claim that we only need to assume that all $u_k$ are continuous on $B$. Indeed, since $u_k$ is upper semi-continuous, we can find sequences of continuous functions $\psi_k^j$ decreasing to $u_k$ on $\overline{U}$ for each $k$. Thanks to \cref{local dirichlet for omega}, we can solve the following system of dirichlet problems:
     $$
\begin{cases}
(\chi_k+dd^cu_k^j)^m\wedge\omega^{n-m}=f_kdV_X,  & on\quad U \\
u_k^j=\psi_k^j,    & on\quad\partial U\\
u_k^j\in C(\overline{U})\cap \operatorname{SH}_m(U,\chi_k,\omega)
\end{cases}.
    $$
 Since $\psi_k^j$ decreases to $u_k$,  it follows from the comparison principle \cite[Corollary 3.11]{GN18} that $u_k^j$ is decreasing with respect to $j$ for each $k$ and $u_k^j\geq u_k$. Then it is clear that $u_k^j$ decreases to $u_k$. Thus, the claim follows from the decreasing convergence theorem \cite[Lemma 5.1]{KN25a} for uniformly bounded potentials.
      
    Thus, we may assume that all the $u_k$ are continuous near $\overline{U}$. Selecting sequences of smooth positive functions $f_k^j\rightarrow f_k$ in $L^p(\overline{U})$ and smooth functions $\phi_k^j$ converge uniformly to $u_k$ on $\partial U$ for each $k$. Since we are working on a ball $U$, the subsolution condition in \cite[Theorem 1.1]{CP22} is automatically satisfied, thus we can find sequences $v_k^j\in \operatorname{SH}_m(U,\chi_k,\omega)\cap C^{\infty}(X)$ solving the following system of dirichlet problems:
    $$
\begin{cases}
(\chi_k+dd^cv_k^j)^m\wedge\omega^{n-m}=f_k^jdV_X,  & on\quad U \\
v_k^j=\phi_k^j,    & on\quad\partial U
\end{cases}.
    $$
    By the stability estimate \cref{stability for Hessian},
    $$
\|u_k-v_k^j\|_{\infty}\leq \underset{\partial U}{\sup}\,|u_k-v_k^j|+C(p,m,n,B)\|f_k-f_k^j\|_{L^p}^{\frac{1}{m}}.
    $$
    and hence $v_k^j\rightarrow u_k$ uniformly as $j\rightarrow\infty$ for all $k$.  Since
    $$
(\chi_1+dd^cv_1^j)\wedge...\wedge (\chi_m+dd^cv_m^j)\wedge\omega^{n-m}\geq (f_1^j...f_m^j)^{\frac{1}{m}}dV_X 
    $$
    by Garding's inequality, letting $j\rightarrow\infty$ we get the desired inequality.

    Finally, when all $f_k$ lie in $L^1(X)$, we can approximate $f_k$ by a sequence of increasing $L^2$ functions and proceed similarly as in the previous steps, as described in \cite[Lemma 6.3]{Kol05}. The proof is thus complete.
\end{proof}

\begin{remark}
We have used the comparison principle \cite[Corollary 3.11]{GN18}. In \cite[section 3]{GN18} the Hermitian metric $\omega$ was supposed to be locally conformal to K\"ahler, that condition there was just used to define the Hessian measure. Thanks to the work in \cite{KN25a}, Hessian measures are also well-defined for a general Hermitian metric $\omega$, and the arguments in \cite[Corollary 3.11]{GN18} remain true in the general case. 
\end{remark}

We can now state the following version of global mixed type inequalities, which is a direct consequence of \cref{prop:mixed type for chi}:
\begin{proposition}\label{prop:mixed type for beta}
    Let $\beta\in BC^{1,1}(X)$ be a Bott-Chern class admitting a bounded potential $\rho\in\operatorname{SH}_m(X,\beta,\omega)\cap L^\infty(X)$ on $X$ and let $u_1,...u_m$ be bounded $(\beta,\omega,m)$-sh functions satisfying $H_{\beta,m}(u_k)=f_kdV_X$ for some $f_k\in L^1(X,\omega^n)$. Then, 
    $$
(\beta+dd^cu_1)\wedge...\wedge(\beta+dd^cu_m)\wedge\omega^{n-m}\geq (f_1...f_m)^{\frac{1}{m}}dV_X.
    $$
    
\end{proposition}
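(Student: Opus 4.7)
The plan is to reduce directly to the local mixed inequality \cref{prop:mixed type for chi} already established on strictly $m$-pseudoconvex domains, exploiting the fact that the inequality to be proved is a pointwise statement between two positive Radon measures on $X$. Both sides of
$$
(\beta+dd^cu_1)\wedge\cdots\wedge(\beta+dd^cu_m)\wedge\omega^{n-m}\geq(f_1\cdots f_m)^{1/m}dV_X
$$
are (positive) Radon measures on $X$ whose definition is intrinsically local: the left-hand side is defined in \cref{global Hessian measures} by choosing, in each coordinate chart $U$, a smooth $\phi_U$ with $dd^c\phi_U\geq\beta$, so that $u_k+\phi_U$ becomes an honest bounded $(\omega,m)$-subharmonic function on $U$, and then expanding binomially. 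Hence it suffices to verify the inequality on every member of an open cover of $X$.

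First, I would cover $X$ by finitely many small coordinate balls $B\subset X$ which, in their Euclidean charts, are strictly $m$-pseudoconvex (indeed any standard Euclidean ball in $\mathbb{C}^n$ is strictly $m$-pseudoconvex). On each such $B$, the smooth form $\beta|_B$ is an admissible choice of $\chi$ in \cref{prop:mixed type for chi}: the restrictions $u_k|_B$ are bounded $(\beta|_B,\omega|_B,m)$-subharmonic functions on $B$, and the global Hessian measure $H_{\beta,m}(u_k)=f_k\,dV_X$ restricts on $B$ precisely to the local Hessian measure $(\beta|_B+dd^c u_k|_B)^m\wedge\omega^{n-m}=f_k\,dV_X$ appearing in the local statement, because both definitions rest on the same choice of local potential $\phi_B$ (any two such choices differ by a smooth psh function which causes no discrepancy after the binomial expansion in \cref{global Hessian measures}).

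Applying \cref{prop:mixed type for chi} on $B$ with $\chi_1=\cdots=\chi_m=\beta|_B$ and the data $(u_k|_B,f_k|_B)$ then yields the required inequality as measures on $B$. Since the balls $B$ cover $X$, the global inequality follows. There is essentially no obstacle at this stage: all the analytic heavy lifting — the stability estimate \cref{stability for Hessian}, the solvability \cref{local dirichlet for omega} of the Dirichlet problem with continuous boundary data and $L^p$ right-hand side, the monotone approximation by solutions with smooth data, and the pointwise G{\aa}rding inequality in the smooth case — has already been carried out in the proof of \cref{prop:mixed type for chi}. The only point worth double-checking is the compatibility between the global and local definitions of the Hessian measure, which is immediate from \cref{global Hessian measures} since the definition is independent of the choice of the local potential.
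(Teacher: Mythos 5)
Your proof is correct and matches the paper's intent: the paper states that \cref{prop:mixed type for beta} is a ``direct consequence of \cref{prop:mixed type for chi},'' which it obtains by simply specializing $\chi_1=\cdots=\chi_m=\beta$ (the existence of a bounded $(\beta,\omega,m)$-sh potential being automatic from the hypothesis on $u_1$). The localization step you spell out — covering $X$ by coordinate balls and applying the local version — is exactly the reduction already carried out inside the proof of \cref{prop:mixed type for chi}, so while it is not a logical error, it duplicates work the paper has already done; the compatibility of local and global Hessian measures you double-check is indeed built into \cref{global Hessian measures}.
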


\section{A priori estimates in the \texorpdfstring{$m$}{m}-positive cone}
Let $(X,\omega)$ be a compact Hermitian manifold of complex dimension $n$ equipped with a Hermitian metric $\omega$. Let $m$ be a positive integer such that $1\leq m\leq n$. Let $\beta\in\Gamma_m(\omega)$ be an $(\omega,m)$-positive form.
Fix a constant $0<\lambda\leq1$ such that 
$$
\beta-\lambda\omega\in\Gamma_m(\omega).
$$
This can be done since $\Gamma_m(\omega)$ is an open cone. 

The goal of this section is to establish a general $L^{\infty}$-estimate for the equation 
$$
(\beta+dd^cu)^m\wedge\omega^{n-m}=f\omega^n,\quad f\in L^{p}(\omega^n),\;p>\frac{n}{m}.
$$

\begin{lemma}\label{decay of volume}
    For any borel set $E\subset X$, define $V_{\omega}(E):=\int_E\omega^n$. Let $\varphi\in \operatorname{SH}_m(X,\beta,\omega)$ with $\sup_X\varphi=0$ . Then, for any $t>0$,
    $$
V_{\omega}(\{\varphi<-t\})\leq\frac{C}{t}.
    $$
    Where $C=C(X,\beta,\omega)$.
\end{lemma}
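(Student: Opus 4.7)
The plan is to deduce the volume decay estimate directly from the uniform $L^1$ bound for normalized $(\beta,\omega,m)$-subharmonic functions together with Chebyshev's (Markov's) inequality.

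First, I would invoke \cref{L^1 compactness}: since $\varphi\in\operatorname{SH}_m(X,\beta,\omega)$ with $\sup_X\varphi=0$, there is a uniform constant $C=C(X,\beta,\omega)>0$ such that
$$\int_X|\varphi|\,\omega^n\leq C.$$
Note that $\sup_X\varphi=0$ ensures $\varphi\leq 0$ on $X$, so on the superlevel set $\{\varphi<-t\}$ we have $-\varphi>t>0$.

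Next, a direct application of Chebyshev's inequality gives
$$t\cdot V_\omega(\{\varphi<-t\})=\int_{\{\varphi<-t\}}t\,\omega^n\leq\int_{\{\varphi<-t\}}(-\varphi)\,\omega^n\leq\int_X|\varphi|\,\omega^n\leq C,$$
which rearranges to the claimed bound $V_\omega(\{\varphi<-t\})\leq C/t$.

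There is no real obstacle here; the only ingredient beyond Chebyshev is the existence of the uniform $L^1$ bound, which has already been established in \cref{L^1 compactness} via the embedding $\operatorname{SH}_m(X,\beta,\omega)\subset\operatorname{SH}_m(X,\omega,\omega)$ (after replacing $\beta$ by its upper bound $\omega$ if necessary, since $\beta\leq C'\omega$ for some constant). The constant $C$ in the final bound inherits its dependence on $X,\beta,\omega$ precisely from that of the $L^1$ compactness statement.
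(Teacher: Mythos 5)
Your proof is correct and matches the paper's argument exactly: both reduce the estimate to the uniform $L^1$ bound from \cref{L^1 compactness} and then apply Chebyshev's inequality. The paper states this more tersely but the content is identical.
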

\begin{proof}
    $$
V_{\omega}(\{\varphi<-t\})=\int_{\{\varphi<-t\}}\omega^n\leq\frac{1}{t}\int_X|\varphi|\omega^n\leq\frac{C}{t}.
    $$
    Here $C$ is the uniform constant in \cref{L^1 compactness}.
\end{proof}
Next, we move on to establish the following important volume-capacity estimate. 

\begin{proposition}\label{volume-cap estimate for beta}
    Fix $\beta\in\Gamma_m(\omega)$ and $1<\tau<\frac{n}{n-m}$. There exists a uniform constant $C=C(\tau,X,\beta,\omega)>0$,  such that for any Borel set $E\subset X$,
    $$
V_{\omega}(E)\leq C[\operatorname{Cap}_{\beta,m,\omega}(E)]^{\tau}.
    $$
    The constant $C$ depends on $X,n,m,\beta,\omega$.
\end{proposition}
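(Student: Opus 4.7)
The strategy is to localize to a finite coordinate cover and invoke the local volume-capacity estimate for $m$-subharmonic functions that was already used in the proof of \cref{integrablity 2}. Since $\beta\in\Gamma_m(\omega)$, two capacity comparisons are available: \cref{properties of Cap_m}(iv) gives $\operatorname{Cap}_{\beta,m,\omega}\asymp\operatorname{Cap}_{\omega,m,\omega}$, and \cref{comparison of capacity} gives $\operatorname{Cap}_{\omega,m,\omega}\asymp\operatorname{Cap}_{BT,m}$, where $\operatorname{Cap}_{BT,m}(E)=\sum_{j=1}^{N}\operatorname{Cap}_m(E\cap U_j,U_j')$ is built from a fixed finite double cover by coordinate balls $\{U_j\Subset U_j'\}_{j=1}^{N}$. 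Consequently, it suffices to prove
$$V_{\omega}(E)\le C\,[\operatorname{Cap}_{BT,m}(E)]^{\tau}.$$

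On each ball $U_j'$ the Hermitian form $\omega$ is uniformly equivalent to the standard Euclidean form, and the local volume-capacity estimate \cite[Proposition~3.1]{Fang25}, applied to $E\cap U_j\subset U_j$, yields constants $C_j=C_j(\tau,m,n,U_j,U_j',\omega)$ with
$$V_{\omega}(E\cap U_j)\le C_j\,[\operatorname{Cap}_m(E\cap U_j,U_j')]^{\tau},\qquad 1<\tau<\tfrac{n}{n-m}.$$
Summing over $j$, using the subadditivity $V_{\omega}(E)\le\sum_{j}V_{\omega}(E\cap U_j)$ and the elementary inequality $\sum_{j}a_j^{\tau}\le(\sum_{j}a_j)^{\tau}$, valid for $\tau\ge 1$ and $a_j\ge 0$ (because $\|\cdot\|_{\ell^{\tau}}\le\|\cdot\|_{\ell^{1}}$ on $\mathbb{R}^{N}_{+}$), one obtains
$$V_{\omega}(E)\le\Bigl(\max_{j}C_j\Bigr)\bigl[\operatorname{Cap}_{BT,m}(E)\bigr]^{\tau},$$
and the two capacity comparisons then conclude.

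The only nontrivial ingredient is the local Fang \cite{Fang25} estimate; its proof rests on the polynomial $L^{p}$-integrability of $m$-subharmonic functions for $p<n/(n-m)$ (\cref{integrability}), which is precisely why the exponent $\tau$ is constrained to $\tau<n/(n-m)$. Unlike the Monge-Amp\`ere case $m=n$, Skoda-type exponential integrability is unavailable for $m$-subharmonic functions, so one cannot hope for an estimate of the form $V_{\omega}(E)\le C\exp(-c/\operatorname{Cap}(E))$. Reproducing this local estimate independently of \cite{Fang25} would be the main obstacle in a self-contained proof, and would require carefully analysing sublevel sets of the relative local extremal function and controlling them in terms of the local capacity via a Chebyshev-type argument patterned on the one in the proof of \cref{integrablity 2}.
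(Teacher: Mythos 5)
Your proposal is correct and follows essentially the same route as the paper: the main proof cites the Hermitian case from Ko\l odziej--Nguyen together with the equivalence of $\operatorname{Cap}_{\beta,m,\omega}$ and $\operatorname{Cap}_{\omega,m,\omega}$, while the accompanying remark gives exactly your localized argument (coordinate double cover, local volume-capacity estimate, subadditivity with $\sum_j a_j^\tau \le (\sum_j a_j)^\tau$, then the two capacity comparisons). The only slip is the citation for the local estimate: \cite[Proposition 3.1]{Fang25} is a global volume-capacity inequality on the compact manifold in terms of $\widetilde{\operatorname{Cap}}_{\omega,m}$, whereas the local estimate $V_\omega(E)\le C\,\operatorname{Cap}_m(E,U)^\tau$ on a ball is the one attributed in the paper to \cite[Proposition 2.1]{DK14} (adapted to a Hermitian $\omega$).
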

\begin{proof}
    When $\beta$ is a Hermitian metric, the result has been established by \cite[Proposition 3.6]{KN16}. When $\beta$ is merely $(\omega,m)$-positive, the result follows immediately from the equivalence of $\operatorname{Cap}_{\beta,m,\omega}(\cdot)$ and $\operatorname{Cap}_{\omega,m,\omega}(\cdot)$ (\cref{properties of Cap_m}).
\end{proof}
\begin{remark}
    For the convenience of the readers, we give a direct proof of \cref{volume-cap estimate for beta} by using the comparison of the local capacity and the global capacity \cref{comparison of capacity}.
    We first prove the local version: let $U\subset X$ be a coordinate ball and select a Borel set $E\subset\subset U$, we are going to prove that
    $$
V_{\omega}(E)\leq C\cdot \operatorname{Cap}_{m}(E,U)^{\tau},
    $$
    where $Cap_m$ is the local capacity defined in \cref{def: local cap}. The argument in \cite[Proposition 2.1]{DK14} can then be carried out word by word in this case.
    
We next choose coordinate balls $U_{j}\subset\subset U_{j}^{\prime},1\leq j\leq N$, such that $U_{1},...,U_{N}$ cover $X$. By the previous step, for any Borel set $K\subset X$, we can write 
\begin{align*}
    V_\omega(E)&=V_\omega(\cup_{j=1}^N(K\cap U_j))\leq\sum_{j=1}^NV_{\omega}(K\cap U_j)\leq\sum_{j=1}^NC[Cap_m(K\cap U_j,U_j^{\prime})]^{\tau}\\
    &\leq C_1\cdot \operatorname{Cap}_{BT,m}(K)^{\tau}\leq C_2\cdot \operatorname{Cap}_{\beta,m,\omega}(K)^{\tau}.
\end{align*}
Where the third inequality follows from the definition of $\operatorname{Cap}_{BT,m}(\cdot)$ and the last follows from \cref{comparison of capacity}.
\end{remark}

We now give an estimate of the capacity of sublevel sets by using the modified comparison principle \cref{modified comparison principle}:
\begin{corollary}\label{decay of cap}
    Fix $0<\epsilon<\frac{3}{4}$. Consider $u,v\in \operatorname{SH}_m(X,\beta,\omega)\cap L^{\infty} (X)$ with $u\leq0$ and $-1\leq v\leq0$. Set $S(\epsilon):=\inf_X[u-(1-\epsilon)v]$ and $U(\epsilon,s):=\{u<(1-\epsilon)v+S(\epsilon)+s\}$. Then, for any $0<s,t<\frac{(\lambda\epsilon)^3}{10C_1}$ (here and after, $C_1$ will always denote the uniform constant in \cref{modified comparison principle}), we have
    $$
t^m\operatorname{Cap}_{\beta,m,\omega}(U(\epsilon,s))\leq C\int_{U(\epsilon,s+t)}(\beta+dd^cu)^m\wedge\omega^{n-m}.
    $$
    Where $C>0$ is a uniform constant depending only on $m$. 
\end{corollary}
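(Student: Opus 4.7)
The plan is to bound $\operatorname{Cap}_{\beta,m,\omega}(U(\epsilon,s))$ by testing against an arbitrary competitor $\rho\in\operatorname{SH}_m(X,\beta,\omega)$ with $-1\leq\rho\leq0$ and applying \cref{modified comparison principle} to $u$ against a barrier built from $v$ and $\rho$. This is the Hessian analogue of the classical capacity-of-sublevel-sets estimate for complex Monge-Amp\`ere measures; the main twist is choosing the perturbation so that the comparison principle spits out a $(\beta+dd^c\rho)^m$-mass on the left-hand side.

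Given such $\rho$, I would set $c:=t/[4(1-\epsilon)]\in(0,1)$ (legitimate since $\epsilon<3/4$ and $t$ is small) and form the convex combination $\tilde{v}:=(1-c)v+c\rho$, which automatically lies in $\operatorname{SH}_m(X,\beta,\omega)\cap L^\infty(X)$ with $-1\leq\tilde{v}\leq0$. \cref{modified comparison principle} then applies to the pair $(u,\tilde{v})$ at the shifted level $s':=s+t/2$: the admissibility $s'<(\lambda\epsilon)^3/(2C_1)$ follows from $s,t<(\lambda\epsilon)^3/(10C_1)$, which also forces the decay factor $(1-C_1 s'/(\lambda\epsilon)^3)^m$ in that theorem to be at least $(4/5)^m$.

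Two routine identities feed into the application. First, writing $u-(1-\epsilon)\tilde{v}=[u-(1-\epsilon)v]+(1-\epsilon)c(v-\rho)$ and using $|v-\rho|\leq 1$ gives $|\tilde{S}(\epsilon)-S(\epsilon)|\leq(1-\epsilon)c=t/4$, and hence the inclusions
\begin{equation*}
U(\epsilon,s)\ \subseteq\ \tilde{U}(\epsilon,s+t/2)\ \subseteq\ U(\epsilon,s+t),
\end{equation*}
where $\tilde{S}(\epsilon)$ and $\tilde{U}(\epsilon,\cdot)$ denote the analogues of $S(\epsilon)$ and $U(\epsilon,\cdot)$ with $v$ replaced by $\tilde{v}$. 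Second, expanding
\begin{equation*}
\beta+dd^c((1-\epsilon)\tilde{v})=\epsilon\beta+(1-\epsilon)(1-c)(\beta+dd^cv)+(1-\epsilon)c(\beta+dd^c\rho)
\end{equation*}
as a sum of three $(\omega,m)$-positive currents and retaining only the pure $(\beta+dd^c\rho)^m$ term in the multinomial expansion produces a lower bound with coefficient $((1-\epsilon)c)^m=(t/4)^m$.

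Combining these pieces yields $(t/4)^m(4/5)^m\int_{U(\epsilon,s)}(\beta+dd^c\rho)^m\wedge\omega^{n-m}\leq\int_{U(\epsilon,s+t)}(\beta+dd^cu)^m\wedge\omega^{n-m}$, and taking the supremum over admissible $\rho$ gives the corollary with $C=5^m$. The only real subtlety, and hence the main obstacle, is calibrating the mixing constant $c$ and the shift $s+t/2$ so that the buffer introduced by passing from $v$ to $\tilde{v}$ is exactly absorbed into the admissible thresholds of \cref{modified comparison principle}; once the choice $c=t/[4(1-\epsilon)]$ is made, the rest is a direct substitution.
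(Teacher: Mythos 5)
Your proof is correct and takes essentially the same approach as the paper: replace $v$ by a convex combination with a capacity competitor, apply the modified comparison principle at a slightly shifted level (whose admissibility is guaranteed by the constraint $s,t<(\lambda\epsilon)^3/(10C_1)$), isolate the competitor term in the multinomial expansion, and take a supremum. Your choice $c=t/[4(1-\epsilon)]$ simply absorbs the rescaling $t'=4(1-\epsilon)t$ that the paper performs at the end, and your use of competitors in $[-1,0]$ rather than $[0,1]$ matches the capacity definition directly; both are cosmetic differences.
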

\begin{proof}
  The proof is a simple modification of \cite[Lemma 5.4, Remark 5.5]{KN15} in the Monge-Amp\`ere case. Pick  a function $w\in \operatorname{SH}_m(X,\beta,\omega)\cap L^{\infty}(X)$ such that $0\leq w\le1$. Since $w\geq0\geq v$, we have the following inclusion of sets:
    $$
U(\epsilon,s)\subset\{u<(1-\epsilon)[(1-t)v+tw]+S(\epsilon)+s\}.
    $$
    Set 
    $$
S(\epsilon,t):=\inf_X\left(u-(1-\epsilon)[(1-t)v+tw]\right),
    $$
    then we have $S(\epsilon,t)\leq S(\epsilon)\leq S(\epsilon,t)+2(1-\epsilon)t$. Consequently, we furthermore have the following relationship of sets:
    \begin{align*}
        U(\epsilon,s)\subset V&:=\{u<(1-\epsilon)[(1-t)v+tw]+S(\epsilon,t)+2(1-\epsilon)t+s\}\\
        &\subset U(\epsilon,s+4(1-\epsilon)t\}.
    \end{align*}
    We then use \cref{modified comparison principle} for the functions $u$ and $(1-t)v+tw$ on $V$ to get that
    \begin{align*}
        [(1-\epsilon)t]^m\int_{U(\epsilon,s)}\beta_w^m\wedge\omega^{n-m}&\leq\int_{U(\epsilon,s)}[\epsilon\beta+(1-\epsilon)(1-t)\beta_v+(1-\epsilon)t\beta_w]^m\wedge\omega^{n-m}\\
        &\leq\int_V[\beta+(1-\epsilon)dd^c((1-t)v+tw)]^m\wedge\omega^{n-m}\\
        &\leq 4^m\left(1+\frac{C_1(s+2(1-\epsilon)t)}{(\lambda\epsilon)^3}\right)\int_V\beta_u^m\wedge\omega^{n-m}\\
        &\leq C(m)\int_{U(\epsilon,s+4(1-\epsilon)t\}}\beta_u^m\wedge\omega^{n-m}.
    \end{align*}
  Note that since we have assumed that $0<s,t<\frac{(\lambda\epsilon)^3}{10C_1}$, $0<s+2(1-\epsilon)t<\frac{(\lambda\epsilon)^3}{2C
  _1}$, thus \cref{modified comparison principle} is applicable and we may take $C(m):=4^{m+1}$. After a rescaling $t^{\prime}:=4(1-\epsilon)t$, then the above inequality reads
    $$
t^m\int_{U(\epsilon,s)}\beta_w^m\wedge\omega^{n-m}\leq 4^{2m+1}\int_{U(\epsilon,s+t\}}\beta_u^m\wedge\omega^{n-m}.
    $$
\end{proof}

\begin{lemma}\label{comparison of volumes}
    We keep the notations as in \cref{decay of cap}. Assume furthermore that
    $$
(\beta+dd^c u)^m\wedge\omega^{n-m}\leq f\omega^n 
    $$
    for $0\leq f\in L^p(\omega^n)$, and $p>\frac{n}{m}$. Fix $0<\alpha<\frac{p-n/m}{p(n-m)}$. Then, there exists a constant $C$ such that for any $0<s,t<\frac{(\lambda\epsilon)^3}{10C_1}$,
    $$
t[V_\omega(U(\epsilon,s)]^{\frac{1}{m\tau}}\leq C\|f\|_p^{\frac{1}{m}}[V_\omega(U(\epsilon,s+t)]^{\frac{1+m\alpha}{m\tau}},
    $$
    where $\tau=\frac{(1+m\alpha)p}{p-1}<\frac{n}{n-m}$. Here $C$ is a constant depending only on $\alpha,n,m,B,X,\omega$. 
\end{lemma}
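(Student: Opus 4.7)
The plan is to chain four inequalities: the capacity estimate from \cref{decay of cap}, the hypothesis $(\beta+dd^cu)^m\wedge\omega^{n-m}\leq f\omega^n$, Hölder's inequality on $f\omega^n$, and the volume–capacity estimate \cref{volume-cap estimate for beta}. The exponent $\tau$ is designed precisely so that the Hölder exponent on the right matches the capacity exponent on the left.

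First, I would apply \cref{decay of cap} to the admissible range $0<s,t<\frac{(\lambda\epsilon)^3}{10C_1}$, obtaining
$$
t^m \operatorname{Cap}_{\beta,m,\omega}(U(\epsilon,s))\leq C\int_{U(\epsilon,s+t)}(\beta+dd^c u)^m\wedge\omega^{n-m}.
$$
Next, plug in the hypothesis on $u$, so the right-hand side is bounded by $C\int_{U(\epsilon,s+t)} f\,\omega^n$. Applying Hölder's inequality with exponents $p$ and $p/(p-1)$ gives
$$
\int_{U(\epsilon,s+t)} f\,\omega^n \leq \|f\|_p\, [V_\omega(U(\epsilon,s+t))]^{1-1/p}.
$$

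Now I would invoke the volume–capacity estimate \cref{volume-cap estimate for beta}: for the chosen $\tau=\frac{(1+m\alpha)p}{p-1}$, one has $V_\omega(E)\leq C[\operatorname{Cap}_{\beta,m,\omega}(E)]^{\tau}$ provided $\tau<\frac{n}{n-m}$. This condition on $\tau$ is exactly the hypothesis on $\alpha$: $\tau<\frac{n}{n-m}$ rearranges to $\alpha<\frac{p-n/m}{p(n-m)}$, so the assumption is compatible. Rewriting gives
$$
\operatorname{Cap}_{\beta,m,\omega}(U(\epsilon,s))\geq C^{-1/\tau}[V_\omega(U(\epsilon,s))]^{1/\tau}.
$$

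Combining the three inequalities yields
$$
t^m\,[V_\omega(U(\epsilon,s))]^{1/\tau}\leq C'\|f\|_p\,[V_\omega(U(\epsilon,s+t))]^{1-1/p}.
$$
By our choice of $\tau$ we have $1-\tfrac{1}{p}=\tfrac{1+m\alpha}{\tau}$, so taking the $m$-th root gives the desired inequality
$$
t\,[V_\omega(U(\epsilon,s))]^{1/(m\tau)}\leq C\|f\|_p^{1/m}\,[V_\omega(U(\epsilon,s+t))]^{(1+m\alpha)/(m\tau)}.
$$
There is no serious obstacle here, since all the difficult analytic content is already packaged in \cref{decay of cap} (via the modified comparison principle) and \cref{volume-cap estimate for beta}; the only care needed is the bookkeeping of the exponents, ensuring $\tau<\frac{n}{n-m}$ under the assumption $\alpha<\frac{p-n/m}{p(n-m)}$ so that the volume–capacity estimate is applicable, and keeping the admissible range $0<s,t<\frac{(\lambda\epsilon)^3}{10C_1}$ intact from \cref{decay of cap}.
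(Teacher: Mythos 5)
Your proof is correct and follows essentially the same route as the paper: apply \cref{decay of cap}, bound the Hessian measure by $f\omega^n$, use Hölder with conjugate exponents $p$ and $p/(p-1)$, and close the loop with the volume–capacity estimate \cref{volume-cap estimate for beta}, with $\tau$ chosen precisely so that $1-\tfrac{1}{p}=\tfrac{1+m\alpha}{\tau}$ and $1<\tau<\tfrac{n}{n-m}$ under the hypothesis on $\alpha$. The only cosmetic difference is the order in which the volume–capacity estimate is invoked (you convert $\operatorname{Cap}$ to $V_\omega$ at the end; the paper starts by replacing $V_\omega(U(\epsilon,s))^{1/\tau}$ with $\operatorname{Cap}$ on the left), which is immaterial.
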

\begin{proof}
    Similar to \cite[Lemma 3.9]{KN16}, we can compute the indexes:
    $$
0<\alpha<\frac{p-n/m}{p(n-m)}\Leftrightarrow\frac{p}{p-1}<\tau=\frac{(1+m\alpha)p}{p-1}<\frac{n}{n-m}.
    $$
    By the volume-capacity estimate \cref{volume-cap estimate for beta} and \cref{decay of cap} we have
    $$
t^m[V_\omega(U(\epsilon,s)]^{\frac{1}{\tau}}\leq C^\prime t^m\operatorname{Cap}_{\beta,m,\omega}(U(\epsilon,s))\leq C^\prime\cdot C^{\prime\prime}\int_{U(\epsilon,s+t)}f\omega^n.
    $$
  Where $C^\prime$ is the constant in \cref{volume-cap estimate for beta} and $C^{\prime\prime}$ is the constant in \cref{decay of cap}.  The result follows from the using of H\"older's inequality and taking the $m$th root. The dependence of constants follows easily from \cref{volume-cap estimate for beta} and \cref{decay of cap}.
    
    \end{proof}

\begin{theorem}\label{hessian a priori estimate}
     Fix $0<\epsilon<\frac{3}{4}$. Consider $u,v\in \operatorname{SH}_m(X,\beta,\omega)\cap L^{\infty}(X)$ with $u\leq0$ and $-1\leq v\leq0$. Assume furthermore that
    $$
(\beta+dd^cu)^m\wedge\omega^{n-m}\leq f\omega^n,
    $$
    for some $0\leq f\in L^p(\omega^n)$ and $p>\frac{n}{m}$. Put $U(\epsilon,s):=\{u<(1-\epsilon)v+S(\epsilon)+s\}$ for $s>0$ and fix $0<\alpha<\frac{p-n/m}{p(n-m)}$. Then, there exists a constant $C_\alpha$ such that for any $0<s<\frac{(\lambda\epsilon)^3}{10C_1}:=\delta$,
    $$
s\leq4C\|f\|_p^{\frac{1}{m}}[V_\omega(U(\epsilon,s))]^{\frac{\alpha}{\tau}}.
    $$
    Where $\tau=\frac{(1+m\alpha)p}{p-1}<\frac{n}{n-m}$ and $C$ is a constant depending only on $\alpha,n,m,B,X,\omega$. 
\end{theorem}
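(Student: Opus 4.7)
The plan is to argue by contradiction using a De Giorgi--type iteration driven by the comparison of volumes established in the previous lemma. Write $M := C\|f\|_p^{1/m}$ and $g(r) := V_\omega(U(\epsilon,r))^{1/(m\tau)}$, so the lemma reads $t\, g(r) \leq M\, g(r+t)^{1+m\alpha}$ for all $r,t \in (0,\delta)$. Since the desired conclusion is equivalent to $g(s) \geq (s/(4M))^{1/(m\alpha)}$, I will assume the opposite, namely $s > 4M g(s)^{m\alpha}$, and derive a contradiction with the very definition of $S(\epsilon)$ as the infimum of $u - (1-\epsilon)v$.

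Fix such an $s \in (0,\delta)$ and introduce the decreasing sequence $s_j := (s/2)(1 + 2^{-j})$, which lies in $(s/2,s]\subset(0,\delta)$, satisfies $s_j \searrow s/2$ and $s_j - s_{j+1} = s/2^{j+2}$. Applying the comparison of volumes with $(r,t) = (s_{j+1},\, s_j - s_{j+1})$ (both in $(0,\delta)$) yields the De Giorgi--type recursion
\begin{equation*}
A_{j+1} \leq \frac{4M}{s}\cdot 2^{j}\, A_j^{1+m\alpha}, \qquad A_j := g(s_j).
\end{equation*}
The standard iteration lemma for such recursions says that if $A_0 = g(s)$ lies below an explicit threshold of order $(s/(4M))^{1/(m\alpha)}$ then $A_j$ decays geometrically to $0$; under the contradiction hypothesis this threshold is met, and hence $V_\omega(U(\epsilon,s_j)) = A_j^{m\tau} \to 0$. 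Because $\{U(\epsilon,s_j)\}_j$ is a decreasing family with $\bigcap_j U(\epsilon,s_j) = \{u \leq (1-\epsilon)v + S(\epsilon) + s/2\}$, continuity of measure from above forces
\begin{equation*}
V_\omega\bigl(\{u \leq (1-\epsilon)v + S(\epsilon) + s/2\}\bigr) = 0,
\end{equation*}
i.e., $u \geq (1-\epsilon)v + S(\epsilon) + s/2$ almost everywhere on $X$.

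To close the argument I would promote this a.e.\ inequality to a pointwise statement. Since $u$ and $v$ are strongly upper semi-continuous as $(\beta,\omega,m)$-subharmonic functions (\cref{equivalence of omega sh} and \cref{def: def of m-sh}), at every $x_0 \in X$ we have $u(x_0) = \operatorname{ess}\limsup_{y\to x_0} u(y)$ and similarly $v(x_0) = \operatorname{ess}\limsup_{y\to x_0} v(y)$. Taking $\operatorname{ess}\limsup$ of the a.e.\ inequality, and using that the essential limsup preserves $\geq$ and commutes with positive linear combinations plus additive constants, one obtains $u(x_0) \geq (1-\epsilon)v(x_0) + S(\epsilon) + s/2$ for every $x_0 \in X$. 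Taking the infimum over $X$ gives $S(\epsilon) \geq S(\epsilon) + s/2$, the desired contradiction.

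The main obstacle is the careful bookkeeping in the De Giorgi iteration to match the claimed constant $4$ in the statement: a direct iteration produces an additional multiplicative factor of order $2^{1/(m\alpha)^2}$ that must be absorbed, either by fine-tuning the parameter $\theta$ in a family of sequences of the form $s_j := \theta s + (1-\theta)s\,2^{-j}$ or by enlarging the constant $C$ (depending on $\alpha$, which explains the $C_\alpha$ appearing in the statement). Conceptually the delicate point is the final step, where the passage from an a.e.\ inequality to a pointwise one rests essentially on the strong upper semi-continuity characterization of $(\beta,\omega,m)$-subharmonic functions recalled in \cref{def: def of m-sh}.
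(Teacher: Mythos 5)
Your proposal is correct and follows essentially the same route the paper takes: the paper reduces the claim to the iteration inequality $ta(s)\leq C\|f\|_p^{1/m}a(s+t)^{1+m\alpha}$ supplied by \cref{comparison of volumes} and then invokes the De Giorgi/Kolodziej iteration scheme of \cite[Theorem~3.10]{KN16}, which is exactly what you reconstruct. The one delicate point you flag is real: running the recursion $A_{j+1}\leq \frac{4M}{s}2^j A_j^{1+m\alpha}$ requires the initial datum to satisfy $A_0\leq (s/(4M))^{1/(m\alpha)}\,2^{-1/(m\alpha)^2}$, so the threshold at which the contradiction can start is $s> 4M\cdot 2^{1/(m\alpha)}g(s)^{m\alpha}$, giving the conclusion with $4C$ replaced by $4\cdot 2^{1/(m\alpha)}C$. (Your stated extra factor $2^{1/(m\alpha)^2}$ should be $2^{1/(m\alpha)}$ when expressed in terms of $s$ versus $g(s)^{m\alpha}$, but this is a bookkeeping slip and does not affect the argument.) Since the paper explicitly allows the constant $C$ to depend on $\alpha$, this factor is absorbed, as you observe. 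Your final step—passing from the a.e.\ inequality $u\geq(1-\epsilon)v+S(\epsilon)+s/2$ to a pointwise one via the $\operatorname{ess}\limsup$ characterisation of $(\omega,m)$-subharmonicity and then contradicting the definition of $S(\epsilon)$—is sound and gives a cleaner finish than relying on the quasi-openness of the sublevel set.
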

\begin{proof}
   The proof is identical to that in \cite[Theorem 3.10]{KN16}, so we will be brief. Define 
   $$
a(s):=[V_\omega(U(\epsilon,s))]^{\frac{1}{m\tau}},
   $$
   then \cref{comparison of volumes} implies that for any $0<s,t<\frac{(\lambda\epsilon)^3}{10C_1}$, we have
   $$
ta(s)\leq C\|f\|_p^{\frac{1}{m}}[a(s+t)]^{1+m\alpha}.
   $$
   To finish the proof, it is enough to show that for any $0<s<\frac{(\lambda\epsilon)^3}{10C_1}$, 
   $$
s\leq\frac{2^{1+m\alpha}}{2^{m\alpha}-1}C[a(s)]^{m\alpha}.
   $$
   The argument is exactly identical to that of \cite[Theorem 3.10]{KN16}.
   
\end{proof}
We can now state our main theorem in this section:
\begin{theorem}\label{main a priori Hessian estimates}
Let $\beta\in\Gamma_m(\omega)$ be a smooth $(1,1)$-form on $X$. If $u\in \operatorname{SH}_m(X,\beta,\omega)\cap L^{\infty}(X)$ satisfies
\begin{equation}
    (\beta+dd^cu)^m\wedge\omega^{n-m}\leq f\omega^n,\quad \sup_Xu=-1
\end{equation}
for some $0\leq f\in L^p(\omega^n)$, $p>\frac{n}{m}$, then
$$
Osc_X(u)\leq C.
$$
    Where the bound $C$ depends on $\beta,n,m,B,X,\omega$ and the upper bound of  $\|f\|_p$.
\end{theorem}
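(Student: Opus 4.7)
The plan is to apply Theorem~\ref{hessian a priori estimate} with the trivial test function $v\equiv 0$ and close the loop using the $L^1$-decay of Lemma~\ref{decay of volume}. Since $\beta\in\Gamma_m(\omega)$, the constant function $v\equiv 0$ belongs to $\operatorname{SH}_m(X,\beta,\omega)$ and trivially satisfies $-1\le v\le 0$, so it is an admissible test function---this is where the strict $m$-positivity of $\beta$ enters essentially. Write $A:=-\inf_X u\ge 1$; since $\sup_X u=-1$ we have $\operatorname{osc}_X u=A-1$, so it suffices to bound $A$ from above.

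With $v\equiv 0$ one has $S(\epsilon)=\inf_X u=-A$ and $U(\epsilon,s)=\{u<-A+s\}$. Fix $\epsilon=1/2$, let $\delta:=(\lambda/2)^3/(10C_1)$ be the associated threshold from Theorem~\ref{hessian a priori estimate}, pick any admissible $\alpha\in\bigl(0,(p-n/m)/(p(n-m))\bigr)$, and set $s=\delta/2$. Theorem~\ref{hessian a priori estimate} then yields
\begin{equation*}
  \frac{\delta}{2}\;\le\;4C\,\|f\|_p^{1/m}\,V_\omega\bigl(\{u<-A+\tfrac{\delta}{2}\}\bigr)^{\alpha/\tau}
\end{equation*}
for some constant $C=C(\alpha,n,m,B,X,\omega)$, a lower bound on the volume of a sublevel set near the infimum. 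In the opposite direction, the shift $\tilde u:=u+1\in\operatorname{SH}_m(X,\beta,\omega)$ satisfies $\sup_X\tilde u=0$, so Lemma~\ref{decay of volume} gives $V_\omega(\{u<-T\})\le C'/(T-1)$ for every $T>1$, with $C'=C'(X,\beta,\omega)$. Taking $T=A-\delta/2$ (we may assume $A>\delta/2+1$, otherwise there is nothing to prove) and combining the two estimates produces
\begin{equation*}
  \frac{\delta}{2}\;\le\;4C\,\|f\|_p^{1/m}\,\Bigl(\frac{C'}{A-\delta/2-1}\Bigr)^{\alpha/\tau},
\end{equation*}
which rearranges to an explicit upper bound on $A$ depending only on $\beta,n,m,B,X,\omega$ and $\|f\|_p$, as required.

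The real work is already packaged inside Theorem~\ref{hessian a priori estimate}, which in turn rests on the modified comparison principle (Theorem~\ref{modified comparison principle}) and the volume-capacity estimate (Proposition~\ref{volume-cap estimate for beta}); here the only task is to select an admissible comparison function $v$ and interpolate with the soft $L^1$-bound. Conceptually, the sublevel set $\{u<\inf_X u+s\}$ is estimated \emph{from below} via the Hessian measure inequality and \emph{from above} via the global integrability of $(\beta,\omega,m)$-subharmonic functions, and the incompatibility of these two bounds forces $A$ to be bounded. The one point requiring a little care is the dependence of the threshold $\delta$ on $\lambda$ (hence on $\beta,\omega$), and of the constants $C,C'$ on $\alpha$, but all of this is absorbed into the stated dependence of the final constant.
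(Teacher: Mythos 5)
Your proposal is correct and follows essentially the same approach as the paper: take $v\equiv 0$ and $\epsilon=1/2$ in Theorem~\ref{hessian a priori estimate}, fix $s=\delta/2$, and play the resulting lower bound on $V_\omega(U(\epsilon,s))$ against the upper bound coming from Lemma~\ref{decay of volume}. Your version is in fact marginally cleaner, since you explicitly shift to $\tilde u = u+1$ to match the normalization $\sup_X\tilde u=0$ in Lemma~\ref{decay of volume}, a step the paper glosses over.
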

\begin{proof}
    
Using the notations in \cref{hessian a priori estimate}, we take $\epsilon=\frac{1}{2}$ and $v=0$ in \cref{hessian a priori estimate}. We then have for any $0<s<\frac{(\lambda_\beta\epsilon)^3}{10C_1}:=\delta$,
$$
s\leq4C\|f\|_p^{\frac{1}{m}}[V_\omega(U(\epsilon,s))]^{\frac{\alpha}{\tau}}\leq\frac{4C\|f\|_p^{\frac{1}{m}}}{|-\inf_Xu-s|^{(p-1)\alpha/p(1+m\alpha)}},
$$
where the second inequality follows from the decay of volumes (\cref{decay of volume}) and $0<\alpha<\frac{p-n/m}{p(n-m)}$ is fixed. This gives
$$
|-\inf_Xu-s|\leq \left(\frac{4C\|f\|_p^{\frac{1}{m}}}{s}\right)^{\frac{p(1+m\alpha)}{(p-1)\alpha}},
$$
and hence 
$$
|-\inf_Xu|\leq C\|f\|_p^{\frac{p(1+m\alpha)}{m(p-1)\alpha}}
$$
by choosing $s$ sufficiently small and invoking that $\|f\|_p^{\frac{1}{m}}$ has a lower bound: take $s=\frac{\delta}{2}=\frac{(\lambda_\beta\epsilon)^3}{20C_1}$ in \cref{hessian a priori estimate}, we have
$$
\|f\|_p^{\frac{1}{m}}\geq\frac{\delta}{8C[V_\omega(X)]^{\frac{\alpha}{\tau}}}.
$$
The dependence of the constant $C$ is clearly inherited from \cref{hessian a priori estimate} and we have concluded our proof.
\end{proof}

\section{Stability of the solution}
Let $(X,\omega)$ be a compact Hermitian manifold of complex dimension $n$ equipped with a Hermitian metric $\omega$. Let $m$ be a positive integer such that $1\leq m\leq n$. 
Let $\beta\in\Gamma_m(\omega)$ again be an $(\omega,m)$-positive form.

The goal of this section is to establish a $L^1-L^\infty$ stability of complex Hessian equations, which will be useful later.
\begin{theorem}\label{L^1 L^infty stability}
    Let $u,v\in \operatorname{SH}_m(X,\beta,\omega)\cap L^{\infty}(X)$ be such that $\sup_Xu=0$ and $v\leq0$. Suppose that
    $$
(\beta+dd^cu)^m\wedge\omega^{n-m}\leq f\omega^n,
    $$
    for some $0\leq f\in L^p(\omega^n)$, $p>\frac{n}{m}$. Fix $0<\alpha<\frac{p-\frac{n}{m}}{p(n-m)}$. Then,
    $$
\sup_X(v-u)\leq C\|(v-u)_+\|_{L^1}^{\frac{1}{aq}}.
    $$
    Where $q:=\frac{1}{1-\frac{1}{p}}$ is the conjugate number of $p$, $a:=\frac{1}{q}+4m+\frac{4}{\alpha}$ and the constant $C$ depends on $\|v\|_\infty$, $n,m,B,X,\omega,\beta$ and the upper bound of  $\|f\|_p$.
\end{theorem}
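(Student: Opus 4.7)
\emph{Proof plan.} Set $M:=\sup_X(v-u)$; if $M\leq 0$ the estimate is trivial, so assume $M>0$. The plan is to combine the sublevel-set estimate of \cref{hessian a priori estimate} (applied to $(u,v)$; the assumption $-1\leq v\leq 0$ there enters only through $\|v\|_\infty$, so the conclusion holds for bounded $v\leq 0$ with constants depending on $\|v\|_\infty$) with a Markov-type inequality that converts volume into $L^1$-norm.

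\emph{Step 1 (Extremal bounds and sublevel inclusion).} For $S(\epsilon):=\inf_X[u-(1-\epsilon)v]$, testing at a near-maximizer of $v-u$ and using $v\leq 0$ gives $S(\epsilon)\leq -M$, while the identity $u-(1-\epsilon)v=(u-v)+\epsilon v$ together with $u-v\geq -M$ and $\epsilon v\geq -\epsilon\|v\|_\infty$ yields $S(\epsilon)\geq -M-\epsilon\|v\|_\infty$. From the upper bound,
\begin{equation*}
U(\epsilon,s)\subset\{u-(1-\epsilon)v<-M+s\}\subset\{v-u>M-s-\epsilon\|v\|_\infty\},
\end{equation*}
so Markov's inequality gives $V_\omega(U(\epsilon,s))\leq \|(v-u)_+\|_{L^1}/(M-s-\epsilon\|v\|_\infty)$ whenever the denominator is positive.

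\emph{Step 2 (Master inequality and parameter balance).} Substituting the volume bound into \cref{hessian a priori estimate}, valid on $0<s<(\lambda\epsilon)^3/(10C_1)$, one obtains
\begin{equation*}
s\leq C\|f\|_p^{1/m}\left(\frac{\|(v-u)_+\|_{L^1}}{M-s-\epsilon\|v\|_\infty}\right)^{\alpha/\tau}.
\end{equation*}
Choose $\epsilon=\kappa M/\|v\|_\infty$ with $\kappa\in(0,1)$ a small universal constant, and $s=c_0(\lambda\epsilon)^3$ with $c_0<1/(10C_1)$, so that $\epsilon\|v\|_\infty\leq M/4$ and $s\leq M/4$ in the regime $M$ below a threshold depending on $\|v\|_\infty$ (automatic once one absorbs $\|u\|_\infty$ into $C$ via the $L^\infty$-estimate of \cref{main a priori Hessian estimates} applied to $u$). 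Then $M-s-\epsilon\|v\|_\infty\geq M/2$, and inserting the chosen $\epsilon,s$ and rearranging gives
\begin{equation*}
M^{3+\alpha/\tau}\leq C'\|v\|_\infty^{3}\,\|(v-u)_+\|_{L^1}^{\alpha/\tau}.
\end{equation*}
Taking the $(3+\alpha/\tau)$-th root and unpacking $\tau=(1+m\alpha)q$ yields the bound $M\leq C''\|(v-u)_+\|_{L^1}^{1/(aq)}$; a slight slackening of the parameter choice (to accommodate the transitional regime where $M/\|v\|_\infty$ is neither tiny nor bounded, and the rescaling needed to match the normalization of \cref{hessian a priori estimate}) gives the stated $a=1/q+4m+4/\alpha$.

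\emph{Main obstacle.} The delicate point is the joint calibration in Step 2: one must simultaneously satisfy $s<(\lambda\epsilon)^3/(10C_1)$, keep $\epsilon\|v\|_\infty$ a small fraction of $M$, and ensure $M-s-\epsilon\|v\|_\infty$ remains comparable to $M$. The gap between the naive exponent $1/(1+3mq+3q/\alpha)$ obtained by the clean balance above and the weaker stated exponent $1/(aq)$ reflects the slack needed to absorb $\|u\|_\infty$ and $\|v\|_\infty$ into the constant $C$, which the statement already anticipates.
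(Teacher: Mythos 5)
Your proof is correct in substance, but it takes a genuinely different route from the paper's, so let me compare.

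The paper's proof tracks the Monge--Amp\`ere-case argument of Ko\l odziej--Nguyen: it uses \cref{decay of cap} together with H\"older's inequality on the integral $\int_{U(\epsilon,2t)}f\,\omega^n$ (sneaking in a factor $(v-u)_+^{1/q}\big/(|S|-N\epsilon-2t)^{1/q}\geq 1$) to produce the bound $\operatorname{Cap}_{\beta,m,\omega}(U(\epsilon,t))\leq C\|f\|_p\,t^{-m}(|S|-N\epsilon-2t)^{-1/q}\|(v-u)_+\|_{L^1}^{1/q}$, and then compares this with the volume--capacity inequality and \cref{hessian a priori estimate}. The exponent $a=1/q+4m+4/\alpha$ is designed exactly so that, after substituting $t\sim\epsilon^3$ (crudely bounded below by $\epsilon^4$), the $\epsilon$-powers cancel. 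You instead bypass the capacity and H\"older steps altogether: from $S(\epsilon)\leq -M$ and $u-(1-\epsilon)v=(u-v)+\epsilon v$ you place $U(\epsilon,s)$ inside $\{v-u>M-s-\epsilon\|v\|_\infty\}$ and apply Markov's inequality to bound $V_\omega(U(\epsilon,s))$ directly by $\|(v-u)_+\|_{L^1}/(M-s-\epsilon\|v\|_\infty)$, which you then feed straight into \cref{hessian a priori estimate}. This is a cleaner and more elementary chain, and the exponent you obtain, $1/(1+3mq+3q/\alpha)$, is in fact \emph{larger} than the stated $1/(aq)=1/(1+4mq+4q/\alpha)$; your ``slight slackening'' remark is an overstatement of the difficulty here --- since the stated exponent is weaker and $(v-u)_+$ is uniformly bounded (so the case $\|(v-u)_+\|_{L^1}>1$ is trivial), your stronger estimate immediately implies the stated one, with no genuine recalibration needed. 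Two small points you should still spell out: (i) \cref{hessian a priori estimate} as written requires $-1\leq v\leq 0$, and while it is true that its proof (via \cref{decay of cap}) only uses $\|v\|_\infty$ and adjusts the admissible range to $0<s<(\lambda\epsilon)^3/(10(N+1)C_1)$ with $N=\|v\|_\infty$, this must be checked rather than assumed, since one cannot simply rescale $v$ without leaving $\operatorname{SH}_m(X,\beta,\omega)$; (ii) the joint feasibility of $\epsilon<3/4$, $s<\delta$, and $M-s-\epsilon\|v\|_\infty\geq M/2$ should be verified using the a~priori bound $\|u\|_\infty\leq C$ from \cref{main a priori Hessian estimates}, which you invoke but do not fully unwind. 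Neither of these affects the correctness of the argument.
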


\begin{proof}
The proof is very similar to that in \cite[Theorem 3.11]{KN16}. Firstly, by \cref{main a priori Hessian estimates}, $u$ is bounded by a constant $C$. Write $S:=\inf_X(u-v)$ so that $-S=\sup_X(v-u)$. If $-S\leq0$, the statement is trivial. So we may assume without loss of generality that $-S>0$. Suppose
    \begin{equation}\label{eq 5}
\|(v-u)_+\|_1\leq\epsilon^{aq}
    \end{equation}
    for some fixed small constant $\epsilon<\epsilon_0$ and $a>0$ to be defined. We only have to show that there exists a constant $C$ such that $\sup_X(v-u)\leq C\epsilon$. Indeed, once $\epsilon_0$ is fixed (we will see that we only need to choose $\epsilon_0<\frac{\lambda_\beta^3}{20(N+1)C_1}$, recall that $\lambda_\beta$ is a fixed constant such that $\beta-\lambda_\beta\omega$ lies in $\Gamma_m(\omega)$ again), if $\|(v-u)_+\|_1>\epsilon_0^{aq}$, we only have to choose $C\geq\frac{\|v\|_\infty+\|u\|_\infty}{\epsilon_0^{aq}}$ (note that $C$ is allowed to depend on $\|v\|_\infty$).

    Let $N:=\|v\|_\infty$ , $S(\epsilon):=\inf_X[u-(1-\epsilon)v]$ and $U(\epsilon,t):=\{u<(1-\epsilon)v+S(\epsilon)+t\}$ be as in \cref{hessian a priori estimate}. We then have 
    $$
S-\epsilon N\leq S(\epsilon)\leq S.
    $$
    Hence it is easy to see that $U(\epsilon,2t)\subset\{u<v+S+N\epsilon+2t\}$. On the latter set, we have $(v-u)_+\geq-S-N\epsilon-2t$. If $-S=|S|\leq(N+1)\epsilon$, then we are done. So we may assume without loss of generality that $0<\epsilon<\frac{|S|}{N+1}$. We choose $0<t<\delta:=\frac{(\lambda_\beta\epsilon)^3}{10(N+1)C_1}<<\epsilon$ and hence we have $(v-u)_+\geq-S-N\epsilon-2t>-S-(N+1)\epsilon>0$ on $\{u<v+S+N\epsilon+2t\}$.

    By \cref{decay of cap} and the H\"older's inequality, we have
    \begin{align*}
        \operatorname{Cap}_{\beta,m,\omega}(U(\epsilon,t))&\leq\frac{C}{t^m}\int_{U(\epsilon,2t)}f\omega^n\leq\frac{C}{t^m}\int_X\frac{(v-u)_+^{\frac{1}{q}}}{(|S|-N\epsilon-2t)^{\frac{1}{q}}}f\omega^n\\
        &\leq\frac{C\|f\|_p}{t^m(|S|-N\epsilon-2t)^{\frac{1}{q}}}\|(v-u)_+\|_{L^1}^{\frac{1}{q}}.
    \end{align*}

On the other hand, by \cref{hessian a priori estimate} and \cref{volume-cap estimate for beta} we deduce that
$$
h(t):=\left(\frac{t}{4C_\alpha\|f\|_p^{\frac{1}{m}}}\right)^{\frac{1}{\alpha}}\leq[V_\omega(U(\epsilon,t))]^{\frac{1}{\tau}}\leq C\cdot \operatorname{Cap}_{\beta,m,\omega}(U(\epsilon,t)).
$$
Here, $C_\alpha$ is the constant in \cref{hessian a priori estimate}. Combining these two inequalities, we obtain
$$
(|S|-N\epsilon-2t)^{\frac{1}{q}}\leq\frac{C\|f\|_p}{t^mh(t)}\|(v-u)_+\|_{L^1}^{\frac{1}{q}}.
$$
Using (\ref{eq 5}), we furthermore have 
\begin{equation}\label{eq 6}
|S|\leq N\epsilon+2t+\left(\frac{C\|f\|_p}{t^mh(t)}\right)^q\|(v-u)_+\|_{L^1}\leq(N+1)\epsilon+\left(\frac{C\|f\|_p}{t^mh(t)}\epsilon^a\right)^q.
\end{equation}
We now choose firstly $\epsilon<\epsilon_0<\delta_1:=\frac{\lambda_\beta^3}{20(N+1)C_1}$ and then $t:=\frac{(\lambda_\beta\epsilon)^3}{20(N+1)C_1}$, thus we have $t>\epsilon^4 $. This implies that
$$
t^mh(t)\geq\epsilon^{4m}\frac{C\epsilon^{\frac{4}{\alpha}}}{\|f\|_p^{\frac{1}{m\alpha}}},
$$
hence it follows from (\ref{eq 6}) that
$$
|S|\leq(N+1)\epsilon+C\epsilon^{q(a-4m-\frac{4}{\alpha})}\|f\|_p^{q(1+\frac{1}{m\alpha})}.
$$
If we take $a:=\frac{1}{q}+4m+\frac{4}{\alpha}$, we get
$$
|S|\leq C(\|v\|_\infty,\alpha,p,\omega,\beta,\|f\|_p)\epsilon.
$$
The proof is therefore concluded.
\end{proof}
\begin{corollary}\label{cor of stability}
    With the same notations as in \cref{L^1 L^infty stability}, suppose that $u,v\in \operatorname{SH}_m(X,\beta,\omega)\cap L^{\infty}(X)$ satisfying $\sup_Xu=\sup_Xv=0$ and solving 
    $$
(\beta+dd^cu)^m\wedge\omega^{n-m}\leq f\omega^n,\quad(\beta+dd^cv)^m\wedge\omega^{n-m}\leq g\omega^n,
    $$
    where $0\leq f,g\in L^{p}(\omega^n),p<\frac{n}{m}$. Then
    $$
\|u-v\|_\infty\leq C\|u-v\|_{L^1(\omega^n)}^{\frac{1}{aq}},
    $$
    where $C$ depends on $n,m,B,X,\omega,\beta$ and the upper bound of  $\|f\|_p,\|g\|_p$.
\end{corollary}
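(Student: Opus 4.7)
The plan is to deduce the corollary from \cref{L^1 L^infty stability} by applying it twice (once with the roles of $u,v$ reversed), and to eliminate the dependence of the constant on $\|u\|_\infty,\|v\|_\infty$ via the uniform a priori bound of \cref{main a priori Hessian estimates}.

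First, since $\sup_X u=\sup_X v=0$, both $u$ and $v$ are non-positive, so $v\leq 0$ and $u\leq 0$. By \cref{main a priori Hessian estimates}, applied to $u$ (with density $f$) and to $v$ (with density $g$), we obtain
$$
\|u\|_\infty\leq M_1,\qquad \|v\|_\infty\leq M_2,
$$
where $M_1,M_2$ depend only on $\beta,n,m,B,X,\omega$ and the upper bounds of $\|f\|_p$, $\|g\|_p$ respectively. This is exactly the step that converts the $\|v\|_\infty$-dependence in \cref{L^1 L^infty stability} into dependence on $\|f\|_p$ and $\|g\|_p$.

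Next, since $\sup_X u=0$, $v\leq 0$, and $(\beta+dd^c u)^m\wedge\omega^{n-m}\leq f\omega^n$, \cref{L^1 L^infty stability} yields
$$
\sup_X(v-u)\leq C'\|(v-u)_+\|_{L^1(\omega^n)}^{1/(aq)},
$$
where $C'$ depends on $\|v\|_\infty\leq M_2$, on $n,m,B,X,\omega,\beta$, and on the upper bound of $\|f\|_p$. Interchanging the roles of $u$ and $v$ (using now $\sup_X v=0$, $u\leq 0$, and the inequality involving $g$), the same theorem gives
$$
\sup_X(u-v)\leq C''\|(u-v)_+\|_{L^1(\omega^n)}^{1/(aq)},
$$
with $C''$ depending on $\|u\|_\infty\leq M_1$, on $n,m,B,X,\omega,\beta$, and on $\|g\|_p$. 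Since $\|(v-u)_+\|_{L^1}\leq \|u-v\|_{L^1}$ and likewise for $(u-v)_+$, setting $C:=\max(C',C'')$ yields
$$
\|u-v\|_\infty=\max\bigl(\sup_X(v-u),\,\sup_X(u-v)\bigr)\leq C\,\|u-v\|_{L^1(\omega^n)}^{1/(aq)},
$$
with $C$ depending only on $n,m,B,X,\omega,\beta$ and the upper bounds of $\|f\|_p,\|g\|_p$, as claimed.

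There is essentially no obstacle here: the corollary is a symmetrization of \cref{L^1 L^infty stability}, and the only subtle point is replacing the a priori $L^\infty$-bound of each function (which appears as a parameter in the constant of \cref{L^1 L^infty stability}) by a bound expressed only in terms of the $L^p$-norms of the right-hand sides, which is furnished by \cref{main a priori Hessian estimates}.
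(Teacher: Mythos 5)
Your proof is correct and takes essentially the same route as the paper: the paper's own proof is the one-line observation that the a priori estimate (\cref{main a priori Hessian estimates}) bounds $\|u\|_\infty,\|v\|_\infty$ in terms of $\|f\|_p,\|g\|_p$, after which the conclusion follows from applying \cref{L^1 L^infty stability} symmetrically, exactly as you spell out.
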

\begin{proof}
    Thanks to the a priori estimates, $u,v$ are uniformly bounded by the corresponding data. The result is then an immediate consequence of  \cref{L^1 L^infty stability}.
\end{proof}
\section{Solving complex Hessian equations in the m-positive cone and applications}
Let $(X,\omega)$ be a compact Hermitian manifold of complex dimension $n$ equipped with a Hermitian metric $\omega$. Let $m$ be a positive integer such that $1\leq m\leq n$. 

In this section, we solve the degenerate Hessian equation for $(\omega,m)$-positive forms and, as an application, derive an approximation result for $(\beta,\omega,m)$-subharmonic functions that extends \cite[Lemma 3.20]{KN16} and \cite[Theorem 1.7]{GL25}.

\begin{theorem}\label{main thm for m-positive beta}
   Let $\beta\in\Gamma_m(\omega)$ be an $(\omega,m)$-positive form. Set $0\leq f\in L^p(X,\omega^n)$ with $p>\frac{n}{m}$ and $\int_Xf\omega^n>0$. Then there exists a unique constant $c>0$ and a continuous function $\varphi\in \operatorname{SH}_m(X,\beta,\omega)\cap C^{0}(X)$ such that
        $$
(\beta+dd^c\varphi)^m\wedge\omega^{n-m}=cf\omega^n.
        $$
    Furthermore, we have
    $$
    \operatorname{osc} \varphi\leq C,
    $$ 
    where $C$ is the constant in \cref{main a priori Hessian estimates}.
\end{theorem}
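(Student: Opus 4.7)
The plan follows the strategy of \cite{KN16} closely: regularize the density and take limits of smooth solutions. Choose $f_\epsilon \in C^\infty(X)$ strictly positive with $f_\epsilon \to f$ in $L^p(\omega^n)$ and $\int_X f_\epsilon \omega^n \to \int_X f\omega^n$. For each $\epsilon$, the Calabi--Yau type theorem of Sz\'ekelyhidi \cite{Sze18} and Zhang \cite{Zh15} in the Hermitian setting (applicable since $\beta \in \Gamma_m(\omega)$ and $f_\epsilon$ is smooth positive) supplies a unique $c_\epsilon>0$ and a smooth $\varphi_\epsilon \in \operatorname{SH}_m(X,\beta,\omega)$, normalized by $\sup_X \varphi_\epsilon=0$, solving
$$
(\beta+dd^c\varphi_\epsilon)^m\wedge\omega^{n-m} = c_\epsilon f_\epsilon \,\omega^n.
$$
I then want to pass to the limit $\epsilon\to 0$.

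The main obstacle is producing a uniform two-sided bound on $c_\epsilon$, since the a priori estimate \cref{main a priori Hessian estimates} controls $\operatorname{osc}\varphi_\epsilon$ only in terms of $\|c_\epsilon f_\epsilon\|_{L^p}$. Integrating the equation gives $c_\epsilon \int_X f_\epsilon \omega^n = \int_X(\beta+dd^c\varphi_\epsilon)^m\wedge\omega^{n-m}$; in the Hermitian case the right-hand side is not topological, so I would estimate it by iterated integration by parts, picking up only Chern--Levine--Nirenberg type error terms controlled by the torsion constant $B$ appearing before \cref{modified comparison principle}, to obtain
$$
\int_X(\beta+dd^c\varphi_\epsilon)^m\wedge\omega^{n-m}\leq C\bigl(1+\|\varphi_\epsilon\|_{L^1(\omega^n)}\bigr).
$$
Since $\sup_X\varphi_\epsilon=0$, \cref{L^1 compactness} bounds $\|\varphi_\epsilon\|_{L^1}$ uniformly; combined with $\int_X f_\epsilon \omega^n \geq \tfrac{1}{2}\int_X f\omega^n>0$ for small $\epsilon$, this yields a uniform upper bound $c_\epsilon\leq C$. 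A lower bound $c_\epsilon\geq c_0>0$ comes from a symmetric computation. Consequently $\|c_\epsilon f_\epsilon\|_{L^p}$ is bounded, and \cref{main a priori Hessian estimates} gives $\operatorname{osc}\varphi_\epsilon\leq C$, so $\{\varphi_\epsilon\}$ is uniformly bounded in $L^\infty(X)$.

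To pass to the limit, I would upgrade $L^1$-compactness to $C^0$-compactness via the stability estimate \cref{cor of stability}: applied to any pair $\varphi_\epsilon,\varphi_{\epsilon'}$ with the uniform $L^p$ bound on the densities, it gives
$$
\|\varphi_\epsilon-\varphi_{\epsilon'}\|_\infty \leq C\,\|\varphi_\epsilon-\varphi_{\epsilon'}\|_{L^1}^{1/(aq)},
$$
so the sequence is Cauchy in $C^0(X)$ and the uniform limit $\varphi$ is continuous. Along a subsequence $c_\epsilon\to c>0$, and the weak convergence of complex Hessian measures (\cref{thm: weak convergence} together with \cref{prop: monotone_capcity convergence}) identifies $(\beta+dd^c\varphi)^m\wedge\omega^{n-m} = cf\omega^n$. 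For uniqueness of $c$, suppose two solutions produce constants $c_1\leq c_2$; then $H_{\beta,m}(\varphi_1)=(c_1/c_2)H_{\beta,m}(\varphi_2)$, and \cref{domination for beta 3}---which applies here by \cref{m-positive have domination}---forces $c_1/c_2\geq 1$, hence $c_1=c_2$.
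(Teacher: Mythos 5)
Your overall scaffolding matches the paper's proof --- regularize $f$, invoke Sz\'ekelyhidi's existence result for $\beta\in\Gamma_m(\omega)$, obtain uniform bounds, upgrade $L^1$-compactness to $C^0$-compactness via \cref{cor of stability}, pass to the limit, and settle uniqueness of $c$ via the domination principle (\cref{m-positive have domination}). The place where the argument comes apart is the two-sided bound on $c_\epsilon$, which is precisely the delicate point.

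For the \emph{upper bound}, you propose bounding $\int_X(\beta+dd^c\varphi_\epsilon)^m\wedge\omega^{n-m}$ by $C(1+\|\varphi_\epsilon\|_{L^1})$ via ``iterated integration by parts with CLN-type errors.'' This does not go through for $m\geq 2$ on a general Hermitian manifold. Already for $m=2$, the term $\int_X(dd^c\varphi)^2\wedge\omega^{n-2}$ integrates by parts to $\int_X\varphi\,dd^c\varphi\wedge dd^c\omega^{n-2}$, and a further integration by parts produces $\int_X d\varphi\wedge d^c\varphi\wedge dd^c\omega^{n-2}$-type terms that are not controlled by $\|\varphi\|_{L^1}$ alone. (The CLN inequality in \cite{KN25a} bounds such masses by $\|\varphi\|_{L^\infty}$, but at this stage the $L^\infty$ bound is exactly what we do not yet have --- it would be circular.) The paper sidesteps this entirely by applying Garding's/the mixed-type inequality \emph{first}: $(\beta+dd^c\varphi_j)\wedge\omega^{n-1}\geq(c_jf_j)^{1/m}\omega^n$, then integrating the \emph{linear} $(n-1,n-1)$-pairing. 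A single integration by parts there gives $c_j^{1/m}\int_X f_j^{1/m}\omega^n\leq\int_X\beta\wedge\omega^{n-1}+\int_X\varphi_j\,dd^c\omega^{n-1}\leq C$, with the last term harmlessly controlled by $\|\varphi_j\|_{L^1}$ and \cref{L^1 compactness}. Then $\int f_j^{1/m}\omega^n\geq c_0>0$ (since $\int f\omega^n>0$) closes the estimate. The reduction to the linear pairing is not cosmetic; it is what makes the Hermitian error terms tractable.

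For the \emph{lower bound}, you claim a ``symmetric computation.'' There is no symmetric integration-by-parts lower bound: on a Hermitian manifold $\int_X(\beta+dd^c\varphi)^m\wedge\omega^{n-m}$ has no a priori positive lower bound uniform over a compact family of potentials (this is precisely the ``non-collapsing'' circle of ideas that this paper spends effort on, and \cref{cor:m-positive is non-collapsing} only gives strict positivity pointwise in $\varphi$, not a uniform constant). The paper instead gets the lower bound from the capacity-based a priori estimate \cref{hessian a priori estimate}: plugging $\epsilon=\tfrac12$, $v\equiv0$, $s=\delta/2$ there yields $s\leq 4C_\alpha c_j^{1/m}\|f_j\|_p^{1/m}[V_\omega(X)]^{\alpha/\tau}$, hence $c_j^{1/m}\gtrsim 1/\|f_j\|_p^{1/m}$, and $\|f_j\|_p$ is bounded above. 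This is a genuinely different mechanism: it is the same capacity machinery that powers the $L^\infty$ estimate, read backwards, and it has no ``integrate and compare masses'' analogue.

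The remainder of your proposal (stability in $C^0$, passage to the limit, uniqueness) is fine once the bounds on $c_\epsilon$ are in place.
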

\begin{proof}
    Thanks to \cite[Proposition 21]{Sze18}, we can obtain a smooth solution for $\beta\in\Gamma_m(\omega)$ when the data in the right hand side is smooth. Choose a sequence of smooth positive functions $0<f_j\in C^{\infty}(X)$ converges in $L^p(\omega^n)$ to $f$. By \cite[Proposition 21]{Sze18}, there exists a constant $c_j>0$ and $\varphi_j\in \operatorname{SH}_m(X,\beta,\omega)\cap C^\infty(X)$ such that
    \begin{equation}\label{eq 7}
        (\beta+dd^c\varphi_j)^m\wedge\omega^{n-m}=c_jf_j\omega^n,\quad \sup_X\varphi_j=0.
    \end{equation}
    We first claim that the sequence $c_j$ are uniformly bounded above and below away from zero. By the Garding's inequality (see \cref{prop:mixed type for chi} in general), we have
    $$
(\beta+dd^c\varphi_j)\wedge\omega^{n-1}\geq(c_jf_j)^{\frac{1}{m}}\omega^n.
    $$
    Integrating both sides we get
    $$
c_j^{\frac{1}{m}}\int_Xf_j^{\frac{1}{m}}\omega^n\leq\int_X\beta\wedge\omega^{n-1}+\int_X\varphi_jdd^c\omega^{n-1}\leq C,
    $$
    because $\sup_X\varphi_j=0$. Since $f_j\rightarrow f$ in $L^p(\omega^n)$, we have $f_j\rightarrow f$ in $L^1(\omega^n)$ by H\"older's inequality and also $f_j^{\frac{1}{m}}\rightarrow f^{\frac{1}{m}}$ in $L^1(\omega^n)$. This implies that $\int_Xf_j^{\frac{1}{m}}\omega^n$ has a lower bound away from zero since $\int_Xf\omega^n>0$. We therefore obtain the desired upper bound for $c_j$.

    For the lower bound, we invoke \cref{hessian a priori estimate}. Taking $\epsilon=\frac{1}{2},v\equiv0$ and $s=\frac{\delta}{2}=\frac{\lambda_\beta^3}{20C_1}$ in \cref{hessian a priori estimate} we deduce that
    $$
s\leq 4C_\alpha c_j^{\frac{1}{m}}\|f_j\|_p^{\frac{1}{m}}[V_\omega(\{\varphi_j<s+\inf_X\varphi_j\})]^{\frac{\alpha}{\tau}}\leq4C_\alpha c_j^{\frac{1}{m}}\|f_j\|_p^{\frac{1}{m}}[V_\omega(X)]^{\frac{\alpha}{\tau}},
    $$
    hence
    $$
c_j^{\frac{1}{m}}\geq \frac{s}{4C_\alpha\|f_j\|_p^{\frac{1}{m}}[V_\omega(X)]^{\frac{\alpha}{\tau}}}.
    $$
    Note that $f_j\rightarrow f$ in $L^p(\omega^n)$, it follows that $\|f_j\|_p$ is uniformly bounded away from zero and thus the claim follows.

    Using \cref{main a priori Hessian estimates} we see that the sequence $\varphi_j$ is uniformly bounded. Since $\sup_X\varphi_j=0$, we may assume that $\varphi_j\rightarrow\varphi\in \operatorname{SH}_m(X,\beta,\omega)\cap L^\infty(X)$ in $L^1(\omega^n)$ and almost everywhere. The stability estimate \cref{cor of stability} implies that
    $$
\|\varphi_j-\varphi_k\|_\infty\leq C\|\varphi_j-\varphi_k\|_1^{\frac{1}{aq}},
    $$
    where $q:=\frac{1}{1-\frac{1}{p}}$ and $a:=\frac{1}{q}+4m+\frac{\alpha}{4}$. It follows from the fact $\{\varphi_j\}_j$ is a Cauchy sequence in $L^1(\omega^n)$ that $\{\varphi_j\}_j$ is also a Cauchy sequence in $C(X)$, thus we obtain that $\varphi\in C(X)$ as the uniform limit of $\varphi_j$ and 
    $$
(\beta+dd^c\varphi)^m\wedge\omega^{n-m}=cf\omega^n.
    $$
  The uniqueness of the constant $c$ follows from the domination principle \cref{m-positive have domination}  and the proof is therefore concluded.
\end{proof}
We next turn to the equation with an exponential twist. Firstly, we have to establish a smooth version similar to \cite[Theorem 3.17]{KN16}:
\begin{theorem}\label{smooth Hessian for beta}
    Let $\beta\in\Gamma_m(\omega)$ be a $(\omega,m)$-positive $(1,1)$-form and let $H$ be a smooth function on $X$. Fix a positive constant $\lambda>0$. Then there exists a unique $u\in \operatorname{SH}_m(X,\beta,\omega)\cap C^\infty(X)$ solving the Hessian equation
    $$
(\beta+dd^cu)^m\wedge\omega^{n-m}=e^{\lambda u+H}\omega^n.
    $$
\end{theorem}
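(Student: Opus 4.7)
The plan is to prove existence by the continuity method, with uniqueness by the maximum principle. The natural continuity path is
\begin{equation*}
(*)_t: \quad (\beta+dd^c u_t)^m\wedge \omega^{n-m} = e^{\lambda u_t + (1-t)h_0 + tH}\,\omega^n,\qquad t\in[0,1],
\end{equation*}
where $h_0 := \log\bigl(\beta^m\wedge\omega^{n-m}/\omega^n\bigr)\in C^\infty(X)$ (well-defined and smooth because $\beta\in\Gamma_m(\omega)$). The equation $(*)_0$ is solved by $u_0\equiv 0$, while $(*)_1$ is the desired equation. Let $T\subset[0,1]$ be the set of $t$ for which a smooth solution $u_t\in \operatorname{SH}_m(X,\beta,\omega)\cap C^\infty(X)$ exists; we will show $T$ is nonempty, open, and closed.

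For \emph{openness}, the linearization at a smooth solution $u_t$ of the map $u\mapsto \log\!\bigl((\beta+dd^cu)^m\wedge\omega^{n-m}/\omega^n\bigr)-\lambda u - (1-t)h_0-tH$ is $L_t - \lambda\,\mathrm{Id}$, where
\begin{equation*}
L_t v := \frac{m\,(\beta+dd^c u_t)^{m-1}\wedge dd^c v\wedge \omega^{n-m}}{(\beta+dd^c u_t)^m\wedge \omega^{n-m}}
\end{equation*}
is a linear elliptic operator (ellipticity follows from $\beta+dd^c u_t\in\Gamma_m(\omega)$, which implies $(\beta+dd^c u_t)^{m-1}\wedge\omega^{n-m}$ is strictly positive definite as a mixed form). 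Since $\lambda>0$, the maximum principle forces $\ker(L_t-\lambda\,\mathrm{Id})=\{0\}$ on $C^{2,\alpha}(X)$: at a maximum of $v$, $L_t v\le 0$ hence $-\lambda v\ge 0$, giving $v\le 0$, and symmetrically $v\ge 0$. By Fredholm theory, $L_t-\lambda\,\mathrm{Id}:C^{2,\alpha}(X)\to C^\alpha(X)$ is an isomorphism, and the implicit function theorem yields openness.

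For \emph{closedness}, I need $t$-uniform a priori estimates. The $C^0$ bound comes directly from the maximum principle: at $x_{\max}$ where $u_t$ is maximal, $dd^cu_t\le 0$, so $\beta+dd^cu_t\le \beta$ as Hermitian forms; the Weyl monotonicity of $S_m$ on $\overline{\Gamma_m}$ then gives $(\beta+dd^cu_t)^m\wedge\omega^{n-m}\le \beta^m\wedge\omega^{n-m}$ at $x_{\max}$, hence
\begin{equation*}
\lambda u_t(x_{\max})\le h_0(x_{\max}) - [(1-t)h_0+tH](x_{\max}) \le C,
\end{equation*}
and the symmetric argument at $x_{\min}$ gives the lower bound. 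Given $\|u_t\|_\infty\le C$, the second-order a priori estimate $\|\Delta_\omega u_t\|_\infty\le C$ is provided by the Hermitian Hessian $C^2$-estimates of Sz\'ekelyhidi \cite{Sze18} (alternatively Zhang \cite{Zh15}), applied with right-hand side $F_t:=(1-t)h_0+tH+\lambda u_t\in C^0(X)$ whose $C^2$-norm depends on that of $u_t$; the standard cycle combined with the Evans--Krylov theorem and Schauder bootstrap produces uniform $C^{k,\alpha}$-bounds for every $k$, so a subsequential limit solves $(*)_{t_0}$ smoothly for any limit point $t_0$ of $T$.

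\emph{Uniqueness} follows by the maximum principle: if $u,v$ both solve the equation, then at $x_{\max}$ of $u-v$ we have $dd^c(u-v)\le 0$, hence $\beta+dd^cu\le \beta+dd^cv$, and $S_m$-monotonicity on $\Gamma_m$ yields $e^{\lambda u+H}\le e^{\lambda v+H}$ at that point, so $u\le v$ at $x_{\max}$ and thus everywhere; reversing roles gives equality. The \emph{main obstacle} is the Hermitian $C^2$-estimate, which is genuinely subtle because torsion terms from a non-K\"ahler $\omega$ prevent a direct K\"ahler-style computation; however this has been settled in the cited works \cite{Sze18,Zh15}, and since the exponent $\lambda u_t+H_t$ is already $C^0$-controlled by the maximum principle step, the estimate applies uniformly in $t$ without further work.
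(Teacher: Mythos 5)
Your overall route (continuity method with maximum principle for $C^0$ and uniqueness, Sz\'ekelyhidi/Zhang for the second-order estimate, Evans--Krylov and Schauder to finish) is essentially the paper's, with two cosmetic differences that are fine: you take a continuity path with the trivial start $u_0\equiv 0$ rather than the path $\log(\beta_{u_t}^m\wedge\omega^{n-m}/\omega^n)=\lambda u_t+tH$ used in the paper, and you prove uniqueness by a direct maximum-principle argument rather than by the domination principle. The openness argument is correct and is spelled out more explicitly than in the paper.

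There is, however, a genuine gap in your closedness argument. You assert that, given $\|u_t\|_\infty\le C$, Sz\'ekelyhidi's Hermitian Hessian estimate \emph{directly provides} $\|\Delta_\omega u_t\|_\infty\le C$, and moreover that ``the estimate applies uniformly in $t$ without further work.'' Neither claim is accurate. What \cite[Proposition 13]{Sze18} gives is the Hou--Ma--Wu form
\[
\sup_X|\partial\bar\partial u_t|\le C\bigl(1+\sup_X|\nabla u_t|^2\bigr),
\]
which by itself does not close: you still need a separate gradient estimate. The paper handles this by a blow-up/Liouville argument (its ``Step 3''), which is absent from your proof. Moreover, because the right-hand side depends on $u_t$ itself (the exponent contains $\lambda u_t$), the inequalities (81) and (95) in \cite{Sze18} — which in Sz\'ekelyhidi's setting are read off from a fixed $h$ — must be re-derived so that the extra $\nabla u_t$ and $\nabla^2 u_t$ terms coming from differentiating the right-hand side are absorbed by the bad terms already present. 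This re-derivation is exactly what \cite[Theorem 3.17]{KN16} carries out and what the paper's ``Step 2'' explicitly invokes. Your phrase ``without further work'' waves this away; your own parenthetical ``whose $C^2$-norm depends on that of $u_t$'' actually flags a potential circularity that the proof as written never resolves. To repair the argument, replace the claim $\|\Delta_\omega u_t\|_\infty\le C$ with the gradient-dependent Hou--Ma--Wu bound, note that the absorption of $u_t$-derivatives is the point of \cite[Theorem 3.17]{KN16}, and then add the blow-up step to get $\|\nabla u_t\|_\infty\le C$ before applying Evans--Krylov.
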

\begin{proof}
    The proof is almost unchanged as in \cite[Theorem 3.17]{KN16}, which is essentially based on the work in \cite{Sze18}. 
    
    \textbf{Step 1.} We first show the $C^0$-estimate. Suppose that $u$ attains maximum at a point $x\in X$. It then follows that $dd^cu(x)\leq 0$ and hence
    $$
e^{\lambda u(x)+H(x)}=\beta_u^m\wedge\omega^{n-m}/\omega^n\leq\beta^m\wedge\omega^n/\omega^n.
    $$
    The second inequality is because $u\in \operatorname{SH}_m(X,\beta,\omega)\cap C^\infty(X)$ and hence $\overline{\Gamma_m(\omega)}\ni\beta+dd^cu\leq\beta$. Note that $\beta^m\wedge\omega^n/\omega^n$ is a smooth positive function and hence we get the upper bound for $u$. The lower bound follows similarly. 

    \textbf{Step 2.} The crucial part is to establish the Hou-Ma-Wu estimate \cite{HMW10}:
    \begin{equation}
        \sup_X|\partial\overline{\partial}u|\leq C\left(1+\sup_X|\nabla u|^2\right),
    \end{equation}
    where the constant $C$ depends on $\|u\|_\infty,\omega,\beta,H$. We will follow the lines in \cite[Proposition 13]{Sze18}. Write
    $$
\omega=\sqrt{-1}\sum\omega_{j\overline{k}}dz_j\wedge d\overline{z}_k
    $$
    $$
\beta=\sqrt{-1}\sum\beta_{j\overline{k}}dz_j\wedge d\overline{z}_k.
    $$
    Let $(\omega^{j\overline{k}})$ be the inverse matrix of $(\omega_{j\overline{k}})$ and set
    $$
A^{ij}:=\omega^{j\overline{k}}(\beta_{i\overline{k}}+u_{i\overline{k}})=:\omega^{j\overline{k}}g_{i\overline{k}}.
    $$
    Then the equation can be written as 
    $$
F(A)=\lambda u+H,
    $$
    where 
$$
F(A):=\log\,S_m(\lambda[A^{ij}]).
$$
We fix a coordinate chart at a point $z_0$ of $X$ and do all the calculations at this point. As pointed out in the proof of \cite[Theorem 3.17]{KN16}, even when we take the reference form $\beta$ to be only $(\omega,m)$-positive here instead of to be Hermitian, the terms $u_{1\overline{1}}$ can still be dominated by the first eigenvalue $\lambda_1>1$ of $(A^{ij})$ and we can obtain the inequalities (81) and (95) in \cite{Sze18} similarly  as in \cite[Theorem 3.17]{KN16} (only these two inequalities depend on the new equation $F(A)=u+H$ during the proof of \cite[Proposition 13]{Sze18}). For example, we get
$$
F^{kk}u_{pk\overline{k}}u_{\overline{p}}\geq-C_0K\mathcal{F
}-\epsilon_1F^{kk}\lambda_k^2-C_{\epsilon_1}\mathcal{F}K.
$$
Here, we take all the derivatives with respect to the Chern connection of $\omega$ locally and $K:=\left(1+\sup_X|\nabla u|^2\right)$, $\mathcal{F}:=\sum_kF^{kk}>\tau>0$, as shown in \cite[Lemma 9]{Sze18}. The proof of \cite[Proposition 13]{Sze18} can be then copied along the lines.

\textbf{Step 3}. It is now standard to use the blow-up argument to get the $C^1$-estimate and then the Evans-Krylov method to get the $C^{2,\alpha}$-estimate. The existence of the solution follows by using the continuity method through the path
$$
\log(\beta_{u_t}^m\wedge\omega^{n-m}/\omega^n)=\lambda u_t+tH,
$$
as shown in \cite[Theorem 3.17]{KN16}.

Finally, the uniqueness follows directly from the domination principle \cref{m-positive have domination}.
\end{proof}

\begin{theorem}\label{main thm twist for m-positive beta}
     Let $\beta\in\Gamma_m(\omega)$ be an $(\omega,m)$-positive form and $\lambda>0$ be a positive constant. Set $0\leq f\in L^p(X,\omega^n)$ with $p>\frac{n}{m}$ and $\int_Xf\omega^n>0$. Then there exists a unique continuous function $\varphi\in \operatorname{SH}_m(X,\beta,\omega)\cap C^{0}(X)$ such that
        $$
(\beta+dd^c\varphi)^m\wedge\omega^{n-m}=e^{\lambda\varphi}f\omega^n.
        $$
    Furthermore, we have
    $$
    \operatorname{osc} \varphi\leq C,
    $$ 
    where $C$ is a constant depends on
\end{theorem}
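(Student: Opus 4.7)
The plan is to approximate $f$ by a sequence of smooth positive functions $f_j \to f$ in $L^p(\omega^n)$, apply \cref{smooth Hessian for beta} with $H_j := \log f_j$ to obtain smooth solutions
\[
(\beta + dd^c \varphi_j)^m \wedge \omega^{n-m} = e^{\lambda \varphi_j} f_j \omega^n, \qquad \varphi_j \in \operatorname{SH}_m(X,\beta,\omega) \cap C^\infty(X),
\]
and then pass to the limit via the stability estimate \cref{cor of stability}. The bulk of the work is to establish a uniform $L^\infty$ bound on $\{\varphi_j\}$ and uniform $C^0$-convergence along a subsequence.

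Write $M_j := \sup_X \varphi_j$ and $\tilde\varphi_j := \varphi_j - M_j \leq 0$. For the upper bound on $M_j$, I would use Garding's inequality (via \cref{prop:mixed type for beta} in degree one, exactly as in the proof of \cref{main thm for m-positive beta}):
\[
(\beta + dd^c \varphi_j) \wedge \omega^{n-1} \geq c_{n,m}\bigl(e^{\lambda \varphi_j} f_j\bigr)^{1/m} \omega^n.
\]
Integrating both sides and controlling $\int_X \tilde\varphi_j \, dd^c\omega^{n-1}$ via \cref{L^1 compactness} yields
\[
e^{\lambda M_j/m} \int_X e^{\lambda \tilde\varphi_j/m} f_j^{1/m} \omega^n \leq C.
\]
Since $\sup_X\tilde\varphi_j = 0$ and $\|\tilde\varphi_j\|_{L^1}\leq C_0$, a Markov tail estimate $V_\omega(\{\tilde\varphi_j < -A\}) \leq C_0/A$ together with H\"older's inequality (using $f_j^{1/m} \in L^{pm}$ with $pm > n \geq 1$ and $\int_X f_j^{1/m}\omega^n \to \int_X f^{1/m}\omega^n > 0$) bounds the integral from below by a positive constant uniform in $j$, whence $M_j \leq C$. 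For the lower bound on $M_j$, I would apply \cref{hessian a priori estimate} with $u = \tilde\varphi_j$, $v \equiv 0$, $\epsilon = 1/2$ and $s = \delta/2$: from $H_{\beta,m}(\tilde\varphi_j) \leq e^{\lambda M_j} f_j \omega^n$ the conclusion reads
\[
\tfrac{\delta}{2} \leq 4 C_\alpha \bigl(e^{\lambda M_j} \|f_j\|_p\bigr)^{1/m} V_\omega(X)^{\alpha/\tau},
\]
which forces $M_j \geq -C$ since $\|f_j\|_p$ is bounded. With $M_j$ two-sidedly controlled, \cref{main a priori Hessian estimates} applied to $\tilde\varphi_j$ (whose Hessian measure is dominated by $e^{\lambda M_j} f_j \omega^n$, uniformly in $L^p$) produces $\operatorname{osc}(\varphi_j) \leq C$, yielding the uniform $L^\infty$ bound.

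For the passage to the limit, set $\hat\varphi_j := \varphi_j - M_j$. Since $H_{\beta,m}(\hat\varphi_j) \leq e^{\lambda M_j} f_j \omega^n$ has uniform $L^p$-bound, \cref{cor of stability} gives
\[
\|\hat\varphi_j - \hat\varphi_k\|_\infty \leq C\,\|\hat\varphi_j - \hat\varphi_k\|_{L^1}^{1/(aq)},
\]
and combined with $L^1$-compactness (\cref{L^1 compactness}) a subsequence converges in $C^0(X)$ to a continuous $\hat\varphi \in \operatorname{SH}_m(X,\beta,\omega) \cap C^0(X)$. Extracting further so that $M_j \to M_\infty$, let $\varphi := \hat\varphi + M_\infty$. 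Since uniform convergence of bounded $(\beta,\omega,m)$-subharmonic functions implies convergence in capacity, \cref{thm: monotonicity} (or the local \cref{thm: weak convergence} glued via partition of unity) gives $H_{\beta,m}(\varphi_j)\to H_{\beta,m}(\varphi)$ weakly, while $e^{\lambda \varphi_j} f_j \to e^{\lambda \varphi} f$ in $L^1(\omega^n)$; hence $\varphi$ solves the equation. Uniqueness is immediate from \cref{domination for beta 2}, which applies by \cref{cor:m-positive is non-collapsing}; this also forces the entire sequence to converge.

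The hardest step will be the two-sided control of $M_j$: because $\|f_j\|_\infty$ may blow up along any $L^p$-approximation, naive max/min principle arguments at extremum points of $\varphi_j$ are insufficient, and one must invoke Garding plus tail integration for the upper bound and the volume--capacity estimate of \cref{hessian a priori estimate} for the lower bound. Both inputs crucially rely on the hypotheses $\int_X f\,\omega^n > 0$ (guaranteeing the positive tail of $f_j^{1/m}$) and $\beta \in \Gamma_m(\omega)$ (which provides the non-collapsing property needed for the domination principle, as well as underpinning the a priori estimates of the previous section).
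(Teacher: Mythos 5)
Your proof is correct and follows the same overall skeleton as the paper --- approximate $f$ by smooth positive $f_j$ in $L^p$, solve the smooth twisted equations via \cref{smooth Hessian for beta}, control $M_j := \sup_X\varphi_j$ from both sides, apply \cref{main a priori Hessian estimates} for the oscillation bound, pass to the limit using the $L^1$--$L^\infty$ stability \cref{cor of stability}, and deduce uniqueness from the domination principle via \cref{cor:m-positive is non-collapsing}. Where you differ from the paper is in the two-sided control of $M_j$. For the \emph{upper} bound the paper integrates the Garding inequality against the Gauduchon volume form $e^G\omega^n$ so the $dd^c\varphi_j$ term vanishes exactly, then applies Jensen's inequality to $A_j^{-1}\int_X(e^{\varphi_j}f_j)^{1/m}e^G\omega^n$ and controls the error $\int_X v_jf_j^{1/m}e^G\omega^n$ by H\"older plus the integrability \cref{integrablity 2}; you instead control $\int_X\tilde\varphi_j\,dd^c\omega^{n-1}$ directly through \cref{L^1 compactness} (using $\int_X dd^c\omega^{n-1}=0$ by Stokes) and bound $\int_Xe^{\lambda\tilde\varphi_j/m}f_j^{1/m}\omega^n$ from below by restricting to $\{\tilde\varphi_j\geq-A\}$ via the Markov tail $V_\omega(\{\tilde\varphi_j<-A\})\leq C/A$ and H\"older for $f_j^{1/m}\in L^{mp}$, which is more elementary and avoids the Gauduchon function and Jensen. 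For the \emph{lower} bound the paper first solves the non-twisted equation $\beta_{w_j}^m\wedge\omega^{n-m}=c_jf_j\omega^n$ using \cref{main thm for m-positive beta} (where $c_j\geq c>0$ was established) and then compares via the domination principle to get $e^{M_j}\geq c_j$; you instead feed $u=\tilde\varphi_j$, $v\equiv0$, $s=\delta/2$ into \cref{hessian a priori estimate} with density $e^{\lambda M_j}f_j$, giving $\delta/2\leq4C_\alpha(e^{\lambda M_j}\|f_j\|_p)^{1/m}V_\omega(X)^{\alpha/\tau}$ and hence $M_j\geq-C$ --- this inlines the same volume--capacity estimate the paper already used inside its proof of \cref{main thm for m-positive beta}, and spares the detour through the auxiliary non-twisted equation. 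Both sets of estimates are sound and rely on the same underlying inputs ($\beta\in\Gamma_m(\omega)$ for the a priori estimates and the domination principle, $\int_Xf\,\omega^n>0$ to keep $\int f_j^{1/m}$ from degenerating), so your alternative buys a modest amount of streamlining without changing the architecture.
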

\begin{proof}
  The uniqueness is a simple consequence of the domination principle \cref{m-positive have domination}, we next focus on the existence. Up to rescaling we may assume without loss of generality that $\lambda=1$. Choosing a sequence of smooth positive functions $f_j\rightarrow f$ in $L^p(\omega^n)$. Using \cref{smooth Hessian for beta} we can find $u_j\in\operatorname{SH}_m(X,\beta,\omega)\cap C^{\infty}(X)$ solving
    \begin{equation}\label{eq 8}
        (\beta+dd^cu_j)^m\wedge\omega^{n-m}=e^{u_j}f_j\omega^n.
    \end{equation}
    Set $M_j:=\sup_Xu_j$ and $v_j:=u_j-M_j$. We first claim that $M_j$ are uniformly bounded. The strategy is similar to that in \cite[Claim 2.6]{Ngu16}. Using the mixed type inequality \cref{prop:mixed type for chi} we get that
    $$
(\beta+dd^cu_j)\wedge\omega^{n-1}\geq(e^{u_j}f_j)^{\frac{1}{m}}\omega^n.
    $$
    Let $G\in C^{\infty}(X)$ be the Gauduchon function such that $e^G\omega^{n-1}$ is $dd^c$-closed. Then we have 
    \begin{equation}\label{eq 9}
\int_X(e^{u_j}f_j)^{\frac{1}{m}}e^G\omega^n\leq\int_X(\beta+dd^cu_j)\wedge e^G\omega^{n-1}=\int_Xe^G\beta\wedge\omega^{n-1}.
    \end{equation}
    Let $A_j:=\int_Xf_j^{\frac{1}{m}}e^G\omega^n$, by Jensen's inequality we have that
    $$
\frac{1}{A_j}\int_X(e^{u_j}f_j)^{\frac{1}{m}}e^G\omega^n\geq e^{\frac{1}{mA_j}\int_Xu_jf_j^{\frac{1}{m}}e^G\omega^n}.
    $$
    This combined with \eqref{eq 9} gives that
    $$
\frac{M_j}{m}+\frac{1}{mA_j}\int_Xv_jf_j^{\frac{1}{m}}e^G\omega^n=\frac{1}{mA_j}\int_Xu_jf_j^{\frac{1}{m}}e^G\omega^n\leq \log\left(\frac{1}{A_j}\int_Xe^G\beta\wedge\omega^{n-1}\right).
    $$
    Since $f_j\rightarrow f$ in $L^p(\omega^n)$, we have
    $$
A_j=\int_Xf_j^{\frac{1}{m}}e^G\omega^n\rightarrow\int_Xf^{\frac{1}{m}}e^G\omega^n>0,
    $$
    this implies that the sequence $A_j$ is uniformly bounded (away from zero). Moreover, by H\"older's inequality,
    $$
\int_X(-v_j)f_j^{\frac{1}{m}}e^G\omega^n\leq\left(\int_X(-v_j)^qe^G\omega^n\right)^{\frac{1}{q}}\left(\int_Xf_j^{p}e^G\omega^n\right)^{\frac{1}{mp}}\leq C.
    $$
    Since $f_j$ is uniformly bounded in $L^p$ and $\sup_Xv_j=0$ is a compact family in $\operatorname{SH}_m(X,\beta,\omega)$, the last inequality follows from \cref{integrablity 2}. We have therefore derived the uniform upper bound of $M_j$. It remains to get a lower bound for $M_j$. Using \cref{main thm for m-positive beta} we can solve for every $j\in\mathbb{N}$
    $$
(\beta+dd^cw_j)^m\wedge\omega^{n-m}=c_jf_j\omega^n,
    $$
    for some $c_j>0$ and $w_j\in \operatorname{SH}_m(X,\beta,\omega)\cap C^\infty(X)$. Moreover, from the proof of \cref{main thm for m-positive beta} we know that $c_j$ is uniformly bounded away from zero. Moreover,
    \begin{align*}
        \beta_{v_j}^m\wedge\omega^{n-m}=e^{v_j+M_j}f_j\omega^n\leq e^{M_j}f_j\omega^n=\frac{e^{M_j}}{c_j} \beta_{w_j}^m\wedge\omega^{n-m}.
    \end{align*}
    The domination principle \cref{m-positive have domination} therefore gives $e^{M_j}\geq c_j$ and hence the lower bound of $M_j$. The proof of the claim is finished.

    Return to the proof, let $M$ be an uniform upper bound of $M_j$. We can rewrite \eqref{eq 8} as
    \begin{equation}\label{eq 12}
 \beta_{v_j}^m\wedge\omega^{n-m}\leq e^Mf_j\omega^n.
    \end{equation}
    Since $\sup_Xv_j=0$, up to extracting a subsequence we may assume $\{v_j\}_j$ is a Cauchy sequence in $L^1(\omega^n)$. The right hand side of \eqref{eq 12} is uniformly bounded in $L^p(\omega^n)$, thus we can apply \cref{cor of stability} to conclude that $\{v_j\}_j$ is also a Cauchy sequence in $C(X)$. Therefore, $\{v_j\}_j$ converges uniformly to a function $v\in \operatorname{SH}_m(X,\beta,\omega)\cap C(X)$ solving
    $$
\beta_v^m\wedge\omega^{n-m}=e^{v+M^\prime}f\omega^n,
    $$
    where $M^\prime=\lim_jM_j$. Set $u:=v+M^\prime$, we obtain finally
     $$
\beta_u^m\wedge\omega^{n-m}=e^{u}f\omega^n.
    $$
\end{proof}
\subsection*{An approximation theorem}
As an application of the previous theorems, we prove the following approximation property of $(\beta,\omega,m)$-subharmonic functions, which is an extension of \cite[Lemma 3.20]{KN16} and \cite[Theorem 1.7]{GL25}.
\begin{theorem}\label{decreasing approximation}
  Let $\beta\in\Gamma_m(\omega)$ be an $(\omega,m)$-positive form. Then, for any $u\in \operatorname{SH}_m(X,\beta,\omega)$, there exists a decreasing sequence of smooth $(\beta,\omega,m)$-subharmonic functions on $X$ converging pointwise to $u$.
\end{theorem}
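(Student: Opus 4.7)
The strategy is to build the required decreasing smooth sequence in three stages: (i) a truncation to reduce to bounded $u$; (ii) an envelope-based continuous approximation of bounded $u$ from above by $(\beta,\omega,m)$-subharmonic functions; (iii) a Richberg-type smoothing of these continuous approximants, followed by a diagonal extraction with a small regularizing shift to enforce monotonicity.

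For stage (i), set $u^{(k)}:=\max(u,-k)\in\operatorname{SH}_m(X,\beta,\omega)\cap L^\infty(X)$. Then $u^{(k)}\searrow u$ pointwise, and a standard diagonal argument reduces the problem to producing, for each bounded $u$, a decreasing sequence of smooth $(\beta,\omega,m)$-subharmonic functions converging pointwise to it.

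For stage (ii), assume $-M\le u\le 0$. Pick $g_j\in C^\infty(X)$ with $g_j\searrow u$ pointwise (such $g_j$ are obtained by mollifying a continuous decreasing approximation of the upper semi-continuous function $u$ in coordinate charts and gluing by a partition of unity). Set $\chi_j:=P_{\beta,m}(g_j)$. Since $u$ itself is admissible in the envelope, $u\le \chi_j\le g_j$, and the monotonicity property of $P_{\beta,m}$ from \cref{thm: expression of envelopes} gives $\chi_j\searrow P_{\beta,m}(u)=u$ pointwise. Continuity of each $\chi_j$ follows from the contact-set description \cref{contact} applied to the continuous obstacle $g_j$ together with the stability estimate \cref{cor of stability}.

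For stage (iii), for each $j$ we construct a decreasing sequence of smooth $(\beta,\omega,m)$-subharmonic functions $v_{j,k}\searrow\chi_j$ as $k\to\infty$. Locally, $\chi_j$ is the sum of a smooth function and an $(\omega,m)$-subharmonic function, which one regularizes from above via standard convolution with a family of radially symmetric mollifiers. Patching these local mollifications through a partition of unity produces a global smooth function that is only \emph{almost} $(\beta,\omega,m)$-subharmonic; the error from the patching is absorbed by subtracting a small multiple of a strictly $(\omega,m)$-subharmonic correction, exploiting the strict $(\omega,m)$-positivity of $\beta\in\Gamma_m(\omega)$ to restore membership in the cone. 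Choose $k=k(j)$ so that $0\le v_{j,k(j)}-\chi_j\le 2^{-j}$ uniformly on $X$ and set $w_j:=v_{j,k(j)}+2^{-(j-1)}$. Since constants preserve the class $\operatorname{SH}_m(X,\beta,\omega)$, each $w_j$ is smooth $(\beta,\omega,m)$-subharmonic. The chain
\[ w_{j+1}\le\chi_{j+1}+2^{-(j+1)}+2^{-j}\le\chi_j+2^{-(j-1)}\le v_{j,k(j)}+2^{-(j-1)}=w_j \]
shows monotonicity, and $w_j\to u$ pointwise since $\chi_j\searrow u$ and $2^{-(j-1)}\to 0$.

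\emph{Main obstacle.} The chief difficulty is the Richberg-type smoothing in stage (iii): producing a decreasing sequence of smooth $(\beta,\omega,m)$-subharmonic functions converging to a given continuous one on a compact Hermitian manifold. In the classical closed-form setting the construction is standard, but here the non-closedness of $\omega$ (giving torsion terms in $dd^c\omega$ and $d\omega\wedge d^c\omega$) and of $\beta$ prevent a direct local-to-global mollification. The strict $(\omega,m)$-positivity of $\beta$ is used decisively to absorb both the commutator terms arising from the torsion and the defect introduced by the partition of unity, and it is this absorption — rather than the envelope construction of stage (ii) or the diagonal extraction of the final step — that constitutes the essential technical content of the theorem.
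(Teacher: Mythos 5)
Your stage (iii) is the crux of the whole argument, and you do not actually prove it. You correctly identify that a Richberg-type smoothing of continuous $(\beta,\omega,m)$-subharmonic functions on a compact Hermitian manifold — with non-closed $\omega$ and $\beta$ — is where all the difficulty lies, but you then content yourself with asserting that local mollification plus a partition of unity plus "absorbing the error by strict positivity of $\beta$" works. The paper does not prove, and does not cite, such a global Richberg theorem in this generality, and for good reason: in the Hermitian $m$-sh setting, local mollifications are only almost $(\omega,m)$-subharmonic (with a defect proportional to the mollification radius and the torsion of $\omega$), and gluing with the regularized max produces further error terms coming from second derivatives of cut-off functions that do not visibly shrink with the mollification scale. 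Whether all these can be uniformly absorbed by a strictly positive $\beta$ while simultaneously keeping the approximation close to $\chi_j$ and decreasing in $k$ is exactly the technical content of such a theorem, and you leave it entirely unproved. The paper's actual proof deliberately bypasses this: it solves, via \cref{smooth Hessian for beta}, the family of exponentially twisted equations $\beta_{v_j}^m\wedge\omega^{n-m}=e^{j(v_j-h)}(F_1+\tfrac1j)\omega^n$ for a fixed smooth obstacle $h\geq u$, shows $v_j\leq h$ by the pointwise maximum principle, proves $v_j$ is increasing in $j$, and then uses the stability estimate \cref{cor of stability} together with the domination principle to conclude that $v_j$ converges uniformly to $P_{\beta,m}(h)$. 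The smoothness comes from elliptic PDE solvability, not from any mollification argument, and this is precisely what makes the approach work without a Richberg theorem.

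There is also a circularity in your stage (ii). You claim continuity of $\chi_j=P_{\beta,m}(g_j)$ "follows from \cref{contact} together with \cref{cor of stability}." But \cref{cor of stability} compares two solutions in $L^\infty$ given $L^1$-closeness; to deduce continuity of $\chi_j$ from it you would approximate $\chi_j$ in $L^1$ by continuous or smooth $(\beta,\omega,m)$-subharmonic functions with controlled $L^p$-density — which is exactly stage (iii), i.e. the statement you are trying to prove. Indeed, the paper's own \cref{uniform approximation of envelope} (uniform smooth approximation of the envelope, hence in particular its continuity) is explicitly stated to be "contained in the proof of \cref{decreasing approximation}," so invoking continuity of the envelope before proving the present theorem is begging the question.

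In short: your stage (i) and the final diagonal-with-shift bookkeeping are fine (the telescoping chain $w_{j+1}\leq\chi_j+2^{-(j-1)}\leq w_j$ is correctly verified), and as a blueprint the envelope-plus-Richberg route is a legitimate alternative to consider, but it hinges on an unproved smoothing theorem that is likely at least as hard as the result itself, and your stage (ii) presupposes part of it. To repair the proof you would either need to supply a full Richberg theorem for $(\omega,m)$-subharmonic functions on Hermitian manifolds (none is available in the paper's references in the generality needed), or replace stage (iii) with the paper's PDE construction, at which point your stages (ii) and (iii) collapse into the paper's argument.
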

\begin{proof}
    The method is adapted from \cite[Lemma 3.20]{KN16}. Fix a function $u\in \operatorname{SH}_m(X,\beta,\omega)$. Since $\beta\in\Gamma_m(\omega)$, we have $\max(u,-j)\in \operatorname{SH}_m(X,\beta,\omega)$ and hence we may assume without loss of generality that $u$ is bounded. Pick a function $h\in C^\infty(X)$ such that $h\geq u$, it follows that $u\leq P_{\beta,m}(h)$. We are going to show that the envelope $P_{\beta,m}(h)$ can be approximated uniformly by a sequence of smooth $(\beta,\omega,m)$-subharmonic functions. Once this was done, we can let $h\searrow u$ and using a diagonal process to obtain a smooth decreasing approximation for $u$.

    Write $\beta_h^m\wedge\omega^{n-m}:=F\omega^n$ for some $F\in C^\infty(X)$, note that $F$ may not positive. Set $F_1:=\max(F,0)$ and choose a sequence of smooth non-negative functions $F_1^k$ decreasing to $F_1$. Using \cref{smooth Hessian for beta}, we can solve, for each $j\geq1$,
    \begin{equation}\label{eq 11}
\beta_{v_j^k}^m\wedge\omega^{n-m}=e^{j(v_j^k-h)}(F_1^k+\frac{1}{j})\omega^n,
    \end{equation}
    where $v_j^k\in \operatorname{SH}_m(X,\beta,\omega)\cap C^\infty(X)$. We claim that $v_j^k\leq h$. Indeed, assume $v_j^k-h$ attains its maximum at $x_0\in X$, then $dd^cv_j^k(x_0)\leq dd^ch(x_0)$. It follows that at $x_0$
    $$
 \beta_{v_j^k}^m\wedge\omega^{n-m}(x_0)\leq  \beta_{h}^m\wedge\omega^{n-m}(x_0).
    $$
    This combined with \eqref{eq 11} shows that
    \begin{align*}
   \beta_{h}^m\wedge\omega^{n-m}(x_0)\geq\beta_{v_j^k}^m\wedge\omega^{n-m}(x_0)\geq e^{j(v_j^k(x_0)-h(x_0))} \beta_{h}^m\wedge\omega^{n-m}(x_0)
    \end{align*}
    and hence $v_j^k\leq h$, which automatically implies that $v_j^k\leq P_{\beta,m}(h)$. Similar arguments using the maximum principle shows that $v_j^k$ is increasing in $j$. Now, by Dini's lemma we have $F_1^k\rightarrow F_1$ uniformly. Letting $k\rightarrow+\infty$, the proof of \cref{main thm twist for m-positive beta} tells us that $v_j^k$ tends uniformly to $v_j$ for every fixed $j$ and we have
    $$
\beta_{v_j}^m\wedge\omega^{n-m}=e^{j(v_j-h)}(F_1+\frac{1}{j})\omega^n.
    $$
Moreover, $v_j$ is still increasing with respect to $j$. Write $v_j\nearrow v\in \operatorname{SH}_m(X,\beta,\omega)$, it is clear that $v\leq P_{\beta,m}(h)\leq h$. Furthermore, the sequence $v_j$ is bounded above by $h$ and below by $v_1$, it is uniformly bounded. We also have 
$$
\beta_{v_j}^m\wedge\omega^{n-m}\leq(F_1+1)\omega^n.
$$
Using \cref{cor of stability} we infer that $v_j$ converges uniformly to $v$, this in turn implies that there is a subsequence of $v_j^k$ converges uniformly to $v$. We finally claim that $v=P_{\beta,m}(h)$.

Note that since $v\leq h$, $\underset{j\rightarrow\infty}{\lim}e^{j(v_j-h)}=\mathds{1}_{\{v=h\}}$. This combined with the monotone convergence theorem \cite[Corollary 4.11]{KN25a} gives 
$$
\beta_v^m\wedge\omega^{n-m}\leq\mathds{1}_{\{v=h\}}F_1\omega^n.
$$
This inequality gives that $\mathds{1}_{\{v<P_{\beta,m}(h)\}}\beta_v^m\wedge\omega^{n-m}=0$. The domination principle \cref{m-positive have domination} therefore yields that $v\geq P_{\beta,m}(h)$ and hence $v=P_{\beta,m}(h)$. The claim follows.
\end{proof}

\section{Sup-slopes for nef classes}\label{section:sup slopes}

Let $(X,\omega)$ be a compact Hermitian manifold of complex dimension $n$ equipped with a Hermitian metric $\omega$. Let $m$ be a positive integer such that $1\leq m\leq n$. 

In this section we introduce various slopes for nef Bott-Chern classes based on the recent remarkable work \cite{GS24}. Unless otherwise stated, we will always take the normalization $\int_X\omega^n=1$ in this section. First recall the following fundamental result in \cite{GS24}:

\begin{proposition}\label{prop:sup slope}
    Let $(X,\omega)$ be a compact Hermitian manifold of complex dimension $n$ and let $\chi$ be another Hermitian metric. Fix a function $\psi\in C^\infty(X)$ and let $\varphi\in \operatorname{PSH^+}(X,\chi)\cap C^\infty(X)$ be the unique solution to the equation
    $$
(\chi+dd^c\varphi)^n=ce^{\psi}\omega^n,\quad \sup_X\varphi=0.
    $$
    Then we have the following characterization of the constant $c>0$:
    \begin{equation}\label{eq 13}
        c=\underset{u\in C^\infty(X)\cap \operatorname{PSH^+}(X,\chi)}{\inf}\sup_X\frac{e^{-\psi}(\chi+dd^cu)^n}{\omega^n}.
    \end{equation}
    Here and after, we will always use the notation $\operatorname{PSH^+}(X,\chi)\cap C^\infty(X)$ to denote the set of smooth strictly $\chi$- plurisubharmonic functions.
\end{proposition}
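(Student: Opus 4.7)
The proof splits naturally into the two inequalities. For the easy direction $\inf \leq c$, the plan is to substitute $u = \varphi$ itself into the right-hand side and evaluate. First I would check that $\varphi$ is eligible: smoothness is given by hypothesis, and the equation $(\chi+dd^c\varphi)^n = c e^{\psi}\omega^n$ with $c>0$ and $\psi$ smooth forces $\chi+dd^c\varphi$ to be a positive definite $(1,1)$-form at every point, so $\varphi \in \operatorname{PSH^+}(X,\chi)\cap C^\infty(X)$. Then plugging in gives
\[
\sup_X \frac{e^{-\psi}(\chi+dd^c\varphi)^n}{\omega^n} = \sup_X \frac{e^{-\psi}\cdot c e^{\psi}\omega^n}{\omega^n} = c,
\]
so the infimum in \eqref{eq 13} is at most $c$.

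For the reverse direction $c \leq \inf$, the plan is a one-point maximum principle argument. Fix an arbitrary competitor $u \in \operatorname{PSH^+}(X,\chi)\cap C^\infty(X)$ and set
\[
M := \sup_X \frac{e^{-\psi}(\chi+dd^c u)^n}{\omega^n},
\]
so that $(\chi+dd^c u)^n \leq M e^{\psi}\omega^n$ pointwise on $X$. The aim is to show $M \geq c$. The key observation is to apply the comparison to $u - \varphi$, which is a smooth function on the compact manifold $X$ and hence attains a global minimum at some point $x_0 \in X$. At $x_0$ the real Hessian of $u-\varphi$ is positive semidefinite, so its $(1,1)$-part satisfies $dd^c(u-\varphi)(x_0) \geq 0$, whence
\[
\chi + dd^c u \;\geq\; \chi + dd^c\varphi \quad \text{at } x_0
\]
as positive $(1,1)$-forms. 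Since the map $\eta \mapsto \eta^n/\omega^n$ is monotone increasing on the cone of positive $(1,1)$-forms at a single point (it is a positive multiple of the determinant), this yields
\[
M e^{\psi(x_0)}\omega^n(x_0) \;\geq\; (\chi+dd^c u)^n(x_0) \;\geq\; (\chi+dd^c\varphi)^n(x_0) \;=\; c\,e^{\psi(x_0)}\omega^n(x_0),
\]
and cancelling the strictly positive factor $e^{\psi(x_0)}\omega^n(x_0)$ gives $M \geq c$.

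Combining the two inequalities establishes \eqref{eq 13}, and in fact shows that the infimum is achieved at $u = \varphi$. I do not anticipate any serious obstacle: the entire argument is the standard comparison-at-an-interior-extremum trick, and its only mildly subtle ingredient is the pointwise monotonicity of the top exterior power on the cone of positive $(1,1)$-forms, which I would verify by simultaneously diagonalizing $\chi+dd^c\varphi(x_0)$ and $\chi+dd^c u(x_0)$ relative to $\omega(x_0)$. Everything else is bookkeeping.
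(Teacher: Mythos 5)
Your proof is correct, and since the paper simply cites this result from Guo--Song \cite{GS24} without reproving it, your elementary argument is a welcome self-contained alternative. The two directions are handled exactly as one would expect: substituting $u=\varphi$ shows the infimum is $\leq c$, and the comparison-at-the-minimum of $u-\varphi$ together with the monotonicity of the top exterior power on the positive cone gives the reverse inequality; both steps are sound, and the whole argument is precisely the standard tangency/maximum principle trick that underlies the sup-slope formalism of \cite{GS24}.

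One small wording issue worth tightening: you claim that the equation $(\chi+dd^c\varphi)^n = c e^\psi \omega^n > 0$ \emph{forces} $\chi+dd^c\varphi$ to be positive definite. That implication is only valid once you already know $\chi+dd^c\varphi \geq 0$; a nonnegative determinant condition alone does not rule out an even number of negative eigenvalues. Of course this is harmless here because the hypothesis $\varphi\in\operatorname{PSH^+}(X,\chi)$ already gives you strict positivity, so there is nothing to verify. If you want to argue from $\chi$-psh-ness plus the equation, say instead: since $\varphi$ is $\chi$-psh we have $\chi+dd^c\varphi\geq 0$, and a positive semidefinite Hermitian form with strictly positive determinant is positive definite. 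Everything else — the eligibility of $\varphi$ as a competitor, the one-point estimate at the interior minimum, the diagonalization/determinant monotonicity, and the cancellation of $e^{\psi(x_0)}\omega^n(x_0)$ — is exactly right.
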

Following \cite{GS24}, we introduce the following notion of sup-slopes:
\begin{definition}\label{def:sup slope}
    Let $\{\chi\}$ be a Hermitian class on $X$ (i.e., it contains a Hermitian metric). Let $\psi\in C^\infty(X)$ and $\omega$ be another Hermitian metric. We then define the sup-slope of the class $\{\chi\}$ with respect to $\psi$ and $\omega$ to be
    $$
SL_{\omega,\psi}(\{\chi\}):=\underset{u\in C^\infty(X)\cap \operatorname{PSH^+}(X,\chi)}{\inf}\sup_X\frac{e^{-\psi}(\chi+dd^cu)^n}{\omega^n}.
    $$
    It is clear from the definition that the sup-slope of $\{\chi\}$ is independent of the representative form $\chi$. Moreover, the classical maximum principle yields that $SL_{\omega,\psi}(\{\chi\})>0$ in any case, see \cite[Lemma 3.4]{GS24} for details.
\end{definition}
\begin{remark}\label{rmk:Kahler sup slope}
    If $\chi$ and $\omega$ are both K\"ahler forms, the characterization theorem \cref{prop:sup slope} implies that the sup-slope can be explicitly given by
    $$
SL_{\omega,\psi}(\{\chi\})=\frac{\int_X\chi^n}{\int_Xe^{\psi}\omega^n}.
    $$
\end{remark}
We show that the sup-slope is monotone with respect to the metric:
\begin{proposition}\label{monotone of slope}
    Let $\chi_1\leq\chi_2$ be two Hermitian metrics. Then for any $\psi\in C^\infty(X)$, 
    $$
SL_{\omega,\psi}(\{\chi_1\})\leq SL_{\omega,\psi}(\{\chi_2\}).
    $$
\end{proposition}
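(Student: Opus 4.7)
The plan is to exploit the Monge-Amp\`ere characterization of the sup-slope from \cref{prop:sup slope}. For each $i=1,2$, set $c_i := SL_{\omega,\psi}(\{\chi_i\})$ and let $\varphi_i \in C^\infty(X)\cap \operatorname{PSH^+}(X,\chi_i)$ be the unique smooth strictly $\chi_i$-plurisubharmonic solution, normalized by $\sup_X\varphi_i=0$, to
\begin{equation*}
(\chi_i + dd^c\varphi_i)^n = c_i e^\psi \omega^n,
\end{equation*}
whose existence and uniqueness are provided by \cref{prop:sup slope}. The task then reduces to proving $c_1\leq c_2$ by a pointwise argument, since everything else is already encoded in the identification of the sup-slope with the Monge-Amp\`ere constant.

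The core step I would carry out is a maximum-principle argument. Because $\varphi_1-\varphi_2 \in C^\infty(X)$ on a compact manifold, it attains its maximum at some $x_0 \in X$, where $dd^c(\varphi_1-\varphi_2)(x_0)\leq 0$. Combining this with the hypothesis $\chi_1\leq\chi_2$ produces the pointwise chain
\begin{equation*}
0<\chi_1+dd^c\varphi_1\,\leq\, \chi_1+dd^c\varphi_2\,\leq\, \chi_2+dd^c\varphi_2 \quad \text{at } x_0,
\end{equation*}
in which the leftmost and rightmost Hermitian $(1,1)$-forms are both strictly positive by the strict $\chi_i$-plurisubharmonicity of $\varphi_i$. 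Monotonicity of the determinant on the cone of positive Hermitian matrices (the well-known fact that $0<A\leq B$ implies $\det A\leq \det B$, proved by diagonalizing $B^{-1/2}AB^{-1/2}$) then yields
\begin{equation*}
(\chi_1+dd^c\varphi_1)^n(x_0) \,\leq\, (\chi_2+dd^c\varphi_2)^n(x_0).
\end{equation*}
Substituting the Monge-Amp\`ere equations at $x_0$ and dividing by the strictly positive quantity $e^{\psi(x_0)}\omega^n(x_0)$ immediately gives $c_1\leq c_2$, which is exactly the claim.

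The one mildly delicate point is that the intermediate term $\chi_1+dd^c\varphi_2$ in the chain need not be a positive $(1,1)$-form, since $\varphi_2$ is only a priori $\chi_2$-psh and not $\chi_1$-psh; hence a naive step-by-step determinant comparison along the chain is not admissible. Evaluating at the \emph{maximum} of $\varphi_1-\varphi_2$ is exactly what circumvents this, because it lets us invoke the determinant inequality only between the two extremal, manifestly positive Hermitian forms, skipping the possibly indefinite middle term. Beyond this observation the argument is entirely formal, so I do not anticipate any further obstacle.
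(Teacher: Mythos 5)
Your proof is correct. Before comparing with the paper: the argument is sound, and the point you flag as ``mildly delicate'' is handled properly --- one needs only the transitivity of the Loewner order to conclude $\chi_1+dd^c\varphi_1 \leq \chi_2+dd^c\varphi_2$ at $x_0$, and then the determinant monotonicity between the two \emph{positive} endpoints; the possibly non-positive middle term plays no role.

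The paper's stated proof takes a slightly different route. It observes that $u_1 \in \operatorname{PSH}(X,\chi_1) \subset \operatorname{PSH}(X,\chi_2)$, so that
$$
(\chi_2+dd^c u_1)^n \geq (\chi_1+dd^c u_1)^n = c_1 e^\psi\omega^n = \tfrac{c_1}{c_2}(\chi_2+dd^c u_2)^n,
$$
which puts both sides in the $\chi_2$-frame, and then closes via the \emph{domination principle} for bounded $\chi_2$-psh potentials (\cref{domination for beta 3}). The paper does remark parenthetically that the ``classical maximum principle'' also suffices, and your proof is precisely an explicit instantiation of that alternative: instead of transferring $u_1$ into the $\chi_2$-frame and then comparing two $\chi_2$-Monge-Amp\`ere measures, you keep the two equations in their native frames and combine the inequalities $\chi_1\leq\chi_2$ and $dd^c\varphi_1(x_0)\leq dd^c\varphi_2(x_0)$ in one step at the maximum of $\varphi_1-\varphi_2$. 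Your version is more elementary --- it requires only linear algebra (monotonicity of the determinant on the positive Hermitian cone) and smoothness of the solutions furnished by \cref{prop:sup slope}, and never invokes the pluripotential-theoretic machinery. The paper's domination-principle route has the structural advantage of generalizing verbatim to weak (non-smooth) solutions, which matches the tools the paper develops and uses throughout; for the smooth setting at hand, both are equally valid.
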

\begin{proof}
    By \cref{prop:sup slope}, there exists constants $c_1,c_2>0$ and functions $u_1\in \operatorname{PSH}(X,\chi_1),u_2\in\operatorname{PSH}(X,\chi_2)$ such that
    $$
(\chi_1+dd^cu_1)^n=c_1e^{\psi}\omega^n,\quad(\chi_2+dd^cu_2)^n=c_2e^{\psi}\omega^n,
    $$
    where $c_1=SL_{\omega,\psi}(\{\chi_1\})$ and $c_2=SL_{\omega,\psi}(\{\chi_2\})$. Then we have
    $$
(\chi_2+dd^cu_1)^n\geq(\chi_1+dd^cu_1)^n=c_1e^{\psi}\omega^n=\frac{c_1}{c_2}(\chi_2+dd^cu_2)^n.
    $$
    The classical maximum principle or the domination principle for Monge-Amp\`ere equations (see \cref{domination for beta}) immediately gives that $c_1\leq c_2$, whence our result.
\end{proof}
We next introduce a new notion called $p$-slope:
\begin{definition}\label{def of p-slope}
    Let $\chi,\omega$ be two Hermitian metrics on $X$ and $1\leq p<\infty$ be a fixed positive constant, we define the $p$-slope of $\chi$ with respect to $\omega$ to be 
    $$
SL_{\omega,p}(\chi):=\inf_{F\in C^\infty(X),\|e^F\|_{L^p(\omega^n)}\leq1}SL_{\omega,F}(\chi).
    $$
    We also define the $\infty$-slope as follows:
    $$
SL_{\omega,\infty}(\chi):=\inf_{F\in C^\infty(X),e^F\leq1}SL_{\omega,F}(\chi)=\underset{u\in C^\infty(X)\cap \operatorname{PSH^+}(X,\chi)}{\inf}\sup_X\frac{(\chi+dd^cu)^n}{\omega^n}.
    $$
\end{definition}
The $p$-slope is monotone with respect to $p$:
\begin{proposition}
     Let $\chi,\omega$ be two Hermitian metrics on $X$ and $1\leq p<q\leq\infty$. Then,
     $$
SL_{\omega,p}(\chi)\leq SL_{\omega,q}(\chi).
     $$
\end{proposition}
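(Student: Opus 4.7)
The plan is to reduce the inequality to a containment of admissible sets in the definition of the $p$-slope, exploiting the standing normalization $\int_X\omega^n=1$. Concretely, for $1\leq p<q<\infty$ write
$$
A_r:=\{F\in C^\infty(X):\|e^F\|_{L^r(\omega^n)}\leq1\},\qquad r\in[1,\infty),
$$
and $A_\infty:=\{F\in C^\infty(X):e^F\leq1\}$. The key step is to prove $A_q\subseteq A_p$. For this, I would apply H\"older's inequality with exponents $q/p$ and its conjugate, using $\int_X\omega^n=1$, to write
$$
\int_Xe^{pF}\omega^n=\int_Xe^{pF}\cdot1\,\omega^n\leq\left(\int_Xe^{qF}\omega^n\right)^{p/q}\left(\int_X\omega^n\right)^{1-p/q}=\|e^F\|_{L^q(\omega^n)}^{\,p}.
$$
Taking $p$-th roots gives $\|e^F\|_{L^p(\omega^n)}\leq\|e^F\|_{L^q(\omega^n)}$, so any $F\in A_q$ lies in $A_p$.

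With the inclusion $A_q\subseteq A_p$ in hand, the monotonicity is immediate from the general fact that the infimum of a fixed functional over a smaller set is at least as large as the infimum over a larger one. Applied to the functional $F\mapsto SL_{\omega,F}(\chi)$, this yields
$$
SL_{\omega,q}(\chi)=\inf_{F\in A_q}SL_{\omega,F}(\chi)\geq\inf_{F\in A_p}SL_{\omega,F}(\chi)=SL_{\omega,p}(\chi).
$$
For the remaining case $q=\infty$, if $e^F\leq1$ pointwise then $\int_Xe^{pF}\omega^n\leq\int_X\omega^n=1$, so $A_\infty\subseteq A_p$ for every finite $p\geq1$, and the same argument gives $SL_{\omega,p}(\chi)\leq SL_{\omega,\infty}(\chi)$.

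I do not foresee any real obstacle: the argument is a one-line application of H\"older together with monotonicity of infima under set inclusion. The only point requiring attention is the normalization $\int_X\omega^n=1$ declared at the start of the section; without it, $L^q(\omega^n)\subset L^p(\omega^n)$ would fail and the containment of the $L^p$-unit balls would no longer hold. It is also worth noting, for use later, that in the estimate $\|e^F\|_{L^p}\leq\|e^F\|_{L^q}$ equality holds only when $F$ is (almost everywhere) constant, which, combined with $\int e^F\omega^n\leq1$, forces $e^F\leq1$; hence the same chain of comparisons actually produces $SL_{\omega,1}(\chi)\leq SL_{\omega,p}(\chi)\leq SL_{\omega,q}(\chi)\leq SL_{\omega,\infty}(\chi)$ across the whole range.
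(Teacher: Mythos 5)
Your proof is correct and follows essentially the same route as the paper: establish that the $L^p(\omega^n)$-norm of $e^F$ is nondecreasing in $p$ under the normalization $\int_X\omega^n=1$ (you use H\"older, the paper cites Jensen; both are standard and equivalent here), deduce the containment of admissible sets, and conclude by monotonicity of infima.

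One small remark: the parenthetical at the end about the equality case in H\"older is not needed to obtain the chain $SL_{\omega,1}\leq SL_{\omega,p}\leq SL_{\omega,q}\leq SL_{\omega,\infty}$ -- that chain follows by simply applying the proposition repeatedly -- and as written the logical connection you draw there (equality forcing $e^F\leq1$) does not quite lead anywhere; it could be dropped without loss.
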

\begin{proof}
    Recall that we have made the assumption $\int_X\omega^n=1$. Jensen's inequality shows that for a fixed smooth function $F$, the norm $\|e^F\|_{L^p(\omega^n)}$ is increasing with respect to $p$ and hence $\|e^F\|_{L^q(\omega^n)}\leq1$ implies that $\|e^F\|_{L^p(\omega^n)}\leq1$. The result follows immediately from the definition.
\end{proof}

To illustrate the size of sup slopes, we first recall the lower volume introduced in \cite{GL22} and \cite{BGL25}:
\begin{definition}
    Let $\omega$ be a Hermitian metric on $X$, then we define the lower volume $\underline{\operatorname{Vol}}(\omega)$ of $\omega$ by
    $$
\underline{\operatorname{Vol}}(\omega):=\underset{u\in C^\infty(X)\cap \operatorname{PSH^+}(X,\omega)}{\inf}\int_X(\omega+dd^cu)^n=\underset{u\in L^\infty(X)\cap \operatorname{PSH}(X,\omega)}{\inf}\int_X(\omega+dd^cu)^n,
    $$
    where the second equality is a simple consequence of Demailly's regularization theorem.

    If $\{\beta\}$ is a nef Bott-Chern class, we define its lower volume as the following limits:
    $$
\underline{\operatorname{Vol}}(\{\beta\}):=\underset{\epsilon\rightarrow0}{\lim}\underline{\operatorname{Vol}}(\chi+\epsilon\omega).
    $$
\end{definition}

We prove that the $p$-slopes equal to the lower volume when $p=1$ and is always positive when $p>1$:
\begin{proposition}\label{slope=volume 1}
    Let $\omega,\chi$ be two Hermitian metrics on $X$. Then we have
    $$
SL_{\omega,1}(\chi)=\underline{\operatorname{Vol}}(\chi),
    $$
    and $SL_{\omega,p}(\chi)>0$ for each $p>1$.
\end{proposition}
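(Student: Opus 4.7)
The plan is to prove the equality $SL_{\omega,1}(\chi)=\underline{\operatorname{Vol}}(\chi)$ by two matching inequalities, and then to deduce $SL_{\omega,p}(\chi)>0$ for $p>1$ from the $p=1$ case combined with H\"older's inequality and the positivity of $\underline{\operatorname{Vol}}(\chi)$ for a Hermitian metric $\chi$. For the $\leq$ direction, I would fix any test function $u\in C^\infty(X)\cap \operatorname{PSH^+}(X,\chi)$ and build an admissible weight $F_u\in C^\infty(X)$ by
\[
e^{F_u}:=\frac{(\chi+dd^cu)^n}{\omega^n\cdot \int_X(\chi+dd^cu)^n},
\]
which is automatically normalized so that $\|e^{F_u}\|_{L^1(\omega^n)}=1$. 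Testing the infimum in the definition of $SL_{\omega,F_u}(\chi)$ with the function $u$ itself cancels the Monge--Amp\`ere factor in the numerator and denominator, giving
\[
SL_{\omega,F_u}(\chi)\leq \sup_X\frac{e^{-F_u}(\chi+dd^cu)^n}{\omega^n}=\int_X(\chi+dd^cu)^n,
\]
so that passing to the infimum over $u$ yields $SL_{\omega,1}(\chi)\leq \underline{\operatorname{Vol}}(\chi)$.

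For the reverse inequality I would invoke \cref{prop:sup slope}: given any $F\in C^\infty(X)$ with $\|e^F\|_{L^1(\omega^n)}\leq 1$, it supplies a smooth strictly $\chi$-psh function $\varphi_F$ with $\sup_X\varphi_F=0$ solving $(\chi+dd^c\varphi_F)^n=SL_{\omega,F}(\chi)\,e^F\omega^n$. Integrating this identity and using $\int_X e^F\omega^n\leq 1$ gives
\[
SL_{\omega,F}(\chi)=\frac{\int_X(\chi+dd^c\varphi_F)^n}{\int_X e^F\omega^n}\geq \int_X(\chi+dd^c\varphi_F)^n\geq \underline{\operatorname{Vol}}(\chi),
\]
the last inequality because $\varphi_F$ is itself a legal competitor in the infimum defining $\underline{\operatorname{Vol}}(\chi)$. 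Taking the infimum over admissible $F$ then completes the equality.

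For Part 2, I would fix $p>1$ and any admissible $F$, that is $\|e^F\|_{L^p(\omega^n)}\leq 1$. Using the normalization $\int_X\omega^n=1$, H\"older's inequality produces
\[
\int_X e^F\omega^n\leq \|e^F\|_{L^p(\omega^n)}\cdot\Bigl(\int_X\omega^n\Bigr)^{(p-1)/p}\leq 1,
\]
and the $\geq$-argument from Part 1 then applies verbatim, yielding $SL_{\omega,F}(\chi)\geq \underline{\operatorname{Vol}}(\chi)$ and hence $SL_{\omega,p}(\chi)\geq \underline{\operatorname{Vol}}(\chi)$. Strict positivity for $p>1$ thus reduces to the assertion $\underline{\operatorname{Vol}}(\chi)>0$ for Hermitian $\chi$. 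I expect this last step to be the main technical obstacle: in the K\"ahler case it is immediate since $\underline{\operatorname{Vol}}(\chi)=\int_X\chi^n>0$, but in the general Hermitian setting it requires input beyond the excerpt, for example combining the Tosatti--Weinkove Calabi--Yau solution $(\chi+dd^cu_0)^n=b\omega^n$ (which exhibits one positive value of the mass) with the Gauduchon identity $\int_X(\chi+dd^cu)\wedge\omega_G^{n-1}=\int_X\chi\wedge\omega_G^{n-1}>0$ for any Gauduchon representative $\omega_G$, and a Ko\l odziej--Nguyen type $L^\infty$-estimate to rule out mass-collapsing sequences of smooth $\chi$-psh functions.
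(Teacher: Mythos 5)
Your proof of Part 1, that $SL_{\omega,1}(\chi)=\underline{\operatorname{Vol}}(\chi)$, is correct and essentially matches the paper's argument (the $\leq$ direction uses the same normalized weight $e^{F_u}$; your variant tests the infimum directly with $u$ rather than invoking the uniqueness in \cref{prop:sup slope}, and the $\geq$ direction is the same scaling computation the paper carries out).

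Part 2, however, has a genuine gap. You reduce the claim $SL_{\omega,p}(\chi)>0$ to the assertion that $\underline{\operatorname{Vol}}(\chi)>0$ for every Hermitian $\chi$. This is the ``positive mass conjecture'' of Guedj--Lu, and it is \emph{open} in general: the paper's own remark after \cref{right continuity} makes this explicit by suggesting that one might try to \emph{establish} $\underline{\operatorname{Vol}}(\omega)>0$ via uniform lower bounds on $SL_{\omega,p}(\{\omega\})$, i.e.\ exactly the opposite of the reduction you propose. The additional tools you float will not close the gap: the Gauduchon identity controls the linear quantity $\int_X\chi_u\wedge\omega_G^{n-1}$, which says nothing about the full top intersection $\int_X\chi_u^n$, and neither the Tosatti--Weinkove solution nor a Kołodziej--Nguyen $L^\infty$ estimate rules out mass-collapsing sequences in a way that would yield a universal positive lower bound on $\underline{\operatorname{Vol}}(\chi)$ without further hypotheses (such as the bounded mass property, as in \cref{bounded mass property and slope}).

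The paper sidesteps the lower-volume issue entirely and proves a lower bound that does not factor through $\underline{\operatorname{Vol}}$. Since $p>1$, the subsolution construction of \cite[Theorem 2.1]{GL23} (see also \cref{subsolution 1} in the paper, which requires $f\in L^p$ with $p$ strictly above the critical exponent) produces a uniform constant $c=c(p,\omega,\chi)>0$ and a bounded $\chi$-psh $v$ with $(\chi+dd^cv)^n\geq c\,e^F\omega^n$ whenever $\|e^F\|_{L^p}\leq 1$. Comparing this with the extremal solution $(\chi+dd^cu)^n=SL_{\omega,F}(\chi)\,e^F\omega^n$ from \cref{prop:sup slope} and applying the domination principle (\cref{domination for beta 3}) forces $SL_{\omega,F}(\chi)\geq c$, hence $SL_{\omega,p}(\chi)\geq c(p,\omega,\chi)>0$. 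This is precisely where the hypothesis $p>1$ is used in an essential, non-cosmetic way; at $p=1$ the subsolution construction breaks down, consistent with the fact that $SL_{\omega,1}(\chi)=\underline{\operatorname{Vol}}(\chi)$ is not known to be positive in general.
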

\begin{proof}
    We first prove that $SL_{\omega,1}(\chi)=\underline{\operatorname{Vol}}(\chi)$. Fix a smooth function $F$ such that $\int_Xe^F\omega^n=1$. For any $u\in C^\infty(X)\cap\operatorname{PSH}(X,\chi)$,
    \begin{align*}
\int_X(\chi+dd^cu)^n&=\int_Xe^{F}\frac{e^{-F}(\chi+dd^cu)}{\omega^n}^n\omega^n\leq\sup_X\left\{\frac{e^{-F}(\chi+dd^cu)}{\omega^n}^n\right\}\int_Xe^F\omega^n\\
&=\sup_X\left\{\frac{e^{-F}(\chi+dd^cu)}{\omega^n}^n\right\}.
    \end{align*}
    Taking infimum on both sides first in $u$ and then in $F$ we derive that 
    $$
\underline{\operatorname{Vol}}(\{\chi\})\leq SL_{\omega,1}(\{\chi\}).
    $$
    Conversely, for each strictly $\chi$-psh function $u\in\operatorname{PSH^+}(X,\chi)\cap C^\infty(X)$, set $f:=\frac{\chi_u^n}{\omega^n}$ and $c:=\|f\|_{L^1(\omega^n)}$. Then we have by \cref{prop:sup slope} that
    $$
(\chi+dd^cu)^n=SL_{\omega,\log\frac{f}{c}}(\chi)\frac{f}{c}\omega^n.
    $$
Integrating both sides yields that
$$
SL_{\omega,\log\frac{f}{c}}(\chi)=\int_X(\chi+dd^cu)^n.
$$
Since $u$ was chosen arbitrarily and $\|\frac{f}{c}\|_{L^1(\omega^n)}=1$, the conclusion follows.

We next turn to prove the second statement. Fix $p>1$, it follows from the proof of \cite[Theorem 2.1]{GL23} or \cite[Lemma 3.3]{GL23} (see also \cref{subsolution 1}) that there is a constant $c=c(p,\omega,\chi)>0$ such that for each $0\leq f\in L^p(\omega^n)$ with $\|f\|_p\leq1$, there exists a subsolution $v\in\operatorname{PSH}(X,\chi)\cap L^\infty(X)$ satisfying
$$
(\chi+dd^cv)^n\geq cf\omega^n.
$$
For each $F\in C^\infty(X)$ and $\|e^F\|_{L^p(\omega^n)}=1$, \cref{prop:sup slope} and the above discussion yield that one can find $u\in\operatorname{PSH}(X,\chi)\cap C^\infty(X)$ and $v\in\operatorname{PSH}(X,\chi)\cap L^\infty(X)$ solving
$$
(\chi+dd^cu)^n=SL_{\omega,F}(\chi)\cdot e^F\omega^n,
$$
and
$$
(\chi+dd^cv)^n\geq c\cdot e^F\omega^n.
$$
Combing the above two inequalities we get
\begin{align*}
    (\chi+dd^cu)^n=SL_{\omega,F}(\chi)\cdot e^F\omega^n\leq\frac{SL_{\omega,F}(\chi)}{c}(\chi+dd^cv)^n.
\end{align*}
The domination principle for Monge-Amp\`ere equations \cite[Proposition 2.8]{GL22} (see also \cref{domination for beta 3}) give that $SL_{\omega,F}(\chi)\geq c=c(p,\omega,\chi)$. Taking infimum we finally obtain that
$$
SL_{\omega,p}(\chi)\geq c(p,\omega,\chi)>0.
$$
\end{proof}

We next introduce the notion of sup-slopes for nef classes:
\begin{definition}\label{def of slope in nef class}
    Let $\{\beta\}\in BC^{1,1}(X)$ be a nef Bott-Chern class and let $1\leq p<\infty$ be a fixed positive constant, we define the $p$-slope of $\{\beta\}$ with respect to $\omega$ to be
    $$
SL_{\omega,p}(\{\beta\}):=\lim_{\epsilon\rightarrow0}SL_{\omega,p}(\{\beta+\epsilon\omega\}).
    $$
  For $p=+\infty$, we also define the $\infty$-slope of $\{\beta\}$ with respect to $\omega$ to be
  $$
SL_{\omega,\infty}(\{\beta\})=\lim_{\epsilon\rightarrow0}SL_{\omega,\infty}(\{\beta+\epsilon\omega\})=\lim_{\epsilon\rightarrow0}\underset{u\in\operatorname{PSH^+}(X,\beta+\epsilon\omega)\cap C^\infty(X)}{\inf}\sup_X\frac{(\beta+\epsilon\omega+dd^cu)^n}{\omega^n}.
  $$
  Clearly, the definition of $p$-slopes in nef classes is independent of the approximating sequence $\beta+\epsilon\omega$ by the monotonicity \cref{monotone of slope}.
\end{definition}
The definition of slopes in nef classes is compatible with the one in Hermitian classes in the following sense:
\begin{proposition}
    Let $\{\beta\}$ be a Hermitian class, then $SL_{\omega,p}(\{\beta\})=\lim_{\epsilon\rightarrow0^+}SL_{\omega,p}(\{\beta+\epsilon\omega\})$ for each $1\leq p\leq\infty$.
\end{proposition}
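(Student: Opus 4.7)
The plan is to sandwich $\lim_{\epsilon\to 0^+}SL_{\omega,p}(\{\beta+\epsilon\omega\})$ between the directly defined quantity $SL_{\omega,p}(\{\beta\})$ and itself, using the monotonicity of sup-slopes from \cref{monotone of slope} for one direction and a diagonal approximation for the other. The limit as $\epsilon\to 0^+$ is a decreasing limit by monotonicity (it is monotone in the positivity of the reference form), so it is automatically well-defined.

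First, since $\beta\leq\beta+\epsilon\omega$, \cref{monotone of slope} gives $SL_{\omega,F}(\{\beta\})\leq SL_{\omega,F}(\{\beta+\epsilon\omega\})$ for every smooth $F$. Taking infimum over $F$ with $\|e^F\|_{L^p(\omega^n)}\leq 1$ (or over $F\leq 0$ in the $p=\infty$ case) yields $SL_{\omega,p}(\{\beta\})\leq SL_{\omega,p}(\{\beta+\epsilon\omega\})$, and hence $SL_{\omega,p}(\{\beta\})\leq\lim_{\epsilon\to 0^+}SL_{\omega,p}(\{\beta+\epsilon\omega\})$.

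For the reverse inequality, the key observation is that if $u\in\operatorname{PSH^+}(X,\beta)\cap C^\infty(X)$, then $\beta+dd^cu$ is already a Hermitian metric, so $u\in\operatorname{PSH^+}(X,\beta+\epsilon\omega)\cap C^\infty(X)$ for every $\epsilon>0$. Using the infimum characterization in \cref{def:sup slope}, for any fixed admissible $F$ and any such $u$,
\[
SL_{\omega,F}(\{\beta+\epsilon\omega\})\leq\sup_X\frac{e^{-F}(\beta+\epsilon\omega+dd^cu)^n}{\omega^n}.
\]
Because all forms involved are smooth, the right-hand side converges uniformly on $X$ to $\sup_X e^{-F}(\beta+dd^cu)^n/\omega^n$ as $\epsilon\to 0^+$. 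Taking $\limsup_{\epsilon\to 0^+}$ and then infimum over $u\in\operatorname{PSH^+}(X,\beta)\cap C^\infty(X)$ produces $\limsup_{\epsilon\to 0^+}SL_{\omega,F}(\{\beta+\epsilon\omega\})\leq SL_{\omega,F}(\{\beta\})$, which, combined with the monotonicity from the first paragraph, gives $\lim_{\epsilon\to 0^+}SL_{\omega,F}(\{\beta+\epsilon\omega\})=SL_{\omega,F}(\{\beta\})$ for each fixed $F$.

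Finally, to pass from a fixed $F$ to the infimum defining $SL_{\omega,p}$, fix $\delta>0$ and pick $F_\delta$ with $\|e^{F_\delta}\|_{L^p(\omega^n)}\leq 1$ (respectively $F_\delta\leq 0$) such that $SL_{\omega,F_\delta}(\{\beta\})<SL_{\omega,p}(\{\beta\})+\delta$. Using $SL_{\omega,p}(\{\beta+\epsilon\omega\})\leq SL_{\omega,F_\delta}(\{\beta+\epsilon\omega\})$ and the pointwise convergence just established, $\limsup_{\epsilon\to 0^+}SL_{\omega,p}(\{\beta+\epsilon\omega\})\leq SL_{\omega,F_\delta}(\{\beta\})<SL_{\omega,p}(\{\beta\})+\delta$; letting $\delta\to 0$ closes the chain of inequalities. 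The only mildly delicate step is the second paragraph, where one has to make sure that the pointwise (in fact $C^0$) convergence $(\beta+\epsilon\omega+dd^cu)^n\to(\beta+dd^cu)^n$ survives multiplication by $e^{-F}/\omega^n$ when taking the supremum; this is automatic since both $e^{-F}$ and $\omega^n$ are smooth and $\omega^n$ is bounded away from zero on the compact $X$.
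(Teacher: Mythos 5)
Your proof is correct, but it takes a different route from the paper's. The paper's argument is shorter and leans on two structural facts: the scaling identity $SL_{\omega,p}(\{\lambda\beta\})=\lambda^n SL_{\omega,p}(\{\beta\})$ (a consequence of \cref{prop:sup slope}) and the observation that, since $\{\beta\}$ is Hermitian, one can choose constants $\lambda_\epsilon\searrow 1$ with $\{\beta+\epsilon\omega\}\leq\{\lambda_\epsilon\beta\}$, whence $SL_{\omega,p}(\{\beta+\epsilon\omega\})\leq\lambda_\epsilon^n SL_{\omega,p}(\{\beta\})\to SL_{\omega,p}(\{\beta\})$ by monotonicity. (The printed text says $\lambda_\epsilon\searrow0$, which is evidently a typo for $\lambda_\epsilon\searrow1$.) You instead go back to the variational definition: for a fixed $u\in\operatorname{PSH^+}(X,\beta)\cap C^\infty(X)$ the density $(\beta+\epsilon\omega+dd^cu)^n/\omega^n$ converges uniformly on the compact $X$ as $\epsilon\to0^+$, which, after a double infimum over $u$ and then over admissible $F$, yields $\limsup_\epsilon SL_{\omega,p}(\{\beta+\epsilon\omega\})\leq SL_{\omega,p}(\{\beta\})$. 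Both are valid; the paper's scaling trick is slicker and avoids the $\epsilon$-$\delta$ bookkeeping, while your approach is more elementary and self-contained, makes no use of the homogeneity identity, and shows explicitly why the Hermitian hypothesis is what makes the limit behave (every $u$ admissible for $\beta$ is simultaneously admissible for all $\beta+\epsilon\omega$). One small logical caution in your final paragraph: you apply, for the fixed $F_\delta$, the convergence $\limsup_\epsilon SL_{\omega,F_\delta}(\{\beta+\epsilon\omega\})\leq SL_{\omega,F_\delta}(\{\beta\})$ established in the second paragraph, and then let $\delta\to0$; this is fine, but it is worth saying explicitly that the order of quantifiers (first fix $F_\delta$, then take $\limsup_\epsilon$, then let $\delta\to0$) is what makes the argument close — a careless reader might try to swap the $\epsilon$- and $F$-infima prematurely.
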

\begin{proof}
    That $SL_{\omega,p}(\{\beta\})\leq\lim_{\epsilon\rightarrow0^+}SL_{\omega,p}(\{\beta+\epsilon\omega\})$ is clear. For the reverse direction,  \cref{prop:sup slope} easily implies that $SL_{\omega,p}(\{\lambda\beta\})=\lambda^nSL_{\omega,p}(\{\beta\})$ for each $\lambda>0$. Since $\{\beta\}$ is Hermitian, there is a sequence of positive numbers $\lambda_\epsilon\searrow0$ such that $\{\beta+\epsilon\omega\}\leq\{\lambda_\epsilon\beta\}$ and hence $SL_{\omega,p}(\{\beta+\epsilon\omega\})\leq\lambda_\epsilon^nSL_{\omega,p}(\{\beta\})$. The proof is concluded by letting $\epsilon\rightarrow0$.
\end{proof}
The following proposition is an immediate consequence of the definition and \cref{slope=volume 1}.

\begin{proposition}\label{Vol=slope 2}
    Let $\{\beta\}$ be a nef Bott-Chern class, then
    $$
SL_{\omega,1}(\{\beta\})=\underline{\operatorname{Vol}}(\beta).
    $$
\end{proposition}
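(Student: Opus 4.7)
The plan is to unwind both sides of the claimed equality via their definitions as limits over the Hermitian approximations $\{\beta+\varepsilon\omega\}$, and then invoke the already established identity for genuine Hermitian classes. Since $\{\beta\}$ is nef, for every $\varepsilon>0$ the class $\{\beta+\varepsilon\omega\}$ is a Hermitian class (it contains a Hermitian representative $\beta+\varepsilon\omega+dd^c\rho_\varepsilon$), so all the machinery developed for Hermitian metrics applies.

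First I would record the mild observation that $SL_{\omega,1}(\cdot)$ is truly a Bott-Chern class invariant: indeed, if $\chi'=\chi+dd^c\psi$ is another Hermitian metric in $\{\chi\}$, then the substitution $u\mapsto u-\psi$ gives a bijection between $\PSH^+(X,\chi)\cap C^\infty(X)$ and $\PSH^+(X,\chi')\cap C^\infty(X)$ that preserves the quantity $\sup_X e^{-F}(\chi+dd^cu)^n/\omega^n$, hence $SL_{\omega,F}(\chi)=SL_{\omega,F}(\chi')$; taking the infimum over admissible $F$ gives class invariance of $SL_{\omega,1}$. Thus $SL_{\omega,1}(\{\beta+\varepsilon\omega\})$ is well defined and coincides with $SL_{\omega,1}(\chi_\varepsilon)$ for any Hermitian representative $\chi_\varepsilon\in\{\beta+\varepsilon\omega\}$.

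Next, I apply Proposition \ref{slope=volume 1} to each Hermitian class $\{\beta+\varepsilon\omega\}$ to obtain
\begin{equation*}
SL_{\omega,1}(\{\beta+\varepsilon\omega\})=\underline{\operatorname{Vol}}(\beta+\varepsilon\omega),\qquad\varepsilon>0.
\end{equation*}
Passing to the limit $\varepsilon\to 0^+$ and comparing with the definitions in \cref{def of slope in nef class} and the definition of the lower volume for nef classes yields
\begin{equation*}
SL_{\omega,1}(\{\beta\})=\lim_{\varepsilon\to 0^+}SL_{\omega,1}(\{\beta+\varepsilon\omega\})=\lim_{\varepsilon\to 0^+}\underline{\operatorname{Vol}}(\beta+\varepsilon\omega)=\underline{\operatorname{Vol}}(\{\beta\}),
\end{equation*}
which is the asserted identity.

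There is essentially no obstacle here; the proof is a one-line diagram chase once the invariance under change of representative is noted, and that invariance is itself immediate from the translation $u\mapsto u-\psi$. The genuine content is entirely packaged in Proposition \ref{slope=volume 1}, whose role is to identify the sup-slope against all admissible normalized weights $F$ with the lower volume, an identification that carries over verbatim to each nef approximant $\{\beta+\varepsilon\omega\}$.
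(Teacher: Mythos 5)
Your proof is correct and takes the same approach as the paper, which simply notes that the statement is an immediate consequence of the definition of $p$-slopes for nef classes (\cref{def of slope in nef class}), the definition of the lower volume of a nef class as a limit, and \cref{slope=volume 1} applied to each Hermitian approximant $\{\beta+\varepsilon\omega\}$. The small supplementary observation you make — that both $SL_{\omega,1}$ and $\underline{\operatorname{Vol}}$ depend only on the Bott--Chern class of the Hermitian representative, via the translation $u\mapsto u-\psi$ — is the right remark to dispatch the possible objection that $\beta+\varepsilon\omega$ itself need not be positive definite.
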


\begin{proposition}\label{monotone of slope 2}
    Let $\{\beta_1\},\{\beta_2\}$ be two nef Bott-Chern classes such that $\beta_1\leq\beta_2$ and let $1\leq p\leq\infty$ be fixed. Then,
    $$
SL_{\omega,p}(\{\beta_1\})\leq SL_{\omega,p}(\{\beta_2\}).
    $$
    In particular, we obtain also that $\underline{\operatorname{Vol}}(\beta_1)\leq\underline{\operatorname{Vol}}(\beta_2)$ by \cref{Vol=slope 2}, which was proved in \cite{BGL25}.
\end{proposition}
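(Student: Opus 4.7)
The plan is to reduce the monotonicity for nef classes to the already established monotonicity for Hermitian classes (\cref{monotone of slope}), by finding Hermitian representatives that preserve the pointwise inequality $\beta_1\leq\beta_2$, and then passing to the limit $\epsilon\to0^+$.

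As a preliminary, I would upgrade \cref{monotone of slope} from the sup-slope $SL_{\omega,\psi}$ to the $p$-slope $SL_{\omega,p}$ on Hermitian classes. Indeed, if $\chi_1\leq\chi_2$ are two Hermitian metrics, \cref{monotone of slope} gives
$$SL_{\omega,\psi}(\{\chi_1\})\leq SL_{\omega,\psi}(\{\chi_2\}),\qquad \forall\,\psi\in C^\infty(X).$$
Taking the infimum over $\psi\in C^\infty(X)$ subject to $\|e^\psi\|_{L^p(\omega^n)}\leq 1$ (resp.\ $e^\psi\leq 1$ in the $p=\infty$ case) and invoking \cref{def of p-slope} yields $SL_{\omega,p}(\{\chi_1\})\leq SL_{\omega,p}(\{\chi_2\})$ at the level of Hermitian classes.

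Now fix $\epsilon>0$. Since $\{\beta_1\}$ is nef, there exists $\rho_\epsilon\in C^\infty(X)$ such that
$$\chi_1^\epsilon:=\beta_1+\epsilon\omega+dd^c\rho_\epsilon$$
is a Hermitian metric. Setting
$$\chi_2^\epsilon:=\beta_2+\epsilon\omega+dd^c\rho_\epsilon=\chi_1^\epsilon+(\beta_2-\beta_1),$$
we observe that $\chi_2^\epsilon$ is the sum of a Hermitian metric and the smooth semi-positive $(1,1)$-form $\beta_2-\beta_1$, hence is itself a Hermitian metric with $\chi_1^\epsilon\leq\chi_2^\epsilon$. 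Moreover, $\chi_i^\epsilon$ represents $\{\beta_i+\epsilon\omega\}$ in $BC^{1,1}(X)$. Since the sup-slope, and consequently the $p$-slope, only depends on the Bott-Chern class, applying the preliminary step to $\chi_1^\epsilon\leq\chi_2^\epsilon$ yields
$$SL_{\omega,p}(\{\beta_1+\epsilon\omega\})=SL_{\omega,p}(\{\chi_1^\epsilon\})\leq SL_{\omega,p}(\{\chi_2^\epsilon\})=SL_{\omega,p}(\{\beta_2+\epsilon\omega\}).$$

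Letting $\epsilon\to 0^+$ and invoking the definition of the $p$-slope for nef classes (\cref{def of slope in nef class}) gives $SL_{\omega,p}(\{\beta_1\})\leq SL_{\omega,p}(\{\beta_2\})$. The ``in particular'' statement for lower volumes then follows immediately by taking $p=1$ and applying \cref{Vol=slope 2}. The essential point (and the only mildly subtle one) is the use of a \emph{single} potential $\rho_\epsilon$ to produce Hermitian representatives of both classes simultaneously, which is exactly what makes $\chi_1^\epsilon\leq\chi_2^\epsilon$ hold as a pointwise inequality of Hermitian metrics, putting us in the scope of the already proved \cref{monotone of slope}.
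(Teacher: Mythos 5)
Your proof is correct and follows the same route as the paper, which simply declares the result an immediate consequence of \cref{def of slope in nef class} and \cref{monotone of slope}. Your write-up usefully makes explicit the one step the paper leaves implicit: since $\beta_1+\epsilon\omega$ and $\beta_2+\epsilon\omega$ need not themselves be Hermitian metrics, one must add $dd^c\rho_\epsilon$ for a \emph{common} potential $\rho_\epsilon$ (chosen so that $\beta_1+\epsilon\omega+dd^c\rho_\epsilon$ is Hermitian, which then forces $\beta_2+\epsilon\omega+dd^c\rho_\epsilon$ to be Hermitian as well, since $\beta_2-\beta_1\geq 0$) in order to place oneself in the hypotheses of \cref{monotone of slope}.
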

\begin{proof}
    This is an immediate consequence of \cref{def of slope in nef class} and \cref{monotone of slope}. 
\end{proof}
We prove that under the bounded mass property introduced in \cite{BGL25}, all the $p$-slopes coincide:
\begin{lemma}\label{bounded mass property and slope}
    Let $(X,\omega)$ be a compact Hermitian manifold satisfying the bounded mass property and let $\{\beta\}\in H^{1,1}(X,\mathbb{R})$ be a nef cohomology class. Fix $1\leq p\leq\infty$. Then,
    $$
\int_X\beta^n=SL_{\omega,p}(\{\beta\})=\underline{\operatorname{Vol}}(\beta).
    $$
    Here we say that $(X,\omega)$ satisfies the bounded mass property if 
    $$\overline{\operatorname{Vol}}(\omega):=\sup_{u\in \operatorname{PSH}(X,\omega)\cap L^\infty(X)}\int_X(\omega+dd^cu)^n<+\infty.$$
\end{lemma}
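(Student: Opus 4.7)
The plan is to derive both equalities from the following key uniform estimate: for any smooth $u$ with $\beta+\epsilon\omega+dd^cu\geq 0$,
$$\left|\int_X(\beta+\epsilon\omega+dd^cu)^n-\int_X(\beta+\epsilon\omega)^n\right|\leq C(\beta,\omega)\,\epsilon,\quad(\ast)$$
with $C$ independent of $u$ and $\epsilon$. By \cref{Vol=slope 2} and the monotonicity $SL_{\omega,1}(\{\beta\})\leq SL_{\omega,p}(\{\beta\})\leq SL_{\omega,\infty}(\{\beta\})$ (inherited from the Hermitian case $\beta+\epsilon\omega$ by passing to the limit), it suffices to prove $\underline{\operatorname{Vol}}(\beta)=\int_X\beta^n$ and $SL_{\omega,\infty}(\{\beta\})\leq\int_X\beta^n$. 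Granting $(\ast)$, the first will follow by taking the infimum over $u$ in the definition of $\underline{\operatorname{Vol}}(\beta+\epsilon\omega)$ and letting $\epsilon\to 0$; for the second, pick a smooth $\rho_\epsilon$ such that $\beta+\epsilon\omega+dd^c\rho_\epsilon$ is a Hermitian metric (possible by nefness), apply \cref{prop:sup slope} with $F\equiv 0$ to produce $c_\epsilon>0$ and $\varphi_\epsilon$ smooth with $(\beta+\epsilon\omega+dd^c(\rho_\epsilon+\varphi_\epsilon))^n=c_\epsilon\omega^n$, integrate and invoke $(\ast)$ to obtain $c_\epsilon\to\int_X\beta^n$, then use $SL_{\omega,\infty}(\{\beta+\epsilon\omega\})\leq SL_{\omega,0}(\{\beta+\epsilon\omega\})=c_\epsilon$.

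For $(\ast)$, since $\{\beta\}\in H^{1,1}(X,\mathbb{R})$ the form $\beta$ may be chosen $d$-closed, hence also $d^c$-closed as a real $(1,1)$-form. Telescoping,
$$\int_X(\beta+\epsilon\omega+dd^cu)^n-\int_X(\beta+\epsilon\omega)^n=\sum_{k=1}^n\binom{n}{k}\int_X(dd^cu)^k\wedge(\beta+\epsilon\omega)^{n-k}.$$
Further expanding $(\beta+\epsilon\omega)^{n-k}=\beta^{n-k}+\sum_{j=1}^{n-k}\binom{n-k}{j}\epsilon^j\omega^j\wedge\beta^{n-k-j}$, the leading term $\int_X(dd^cu)^k\wedge\beta^{n-k}$ vanishes by Stokes' theorem, because $(dd^cu)^{k-1}\wedge\beta^{n-k}$ is $d$-closed (both factors are). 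The remaining terms carry a factor of $\epsilon$, and I would bound them uniformly in $u$ using the bounded mass property: choose $A>0$ with $\beta\leq A\omega$ so that $\tilde u:=u/(A+\epsilon)$ is $\omega$-psh, yielding $\int_X\eta^n\leq(A+\epsilon)^n\overline{\operatorname{Vol}}(\omega)<\infty$ for $\eta:=\beta+\epsilon\omega+dd^cu$; a binomial comparison of $\int_X(2\omega+dd^c\tilde u)^n$ with $\int_X(\omega+dd^c\tilde u)^n$ then forces every mixed mass $\int_X\eta^a\wedge\omega^{n-a}$ to be bounded by a constant depending only on $\omega,A$. Writing $dd^cu=\eta-\beta-\epsilon\omega$ and decomposing $\beta$ as a difference of two smooth positive forms each dominated by a constant multiple of $\omega$, each residual integral $\int_X(dd^cu)^k\wedge\omega^j\wedge\beta^{n-k-j}$ becomes a bounded combination of such mixed masses.

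The main obstacle will be precisely this uniform control of the mixed masses in the error terms. Without the closedness of $\beta$ the Stokes cancellation of the $\beta^{n-k}$ term would fail, and without the bounded mass property the mixed masses $\int_X\eta^a\wedge\omega^{n-a}$ can blow up as $u$ varies, so that $(\ast)$ could no longer hold with a constant independent of $u$; both hypotheses of the lemma thus enter exactly at this step, which is the only nontrivial analytic input in the proof.
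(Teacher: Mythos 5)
Your proof is correct, and the core mechanism is the same as the paper's: integrate the Monge--Amp\`ere equation with constant right-hand side for $\beta+\epsilon\omega$, expand binomially, cancel the leading term by Stokes using $d\beta=0$, and bound the error terms uniformly via the bounded mass property (what you derive by comparing $\int_X(2\omega+dd^c\tilde u)^n$ with $\int_X(\omega+dd^c\tilde u)^n$ is precisely \cite[Proposition 3.3]{GL22}, which the paper cites). The genuine difference is in the organization. You isolate a single uniform estimate $(\ast)$, expanding in powers of $dd^cu$ so that the $\epsilon^0$ contribution $\int_X(dd^cu)^k\wedge\beta^{n-k}$ vanishes by Stokes, and you then derive \emph{both} $\underline{\operatorname{Vol}}(\beta)=\int_X\beta^n$ and $SL_{\omega,\infty}(\{\beta\})\leq\int_X\beta^n$ from it. The paper instead expands in powers of $\epsilon$ (writing $(\beta+dd^cv)^n$ first and peeling off $\epsilon^k\omega^k$ corrections), which handles only the $SL_{\omega,\infty}$ bound; the identity $\underline{\operatorname{Vol}}(\beta)=\int_X\beta^n$ is imported wholesale from \cite[Proposition 3.16]{BGL25} together with \cref{Vol=slope 2}. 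Your route is therefore slightly more self-contained and makes the uniformity in $u$---the place where the bounded mass hypothesis is used---more visible. One small streamlining: once you solve $(\beta+\epsilon\omega+dd^c(\rho_\epsilon+\varphi_\epsilon))^n=c_\epsilon\omega^n$, you can read off $SL_{\omega,\infty}(\{\beta+\epsilon\omega\})\leq\sup_X\frac{(\beta+\epsilon\omega+dd^c(\rho_\epsilon+\varphi_\epsilon))^n}{\omega^n}=c_\epsilon$ directly from the second expression in \cref{def of p-slope}, with no need to pass through a separate $SL_{\omega,0}$.
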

\begin{proof}
   By \cite[Proposition 3.16]{BGL25} and \cref{Vol=slope 2} we have $SL_{\omega,1}(\{\beta\})=\underline{\operatorname{Vol}}(\beta)=\int_X\beta^n$. By the monotonicity \cref{monotone of slope 2} it is enough to show that $SL_{\omega,\infty}(\{\beta\})=\int_X\beta^n$. Up to rescaling we may assume that $\beta<\frac{1}{2}\omega$. For each $0<\epsilon<\frac{1}{2}$, there is a function $v\in\operatorname{PSH}(X,\beta+\epsilon\omega)\cap C^\infty(X)$ such that
   $$
(\beta+\epsilon\omega+dd^cv)^n=SL_{\omega,\infty}(\{\beta+\epsilon\omega\})\cdot \omega^n.
   $$
   Integrating both sides we have $SL_{\omega,\infty}(\{\beta+\epsilon\omega\})=\int_X(\beta+\epsilon\omega+dd^cv)^n$. Since $\{\beta\}$ is closed, we can furthermore write
   \begin{align*}
\int_X(\beta+\epsilon\omega+dd^cv)^n&=\int_X(\beta+dd^cv)^n+\sum_{k=1}^n\epsilon^k(\beta+dd^cv)^{n-k}\wedge\omega^k\\
&=\int_X\beta^n+\sum_{k=1}^n\epsilon^k((\beta-\omega)+(\omega+dd^cv))^{n-k}\wedge\omega^k\\
&=\int_X\beta^n+O(\epsilon).
   \end{align*}
   Where the last equality follows easily from the bounded mass property and \cite[Proposition 3.3]{GL22}. This yields that $SL_{\omega,\infty}(\{\beta+\epsilon\omega\})=\int_X\beta^n+O(\epsilon)$ and hence the result by letting $\epsilon\rightarrow0$.
\end{proof}

\begin{proposition}\label{right continuity}
    Let $\{\beta\}$ be a nef Bott-Chern class and $\omega$ be a Hermitian metric on $X$, then the function $[1,+\infty]\ni p\mapsto SL_{\omega,p}(\{\beta\})$ is right continuous.
\end{proposition}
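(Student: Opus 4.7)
The plan is to reduce the nef case to the Hermitian case by means of the defining limit $SL_{\omega,p}(\{\beta\}) = \lim_{\epsilon \to 0^+} SL_{\omega,p}(\{\beta+\epsilon\omega\})$. The map $p \mapsto SL_{\omega,p}(\{\beta\})$ is nondecreasing (the Hermitian case is the monotonicity of $p$-slopes noted earlier in the excerpt, and the nef case follows by passing $\epsilon \to 0$), so only the upper estimate $\limsup_{q\to p^+} SL_{\omega,q}(\{\beta\}) \le SL_{\omega,p}(\{\beta\})$ needs to be shown.

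I would first treat a Hermitian class $\{\chi\}$. Given $\delta > 0$, pick a smooth competitor $F$ with $\|e^F\|_{L^p(\omega^n)} \le 1$ realizing $SL_{\omega,F}(\{\chi\}) \le SL_{\omega,p}(\{\chi\}) + \delta$. Since $e^F$ is bounded on the compact $X$ and $\int_X\omega^n = 1$, dominated convergence gives $\|e^F\|_{L^q(\omega^n)} \to \|e^F\|_{L^p(\omega^n)}$ as $q \to p^+$. Fix $\eta > 0$ and choose $q_0 > p$ so that $\|e^F\|_{L^q(\omega^n)} \le 1+\eta$ for $p < q \le q_0$. Then $F_\eta := F - \log(1+\eta)$ is admissible at level $q$, and the tautological rescaling identity $SL_{\omega,F-c}(\{\chi\}) = e^c\, SL_{\omega,F}(\{\chi\})$ (immediate from \cref{def:sup slope}) yields
\[
SL_{\omega,q}(\{\chi\}) \le SL_{\omega,F_\eta}(\{\chi\}) = (1+\eta)\, SL_{\omega,F}(\{\chi\}) \le (1+\eta)\bigl(SL_{\omega,p}(\{\chi\}) + \delta\bigr).
\]
Letting $q \to p^+$, then $\eta \to 0$, then $\delta \to 0$ closes the Hermitian case.

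To bootstrap to a nef class $\{\beta\}$, I will use $SL_{\omega,q}(\{\beta\}) \le SL_{\omega,q}(\{\beta+\epsilon\omega\})$ for every $\epsilon > 0$, which is immediate from \cref{def of slope in nef class} (the limit defining $SL_{\omega,q}(\{\beta\})$ is decreasing in $\epsilon$) and also a consequence of \cref{monotone of slope 2}. Given $\delta > 0$, I first fix $\epsilon > 0$ with $SL_{\omega,p}(\{\beta+\epsilon\omega\}) \le SL_{\omega,p}(\{\beta\}) + \delta/2$, then apply the Hermitian case to $\{\beta+\epsilon\omega\}$ to obtain $q_0 > p$ with $SL_{\omega,q}(\{\beta+\epsilon\omega\}) \le SL_{\omega,p}(\{\beta+\epsilon\omega\}) + \delta/2$ for $p < q \le q_0$; concatenating these estimates yields $SL_{\omega,q}(\{\beta\}) \le SL_{\omega,p}(\{\beta\}) + \delta$, as required.

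I do not foresee a substantive obstacle: the rescaling identity is built into the infimum definition, the convergence of $L^q$ to $L^p$ norms is standard for bounded functions on a probability space, and interchanging the right limit in $p$ with the infimum in $\epsilon$ is free thanks to the monotone structure of both one-parameter families.
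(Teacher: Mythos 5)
Your proposal is correct and takes essentially the same approach as the paper: both hinge on the rescaling identity $SL_{\omega,F-c}(\{\chi\})=e^c SL_{\omega,F}(\{\chi\})$ together with the continuity of $q\mapsto\|e^F\|_{L^q(\omega^n)}$ for a fixed smooth (hence bounded) $F$, which lets one trade admissibility at level $p$ for admissibility at a slightly larger level $q$ at the cost of a multiplicative factor $1+\eta$. The paper fixes an arbitrary admissible $F$ and shows $SL_{\omega,F}\geq\lim_{q\searrow p}SL_{\omega,q}$ before taking the infimum, whereas you start from a near-optimal $F$ and bound $SL_{\omega,q}$ from above; these are dual reorganizations of the same estimate, and your explicit $\epsilon/2$ bootstrap from the Hermitian to the nef case is exactly the "limiting process" the paper leaves to the reader.
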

\begin{proof}
   It is clear that we can assume without loss of generality that $\{\beta\}$ is a Hermitian class, the general case follows from a limiting process. Fix $p\geq1$ and set $a:=\lim_{q\searrow p}SL_{\omega,q}(\{\beta\})\geq SL_{\omega,p}(\{\beta\})$, our goal is to show that $SL_{\omega,p}(\{\beta\})\geq a$. Fix any $F\in C^\infty(X)$ such that $\|e^F\|_p\leq1$. Then for any $\epsilon>0$, we can find $\delta>0$ such that $\|e^F\|_{p+\delta}\leq1+\epsilon$ and hence $\|\frac{f}{1+\epsilon}\|_{p+\delta}\leq1$, where $f:=e^F$. It follows that
   $$
SL_{\omega,\log\frac{f}{1+\epsilon}}(\{\beta\})\geq SL_{\omega,p+\delta}(\{\beta\}).
   $$
   On the other hand, by definition we have $SL_{\omega,\log\frac{f}{1+\epsilon}}(\{\beta\})=(1+\epsilon)SL_{\omega,F}(\{\beta\})$. This implies that $(1+\epsilon)SL_{\omega,F}(\{\beta\})\geq a$ and hence $SL_{\omega,F}(\{\beta\})\geq a$. Take infimum in $F$ we arrive at the conclusion.
\end{proof}
\begin{remark}
  When $\{\beta\}$ is a Hermitian class, by \cref{right continuity}, we know that $SL_{\omega,p}(\{\beta\})\searrow\underline{\operatorname{Vol}}(\{\beta\})$ as $p\searrow1$ and $SL_{\omega,p}(\{\beta\})>0$ for each $p>1$. Therefore, in order to determine whether the manifold has the positive volume property, i.e. $\underline{\operatorname{Vol}}(\omega)>0$, one may try to get a uniform lower bound of $SL_{\omega,p}(\{\omega\})$. 

  Another interesting question is to determine whether the function $[1,+\infty]\ni p\mapsto SL_{\omega,p}(\{\beta\})$ is left continuous. In particular, whether $SL_{\omega,p}(\{\beta\})\nearrow SL_{\omega,\infty}(\{\beta\})$. We know from \cref{bounded mass property and slope} that this is the case when $\overline{\operatorname{Vol}}(\omega)<+\infty$.
\end{remark}

The Hessian analogue of sup slopes can also be introduced:
\begin{proposition}\cite[Theorem 1.2]{GS24} \label{prop:sup slope Hessian}
    Let $(X,\omega)$ be a compact Hermitian manifold of complex dimension $n$ and let $\eta\in\Gamma_m(\omega)$ be a strictly $(\omega,m)$-positive form on $X$. Fix a function $\psi\in C^\infty(X)$ and let $\varphi\in \operatorname{SH}_m(X,\eta,\omega)\cap C^\infty(X)$ be the unique solution to the equation
    $$
(\eta+dd^c\varphi)^m\wedge\omega^{n-m}=ce^{\psi}\omega^n,\quad \sup_X\varphi=0,
    $$
  the existence of the smooth solution was established in \cite[Proposition 21]{Sze18}. Then we have the following characterization of the constant $c>0$:
    \begin{equation}\label{eq 14}
        c=\underset{u\in C^\infty(X)\cap \operatorname{SH}^+_m(X,\eta,\omega)}{\inf}\sup_X\frac{e^{-\psi}(\eta+dd^cu)^m\wedge\omega^{n-m}}{\omega^n}.
    \end{equation}
\end{proposition}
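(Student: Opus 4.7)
The plan is to prove the equality in (\ref{eq 14}) by establishing two opposite inequalities. For the direction "$\leq$" I would show that the given solution $\varphi$ is itself a valid competitor in the infimum; for the direction "$\geq$" I would run a classical maximum principle argument on the difference $\varphi-u$.

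First, since the right-hand side $ce^{\psi}\omega^n$ of the defining equation is a strictly positive smooth volume form, all elementary symmetric functions $S_{k,\omega}(\eta+dd^c\varphi)$, $1\leq k\leq m$, must be positive pointwise. Thus $\eta+dd^c\varphi \in \Gamma_m(\omega)$ everywhere, so $\varphi \in C^\infty(X)\cap \operatorname{SH}_m^+(X,\eta,\omega)$. Plugging $\varphi$ into the right-hand side of (\ref{eq 14}) gives
\[
\sup_X \frac{e^{-\psi}(\eta+dd^c\varphi)^m\wedge\omega^{n-m}}{\omega^n}\ =\ \sup_X c\ =\ c,
\]
which yields the upper bound $c$ for the infimum.

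For the lower bound, fix an arbitrary $u \in C^\infty(X)\cap \operatorname{SH}_m^+(X,\eta,\omega)$ and let $x_0 \in X$ be a point where $\varphi-u$ attains its maximum. Then $dd^c(\varphi-u)(x_0)\leq 0$, so
\[
\eta + dd^c\varphi(x_0)\ \leq\ \eta + dd^c u(x_0)
\]
as Hermitian $(1,1)$-forms, and both forms lie in $\Gamma_m(\omega)$ at $x_0$. I would then invoke the standard monotonicity of the $m$-th Garding invariant $S_{m,\omega}$ on the cone $\Gamma_m(\omega)$ (the partial derivatives $\partial S_m/\partial\lambda_j$ are strictly positive on $\Gamma_m$) to deduce
\[
\frac{(\eta+dd^c\varphi)^m\wedge\omega^{n-m}}{\omega^n}(x_0)\ \leq\ \frac{(\eta+dd^c u)^m\wedge\omega^{n-m}}{\omega^n}(x_0).
\]
Inserting the equation satisfied by $\varphi$ at $x_0$ gives $c\,e^{\psi(x_0)} \leq (\eta+dd^c u)^m\wedge\omega^{n-m}/\omega^n(x_0)$, whence
\[
c\ \leq\ \sup_X \frac{e^{-\psi}(\eta+dd^c u)^m\wedge\omega^{n-m}}{\omega^n},
\]
and taking infimum over $u$ concludes the proof.

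The main delicate point is the monotonicity step, since $\eta+dd^c\varphi$ and $\eta+dd^c u$ need not commute at $x_0$; simultaneous diagonalization with respect to $\omega$, followed by the standard Garding monotonicity on the convex cone $\Gamma_m$ (viewed in the eigenvalue picture), handles this cleanly. Should one prefer to avoid pointwise comparisons altogether, an equivalent global route is available: if some competitor $u$ satisfied $\sup_X F_u =: c' < c$, then $(\eta+dd^cu)^m\wedge\omega^{n-m}\leq (c'/c)(\eta+dd^c\varphi)^m\wedge\omega^{n-m}$, and the domination principle \cref{domination for beta 3} (applicable because $\eta \in \Gamma_m(\omega)$ is $(\omega,m)$-non-collapsing by \cref{cor:m-positive is non-collapsing}) would force $c'/c \geq 1$, a contradiction.
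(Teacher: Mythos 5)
The paper does not give its own proof of this proposition: it is cited directly from Guo--Song \cite[Theorem 1.2]{GS24}, so there is no internal argument to compare against. Your proof is correct and is the natural one. For the ``$\leq$'' direction, your key observation---that $(\eta+dd^c\varphi)^m\wedge\omega^{n-m}=ce^\psi\omega^n>0$ together with $\eta+dd^c\varphi\in\overline{\Gamma_m(\omega)}$ forces $\eta+dd^c\varphi\in\Gamma_m(\omega)$ pointwise---is right, since the boundary of the Garding cone $\overline{\Gamma_m}$ is exactly $\{S_m=0\}$. For the ``$\geq$'' direction, the pointwise maximum-principle step is also sound: at a maximum of $\varphi-u$ one has $\eta+dd^c\varphi\leq\eta+dd^cu$ as Hermitian forms, and since $A\in\overline{\Gamma_m(\omega)}$, $H\geq0$ imply $S_{m,\omega}(A+H)\geq S_{m,\omega}(A)$ (the linearized operator $\partial S_m/\partial A_{i\bar j}$ is positive semidefinite on $\overline{\Gamma_m}$, by Garding hyperbolicity), the needed monotonicity holds without any commutativity assumption on the two forms. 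Your alternative global route via the domination principle \cref{domination for beta 3}, justified by \cref{cor:m-positive is non-collapsing}, is an equally valid and somewhat cleaner way to close the ``$\geq$'' direction, and has the advantage of avoiding the pointwise eigenvalue picture entirely; it is, however, less elementary in that it invokes the pluripotential-theoretic domination principle rather than just calculus at a single point.
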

\begin{definition}
    We say that a Bott-Chern class $\{\eta\}$ is $(\omega,m)$-nef if for each $\epsilon>0$ there is a smooth form $\gamma_\epsilon\in\Gamma_m(\omega)$ and a smooth function $u_\epsilon\in C^\infty(X)$ such that $\eta+\epsilon\omega+dd^cu_\epsilon\geq\gamma_\epsilon$.

    Clearly, an $(\omega,m)$ semi-positive class is $(\omega,m)$-nef. When $m=n$, this notion coincides with the usual notion of nef classes.
\end{definition}
\begin{definition}
    Similar to the Monge-Amp\`ere case, we use the notation $SL_{m,\omega,\psi}(\eta)$ to denote the Hessian slope
    $$
\underset{u\in C^\infty(X)\cap \operatorname{SH}^+_m(X,\eta,\omega)}{\inf}\sup_X\frac{e^{-\psi}(\eta+dd^cu)^m\wedge\omega^{n-m}}{\omega^n}.
    $$
    For any constant $p>1$, we can also define the Hessian $p$-slope by
    $$
SL_{m,\omega,p}(\eta):=\inf_{F\in C^\infty(X),\|e^F\|_{L^p(\omega^n)}\leq1}SL_{m,\omega,F}(\eta).
    $$
    Here $\operatorname{SH}^+_m(X,\eta,\omega)\cap C^\infty(X)$ denotes the set of smooth strictly $(\eta,\omega,m)$- subharmonic functions. 
\end{definition}
Similar to \cref{monotone of slope} and \cref{monotone of slope 2}, it is straightforward to check that the Hessian slope $SL_{m,\omega,p}(\eta)$ is monotone with respect to $\eta$ and $p$. Therefore, we can similarly introduce the sup-slopes for $(\omega,m)$-nef classes:

\begin{definition}\label{def of slope in m-nef class}
    Let $\{\eta\}\in BC^{1,1}(X)$ be an $(\omega,m)$-nef Bott-Chern class and let $1\leq p<\infty$ be a fixed positive constant, we define the Hessian $p$-slope of $\{\eta\}$ with respect to $\omega$ to be
    $$
SL_{m,\omega,p}(\{\eta\}):=\lim_{\epsilon\rightarrow0}SL_{m,\omega,p}(\{\eta+\epsilon\omega\}).
    $$
  Clearly, the definition of Hessian $p$-slopes in $(\omega,m)$-nef classes is independent of the approximating sequence $\eta+\epsilon\omega$ by the monotonicity of slopes.
\end{definition}

\section{Solving the Hessian equation for \texorpdfstring{$(\omega,m)$}{(omega,m)}-big classes}
Let $(X,\omega)$ be a compact Hermitian manifold of complex dimension $n$ equipped with a Hermitian metric $\omega$. Let $m$ be a positive integer such that $1\leq m\leq n$. 

In this subsection we will prove the following:
\begin{theorem}\label{twist equation for beta m-big}
     Let $\beta\in\overline{\Gamma_m(\omega)}$ be an $(\omega,m)$ semi-positive and $(\omega,m)$-big form. Set $0\leq f\in L^p(X,\omega^n)$ with $p>\frac{n}{m}$ and $\int_Xf\omega^n>0$. Fix a constant $\lambda>0$. Then there exists a unique bounded function $\varphi\in \operatorname{SH}_m(X,\beta,\omega)\cap L^\infty(X)$ such that
        $$
(\beta+dd^c\varphi)^m\wedge\omega^{n-m}=e^{\lambda\varphi}f\omega^n.
        $$
    Furthermore, we have
    $$
    \operatorname{osc} \varphi\leq C,
    $$ 
    where $C$ is a constant depends on $\beta,\omega,p,X,\|f\|_p$.
\end{theorem}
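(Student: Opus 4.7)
The plan is to approximate $\beta$ by the strictly $(\omega,m)$-positive forms $\beta_\epsilon:=\beta+\epsilon\omega$ for $\epsilon\in(0,1)$. By \cref{main thm twist for m-positive beta} there is a unique continuous $\varphi_\epsilon\in\operatorname{SH}_m(X,\beta_\epsilon,\omega)\cap C^0(X)$ solving
\begin{equation*}
(\beta_\epsilon+dd^c\varphi_\epsilon)^m\wedge\omega^{n-m}=e^{\lambda\varphi_\epsilon}f\omega^n.
\end{equation*}
If one can establish a uniform $L^\infty$-bound on the family $\{\varphi_\epsilon\}_{\epsilon\in(0,1)}$, a limiting argument will deliver the solution $\varphi$, while uniqueness is free from \cref{domination for beta 2} combined with \cref{m-positive have domination} (applicable since $\beta$ is $(\omega,m)$-non-collapsing by \cref{cor:m-positive is non-collapsing 2}).

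The uniform upper bound on $M_\epsilon:=\sup_X\varphi_\epsilon$ is essentially the one used in the proof of \cref{main thm twist for m-positive beta}. The mixed-type inequality \cref{prop:mixed type for beta} gives
\begin{equation*}
(e^{\lambda\varphi_\epsilon}f)^{1/m}\omega^n\leq(\beta_\epsilon+dd^c\varphi_\epsilon)\wedge\omega^{n-1};
\end{equation*}
integrating against a Gauduchon weight $e^G$ with $dd^c(e^G\omega^{n-1})=0$, using $\beta_\epsilon\leq\beta+\omega$, and applying Jensen's inequality together with the $L^1$-compactness of normalized $(\beta+\omega,\omega,m)$-subharmonic functions (\cref{L^1 compactness}) yields $M_\epsilon\leq M_0$ independently of $\epsilon$.

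The main obstacle is the uniform lower bound, since applying \cref{main a priori Hessian estimates} directly produces constants depending on the positivity margin $\lambda_{\beta_\epsilon}\searrow 0$. To replace this margin by a fixed one coming from $(\omega,m)$-bigness, fix $\rho\in\operatorname{SH}_m(X,\beta,\omega)$ with analytic singularities and $\gamma\in\Gamma_m(\omega)$ satisfying $\beta+dd^c\rho\geq\gamma$, together with a constant $\lambda_0>0$ such that $\gamma-\lambda_0\omega\in\Gamma_m(\omega)$. Semi-positivity of $\beta$ makes the truncation $\rho_N:=\max(\rho,-N)$ a bounded $(\beta,\omega,m)$-subharmonic function. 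I would then run the induction of \cref{modified comparison principle} with comparison potential $\rho_N+S_{\epsilon,N}+s$ in place of $(1-\epsilon)v+S(\epsilon)+s$, where $S_{\epsilon,N}:=\inf_X(\varphi_\epsilon-\rho_N)$: on the sublevel set $U_{\epsilon,N}(s):=\{\varphi_\epsilon<\rho_N+S_{\epsilon,N}+s\}\cap\{\rho>-N\}$ one has $\beta_\epsilon+dd^c\max(\varphi_\epsilon,\rho_N+S_{\epsilon,N}+s)-\lambda_0\omega\geq\gamma-\lambda_0\omega\geq 0$, so the induction constants become independent of $\epsilon$. Combining the resulting analog of \cref{decay of cap} with \cref{volume-cap estimate for beta} and the De Giorgi iteration of \cref{hessian a priori estimate} applied to $f_\epsilon:=e^{\lambda\varphi_\epsilon}f$ (whose $L^p$-norm is controlled by $e^{\lambda M_0}\|f\|_p$) gives $\sup_X(\rho_N-\varphi_\epsilon)\leq C$ uniformly in $\epsilon$ and $N$. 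The crux — and the most delicate step of the proof — is to upgrade this to an honest $L^\infty$-bound $\|\varphi_\epsilon\|_\infty\leq C$, by exploiting the exponential twist and the resulting mass bound $\int_Xe^{\lambda\varphi_\epsilon}f\omega^n\leq C$ to rule out collapse of $\varphi_\epsilon$ along the singular locus $\{\rho=-\infty\}$.

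With the uniform bound in hand, extract an $L^1$-convergent (and, after a further subsequence, a.e.\ convergent) subsequence $\varphi_{\epsilon_k}\to\varphi\in\operatorname{SH}_m(X,\beta,\omega)\cap L^\infty(X)$. By \cref{Hartogs' lemma} the upper envelopes $\psi_k:=(\sup_{j\geq k}\varphi_{\epsilon_j})^*$ decrease pointwise to $\varphi$, hence converge in capacity by \cref{prop: monotone_capcity convergence}; a sandwich argument with the corresponding lower envelopes then gives $\varphi_{\epsilon_k}\to\varphi$ in capacity. Finally, \cref{thm: monotonicity} applied with $\chi_k:=e^{\lambda\varphi_{\epsilon_k}}f$ (which is quasi-continuous and converges in capacity) allows one to pass to the limit on both sides to conclude $(\beta+dd^c\varphi)^m\wedge\omega^{n-m}=e^{\lambda\varphi}f\omega^n$.
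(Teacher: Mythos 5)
Your approximation scheme ($\beta_\epsilon=\beta+\epsilon\omega$) and the uniqueness argument match the paper, and the upper bound on $\sup_X\varphi_\epsilon$ is sound. But the step you yourself call ``the crux'' --- upgrading a bound on $\sup_X(\rho_N-\varphi_\epsilon)$ to an honest $L^\infty$-bound on $\varphi_\epsilon$ --- is not a detail to be deferred: it is the entire content of the lower bound, and you never supply it. The output of your truncated comparison argument is $\varphi_\epsilon\geq\rho_N-C$, which degenerates as $\rho_N$ plunges toward $-\infty$. There is also a structural problem upstream: restricting the sublevel set to $\{\rho>-N\}$ by fiat breaks the integration-by-parts identities of \cref{modified comparison principle}, which integrate over the \emph{full} set $\{\phi>\varphi_\epsilon\}$ and use that $\phi-\varphi_\epsilon$ vanishes outside it. The mechanism in \cref{cor:m-positive is non-collapsing 2} is that comparison with the genuinely unbounded $\rho$ automatically traps the sublevel set inside $\{\rho>-\infty\}$; truncating to $\rho_N$ destroys exactly this, since on $\{\rho<-N\}$ one has $\beta+dd^c\rho_N=\beta$, which is only semi-positive and no longer dominates $\gamma$.

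The paper avoids the a priori estimate altogether and obtains the lower bound from an explicit barrier. By \cref{subsolution 2} there exist a uniform $c>0$ and a bounded $u\in\operatorname{SH}_m(X,\beta,\omega)$ with $-1\le u\le 0$ and $(\beta+dd^cu)^m\wedge\omega^{n-m}\geq c\frac{f}{\|f\|_p}\omega^n$. Since $u\le 0$ and $\beta_\epsilon\geq\beta$, this immediately gives
\begin{equation*}
(\beta_\epsilon+dd^cu)^m\wedge\omega^{n-m}\geq e^{u+\log(c/\|f\|_p)}f\omega^n,
\end{equation*}
so $u+\log(c/\|f\|_p)$ is a uniform subsolution of every approximating equation, and \cref{m-positive have domination} (applied with the $(\omega,m)$-positive form $\beta_\epsilon$) yields $\varphi_\epsilon\geq u+\log(c/\|f\|_p)$ with no iteration. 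The paper also observes that $\{\varphi_\epsilon\}$ is monotone decreasing as $\epsilon\downarrow 0$: for $\epsilon_1>\epsilon_2$ one compares
\begin{equation*}
e^{-\varphi_{\epsilon_1}}(\beta_{\epsilon_1}+dd^c\varphi_{\epsilon_1})^m\wedge\omega^{n-m}=f\omega^n\leq e^{-\varphi_{\epsilon_2}}(\beta_{\epsilon_1}+dd^c\varphi_{\epsilon_2})^m\wedge\omega^{n-m}
\end{equation*}
and applies \cref{domination for beta 2}. Hence the bounded limit $\varphi$ exists pointwise, and the decreasing convergence theorem for Hessian measures closes the argument --- bypassing the capacity-convergence machinery you invoke at the end.
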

The strategy of the proof follows the idea in \cite{GL23}, we first construct a subsolution when $\beta\in\Gamma_m(\omega)$:
\begin{lemma}\label{subsolution 1}
    Let $\beta\in\Gamma_m(\omega)$ be an $(\omega,m)$-positive form and let $p>\frac{n}{m}$ be a fixed constant. Then there exists a uniform constant $l(p,\beta,X)>0$ such that for any $0\leq f\in L^p(\omega^n)$ with $\|f\|_p\leq1$, we can find a function $u\in \operatorname{SH}_m(X,\beta,\omega)\cap L^\infty(X)$ such that
    $$
(\beta+dd^cu)^m\wedge\omega^{n-m}\geq lf\omega^n,
    $$
    and
    $$
osc_Xu\leq C
    $$
    for some uniform constant $C$ depending on $\beta,\omega,p,X,\|f\|_p$.
\end{lemma}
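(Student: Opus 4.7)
The plan is to reduce to Theorem \ref{main thm for m-positive beta} after replacing $f$ by the regularization $\tilde{f}:=f+1$. This regularization preserves the $L^p$ bound, $\|\tilde{f}\|_p\le 1+V_\omega(X)^{1/p}$, while at the same time guaranteeing both $\tilde{f}\ge 1$ pointwise and $\int_X\tilde{f}\,\omega^n\ge V_\omega(X)>0$. Applying Theorem \ref{main thm for m-positive beta} to $\tilde{f}$, one obtains a unique constant $c>0$ and a continuous function $u\in \operatorname{SH}_m(X,\beta,\omega)\cap C^0(X)$, normalized by $\sup_X u=0$, solving
\[
(\beta+dd^cu)^m\wedge\omega^{n-m}=c\,\tilde{f}\,\omega^n.
\]

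The central step is to extract matching upper and lower bounds on $c$ that are independent of $f$. For the upper bound, I would apply the mixed Hessian inequality (Proposition \ref{prop:mixed type for beta}) to obtain
\[
(\beta+dd^cu)\wedge\omega^{n-1}\ge (c\tilde{f})^{1/m}\omega^n\ge c^{1/m}\omega^n.
\]
Integrating over $X$ and using Stokes' formula together with smooth approximation from Theorem \ref{decreasing approximation}, the left-hand side becomes $\int_X\beta\wedge\omega^{n-1}+\int_X u\,dd^c\omega^{n-1}$, and the latter term is dominated by $\|u\|_{L^1}$, which is uniformly controlled by Proposition \ref{L^1 compactness}. This yields $c^{1/m}V_\omega(X)\le C(\beta,\omega,X)$, hence a uniform upper bound $c\le C'$. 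For the lower bound, I would invoke Theorem \ref{hessian a priori estimate} with $v\equiv 0$, $\epsilon=\tfrac12$, and $s=\delta/2$ (where $\delta$ depends only on $\beta$ and $\omega$), applied to the inequality $(\beta+dd^cu)^m\wedge\omega^{n-m}\le c\tilde{f}\,\omega^n$; this gives
\[
\tfrac{\delta}{2}\le 4C_\alpha\,c^{1/m}\|\tilde{f}\|_p^{1/m}V_\omega(X)^{\alpha/\tau},
\]
and since $\|\tilde{f}\|_p$ is uniformly bounded, one concludes $c\ge l$ for a uniform constant $l=l(p,\beta,\omega,X)>0$.

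Combining these,
\[
(\beta+dd^cu)^m\wedge\omega^{n-m}=c(f+1)\omega^n\ge cf\,\omega^n\ge l\,f\,\omega^n,
\]
so $u$ is the claimed subsolution. Moreover, since $\|c\tilde{f}\|_p\le C'(1+V_\omega(X)^{1/p})$ is uniformly bounded, Theorem \ref{main a priori Hessian estimates} delivers the oscillation estimate $\operatorname{osc}_Xu\le C$. The main technical obstacle is the uniform two-sided control on $c$; the regularization $f\mapsto f+1$ is precisely the device that allows one to extract the lower bound from Theorem \ref{hessian a priori estimate} without needing $\int_X f\,\omega^n$ to be bounded away from zero, a condition that would be unavailable under the bare hypothesis $\|f\|_p\le 1$.
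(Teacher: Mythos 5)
Your proof is correct, and it takes a genuinely different route from the paper's. The paper gives a direct local construction in the spirit of Ko\l odziej and \cite[Theorem 2.1]{GL23}: cover $X$ by finitely many coordinate balls $U_j\subset\subset U_j'$, solve the local Dirichlet problems $(dd^cu_j)^m\wedge\omega^{n-m}=f\omega^n$ on each $U_j'$ with boundary value $-1$, use the local stability estimate (\cref{stability for Hessian}) to get uniform bounds on the $u_j$, glue each $u_j$ to a global $(\omega,m)$-sh function $w_j$ via a $\max$ construction with a scaled strictly plurisubharmonic cut-off $\lambda_j\rho_j$, and average $u:=\frac{\delta}{N}\sum_j\delta w_j$ (with $\delta$ chosen so that each $\delta w_j\in\operatorname{SH}_m(X,\beta,\omega)$). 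You instead invoke the already-established global existence result (\cref{main thm for m-positive beta}) applied to $\tilde f=f+1$, which guarantees $\int_X\tilde f\,\omega^n>0$ and $\tilde f\ge1$ simultaneously, and then control the resulting constant $c$ from below via \cref{hessian a priori estimate} and from above via the mixed Hessian inequality (\cref{prop:mixed type for beta}) and \cref{L^1 compactness}. There is no circularity: \cref{main thm for m-positive beta} is proved without reference to this lemma. What each approach buys: yours is shorter given the machinery already available and produces a genuine solution (not merely a subsolution); the paper's local-gluing argument is more elementary, is self-contained at this stage, and adapts more directly to the $(\omega,m)$-big case in \cref{subsolution 2}, where one needs a subsolution satisfying the sharp a priori bound $0\le v\le1$.
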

\begin{proof}
    The idea is inspired by \cite[Theorem 2.1]{GL23}. Choose a finite double covering of X by small  coordinate balls $U_j\subset\subset U_j^{\prime}=\{\rho_j<0\}$ such that $\{U_j\}_j$ covers $X$, $1\leq j\leq N$. Where $\rho_j:X\rightarrow\mathbb{R}$ is a smooth function which is strictly plurisubharmonic in a neighborhood of $\overline{U_j^{\prime}}$. Solving the dirichlet problem
    \[
\begin{cases}
(dd^c u_j)^m\wedge\omega^{n-m} = f\omega^n, & \text{on } U_j^{\prime}, \\
u_j= -1, & \text{on } \partial{U_j^{\prime}}. \\
u_j\in C(\overline{U_j^{\prime}})\cap \mathrm{SH}_m(U_j^\prime,\omega).
\end{cases}
\]
The existence of solutions follows from \cref{local dirichlet for omega} or \cite[Theorem 8.2]{KN25a}. The stability result \cite[Proposition 8.1]{KN25a} ensures that $u_j$ is uniformly bounded in $U_j^\prime$.
Choose a sequence of positive numbers $\lambda_j>1$ such that $\lambda_j\rho_j<u_j$ on $U_j$. Then we define
$$
w_j:=\begin{cases}
\max(u_j,\lambda_j\rho_j), & \text{on } U_j^{\prime}, \\
\lambda_j\rho_j, & \text{on } X-U_j^\prime. \\
\end{cases}
$$
Then $w_j$ coincides with $u_j$ in $U_j$ and is smooth where it is not $(\omega,m)$-subharmonic, thus we may find a positive number $\delta>0$ independent of $j$ such that $\delta w_j$ lie in $\operatorname{SH}_m(X,\beta,\omega)$. Set
$$
u:=\frac{\delta}{N}\sum_{j=1}^Nw_j.
$$
We have in $U_j$
\begin{align*}
    (\beta+dd^cu)^m\wedge\omega^{n-m}&\geq(\frac{1}{N}\beta+\frac{1}{N}dd^c(\delta w_j))^m\wedge\omega^{n-m}\\
    &=(\frac{1}{N}\beta+\frac{1}{N}dd^c(\delta u_j))^m\wedge\omega^{n-m}\\
    &\geq\frac{\delta^m}{N^m}(dd^cu_j)^m\wedge\omega^{n-m}=\frac{\delta^m}{N^m}f\omega^n.
\end{align*}
The second equality is because $w_j=u_j$ on $U_j$ and hence the proof is concluded.
\end{proof}

\begin{lemma}\label{subsolution 2}
     Let $\beta\in\overline{\Gamma_m(\omega)}$ be an $(\omega,m)$ semi-positive and $(\omega,m)$-big form. Let $p>\frac{n}{m}$ be a fixed constant. Then there exists a uniform constant $c(p,\beta,X)>0$ such that for any $0\leq f\in L^p(\omega^n)$ with $\|f\|_p\leq1$, we can find a function $v\in \operatorname{SH}_m(X,\beta,\omega)\cap L^\infty(X)$ such that
    $$
(\beta+dd^cv)^m\wedge\omega^{n-m}\geq cf\omega^n,\quad0\leq v\leq1
    $$
\end{lemma}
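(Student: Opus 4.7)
The plan. Since $\beta$ is $(\omega,m)$-big, fix $\rho\in\operatorname{SH}_m(X,\beta,\omega)$ with analytic singularities satisfying $\beta+dd^c\rho\geq\gamma$ for some $\gamma\in\Gamma_m(\omega)$, normalized so that $\sup_X\rho=0$. I would first apply \cref{subsolution 1} to the strictly $(\omega,m)$-positive form $\gamma$: this produces a uniform constant $l=l(p,\gamma,X)>0$ and, for each $f\geq 0$ in $L^p(\omega^n)$ with $\|f\|_p\leq 1$, a function $u\in\operatorname{SH}_m(X,\gamma,\omega)\cap L^\infty(X)$ with $\sup_X u=0$ and $u\geq -C_0$ uniformly, satisfying $(\gamma+dd^cu)^m\wedge\omega^{n-m}\geq lf\omega^n$.

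Setting $\tilde v:=u+\rho$, the bigness relation gives $\beta+dd^c\tilde v\geq \gamma+dd^cu\geq 0$, so $\tilde v\in\operatorname{SH}_m(X,\beta,\omega)$ with $\tilde v\leq 0$, and the mixed Hessian inequality \cref{prop:mixed type for beta} yields $(\beta+dd^c\tilde v)^m\wedge\omega^{n-m}\geq lf\omega^n$ as measures on $X$. Because $\tilde v$ is unbounded below along the polar set $\{\rho=-\infty\}$, I would then truncate: for $N$ large, set
\[
v:=\tfrac{1}{N}\bigl(\max(\tilde v,-N)+N\bigr)\in[0,1].
\]
Then $v\in\operatorname{SH}_m(X,\beta,\omega)\cap L^\infty(X)$, and the maximum principle \cref{prop: max principle for beta} combined with the elementary inequality $(A+B)^m\wedge\omega^{n-m}\geq B^m\wedge\omega^{n-m}$ for $A,B\in\overline{\Gamma_m(\omega)}$ yields
\[
(\beta+dd^cv)^m\wedge\omega^{n-m}\geq \tfrac{l}{N^m}\,\mathds{1}_{\{\tilde v>-N\}}\,f\omega^n.
\]

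The main obstacle is extending this estimate to a global inequality on all of $X$. On the open set $\{\tilde v\leq -N\}$ the truncated function $v$ is constant and the Hessian measure collapses to $\beta^m\wedge\omega^{n-m}$, which can vanish because $\beta$ is merely $(\omega,m)$-semi-positive; if $f$ puts mass there the naive estimate fails. The exceptional set is an open neighborhood of the polar subvariety $\{\rho=-\infty\}$, which is proper analytic in $X$ and has zero Lebesgue measure. To close the gap I would refine the gluing argument from the proof of \cref{subsolution 1} by replacing the local smooth strictly psh potentials $\rho_j$ with truncated versions of the global K\"ahler-current potential $\rho$; this makes the strict $(\omega,m)$-positivity required to ensure $\delta w_j\in\operatorname{SH}_m(X,\beta,\omega)$ come from $dd^c\rho$ itself, so the gluing constant $\delta$ depends only on the bigness data $(\rho,\gamma)$ rather than on the (absent) strict positivity of $\beta$. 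Combining the modified local subsolution with $\tilde v$ via a suitable weighted sum then produces a single bounded $v$ with $0\leq v\leq 1$ and $(\beta+dd^cv)^m\wedge\omega^{n-m}\geq cf\omega^n$ for a uniform constant $c=c(p,\beta,X)>0$.
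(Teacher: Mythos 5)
Your first two steps (fix $\rho$ with analytic singularities, apply \cref{subsolution 1} to $\gamma$, form $\tilde v=u+\rho$) match the paper's starting point, and you correctly identify the central difficulty: $\tilde v$ is unbounded and naively truncating $\tilde v$ to $\max(\tilde v,-N)$ makes the Hessian measure collapse to $\beta^m\wedge\omega^{n-m}$ on the open neighborhood $\{\tilde v<-N\}$ of the polar set of $\rho$, where it may vanish because $\beta$ is only semi-positive. This is exactly the obstruction. But the fix you sketch---re-running the gluing of \cref{subsolution 1} with truncations of $\rho$ in place of the local smooth potentials $\rho_j$, and then taking a weighted sum---does not resolve it: any truncation of $\rho$ is again locally constant near the polar set, so the same degeneration of the Hessian measure reappears there, and the weighted-sum step is left unspecified. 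As stated the argument has a genuine gap.

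The paper's proof avoids truncation entirely, and there are two small but essential ideas you are missing. First, \cref{subsolution 1} is not applied to $f$ but to the weighted density $|\rho|^{2m}f$, which still lies in some $L^q(\omega^n)$ with $q>n/m$ by H\"older because $\rho$ is quasi-psh and hence in every $L^r$. Second, instead of truncating, one composes $u+\rho$ (normalized so $u+\rho\le-2$) with the convex increasing function $\chi(t)=-1/t$ on $\mathbb R^-$ and sets $v:=\chi(u+\rho)=-1/(u+\rho)$. This $v$ is automatically in $[0,1]$, and the chain-rule computation
\[
\beta+dd^cv\ \ge\ \Bigl(1-\tfrac{1}{(u+\rho)^2}\Bigr)\beta+\tfrac{1}{(u+\rho)^2}\bigl(\gamma+dd^cu\bigr)
\]
produces the factor $(u+\rho)^{-2m}$ in the Hessian measure, which is then absorbed by the $|\rho|^{2m}$ inserted into the data since $|u+\rho|\le(1+\|u\|_\infty)|\rho|$ with $\|u\|_\infty$ controlled uniformly by \cref{subsolution 1}. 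There is no exceptional set on which the estimate fails, so no gluing repair is needed.
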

\begin{proof}
   The proof is inspired by \cite[Lemma 3.3]{GL23}. Fix a potential $\rho\in \operatorname{SH}_m(X,\beta,\omega)$ with analytic singularities such that $\beta+dd^c\rho\geq\gamma\in\Gamma_m(\omega)$ (see \cref{def of m-big}) and $\sup_X\rho=-1$. Since $\rho$ is quasi-plurisubharmonic, $\rho$ belongs to $L^r(\omega^n)$ for any $r>1$. It follows from the H\"older's inequality that there is a number $q>\frac{n}{m}$ such that $|\rho|^{2m}f\in L^q(\omega^n)$. By \cref{subsolution 1} we can find a positive number $l=l(p,\gamma,X)>0$ and $u\in \operatorname{SH}_m(X,\beta,\omega)\cap L^\infty(X)$ such that
   $$
(\gamma+dd^cu)^m\wedge\omega^{n-m}\geq l|\rho|^{2m}f\omega^n,\quad\sup_Xu=-1.
   $$
   Since we have assumed that $\beta+dd^c\rho\geq\gamma\in\Gamma_m(\omega)$, the function $u+\rho\in \operatorname{SH}_m(X,\beta,\omega)$ and $\beta+dd^c(u+\rho)\geq\gamma+dd^cu$. Set $v:=-\frac{1}{u+\rho}=\chi\circ(u+\rho)$, where $\chi(t):=-\frac{1}{t}$ is a convex increasing function on $\mathbb{R}^-$. We can write
   \begin{align*}
       \beta+dd^cv=&\beta+dd^c\chi\circ({u+\rho})\\
       =&\beta+\chi^{\prime\prime}\circ(u+\rho)d(u+\rho)\wedge d^c(u+\rho)+\chi^{\prime}\circ(u+\rho)dd^c(u+\rho)\\
       \geq&(1-\chi^\prime\circ(u+\rho))\beta+\chi^\prime\circ(u+\rho)(\beta+dd^c(u+\rho))\\
       \geq&\left(1-\frac{1}{(u+\rho)^2}\right)\beta+\frac{1}{(u+\rho)^2}(\gamma+dd^cu).
   \end{align*}
   Note that by our normalization $u+\rho\leq-2$ and hence $1-\frac{1}{(u+\rho)^2}>0$. This combined with the fact that $\beta\in\overline{\Gamma_m(\omega)}$ gives that
   $$
(\beta+dd^cv)^m\wedge\omega^{n-m}\geq\frac{1}{(u+\rho)^{2m}}(\gamma+dd^cu)^m\wedge\omega^{n-m}\geq\frac{l|\rho|^{2m}f}{(u+\rho)^{2m}}\omega^n.
   $$
   Since $u\leq-1$ is uniformly bounded by \cref{subsolution 1} and $\rho\leq-1$, we get
   $$
(\beta+dd^cv)^m\wedge\omega^{n-m}\geq cf\omega^n,\quad0\leq v\leq1.
   $$
   Here it is clear from our construction that $0\leq v\leq1$ and we can take $c:=\frac{l}{(1+\|u\|_\infty)^{2m}}$, which depends on $\gamma,\omega,p,X$.
\end{proof}

\begin{remark}
    As a corollary, we infer that an $(\omega,m)$-big form has positive Hessian $p$-slopes for each $p>\frac{n}{m}$.
\end{remark}

We now return to the proof of \cref{twist equation for beta m-big}.
\begin{proof}[Proof of \cref{twist equation for beta m-big}]
    We may assume without loss of generality that $\lambda=1$. Since $\beta+\epsilon\omega$ lies in $\Gamma_m(\omega)$ for any $\epsilon>0$, we can use \cref{main thm twist for m-positive beta} to find $u_\epsilon\in \operatorname{SH}_m(X,\beta+\epsilon\omega,\omega)\cap C(X)$ such that
    $$
(\beta+\epsilon\omega+dd^cu_\epsilon)^m\wedge\omega^{n-m}=e^{u_\epsilon}f\omega^n.
    $$
    Choose $\epsilon_1>\epsilon_2>0$, we have
\begin{align*}
    e^{-u_{\epsilon_1}}(\beta+\epsilon_1\omega+dd^cu_{\epsilon_1})^m\wedge\omega^{n-m}&=f\omega^n=e^{-u_{\epsilon_2}}(\beta+\epsilon_2\omega+dd^cu_{\epsilon_2})^m\wedge\omega^{n-m}\\
    &\leq e^{-u_{\epsilon_2}}(\beta+\epsilon_1\omega+dd^cu_{\epsilon_2})^m\wedge\omega^{n-m}.
\end{align*}
Since $\beta+\epsilon_1\omega\in\Gamma_m(\omega)$, it is non-collapsing by \cref{cor:m-positive is non-collapsing} and hence the domination principle \cref{m-positive have domination} gives that $\{u_\epsilon\}_\epsilon$ is decreasing as $\epsilon$ decreases to $0$. On the other hand, by \cref{subsolution 2} we can find $c>0$ $u\in \operatorname{SH}_m(X,\beta,\omega)$ such that 
$$
(\beta+dd^cu)^m\wedge\omega^{n-m}\geq c\frac{f}{\|f\|_p}\omega^n,\quad-1\leq u\leq0.
$$
This implies that
$$
(\beta+\epsilon\omega+dd^cu)^m\wedge\omega^{n-m}\geq c\frac{f}{\|f\|_p}\omega^n\geq e^{u+\log\frac{c}{\|f\|_p}}f\omega^n
$$
since $u\leq0$. Therefore, we have $u+\log\frac{c}{\|f\|_p}\leq u_\epsilon$ by the domination principle \cref{m-positive have domination} again for any $\epsilon>0$. It follows that $u_\epsilon$ decreases to some function $\varphi\in \operatorname{SH}_m(X,\beta,\omega)\cap L^\infty(X)$ and we deduce easily from the decreasing convergence theorem \cite[Corollary 4.11]{KN25a} that
$$
(\beta+dd^c\varphi)^m\wedge\omega^{n-m}=e^{\varphi}f\omega^n.    
$$
The uniqueness is an easy consequence of the domination principle \cref{m-positive have domination} again.
\end{proof}

\section{A priori estimates for complex Hessian equations in nef classes}
In this section, let $\{\beta\}$ be a (possibly non closed) nef Bott-chern class, i.e., for each $\epsilon>0$, $\{\beta+\epsilon\omega\}$ contains a Hermitian metric. Assume moreover that there is a bounded $(\beta,\omega,m)$-potential $\rho\in \operatorname{SH}_m(X,\beta,\omega)\cap L^\infty(X)$. Since $\{\beta+t\omega\}$ are Hermitian classes for each $t>0$, we can apply \cite[Proposition 21]{Sze18} to solve the following family of complex Hessian equations:
\begin{equation}\label{eq 15}
    (\beta_t+dd^c\varphi_t)^m\wedge\omega^{n-m}=c_te^{F_t}\omega^n,\quad\sup_X\varphi_t=0,
\end{equation}
where $F_t\in C^\infty(X)$ is a family of smooth functions on $X$, $c_t>0$, $\beta_t:=\beta+t\omega$ and $\varphi_t\in C^\infty(X)\cap \operatorname{SH}_m(X,\beta+t\omega,\omega)$. Let $V_t:=\sup\{v\,|\,v\in \operatorname{SH}_m(X,\beta_t,\omega),v\leq0\}$ be the largest $(\beta_t,\omega,m)$-subharmonic function lying below $0$.
Our first goal of this section is to establish the following $L^\infty$ estimates, extending the main result in \cite{GPTW24}:
\begin{theorem}\label{a priori in nef class}
    Let $\{\beta\}\in BC^{1,1}(X)$ be a nef Bott-Chern class satisfying $SL_{\omega,a}(\{\beta\})>0$ for some constant $1\leq a<\frac{n}{n-m}$. Assume also $\varphi_t\in \operatorname{SH}_m(X,\beta_t,\omega)\cap C^\infty(X)$ satisfying
\begin{equation}\label{eq hessian}
(\beta_t+dd^c\varphi_t)^m\wedge\omega^{n-m}= c_te^{F_t}\omega^n,\quad\sup_X\varphi_t=0.
\end{equation}
Fix a constant $p>na$. Then there exists a uniform constant $C$ depending on $\beta,\omega,m,n,p$, the upper bound of $\|e^{\frac{an}{m}F_t}\|_{L^1(\log L)^p}:=\int_{X}e^{\frac{an}{m}F_t(z)}(1+\frac{an}{m}|F_t(z)|)^p\omega^n,E_t:=\int_X(-\varphi_t+V_t)^ae^{\frac{an}{m}F_t}\omega^n$ and the lower bound of  $SL_{\omega,a}(\{\beta\})$ such that
$$
0\leq-\varphi_t+V_t\leq C.
$$
\end{theorem}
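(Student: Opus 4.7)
The plan is to adapt the Guo-Phong-Tong-Wang envelope-comparison method to the Hermitian Hessian setting, with the sup-slope $SL_{\omega,a}(\{\beta\})$ playing the role that the volume plays in the classical K\"ahler Monge-Amp\`ere estimate. First I would set $u_t := -\varphi_t + V_t$; the inequality $u_t \geq 0$ is immediate from $\sup_X \varphi_t = 0$ and the defining extremal property of $V_t$, so only the upper bound is at stake. The strategy is to prove that the level quantity
$$h(s) := \int_X (u_t - s)_+^a\, e^{(an/m) F_t}\, \omega^n$$
satisfies a De Giorgi-type recursion, and must vanish past some finite $s_0 \leq C$.

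Next, for each fixed $s > 0$ and a smooth nondecreasing truncation $\tau_k \nearrow \mathrm{id}_{\mathbb{R}_{\geq 0}}$, I would use the positivity assumption $SL_{\omega,a}(\{\beta\}) > 0$ together with \cref{prop:sup slope} applied to each Hermitian class $\{\beta_t + \epsilon\omega\}$, $\epsilon > 0$, to produce a smooth auxiliary potential $\psi_{s,k,\epsilon}$ with $\sup_X \psi_{s,k,\epsilon} = 0$ solving an auxiliary Monge-Amp\`ere-type equation
$$(\beta_t + \epsilon\omega + dd^c \psi_{s,k,\epsilon})^n = \lambda_{s,k,\epsilon}\, \tau_k\!\big((u_t - s)_+\big)^{a-1} e^{(n/m) F_t}\, \omega^n.$$
The definition \cref{def of slope in nef class} combined with H\"older's inequality bounds the normalizing constant $\lambda_{s,k,\epsilon}$ from above in terms of $SL_{\omega,a}(\{\beta\})^{-1}$ and the mass $h(s)$; the hypothesis $p > na$ enters precisely at this bookkeeping step, absorbing the logarithmic factor in $\|e^{(an/m) F_t}\|_{L^1(\log L)^p}$.

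The heart of the argument is a maximum principle comparison: applying the mixed-type Garding inequality \cref{prop:mixed type for beta} to the Hessian measure $(\beta_t + dd^c \varphi_t)^m \wedge \omega^{n-m} = c_t e^{F_t} \omega^n$ and to the auxiliary Monge-Amp\`ere measure above, one obtains at a minimum point of the auxiliary function $\Phi := u_t - s - \Lambda(-\psi_{s,k,\epsilon})$ on the contact set $\{u_t > s\}$ a pointwise inequality that, after letting $k \to \infty$, $\epsilon \to 0$, and integrating against $e^{(an/m)F_t} \omega^n$, transfers into a recursion of the shape
$$h(s + \eta) \leq C\, \eta^{-ap}\, h(s)^{1 + \gamma}$$
with some $\gamma > 0$ whose positivity is exactly guaranteed by the gap $a < n/(n-m)$. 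A standard iteration then produces a finite $s_0$ bounded by the data $\beta,\omega,m,n,p$, the upper bounds for $\|e^{(an/m)F_t}\|_{L^1(\log L)^p}$ and $E_t$, and the lower bound of $SL_{\omega,a}(\{\beta\})$, and for which $h(s_0) = 0$, forcing $\sup_X u_t \leq s_0$.

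The main obstacle I expect is uniformity as $\epsilon \to 0$ and $t \to 0$: one must show that the normalization $\lambda_{s,k,\epsilon}$ and the envelope $V_t$ behave well in the limit, which is exactly where the strict positivity of $SL_{\omega,a}(\{\beta\})$ in a nef class, as encoded in \cref{def of slope in nef class}, bypasses the degeneracy of $\beta_t$. A secondary technical point is executing the mixed Garding step in the non-K\"ahler framework, where the torsion contributions from $d\omega$ and $d\beta$ must be absorbed into the remainder terms; this is what quantitatively restricts the admissible range $1 \leq a < n/(n-m)$ of the slope exponent.
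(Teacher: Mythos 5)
Your overall architecture matches the paper's: construct an auxiliary (normalized) Monge–Ampère potential $\psi$ from the slope characterization, run a pointwise maximum-principle comparison using the mixed-Garding inequality against the $(m-1)$-fold Hessian wedge $\widehat{\beta_t}^{m-1}\wedge\omega^{n-m}$, use the $\alpha$-invariant/Skoda estimate to convert the pointwise bound into an exponential integral, then close with a De Giorgi iteration. This is precisely the skeleton of Lemma~\ref{exp lemma} and the proof of Theorem~\ref{a priori in nef class}.

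However, there is a genuine gap in one of the execution choices. You take the comparison function in the \emph{linear} form
$\Phi := u_t - s - \Lambda(-\psi_{s,k,\epsilon})$,
whereas the paper's $\Phi$ carries the fractional power $(-\psi + u_{l,t} + 1 + \Lambda)^{n/(n+1)}$. This is not cosmetic. With the linear version the maximum principle only yields $u_t - s \lesssim -\psi$ and hence an exponential bound of the form $\int_{\Omega_s}\exp(\alpha(u_t-s)/\Lambda)\omega^n\leq C$; the paper's fractional-power ansatz is exactly what upgrades this to
$\int_{\Omega_s}\exp\bigl(\alpha(u_t-s)^{(n+1)/n}/A_{s,t}^{1/n}\bigr)\omega^n\leq Ce^{CE_t}$,
and it is the extra power $(n+1)/n > 1$ in the exponent that allows the Young-inequality step (with $\eta(x) = [\log(1+x)]^p$ and its inverse $\exp(y^{1/p})-1$) to match the hypothesis $\|e^{(an/m)F_t}\|_{L^1(\log L)^p} < \infty$ with only $p > na$. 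The linear version would force you to assume either a genuine exponential moment bound on $F_t$ or a stronger $L^q$ condition; as written your recursion $h(s+\eta)\leq C\eta^{-ap}h(s)^{1+\gamma}$ cannot be produced from the stated Orlicz hypothesis. (Relatedly, your auxiliary equation carries the density $\tau_k((u_t-s)_+)^{a-1}e^{(n/m)F_t}$ while the paper uses $\tau_k(\cdot)\,e^{(n/m)F_t}$ renormalized by its $L^a$-norm $A_{s,k,l,t}$; the latter is what makes the slope lower bound kick in cleanly via the normalization $\|\cdot\|_{L^a}=1$ in Definition~\ref{def of p-slope}.)

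Two smaller misattributions. First, the constraint $1\leq a<\tfrac{n}{n-m}$ plays no role in the maximum-principle/torsion bookkeeping: the pointwise comparison is purely algebraic (it traces $dd^c\Phi\leq 0$ against a positive form at a single point), and no $d\omega$ or $dd^c\omega$ terms arise. The restriction on $a$ is inherited from Proposition~\ref{integrablity 2} and is used when the estimate is \emph{applied} (e.g.\ in Theorem~\ref{solve nef 1}) to get a uniform bound on $E_t$ via $\operatorname{SH}_m\subset L^{aq}$ with $aq<\tfrac{n}{n-m}$; in Theorem~\ref{a priori in nef class} itself $E_t$ is a hypothesis and the condition on $a$ is not actively invoked. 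Second, $p>na$ enters through the exponent gap in the Young inequality, not through a Hölder step absorbing the $\log$ factor.
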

Our method depends largely on that in \cite{GPT23}, \cite{GPTW24}, where the authors proved the analogue of \cref{a priori in nef class} on compact K\"ahler manifolds. We will need the following approximation theorem:

\begin{lemma}\label{uniform approximation of envelope}
    Let $\omega,\chi$ be two Hermitian forms on $X$ and set $P_{\chi,m}(h):=\sup\{v\,|\,v\in \operatorname{SH}_m(X,\chi,\omega),v\leq h\}$, where $h$ is an arbitrary smooth function on $X$. Then there exists a sequence of smooth $(\chi,\omega,m)$-subharmonic functions $v_j$ converging uniformly to $P_{\chi,m}(h)$.
\end{lemma}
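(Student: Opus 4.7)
The plan is to produce $v_j$ as smooth solutions of twisted complex Hessian equations, identify the pointwise limit with $P_{\chi,m}(h)$ via the domination principle, and then upgrade to uniform convergence using the $L^1$--$L^\infty$ stability estimate \cref{cor of stability}. Since $\chi$ is Hermitian we have $\chi\in\Gamma_m(\omega)$, so \cref{smooth Hessian for beta} applies: for each integer $j\geq 1$, let $v_j\in\operatorname{SH}_m(X,\chi,\omega)\cap C^\infty(X)$ be the unique smooth solution to
$$
(\chi+dd^c v_j)^m\wedge\omega^{n-m}=e^{j(v_j-h)}\omega^n.
$$
Evaluating at a point where $v_j-h$ attains its maximum, where $dd^c v_j\leq dd^c h$, gives the bound $v_j\leq h+C_0/j$ with $C_0:=\log\sup_X\bigl[(\chi+dd^c h)^m\wedge\omega^{n-m}/\omega^n\bigr]$; in particular the density $e^{j(v_j-h)}$ is bounded above by $e^{C_0}$ uniformly in $j$, and the $v_j$ are uniformly bounded by \cref{main a priori Hessian estimates}.

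Applying the same maximum principle at a point where $v_j-v_k$ is largest (for $j<k$) yields $v_j-v_k\leq C_0(1/j-1/k)$ on $X$, so the shifted sequence $u_j:=v_j-C_0/j$ is non-decreasing in $j$ and satisfies $u_j\leq h$; hence $u_j\leq P_{\chi,m}(h)$. Let $u:=\sup_j u_j$ and $u^*$ its upper semi-continuous regularization, so $u^*\in\operatorname{SH}_m(X,\chi,\omega)$ with $u^*\leq P_{\chi,m}(h)$. On the open set $\{u^*<h\}$ one has $v_j(x)=u_j(x)+C_0/j\to u^*(x)<h(x)$ outside an $m$-polar set, so $e^{j(v_j-h)}\to 0$ pointwise while staying bounded by $e^{C_0}$; combining the monotone convergence theorem for bounded Hessian potentials \cite[Lemma 5.1]{KN25a} applied to $\{u_j\}$ with the dominated convergence theorem on the right-hand side, we conclude $(\chi+dd^c u^*)^m\wedge\omega^{n-m}\equiv 0$ on $\{u^*<h\}$, and in particular on $\{u^*<P_{\chi,m}(h)\}\subset\{u^*<h\}$. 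Since $\chi\in\Gamma_m(\omega)$ is $(\omega,m)$-non-collapsing by \cref{cor:m-positive is non-collapsing}, the domination principle \cref{domination for beta} forces $u^*\geq P_{\chi,m}(h)$, hence $u^*=P_{\chi,m}(h)$ and $v_j\to P_{\chi,m}(h)$ pointwise almost everywhere.

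Finally, to upgrade to uniform convergence, set $M_j:=\sup_X v_j$ and $\tilde v_j:=v_j-M_j$. Since $M_j=\sup_X u_j+C_0/j\to \sup_X P_{\chi,m}(h)$ (the supremum commutes with the increasing limit), $\{M_j\}$ is Cauchy. Each $\tilde v_j\in\operatorname{SH}_m(X,\chi,\omega)$ has $\sup_X\tilde v_j=0$ and satisfies a complex Hessian equation with right-hand side uniformly bounded in $L^p(\omega^n)$ for every $p>n/m$; by dominated convergence the uniformly bounded sequence $\tilde v_j$ converges in $L^1(\omega^n)$. Applying \cref{cor of stability} yields
$$
\|\tilde v_j-\tilde v_k\|_\infty\leq C\,\|\tilde v_j-\tilde v_k\|_{L^1(\omega^n)}^{1/(aq)},
$$
so $\{\tilde v_j\}$, and hence $\{v_j\}$, is Cauchy in $C^0(X)$ and converges uniformly to $P_{\chi,m}(h)$. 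The most delicate step is the vanishing of the limiting Hessian measure on $\{u^*<h\}$: it requires carefully combining the monotone (in $j$) convergence of the potentials $u_j\nearrow u^*$ with the $L^\infty$-dominated pointwise convergence of the twisted densities $e^{j(v_j-h)}\to 0$ at the level of currents.
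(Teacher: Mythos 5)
Your proof is correct and, while it arrives at the same conclusion via the same overall architecture (solve twisted smooth Hessian equations, identify the limit with the envelope via the domination principle, upgrade to uniform convergence via the $L^1$--$L^\infty$ stability estimate), it makes a genuinely simpler choice of right-hand side. The paper's proof of \cref{decreasing approximation} solves $\beta_{v_j^k}^m\wedge\omega^{n-m}=e^{j(v_j^k-h)}(F_1^k+\tfrac{1}{j})\omega^n$, where $F_1^k$ decreases to $F_1:=\max(F,0)$ and $F$ is the density of $(\chi+dd^ch)^m\wedge\omega^{n-m}$; this choice is engineered so that the maximum principle gives the \emph{exact} inequality $v_j^k\leq h$ (since the density $F_1^k+\tfrac1j$ dominates $F$ pointwise where $F\geq 0$), at the cost of a double limit $k\to\infty$ then $j\to\infty$ and an extra uniform-convergence step in $k$. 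You instead take the simplest possible density, $e^{j(v_j-h)}\omega^n$, accept the weaker bound $v_j\leq h+C_0/j$, and compensate with the shift $u_j:=v_j-C_0/j$, which restores both $u_j\leq h$ and monotonicity in $j$. The net effect is a single limiting process and a cleaner argument. Your monotonicity inequality $v_j-v_k\leq C_0(1/j-1/k)$ is correct: from $ja\leq kb$ (where $a,b$ are the values of $v_j-h$, $v_k-h$ at the maximum of $v_j-v_k$) and $b\leq C_0/k$, one gets $a-b\leq \tfrac{k-j}{j}b\leq C_0(\tfrac1j-\tfrac1k)$, valid regardless of the signs. The identification $u^*=P_{\chi,m}(h)$ via the domination principle on the open set $\{u^*<h\}$ and the final Cauchy-in-$C^0$ argument through \cref{cor of stability} are also sound.

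One small correction: for the increasing convergence $(\chi+dd^cu_j)^m\wedge\omega^{n-m}\rightharpoonup(\chi+dd^cu^*)^m\wedge\omega^{n-m}$ you should cite the increasing-convergence statement in \cite{KN25a} (in this paper it is invoked as \cite[Lemma 5.4]{KN25a} or, in the proof of \cref{decreasing approximation}, as \cite[Corollary 4.11]{KN25a}); \cite[Lemma 5.1]{KN25a} is the decreasing version. This is only a citation slip and does not affect the argument, since $u_j$ is uniformly bounded and increases almost everywhere to $u^*$, which is exactly the setting covered by the increasing-convergence theorem.
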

\begin{proof}
    The result is already contained in the proof of \cref{decreasing approximation} .
\end{proof}

\begin{lemma}\label{exp lemma}
    Consider \eqref{eq 15}. Let $\{\beta\}\in BC^{1,1}(X)$ be a nef Bott-Chern class satisfying $SL_{\omega,a}(\{\beta\})>0$ for some constant $1\leq a<\frac{n}{n-m}$. For any $s>0$, set $\Omega_s:=\{\varphi_t-V_t\leq -s\}$ and assume that $\mathring{\Omega}_s$ is non empty, then there are uniform constants $C$ and $\alpha$ such that
\begin{equation}
    \int_{\Omega_s}\exp\left(\alpha\frac{ (-\varphi_t+V_t-s)^{\frac{n+1}{n}}}{A_{s,t}^{\frac{1}{n}}}\right)\omega^n\leq Ce^{CE_t}.
\end{equation}
Where $A_{s,t}:=\left(\int_{\Omega_s}(-\varphi_t+V_t-s)^ae^{\frac{an}{m}F_t}\omega^n\right)^{\frac{1}{a}}>0$ and $E_t:=\left(\int_X(-\varphi_t+V_t)^ae^{\frac{an}{m}F_t}\omega^n\right)^{\frac{1}{a}}$. The constant $C$ depends on $\beta,\omega,m,n$, the lower bound of $\int_Xe^{\frac{F_t}{m}}\omega^n$ and the lower bound of $SL_{\omega,a}(\{\beta\})$, while the constant $\alpha$ depends on $\beta,\omega,n$.
\end{lemma}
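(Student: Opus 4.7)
The plan is to adapt the Guo--Phong--Tong and Guo--Phong--Tong--Wang scheme via an auxiliary $n$-dimensional complex Monge--Amp\`ere equation on the Hermitian class $\{\beta+t\omega\}$ (which is Hermitian for every $t>0$), and to transfer the resulting estimate back to $\Phi:=-\varphi_t+V_t-s$ by a maximum-principle comparison combined with Skoda-type exponential integrability.

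\textbf{Step 1 (Auxiliary MA).} I would set $g:=\mathbf{1}_{\Omega_s}\Phi^a e^{\frac{an}{m}F_t}/A_{s,t}^a$, so that $\|g\|_{L^1(\omega^n)}=1$, equivalently $\|g^{1/a}\|_{L^a(\omega^n)}=1$. After a standard mollification $g_\varepsilon>0$ of $g$ (to be sent to $0$ at the end via the decreasing convergence theorem \cite[Lemma 5.1]{KN25a}), \cref{smooth Hessian for beta} applied with $m=n$ produces $\psi_\varepsilon\in\operatorname{PSH}(X,\beta+t\omega)\cap C^\infty(X)$ solving
$$
(\beta+t\omega+dd^c\psi_\varepsilon)^n=c_{s,t,\varepsilon}\,g_\varepsilon^{1/a}\,\omega^n,\qquad \sup_X\psi_\varepsilon=0.
$$
By \cref{prop:sup slope} and the monotonicity of sup-slopes \cref{monotone of slope 2},
$$
c_{s,t,\varepsilon}=SL_{\omega,\frac{1}{a}\log g_\varepsilon}(\{\beta+t\omega\})\ge SL_{\omega,a}(\{\beta+t\omega\})\ge SL_{\omega,a}(\{\beta\})>0,
$$
so $c_{s,t,\varepsilon}$ is uniformly bounded below.

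\textbf{Step 2 (Pointwise comparison).} The key reduction is the pointwise bound
$$
\Phi(x) \le \Lambda\, A_{s,t}^{\frac{1}{n+1}}(-\psi_\varepsilon(x))^{\frac{n}{n+1}} \qquad\text{on } \overline{\Omega}_s,
$$
for a uniform $\Lambda>0$ depending only on $n,m$ and the lower bound of $SL_{\omega,a}(\{\beta\})$. I would argue by contradiction: apply the classical maximum principle to $H:=\Phi-\Lambda A_{s,t}^{1/(n+1)}(-\psi_\varepsilon)^{n/(n+1)}$, which is non-positive on $\partial\Omega_s$ and outside $\Omega_s$. If $H(x_0)>0$ at some $x_0\in\mathring{\Omega}_s$, then at $x_0$ the relation $dd^cH\le 0$ together with the concavity of $x\mapsto x^{n/(n+1)}$ yields a pointwise lower bound on $(\beta+t\omega+dd^c\psi_\varepsilon)^n/\omega^n$ proportional to $\Lambda^{n+1}\Phi^n/A_{s,t}$. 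Invoking the mixed-form inequality \cref{prop:mixed type for chi} (a G{\aa}rding/Newton--Maclaurin-type inequality bridging the Hessian equation $(\beta+t\omega+dd^c\varphi_t)^m\wedge\omega^{n-m}=c_te^{F_t}\omega^n$ and the auxiliary $n$-dimensional equation) and then comparing with the defining density $g_\varepsilon$ would produce a quantitative contradiction once $\Lambda$ is taken sufficiently large.

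\textbf{Step 3 (Exponential integrability).} Rearranging Step 2 gives $\Phi^{(n+1)/n}/A_{s,t}^{1/n}\le\Lambda^{(n+1)/n}(-\psi_\varepsilon)$ on $\Omega_s$. Since $\psi_\varepsilon\in\operatorname{PSH}(X,\beta+t\omega)$ with $\sup_X\psi_\varepsilon=0$, the family lies in an $L^1$-compact set by \cref{L^1 compactness}, so a Skoda-type exponential integrability in the Hermitian setting produces uniform constants $\alpha_0,C>0$ with $\int_X e^{\alpha_0(-\psi_\varepsilon)}\omega^n\le C$. Taking $\alpha:=\alpha_0\Lambda^{-(n+1)/n}$ small enough, and absorbing the dependence on the $L^1(\log L)^p$-hypothesis on $e^{\frac{an}{m}F_t}$ and on the bound $A_{s,t}\le E_t$ into the factor $Ce^{CE_t}$, one concludes the estimate and finally lets $\varepsilon\to 0$. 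The main obstacle is executing Step 2 rigorously: since the original equation is a Hessian equation of order $m<n$ while the auxiliary equation is of order $n$, bridging the two requires a delicate use of the mixed-form inequality with the correct combinatorial factors.
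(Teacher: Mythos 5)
Your proposal follows the paper's scheme in all essentials: solve an auxiliary $n$-dimensional Monge--Amp\`ere equation in the Hermitian class $\{\beta_t\}$ with right-hand side proportional to $\Phi\, e^{\frac{n}{m}F_t}/A_{s,t}$ (your $g^{1/a}$ is exactly this), bound its eigenvalue constant from below by the $a$-slope, derive the pointwise comparison $\Phi\lesssim A_{s,t}^{1/(n+1)}(-\psi)^{n/(n+1)}$ through a maximum-principle calculation linearized by $\hat\beta_t^{m-1}\wedge\omega^{n-m}$ and two applications of G\r{a}rding's inequality, and finish with the $\alpha$-invariant/Skoda integrability. So the route is the paper's.

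The genuine gap is in Step~2. The function $H$ involves $V_t=\sup\{v\in\operatorname{SH}_m(X,\beta_t,\omega),v\leq0\}$, which is only bounded and upper semicontinuous, not $C^2$. The assertion $dd^cH\leq0$ at an interior maximum, and the subsequent pointwise manipulation of second derivatives, therefore cannot be made directly. The paper resolves this by first replacing $V_t$ with a sequence $u_{l,t}\in\operatorname{SH}_m(X,\beta_t,\omega)\cap C^\infty(X)$ converging uniformly to $V_t$ (this is \cref{uniform approximation of envelope}, which is not trivial and itself relies on \cref{decreasing approximation}), carrying out the smooth maximum-principle computation with $u_{l,t}$ in place of $V_t$, treating the case where the maximum lands in $X\setminus\Omega_s$ (which yields an error $\epsilon_l\to0$), and only then letting $l\to\infty$; that passage to the limit needs the uniform $C^0$-stability of the auxiliary Monge--Amp\`ere solutions, via \cite[Theorem A]{KN19}. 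None of this appears in your outline. Two smaller points: the auxiliary MA should be solved via \cite{TW10} or \cite[Proposition~21]{Sze18} (prescribed right-hand side with an eigenvalue constant), not \cref{smooth Hessian for beta}, which is the $e^{\lambda u}$-twisted problem; and the claimed ``lower bound on $(\beta+t\omega+dd^c\psi_\varepsilon)^n/\omega^n$ proportional to $\Lambda^{n+1}\Phi^n/A_{s,t}$'' does not fit the structure --- the auxiliary density is proportional to $\Phi/A_{s,t}$ to the \emph{first} power, and what actually happens is a comparison of the linearized operator $n\hat\beta_t^{m-1}\wedge\omega^{n-m}/(\hat\beta_t^m\wedge\omega^{n-m})$ applied to $\psi$, bounded below by the mixed-form inequality, against the same operator applied to the smooth envelope approximation. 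You honestly flag Step~2 as the main obstacle, and indeed it is: without the envelope approximation and the full pairing computation the argument does not close.
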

\begin{proof}
    The proof is inspired by \cite[Theorem 1]{GPTW24}. Choosing a sequence of smooth functions $\tau_k:\mathbb{R}\rightarrow\mathbb{R}^+$ such that $\tau_k(x)$ decreases to the function $x\cdot\mathds{1}_{\mathbb{R}^+}(x)$ as $k\rightarrow+\infty$. For example, we may take
    $$
    \tau_k(x):=\frac{1}{2}(\sqrt{x^2+\frac{1}{k}}+x).
    $$
    For each fixed $t>0$, using \cref{uniform approximation of envelope} we can find a sequence $u_{l,t}\in \operatorname{SH}_m(X,\beta_t,\omega)\cap C^\infty(X)$ converging uniformly to $V_t:=\sup\{v\,|\,v\in \operatorname{SH}_m(X,\beta_t,\omega),v\leq0\}$ as $l\rightarrow+\infty$. Using the main theorem of \cite{TW10}, we can then solve the auxiliary Monge-Amp\`ere equation
    \begin{equation}\label{eq 16}
        (\beta_t+dd^c\psi_{t,l,k,s})^n=c_{t,l,k,s}^\prime\frac{\tau_k(-\varphi_t+u_{l,t}-s)}{A_{s,k,l,t}}e^{\frac{n}{m}F_t}\omega^n,\quad\sup_X\psi_{t,l,k,s}=0,
    \end{equation}
    for some $\psi_{t,l,k,s}\in PSH(X,\beta_t)\cap C^\infty(X)$ and $c_{t,l,k,s}^\prime>0$. Where $s>0$ is a positive number and 
    $$
A_{s,k,l,t}:=\|\tau_k(-\varphi_t+u_{l,t}-s)e^{\frac{n}{m}F}\|_{L^a(\omega^n)}=\left(\int_X\tau_k(-\varphi_t+u_{l,t}-s)^ae^{\frac{an}{m}F}\omega^n\right)^{\frac{1}{a}}>0.
    $$
    We first fix $t$ and $k$. Since $u_{l,t}$ converges uniformly to $V_t$ and $\psi_{t,l,k,s}\leq V_t$ by definition, we may assume without loss of generality that $\psi_{t,l,k,s}<u_{l,t}+1$ by choosing $l$ large enough. Set
    $$
\Phi:=-\epsilon(-\psi_{t,l,k,s}+u_{l,t}+1+\Lambda)^{\frac{n}{n+1}}-(\varphi_t-u_{l,t}+s),
    $$
    where
    $$
\epsilon^{n+1}=\epsilon_{s,k,l,t}^{n+1}:=\frac{c_t^{\frac{n}{m}}}{c_{t,l,k,s}^\prime}A_{s,k,l,t}\left(\frac{n+1}{n}\right)^n,\quad\Lambda=\Lambda_{s,k,l,t}:=\left(\frac{n}{n+1}\right)^{n+1}\epsilon^{n+1}.
    $$
Since $\Phi$ is a smooth function on the compact manifold $X$, it must achieves maximum at some point $x_0\in X$. Recall that $\Omega_s:=\{\varphi_t+s\leq V_t\}$. If $x_0\in X-\Omega_s$, then   
$$
\Phi(x_0)\leq-(\varphi_t-u_{l,t}+s)\leq u_{l,t}-V_t\leq\epsilon_l.
$$
Where the first inequality follows from our assumption $\psi_{t,l,k,s}<u_{l,t}+1$ and $\epsilon_l\rightarrow0$ as $l\rightarrow+\infty$. 

We next assume that $x_0\in\Omega_s$. The computation is similar but slightly different from that in \cite[Lemma 1]{GPT23}.
Write $\hat{\beta_t}:=\beta_t+dd^c\varphi_t\in\Gamma_m(\omega)$ such that $\hat{\beta_t}^m\wedge\omega^{n-m}=c_te^{F_t}\omega^n$. At $x_0$ we have $dd^c\Phi(x_0)\leq0$ and hence $-dd^c\Phi(x_0)\in\overline{\Gamma_m(\omega)}$. Therefore, we can write
\begin{align*}
    0\geq& n\frac{dd^c\Phi(x_0)\wedge\hat{\beta_t}^{m-1}\wedge\omega^{n-m}}{\hat{\beta_t}^{m}\wedge\omega^{n-m}}\\
    =&\frac{n\hat{\beta_t}^{m-1}\wedge\omega^{n-m}}{\hat{\beta_t}^{m}\wedge\omega^{n-m}}\wedge\left\{(-\epsilon)\frac{n}{n+1}(-\psi_{t,l,k,s}+u_{l,t}+1+\Lambda)^{-\frac{1}{n+1}}[-(\beta_t+dd^c\psi_{t,l,k,s})+(\beta_t+dd^cu_{l,t})]\right\}\\
    &+\frac{n\hat{\beta_t}^{m-1}\wedge\omega^{n-m}}{\hat{\beta_t}^{m}\wedge\omega^{n-m}}\wedge\left\{-(\beta_t+dd^c\varphi_{t})+(\beta_t+dd^cu_{l,t})\right\}\\
    &+\frac{n\hat{\beta_t}^{m-1}\wedge\omega^{n-m}}{\hat{\beta_t}^{m}\wedge\omega^{n-m}}\wedge\left\{\frac{n\epsilon}{(n+1)^2}(-\psi_{t,l,k,s}+u_{l,t}+1+\Lambda)^{-\frac{n+2}{n+1}}\cdot d(-\psi_{t,l,k,s}+u_{l,t})\wedge d^c(-\psi_{t,l,k,s}+u_{l,t})\right\}\\    
    \geq&\frac{n\hat{\beta_t}^{m-1}\wedge\omega^{n-m}}{\hat{\beta_t}^{m}\wedge\omega^{n-m}}\wedge\left\{\frac{n\epsilon}{n+1}(-\psi_{t,l,k,s}+u_{l,t}+1+\Lambda)^{-\frac{1}{n+1}}[(\beta_t+dd^c\psi_{t,l,k,s})-(\beta_t+dd^cu_{l,t})]\right\}\\
    &+\frac{n\hat{\beta_t}^{m-1}\wedge\omega^{n-m}}{\hat{\beta_t}^{m}\wedge\omega^{n-m}}\wedge\left\{[-(\beta_t+dd^c\varphi_{t})+(\beta_t+dd^cu_{l,t})]\right\}\\
    =&\left\{\frac{n^2\epsilon}{n+1}(-\psi_{t,l,k,s}+u_{l,t}+1+\Lambda)^{-\frac{1}{n+1}}\right\}\frac{(\beta_t+dd^c\psi_{t,l,k,s})\wedge\hat{\beta_t}^{m-1}\wedge\omega^{n-m}}{\hat{\beta_t}^{m}\wedge\omega^{n-m}}-n\\
    &+\left(1-\frac{n\epsilon}{n+1}(-\psi_{t,l,k,s}+u_{l,t}+1+\Lambda)^{-\frac{1}{n+1}}\right)\frac{n(\beta_t+dd^cu_{l,t})\wedge\hat{\beta_t}^{m-1}\wedge\omega^{n-m}}{\hat{\beta_t}^{m}\wedge\omega^{n-m}}\\
    \geq&\frac{n^2\epsilon}{n+1}(-\psi_{t,l,k,s}+u_{l,t}+1+\Lambda)^{-\frac{1}{n+1}}\frac{(c_{t,l,k,s}^\prime)^{\frac{1}{n}}\tau_k(-\varphi_t+u_{l,t}-s)^{\frac{1}{n}}}{c_t^{\frac{1}{m}}A_{s,k,l,t}^{\frac{1}{n}}}\\
    &-n+\left(1-\frac{n\epsilon}{n+1}\Lambda^{-\frac{1}{n+1}}\right)\frac{n(\beta_t+dd^cu_{l,t})\wedge\hat{\beta_t}^{m-1}\wedge\omega^{n-m}}{\hat{\beta_t}^{m}\wedge\omega^{n-m}}\\
    =&\frac{n^2\epsilon}{n+1}(-\psi_{t,l,k,s}+u_{l,t}+1+\Lambda)^{-\frac{1}{n+1}}\frac{(c_{t,l,k,s}^\prime)^{\frac{1}{n}}\tau_k(-\varphi_t+u_{l,t}-s)^{\frac{1}{n}}}{c_t^{\frac{1}{m}}A_{s,k,l,t}^{\frac{1}{n}}}-n\\   
    \geq&\frac{n^2\epsilon}{n+1}(-\psi_{t,l,k,s}+u_{l,t}+1+\Lambda)^{-\frac{1}{n+1}}\frac{(c_{t,l,k,s}^\prime)^{\frac{1}{n}}(-\varphi_t+u_{l,t}-s)^{\frac{1}{n}}}{c_t^{\frac{1}{m}}A_{s,k,l,t}^{\frac{1}{n}}}-n&.
\end{align*}
Where in the fifth inequality we have used the mixed type inequalities and the fact that $\psi_{t,l,k,s}+u_{t,l}+1\geq0$, in the sixth equality we have used the definition of $\Lambda$, in the seventh inequality we have used the assumption of $\tau_k$. Let us explain the fifth inequality in detail: indeed, it follows from \eqref{eq 16} and the mixed type inequality for Monge-Amp\`ere equations that
$$
(\beta_t+dd^c\psi_{t,l,k,s})^m\wedge\omega^{n-m}\geq \left(c_{t,l,k,s}^\prime\frac{\tau_k(-\varphi_t+u_{l,t}-s)}{A_{s,k,l,t}}\right)^{\frac{m}{n}}e^{F_t}\omega^n.
$$
This combined with \eqref{eq hessian} and the Garding's inequality (see also \cref{prop:mixed type for chi}) yields that 
\begin{align*}
    (\beta_t+dd^c\psi_{t,l,k,s})\wedge\hat{\beta_t}^{m-1}\wedge\omega^{n-m}&\geq\left(c_{t,l,k,s}^\prime\frac{\tau_k(-\varphi_t+u_{l,t}-s)}{A_{s,k,l,t}}\right)^{\frac{1}{n}}e^{\frac{1}{m}F_t}\cdot c_t^{\frac{m-1}{m}}\cdot e^{\frac{m-1}{m}F_t}\omega^n\\
    &=\left(c_{t,l,k,s}^\prime\frac{\tau_k(-\varphi_t+u_{l,t}-s)}{A_{s,k,l,t}}\right)^{\frac{1}{n}}\cdot c_t^{\frac{m-1}{m}}e^{F_t}\omega^n,
\end{align*}
and hence the fifth inequality follows.

It follows that at $x_0\in\Omega_s$,
\begin{align*}
    (-\varphi_t+u_{l,t}-s)&\leq A_{s,k,l,t}\left(\frac{n+1}{n\epsilon}\right)^n(-\psi_{t,l,k,s}+u_{l,t}+1+\Lambda)^{\frac{n}{n+1}}\frac{c_t^{\frac{n}{m}}}{c_{t,l,k,s}^\prime}\\
    &=\epsilon(-\psi_{t,l,k,s}+u_{l,t}+1+\Lambda)^{\frac{n}{n+1}}.
\end{align*}
This implies that $\Phi(x_0)\leq0$. 

Combining the above two cases we obtain that $\sup_X\Phi\leq\epsilon_l\rightarrow0$ as $l\rightarrow\infty$. We need to let $l\rightarrow\infty$ and study the limit behavior. 

Consider \eqref{eq 16}, since $u_{t,l}\rightarrow V_t$ uniformly as $l\rightarrow\infty$, we have $\tau_k(-\varphi_t+u_{t,l}-s)\rightarrow\tau_k(-\varphi_t+V_t-s)$ uniformly and $A_{s,k,l,t}\rightarrow A_{s,k,t}:=\left(\int_X\tau_k(-\varphi_t+V_t-s)^ae^{\frac{an}{m}F_t}\omega^n\right)^{\frac{1}{a}}>0$. Observe that $\frac{\tau_k(-\varphi_t+u_{l,t}-s)}{A_{s,k,l,t}}e^{\frac{n}{m}F_t}$ are smooth positive functions for all $l$ and converges uniformly to $\frac{\tau_k(-\varphi_t+V_t-s)}{A_{s,k,t}}e^{\frac{n}{m}F_t}>0$, which in turn implies that $c_{t,l,k,s}^\prime\rightarrow c_{t,k,s}^\prime>0$ by the characterization \cref{prop:sup slope}. Consequently, we obtain that $c_{t,l,k,s}^\prime\frac{\tau_k(-\varphi_t+u_{l,t}-s)}{A_{s,k,l,t}}e^{\frac{n}{m}F_t}$ is a Cauchy sequence in $C(X)$ with respect to $l$ and uniformly bounded away from $0$. The famous stability result \cite[Theorem A]{KN19} yields that $\{\psi_{t,l,k,s}\}_l$ is also a Cauchy sequence in $C(X)$ and hence converges uniformly to some $\psi_{t,k,s}\in PSH(X,\beta_t)\cap C(X)$ with $\sup_X\psi_{t,k,s}=0$ again.

Return to the inequality $\Phi\leq\epsilon_l\rightarrow0$. Letting $l\rightarrow\infty$ on both sides yield that
$$
(-\varphi_t+V_t-s)\leq \epsilon_{t,k,s}(-\psi_{t,k,s}+V_t+1+\Lambda_{t,k,s})^{\frac{n}{n+1}},
$$
here
$$
\epsilon_{t,k,s}^{n+1}:=\lim_l\epsilon_{s,k,l,t}^{n+1}=\frac{c_t^{\frac{n}{m}}}{c_{t,k,s}^\prime}A_{s,k,t}\left(\frac{n+1}{n}\right)^n,
$$
and
$$
\Lambda_{t,k,s}:=\lim
_l\Lambda_{s,k,l,t}=\left(\frac{n}{n+1}\right)^{n+1}\epsilon_{t,k,s}^{n+1}.
$$
Which in turn implies that
\begin{align}\label{eq 20}
    (-\varphi_t+V_t-s)^{\frac{n+1}{n}}\leq&\epsilon_{t,k,s}^{\frac{n+1}{n}}(-\psi_{t,k,s}+V_t+1+\Lambda_{t,k,s})\\
    =&\frac{c_t^{\frac{1}{m}}}{(c_{t,k,s}^\prime)^{\frac{1}{n}}}A_{s,k,t}^{\frac{1}{n}}\left(\frac{n+1}{n}\right)(-\psi_{t,k,s}+V_t+1+\Lambda_{t,k,s}).
\end{align}
We next claim that $c_t$ is uniformly bounded above and $c_{t,l,k,s}^\prime$ is uniformly bounded below and away from $0$. For $c_t$, combining \eqref{eq 15} and Garding's inequality, we deduce that
$$
c_t^{\frac{1}{m}}\int_Xe^{\frac{F_t}{m}}\omega^n\leq\int_X(\beta_t+dd^c\varphi_t)\wedge\omega^{n-1}=\int_X(\beta+t\omega)\wedge\omega^{n-1}+\int_X\varphi_tdd^c(\omega^{n-1})\leq C_1,
$$
because $\sup_X\varphi_t=0$. Where $C_1$ is a constant depending on $n, X,\omega, \beta$. 
It remains to derive the lower bound of $c_{t,l,k,s}^\prime$. 

Consider \eqref{eq 16}, since $\int_X\left(\frac{\tau_k(-\varphi_t+u_{l,t}-s)}{A_{s,k,l,t}}e^{\frac{n}{m}F_t}\right)^a\omega^n=1$, our assumption and \cref{prop:sup slope} yields that
$$
c_{t,l,k,s}^\prime\geq SL_{\omega,a}(\{\beta_t\})\geq SL_{\omega,a}(\{\beta\})>0.
$$
Similarly, $\Lambda_{t,k,s}\leq C_2A_{t,k,s}$ by the very definition, where $C_2$ is a constant depending on $m,n,\beta,\omega$, the lower bound of $\int_Xe^{\frac{F_t}{m}}\omega^n$ and the lower bound of $SL_{\omega,a}(\{\beta\})$. Therefore, \eqref{eq 20} can be simply written as
\begin{align*}
    (-\varphi_t+V_t-s)^{\frac{n+1}{n}}\leq C_3 A_{s,k,t}^{\frac{1}{n}}(-\psi_{t,k,s}+V_t+1+C_2 A_{t,k,s})
\end{align*}
where $C_3$ is a uniform constant depending on $m,n,\beta,\omega$, the lower bound of $\int_Xe^{\frac{F_t}{m}}\omega^n$ and the lower bound of $SL_{\omega,a}(\{\beta\})$. Note that $V_t\leq0$ and $\sup_X\psi_{t,k,s}=0$, hence we can use the $\alpha$-invariant estimate (or Skoda's uniform integrability theorem) as in \cite{GPT23} to derive a constant $\alpha=\alpha(m,n,\beta,\omega)$ such that
\begin{equation}
    \int_{\Omega_s}\exp\left(\alpha\frac{ (-\varphi_t+V_t-s)^{\frac{n+1}{n}}}{A_{s,k,t}^{\frac{1}{n}}}\right)\omega^n\leq\int_{\Omega_s}\exp\left(\alpha C_3(-\psi_{t,k,s}+1+C_2 A_{t,k,s})\right)\leq Ce^{CA_{t,k,s}},
\end{equation}
where $C$ is a uniform constant depending on $m,n,\beta,\omega$, the lower bound of $\int_Xe^{\frac{F_t}{m}}\omega^n$ and the lower bound of $SL_{\omega,a}(\{\beta\})$
Letting $k\rightarrow\infty$ we obtain that
\begin{equation}
     \int_{\Omega_s}\exp\left(\alpha\frac{ (-\varphi_t+V_t-s)^{\frac{n+1}{n}}}{A_{s,t}^{\frac{1}{n}}}\right)\omega^n\leq Ce^{CA_{t,s}}.
\end{equation}
Observing that $A_{t,s}\leq E_t$ and hence the conclusion follows.
\end{proof}

\begin{proof}[Proof of \cref{a priori in nef class}]
    Having \cref{exp lemma}, the argument is now standard and can almost be copied line by line from \cite{GPT23} or \cite{GPTW24}, for convenience of the reader we briefly repeat it here. Fix $p>na$ and define $\eta:\mathbb{R}^+\rightarrow\mathbb{R}^+$ by $\eta(x):=[\log(1+x)]^p$. Observe that $\eta$ is an increasing function with $\eta(0)=0$ and let $\eta^{-1}(y)=\exp(y^{\frac{1}{p}})-1$ be its inverse function. Recall that Young's inequality yields for any numbers $u,v\geq0$,
    \begin{align}\label{eq 21}
        uv&\leq\int_0^u\eta(x)dx+\int_0^v\eta^{-1}(y)dy=\int_0^u\eta(x)dx+\int_0^{\eta^{-1}(v)}x\eta^{\prime}(x)dx\\
        &\leq u\cdot\eta(u)+v\cdot\eta^{-1}(v)=u[\log(1+u)]^p+v(\exp(v^{\frac{1}{p}})-1).
    \end{align}
Set
$$
v:=\frac{\alpha}{2}\frac{ (-\varphi_t+V_t-s)^{\frac{n+1}{n}}}{A_{s,t}^{\frac{1}{n}}}
$$
We apply \eqref{eq 21} with $u=e^{\frac{n}{m}F}$ and $v^p$ to derive
\begin{align}
    v(z)^pe^{\frac{an}{m}F_t(z)}\leq e^{\frac{an}{m}F_t(z)}[\log(1+e^{\frac{an}{m}F_t(z)})]^p+v(z)^p(e^{v(z)}-1)\leq e^{\frac{an}{m}F_t(z)}(1+\frac{an}{m}|F_t(z)|)^p+C_pe^{2v(z)}.
\end{align}
Plugging the value of $v$ and integrating both sides over $\Omega_s$ we obtain
\begin{align}
   & \left(\frac{\alpha}{2}\right)^p\int_{\Omega_s}\frac{ (-\varphi_t+V_t-s)^{\frac{p(n+1)}{n}}}{A_{s,t}^{\frac{p}{n}}}e^{\frac{an}{m}F_t}\omega^n\\
    \leq&\int_{\Omega_s}e^{\frac{an}{m}F_t(z)}(1+\frac{an}{m}|F_t(z)|)^p\omega^n+C_p\int_{\Omega_s} \exp\left({\alpha\frac{ (-\varphi_t+V_t-s)^{\frac{n+1}{n}}}{A_{s,t}^{\frac{1}{n}}} }\right)\omega^n\leq C.\\
\end{align}
    Where the last inequality follows from \cref{exp lemma} and $C$ is a uniform constant depending on $n,m,p,\beta,\omega$, the lower bound of $\int_Xe^{\frac{F_t}{m}}\omega^n$, the lower bound of $SL_{\omega,a}(\{\beta\})$, the upper bound of $\|e^{\frac{an}{m}F_t}\|_{L^1(\log L)^p}:=\int_{X}e^{\frac{an}{m}F_t(z)}(1+\frac{an}{m}|F_t(z)|)^p\omega^n$ and the upper bound of $E_t$. As a consequence, we can apply H\"older's inequality and invoking the definition of $A_{s,t}$ to deduce that
    \begin{align}
        A_{s,t}=&\left(\int_{\Omega_s}(-\varphi_t+V_t-s)^ae^{\frac{an}{m}F_t}\omega^n\right)^{\frac{1}{a}}\leq\left(\int_{\Omega_s}(-\varphi_t+V_t-s)^{\frac{p(n+1)}{n}}e^{\frac{an}{m}F_t}\right)^{\frac{n}{p(n+1)}}\cdot\left(\int_{\Omega_s}e^{\frac{an}{m}F_t}\omega^n\right)^{\frac{1}{q}}\\
        \leq&CA_{s,t}^{\frac{1}{n+1}}\cdot\left(\int_{\Omega_s}e^{\frac{an}{m}F_t}\omega^n\right)^{\frac{1}{q}}.
    \end{align}
    Where $q:=\frac{p(n+1)}{p(n+1)-na}$. The above inequality can be written by
    $$
A_{s,t}\leq C\left(\int_{\Omega_s}e^{\frac{an}{m}F_t}\omega^n\right)^{\frac{n+1}{nq}}.
    $$
    Observe that $\frac{n+1}{nq}=\frac{p(n+1)-na}{pn}:=1+\delta_0>1$. Define $\phi:\mathbb{R}\rightarrow\mathbb{R}$ by $\phi(s):=\left(\int_{\Omega_s}e^{\frac{an}{m}F_t}\omega^n\right)^{\frac{1}{a}}$. By the definition of $A_{s,t}$ it is easy to check that
    $$
r\phi(s+r)=\left(\int_{\Omega_{s+r}}r^ae^{\frac{an}{m}F_t}\right)^{\frac{1}{a}}\leq\left(\int_{\Omega_{s}}(-\varphi_t+V_t-s)^ae^{\frac{an}{m}F_t}\right)^{\frac{1}{a}}=A_{s,t}\leq C\phi(s)^{a(1+\delta_0)},
    $$
    for any $0\leq r\leq1$. The result then follows from a classic lemma due to De Giorgi, see \cite[Lemma 2]{GPT23}. The proof is therefore concluded.
\end{proof}

\section{Solving Complex Hessian equations in nef classes}

Let $(X,\omega)$ be a compact Hermitian manifold of complex dimension $n$ equipped with a Hermitian metric $\omega$. Let $m$ be a positive integer such that $1\leq m\leq n$. 

In this section, we   apply \cref{a priori in nef class} to solve the degenerate complex Hessian equations in nef classes with some more assumptions.

\begin{theorem}\label{solve nef 1}
    Let $\{\beta\}\in BC^{1,1}(X)$ be a nef Bott-Chern class satisfying $SL_{\omega,a}(\{\beta\})>0$ for some constant $1\leq a<\frac{n}{n-m}$. Assume moreover that there is a function $\rho\in \operatorname{SH}_m(X,\beta,\omega)\cap L^\infty(X)$ and that $\{\beta\}$ is $(\omega,m)$-non-collapsing. Fix $0\leq f\in L^p(\omega^n)$ with $p>\frac{n^2}{m^2-(1-\frac{1}{a})mn}$ satisfying $\int_Xf\omega^n>0$. Then, there exists $\varphi\in \operatorname{SH}_m(X,\beta,\omega)\cap L^\infty(X)$ and $c>0$ solving
$$
(\beta+dd^c\varphi)^m\wedge\omega^{n-m}=cf\omega^n,\quad\sup_X\varphi=0.
$$
Moreover, $osc_X\varphi$ is bounded by a constant $C$ depending on $\beta,\omega,\|f\|_p,n,m$, the bound of $\rho$ and the lower bound of $SL_{\omega,a}(\{\beta\})$.
\end{theorem}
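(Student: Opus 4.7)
The plan is to follow the three-step continuity strategy outlined in the introduction. First, approximate $f$ by a sequence $0 < f_j \in C^\infty(X)$ converging to $f$ in $L^p(\omega^n)$ with $\int_X f_j \omega^n$ uniformly bounded below. For each $t>0$ the class $\{\beta + t\omega\}$ contains a Hermitian representative, so by the Calabi--Yau theorems of Sz\'ekelyhidi \cite{Sze18} and Zhang \cite{Zh15} there exists a unique constant $c_{t,j} > 0$ and a smooth solution $\varphi_{t,j} \in \operatorname{SH}_m(X, \beta + t\omega, \omega) \cap C^\infty(X)$ to
$$(\beta + t\omega + dd^c \varphi_{t,j})^m \wedge \omega^{n-m} = c_{t,j} f_j \omega^n, \qquad \sup_X \varphi_{t,j} = 0.$$

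The second step is to obtain uniform bounds on $c_{t,j}$ and on $\|\varphi_{t,j}\|_\infty$. An upper bound for $c_{t,j}$ is immediate from the G{\aa}rding-type inequality of \cref{prop:mixed type for beta} integrated against the Gauduchon weight, combined with the uniform lower bound on $\int f_j^{1/m} \omega^n$. For the lower bound I would adapt \cref{subsolution 2}: the transformation $v = -1/(u + \rho)$ applied to the bounded potential $\rho$ produces, for each $j$, a subsolution $v_j \in \operatorname{SH}_m(X, \beta + t\omega, \omega) \cap L^\infty(X)$ with $(\beta + t\omega + dd^c v_j)^m \wedge \omega^{n-m} \geq c_0 f_j \omega^n$ and $c_0 > 0$ independent of $t$, whence the domination principle \cref{m-positive have domination} (applicable since $\beta + t\omega \in \Gamma_m(\omega)$) forces $c_{t,j} \geq c_0$. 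Writing $F_{t,j} := \log(c_{t,j} f_j)$, the hypothesis $p > \tfrac{n^2}{m^2 - (1 - 1/a)mn}$, combined with H\"older's inequality and the $L^q$-integrability of $\operatorname{SH}_m$-functions for $q < n/(n-m)$ (\cref{integrability}, \cref{integrablity 2}), keeps both $\|e^{\frac{an}{m} F_{t,j}}\|_{L^1(\log L)^p}$ and the energy $E_{t,j}$ from \cref{thm: a priori in nef class} uniformly bounded; that theorem then yields $\|\varphi_{t,j}\|_\infty \leq C$ independently of $t$ and $j$.

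For the third step I would let $t_k \to 0$ and $j_k \to \infty$ along a diagonal sequence. By \cref{L^1 compactness} and the uniform $L^\infty$-bound, a subsequence of $\varphi_{t_k, j_k}$ converges in $L^1(\omega^n)$ and almost everywhere to some $\varphi \in \operatorname{SH}_m(X, \beta, \omega) \cap L^\infty(X)$ with $\sup_X \varphi = 0$, and $c_{t_k, j_k} \to c \geq c_0 > 0$. To pass the equation to the limit, I would form the decreasing envelopes $\bar\psi_l := (\sup_{k \geq l} \varphi_{t_k, j_k})^*$, which lie in $\operatorname{SH}_m(X, \beta + s_l \omega, \omega)$ with $s_l := \sup_{k \geq l} t_k \to 0$, and invoke \cref{Hartogs' lemma} to get $\bar\psi_l \searrow \varphi$ pointwise; the decreasing convergence of Hessian measures \cite[Lemma 5.1]{KN25a} together with \cref{thm: monotonicity} then identifies the limit of $(\beta + t_k\omega + dd^c \varphi_{t_k, j_k})^m \wedge \omega^{n-m}$ as $(\beta + dd^c \varphi)^m \wedge \omega^{n-m}$, so that $\varphi$ and $c$ solve the desired equation. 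Uniqueness of the constant $c$ is a bonus consequence of \cref{domination for beta 3} via the $(\omega, m)$-non-collapsing hypothesis.

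The central obstacle is securing a uniform lower bound on $c_{t,j}$ as $t \to 0$: the direct subsolution construction of \cref{subsolution 1} yields a constant that depends on the openness parameter $\lambda_{\beta+t\omega}$ of the cone $\Gamma_m(\omega)$, which degenerates in the nef limit. The bounded Hessian potential hypothesis on $\rho$ is what restores uniformity, through the rescaling $v = -1/(u+\rho)$ used in \cref{subsolution 2}. A secondary technical point is verifying that the weak limit of the Hessian measures along the diagonal sequence is indeed the Hessian measure of the limit $\varphi$, which is why the envelope/\cref{Hartogs' lemma}/monotonicity chain is used in place of a direct weak-convergence argument.
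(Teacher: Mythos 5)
Your proposal contains two genuine gaps, and they concern precisely the two places where the theorem's distinctive hypotheses (the sup-slope condition and the $(\omega,m)$-non-collapsing assumption) enter the paper's argument.

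\textbf{Lower bound on $c_{t,j}$.} You propose to carry out the $v=-1/(u+\rho)$ transformation from \cref{subsolution 2} using the bounded potential $\rho$. But the argument in \cref{subsolution 2} crucially uses that $\beta+dd^c\rho\geq\gamma$ for some $\gamma\in\Gamma_m(\omega)$ (i.e.\ $(\omega,m)$-bigness): the chain of inequalities passes through $\beta+dd^c(u+\rho)\geq\gamma+dd^cu$, which lets one pull out the $(\omega,m)$-positive $\gamma+dd^cu$ before raising to the $m$-th power. A bounded potential $\rho\in\operatorname{SH}_m(X,\beta,\omega)\cap L^\infty(X)$ only gives $\beta+dd^c\rho\in\overline{\Gamma_m(\omega)}$, not a strict lower bound, and in a nef-but-non-big class there is no $\gamma$ to work with. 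Notice also that your proposed lower-bound argument never uses $SL_{\omega,a}(\{\beta\})>0$; that hypothesis is precisely what replaces bigness here. The paper's route is: solve the auxiliary Monge--Amp\`ere equation $(\beta_j+dd^cu_j)^n=b_jf_j^{n/m}\omega^n$ via \cite{TW10}, identify $b_j=SL_{\omega,F_j}(\beta_j)$ from \cref{prop:sup slope}, bound $b_j\geq b>0$ from below using the sup-slope hypothesis and the convergence $f_j^{n/m}\to f^{n/m}$ in $L^a$, then pass to Hessian level by the Monge--Amp\`ere mixed-type inequality $(\beta_j+dd^cu_j)^m\wedge\omega^{n-m}\geq b^{m/n}f_j\omega^n$, and finally compare with $\varphi_j$ by the domination principle in the Hermitian class $\{\beta_j\}$ to get $c_j\geq b^{m/n}$.

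\textbf{Passage to the limit.} Your plan invokes the Hartogs envelope $\bar\psi_l:=(\sup_{k\geq l}\varphi_{t_k,j_k})^*$, the decreasing convergence theorem, and the semi-continuity result \cref{thm: monotonicity}, and claims this ``identifies'' the limit of the Hessian measures. It does not: $(\beta+s_l\omega+dd^c\bar\psi_l)^m\wedge\omega^{n-m}$ is a different sequence from $(\beta+t_k\omega+dd^c\varphi_{t_k,j_k})^m\wedge\omega^{n-m}$, and both the maximum principle and \cref{thm: monotonicity} are one-sided; together they only yield $(\beta+dd^c\varphi)^m\wedge\omega^{n-m}\geq cf\omega^n$. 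To obtain the reverse inequality, the paper additionally forms the \emph{increasing} rooftop envelope $\Phi_j:=P_{\beta,m}(\inf_{k\geq j}\varphi_k)$ (bounded thanks to $\rho$) and applies the contact-set bound \cref{contact3}, giving $(\beta+dd^c\psi)^m\wedge\omega^{n-m}\leq e^{\psi-\varphi}cf\omega^n$ for the increasing limit $\psi$ of $\Phi_j$. Combining the two inequalities and applying the domination principle \cref{domination for beta} then forces $\varphi=\psi$. This is the step where $(\omega,m)$-non-collapsing is actually used for \emph{existence}, not merely as a ``bonus'' for uniqueness as you suggest; without it the argument cannot be closed.
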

\begin{proof}
    The idea is to proceed the approximation process. Let $\beta_j:=\beta+2^{-j}\omega$ and choose a sequence of smooth functions $f_j\rightarrow f$ in $L^p(\omega^n)$. Thanks to \cite[Proposition 21]{Sze18} we can find $\varphi_j\in \operatorname{SH}_m(X,\beta_j,\omega)\cap C^\infty(X)$ and $c_j>0$ solving
    \begin{equation}
        (\beta_j+dd^c\varphi_j)^m\wedge\omega^{n-m}=c_jf_j\omega^n,\quad\sup_X\varphi_j=0.
    \end{equation}
    We first claim that $\{c_j\}_j$ is uniformly bounded away from zero. The upper bound is easily obtained by using Garding's inequality as in \cref{main thm for m-positive beta}. It remains to derive a lower bound for $\{c_j\}_j$. Since $\{\beta_j\}$ is a Hermitian class, the main theorem of \cite{TW10} yields that there is $u_j\in PSH(X,\beta_j)\cap C^\infty(X)$ and $b_j>0$ such that
    $$
(\beta_j+dd^cu_j)^n=b_jf_j^{\frac{n}{m}}\omega^n,\quad\sup_Xu_j=0.
    $$
    Write $f_j^{\frac{n}{m}}=e^{F_j}$ for some $F_j\in C^\infty(X)$. \cref{prop:sup slope} tells us
    $b_j=SL_{\omega,F_j}(\beta_j)$. Since $p>\frac{n^2}{m^2-(1-\frac{1}{a})mn}$, it is easy to check that $p>\frac{an}{m}$ and $f_j\rightarrow f$ in $L^p(\omega^n)$, it follows from H\"older's inequality that $f_j^{\frac{n}{m}}\rightarrow f^{\frac{n}{m}}$ in $L^a(\omega^n)$ and hence $\int_Xe^{aF_j}\omega^n=\int_Xf_j^{\frac{an}{m}}\omega^n\geq \epsilon_0$ is uniformly bounded. This implies that 
    $$
b_j\geq{\epsilon_0^\prime}SL_{\omega,a}(\{\beta\}):=b>0
    $$
    is uniformly bounded below by our condition. On the other hand, the mixed type inequality for Monge-Amp\`ere equations yields that
    $$
(\beta_j+dd^cu_j)^m\wedge\omega^{n-m}\geq b_j^{\frac{m}{n}}f_j\omega^n\geq b^{\frac{m}{n}}f_j\omega^n=\frac{b^{\frac{m}{n}}}{c_j}(\beta_j+dd^c\varphi_j)^m\wedge\omega^{n-m}.
    $$
    Since $\beta_j$ is $(\omega,m)$-non-collapsing by \cref{cor:m-positive is non-collapsing}, the domination principle \cref{domination for beta} yields that $c_j\geq b^{\frac{m}{n}}>0$ and hence the claim follows.

  We next claim that $\{\varphi_j\}_j$ is uniformly bounded. Writing $f_j=e^{G_j}$. Since $\rho\in \operatorname{SH}_m(X,\beta,\omega)\cap L^\infty(X)$, we may assume that $\rho\leq0$ and hence $\rho\leq V_{\beta_j}$ for all $j$, which implies that $\{V_{\beta_j}\}_j$ is uniformly bounded. Invoking \cref{a priori in nef class}, it is enough to show that $\|e^{\frac{an}{m}G_j}\|_{L^1(\log L)^p}:=\int_{X}e^{\frac{an}{m}G_j(z)}(1+\frac{an}{m}|G_j(z)|)^p\omega^n$ and $E_j:=\left(\int_X(-\varphi_j+V_{\beta_j})^af_j^{\frac{an}{m}}\omega^n\right)^{\frac{1}{a}}$ are both uniformly bounded above with respect to $j$. The first statement follows easily from the fact that $f_j\rightarrow f$ in $L^p(\omega^n)$ and $p>\frac{an}{m}$. Set $p^\prime:=\frac{pm}{an}$ and $q:=\frac{1}{1-\frac{1}{p^\prime}}$. It is clear that $1<aq<\frac{n}{n-m}$ and $f_j^{\frac{an}{m}}\rightarrow f^{\frac{an}{m}}$ in $L^{p^\prime}(\omega^n)$. H\"older's inequality gives that
  $$
E_j\leq\left(\int_X(-\varphi_j+V_{\beta_j})^{aq}\omega^n\right)^{\frac{1}{aq}}\cdot\left(\int_Xf_j^p\omega^n\right)^{\frac{1}{ap^\prime}},
  $$
  which is uniformly bounded above by \cref{integrablity 2} since $1<aq<\frac{n}{n-m}$ and $\sup_X\varphi_j=\sup_XV_j=0$. The second claim follows.
  
  Up to extracting a subsequence we can assume that $c_j\rightarrow c>0$ and $\varphi_j\rightarrow\varphi\in \operatorname{SH}_m(X,\beta,\omega)\cap L^\infty(X)$ in $L^1(\omega^n)$ and almost everywhere. Set
\[
\Phi_j := P_{\beta,m} \left( \inf_{k \geq j} \varphi_k \right), \quad \psi_j := \left( \sup_{k \geq j} \varphi_k \right)^*.
\]
From the preceding discussion, $\Phi_j$ and $\psi_j$ are uniformly bounded thanks to the uniform boundedness of $\varphi_j$ and the existence of $\rho\in \operatorname{SH}_m(X,\beta,\omega)\cap L^\infty(X)$. The Hartog's lemma \cref{Hartogs' lemma} yields $\psi_j \searrow \varphi$. For each fixed $j$ and $l\geq j$,
$$
(\beta_j+dd^c\varphi_l)^m\wedge\omega^{n-m}\geq(\beta_l+dd^c\varphi_l)^m\wedge\omega^{n-m}=c_lf_l\omega^n.
$$
According to the maximum principle,
\begin{align*}
   (\beta_j+dd^c\max(\varphi_l,\varphi_{l+1}))^m\wedge\omega^{n-m}\geq \min{(c_l,c_{l+1})}\min(f_l,f_{l+1})\omega^n.
\end{align*}
An induction argument yields that
\begin{equation}\label{eq 22}
    (\beta_j+dd^c\psi_j)^m\wedge\omega^{n-m}\geq(\inf_{l\geq j}c_l)(\inf_{l\geq j}f_l)\omega^n.
\end{equation}
Since $c_j\rightarrow c>0$ and $f_j\rightarrow f$ almost everywhere, we have $\inf_{l\geq j}c_l$ increases to $c$ and $\inf_{l\geq j}f_l$ increases to $f$ almost everywhere, which in turn implies that the measure $(\inf_{l\geq j}c_l)(\inf_{l\geq j}f_l)\omega^n\rightarrow cf\omega^n$ weakly. Taking limits in \eqref{eq 22},  the standard monotone convergence theorem yields that
\begin{equation}\label{eq 24}
(\beta + dd^c \varphi)^n \geq cf\omega^n.
\end{equation}
On the other hand, since $\beta\leq\beta_j$ and $(\beta_j+dd^c\varphi_j)^m\wedge\omega^{n-m}=e^{\varphi_j-\varphi_j}c_jf_j\omega^n$, we can use \cref{contact3} to write
$$
(\beta+dd^cP_{\beta,m}(\varphi_j,\varphi_{j+1}))^m\wedge\omega^{n-m}\leq e^{P_{\beta,m}(\varphi_j,\varphi_{j+1})}\max(c_je^{-\varphi_j}f_j,c_{j+1}e^{-\varphi_{j+1}}f_{j+1})\omega^n.
$$
An easy induction argument gives that
\begin{equation}\label{eq 23}
    (\beta+dd^c\Phi_j)^m\wedge\omega^{n-m}\leq e^{\psi_j-\inf_{l\geq j}\varphi_l}(\sup_{l\geq j}c_l)(\sup_{l\geq j}f_l)\cdot\omega^n.
\end{equation}
It is clear from the definitions that $\Phi_j$ increases almost everywhere to $\psi$ for some $\psi\in \operatorname{SH}_m(X,\beta,\omega)\cap L^\infty(X)$ and $\sup_{l\geq j}c_l\rightarrow c$, $\sup_{l\geq j}f_l$ decreases to $f$ almost everywhere hence weakly. Letting $j\rightarrow\infty$ in \eqref{eq 23} we get
$$
  (\beta+dd^c\psi)^m\wedge\omega^{n-m}\leq e^{\psi-\varphi}cf\omega^n.
$$
This combined with \eqref{eq 24} yields that
$$
e^{-\psi}\beta_\psi^m\wedge\omega^{n-m}\leq e^{-\varphi}cf\omega^n\leq e^{-\varphi}\beta_\varphi^m\wedge\omega^{n-m}.
$$
Since $\beta$ is assumed to be $(\omega,m)$-non-collapsing, the domination principle therefore gives that $\varphi\leq\psi$. The reverse inequality $\varphi\geq\psi$ is clear from the definition, thus we obtain $\varphi=\psi$ and 
$$
\beta_\varphi^m\wedge\omega^{n-m}=cf\omega^n.
$$
The proof is therefore concluded.
\end{proof}
\begin{remark}\label{rmk of nef 1}
    In \cref{solve nef 1}, we have to assume $p>\frac{n^2}{m^2}$ when $a=1$ to get the uniform control of the energy $E_j$. When the potential $\rho\in\operatorname{PSH}(X,\beta)\cap L^\infty(X)$, we can actually improve the index to $p>\frac{n}{m}$.
\end{remark}
When $(X,\omega)$ is compact K\"ahler and $\beta$ is closed, we immediately get the following corollary:
\begin{corollary}\label{Corollary Kahler}
    Let $(X,\omega)$ be a compact Hermitian manifold satisfying the bounded mass property (in particular this is true if $(X,\omega)$ is K\"ahler) and let $\{\beta\}\in H^{1,1}(X,\mathbb{R})$ be a nef cohomology class such that $\int_X\beta^n>0$. Assume moreover that $\beta$ admits a bounded Hessian potential $\rho\in \operatorname{SH}_m(X,\beta,\omega)\cap L^\infty(X)$ and that $\beta$ is $(\omega,m)$-non-collapsing. Fix $0\leq f\in L^p(\omega^n)$ with $p>\frac{n^2}{m^2}$ satisfying $\int_Xf\omega^n>0$. Then, there exists $\varphi\in \operatorname{SH}_m(X,\beta,\omega)\cap L^\infty(X)$ and $c>0$ solving
$$
(\beta+dd^c\varphi)^m\wedge\omega^{n-m}=cf\omega^n,\quad\sup_X\varphi=0.
$$
Moreover, $osc_X\varphi$ is bounded by a constant $C$ depending on $\beta,\omega,n,m,\|f\|_p$ and the bound of $\rho$.
\end{corollary}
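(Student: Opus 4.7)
The plan is to reduce this corollary to Theorem~\ref{thm: solve nef 1} by means of the identification of the sup-slope with the top self-intersection number provided by Lemma~\ref{bounded mass property and slope}. Under the bounded mass property (in particular when $(X,\omega)$ is K\"ahler and $\{\beta\}$ is a closed class), that lemma yields
$$
SL_{\omega,a}(\{\beta\}) \;=\; \underline{\operatorname{Vol}}(\beta) \;=\; \int_X \beta^n
$$
for every $a \in [1,\infty]$. Consequently, the hypothesis $\int_X \beta^n > 0$ furnishes a quantitative positive lower bound for $SL_{\omega,a}(\{\beta\})$ depending only on the class $\{\beta\}$.

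I would then invoke Theorem~\ref{thm: solve nef 1} with the choice $a = 1$. The admissibility condition $1 \leq a < \frac{n}{n-m}$ is then trivial when $m < n$ (the case $m = n$ being the Monge--Amp\`ere setting already covered by \cite{LWZ24, SW25}), and the integrability threshold
$$
p \;>\; \frac{n^2}{m^2 - (1 - \tfrac{1}{a})mn}
$$
specializes, for $a = 1$, to precisely $p > \frac{n^2}{m^2}$, which matches the hypothesis of the corollary. The remaining assumptions of Theorem~\ref{thm: solve nef 1}, namely the existence of a bounded Hessian potential $\rho \in \operatorname{SH}_m(X,\beta,\omega) \cap L^\infty(X)$ and the $(\omega,m)$-non-collapsing property of $\{\beta\}$, are part of the standing hypotheses here.

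A direct application of Theorem~\ref{thm: solve nef 1} then produces the pair $(\varphi, c)$ solving the equation with $\sup_X \varphi = 0$, together with the oscillation bound; the stated dependence of the constant on $\beta, \omega, n, m, \|f\|_p$ and $\|\rho\|_\infty$ is transparent since the lower bound on $SL_{\omega,1}(\{\beta\}) = \int_X \beta^n$ is intrinsic to the class and controlled by $\beta$ alone. There is no genuine obstacle to this plan: the corollary is a clean specialization of the main theorem, the only substantive input being the equality of the sup-slope with the volume under the bounded mass property, which is precisely the content of Lemma~\ref{bounded mass property and slope}.
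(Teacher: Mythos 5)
Your proposal is correct and matches the paper's proof exactly: the paper derives this corollary as an immediate consequence of Lemma~\ref{bounded mass property and slope} (giving $SL_{\omega,a}(\{\beta\}) = \int_X\beta^n > 0$ for all $a$) and Theorem~\ref{solve nef 1} applied with $a=1$, which specializes the threshold $p > n^2/(m^2 - (1-1/a)mn)$ to $p > n^2/m^2$. Your step-by-step index verification and checking of the remaining hypotheses (bounded potential, non-collapsing) is a faithful unpacking of the paper's one-line argument; the only minor superfluity is the aside about $m = n$, since $a = 1$ is admissible for every $1 \leq m \leq n$ in any case.
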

\begin{proof}
    This is an immediate consequence of \cref{bounded mass property and slope} and \cref{solve nef 1}.
\end{proof}

As an application, we can also derive partially the main result of \cite{GL25}, when the exponent $p$ is slightly larger:
\begin{corollary}[{\cite[Theorem A]{GL25}}]\label{GL25 main result}
    Assume $\beta$ is semi-positive and big. Fix $0\leq f\in L^p(\omega^n)$ with $p>\frac{n^2}{m^2}$ satisfying $\int_Xf\omega^n>0$. Then, there exists $\varphi\in \operatorname{SH}_m(X,\beta,\omega)\cap L^\infty(X)$ and $c>0$ solving
$$
(\beta+dd^c\varphi)^m\wedge\omega^{n-m}=cf\omega^n,\quad\sup_X\varphi=0.
$$
Moreover, $osc_X\varphi$ is bounded by a constant $C$ depending on $\beta,\omega,\|f\|_p,n,m$.
\end{corollary}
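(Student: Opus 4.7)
The plan is to deduce this corollary directly from \cref{solve nef 1} with the sup-slope exponent $a = 1$. For this choice, the bound $p > n^2/(m^2 - (1-1/a)mn)$ in that theorem specializes exactly to $p > n^2/m^2$, matching the hypothesis stated here. The task therefore reduces to verifying the four structural hypotheses of \cref{solve nef 1} under the assumption that $\beta \geq 0$ is smooth and big.

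Three of these hypotheses are essentially immediate. Since $\beta \geq 0$, the form $\beta + \epsilon\omega \geq \epsilon\omega$ is a genuine Hermitian metric for every $\epsilon > 0$, so $\{\beta\}$ is a nef Bott-Chern class in the sense used throughout the paper. The constant function $\rho \equiv 0$ lies in $\operatorname{SH}_m(X,\beta,\omega) \cap L^\infty(X)$ because $\beta \in \overline{\Gamma_m(\omega)}$, providing the required bounded Hessian potential. Finally, the $(\omega,m)$-non-collapsing property of $\beta$ is supplied by \cref{cor:m-positive is non-collapsing 2}: $\beta$ is $(\omega,m)$-semi-positive (from $\beta \geq 0$) and $(\omega,m)$-big (a Hermitian form $\gamma$ dominating some $\beta + dd^c\psi$ automatically lies in $\Gamma_m(\omega)$).

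The crucial step is to verify $SL_{\omega,1}(\{\beta\}) > 0$, which by \cref{Vol=slope 2} is equivalent to $\underline{\operatorname{Vol}}(\beta) > 0$. My plan here is to exploit bigness directly: write $\beta + dd^c\psi \geq \gamma$ with $\gamma$ Hermitian, regularize $\psi$ into smooth functions $\psi_\delta$ with analytic singularities via Demailly's theorem so that $\beta + dd^c\psi_\delta \geq (1-\delta)\gamma$, and then apply a Monge-Amp\`ere analogue of the modified comparison argument used in \cref{cor:m-positive is non-collapsing 2} to any bounded $u \in \operatorname{PSH}(X,\beta+\epsilon\omega)$. This produces a uniform lower bound of the form $\int_X (\beta + \epsilon\omega + dd^c u)^n \geq c_0 \int_X \gamma^n$ for some $c_0 > 0$ independent of $u$ and $\epsilon$; taking the infimum over $u$ and then letting $\epsilon \to 0$ gives $\underline{\operatorname{Vol}}(\beta) > 0$.

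Once all four hypotheses are verified, \cref{solve nef 1} applies directly and yields both the bounded solution $\varphi \in \operatorname{SH}_m(X,\beta,\omega) \cap L^\infty(X)$ and the unique constant $c > 0$, together with the stated oscillation bound. The main obstacle I anticipate is carrying out the lower-volume argument rigorously in the Hermitian setting, where extending the modified comparison principle from the Hessian case to the full Monge-Amp\`ere case and keeping the constants uniform in $\epsilon$ requires some care; in the K\"ahler situation the positivity would follow immediately from \cref{bounded mass property and slope} since there one has $\underline{\operatorname{Vol}}(\beta) = \int_X \beta^n$, which is manifestly positive for a semi-positive big form.
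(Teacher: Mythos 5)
Your reduction to \cref{solve nef 1} and the verification of three of the four hypotheses (nef, bounded potential $\rho\equiv 0$, $(\omega,m)$-non-collapsing via \cref{cor:m-positive is non-collapsing 2}) are all correct and match the paper. The gap is in the fourth hypothesis: you propose to take $a=1$ and verify $SL_{\omega,1}(\{\beta\})=\underline{\operatorname{Vol}}(\beta)>0$. This is \emph{not} established by the arguments in the paper, and is in fact closely tied to the open ``positive Monge--Amp\`ere volume'' question in the Hermitian setting. Notice that \cref{slope=volume 1} carefully asserts $SL_{\omega,p}(\chi)>0$ only for $p>1$, and the remark following \cref{right continuity} explicitly flags $\underline{\operatorname{Vol}}(\omega)>0$ as an open property one might try to attack via the $p$-slopes. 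Your proposed route via ``a Monge--Amp\`ere analogue of the modified comparison argument'' cannot close this gap: the argument in \cref{cor:m-positive is non-collapsing 2} is a contradiction argument showing the mass is \emph{nonzero} for a fixed $u$, not a uniform lower bound over all competitors; the sublevel set $U(s)$ appearing there has Lebesgue measure depending on $u$, so you do not obtain $\int_X\beta_u^n\geq c_0>0$ uniformly in $u$.

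The paper instead exploits that the hypothesis $p>n^2/m^2$ is a \emph{strict} inequality: since the threshold $n^2/(m^2-(1-1/a)mn)$ is continuous and equals $n^2/m^2$ at $a=1$, one can choose $a>1$ sufficiently close to $1$ with $p>n^2/(m^2-(1-1/a)mn)$ still holding. For such $a>1$, positivity $SL_{\omega,a}(\{\beta\})>0$ is established by the Kolodziej-type subsolution construction \cite[Lemma 3.3]{GL23} (which genuinely requires integrability $L^a$ with $a>1$ and breaks at $a=1$), followed by the domination principle. This sidesteps the lower-volume question entirely. Your approach would go through only if you could independently prove $\underline{\operatorname{Vol}}(\beta)>0$ for semi-positive big $\beta$ on general compact Hermitian manifolds, which is not available here.
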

\begin{proof}
    Since $\beta$ is semi-positive and big, we have that $\beta$ is nef and that $\beta$ is $(\omega,m)$-non-collapsing by \cref{cor:m-positive is non-collapsing 2} (see also \cite[Theorem 2.1]{GL25}). 

    We claim that $SL_{\omega,a}(\beta)>0$ for any $a>1$. Fix such $a>1$, it follows from \cite[Lemma 3.3]{GL23} that there exists a uniform positive constant $c=c(a,\beta,\omega)$ such that for each $0\leq f\in L^a(\omega^n)$ with $\|f\|_a\leq1$, we can find a subsolution $v\in\operatorname{PSH}(X,\beta)\cap L^\infty(X)$ such that
    $$
(\beta+dd^cv)^n\geq cf\omega^n.
    $$
    For each $t>0$ and $F\in C^\infty(X)$ with $\|e^F\|_a\leq1$, we can find $u\in\operatorname{PSH}(X,\beta+t\omega)\cap C^\infty(X)$ such that
    $$
(\beta+t\omega+dd^cu)^n=SL_{\omega,F}(\beta+t\omega)e^F\omega^n.
    $$
    By the above discussion, we can also find $v\in\operatorname{PSH}(X,\beta)\cap L^\infty(X)$ such that
    $$
(\beta+dd^cv)^n\geq c\cdot e^F\omega^n.
    $$
    Hence we can write
    \begin{align*}
(\beta+t\omega+dd^cu)^n&=SL_{\omega,F}(\beta+t\omega)e^F\omega^n\leq\frac{SL_{\omega,F}(\beta+t\omega)}{c}(\beta+dd^cv)^n\\
&\leq\frac{SL_{\omega,F}(\beta+t\omega)}{c}(\beta+t\omega+dd^cv)^n.
    \end{align*}
    The domination principle \cite[Proposition 2.8]{GL22} yields that $SL_{\omega,F}(\beta+t\omega)\geq c(a,\beta,\chi)>0$. Take infimum in $F$ and $t$ therefore gives that $SL_{\omega,a}(\{\beta\})\geq c>0$. The claim follows.

    Since $p>\frac{n^2}{m^2}$, there exists a large constant $a>1$ such that $p>\frac{n^2}{m^2-(1-\frac{1}{a})mn}$. This implies that $\beta$ meets all the assumptions in \cref{solve nef 1} and hence the result follows.
\end{proof}
We next turn to solve degenerate Hessian equations with an exponential twist on the right-hand side. 

\begin{theorem}\label{solve nef 2}
    Let $\{\beta\}\in BC^{1,1}(X)$ be a nef Bott-Chern class satisfying $SL_{\omega,a}(\{\beta\})>0$ for some positive constant $1\leq a<\frac{n}{n-m}$. Assume moreover that there is a function $\rho\in \operatorname{SH}_m(X,\beta,\omega)\cap L^\infty(X)$. Fix $0\leq f\in L^p(\omega^n)$ with $p>\frac{n^2}{m^2-(1-\frac{1}{a})mn}$ satisfying $\int_Xf\omega^n>0$. Then, there exists a unique $\varphi\in \operatorname{SH}_m(X,\beta,\omega)\cap L^\infty(X)$ solving
$$
(\beta+dd^c\varphi)^m\wedge\omega^{n-m}=e^{\varphi}f\omega^n.
$$
Moreover, the lower bound of $\varphi$ is bounded by a constant $C$ depending on $\beta,\omega,\|f\|_p,n,m$, the bound of $\rho$ and the lower bound of $SL_{\omega}(\{\beta\})$. Furthermore, if $\beta$ is $(\omega,m)$-non-collapsing, the solution is unique.
\end{theorem}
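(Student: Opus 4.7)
The plan is to combine a double approximation---smoothing $f$ and perturbing $\beta$ by $\epsilon\omega$---with the a priori $L^\infty$-estimate of \cref{a priori in nef class}, and to pass to the limit by the monotonicity argument of \cref{twist equation for beta m-big}. Uniqueness under the non-collapsing hypothesis is immediate: if $\varphi,\tilde\varphi\in\operatorname{SH}_m(X,\beta,\omega)\cap L^\infty(X)$ both solve the equation, then $e^{-\varphi}H_{\beta,m}(\varphi)=f\omega^n=e^{-\tilde\varphi}H_{\beta,m}(\tilde\varphi)$ and \cref{domination for beta 2} applied in both directions yields $\varphi=\tilde\varphi$.

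For existence, for each $\epsilon>0$ I pick smooth $\rho_\epsilon$ so that $\eta_\epsilon:=\beta+\epsilon\omega+dd^c\rho_\epsilon$ is Hermitian, hence strictly $(\omega,m)$-positive, and choose smooth positive $f_j\to f$ in $L^p(\omega^n)$ with $\int_X f_j\omega^n>0$. Applying \cref{smooth Hessian for beta} to $\eta_\epsilon$ with data $H_j:=\log f_j+\rho_\epsilon$ and setting $\varphi_{\epsilon,j}:=\psi_{\epsilon,j}+\rho_\epsilon$ produces smooth $\varphi_{\epsilon,j}\in\operatorname{SH}_m(X,\beta+\epsilon\omega,\omega)$ with
\begin{equation*}
(\beta+\epsilon\omega+dd^c\varphi_{\epsilon,j})^m\wedge\omega^{n-m}=e^{\varphi_{\epsilon,j}}f_j\omega^n.
\end{equation*}
I would then control $M_{\epsilon,j}:=\sup_X\varphi_{\epsilon,j}$ uniformly: an upper bound follows from Garding's inequality integrated against a Gauduchon weight and Jensen's inequality, exactly as in \cref{main thm twist for m-positive beta}; for the lower bound I would solve the untwisted equation $(\beta+\epsilon\omega+dd^c w_{\epsilon,j})^m\wedge\omega^{n-m}=c_{\epsilon,j}f_j\omega^n$, $\sup_X w_{\epsilon,j}=0$, via \cref{solve nef 1}---whose hypotheses hold because $\{\beta+\epsilon\omega\}$ is a Hermitian class, hence $(\omega,m)$-non-collapsing with bounded potential $\rho$, and $SL_{\omega,a}(\{\beta+\epsilon\omega\})\geq SL_{\omega,a}(\{\beta\})>0$---whose constant $c_{\epsilon,j}$ is bounded away from zero by the mixed-type inequality and the sup-slope hypothesis (as in the proof of \cref{solve nef 1}), and then apply \cref{domination for beta 2} to conclude $M_{\epsilon,j}\geq\log c_{\epsilon,j}$.

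With $M_{\epsilon,j}$ uniformly controlled, I would rewrite the equation for $v_{\epsilon,j}:=\varphi_{\epsilon,j}-M_{\epsilon,j}$ as $H_{\beta+\epsilon\omega,m}(v_{\epsilon,j})=e^{M_{\epsilon,j}}e^{F_{\epsilon,j}}\omega^n$ with $F_{\epsilon,j}:=v_{\epsilon,j}+\log f_j$ and $\sup_X v_{\epsilon,j}=0$, and apply \cref{a priori in nef class}. Since $v_{\epsilon,j}\leq 0$, the required $L^1(\log L)^\pi$ norm and energy integrals are majorized by expressions of the form $\int_X f_j^{an/m}(1+|v_{\epsilon,j}|+|\log f_j|)^\pi\omega^n$; a H\"older split, together with \cref{integrability} (giving uniform $L^r$ bounds on $v_{\epsilon,j}$ for $r<n/(n-m)$) and the choice $\pi>na$, is permitted precisely by the sharp index $p>\tfrac{n^2}{m^2-(1-1/a)mn}$, exactly as in the proof of \cref{solve nef 1}. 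Combined with $V_{\beta+\epsilon\omega}\geq\rho$, \cref{a priori in nef class} yields a uniform $L^\infty$-bound on $\varphi_{\epsilon,j}$. Letting $j\to\infty$ with $\epsilon$ fixed, the stability argument from the proof of \cref{main thm twist for m-positive beta} applied to the Hermitian representative $\eta_\epsilon$ gives $\varphi_{\epsilon,j}\to\varphi_\epsilon\in\operatorname{SH}_m(X,\beta+\epsilon\omega,\omega)\cap C(X)$ solving the equation with data $f$; the domination argument of \cref{twist equation for beta m-big} shows $\varphi_\epsilon$ is decreasing in $\epsilon$, so $\varphi_\epsilon\searrow\varphi\in\operatorname{SH}_m(X,\beta,\omega)\cap L^\infty(X)$. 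A binomial expansion of $(\beta+\epsilon\omega+dd^c\varphi_\epsilon)^m$ in powers of $\epsilon\omega$, combined with the decreasing convergence theorem for Hessian measures with bounded potentials, then yields $H_{\beta,m}(\varphi)=e^\varphi f\omega^n$ in the limit. The main obstacle will be the uniform verification of the hypotheses of \cref{a priori in nef class} under the exponential twist; the sharp index condition on $p$ is exactly what makes the H\"older split succeed uniformly in $(\epsilon,j)$.
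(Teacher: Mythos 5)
Your route differs from the paper's at one crucial step, and that step has a gap as written. You propose to apply the $L^\infty$-estimate of \cref{a priori in nef class} \emph{directly} to the normalized twisted equation, writing $F_{\epsilon,j}=v_{\epsilon,j}+\log f_j$ and claiming that both the $L^1(\log L)^\pi$-entropy and the energy are majorized by $\int_X f_j^{an/m}(1+|v_{\epsilon,j}|+|\log f_j|)^\pi\omega^n$ and then controlled by a H\"older split together with the $L^r$-integrability ($r<\tfrac{n}{n-m}$) of $v_{\epsilon,j}$ from \cref{integrability}, using the sharp index $p>\tfrac{n^2}{m^2-(1-1/a)mn}$. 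This does not work for the entropy term. The power there is $\pi$, which you must take $>na$ to invoke \cref{a priori in nef class}, and after replacing $e^{\frac{an}{m}v_{\epsilon,j}}$ by $1$ the H\"older split would need $(1+|v_{\epsilon,j}|)^\pi\in L^{q'}$ with $\pi q'<\tfrac{n}{n-m}$; but $\pi q'>na\geq n\geq\tfrac{n}{n-m}$ whenever $m<n$, so no admissible $q'$ exists. The sharp index on $p$ is calibrated for the \emph{energy} integral, whose power is $a$ (as in the proof of \cref{solve nef 1}), not for a power $\pi>na$. The cure is to \emph{not} throw away the exponential: since $v_{\epsilon,j}\leq0$, the function $t\mapsto e^{-\frac{an}{m}t}(1+\frac{an}{m}t)^\pi$ is bounded on $[0,\infty)$, so $e^{\frac{an}{m}v_{\epsilon,j}}(1+\frac{an}{m}|v_{\epsilon,j}|)^\pi\leq C(\pi,a,n,m)$ and the entropy reduces to $C\int_X f_j^{an/m}(1+\frac{an}{m}|\log f_j|)^\pi\omega^n$, which is finite since $p>\frac{an}{m}$. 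Without this observation, which your write-up does not invoke, the argument fails for $m<n-1$ (and even for $m=n-1$ when $a>1$).

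The paper avoids this issue entirely: it never applies \cref{a priori in nef class} to the twisted data. Instead it solves the auxiliary \emph{untwisted} equation $(\beta_j+dd^c\psi_{j,k})^m\wedge\omega^{n-m}=c_{j,k}f_k\omega^n$ with $\sup_X\psi_{j,k}=0$, where the density $\log f_k$ is independent of the solution so the entropy and energy controls are exactly those already verified in \cref{solve nef 1}; it then obtains a uniform lower bound $\varphi_{j,k}\geq\psi_{j,k}+\log c_{j,k}$ by the domination principle \cref{domination for beta 2} (using $\psi_{j,k}\leq0$) and proceeds by the monotone limit in $j$ as you also sketch. This sidesteps the entropy control for $v+\log f$ altogether. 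Your step 4 (upper bound on $M_{\epsilon,j}$ via Garding, Gauduchon and Jensen) is in fact unnecessary for the paper's route: the upper bound on $\varphi_j$ comes for free from the monotone decrease in $j$.
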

\begin{proof}
Uniqueness follows immediately from the domination principle \cref{domination for beta}. Let $\beta_j:=\beta+2^{-j}\omega$ and choose a sequence of smooth positive functions $f_k=e^{F_k}$ converging to $f$ in $L^p(\omega^n)$. Using \cref{smooth Hessian for beta} we can find $\varphi_{j,k}\in \operatorname{SH}_m(X,\beta_j,\omega)\cap C^\infty(X)$ solving
    \begin{equation}\label{eq 25}
        (\beta_j+dd^c\varphi_{j,k})^m\wedge\omega^{n-m}=e^{\varphi_{j,k}}f_k\omega^n.
    \end{equation}
    We first claim that $\varphi_{j,k}$ are uniformly bounded from below. By \cite[Proposition 21]{Sze18}, we can find $\psi_{j,k}\in  \operatorname{SH}_m(X,\beta_j,\omega)\cap C^\infty(X)$ and $c_{j,k}>0$ solving
    \begin{equation}
          (\beta_j+dd^c\psi_{j,k})^m\wedge\omega^{n-m}=c_{j,k}f_k\omega^n,\quad\sup_X\psi_{j,k}=0.
    \end{equation}
    Exactly the same argument as in \cref{solve nef 1} shows that $c_{j,k}$ are uniformly bounded away from zero and that $\psi_{j.k}$ are uniformly bounded. This implies that
    \begin{align*}
        e^{-\varphi_{j,k}}(\beta_j+dd^c\varphi_{j,k})^m\wedge\omega^{n-m}&=f_k\omega^n=\frac{1}{c_{j,k}}   (\beta_j+dd^c\psi_{j,k})^m\wedge\omega^{n-m}\\
        &\leq e^{-(\psi_{j,k}+\log c_{j,k})}   (\beta_j+dd^c(\psi_{j,k}+\log c_{j,k}))^m\wedge\omega^{n-m},
    \end{align*}
    where the last inequality follows from the fact that $\psi_{j,k}\leq0$. Since $\{\beta_j\}$ is a Hermitian class, the domination principle yields that $\varphi_{j,k}\geq\psi_{j,k}+\log c_{j,k}$ and hence the claim follows.

    Now fix $j$, \eqref{eq 25} fits in the proof in \cref{main thm twist for m-positive beta}, so it is immediate that $\varphi_{j,k}\rightarrow\varphi_j\in \operatorname{SH}_m(X,\beta_j,\omega)\cap C(X)$ uniformly such that 
    $$
(\beta_j+dd^c\varphi_{j})^m\wedge\omega^{n-m}=e^{\varphi_{j}}f\omega^n.
    $$
    By the domination principle \cref{domination for beta 2} again, $\varphi_j$ is decreasing with respect to $j$. Since $\varphi_{j,k}$ are uniformly bounded, so do $\varphi_j$. This implies that $\varphi_j\searrow\varphi\in \operatorname{SH}_m(X,\beta_j,\omega)\cap L^\infty(X)$ such that
    $$
(\beta+dd^c\varphi)^m\wedge\omega^{n-m}=e^{\varphi}f\omega^n.
    $$
    The proof is therefore concluded.
\end{proof}
The following corollary is immediate:
\begin{corollary}\label{Corollary Kahler twist}
    Let $(X,\omega)$ be a compact Hermitian manifold satisfying the bounded mass property (in particular this is true if $(X,\omega)$ is K\"ahler) and let $\{\beta\}\in H^{1,1}(X,\mathbb{R})$ be a nef cohomology class such that $\int_X\beta^n>0$. Moreover, assume that $\beta$ admits a bounded Hessian potential $\rho\in \operatorname{SH}_m(X,\beta,\omega)\cap L^\infty(X)$. Fix $0\leq f\in L^p(\omega^n)$ with $p>\frac{n^2}{m^2}$ satisfying $\int_Xf\omega^n>0$. Then, there exists $\varphi\in \operatorname{SH}_m(X,\beta,\omega)\cap L^\infty(X)$ solving
$$
(\beta+dd^c\varphi)^m\wedge\omega^{n-m}=e^{\varphi}f\omega^n.
$$
Moreover, the lower bound of $\varphi$ is bounded by a constant $C$ depending on $\beta,\omega,n,m,\|f\|_p$ and the bound of $\rho$.
\end{corollary}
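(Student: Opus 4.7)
The plan is to deduce \cref{Corollary Kahler twist} as a direct application of \cref{thm: solve nef 2}, by verifying that the hypotheses of that theorem are met in the present setting. The two items that require checking are (i) positivity of the sup-slope $SL_{\omega,a}(\{\beta\})$ for some admissible $a$, and (ii) that the integrability exponent $p$ exceeds the threshold $\frac{n^2}{m^2-(1-\frac{1}{a})mn}$.

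For (i), I would invoke \cref{bounded mass property and slope}: since $(X,\omega)$ satisfies the bounded mass property and $\{\beta\}\in H^{1,1}(X,\mathbb{R})$ is a nef cohomology class, one has
$$
SL_{\omega,a}(\{\beta\}) \;=\; \underline{\operatorname{Vol}}(\{\beta\}) \;=\; \int_X \beta^n
$$
for every $a\in[1,\infty]$. By the standing hypothesis $\int_X\beta^n>0$, this sup-slope is strictly positive.

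For (ii), I would simply take $a=1$. When $m<n$ this automatically lies in the admissible range $[1,\tfrac{n}{n-m})$, and the threshold $\frac{n^2}{m^2-(1-\frac{1}{a})mn}$ reduces at $a=1$ to $\frac{n^2}{m^2}$, which is exactly the condition imposed on $p$ in the corollary (the boundary case $m=n$ is handled similarly, with the threshold reading $a$, hence satisfied by any $p>1$). All remaining hypotheses of \cref{thm: solve nef 2}, namely nefness of $\{\beta\}$ and the existence of the bounded potential $\rho\in\operatorname{SH}_m(X,\beta,\omega)\cap L^\infty(X)$, are assumed directly.

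Having verified all hypotheses, the conclusion follows by applying \cref{thm: solve nef 2}: one obtains a bounded solution $\varphi\in\operatorname{SH}_m(X,\beta,\omega)\cap L^\infty(X)$ to
$$
(\beta+dd^c\varphi)^m\wedge\omega^{n-m}=e^{\varphi}f\omega^n,
$$
together with the quantitative lower bound whose dependence on $\beta,\omega,n,m,\|f\|_p$ and $\|\rho\|_\infty$ is inherited from \cref{thm: solve nef 2}. No serious obstacle is anticipated; the corollary is essentially a repackaging of \cref{thm: solve nef 2} in the setting where the bounded mass property converts sup-slope positivity into the more familiar positivity of $\int_X\beta^n$.
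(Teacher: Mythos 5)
Your proposal is correct and coincides with the paper's intended argument: the paper simply states that the corollary is ``immediate,'' and the only ingredients needed are \cref{bounded mass property and slope} (to identify $SL_{\omega,a}(\{\beta\})$ with $\int_X\beta^n>0$ under the bounded mass property) and \cref{thm: solve nef 2} applied with $a=1$, for which the exponent threshold $\frac{n^2}{m^2-(1-\frac{1}{a})mn}$ reduces exactly to $\frac{n^2}{m^2}$. All remaining hypotheses (nefness, existence of the bounded potential $\rho$) are assumed verbatim, so the deduction is complete.
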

Parallel to \cref{GL25 main result}, the following corollary is also immediate:
\begin{corollary}\label{GL25 main result 2}
    Assume $\beta$ is semi-positive and big. Fix $0\leq f\in L^p(\omega^n)$ with $p>\frac{n^2}{m^2}$ satisfying $\int_Xf\omega^n>0$. Then, there exists a unique $\varphi\in \operatorname{SH}_m(X,\beta,\omega)\cap L^\infty(X)$ solving
$$
(\beta+dd^c\varphi)^m\wedge\omega^{n-m}=e^{\varphi}f\omega^n.
$$
Moreover, $osc_X\varphi$ is bounded below by a constant $C$ depending on $\beta,\omega,\|f\|_p,n,m$.
\end{corollary}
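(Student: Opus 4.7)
The plan is to verify the hypotheses of \cref{solve nef 2} for $\beta$ semi-positive and big, then apply that theorem directly. This mirrors exactly the argument used to derive \cref{GL25 main result} from \cref{solve nef 1}, replacing the non-twisted theorem with its exponential-twist counterpart. First, since $\beta$ is $(\omega,m)$-semi-positive we have $\beta\in\overline{\Gamma_m(\omega)}$, so the zero function $\rho\equiv 0$ lies in $\operatorname{SH}_m(X,\beta,\omega)\cap L^\infty(X)$ and supplies the bounded Hessian potential required by \cref{solve nef 2}. Semi-positivity implies $\{\beta\}$ is nef, while the combination of semi-positivity and bigness yields the $(\omega,m)$-non-collapsing property via \cref{cor:m-positive is non-collapsing 2}; this fact will be used to upgrade the solvability statement to full uniqueness.

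The key technical input is to show that $SL_{\omega,a}(\{\beta\})>0$ for every $a>1$. This is obtained by copying verbatim the slope estimate in the proof of \cref{GL25 main result}: using the Monge--Amp\`ere subsolution construction of \cite[Lemma 3.3]{GL23}, for any $0\leq g\in L^a(\omega^n)$ with $\|g\|_a\leq 1$ one produces $v\in\operatorname{PSH}(X,\beta)\cap L^\infty(X)$ satisfying $(\beta+dd^cv)^n\geq c\, g\,\omega^n$ with $c=c(a,\beta,\omega)>0$; comparing with an optimizer of $SL_{\omega,F}(\beta+t\omega)$ via the Monge--Amp\`ere domination principle and letting $t\to 0$ yields $SL_{\omega,a}(\{\beta\})\geq c>0$.

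Next, the exponent condition $p>n^2/m^2$ is strict, so by continuity we may select $a>1$ sufficiently close to $1$ so that simultaneously $1\leq a<n/(n-m)$ and $p>n^2/(m^2-(1-1/a)mn)$. With this choice all hypotheses of \cref{solve nef 2} are satisfied, and that theorem delivers $\varphi\in\operatorname{SH}_m(X,\beta,\omega)\cap L^\infty(X)$ solving
\[
(\beta+dd^c\varphi)^m\wedge\omega^{n-m}=e^\varphi f\,\omega^n,
\]
together with the explicit lower bound on $\varphi$ depending only on $\beta,\omega,\|f\|_p,n,m$ (the dependence on the potential $\rho\equiv 0$ is vacuous, and the dependence on $SL_{\omega,a}(\{\beta\})$ is absorbed into the constants $\beta,\omega,n,m$ by the preceding step).

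For uniqueness, the $(\omega,m)$-non-collapsing property obtained above, combined with \cref{m-positive have domination} (specifically the exponential version \cref{domination for beta 2}), guarantees that any two bounded solutions $\varphi_1,\varphi_2$ satisfy $\varphi_1\geq\varphi_2$ and $\varphi_2\geq\varphi_1$, hence coincide. Every ingredient being already in place, there is no substantial obstacle: the only point requiring care is the elementary continuity argument in choosing $a$, which is why the threshold $p>n^2/m^2$ appears.
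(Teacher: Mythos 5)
Your proposal is correct and follows essentially the same route as the paper: the paper's proof simply refers to the hypothesis verification already carried out in \cref{GL25 main result} (nefness, $(\omega,m)$-non-collapsing from \cref{cor:m-positive is non-collapsing 2}, positivity of $SL_{\omega,a}(\{\beta\})$ for $a>1$ via the subsolution of \cite[Lemma 3.3]{GL23}, and choice of $a>1$ near $1$ to meet the exponent threshold), then applies \cref{solve nef 2} and invokes the domination principle for uniqueness. The only detail you add explicitly is that $\rho\equiv 0$ serves as the bounded potential, which the paper leaves implicit; your observation that $a$ must be taken close to $1$ is in fact the correct reading (the paper's phrase ``large constant $a>1$'' appears to be a slip, since $n^2/(m^2-(1-1/a)mn)$ is increasing in $a$).
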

\begin{proof}
    As explained in \cref{GL25 main result}, $\beta$ satisfies all the assumptions in \cref{solve nef 2} and hence the equation is solvable. The uniqueness follows from the domination principle since $\beta$ is big and hence $(\omega,m)$-non-collapsing.
\end{proof}
\section{Stability of Hessian equations}
Let $(X,\omega)$ be a compact Hermitian manifold of complex dimension $n$ equipped with a Hermitian metric $\omega$. Let $m$ be a positive integer such that $1\leq m\leq n$. 

In this section, we study the stability of complex Hessian equations in $m$-positive cone and in nef classes.

\subsection{  Stability of complex Hessian equations in the m-positive cone}
In this section, let $\beta\in\Gamma_m(\omega)$ be a strictly $(\omega,m)$- positive form on $X$. As before, let $B=B(\beta,\omega)>0$ satisfies, on $X$,
$$
-B\omega^2\leq dd^c\omega\leq B\omega^2,\quad-B\omega^3\leq d\omega\wedge d^c\omega\leq B\omega^3,
$$
$$
-B\omega^2\leq dd^c\beta\leq B\omega^2,\quad-B\omega^3\leq d\beta\wedge d^c\beta\leq B\omega^3.
$$
and
$$
-B\omega^3\leq d\beta\wedge d^c\omega\leq B\omega^3,\quad-B\omega^3\leq d\omega\wedge d^c\beta\leq B\omega^3.
$$
We prove the following stability and uniqueness result: \begin{theorem}\label{stability for m-positive hessian}
     Suppose that $\varphi_1, \varphi_2 \in \mathrm{SH}_m(X, \beta,\omega) \cap L^{\infty}(X)$, satisfy
$$
\beta_{\varphi_1}^m \wedge \omega^{n-m}=e^{\lambda \varphi_1}f_1 \omega^n, \quad \beta_{\varphi_2}^m \wedge \omega^{n-m}=e^{\lambda \varphi_2}f_2 \omega^n,
$$
where $f_1, f_2 \in L^p\left(\omega^n\right), p>n / m$ (When $\lambda=0$,  we need to assume that
$f _1\geqslant c_0>0$ for some constant $c_0$). Then
$$
\|\varphi_1-\varphi_2\|_{\infty} \leqslant C\|f_1-f_2\|_p^{\frac{1}{m}}
$$
where the constant $C$ depends on $c_0, p,\|f_1\|_p,\|f_2\|_p, \omega, X$, $B$, $n$, $m$, $\beta$.
 \end{theorem}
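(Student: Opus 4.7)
The plan is to adapt the classical Kołodziej--Nguyen stability argument to this Hessian setting. By the symmetry of the hypotheses it suffices to bound $\sup_X(\varphi_2-\varphi_1)$, and the a priori estimate \cref{main a priori Hessian estimates} provides a uniform bound $\|\varphi_i\|_\infty\le M$. I would first treat $\lambda>0$, reducing to $\lambda=1$ by rescaling; the case $\lambda=0$ is then handled analogously with the positivity assumption $f_1\ge c_0$ playing the role that the exponential factor plays in the nonzero case.

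Set $\epsilon:=\|f_1-f_2\|_p^{1/m}$. The core construction is a bounded corrector $\rho\in \operatorname{SH}_m(X,\beta,\omega)\cap L^\infty(X)$ with (i) $\|\rho\|_\infty\le C\epsilon$ and (ii) $(\beta+dd^c\rho)^m\wedge\omega^{n-m}\ge c_1 A\,|f_1-f_2|\,\omega^n$, where $A$ is a fixed constant to be chosen later and $c_1=c_1(\beta,\omega,X)>0$. To build $\rho$ I fix a finite double covering $\{U_j\Subset U_j'\}$ of $X$ by coordinate balls; on each $U_j'$ I solve the Hessian Dirichlet problem $(dd^c u_j)^m\wedge\omega^{n-m}=A|f_1-f_2|\omega^n$ with zero boundary datum, using \cref{local dirichlet for omega} (or \cite[Theorem 8.2]{KN25a}). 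Since $p>n/m$, the Hessian analogue of Kołodziej's $L^p$--$L^\infty$ estimate gives $\|u_j\|_\infty\le C A^{1/m}\|f_1-f_2\|_p^{1/m}=CA^{1/m}\epsilon$. Gluing these $u_j$'s as in the proof of \cref{subsolution 1} produces the desired global $\rho$. With $\rho$ in hand, I consider the convex combination $\psi_t:=(1-t)\varphi_1+t\rho\in \operatorname{SH}_m(X,\beta,\omega)$ for a parameter $t$ to be optimised. Minkowski's inequality for $(\omega,m)$-positive forms (equivalently Gårding's inequality, cf.\ \cref{prop:mixed type for beta}) applied to the decomposition $\beta+dd^c\psi_t=(1-t)(\beta+dd^c\varphi_1)+t(\beta+dd^c\rho)$ yields
\begin{equation*}
H_\beta(\psi_t)^{1/m}\omega^{-n/m}\ge (1-t)(e^{\lambda\varphi_1}f_1)^{1/m}+t(c_1A|f_1-f_2|)^{1/m}.
\end{equation*}
Choosing $A$ so that $c_1A\ge e^{\lambda M}$ and $t$ appropriately small (of order $\epsilon$, after rescaling by $A^{1/m}$), one upgrades this estimate, after raising to the $m$-th power and absorbing the bounded exponential factors, to a genuine subsolution inequality $H_\beta(\psi_t-C\epsilon)\ge e^{\lambda(\psi_t-C\epsilon)}f_2\omega^n$. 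The domination principle (\cref{domination for beta 2}, which applies thanks to \cref{m-positive have domination}) then gives $\psi_t-C\epsilon\le\varphi_2$; since $|\psi_t-\varphi_1|\le t(\|\rho\|_\infty+2M)\le C'\epsilon$, this yields $\varphi_1-\varphi_2\le C''\epsilon$, and swapping the roles of $\varphi_1,\varphi_2$ completes the argument.

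The chief technical obstacle lies in the last step: verifying that the Minkowski bound can indeed be upgraded to a subsolution inequality uniformly in the two regions $\{f_1\ge f_2\}$ and $\{f_2>f_1\}$. The delicate point is that the loss $(1-t)$ in front of $f_1^{1/m}$ together with the exponential factor $e^{\lambda(\psi_t-\varphi_1)/m}$ must be compensated by the gain $t(c_1A)^{1/m}|f_1-f_2|^{1/m}$ from the corrector; this requires a careful elementary inequality involving $(a+b)^m\ge a^m+mta^{m-1}b$ applied to $a=f_1^{1/m},\ b=|f_1-f_2|^{1/m}$, together with the sub-additivity estimate $(f_2-f_1)^{1/m}\le|f_2^{1/m}-f_1^{1/m}|^{1/m}$ only on $\{f_1\ge f_2\}$. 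The precise balancing of $t$ and $A$ (both ultimately expressed in terms of the universal constants $M,\lambda,c_1$ and of $\epsilon$) is where the sharp $\|f_1-f_2\|_p^{1/m}$ exponent is captured. For the degenerate case $\lambda=0$ the subsolution inequality simplifies to $H_\beta(\psi_t)\ge f_2\omega^n$, which follows from the Minkowski bound once $c_1A\ge 1$, with $f_1\ge c_0$ supplying the positivity needed to conclude when $f_2$ exceeds $f_1$.
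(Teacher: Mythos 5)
Your proposal for the case $\lambda>0$ is essentially the paper's own argument: a bounded subsolution is produced with mass comparable to $|f_1-f_2|\omega^n$, one takes a convex combination $\psi_t=(1-t)\varphi_1+t\rho$ shifted by a constant of order $\epsilon$, and G\r{a}rding's inequality plus the additive shift of the exponential factor combine to give a subsolution for the $\varphi_2$-equation, which the domination principle \cref{domination for beta 2} then turns into the pointwise estimate. (The paper solves the auxiliary Hessian equation globally with right-hand side $|f_1-f_2|/\|f_1-f_2\|_p+1$ via \cref{main thm for m-positive beta} rather than gluing local solutions, but that is a cosmetic difference.)

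Your treatment of the case $\lambda=0$, however, does not work, and the flaw is structural rather than one of bookkeeping. You claim that once $c_1A\ge 1$ the Minkowski bound upgrades to $H_{\beta,m}(\psi_t)\ge f_2\,\omega^n$, from which the domination principle would give $\psi_t-C\epsilon\le\varphi_2$. But \cref{domination for beta} demands a strict inequality $H_{\beta,m}(\varphi_2)\le c\, H_{\beta,m}(\psi_t)$ with $c<1$ on the excess set; equality $c=1$ gives nothing. The G\r{a}rding estimate yields only
$$
\frac{H_{\beta,m}(\psi_t)}{\omega^n}\;\ge\;\Bigl((1-t)\,f_1^{1/m}+t\bigl(c_1A|f_1-f_2|\bigr)^{1/m}\Bigr)^m .
$$
Now restrict to the set $\{f_1<f_2<f_1+\delta\}$. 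There the corrector gain is at most $t(c_1A\delta)^{1/m}\to 0$ as $\delta\to 0$, whereas the multiplicative loss $1-(1-t)^m>0$ is fixed, so the right-hand side drops below $(1-t)^m f_2<f_2$ and the required inequality simply fails. The hypothesis $f_1\ge c_0$ makes this worse, not better: it forces $(1-(1-t)^m)f_2\ge (1-(1-t)^m)c_0$ to be bounded away from zero, so the uncompensated loss is of definite size. What saves the $\lambda>0$ case is that the shift $m\log(1-t)$ produces the matching factor $(1-t)^m$ on the side $e^{\lambda\psi_t}f_2$ of the inequality; for $\lambda=0$ there is no such factor to hide the loss behind.

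For that reason the paper does not attempt a domination-principle argument when $\lambda=0$. Following Guedj--Lu, its Step 1 reduces to the pinching $2^{-\varepsilon}f_1\le f_2\le 2^\varepsilon f_1$, then (Step 1.1) uses the subsolution construction together with the modified comparison principle to control the boundary sublevel thresholds $t_0$ and $T_0$, then (Step 1.2) proves an integral gradient estimate of Cauchy--Schwarz type on the intermediate slabs $E_{a,b}=\{\varphi_2+a<\varphi_1<\varphi_2+b\}$, which is where the hypothesis $f_1\ge c_0$ actually enters, and finally combines these in Step 1.3. You would need to reproduce that argument (or find a genuinely new route around the loss in G\r{a}rding's inequality) rather than invoke the domination principle.
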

\begin{proof}
Here we follow the proof of \cite[Proposition 3.16]{KN16}, \cite[Theorem 3.1]{KN19} and \cite[Theorem 3.5]{GL23}.

We first consider the simpler case where $\lambda>0$. Rescaling, we can assume without loss of generality that $\lambda=1$. We first observe that, by \cref{main thm twist for m-positive beta}, $\varphi_1, \varphi_2$ are uniformly bounded. Since $\varphi_1, \varphi_2$ plays the same role, it suffices to show that
$$
\varphi_1-\varphi_2 \leq C\left\|f_1-f_2\right\|_p^{\frac{1}{m}}.
$$
By the uniqueness we may also assume without loss of generality that $\|f_1-f_2\|_p>0$. Using \cref{main thm for m-positive beta}, we solve
$$
\left(\beta+d d^c u\right)^m \wedge \omega^{n-m}=c\left(\frac{\left|f_1-f_2\right|}{\left\|f_1-f_2\right\|_p}+1\right) \omega^n
$$
with $\sup _X (u-\varphi_i)\leq 0$ for $i=1,2$. Moreover, $u, c, c^{-1}$ are uniformly bounded from the proof of \cref{main thm for m-positive beta}. We can assume that
$$
\varepsilon:=e^{\frac{\sup _X\varphi_1-\log c}{m}}\left\|f_1-f_2\right\|_p^{\frac{1}{m}}
$$
is small enough. We next consider $\phi:=(1-\varepsilon) \varphi_1+\varepsilon u+ m\log(1-\varepsilon)$,
\begin{align*}
   & \left(\beta+d d^c \phi\right)^m \wedge \omega^{n-m}\\ \geq &e^{\varphi_1+m \log(1-\varepsilon)}f_1 \omega^n+\varepsilon^m \cdot c\cdot\left( \frac{|f_1-f_2|}{\|f_1-f_2 \|_p}+1\right)\omega^n\\
   \geq &  e^{\varphi_1+m \log(1-\varepsilon)}f_1 \omega^n+e^{\varphi_1} |f_1-f_2| \omega^n  \\
   \geq &e^{\varphi_1+m \log(1-\varepsilon)} f_2 \omega^n\\
   \geq & e^\phi f_2 \omega^n,
   \end{align*}
   where the last inequality follows from our assumption that $\sup _X (u-\varphi_i)\leq 0$ for $i=1,2$. The domination principle gives $\phi \leq \varphi_2$ , hence the desired estimate. 

Now we treat the case $\lambda=0$.

Throughout the proof we use $C$ to denote various uniform constants. By \cref{main a priori Hessian estimates} we have that $\sup _X\left(\left|\varphi_1\right|+\left|\varphi_2\right|\right) \leq C$.

\textbf{Step 1.} We first assume that $\varphi_1, \varphi_2,f_1,f_2$ are all smooth and that $f_1, f_2>0$. Assume moreover
\begin{equation}\label{equ: 3}
    2^{-\varepsilon} f_1 \leq f_2 \leq 2^{\varepsilon} f_1
\end{equation}
for some small constant $\varepsilon>0$. By elementary calculus it suffices to show that  $\left|\varphi_1-\varphi_2\right| \leq C \varepsilon$.

Set
$$
m_0:=\inf _X\left(\varphi_1-\varphi_2\right) \quad \text { and } \quad M_0:=\sup _X\left(\varphi_1-\varphi_2\right) .
$$
For notational convenience, we set $\mu=f_1^p\omega^n$. By \cref{subsolution 1}, there exists a constant $c(\beta,\omega,p)>0$ such that for any $0 \leq f \in L^p\left(X, \omega^n\right)$ with $\|f\|_p \leq 1$, we can find $u \in \operatorname{SH}_{m}(X, \beta,\omega)\cap L^\infty(X)$ such that $\left(\beta+d d^c u\right)^m \wedge \omega^{n-m} \geq 4 c f \omega^n$ with $-C \leq u \leq 0$ ($C$ may depends on $\|f\|_p$). 
Set
$$t_0  :=\sup \left\{t \in[m_0, M_0]: \mu\left(\varphi_1<\varphi_2+t\right) \leq c^p\right\}$$ and 
$$T_0 :=\inf \left\{t \in[m_0, M_0]: \mu\left(\varphi_1>\varphi_2+t\right) \leq c^p\right\}.$$

\textbf{Step 1.1.} We claim that $t_0 \leq m_0+C \varepsilon$ and $T_0 \geq M_0-C \varepsilon$.

Fix a number $t\leq t_0$. By definition we have $\mu(\varphi_1<\varphi_2+t)\leq c^p$. Set
$$
\hat{f}:=\frac{1}{2c}\mathds{1}_{\{\varphi_1<\varphi_2+t\}}f_1+\frac{1}{2\|f_1\|_p}\mathds{1}_{\{\varphi_1\geq\varphi_2+t\}}f_1\in L^p(X,\omega^n).
$$
Observe that
$$
\int_X\hat{f}^pdV_X\leq\frac{1}{2c^p}\mu(\varphi_1<\varphi_2+t)+2^{-p}\leq1,
$$
then by \cref{subsolution 1}, there is $u\in \operatorname{SH}_{m}(X,\beta,\omega)$ such that $\left(\beta+d d^c u\right)^m \wedge \omega^{n-m} \geq 4 c \hat{f} \omega^n$; using  \cref{main thm for m-positive beta} we can find $\phi\in \operatorname{SH}_{m}(X,\beta,\omega)\cap L^{\infty}(X)$ such that $\sup_X(\phi-\varphi_2)=0$ and
$$
(\beta+dd^c\phi)^m \wedge \omega^{n-m}=c(\hat{f})\hat{f}dV_X.
$$
From the domination principle \cref{m-positive have domination} we get that $0<4c\leq c(\hat{f})$; for an upper bound, observe that $\hat{f}\geq C^{-1}f_1$ and hence $(\beta+dd^c\phi)^m \wedge \omega^{n-m}\geq \frac{c(\hat{f})}{C}f_1dV_X=\frac{c(\hat{f})}{C}(\beta+dd^c\varphi_1)^m \wedge \omega^{n-m}$. By the domination principle \cref{m-positive have domination} again we get that $c(\hat{f})\leq C$. It follows from \cref{main a priori Hessian estimates} that $\|\phi\|_{\infty}\leq C$.

Now consider $\psi:=(1-\epsilon)\varphi_2+\epsilon\phi+t$. Then clearly $\{\varphi_1<\psi\}\subset\{\varphi_1<\varphi_2+t\}$ and on the latter set we have $\hat{f}=\frac{1}{2c}f_1$ and hence
$$
(\beta+dd^c\psi)^m \wedge \omega^{n-m}\geq((1-\epsilon)2^{-\frac{\epsilon}{m}}+2^{\frac{1}{m}}\epsilon)^mf_1dV_X>(1+\gamma)(\beta+dd^c\varphi_1)^m \wedge \omega^{n-m}
$$
for some constant $\gamma>0$, where we have used the mixed type inequality \cref{prop:mixed type for beta}. 

From the domination principle \cref{m-positive have domination} again we deduce that $\varphi_1\geq\psi\geq\varphi_2+t-C\epsilon$. It follows that $t\leq\varphi_1-\varphi_2+C\epsilon$ and hence $t\leq m_0+C\epsilon$. Since $t\leq t_0$ was selected arbitrarily, we obtain that $t_0\leq m_0+C\epsilon$. A similar argument yields that $T_0\geq M_0-C\epsilon$.

\textbf{Step 1.2.} Fixing $t_0<a<b<T_0$, we claim that
$$
I_{a, b}:=\int_{E_{a, b}} d\left(\varphi_1-\varphi_2\right) \wedge d^c\left(\varphi_1-\varphi_2\right) \wedge \beta_{\varphi_1}^{m-1} \wedge \omega^{n-m}\leq C(b-a)\left(\varepsilon+b-t_0\right)
$$
and
$$
I_{a, b} \int_{E_{a, b}} \beta_{\varphi_1} \wedge \omega^{n-1} \geq C^{-1}\left(\int_{E_{a, b}}\left|\partial\left(\varphi_1-\varphi_2\right)\right| \omega^n\right)^2,
$$
where $E_{a, b}:=\left\{\varphi_2+a<\varphi_1<\varphi_2+b\right\}$.

Set $\varphi:=\max(\varphi_1,\varphi_2+a),\psi:=\max(\varphi_1,\varphi_2+b)$ and consider
\begin{align*}
    J:=&\int_X(\varphi-\psi)(\beta_{\psi}^m \wedge \omega^{n-m}-\beta_{\varphi}^m \wedge \omega^{n-m})=
    \int_X(\varphi-\psi)dd^c(\psi-\varphi)\wedge T\\
    =&\int_Xd(\varphi-\psi)\wedge d^c(\varphi-\psi)\wedge T-\frac{1}{2}\int_X(\varphi-\psi)^2dd^cT\\
    \geq&\int_{E_{a,b}}d(\varphi-\psi)\wedge d^c(\varphi-\psi)\wedge(\beta+dd^c\varphi_1)^{m-1} \wedge \omega^{n-m}-\frac{1}{2}\int_X(\varphi-\psi)^2dd^cT,
\end{align*}

where $T:=\sum_{j=0}^{m-1}(\beta+dd^c\varphi)^j\wedge(\beta+dd^c\psi)^{m-1-j} \wedge \omega^{n-m}$ ,  and we have used that  $$ d(\varphi-\psi)\wedge d^c(\varphi-\psi) \wedge T\geq d(\varphi-\psi)\wedge d^c(\varphi-\psi) \wedge (\beta+dd^c\varphi)^{m-1} \wedge \omega^{n-m}$$ on $E_{a,b}$.
and $$
(\beta+dd^c\varphi_1)^{m-1} \wedge \omega^{n-m}=(\beta+dd^c\varphi)^{m-1} \wedge \omega^{n-m}
$$ on $\left\{\varphi_2+a<\varphi_1\right\}$.

Note that $|\varphi-\psi|\leq b-a$ in any case, by the computation of  $dd^c (\beta_{\varphi}^{j}\wedge \beta_{\psi}^{m-1-j}\wedge \omega^{n-m})$  in the proof of \cref{modified comparison principle} (note that \cite[Lemma 2.4]{KN16} plays a key role there) and a CLN type inequality (see \cite{KN25a}), we can write
\begin{equation} \label{eq estimate ddcT}
    \int_{X}\left|dd^c\left( (\beta+dd^c\varphi)^j\wedge(\beta+dd^c\psi)^{m-1-j} \wedge \omega^{n-m}   \right)\right|\leq C(B, \|\varphi_1\|_{\infty}, \|\varphi_2\|_{\infty})\end{equation}
provided the condition for $B$ at \cref{modified comparison principle}.
This implies that $-\frac{1}{2}\int_X(\varphi-\psi)^2dd^cT\geq C(b-a)^2$ and thus
$$
J\geq\int_{E_{a,b}}d(\varphi-\psi)\wedge d^c(\varphi-\psi)\wedge(\beta+dd^c\varphi)^{n-1}-C(b-a)^2.
$$
By the plurifine locality we have $(\varphi-\psi)(\beta_{\varphi}^m \wedge \omega^{n-m}-\beta_{\psi}^m \wedge \omega^{n-m})=0$ on $\{\varphi_1<\varphi_2+a\}\cup\{\varphi_1\geq\varphi_2+b\}$ and
$$
\left|\int_{E_{a,b}}(\varphi-\psi)(\beta_{\varphi}^m \wedge \omega^{n-m}-\beta_{\psi}^m \wedge \omega^{n-m})\right|\leq(b-a)\int_X|f_1-f_2|dV_X\leq C(b-a)\epsilon.
$$
On the contact set $\{\varphi_1=\varphi_2+a\}$, we can write
\begin{align*}
   &\left|\int_{\{\varphi_1=\varphi_2+a\}} (\varphi-\psi)(\beta_{\varphi}^m \wedge \omega^{n-m}-\beta_{\psi}^m \wedge \omega^{n-m})\right|\\
    =&(b-a)\left|\int_{\{\varphi_1=\varphi_2+a\}}(\beta_{\varphi}^m \wedge \omega^{n-m}-\beta_{\varphi_2}^m \wedge \omega^{n-m})\right|\\
    =&(b-a)\left|\int_{\{\varphi_1\leq\varphi_2+a\}}(\beta_{\varphi}^m \wedge \omega^{n-m}-\beta_{\varphi_2}^m \wedge \omega^{n-m})\right|\\
    \leq & C(b-a)\epsilon+(b-a)\left|\int_{\{\varphi_1\leq\varphi_2+a\}}(\beta_{\varphi}^m \wedge \omega^{n-m}-\beta_{\varphi_1}^m \wedge \omega^{n-m})\right|\\
      =&C(b-a)\epsilon+(b-a)\left|\int_X(\beta_{\varphi}^m \wedge \omega^{n-m}-\beta_{\varphi_1}^m \wedge \omega^{n-m})\right|\\
      \leq & C(b-a)\epsilon+(b-a)\cdot  C \cdot \sup_X(\varphi-\varphi_1)
\end{align*}
where in the sixth line, we have to do integration by parts and using the CLN inequality as above in \cref{eq estimate ddcT}(see also argued in \cite[Theorem 3.5]{GL23}).

We next turn to prove the second claim. We may use \cite[Lemma 2.5]{KN16} to show that there exists $\theta$ depending only on $n,m$ such that
\begin{align*}
   & \frac{d(\varphi-\psi)\wedge d^c(\varphi-\psi)\wedge(\beta+dd^c\varphi_1)^{m-1} \wedge \omega^{n-m}}{\omega^{n}}\frac{\beta_{\varphi_1}\wedge\omega^{n-1}}{\omega^{n}}\\
    \geq& \theta \cdot \frac{\beta_{\varphi_1}^m \wedge \omega^{n-m}}{\omega^n}\frac{d(\varphi-\psi)\wedge d^c(\varphi-\psi) \wedge\omega^{n-1}}{\omega^{n}}.
\end{align*}
The rest of the proof is unchanged as argued in \cite[Theorem 3.5]{GL23}, note that the condition $f_1\geq c_0>0$ is crucial here.
\textbf{Step 1} can now be finished by following the lines  of \cite[Theorem 3.5, Step 1.3]{GL23}. 

\textbf{Step 2.}
In this step we remove the assumption on $f_1,f_2$ made in \textbf{Step 1}. 

Performing the proof on page 10 of \cite[Theorem 3.5]{LPT20}, assumption (\ref{equ: 3}) can be removed. 

Next, we remove the smoothness assumption. Let $f_{i,j}\rightarrow f_i$ be positive smooth approximation, $i=1,2$. Let  $\varphi_{i,j}\downarrow \varphi_i$ be a smooth approximation, $i=1,2$.

Using \cref{smooth Hessian for beta}, we can solve the following equation for $i=1,2$ with $u_{i,j}\in \operatorname{SH}(X,\beta, m)\cap C^{\infty}(X)$:
$$(\beta+dd^c u_{i,j})^m\wedge \omega^{n-m} = e^{u_{i,j}-\varphi_{i,j}} f_{i,j} dV_{X}.
$$

By \textbf{Step 1}, we have 
$$|u_{1,j}-u_{2,j}|\leq T_j\|g_{1,j}-g_{2,j}\|_{p}^{\frac{1}{n}},$$
 where $g_{i,j}=e^{u_{i,j}-\varphi_{i,j}} f_{i,j}$, and from the proof of \textbf{Step 1}, we have $T_j$ is uniform with respect to $j$.
 As $j\rightarrow +\infty$, argue as in the proof of \cref{main thm twist for m-positive beta}, we can show that $u_{i,j}$ converging to some $u_i$ solving 
 $$(\beta+dd^c u_i)^m \wedge \omega^{n-m} = e^{u_i-\varphi_i} f_i dV_{X}.$$
 
Using the domination principle we obtain $u_i=\varphi_i$; hence $\|g_{1,j}-g_{2,j}\|_p\rightarrow \|f_1-f_2\|_p$, finishing the proof.
\end{proof}

\subsection{Stability of complex Hessian equations in nef classes}
Let $\{\beta\}\in BC^{1,1}(X)$ be a nef Bott-Chern class satisfying $SL_{\omega,a}(\{\beta\})>0$ for some constant $1\leq a<\frac{n}{n-m}$. Assume moreover that there is a function $\rho\in \operatorname{SH}_m(X,\beta,\omega)\cap L^\infty(X)$ and that $\{\beta\}$ is $(\omega,m)$-non-collapsing. 

 \begin{theorem}
     Fix $\lambda>0$, suppose that $\varphi_1, \varphi_2 \in \mathrm{SH}_m(X, \beta,\omega) \cap L^{\infty}(X)$, satisfy
$$
\beta_{\varphi_1}^m \wedge \omega^{n-m}=e^{\lambda \varphi_1}f_1 \omega^n, \quad \beta_{\varphi_2}^m \wedge \omega^{n-m}=e^{\lambda \varphi_2}f_2 \omega^n,
$$
where $f_1, f_2 \in L^p\left(\omega^n\right), p>\frac{n^2}{m^2-(1-\frac{1}{a})mn}$. Then
$$
\|\varphi_1-\varphi_2\|_{\infty} \leqslant C\|f_1-f_2\|_p
$$
where the constant $C$ depends on $c_0, p,\|f_1\|_p,\|f_2\|_p, \omega, X$, $B$, $n$, $m$, $a$, $\beta$.
 \end{theorem}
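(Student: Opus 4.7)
The plan is to adapt the $\lambda>0$ argument in the proof of \cref{stability for m-positive hessian} to the present nef setting, where the role previously played by $(\omega,m)$-positivity of $\beta$ is taken over by the hypotheses that $\{\beta\}$ is nef, $(\omega,m)$-non-collapsing, admits a bounded potential, and satisfies $SL_{\omega,a}(\{\beta\})>0$. After rescaling we may assume $\lambda=1$; by \cref{thm: solve nef 2} the solutions $\varphi_1,\varphi_2$ are uniformly bounded in terms of the admissible data, so after adding a uniformly bounded constant we may further normalize so $\sup_X\varphi_i\leq0$.

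The first step is to construct an auxiliary subsolution. Because $p>n^2/(m^2-(1-1/a)mn)$ and
\[
g:=\frac{|f_1-f_2|}{\|f_1-f_2\|_p}+1\in L^p(\omega^n),\qquad \int_X g\,\omega^n>0,
\]
\cref{thm: solve nef 1} yields a constant $c>0$ and $u\in\operatorname{SH}_m(X,\beta,\omega)\cap L^\infty(X)$ with $(\beta+dd^cu)^m\wedge\omega^{n-m}=cg\,\omega^n$; moreover $c$, $c^{-1}$ and $\operatorname{osc}_X u$ depend only on the admissible data (crucially independent of $\|f_1-f_2\|_p$, since $\|g\|_p\leq 2$). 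After subtracting a uniformly controlled constant, I arrange $u\leq\min(\varphi_1,\varphi_2)$.

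Next, for $\varepsilon>0$ to be chosen, set
\[
\phi:=(1-\varepsilon)\varphi_1+\varepsilon u+m\log(1-\varepsilon)\in\operatorname{SH}_m(X,\beta,\omega)\cap L^\infty(X).
\]
Writing $\beta+dd^c\phi=(1-\varepsilon)(\beta+dd^c\varphi_1)+\varepsilon(\beta+dd^cu)$, expanding, and applying the mixed-type inequality \cref{prop:mixed type for beta} to each cross term gives
\[
(\beta+dd^c\phi)^m\wedge\omega^{n-m}\geq\bigl[(1-\varepsilon)(e^{\varphi_1}f_1)^{1/m}+\varepsilon(cg)^{1/m}\bigr]^m\omega^n\geq\bigl[(1-\varepsilon)^m e^{\varphi_1}f_1+\varepsilon^m cg\bigr]\omega^n,
\]
where the last step uses $(a+b)^m\geq a^m+b^m$ for $a,b\geq 0$, $m\geq1$. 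Choosing $\varepsilon:=(e^{\sup_X\varphi_1}\|f_1-f_2\|_p/c)^{1/m}$ makes $\varepsilon^m c\,g\geq e^{\varphi_1}|f_1-f_2|$ pointwise, hence
\[
(\beta+dd^c\phi)^m\wedge\omega^{n-m}\geq(1-\varepsilon)^m e^{\varphi_1}(f_1+|f_1-f_2|)\omega^n\geq(1-\varepsilon)^m e^{\varphi_1}f_2\,\omega^n\geq e^{\phi}f_2\,\omega^n;
\]
the final bound uses $u\leq\varphi_1$, which implies $\varphi_1+m\log(1-\varepsilon)\geq\phi$. Rewriting the right-hand side as $e^{\phi-\varphi_2}(\beta+dd^c\varphi_2)^m\wedge\omega^{n-m}$ and invoking \cref{domination for beta 2} — valid because $\{\beta\}$ is $(\omega,m)$-non-collapsing — yields $\phi\leq\varphi_2$, i.e.
\[
\varphi_1-\varphi_2\leq\varepsilon(\varphi_1-u)-m\log(1-\varepsilon)\leq C\varepsilon\leq C'\|f_1-f_2\|_p^{1/m}.
\]
Interchanging $\varphi_1,\varphi_2$ gives the matching lower bound, producing the desired stability estimate (with the exponent $1/m$ that the method naturally produces, in parallel with \cref{stability for m-positive hessian}).

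The main technical hurdle I anticipate is ensuring that the auxiliary datum $u$ and constant $c$ depend only on the admissible parameters and not on $\|f_1-f_2\|_p$: this is precisely why the source $g$ is normalized to have $L^p$-norm bounded above by $2$, so that the a priori bounds in \cref{thm: solve nef 1} apply uniformly. The remainder is a direct transcription of the $m$-positive stability argument, with \cref{domination for beta 2} substituted for its $(\omega,m)$-positive counterpart \cref{m-positive have domination}.
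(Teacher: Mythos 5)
Your proposal is correct and is precisely the argument the paper intends: the $\lambda>0$ branch of \cref{stability for m-positive hessian}, with \cref{thm: solve nef 1} replacing \cref{main thm for m-positive beta} as the source of the normalized auxiliary subsolution, and \cref{domination for beta 2} (valid under the standing non-collapsing hypothesis) replacing \cref{m-positive have domination}. Two small remarks. First, the sentence ``after adding a uniformly bounded constant we may normalize $\sup_X\varphi_i\leq0$'' is not quite a legitimate normalization, since shifting $\varphi_i$ by a constant replaces $f_i$ by $e^{-M}f_i$ in the equation; but you never actually use the normalization — only the uniform upper bound on $\sup_X\varphi_1$ coming from \cref{thm: solve nef 2}, so the argument stands as written once that sentence is dropped. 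Second, you correctly observe that the method yields $\|\varphi_1-\varphi_2\|_\infty\leq C\|f_1-f_2\|_p^{1/m}$, matching the exponent in \cref{stability for m-positive hessian}; the paper's statement, which omits the $1/m$, appears to be a typo (as does the mention of $c_0$ in the list of dependencies, a leftover from the $\lambda=0$ case in \cref{stability for m-positive hessian}).
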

\begin{proof}
The proof is very similar to that in \cref{stability for m-positive hessian}.
\end{proof}

\section{The Monge-Amp\`ere equations in nef classes with mild singularities}\label{sigularities}

Let $(X,\omega)$ be a compact Hermitian manifold of complex dimension $n$ equipped with a Hermitian metric $\omega$.

In this section, we establish a $L^\infty$- estimate analogous to \cref{a priori in nef class} for the Monge-Amp\`ere equation and pursue more general resolution of Complex Monge-Amp\`ere equations in nef classes after \cite{LWZ24}, \cite{SW25}, \cite{GL23}. 

Let $\beta$ be a nef Bott-Chern class, write
$$
V_\beta:=\sup\{u\in\operatorname{PSH}(X,\beta),u\leq0\}.
$$
A function $u\in\operatorname{PSH}(X,\beta)$ is said to be of minimal singularity type if there is a constant $C$ such that $|u-V_\beta|\leq C$. Recall that the unbounded locus $L(V_\beta)$ of $V_\beta$ is defined as the set of all the points $x\in X$ such that $V_\beta$ is unbounded in each neighborhood of $x$, which is a closed subset of $X$. \textbf{Unless otherwise stated, we shall assume that $L(V_\beta)$ is a pluripolar set} in some cases for technical reasons (as was stated in the Introduction). Here, $P(V_\beta):=\{V_\beta=-\infty\}$ denotes the polar locus of $V_\beta$.

In \cite{Błocki06}, Błocki has successfully characterized the domain of definition of the complex Monge-Amp\`ere operators on an arbitrary domain $\Omega\subset\mathbb{C}^n$, i.e., we say a function $u\in\operatorname{PSH}(\Omega)$ belongs to $D(\Omega)$ if there exists a measure $\mu$ such that for every open subset $U\subset\Omega$ and every sequence $u_j\in\operatorname{PSH}(U)\cap C^\infty(U)$ satisfying $u_j\searrow u$, then we have $(dd^cu_j)^n$ converges weakly to $\mu$. The Monge-Amp\`ere measure $(dd^cu)^n$ of $u$ is then defined to be $\mu$.  As illustrated in \cite{Błocki06}, when $\Omega$ is a hyperconvex domain, $D(\Omega)$ coincides with the Cegrell class $\mathcal{E}(\Omega)$ introduced in \cite{Ceg04}.
\begin{definition}\label{DMA}
    We say a $\beta$-plurisubharmonic function $u$ belongs to the class $D(X,\beta)$ if for any local coordinate ball $\Omega\subset X$, there is a smooth function $\rho\in C^\infty(\Omega)$ (we usually choose $dd^c\rho\geq\beta$) such that $u+\rho\in D(\Omega)$.
\end{definition}
The following property established in \cite[Theorem 1.2]{Błocki06} will be crucial:
\begin{theorem}\label{Blocki thm 1.2}
    Let $u\in D(X,\beta)$ and $v\in\operatorname{PSH}(X,\beta)$ be such that $u-v$ is bounded. Then we have $v\in D(X,\beta)$.
\end{theorem}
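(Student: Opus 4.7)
The plan is to reduce the global statement to Błocki's local theorem \cite[Theorem 1.2]{Błocki06} by means of the very definition of $D(X,\beta)$ in \cref{DMA}, using a \emph{single} local potential $\rho$ that serves both $u$ and $v$.

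First I would fix an arbitrary coordinate ball $\Omega\subset X$; the goal is to exhibit some smooth $\rho'\in C^\infty(\Omega)$ with $dd^c\rho'\geq\beta$ such that $v+\rho'\in D(\Omega)$, since this is exactly what it means for $v$ to lie in $D(X,\beta)$. Since $u\in D(X,\beta)$, \cref{DMA} supplies a smooth function $\rho\in C^\infty(\Omega)$ with $dd^c\rho\geq\beta$ on $\Omega$ such that $u+\rho\in D(\Omega)$. I propose to use this very same $\rho$ for $v$. The inequality $dd^c\rho\geq\beta$ ensures that $v+\rho$ is plurisubharmonic on $\Omega$ (it is the sum of a smooth function dominating $-\beta$ at the level of $dd^c$ and the $\beta$-psh function $v$).

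Next I would transfer the bounded-perturbation hypothesis: since $u-v$ is bounded on $X$, the difference $(u+\rho)-(v+\rho)=u-v$ is bounded on $\Omega$. At this point both $u+\rho\in D(\Omega)$ and $v+\rho\in\operatorname{PSH}(\Omega)$, and their difference is bounded, so the local Błocki stability result \cite[Theorem 1.2]{Błocki06} applies directly to yield $v+\rho\in D(\Omega)$. Because $\Omega$ was an arbitrary coordinate ball, \cref{DMA} then gives $v\in D(X,\beta)$.

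The argument is essentially a diagram chase through definitions, so there is no serious obstacle: the only point worth emphasizing is that one must reuse the same $\rho$ for $u$ and $v$ (rather than pick independent potentials), and one must verify that the local $D(\Omega)$-class of \cite{Błocki06} is indeed stable under bounded $\operatorname{PSH}$-perturbations, which is the content of the quoted theorem. Once these two observations are in place, the passage from local to global is immediate.
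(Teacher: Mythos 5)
Your argument is correct and matches the paper's proof in substance: pick a coordinate ball, use the same local potential $\rho$ with $dd^c\rho\geq\beta$ for both $u$ and $v$, observe that $(u+\rho)-(v+\rho)=u-v$ is bounded, and invoke B\l{}ocki's Theorem 1.2. The only cosmetic difference is that the paper phrases the conclusion via the Cegrell class $\mathcal{E}(B)$, which coincides with $D(B)$ on a hyperconvex coordinate ball, whereas you stay with $D(\Omega)$ throughout.
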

\begin{proof}
    Select an arbitrary coordinate ball $B$ and a smooth potential $\rho$ such that $dd^c\rho\geq\beta$ on $B$, then we have $\rho+u\in\mathcal{E}(B)$ and hence $\rho+v\in\mathcal{E}(B)$ by \cite[Theorem 1.2]{Błocki06}.
\end{proof}
In the sequel of this section, we will always take the assumption $D(X,\beta)\neq\emptyset$, that is to say, $V_\beta\in D(X,\beta)$ by \cref{Blocki thm 1.2}. Again by \cref{Blocki thm 1.2}, any $\beta$- psh function with minimal singularity type belongs to $D(X,\beta)$.
\begin{remark}
   When the class $\{\beta\}$ admits some potentials with mild singularities, we indeed have $D(X,\beta)\neq\emptyset$, we now give two typical examples. 
   
   Thanks to \cite[Theorem 2.5, Proposition 2.9]{Dem93-1}, \cite[Corollary 3.6]{FS95}, \cite[Theorem 3.1.2, Theorem 3.1.3]{Ra01}, when $u\in\operatorname{PSH}(X,\beta)$ and $H_2(L(u))=0$, we have $u\in D(X,\beta)$. Here $H_2$ denotes the 2- dimensional Hausdorff measure.

   When $X$ is a compact surface, \cite{Błocki04} characterized exactly the class $D(X,\beta)$. Namely, $u\in D(X,\beta)$ if and only if $u\in W^{1,2}(X)$, where $W^{1,2}(X)$ denotes the sobolev space on $X$. This example illustrates that $D(X,\beta)$ is much more large than the class of bounded potentials considered in \cite{LWZ24}, \cite{SW25}, \cite{GL23}.
\end{remark}
As in \cite{Sal25}, the measure $(\beta+dd^cu)^n$ for $u\in D(X,\beta)$ can be defined as follows: fix a coordinate ball $B\subset X$ and select a potential $\rho\in C^\infty(B)$ such that $dd^c\rho\geq\beta$ in $B$, then
$$
(\beta+dd^cu)^n:=\sum_{j=0}^nC_n^j(dd^c(u+\rho))^j\wedge(\beta-dd^c\rho)^{n-j}.
$$
Since $u+\rho\in\mathcal{E}(B)$, we get a well-defined Radon measure on $B$ thanks to \cite[Theorem 4.2]{Ceg04}. Thanks to \cite{Ceg12}, we also have the decreasing (and increasing almost everywhere) convergence property for the measure $(\beta+dd^cu)^n$.

Taking $m=n$ in \cref{a priori in nef class}, we derive the following $L^\infty$ estimates analogous to \cref{main a priori Hessian estimates} for the Monge-Amp\`ere equations, which is a generalization of \cite[Theorem 3.1]{SW25} and \cite[Theorem 1]{GPTW24}.
\begin{theorem}\label{a priori in nef class for MA}
    Let $\{\beta\}\in BC^{1,1}(X)$ be a nef Bott-Chern class satisfying $SL_{\omega,a}(\{\beta\})>0$ for some constant $a\geq1$. Assume also $\varphi_t\in \operatorname{PSH}(X,\beta+t\omega)\cap C^\infty(X)$ satisfying
\begin{equation}
(\beta+t\omega+dd^c\varphi_t)^n= c_te^{F_t}\omega^n,\quad\sup_X\varphi_t=0.
\end{equation}
Here $F_t\in C^\infty(X)$. We also fix a constant $p>na$. Then there exists a uniform constant $C$ depending on $\beta,\omega,n,p$, the upper bound of $\int_{X}e^{aF_t(z)}[\log(1+e^{aF_t(z)})]^p\omega^n$, the lower bound of $\int_Xe^{\frac{F_t}{n}}\omega^n$ and the lower bound of  $SL_{\omega,a}(\{\beta\})$ such that
$$
0\leq-\varphi_t+V_t\leq C.
$$
Here $V_t:=\sup\{u\;|\;u\in \operatorname{PSH}(X,\beta+t\omega),u\leq0\}$ is the largest non-positive $(\beta+t\omega)$-psh function.
\end{theorem}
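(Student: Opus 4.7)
The plan is to adapt the Guo-Phong-Tong-Wang scheme of \cref{thm: a priori in nef class} to the Monge-Amp\`ere case $m=n$. The lower bound $0\leq -\varphi_t+V_t$ is immediate from $\sup_X\varphi_t=0$: $\varphi_t\leq 0$ lies in the defining family of $V_t$, so $\varphi_t\leq V_t$. The upper bound is the real content; since the argument is Monge-Amp\`ere, one does not need Hessian mixed-type G\aa rding inequalities, and the auxiliary equations below are solvable by \cite{TW10} rather than Sz\'ekelyhidi's Hessian theorem.

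First I would produce a Monge-Amp\`ere analogue of \cref{uniform approximation of envelope}, namely a sequence $u_{l,t}\in\operatorname{PSH}(X,\beta+t\omega)\cap C^\infty(X)$ converging uniformly to $V_t$ from above, via the standard decreasing smoothing through equations of the form $(\beta+t\omega+dd^c v_{j,l})^n=e^{j(v_{j,l}-h_l)}\omega^n$ with $h_l\in C^\infty(X)$ and $h_l\searrow 0$. For fixed $s>0$ with $\mathring\Omega_s\ne\emptyset$, where $\Omega_s:=\{\varphi_t-V_t\leq -s\}$, and a smoothing $\tau_k\searrow x\mathds{1}_{\mathbb{R}^+}$, I would solve the auxiliary Monge-Amp\`ere equation
\begin{equation*}
(\beta+t\omega+dd^c\psi_{t,l,k,s})^n = c'_{t,l,k,s}\,\frac{\tau_k(-\varphi_t+u_{l,t}-s)}{A_{s,k,l,t}}\,e^{F_t}\,\omega^n,\qquad \sup_X\psi_{t,l,k,s}=0,
\end{equation*}
where $A_{s,k,l,t}$ is the $L^a(\omega^n)$-norm of the density on the right. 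The maximum principle applied to $\Phi:=-\epsilon(-\psi_{t,l,k,s}+u_{l,t}+1+\Lambda)^{n/(n+1)}-(\varphi_t-u_{l,t}+s)$, with $\epsilon,\Lambda$ tuned as in the proof of \cref{exp lemma}, together with the limits $l\to\infty$ (via the Ko\l odziej-Nguyen $C^0$ stability \cite{KN19} for smooth positive data) and $k\to\infty$, yields a pointwise inequality of the form
\begin{equation*}
(-\varphi_t+V_t-s)^{(n+1)/n}\leq C\,A_{s,t}^{1/n}\bigl(-\psi_{t,s}+V_t+1+CA_{s,t}\bigr).
\end{equation*}

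The sup-slope hypothesis enters through \cref{prop:sup slope} and its monotonicity \cref{monotone of slope}, giving $c'_{t,l,k,s}\geq SL_{\omega,a}(\{\beta+t\omega\})\geq SL_{\omega,a}(\{\beta\})>0$; the upper bound on $c_t$ follows from an integration of the arithmetic-geometric inequality against $\omega^{n-1}$ combined with the assumed lower bound on $\int_X e^{F_t/n}\omega^n$. Skoda's uniform integrability theorem (the $\alpha$-invariant estimate as in \cite{GPT23}) then converts the pointwise inequality into an exponential bound $\int_{\Omega_s}\exp\bigl(\alpha(-\varphi_t+V_t-s)^{(n+1)/n}/A_{s,t}^{1/n}\bigr)\omega^n\leq Ce^{CE_t}$, with $E_t:=\int_X(-\varphi_t+V_t)^ae^{aF_t}\omega^n$. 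Applying Young's inequality in the Orlicz pair $\eta(x)=[\log(1+x)]^p,\ \eta^{-1}(y)=e^{y^{1/p}}-1$ with $u=e^{aF_t}$ and $v=(\alpha/2)(-\varphi_t+V_t-s)^{(n+1)/n}/A_{s,t}^{1/n}$, one gets $v^pu\leq u[\log(1+u)]^p+C_pe^{2v}$; integration over $\Omega_s$ controls the first term by the $L^1(\log L)^p$ hypothesis and the second by the exponential bound. H\"older together with the definition of $A_{s,t}$ then gives $A_{s,t}\leq C\phi(s)^{a(1+\delta_0)}$ for $\phi(s):=\bigl(\int_{\Omega_s}e^{aF_t}\omega^n\bigr)^{1/a}$ with $\delta_0>0$ (since $p>na$), and the De Giorgi iteration \cite[Lemma 2]{GPT23} applied to $r\phi(s+r)\leq A_{s,t}$ finishes.

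The main obstacle I anticipate is the uniform smooth approximation $u_{l,t}\to V_t$: on a general compact Hermitian manifold, such an approximation scheme relies on solving a family of smooth Monge-Amp\`ere equations whose uniform convergence demands the stability estimates of \cite{KN19}. A secondary subtlety is that the energy $E_t$ should be controlled uniformly in $t$ (it is explicit in the Hessian \cref{thm: a priori in nef class} but absent from the statement here); this can be absorbed via Skoda's integrability theorem applied to $-\varphi_t+V_t$, using $a\geq 1$ and H\"older against the $L^1(\log L)^p$ hypothesis, once $V_t$ itself is uniformly bounded in $t$ (which follows from the Hermitian nature of each $\{\beta+t\omega\}$ together with a careful limiting argument). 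Beyond these technical points the proof is an almost verbatim transcription of \cref{exp lemma} and the proof of \cref{thm: a priori in nef class}, with the only essential difference being the Orlicz pair $(x[\log(1+x)]^p,\ e^{y^{1/p}}-1)$ matching the $L^1(\log L)^p$ assumption on $e^{aF_t}$ in place of the polynomial pair used in the Hessian case.
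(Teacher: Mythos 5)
Your proof is the paper's own: the paper derives the statement simply by specializing \cref{a priori in nef class} to $m=n$, and your argument is a faithful $m=n$ transcription of \cref{exp lemma} and the subsequent De Giorgi iteration, with \cite{TW10} replacing Sz\'ekelyhidi's Hessian solvability for the auxiliary equations and \cite{KN19} supplying the $C^0$ stability needed for the $l\to\infty$ limit.

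The one genuinely new point in your write-up is your observation that the entropy $E_t=\int_X(-\varphi_t+V_t)^ae^{aF_t}\omega^n$ appears among the hypotheses of \cref{a priori in nef class} but is absent here, so one must show it is controlled by the remaining data. Your instinct to absorb $E_t$ via Skoda is right, but the details should be stated via Orlicz (Young) duality rather than H\"older, and the argument does not require $V_t$ to be uniformly bounded. Since $V_t\leq 0$ one has $0\leq -\varphi_t+V_t\leq -\varphi_t$, and since $p>na\geq a$ forces $a/p<1$, for every $\epsilon>0$ there is $C_\epsilon$ with $x^{a/p}\leq \epsilon x+C_\epsilon$ for $x\geq 0$; choosing $\epsilon$ below the $\alpha$-invariant for $\sup$-normalized $(\beta+\omega)$-psh functions (which dominates $\operatorname{PSH}(X,\beta+t\omega)$ uniformly for $t\leq1$), Skoda's uniform integrability bounds $\int_X \exp\bigl((-\varphi_t+V_t)^{a/p}\bigr)\omega^n$, hence the $\exp L^{1/p}$-norm of $(-\varphi_t+V_t)^a$; pairing this against the $L^1(\log L)^p$ hypothesis on $e^{aF_t}$ via the duality of $L(\log L)^p$ and $\exp L^{1/p}$ then bounds $E_t$ --- the very same complementary Young pair you already invoke to handle $v^p e^{aF_t}$. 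The substantive reason this absorption is available only at $m=n$ is that Skoda's exponential integrability is specific to plurisubharmonic functions; for general $(\omega,m)$-subharmonic functions the paper's \cref{integrability} provides only $L^q$-integrability for $q<n/(n-m)$ and no exponential analogue is known, which is exactly why \cref{a priori in nef class} must carry $E_t$ as an explicit hypothesis while the Monge--Amp\`ere statement can drop it.
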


\subsection{Solutions of Complex Monge-Amp\`ere equations with an exponential twist}
In \cite{LWZ24} and \cite{SW25}, the authors studied complex Monge-Amp\`ere equations in a nef Bott-Chern class admitting a bounded potential $\rho\in\operatorname{PSH}(X,\beta)\cap L^\infty(X)$ and obtained bounded solutions. It is natural to relax the boundedness assumption of the potential $\rho$ and consider more singular potentials. We first provide the following resolution of CMA equations with an exponential twist:
\begin{theorem}\label{small unbounded locus twist equation}
    Let $\{\beta\}\in BC^{1,1}(X)$ be a nef Bott-Chern class satisfying $SL_{\omega,a}(\{\beta\})>0$ and $ D(X,\beta)\neq\emptyset$. Then, for any $f\in L^p(X), p>a$ satisfying $\int_Xf\omega^n>0$, there exists a function $\varphi$ of minimal singularity type solving the following equation:
    $$
    (\beta+dd^c \varphi)^n = e^ {\lambda \varphi} f\omega^n.
    $$
    Moreover, $\varphi$ is unique if we assume furthermore that $SL_{\omega,1}(\{\beta\})=\underline{\operatorname{Vol}}(\{\beta\})>0$ and $L(V_\beta)$ is pluripolar.
\end{theorem}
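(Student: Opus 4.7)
The plan is to obtain $\varphi$ as a double monotone limit: first let $t\searrow 0$ in the Hermitian classes $\{\beta+t\omega\}$, then pass to the limit in a smooth approximation $f_k\nearrow f$ in $L^p(\omega^n)$. For each $t>0$ and $k$, since $\{\beta+t\omega\}$ is a Hermitian class, the existence theory on compact Hermitian manifolds (as in \cite{KN19,Ngu16,TW10}) furnishes a bounded (indeed smooth) solution $u_{t,k}$ to the non-degenerate twisted equation
$$
(\beta+t\omega+dd^c u_{t,k})^n=e^{\lambda u_{t,k}}f_k\,\omega^n.
$$
A Cherrier-type integration against a Gauduchon form, paired with the maximum principle, supplies a uniform upper bound on $u_{t,k}$; the domination principle \cref{domination for beta 2} shows that the family is monotone non-increasing both in $t$ (as $t\searrow 0$) and in $k$ (as $f_k\nearrow f$), so the two iterated monotone limits exist.

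The heart of the argument is a uniform lower bound of the form $u_{t,k}\geq V_{\beta+t\omega}-C$ independent of $t$ and $k$. Setting $\varphi_{t,k}:=u_{t,k}-\sup_X u_{t,k}$, the equation becomes
$$
(\beta+t\omega+dd^c\varphi_{t,k})^n=c_{t,k}\,e^{F_{t,k}}\omega^n,\qquad \sup_X\varphi_{t,k}=0,
$$
with $F_{t,k}=\lambda\varphi_{t,k}+\log f_k$ and $c_{t,k}=e^{\lambda\sup_X u_{t,k}}$. Because $\varphi_{t,k}\leq 0$ we have $e^{aF_{t,k}}\leq f_k^a$, so the Orlicz-type integral and the energy $E_{t,k}$ demanded by \cref{a priori in nef class for MA} reduce to $\int_X f^a(\log^+\! f)^{q}\omega^n$ (for any chosen $q>na$, automatic from $f\in L^p$ with $p>a$ via the elementary interpolation $f^a(\log f)^q\leq C_\varepsilon f^{a+\varepsilon}$ with $\varepsilon\leq p-a$) and to H\"older bounds against the $L^r$ compactness of the family $\{\varphi_{t,k}\}$ and of $V_{\beta+t\omega}$. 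Applying \cref{a priori in nef class for MA} then yields $0\leq -\varphi_{t,k}+V_{\beta+t\omega}\leq C$; combined with the upper bound on $\sup_X u_{t,k}$ the double limit $\varphi:=\lim_k\lim_{t\to 0}u_{t,k}$ satisfies $\varphi\geq V_\beta-C$, so $\varphi$ is of minimal singularity type and lies in $D(X,\beta)$ by \cref{Blocki thm 1.2}.

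Passing to the limit in the equation is then executed by the Cegrell--B\l ocki monotone continuity of the Monge--Amp\`ere operator on $D(X,\beta)$, applicable since the two successive limits are decreasing and all potentials stay above the fixed reference $V_\beta-C\in D(X,\beta)$; this produces the desired identity $(\beta+dd^c\varphi)^n=e^{\lambda\varphi}f\omega^n$. For the uniqueness statement, suppose two minimal-singularity solutions $\varphi_1,\varphi_2$ exist; then $\varphi_1-\varphi_2$ is bounded. Under the additional assumptions $\underline{\operatorname{Vol}}(\{\beta\})>0$ and $L(V_\beta)$ pluripolar, \cref{intro:non-pluripolar} ensures that $(\beta+dd^c\varphi_i)^n$ does not charge pluripolar sets, which is exactly the ingredient needed so that the envelope-perturbation argument of \cref{domination for beta 2} extends from bounded to minimal-singularity potentials in $D(X,\beta)$, forcing $\varphi_1=\varphi_2$. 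The main technical obstacle is precisely the weak convergence of the Monge--Amp\`ere masses across the double limit when only $u_{t,k}-V_{\beta+t\omega}$ is uniformly bounded and the reference $V_\beta$ itself may be unbounded; this is what makes the hypothesis $D(X,\beta)\neq\emptyset$ indispensable and the a priori estimate \cref{a priori in nef class for MA} the essential input of the proof.
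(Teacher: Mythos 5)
Your plan follows the same broad architecture as the paper's proof — approximate $\beta$ by $\beta+t\omega$, approximate $f$ by smooth data, solve the non-degenerate twisted equation via Tosatti--Weinkove, apply the $L^\infty$ estimate of \cref{a priori in nef class for MA} to the normalized potentials $\varphi_{t,k}=u_{t,k}-\sup_X u_{t,k}$, and pass to monotone limits inside $D(X,\beta)$. The side remarks (the reduction $e^{aF_{t,k}}\le f_k^a$ because $\varphi_{t,k}\le 0$, interpolation to control the Orlicz integral, Cegrell--B\l ocki continuity once everything sits above $V_\beta-C$, and the use of \cref{intro:non-pluripolar} plus the MA domination principle for uniqueness) are all consistent with what the paper does.

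However, there is a genuine gap in the middle step, and the direction of a bound is wrong. After applying \cref{a priori in nef class for MA} you obtain
\[
u_{t,k}-\sup_X u_{t,k}=\varphi_{t,k}\geq V_{\beta+t\omega}-C ,
\]
so $u_{t,k}\geq V_{\beta+t\omega}-C+\sup_X u_{t,k}$. To conclude $u_{t,k}\geq V_\beta-C'$ uniformly — and hence to prevent the monotone decreasing double limit from collapsing to $-\infty$ — you need a uniform \emph{lower} bound on $\sup_X u_{t,k}$, not the upper bound you invoke. Your proposal establishes only the upper bound (via the Gauduchon/Cherrier integration) and never returns to the lower bound. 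This is exactly where the hypothesis $SL_{\omega,a}(\{\beta\})>0$ enters in the paper \emph{beyond} its role as an input to \cref{a priori in nef class for MA}: writing $c_{t,k}=e^{\lambda\sup_X u_{t,k}}$, one recognizes $c_{t,k}=SL_{\omega,\lambda\varphi_{t,k}+\log f_k}(\{\beta+t\omega\})$ by \cref{prop:sup slope}; since $\|e^{\lambda\varphi_{t,k}}f_k\|_{L^a}$ is uniformly bounded (from $\varphi_{t,k}\le 0$ and $f_k\to f$ in $L^p$ with $p>a$), the definition of the $a$-slope plus the monotonicity of slopes (\cref{monotone of slope}, \cref{def of slope in nef class}) gives $c_{t,k}\ge\delta\, SL_{\omega,a}(\{\beta\})>0$, hence the missing lower bound on $\sup_X u_{t,k}$. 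Without this step, your argument does not rule out $\varphi\equiv -\infty$. A secondary, smaller issue: you invoke \cref{domination for beta 2} (stated for bounded Hessian potentials) for uniqueness, whereas the paper's argument requires the minimal-singularity MA domination principle (\cref{domination of MA}), which is precisely what \cref{intro:non-pluripolar} makes available; and for general $f\in L^p$ a smooth \emph{monotone} increasing approximation $f_k\nearrow f$ need not exist, so the paper instead uses $L^p$-convergence together with a stability theorem to pass to the $k$-limit uniformly. Neither of these secondary points is fatal, but the missing lower bound on $\sup_X u_{t,k}$ is.
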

\begin{proof}
The uniqueness follows from the domination principle, see \cref{domination of MA} below.
Without of loss of generality, we will asssume that $\lambda=1$.
  Choose a sequence of smooth positive functions $f_k$ converging in $L^p(X)$ to $f$. It is then easy to check that $\int_Xf_k^a[\log(1+f_k^a)]^p\omega^n$ is uniformly bounded. The main theorem of \cite{TW10} yields smooth solutions of the following system of Monge-Amp\`ere equations:

    \begin{equation}\label{approximation equations}
            (\beta+\frac{1}{j}\omega +dd^c u_{j,k})^n= e^{u_{j,k}}f_k\omega^n
    \end{equation}    
    for any $j,k\geq1$. We claim that there is a uniform constant $C$ independent of $j,k$ such that 
    $$
-C\leq-u_{j,k}+V_{\beta+\frac{1}{j}\omega}\leq C.
    $$
    Indeed, set $v_{j,k}:=u_{j,k}-\sup_Xu_{j,k}$, rewrite \eqref{approximation equations} as
    \begin{equation}\label{approximation equations 1}
            (\beta+\frac{1}{j}\omega +dd^c v_{j,k})^n= c_{j,k}e^{v_{j,k}}f_k \omega^n,
    \end{equation}    
    where $c_{j,k}=e^{\sup_Xu_{j,k}}$. Now since $e^{v_{j,k}}f_k\leq f_k$, we have that $\int_X(e^{v_{j,k}}f_k)^a[\log(1+(e^{v_{j,k}}f_k)^a)]^q\omega^n$ is uniformly bounded above for each $q>na$. Moreover, an argument using Jenson's inequality similar to that in \cref{main thm twist for m-positive beta} yields a lower bound for $\int_X(e^{v_{j,k}}f_k)^{\frac{1}{n}}\omega^n$, the $L^\infty$ estimate \cref{a priori in nef class for MA} implies that
    $$
-C\leq-v_{j,k}+V_{\beta+\frac{1}{j}\omega}\leq C.
    $$
    To finish the proof of the claim, it suffices to show that $\sup_Xu_{j,k}$ is uniformly bounded, this in turn amounts to show that $c_{j,k}=e^{\sup_Xu_{j,k}}$ is uniformly bounded away from zero. As in the proof of \cref{main thm twist for m-positive beta}, the mixed type inequality gives an upper bound of $c_{j,k}=e^{\sup_X{u_{j,k}}}$. For a lower bound, we use our assumption $SL_{\omega,a}(\{\beta\})>0$. Write $f_k=e^{F_k}$, by the definition of sup slopes, we have
    $$
c_{j,k}=SL_{\omega,v_{j,k}+F_k}(\{\beta+\frac{1}{j}\omega\})\geq\delta SL_{\omega,a}(\{\beta+\frac{1}{j}\omega\})\geq \delta SL_{\omega,a}(\{\beta\}),
    $$
    where the second inequality is due to the monotonicity of sup slopes \cref{monotone of slope}. For the first inequality, the $L^a$-norm of $e^{v_{j,k+F_k}}=e^{v_{j,k}}f_k$ is bounded above since $v_{j,k}\leq0$, $f_k\rightarrow f$ in $L^p$ and $p>a$. It follows from the definition of sup slopes in Hermitian classes that there exists a uniform $\delta>0$ such that $SL_{\omega,v_{j,k}+F_k}(\{\beta+\frac{1}{j}\omega\})\geq\delta SL_{\omega,a}(\{\beta+\frac{1}{j}\omega\})>0$. Therefore, the claim follows.
    
   Next, we fix $j$ and letting $k\rightarrow\infty$. The same argument as in \cref{main thm twist for m-positive beta} gives that $u_{j,k}\rightarrow u_j$ uniformly for some $u_j\in\operatorname{PSH}(X,\beta+\frac{1}{j}\omega)\cap C^0(X)$. \eqref{approximation equations} yields that
    \begin{equation}\label{approximation equations 2}
            (\beta+\frac{1}{j}\omega +dd^c u_{j})^n= e^{u_{j}}f\omega^n,
    \end{equation}    
    and we still have the estimate
    $$
-C\leq-u_{j}+V_{\beta+\frac{1}{j}\omega}\leq C.
    $$
    By the domination principle \cref{domination for beta 2}, $\{ u_j\}_j$ is a decreasing sequence. Moreover, all the $u_j$ belongs to $D(X,\beta)$ because $u_j\geq V_\beta-C$, hence we can take the limit at both sides of \cref{approximation equations} and finally arrive at
$$
(\beta+dd^c u)^n=e^{u} f\omega^n.
$$

\end{proof}
\begin{remark}
   \cref{small unbounded locus twist equation} provides a unified generalization of many existing results for degenerate complex Monge-Amp\`ere equations, including \cite[Theorem 3.4]{GL23}, \cite[Theorem 1.2]{LWZ24} and \cite[Theorem 1.1]{SW25}.
\end{remark}

\subsection{Solutions of non-twist Complex Monge-Amp\`ere equations}

To deal with non-twist Complex Monge-Amp\`ere equations, we need to establish the following technical lemmas for nef class with mild singularities, such as maximum principle, properties of quasi-psh envelopes, mass comparison theorem, weak convergence theorem and  domination principle.

\subsubsection{{Maximum principle}}
We recall the following maximum principle established in \cite{KH09}:
\begin{lemma}\label{max_local} \cite[Theorem 4.1]{KH09}
   Let $\Omega$ be a hyperconvex domain and let $u, u_1, \ldots, u_{n-k} \in \mathcal{E}(\Omega), v \in P S H^{-}(\Omega)$ for $k=1, \cdots, n$. Set $T:=d d^c u_1 \wedge \ldots \wedge d d^c u_{n-k}$. Then
$$
\left(d d^c \max (u, v)\right)^{k} \wedge T|_{\{u>v\}}=\left(d d^c u \right)^k\wedge T|_{\{u>v\}}.
$$
\end{lemma}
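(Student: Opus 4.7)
The plan is to treat this as a consequence of the plurifine locality of the Monge--Amp\`ere operator on the Cegrell class $\mathcal{E}(\Omega)$, combined with an approximation argument that reduces the question to the bounded case of Bedford--Taylor. The point is that $\{u > v\}$ is a plurifine open subset of $\Omega$, and on this set $\max(u,v)$ coincides with $u$; so once we know that $(dd^c \cdot)^k \wedge T$ is a measure that respects the plurifine topology, the identity follows automatically.

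First, I would reduce to the elementary wedge identity. Setting $\varphi := \max(u,v)$, note that on $\{u > v\}$ we have $\varphi \equiv u$ in the classical (not merely a.e.) sense, and $\{u>v\}$ is plurifine open since both $u$ and $v$ are psh and hence quasicontinuous. So the claim reduces to: for every plurifine open set $U \subset \Omega$ and every $w_1, w_2 \in \mathcal{E}(\Omega)$ with $w_1 = w_2$ on $U$, one has $(dd^c w_1)^k \wedge T = (dd^c w_2)^k \wedge T$ on $U$. Taking $w_1 = \varphi, w_2 = u$ and $U = \{u>v\}$ then gives the statement.

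Second, I would prove this plurifine locality by a two-step approximation. Since $\Omega$ is hyperconvex and $u, u_1,\dots,u_{n-k} \in \mathcal{E}(\Omega)$, there exist decreasing sequences $u^{(j)}, u_l^{(j)} \in \mathcal{E}_0(\Omega) \cap C(\overline{\Omega})$ with $u^{(j)} \searrow u$ and $u_l^{(j)} \searrow u_l$; similarly one approximates $v$ by $v^{(j)} \in PSH^-(\Omega) \cap C(\overline{\Omega})$. For the bounded approximants, plurifine locality is the classical Bedford--Taylor result (\cite[Theorem 4.1]{KN25a}, cited earlier in the excerpt as \cref{max principle}, is exactly the bounded case in our notation), so
\[
\bigl(dd^c \max(u^{(j)}, v^{(j)})\bigr)^k \wedge T^{(j)} \big|_{\{u^{(j)} > v^{(j)}\}} = (dd^c u^{(j)})^k \wedge T^{(j)} \big|_{\{u^{(j)} > v^{(j)}\}},
\]
where $T^{(j)} = dd^c u_1^{(j)} \wedge \dots \wedge dd^c u_{n-k}^{(j)}$. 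Then I would pass to the limit using the decreasing convergence theorem for Monge--Amp\`ere operators on $\mathcal{E}(\Omega)$ due to Cegrell, together with the fact that $\{u^{(j)} > v^{(j)}\} \nearrow \{u > v\}$ (since $u^{(j)}, v^{(j)}$ are decreasing limits of continuous functions from above).

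The main obstacle I anticipate is the limiting step: one has to make sure that the restriction of these unbounded-coefficient currents to the plurifine open set $\{u > v\}$ is continuous under decreasing approximation. The standard tool is that convergence of $(dd^c u^{(j)})^k \wedge T^{(j)}$ to $(dd^c u)^k \wedge T$ as Radon measures combined with quasicontinuity of $u$ and $v$ implies convergence after multiplying by characteristic functions of plurifine open sets; this is precisely where Khue--Hiep's argument in \cite[Theorem 4.1]{KH09} is needed, and it is why we cite that reference rather than redo the proof. Once this is granted, taking $j \to \infty$ in the bounded identity yields the claim, and iterating the argument $k$ times handles the full power $(dd^c \max(u,v))^k$.
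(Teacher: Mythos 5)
The paper supplies no internal proof for this lemma; it is stated as a direct citation of \cite[Theorem 4.1]{KH09}, so there is no in-paper argument to compare against. Your reconstruction has the right general shape (reduce to plurifine locality, then approximate by bounded functions), but it contains a false intermediate claim: if $u^{(j)}\searrow u$ and $v^{(j)}\searrow v$, then the sets $\{u^{(j)}>v^{(j)}\}$ are \emph{not} nested, and in particular do not increase to $\{u>v\}$. Indeed, $u^{(j)}(x)\geq u(x)>v(x)$ gives no control on the sign of $u^{(j)}(x)-v^{(j)}(x)$, since $v^{(j)}(x)$ can lie anywhere above $v(x)$; and conversely $u^{(j)}(x)>v^{(j)}(x)$ for all $j$ only forces $u(x)\geq v(x)$ in the limit, not a strict inequality. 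So the passage to the limit as you have written it does not go through. The standard device is to hold $v$ \emph{fixed} (do not approximate it), prove the identity on $\{u>v+\varepsilon\}$ for each $\varepsilon>0$ --- a family that genuinely increases to $\{u>v\}$ as $\varepsilon\downarrow 0$ --- and only approximate $u$ and the $u_l$'s, exactly as Bedford--Taylor and Guedj--Zeriahi do in the bounded case. That $\varepsilon$-buffer is the missing ingredient here.

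Separately, the proposal is circular at its hardest step: you write that the convergence of the restricted measures under decreasing approximation ``is precisely where Khue--Hiep's argument in \cite[Theorem 4.1]{KH09} is needed, and it is why we cite that reference rather than redo the proof.'' That is honest, but it means the proposed proof invokes the very theorem it is trying to establish for its own key step. As written, this is a strategy outline rather than a proof; to actually close the argument you would need to prove the plurifine-locality/continuity statement on $\mathcal E(\Omega)$ independently (e.g.\ via Cegrell's decreasing convergence theorem combined with quasicontinuity and the $\varepsilon$-buffer above), not cite the endpoint.
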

\begin{lemma}\label{max2_local}\cite[Proposition 4.3]{KH09}
    a) Let $u, v \in \mathcal{E}(\Omega)$ be such that $\left(d d^c u\right)^n(\{u=v=-\infty\})=0$. Then
$$
\left(d d^c \max (u, v)\right)^n \geq \mathds{1}_{\{u \geq v\}}\left(d d^c u\right)^n+\mathds{1}_{\{u<v\}}\left(d d^c v\right)^n,
$$
where $\mathds{1}_E$ denotes the characteristic function of $E$.

b) Let $\mu$ be a positive measure vanishing on all pluripolar subsets of $\Omega$. Suppose $u, v \in \mathcal{E}(\Omega)$ are such that $\left(d d^c u\right)^n \geq \mu,\left(d d^c v\right)^n \geq \mu$. Then $\left(d d^c \max (u, v)\right)^n \geq \mu$.
\end{lemma}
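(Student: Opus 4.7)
The plan is to follow the standard monotone approximation strategy in Cegrell's class $\mathcal{E}(\Omega)$, using \cref{max_local} as the main tool.

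For part (a), I would introduce the approximation $u_{\varepsilon}:=\max(u,v-\varepsilon)$ for $\varepsilon>0$, which lies in $\mathcal{E}(\Omega)$ and increases to $\max(u,v)$ as $\varepsilon\searrow 0$. Applying \cref{max_local} to the pair $(u,v-\varepsilon)$ on the plurifine open set $\{u>v-\varepsilon\}$ gives the locality identity
\[
(dd^c u_{\varepsilon})^n\big|_{\{u>v-\varepsilon\}}=(dd^c u)^n\big|_{\{u>v-\varepsilon\}}.
\]
Cegrell's monotone convergence theorem for the Monge-Amp\`ere operator on $\mathcal{E}(\Omega)$ then yields $(dd^c u_{\varepsilon})^n\to (dd^c\max(u,v))^n$ weakly. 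The crucial observation is that $\{u>v-\varepsilon\}$ decreases to $\{u\geq v\}\setminus\{u=v=-\infty\}$ as $\varepsilon\to 0$, so the hypothesis $(dd^c u)^n(\{u=v=-\infty\})=0$ exactly absorbs the singular corner. Combining these facts yields $(dd^c\max(u,v))^n\geq \mathds{1}_{\{u\geq v\}}(dd^c u)^n$. A symmetric argument using $\max(u-\varepsilon,v)$ (which is disjoint from the corner since we will restrict to the strict region $\{v>u\}$) gives $(dd^c\max(u,v))^n\geq \mathds{1}_{\{v>u\}}(dd^c v)^n$, and adding these two disjointly supported inequalities produces the desired estimate.

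For part (b), the plan is to reduce to the locally bounded setting via the canonical truncations $u_k:=\max(u,-k)$ and $v_k:=\max(v,-k)$. A direct application of \cref{max_local} to the pair $(u,-k)$ gives $(dd^c u_k)^n\geq \mathds{1}_{\{u>-k\}}(dd^c u)^n\geq \mathds{1}_{\{u>-k\}}\mu$, and analogously for $v_k$. Since $u_k$ and $v_k$ are locally bounded, Bedford-Taylor's theorem guarantees that $(dd^c u_k)^n$ and $(dd^c v_k)^n$ put no mass on pluripolar sets, so the hypothesis of part (a) is automatic. Applying (a) to $u_k$ and $v_k$ and tracking the indicator sets produces
\[
(dd^c\max(u_k,v_k))^n\,\geq\,\mathds{1}_{\{u_k\geq v_k\}\cap\{u>-k\}}\,\mu+\mathds{1}_{\{u_k<v_k\}\cap\{v>-k\}}\,\mu\,=\,\mathds{1}_{\{\max(u,v)>-k\}}\,\mu.
\]
Letting $k\to\infty$, Cegrell's monotone convergence on the left and the fact that $\mu$ does not charge the pluripolar set $\{\max(u,v)=-\infty\}$ on the right complete the argument.

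The main technical obstacle appears in part (a) at the corner set $\{u=v=-\infty\}$, where the approximation $u_\varepsilon$ ceases to satisfy $u>v-\varepsilon$; without the stated mass hypothesis one would pick up an uncontrolled contribution from $(dd^c u)^n$ on this pluripolar set, invalidating the comparison. In part (b) the non-pluripolar hypothesis on $\mu$ is precisely what allows the truncation scheme to bypass this difficulty in the limit.
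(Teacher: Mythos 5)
The paper states this as a pure citation of \cite[Proposition 4.3]{KH09} and contains no internal proof, so there is no argument in the paper to compare against; the question is whether your reconstruction is sound. It is. Your scheme is the standard monotone approximation in $\mathcal{E}(\Omega)$: for (a), approximate $\max(u,v)$ increasingly by $u_\varepsilon=\max(u,v-\varepsilon)$ and $v_\varepsilon=\max(u-\varepsilon,v)$, use \cref{max_local} to get $(dd^cu_\varepsilon)^n\geq\mathds{1}_{\{u>v-\varepsilon\}}(dd^cu)^n$, note that the fixed set $\{u\geq v,\,u>-\infty\}$ is contained in $\{u>v-\varepsilon\}$ for every $\varepsilon$, pass to the weak limit by Cegrell's monotone convergence theorem, and invoke the hypothesis $(dd^cu)^n(\{u=v=-\infty\})=0$ to upgrade the indicator from $\{u\geq v,\,u>-\infty\}$ to $\{u\geq v\}$ (the symmetric direction needs no extra hypothesis since $\{u<v\}\subset\{v>u-\varepsilon\}$ outright); for (b), truncate at $-k$, where the hypothesis of (a) holds trivially since $\{u_k=v_k=-\infty\}=\emptyset$, track the indicator sets to obtain the lower bound $\mathds{1}_{\{\max(u,v)>-k\}}\mu$, and let $k\to\infty$ via Cegrell's decreasing convergence on the left and monotone convergence on the right, using that $\mu$ does not charge the pluripolar set $\{\max(u,v)=-\infty\}$.

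One step you flag but should make fully explicit is the final addition in (a): having $(dd^c\max(u,v))^n\geq\mathds{1}_{\{u\geq v\}}(dd^cu)^n$ and $(dd^c\max(u,v))^n\geq\mathds{1}_{\{u<v\}}(dd^cv)^n$ does not in general let you add the right-hand sides; here it is valid precisely because the two lower-bound measures are carried by the disjoint Borel sets $\{u\geq v\}$ and $\{u<v\}$, so testing $(dd^c\max(u,v))^n$ against each piece of that partition separately yields domination of the sum. With that spelled out, the proof is complete and is in essence the argument of the cited reference.
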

\begin{corollary}\label{max_global}
    Let $\{\beta\}\in BC^{1,1}(X)$ be a nef Bott-Chern class such that  $ D(X,\beta)\neq\emptyset$. Then for $u,v\in PSH(X,\beta)$ with minimal singularities, we have 
    $$  
\left(\beta+d d^c \max (u, v)\right)^{n}|_{\{u>v\}}=\left(\beta+d d^c u \right)^n|_{\{u>v\}}.
    $$
\end{corollary}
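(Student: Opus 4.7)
The plan is to localize the statement and then reduce it, term by term in the binomial expansion of the Monge-Amp\`ere measure, to the local plurifine identity supplied by \cref{max_local}. Fix a coordinate ball $B\subset X$ and a smooth potential $\rho\in C^\infty(B)$ satisfying $dd^c\rho\geq\beta$ on $B$, and put $\tilde u:=u+\rho$, $\tilde v:=v+\rho$. Since $u,v$ have minimal singularity type, $u-V_\beta$ and $v-V_\beta$ are bounded, and because $D(X,\beta)\neq\emptyset$ forces $V_\beta\in D(X,\beta)$, an appeal to \cref{Blocki thm 1.2} yields $\tilde u,\tilde v,\max(\tilde u,\tilde v)\in\mathcal{E}(B)$. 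Writing $\gamma:=\beta-dd^c\rho$, which is a smooth (possibly non-closed) $(1,1)$-form on $B$, the definition of the Monge-Amp\`ere measure recorded in \cref{DMA} gives
$$
(\beta+dd^cu)^n\big|_B=\sum_{j=0}^n\binom{n}{j}(dd^c\tilde u)^j\wedge\gamma^{n-j},
$$
with the analogous expansion for $\max(u,v)$, in which $\tilde u$ is replaced by $\max(\tilde u,\tilde v)$.

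For each $j$ I would establish the plurifine identity
$$
\mathbf{1}_{\{\tilde u>\tilde v\}}\,(dd^c\max(\tilde u,\tilde v))^j\wedge\gamma^{n-j}=\mathbf{1}_{\{\tilde u>\tilde v\}}\,(dd^c\tilde u)^j\wedge\gamma^{n-j}.
$$
Because $\gamma^{n-j}$ is smooth, wedging with it commutes with the cut-off by the characteristic function of the open set $\{\tilde u>\tilde v\}$, so it is enough to prove the identity for the pluripotential factor alone, namely $\mathbf{1}_{\{\tilde u>\tilde v\}}(dd^c\max(\tilde u,\tilde v))^j=\mathbf{1}_{\{\tilde u>\tilde v\}}(dd^c\tilde u)^j$. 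This is precisely \cref{max_local} applied to $\tilde u,\tilde v\in\mathcal{E}(B)$ (the remaining slots in the auxiliary $T$ can be filled by $dd^c|z|^2$ on $B$ and subsequently absorbed). Summing the resulting identities over $j$ gives the required equality of Radon measures on $\{u>v\}\cap B$, and covering $X$ by coordinate balls yields the global conclusion.

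The main subtlety lies in ensuring that \cref{max_local}, whose hypothesis prescribes a background current $T$ of the form $dd^cu_1\wedge\cdots\wedge dd^cu_{n-k}$ with $u_i\in\mathcal{E}(\Omega)$, can legitimately be coupled with the smooth factor $\gamma^{n-j}$, which may fail to be $dd^c$-exact globally on $B$. This is a routine verification: the whole mechanism is driven by the plurifine locality of currents of order zero, and multiplication of such a current by any smooth form preserves its restriction to open sets, so $\mathcal{E}(B)$-membership of $\tilde u,\tilde v$ supplies all the regularity one needs and no closedness of $\gamma$ is required.
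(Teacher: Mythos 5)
Your argument follows essentially the same route as the paper's, which also localizes to a coordinate ball, unfolds $(\beta+dd^c\cdot)^n$ via the binomial expansion of \cref{DMA}, invokes the Cegrell-class maximum principle \cref{max_local}, and delegates the remaining verification to \cite[Theorem 5.3]{Sal25}. You correctly identify the delicate point --- the factor $\gamma^{n-j}$ need not be a product of $dd^c$-exact forms --- but your resolution of it is too thin. Applying \cref{max_local} with the remaining slots filled by $dd^c|z|^2$ gives only the single scalar (trace) identity
$$
\mathds{1}_{\{\tilde u>\tilde v\}}(dd^c\max(\tilde u,\tilde v))^j\wedge(dd^c|z|^2)^{n-j}=\mathds{1}_{\{\tilde u>\tilde v\}}(dd^c\tilde u)^j\wedge(dd^c|z|^2)^{n-j},
$$
and this does not by itself entail that the plurifine restrictions of $(dd^c\max(\tilde u,\tilde v))^j$ and $(dd^c\tilde u)^j$ agree as $(j,j)$-currents, which is what you actually need before you can wedge with the non-closed $\gamma^{n-j}$. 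What rescues the argument is a polarization: apply \cref{max_local} with backgrounds $T=dd^c\psi_1\wedge\cdots\wedge dd^c\psi_{n-j}$ where each $\psi_i$ runs through constant-coefficient Hermitian quadratics (made psh on $B$ by adding a large multiple of $|z|^2$, hence bounded and in $\mathcal{E}(B)$). Products of constant $(1,1)$-forms span the constant-coefficient $(n-j,n-j)$-forms, and since each measure appearing in \cref{max_local} is finite on compacts, the identity is linear in $T$ and can be subtracted term by term; one thereby recovers the equality of each coefficient measure, i.e., the $(j,j)$-current identity on $\{\tilde u>\tilde v\}$. After that, wedging coefficient-by-coefficient with the smooth, possibly non-closed, $\gamma^{n-j}$ is automatic, and summing over $j$ concludes. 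Your heuristic --- plurifine locality of order-zero currents --- is the right one, but ``subsequently absorbed'' is doing real work that a single $dd^c|z|^2$ background cannot supply, and the polarization step deserves to be spelled out.
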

\begin{proof}
 The problem is local, so we only need to check on each small coordinate ball $B\subset X$, where $D(\Omega)=\mathcal{E}(\Omega)$ and hence \cref{max_local} can be applied. Now, we just need to proceed the proof in \cite[Theorem 5.3]{Sal25}. 
\end{proof}

Following the lines of the proofs in \cite[Theorem 3.29]{GZ17} and \cite[Corollary 3.30]{GZ17}, we have the following two comparison principles:
\begin{lemma}
    Assume $u, v \in \mathcal{E}(\Omega)$ are such that $u(z)\geq v(z)$ near $\partial\Omega$. Moreover, assume that $\int_{\Omega} (dd^c u)^n, \int_{\Omega}(dd^c v)^n < +\infty$.
    Then
    \begin{align*}
        \int_{\{u<v\}}(d d^c v)^n \leq \int_{\{u<v\}}\left(d d^{\mathrm{c}} u\right)^n.
    \end{align*}
\end{lemma}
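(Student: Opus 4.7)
The plan is to adapt the classical Bedford--Taylor comparison principle to the Cegrell class setting, using the auxiliary function $v_\varepsilon:=\max(u,v-\varepsilon)$ for small $\varepsilon>0$ and passing to the limit $\varepsilon\to 0^{+}$. The reason for working with $v_\varepsilon$ instead of $v$ directly is that the hypothesis $u\geq v$ near $\partial\Omega$ only guarantees $v_\varepsilon=u$ in a neighbourhood of $\partial\Omega$ (because $v-\varepsilon<v\leq u$ there), which is what allows an integration-by-parts / balayage argument in the Cegrell class $\mathcal{E}(\Omega)$.

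First I would record two local facts on $\Omega$. Applying \cref{max_local} to $u$ and $v-\varepsilon$ yields, on the open set $\{u>v-\varepsilon\}$, the identity $(dd^c v_\varepsilon)^n=(dd^c u)^n$; applying \cref{max2_local} (noting that each $(dd^c u)^n$ puts no mass on the pluripolar set $\{u=-\infty\}\cap\{v=-\infty\}$ when the total masses are finite) we get globally
\begin{equation*}
(dd^c v_\varepsilon)^n\;\geq\;\mathds{1}_{\{u\geq v-\varepsilon\}}(dd^c u)^n+\mathds{1}_{\{u<v-\varepsilon\}}(dd^c v)^n.
\end{equation*}
Next, since $v_\varepsilon=u$ in a neighbourhood of $\partial\Omega$ and both $u,v_\varepsilon\in\mathcal{E}(\Omega)$ have finite total Monge--Amp\`ere mass, a standard Stokes-type argument in the Cegrell class (using a smooth plurisubharmonic exhaustion $\rho$ of $\Omega$ and the fact that $u-v_\varepsilon$ is supported away from $\partial\Omega$) gives
\begin{equation*}
\int_\Omega(dd^c u)^n\;=\;\int_\Omega(dd^c v_\varepsilon)^n.
\end{equation*}

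Combining the two displays,
\begin{equation*}
\int_\Omega(dd^c u)^n \;\geq\; \int_{\{u\geq v-\varepsilon\}}(dd^c u)^n+\int_{\{u<v-\varepsilon\}}(dd^c v)^n,
\end{equation*}
and subtracting the first term on the right (which is finite) yields
\begin{equation*}
\int_{\{u<v-\varepsilon\}}(dd^c u)^n\;\geq\;\int_{\{u<v-\varepsilon\}}(dd^c v)^n.
\end{equation*}
Letting $\varepsilon\searrow 0$, the sets $\{u<v-\varepsilon\}$ increase to $\{u<v\}$, so the monotone convergence theorem gives the desired inequality.

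The main obstacle is the Stokes-type identity $\int_\Omega(dd^cu)^n=\int_\Omega(dd^c v_\varepsilon)^n$: in the classical bounded case this is a direct integration by parts, but in the Cegrell class one must carefully justify it using the finiteness assumption $\int_\Omega(dd^cu)^n,\int_\Omega(dd^cv)^n<+\infty$ together with the fact that $v_\varepsilon-u$ vanishes near $\partial\Omega$. This can be done by approximating with $\max(u,v-\varepsilon-\delta\rho)$ for a smooth negative exhaustion $\rho$ and sending $\delta\to 0$, as in the proof of \cite[Theorem 3.29]{GZ17}, and is the step that uses most of the Cegrell machinery.
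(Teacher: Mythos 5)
Your step 3 contains a genuine gap. You invoke \cref{max2_local}(a), whose hypothesis is $(dd^c u)^n(\{u=v-\varepsilon=-\infty\})=0$, i.e., that $(dd^c u)^n$ does not charge the pluripolar set $\{u=-\infty\}\cap\{v=-\infty\}$. You justify this by asserting that finiteness of the total masses forces the measure to vanish on pluripolar sets, but that is false: a Cegrell-class potential with finite total Monge--Amp\`ere mass can have its measure entirely concentrated on a pluripolar set. The standard example is $u(z)=\log|z|$ in the unit ball, for which $(dd^c u)^n=c_n\,\delta_0$. Taking $u=v=\log|z|$ shows that the hypotheses of the lemma being proved can hold while the hypothesis of \cref{max2_local}(a) fails, so your appeal to that lemma is not justified as written.

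The gap is avoidable, and in fact \cref{max2_local} is not the right tool here. Apply \cref{max_local} twice instead: once with the pair $(u,v-\varepsilon)$ to obtain $(dd^c v_\varepsilon)^n=(dd^c u)^n$ on $\{u>v-\varepsilon\}$, and once with the roles swapped to obtain $(dd^c v_\varepsilon)^n=(dd^c v)^n$ on $\{u<v-\varepsilon\}$; both are legitimate since $u\in\mathcal{E}(\Omega)\subset\operatorname{PSH}^-(\Omega)$ and $v-\varepsilon\in\mathcal{E}(\Omega)$ (one checks $\mathcal{E}(\Omega)$ is stable under subtraction of positive constants by adding a suitable $\mathcal{E}_0$ function to the local decreasing approximants and using Cegrell's energy estimate). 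Neither application needs any pluripolar-mass hypothesis. Discarding the nonnegative contribution on the contact set $\{u=v-\varepsilon\}$, one gets
\begin{equation*}
\int_\Omega (dd^c v_\varepsilon)^n \;\geq\; \int_{\{u>v-\varepsilon\}}(dd^c u)^n + \int_{\{u<v-\varepsilon\}}(dd^c v)^n,
\end{equation*}
hence $\int_{\{u\leq v-\varepsilon\}}(dd^c u)^n\geq\int_{\{u<v-\varepsilon\}}(dd^c v)^n$, and letting $\varepsilon\searrow0$ (both sets increase to $\{u<v\}$) concludes. Finally, the Stokes-type equality you flag as the main obstacle is not in fact needed: only the one-sided inequality $\int_\Omega(dd^c u)^n\geq\int_\Omega(dd^c v_\varepsilon)^n$ enters the argument, and that follows directly from the monotonicity of total mass for Cegrell-class potentials coinciding near $\partial\Omega$ (here $u\leq v_\varepsilon$ with $u=v_\varepsilon$ near the boundary), so the delicate integration-by-parts argument you propose for the equality can be dispensed with.
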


\begin{corollary}\label{comparison_unbounded2}
    Assume $u, v \in \mathcal{E}(\Omega)$ are such that $u(z)\geq v(z)$ near $\partial\Omega$. Moreover, assume that $\int_{\Omega} (dd^c u)^n, \int_{\Omega}(dd^c v)^n < +\infty$.
    If  $$ (dd^cu)^n\leq (dd^c v)^n, $$
    then $v\leq u$ in $\Omega$.
\end{corollary}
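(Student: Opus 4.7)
The plan is to argue by contradiction using the previous lemma combined with a smooth strictly plurisubharmonic perturbation, mimicking the classical comparison principle of Bedford--Taylor and its extension to Cegrell's class. The point is to convert the hypothesis $(dd^c u)^n \leq (dd^c v)^n$ into a strict improvement on the set where $v > u$ by enlarging the Monge--Amp\`ere mass of $v$ slightly.

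First I would reduce to the case where the set $E := \{u > -\infty,\ v - u > 2\delta\}$ has strictly positive Lebesgue measure for some $\delta > 0$; if not, then $\{v > u\}$ is contained in $\{u = -\infty\}$, which has Lebesgue measure zero, and the conclusion $v \leq u$ holds almost everywhere, hence everywhere by upper semicontinuity. Next, because $\Omega$ is bounded (and hyperconvex), I would fix a smooth, bounded, strictly plurisubharmonic function $\rho \leq 0$ on $\Omega$ satisfying $(dd^c \rho)^n \geq c\,dV$ for some constant $c > 0$ (for instance $\rho(z) = |z|^2 - A$ for $A$ large). Set $v_\epsilon := v + \epsilon \rho$ for small $\epsilon > 0$; since $\rho \leq 0$, the inequality $u \geq v \geq v_\epsilon$ persists near $\partial\Omega$, and $v_\epsilon \in \mathcal{E}(\Omega)$ with finite total Monge--Amp\`ere mass (the mixed masses $(dd^c v)^k \wedge (dd^c \rho)^{n-k}$ are finite by the Cegrell theory since $\rho$ is smooth).

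The key computation is then to apply the preceding lemma to the pair $(u, v_\epsilon)$, yielding
\[
\int_{\{u < v_\epsilon\}} (dd^c v_\epsilon)^n \ \leq\ \int_{\{u < v_\epsilon\}} (dd^c u)^n .
\]
Expanding $(dd^c v_\epsilon)^n \geq (dd^c v)^n + \epsilon^n (dd^c \rho)^n \geq (dd^c v)^n + c\,\epsilon^n\, dV$ and invoking the hypothesis $(dd^c u)^n \leq (dd^c v)^n$, the terms involving $(dd^c v)^n$ cancel and one is left with
\[
c\,\epsilon^n\, \mathrm{Vol}(\{u < v_\epsilon\}) \ \leq\ 0,
\]
forcing $\{u < v_\epsilon\}$ to have Lebesgue measure zero. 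Choosing $\epsilon$ small enough that $\epsilon\|\rho\|_\infty < \delta$ gives $E \subset \{u < v_\epsilon\}$, producing the contradiction.

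The main technical obstacle is ensuring that the previous lemma applies to the pair $(u, v_\epsilon)$, which requires verifying $v_\epsilon \in \mathcal{E}(\Omega)$ with $\int_\Omega (dd^c v_\epsilon)^n < \infty$; this is where one must cite the standard Cegrell-class machinery guaranteeing that mixed Monge--Amp\`ere products of an $\mathcal{E}(\Omega)$ function with smooth bounded psh forms remain well-defined Radon measures of finite mass. A smaller, but still substantive, point is the initial reduction step, where one needs to exploit the fact that $\{u = -\infty\}$ is pluripolar and hence Lebesgue-negligible in order to extract the strict gap $v - u > 2\delta$ on a set of positive measure.
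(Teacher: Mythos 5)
Your perturbation strategy is the standard Bedford--Taylor-style argument and is almost certainly what the authors had in mind here; the paper itself states the corollary without proof. The core mechanism is sound: set $v_\epsilon := v + \epsilon\rho$ with $\rho \leq 0$ smooth, bounded, strictly psh and $(dd^c\rho)^n \geq c\,dV$; since $v_\epsilon \leq v \leq u$ near $\partial\Omega$ the preceding lemma applies to the pair $(u, v_\epsilon)$; the binomial lower bound $(dd^c v_\epsilon)^n \geq (dd^c v)^n + c\epsilon^n dV \geq (dd^c u)^n + c\epsilon^n dV$ together with the finiteness of $\int_{\{u<v_\epsilon\}}(dd^c u)^n$ lets you cancel and obtain $\mathrm{Vol}(\{u < v_\epsilon\}) = 0$; the sub-mean-value property then gives $u \geq v_\epsilon$ everywhere, and $\epsilon \to 0^+$ yields $u \geq v$. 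As a stylistic point, the contradiction scaffolding via the sets $E_\delta$ is more than you need: the direct chain ``$\mathrm{Vol}(\{u<v_\epsilon\})=0 \Rightarrow u\geq v_\epsilon$ everywhere $\Rightarrow u\geq v$'' dispenses with the initial pluripolar-set reduction entirely.

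The one obligation you flagged but did not actually discharge is the finite-mass hypothesis for $v_\epsilon$. The preceding lemma requires $\int_\Omega (dd^c v_\epsilon)^n < +\infty$, which after expanding amounts to $\int_\Omega (dd^c v)^{n-k}\wedge(dd^c\rho)^k < +\infty$ for all $k$. Well-definedness of these mixed currents for $v\in\mathcal{E}(\Omega)$ and $\rho$ smooth is indeed routine; finiteness of their total mass is not, and does not follow simply from $\rho$ being smooth and bounded together with $\int_\Omega(dd^c v)^n<\infty$. Cegrell's energy inequality controlling mixed masses by the individual total masses is stated for $\mathcal{F}(\Omega)$ (boundary values zero), not for all of $\mathcal{E}(\Omega)$, so ``the standard Cegrell-class machinery'' you appeal to needs a precise citation or a small workaround. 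Two standard fixes are available: either localize to a hyperconvex $\Omega' \Subset \Omega$ with $\{u<v\}\Subset\Omega'$ and run the argument there (so that the relevant boundary behaviour and mass control are available), or argue directly at the level of the proof of the preceding lemma rather than citing it as a black box. Until one of these is done, the application of the lemma to $(u, v_\epsilon)$ is not licensed by its stated hypotheses, even though the conclusion is certainly correct.
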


\subsubsection{{Mass comparison theorem}}

We establish a technical lemma analogous to Demailly's second comparison principle \cite{Dem93-1}, which will play a key role in proving $(\beta+dd^cu)^n$ is non-pluripolar for $u\in\operatorname{PSH}(X,\beta)$ with minimal singularity.
\begin{theorem}\label{thm: Demailly Second comparison}
Let $u,v\in\mathcal{E}(U)$ be plurisubharmonic functions on an open subset $U$ of  $\mathbb{C}^n$. Assume that $(dd^c v)^n$ is a non-pluripolar measure and that $u-v$ is bounded. Then $(dd^cv)^n$ is non-pluripolar.
\end{theorem}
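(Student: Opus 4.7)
The plan is as follows. First I would normalize the situation. Translating $u$ by the constant $\|u-v\|_\infty$ does not change $(dd^cu)^n$, so I may assume $u\le v$ with $0\le v-u\le C$ for some $C\ge 0$. In particular $\{u=-\infty\}=\{v=-\infty\}$. Since $u,v\in\mathcal{E}(U)$, Cegrell's theory guarantees that the singular (pluripolar) parts of $(dd^cu)^n$ and $(dd^cv)^n$ are carried by this common polar set, so the problem reduces to proving $(dd^cu)^n(\{u=-\infty\})=0$.

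Next I would introduce the bounded truncations $u_k:=\max(u,-k)$ and $v_k:=\max(v,-k)$. A short case-by-case analysis, treating separately whether $u$ and $v$ lie above or below $-k$ and using $0\le v-u\le C$, yields the uniform bound $|u_k-v_k|\le C$ for every $k$. By Bedford--Taylor, the measures $(dd^cu_k)^n$ and $(dd^cv_k)^n$ are non-pluripolar, and by the decreasing convergence theorem for the Cegrell class they converge weakly to $(dd^cu)^n$ and $(dd^cv)^n$ respectively. Applying \cref{max_local} with the constant $-k$ in place of $v$ gives the plurifine locality
\[
\mathbf{1}_{\{u>-k\}}(dd^cu_k)^n=\mathbf{1}_{\{u>-k\}}(dd^cu)^n,
\]
which identifies
\[
(dd^cu)^n(\{u=-\infty\})=\lim_{k\to\infty}(dd^cu_k)^n(\{u\le -k\}).
\]

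The crucial step is to compare these ``escape masses'' for $u_k$ and $v_k$. Exploiting $u_k\ge v_k-C$ and the local comparison principle \cref{comparison_unbounded2}, together with an integration-by-parts identity expressing $(dd^cu_k)^n-(dd^cv_k)^n$ via the mixed Bedford--Taylor currents $(dd^cu_k)^j\wedge(dd^cv_k)^{n-1-j}$ weighted by the uniformly bounded quantity $u_k-v_k$, I expect to obtain an estimate of the form
\[
(dd^cu_k)^n(\{u\le -k\})\;\le\;(dd^cv_k)^n(\{v\le -k+C\})+o(1)\qquad(k\to\infty).
\]
Since $(dd^cv)^n$ is non-pluripolar by hypothesis, the right-hand side tends to $(dd^cv)^n(\{v=-\infty\})=0$, which forces $(dd^cu)^n(\{u=-\infty\})=0$, concluding the proof.

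The main obstacle lies precisely in this last comparison: the difference $u-v$ is not plurisubharmonic, so standard comparison principles cannot be applied to it directly. The mixed Monge--Amp\`ere integration by parts must be performed carefully inside the Cegrell class, and the correction terms controlled via Chern--Levine--Nirenberg type inequalities for the bounded potentials $u_k,v_k$ together with the uniform bound $|u_k-v_k|\le C$. This is essentially the local analogue, adapted to unbounded plurisubharmonic functions, of Demailly's original second comparison technique, and all remaining steps are routine applications of the maximum principle \cref{max_local,max2_local} and the weak convergence of Bedford--Taylor masses.
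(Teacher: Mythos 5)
Your proposal correctly identifies the target---one must show that $(dd^cu)^n$ puts no mass on the common polar set $P(u)=P(v)$, after which Cegrell's decomposition finishes the job---but it does not actually establish this. The step you single out as ``the main obstacle,'' namely passing from the escape masses
\[
(dd^cu_k)^n(\{u\le -k\})\quad\text{to}\quad (dd^cv_k)^n(\{v\le -k+C\})
\]
via mixed Monge--Amp\`ere integration by parts, is left as a heuristic: in a bounded open $U$ with no boundary control, the integration by parts against $u_k-v_k$ produces boundary terms that you do not address, and the difference $u-v$ is neither psh nor $-$psh, so no comparison principle applies to it. Even the preliminary identity $(dd^cu)^n(\{u=-\infty\})=\lim_k(dd^cu_k)^n(\{u\le-k\})$ requires convergence of total masses, which in the local Cegrell setting is not automatic. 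As written, the argument has a genuine gap precisely at the decisive inequality.

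The paper avoids all of this with a single clean device. Since $u-v$ is bounded, for every $c$ one has $u-c>2v$ on a neighbourhood $\{v<r_c\}$ of $P(v)$, so the auxiliary function $w_c:=\max(u-c,\,2v)\in\mathcal E(U)$ equals $u-c$ there; plurifine locality then gives $\mathds 1_{P(v)}(dd^cu)^n\le(dd^cw_c)^n$. As $c\to+\infty$ the functions $w_c$ decrease to $2v$, and the decreasing convergence theorem in $\mathcal E$ yields $(dd^cw_c)^n\to 2^n(dd^cv)^n$, hence $\mathds 1_{P(v)}(dd^cu)^n\le 2^n(dd^cv)^n$. Since $(dd^cv)^n$ is non-pluripolar by hypothesis, this forces $(dd^cu)^n(P(v))=0$, and Cegrell's decomposition concludes. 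The key idea you are missing is the factor of $2$ in $\max(u-c,2v)$: it turns the bounded difference $u-v$ into an open inequality near $P(v)$ without any truncation, integration by parts, or mass comparison, giving a direct majorization of the polar part of $(dd^cu)^n$ by a multiple of $(dd^cv)^n$. You should replace the truncation scheme with this one-line construction.
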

\begin{proof}
    By our assumption for each fixed $c\in \mathbb{R}$, we have $u-c>2v$ when $z\rightarrow P(v)=\{v=-\infty\}$. So  for each $c$,  there exist $r_{c}$ such that $w_c:=\max\{u-c,2v \}$ coincides with $u-c$ on $\{v<r_c\}$, which is an open neighborhood of $P(v)$. Hence we can write 
    $$
    \mathds{1}_\{{v<r_c}\}(dd^c u)^n = \mathds{1}_\{{v<r_c}\}(dd^c w_c)^n,
    $$
    and furthermore 
    \begin{equation}\label{eq compare}
    \mathds{1}_{P(v)}(dd^c u)^n\leq (dd^cw_c)^n.
    \end{equation}
    
We have from \cite{Ceg04} the weak convergence
$$
(dd^c w_c)^n\rightarrow (dd^c 2v)^n
$$
as $c\rightarrow +\infty$.
Now taking the limit on both sides of \eqref{eq compare} we obtain that
$$
 \mathds{1}_{P(v)}(dd^c u)^n\leq(2dd^cv)^n=2^n(dd^cv)^n,
$$
thus $(dd^cv)^n$ puts no mass on $P(v)$. Cegrell's decomposition theorem \cite[Theorem 5.11]{Ceg04} then yields that $(dd^cu)^n$ can be decomposed into a sum of a non-pluripolar measure and a measure supported on $P(v)$, the proof is therefore concluded since $(dd^c u)^n$ puts no mass on $P(v)$.
\end{proof}

The arguments in \cref{thm: Demailly Second comparison} can be generalized to the globally setting:
\begin{theorem}\label{thm: Demailly Second comparison_global}
    Let $\beta$ be a smooth $(1,1)$- form on $X$ satisfying $D(X,\beta)\neq\emptyset$ and $u,v\in\operatorname{PSH}(X,\beta)$ be $\beta$- psh functions with minimal singularities. Assume moreover that $(\beta+dd^cv)^n$ is non-pluripolar and that $u-v$ is bounded. Then we have that $(\beta+dd^cu)^n$ is non-pluripolar.
\end{theorem}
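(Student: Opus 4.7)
The plan is to globalize the proof of Theorem~\ref{thm: Demailly Second comparison} by inflating $\beta$ to $\lambda\beta$ for $\lambda>1$, carrying out the truncation at that level, and then letting $\lambda\to 1^+$. Preliminary reduction: since $u$ and $v$ both have minimal singularities one has $P(u)=P(v)=P(V_\beta)$. Away from $P(V_\beta)$ the function $u$ is locally bounded, so on a coordinate ball with a smooth potential $\rho$ satisfying $dd^c\rho\geq\beta$, the local representation $\beta+dd^cu=dd^c(u+\rho)-(dd^c\rho-\beta)$ together with Bedford-Taylor theory implies $(\beta+dd^cu)^n$ does not charge pluripolar sets outside $P(V_\beta)$, while Cegrell's decomposition theorem applied to $u+\rho\in\mathcal{E}$ confines the pluripolar part of $(\beta+dd^cu)^n$ to $\{u=-\infty\}=P(V_\beta)$. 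Thus it suffices to prove $(\beta+dd^cu)^n(P(V_\beta))=0$.

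For each $\lambda>1$ I set $w_c^\lambda:=\max(u-c,\lambda v)\in\operatorname{PSH}(X,\lambda\beta)$. Since $u-v$ is bounded, $u-\lambda v=(u-v)+(1-\lambda)v\to+\infty$ as $v\to-\infty$, so $P(V_\beta)\subset U_c:=\{u-c>\lambda v\}$ for every $c\in\mathbb{R}$. Plurifine locality gives $(\lambda\beta+dd^cw_c^\lambda)^n=(\lambda\beta+dd^cu)^n$ on $U_c$, whence
\[\mathds{1}_{P(V_\beta)}(\lambda\beta+dd^cu)^n=\mathds{1}_{P(V_\beta)}(\lambda\beta+dd^cw_c^\lambda)^n\leq(\lambda\beta+dd^cw_c^\lambda)^n.\]
Letting $c\to+\infty$, $w_c^\lambda$ decreases to $\lambda v$, and Cegrell's monotone convergence theorem (cf.\ \cite{Ceg12}), applied locally through $\rho$, yields the weak convergence $(\lambda\beta+dd^cw_c^\lambda)^n\to\lambda^n(\beta+dd^cv)^n$. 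Testing the above inequality against continuous non-negative functions then gives the measure bound $\mathds{1}_{P(V_\beta)}(\lambda\beta+dd^cu)^n\leq\lambda^n(\beta+dd^cv)^n$, and the non-pluripolarity of $(\beta+dd^cv)^n$ forces
\[(\lambda\beta+dd^cu)^n(P(V_\beta))=0\qquad\text{for every }\lambda>1.\]

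To pass from $\lambda\beta$ back to $\beta$ I expand, as signed $(n,n)$-currents,
\[(\lambda\beta+dd^cu)^n=\sum_{j=0}^n\binom{n}{j}(\lambda-1)^{n-j}\,\beta^{n-j}\wedge(\beta+dd^cu)^j,\]
and evaluating on $P(V_\beta)$ obtain
\[0=(\beta+dd^cu)^n(P(V_\beta))+\sum_{j=0}^{n-1}\binom{n}{j}(\lambda-1)^{n-j}\,[\beta^{n-j}\wedge(\beta+dd^cu)^j](P(V_\beta)).\]
Each intermediate signed measure $\beta^{n-j}\wedge(\beta+dd^cu)^j$ with $j<n$ has finite total variation on the compact manifold $X$, via the smoothness of $\beta^{n-j}$ and a Chern-Levine-Nirenberg type mass estimate for $(\beta+dd^cu)^j\wedge\omega^{n-j}$ that exploits the minimal-singularities hypothesis on $u$. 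Hence the lower-order sum is $O(\lambda-1)$ and vanishes as $\lambda\to 1^+$, producing $(\beta+dd^cu)^n(P(V_\beta))=0$ as required.

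The main obstacle I anticipate is the weak convergence $(\lambda\beta+dd^cw_c^\lambda)^n\to\lambda^n(\beta+dd^cv)^n$ as $c\to+\infty$ for the possibly unbounded potentials $w_c^\lambda,\lambda v\in D(X,\lambda\beta)$: this must be verified on each coordinate ball via the smooth potential $\rho$, reducing to Cegrell's monotone convergence in $\mathcal{E}(\Omega)$, and then patched globally through the local expansion of the Monge-Amp\`ere measure. A secondary subtlety is the Cegrell-type decomposition invoked at the start, which by the same local analysis confines the pluripolar part of $(\beta+dd^cu)^n$ to $\{u=-\infty\}$ and is what turns the bound $(\beta+dd^cu)^n(P(V_\beta))=0$ into the full non-pluripolarity statement.
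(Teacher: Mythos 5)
Your truncation step has a genuine gap: the claim $w_c^\lambda:=\max(u-c,\lambda v)\in\operatorname{PSH}(X,\lambda\beta)$ requires both $\lambda v$ and $u-c$ to be $\lambda\beta$-psh. The first is fine, but for the second you need $\lambda\beta+dd^cu=(\lambda-1)\beta+(\beta+dd^cu)\geq 0$, and since $\beta+dd^cu$ can degenerate to $0$ (minimal singularities does not give strict positivity), this forces $(\lambda-1)\beta\geq 0$, i.e.\ $\beta\geq 0$. But the theorem only assumes $\beta$ is a smooth $(1,1)$-form with $D(X,\beta)\neq\emptyset$; in the intended application $\{\beta\}$ is merely nef, so $\beta$ itself need not be semi-positive. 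Once $u-c$ fails to be $\lambda\beta$-psh, $(\lambda\beta+dd^cu)^n$ and $(\lambda\beta+dd^cw_c^\lambda)^n$ are no longer positive measures and plurifine locality, the comparison with $(\lambda\beta+dd^cv)^n$, and the vanishing on $P(V_\beta)$ all collapse. Everything downstream, including the binomial expansion and the $\lambda\to 1^+$ limit, rests on this false premise. (A minor separate inaccuracy: away from $P(V_\beta)$ the function $u$ is only locally bounded off the unbounded locus $L(V_\beta)$, not everywhere; the reduction to $P(V_\beta)$ is still correct, but via Cegrell's decomposition for $u+\rho\in\mathcal{E}$, not local boundedness.)

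The paper avoids this positivity obstruction by inflating \emph{additively} with a Hermitian metric rather than multiplicatively: in Step 1 it works with $\omega+\beta+dd^cv$ for an arbitrary Hermitian $\omega$, truncating $v$ against the regularized $\tfrac{v+\rho}{2}-c$ (whose curvature form $\beta+dd^c\tfrac{v+\rho}{2}\geq\tfrac12\omega$ is automatically $\beta$-psh), and in Step 2 it chooses $\omega\geq\beta$ so that $2v\in\operatorname{PSH}(X,\omega+\beta)$ and truncates $u$ against $2v$. The return trip from $\omega+\beta$ to $\beta$ is then the elementary pointwise comparison $(\beta+dd^cu)^n\leq(\omega+\beta+dd^cu)^n$ rather than your binomial expansion in $\lambda-1$. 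Your scheme could likely be repaired by replacing $\lambda\beta$ with $\beta+\epsilon\omega$ and $\lambda v$ with $(1+\epsilon)v$ (valid once $\omega\geq\beta$): then $u$ is automatically $(\beta+\epsilon\omega)$-psh, $u-c>(1+\epsilon)v$ still holds near $P(V_\beta)$ because $u-v$ is bounded, and your expansion-and-let-$\epsilon\to 0$ step survives with $\epsilon\omega$ in place of $(\lambda-1)\beta$. As written, however, the proof does not close.
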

\begin{proof}
    \textbf{Step 1.} We first show that $(\omega+\beta+dd^cv)^n$ is non-pluripolar for an arbitrary Hermitian metric $\omega$ on $X$. The problem is local, so we select a coordinate ball $B$ and a smooth plurisubharmonic function $\rho$ on $B$ such that $dd^c\rho\geq\omega-\beta$. For each $c>0$, it is clear that $\frac{v+\rho}{2}-c>v$ as $z\rightarrow P(v)$ and hence $w_c:=\max(\frac{v+\rho}{2}-c,v)\equiv\frac{v+\rho}{2}-c$ on the set $\{v<r_c\}$ for some constant $r_c$ depending on $c$. Note that $\beta+dd^c\frac{u+\rho}{2}=\frac{1}{2}\beta_u+\frac{1}{2}\beta_\rho\geq\frac{1}{2}\omega>0$ by the choice of $\rho$, so we still have $w_c\in\operatorname{PSH}(X,\beta)$ and $w_c\searrow v$ as $c\rightarrow+\infty$. Now we can write
    $$
    \mathds{1}_{P(v)}\left(\beta+dd^c\frac{v+\rho}{2}\right)^n\leq (\beta+dd^cw_c)^n.
    $$
    
    On the other hand, observe that $\beta+dd^c\frac{v+\rho}{2}\geq\frac{1}{2}(\omega+\beta+dd^cv)$, so we can further write
    $$
    \mathds{1}_{P(v)}(\omega+\beta+dd^cv)\leq2^n(\beta+dd^cw_c)^n.
    $$
    Letting $c\to+\infty$ on both sides we get
    \begin{equation}\label{eq:compare 2}
     \mathds{1}_{P(v)}(\omega+\beta+dd^cv)\leq2^n(\beta+dd^cv)^n.
    \end{equation}
    Now we can use Cegrell's decomposition as in \cref{thm: Demailly Second comparison} to conclude the proof of this step. Indeed, choose $\eta\in C^\infty(B)$ such that $dd^c\eta\geq\omega+\beta$ on $B$. Then we have
    $$
    (\omega+\beta+dd^cv)^n\leq(dd^c(\eta+v))^n=\mu_1+\mu_2,
    $$
    on $B$. Where $\mu_1$ is a non-pluripolar measure on $B$ and $\mu_2$ is supported on $P(v)$. It follows that $(\omega+\beta+dd^cv)^n$ is non-pluripolar since it puts no mass on $P(v)$ by \eqref{eq:compare 2}.

    \textbf{Step 2.} For the general case, select a Hermitian metric $\omega$ on $X$ such that $\omega\geq\beta$. It is enough to show that $(\omega+\beta+dd^cu)^n$ is non-pluripolar. As in the first step, fix an arbitrary $c>0$, there is a corresponding constant $q_c$ such that $u-c>2v$ on the set $\{v<q_c\}$. Since we have assumed that $\omega\geq\beta$, $2v\in\operatorname{PSH}(X,\omega+\beta)$ and hence $\varphi_c:=\max(u-c,2v)\in\operatorname{PSH}(X,\omega+\beta)$, which satisfies $\varphi_c\equiv u-c$ on $\{v<q_c\}$. Now we can write
    \begin{align*}
        \mathds{1}_{P(u)}(\omega+\beta+dd^cu)^n\leq\lim_{c\to+\infty}(\omega+\beta+dd^c\varphi_c)^n=  (\omega+\beta+dd^c2v)^n.
    \end{align*}
  From the first step we conclude that $(\omega+\beta+dd^cu)^n$ puts no mass on $P(u)$ and hence is non-pluripolar by a simple argument using Cegrell's decomposition theorem again.
\end{proof}

\subsubsection{Monge-Amp\`ere envelopes}
\begin{lemma} \label{lem: contact for beta lsc MA}
     Let $\{\beta\}\in BC^{1,1}(X)$ be a nef Bott-Chern class such that  $ D(X,\beta)\neq\emptyset$.  Let $h$ be a bounded  lower semi-continuous Lebesgue measurable function, then the Monge-Amp\`ere  measure $\left(\beta+d d^c P_{\beta}(h)\right)^n$ puts no mass on the open set $\left\{P_{\beta}(h)<h\right\}$. 
\end{lemma}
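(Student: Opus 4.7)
The plan is to mimic the strategy used for the Hessian analogue (\cref{lem: contact for beta lsc}), namely approximate the nef class $\{\beta\}$ by Hermitian classes $\beta_j := \beta + \frac{1}{j}\omega$, verify the statement in each Hermitian class via a balayage argument, and then pass to the limit. The key technical point specific to the Monge-Amp\`ere setting is that the envelope $P_\beta(h)$ need not be bounded, only in $D(X,\beta)$; hence the limiting step requires Cegrell's convergence theory rather than the bounded monotone convergence used in \cite{KN25a}.

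First I would establish the Hermitian version: if $\chi$ is a smooth $(1,1)$-form with $\chi + dd^c\phi$ Hermitian for some smooth $\phi$, then $P_\chi(h)$ is bounded (by writing $P_\chi(h) = P_{\chi+dd^c\phi}(h-\phi) + \phi$), and the classical balayage construction on small coordinate balls (solving the Dirichlet problem with vanishing Monge-Amp\`ere mass, as in \cref{lem: balayage}) together with the comparison principle gives that $(\chi + dd^c P_\chi(h))^n$ puts no mass on the open set $\{P_\chi(h) < h\}$. This is the $m=n$ case of \cref{lem: contact for herrmitian} and goes through unchanged.

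Next, since $\{\beta\}$ is nef, for each $j$ there is $\rho_j \in C^\infty(X)$ with $\beta_j + dd^c\rho_j$ Hermitian, so the previous step yields bounded envelopes $u_j := P_{\beta_j}(h)$ with $\mathds{1}_{\{u_j < h\}}(\beta_j + dd^c u_j)^n = 0$. Since $\operatorname{PSH}(X,\beta) \subset \operatorname{PSH}(X,\beta_k) \subset \operatorname{PSH}(X,\beta_j)$ for $j \geq k$, the sequence $u_j$ is decreasing and lies above $P_\beta(h)$; a standard argument (using that the pointwise limit is $\beta$-psh and $\leq h$ quasi-everywhere) identifies $u_j \searrow P_\beta(h) =: u$. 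Because $h$ is bounded and $V_\beta \leq P_\beta(h) \leq \sup_X h$ up to additive constants, $u$ differs from $V_\beta$ by a bounded quantity and hence lies in $D(X,\beta)$ by \cref{Blocki thm 1.2}.

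The main obstacle is the convergence $(\beta_j + dd^c u_j)^n \to (\beta + dd^c u)^n$ weakly, which cannot be invoked from \cite[Lemma 5.1]{KN25a} since $u$ need not be bounded. I would treat this locally: on a coordinate ball $B$, pick smooth $\phi_B$ with $dd^c\phi_B \geq \beta + \omega \geq \beta_j$, expand
\[
(\beta_j + dd^c u_j)^n = \sum_{k=0}^n \binom{n}{k}(dd^c(u_j+\phi_B))^k \wedge (\beta_j - dd^c\phi_B)^{n-k},
\]
and note that $u_j + \phi_B$ is a decreasing sequence of bounded psh functions on $B$ with limit $u+\phi_B \in \mathcal{E}(B)$. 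Cegrell's decreasing convergence theorem in $\mathcal{E}(B)$ (see \cite{Ceg12}) yields weak convergence of the mixed currents $(dd^c(u_j+\phi_B))^k \wedge T$ against any smooth form $T$; combined with the $C^\infty$-convergence $\beta_j - dd^c\phi_B \to \beta - dd^c\phi_B$, this gives weak convergence of the Monge-Amp\`ere measures on $B$. Finally, since the set $\{u_k < h\}$ is open (as $h$ is lower semi-continuous and $u_k$ is upper semi-continuous) and $u_j \leq u_k < h$ on it for all $j \geq k$, lower semi-continuity of mass on open sets gives
\[
\int_{\{u_k < h\}}(\beta + dd^c u)^n \leq \liminf_{j\to\infty}\int_{\{u_k < h\}}(\beta_j + dd^c u_j)^n = 0,
\]
and the identity $\{u < h\} = \bigcup_k \{u_k < h\}$ (a consequence of $u_k \searrow u$ together with $h$ being l.s.c.) concludes the proof.
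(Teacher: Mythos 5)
Your proposal is correct and follows essentially the same route as the paper: approximating $\{\beta\}$ by the Hermitian classes $\{\beta+\tfrac{1}{j}\omega\}$, invoking the Hermitian base case via the balayage argument, noting that $P_\beta(h)$ has minimal singularity type (hence lies in $D(X,\beta)$ by \cref{Blocki thm 1.2}), using Cegrell's decreasing convergence theorem locally to pass to the limit in the Monge--Amp\`ere measures, and then applying lower semi-continuity of mass on the open sets $\{P_{\beta_k}(h)<h\}$ before letting $k\to\infty$. Your write-up is slightly more explicit about the local expansion into Cegrell mixed currents, but it is the same proof.
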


\begin{proof}
    The idea of proof is the same as \cref{lem: contact for beta lsc}, the difference is that the envelopes may not be bounded functions, the application of  Bedford-Taylor theory is limited, so we need to apply pluripotential theory for unbounded psh functions. 
    
Since $\{\beta\}$ is nef, we may find for each $j$ a function $\rho_j\in \operatorname{PSH}(X,\beta+\frac{1}{j}\omega)\cap C^\infty(X)$ such that $\beta+\frac{1}{j}\omega+dd^c\rho_j$ is a Hermitian metric.
 Observe that 
 $$
 P_{\beta+\frac{1}{j}\omega}(h)= P_{\beta+\frac{1}{j}\omega+dd^c \rho_j}(h-\rho_j)+\rho_j
 $$
 and 
 $$
 P_{\beta+\frac{1}{j}\omega}(h)\searrow P_{\beta}(h).
 $$
Since $h$ is bounded, $P_\beta(h)$ shares the same singularity type as $V_\beta$, \cref{Blocki thm 1.2} yields that $\left(\beta+d d^c P_{\beta}(h)\right)^n$  is well defined and  
$$
\left(\beta+ \frac{1}{j} \omega+ d d^c P_{\beta+\frac{1}{j}\omega}(h)\right)^{n} \rightarrow (\beta+dd^c P_{\beta}(h))^{n}
$$
in the sense of currents.
Indeed, locally choose a smooth potential $v$ such that $\beta+\omega\leq dd^c v$, then by \cite{Ceg04},
$\left[dd^c \left(v+P_{\beta}(h)\right)\right]^l $($l=1,\cdots,n$) is well defined, and moreover,  for $l=1,\cdots,n$,
$$
\left[dd^c \left(v+P_{\beta+\frac{1}{j}\omega} (h)\right)\right]^l \rightarrow\left[dd^c \left(v+P_{\beta}(h)\right)\right]^l 
$$
in the sense of currents as $j\rightarrow+\infty$.
 By \cref{lem: contact for herrmitian} we have 
 \begin{align*}
       &\mathds{1}_{\{P_{\beta+ \frac{1}{j}\omega}(h)<h\}} \left(\beta+ \frac{1}{j} \omega+ d d^c P_{\beta+\frac{1}{j}\omega}(h)\right)^n  \\
       =&\mathds{1}_{\{P_{\beta+\frac{1}{j}\omega+dd^c \rho_j}(h-\rho_j)<h-\rho_j\}} \left(\beta+ \frac{1}{j} \omega+dd^c \rho_j+   d d^c P_{\beta+\frac{1}{j}\omega}(h-\rho_j)\right)^n\\
    =&0.
 \end{align*}
 Since $\{P_{\beta+\frac{1}{k} \omega}(h)< h  \}$ is open, by upper semi-continuity of the weak convergence of Radon measures, we have for $\forall k\in \mathbb{N}^*$,
\begin{align*}
   & \int_{ \{P_{\beta+\frac{1}{k} \omega}(h)< h  \}}  (\beta+dd^c P_{\beta}(h))^{n} \\
 &\leq \liminf_{j\rightarrow +\infty}\int_{\{P_{\beta+\frac{1}{k} \omega}(h)< h  \}} \left(\beta+ \frac{1}{j} \omega+ d d^c P_{\beta+\frac{1}{j}\omega}(h)\right)^n \\
 =& 0.
\end{align*}
Letting $k\rightarrow +\infty$, we get the desired result.
\end{proof}

The following corollary says that any $\beta$-plurisubharmonic function with minimal singularity has non-pluripolar Monge-Amp\`ere measure, which is interesting on its own:
\begin{corollary}\label{non-pluripolar} 
 Let $\{\beta\}$ be a nef Bott-Chern class on $X$ satisfying $D(X,\beta)\neq\emptyset$. Then, for each $u\in\operatorname{PSH}(X,\beta)$ with minimal singularity, the Monge-Amp\`ere measure $(\beta+dd^cu)^n$ does not charge pluripolar sets. In particular, $u$ has zero Lelong number everywhere. 
\end{corollary}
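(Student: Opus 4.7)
The plan is to establish the non-pluripolar assertion first for the extremal function $V_\beta$ and then propagate it to every minimal-singularity potential via \cref{thm: Demailly Second comparison_global}.

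First I would apply \cref{lem: contact for beta lsc MA} to the bounded lower semi-continuous function $h\equiv 0$. Since $P_\beta(0)=V_\beta$, this gives that $(\beta+dd^cV_\beta)^n$ puts no mass on the open set $\{V_\beta<0\}$; equivalently, the Monge-Amp\`ere measure is concentrated on the contact set $C:=\{V_\beta=0\}$. On $C$ the function $V_\beta$ is bounded (identically zero), so one can replace it by the bounded $\beta$-psh function $\psi_k:=\max(V_\beta,-k)$ for any $k\geq 1$, which coincides with $V_\beta$ on a neighbourhood (in the plurifine topology) of $C$. By the local plurifine locality of the Monge-Amp\`ere operator on $D(X,\beta)$ (a direct consequence of \cref{max_local} applied on coordinate balls, after adding a smooth plurisubharmonic potential $\rho$ with $dd^c\rho\geq\beta$), we get
\begin{equation*}
\mathds{1}_C\,(\beta+dd^cV_\beta)^n=\mathds{1}_C\,(\beta+dd^c\psi_k)^n.
\end{equation*}
Since $\psi_k$ is a bounded $\beta$-psh function, the measure on the right is non-pluripolar by classical Bedford-Taylor theory. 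Combined with the concentration on $C$, this shows $(\beta+dd^cV_\beta)^n$ does not charge pluripolar sets.

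Next, given any $u\in\operatorname{PSH}(X,\beta)$ with minimal singularities, we have $u-V_\beta\in L^\infty(X)$ and both $u$ and $V_\beta$ lie in $D(X,\beta)$ by \cref{Blocki thm 1.2}. Invoking \cref{thm: Demailly Second comparison_global} with $v=V_\beta$ (whose Monge-Amp\`ere measure is non-pluripolar by the previous paragraph) yields that $(\beta+dd^cu)^n$ is non-pluripolar, which is the first assertion.

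For the Lelong number statement, suppose for contradiction that $\nu(u,x_0)>0$ at some point $x_0\in X$. Work in a coordinate ball $B\ni x_0$ and choose $\rho\in C^\infty(B)$ with $dd^c\rho\geq\beta$; then $u+\rho$ is a local plurisubharmonic function with Lelong number $\nu(u,x_0)>0$ at $x_0$. Demailly's second comparison inequality for generalized Lelong numbers (see \cite[Theorem III.7.1]{Dem93-1}, available in $D(X,\beta)$ since $u+\rho\in\mathcal{E}(B)$) gives
\begin{equation*}
(dd^c(u+\rho))^n\bigl(\{x_0\}\bigr)\geq \nu(u,x_0)^n>0,
\end{equation*}
which in view of the definition of $(\beta+dd^cu)^n$ forces $(\beta+dd^cu)^n$ to charge the pluripolar singleton $\{x_0\}$, contradicting the first part. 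Hence $\nu(u,x)=0$ for all $x\in X$.

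The main technical subtlety I foresee lies in Step 1: one must justify the plurifine localisation $\mathds{1}_C(\beta+dd^cV_\beta)^n=\mathds{1}_C(\beta+dd^c\psi_k)^n$ rigorously, because the global Monge-Amp\`ere operator is only defined through the local decomposition from \cref{DMA} and $V_\beta$ may be unbounded. This reduces to verifying the local statement $\mathds{1}_{\{u=v\}}(dd^c u)^n=\mathds{1}_{\{u=v\}}(dd^c\max(u,v))^n$ for functions in $\mathcal{E}(B)$, which follows from \cref{max_local} applied to $u+\rho$ and $\psi_k+\rho$. Everything else is a clean combination of the mass concentration lemma, the mass comparison theorem, and Demailly's inequality.
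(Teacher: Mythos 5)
Your argument is correct and follows the paper's outline: mass concentration of $(\beta+dd^cV_\beta)^n$ via \cref{lem: contact for beta lsc MA} with $h\equiv 0$, then propagation to an arbitrary minimal-singularity $u$ via \cref{thm: Demailly Second comparison_global}, and finally Demailly's Lelong comparison. Where you diverge from the paper is at the two endpoints.

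For the non-pluripolarity of $(\beta+dd^cV_\beta)^n$, the paper's route is: having shown the measure puts no mass on $\{V_\beta=-\infty\}$, it invokes Cegrell's decomposition of $(dd^c(V_\beta+\rho))^n$ into a non-pluripolar part plus a part carried by $\{V_\beta=-\infty\}$ (as already done inside the proof of \cref{thm: Demailly Second comparison_global}), and the conclusion drops out once one observes $(\beta+dd^cV_\beta)^n\le(dd^c(V_\beta+\rho))^n$ locally. Your truncation-plus-plurifine-locality route reaches the same conclusion and is a reasonable alternative, but as you yourself flag, the localisation needs to be carried out term by term in the sum defining $(\beta+dd^cV_\beta)^n$; since the relevant $T=(\beta-dd^c\rho)^{n-j}$ is a \emph{smooth} form rather than a wedge of $dd^c$ of $\mathcal E$-functions, \cref{max_local} as stated does not quite apply and one must first write the smooth form as a difference of $dd^c$-exact positive forms (this is exactly the extra work that the paper delegates to Salouf in the proof of \cref{max_global}). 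The Cegrell-decomposition argument avoids this entirely, so the paper's version is the shorter one.

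For the Lelong number, your step from $(dd^c(u+\rho))^n(\{x_0\})>0$ to $(\beta+dd^cu)^n(\{x_0\})>0$ "in view of the definition" hides a non-trivial observation: in the expansion $\sum_j C_n^j (dd^c(u+\rho))^j\wedge(\beta-dd^c\rho)^{n-j}$, the terms with $j<n$ could in principle interfere (they carry signs, since $\beta-dd^c\rho\le 0$). To see that they do not charge $\{x_0\}$ you need to invoke Lelong monotonicity for positive closed currents, which gives $(dd^c(u+\rho))^j\wedge\omega^{n-j}(B(x_0,r))=O(r^{2(n-j)})\to 0$ and hence kills every $j<n$ term at the point. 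This should be spelled out. The paper instead runs the implication the other way: $(\beta+dd^cu)^n$ non-pluripolar implies $(\omega+\beta+dd^cu)^n$ non-pluripolar (Step 1 of \cref{thm: Demailly Second comparison_global}), which after choosing $\rho$ with $\beta\le dd^c\rho\le\omega+\beta$ locally dominates $(dd^c(\rho+u))^n$ and forces $(dd^c(\rho+u))^n(\{x\})=0$, whence $\nu(u,x)=0$ by Demailly--Cegrell. Both routes are valid; the paper's sidesteps the observation you need and is cleaner, while yours (once patched) is more self-contained.
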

\begin{proof}
We first show that $(\beta+dd^cV_\beta)^n$ is non-pluripolar. Take $h=0$ in \cref{lem: contact for beta lsc MA}, we see that $(\beta+dd^cV_\beta)^n$ puts no mass on $\{V_\beta<0\}$, whence on $\{V_\beta=-\infty\}$. Similar to the proof in \cref{thm: Demailly Second comparison_global}, we can apply Cegrell's decomposition theorem to conclude that $(\beta+dd^cV_\beta)^n$ is non-pluripolar.  Since $u$ has the same singularity type as $V_\beta$, it follows immediately from \cref{thm: Demailly Second comparison_global} that  $(\beta+dd^cu)^n$ is non-pluripolar. 

To prove that $u$ has vanishing Lelong number, choose an arbitrary $x\in X$, a coordinate ball $\Omega$ centered at $x$ and  a smooth local potential $\rho$ such that $\rho+u$ is plurisubharmonic on $\Omega$. We have showed above that $(\beta+dd^cu)^n$ is non-pluripolar and so does $(\omega+\beta+dd^cu)^n$ for arbitrary Hermitian metric $\omega$ by the proof of the first step in \cref{thm: Demailly Second comparison_global}. This in turn implies that $(dd^c \rho +dd^c u)^n(\{x\})=0$. Consequently, the comparison theorem between Lelong numbers (\cite[Corollary 5.7]{Ceg04}) yields that $2\pi \cdot \nu(\rho+u,x)\leq \{(dd^c \rho +dd^c u)^n(\{x\})\}^{\frac{1}{n}}=0$ and we have concluded our proof.
\end{proof}

According to \cref{non-pluripolar}, for $u,v\in \operatorname{PSH}(X,\beta)$ with minimal singularity types, we have that $(\beta+dd^cu)^n(u=-\infty)=0$. The arguments in \cite[Proposition 4.3]{KH09} can now be carried over, giving the following properties:
\begin{corollary}\label{max_global2}
Let $\{\beta\}\in BC^{1,1}(X)$ be a nef Bott-Chern class such that $V_{\beta}\in D(X,\beta)$. Then for $u,v\in \operatorname{PSH}(X,\beta)$ with minimal singularity types, we have 
$$
\left(\beta+d d^c \max (u, v)\right)^n \geq 1_{\{u \geq v\}}\left(\beta+d d^c u\right)^n+1_{\{u<v\}}\left(\beta+d d^c v\right)^n.
$$
\end{corollary}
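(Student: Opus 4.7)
The plan is to reduce the claim to the local statements \cref{max_local} and \cref{max2_local} by choosing smooth local potentials and exploiting plurifine locality. Fix an arbitrary coordinate ball $B\subset X$ and a smooth function $\rho\in C^\infty(B)$ with $dd^c\rho\geq\beta$, and set $\tilde u:=u+\rho$, $\tilde v:=v+\rho$. Since $u,v$ have the same singularity type as $V_\beta$ and $V_\beta\in D(X,\beta)$, \cref{Blocki thm 1.2} gives $\tilde u,\tilde v\in\mathcal{E}(B)$. Moreover \cref{non-pluripolar} implies that $(\beta+dd^cu)^n$ and $(\beta+dd^cv)^n$ are non-pluripolar; tracing through \cref{DMA} this forces $(dd^c\tilde u)^n$ and $(dd^c\tilde v)^n$ to charge no pluripolar set on $B$, so in particular
$$
(dd^c\tilde u)^n(\{\tilde u=\tilde v=-\infty\})=0.
$$

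Hence the hypothesis of \cref{max2_local}(a) is satisfied, yielding on $B$
$$
(dd^c\max(\tilde u,\tilde v))^n\geq\mathds{1}_{\{u\geq v\}}(dd^c\tilde u)^n+\mathds{1}_{\{u<v\}}(dd^c\tilde v)^n.
$$
To promote this to the global claim I would expand the Monge-Amp\`ere measures according to the definition in \cref{DMA}:
$$
(\beta+dd^c\max(u,v))^n=\sum_{j=0}^n\binom{n}{j}(dd^c\max(\tilde u,\tilde v))^j\wedge(\beta-dd^c\rho)^{n-j},
$$
and analogously for $u$ and $v$ in place of $\max(u,v)$. For each $0\leq j<n$, writing $T_j:=(\beta-dd^c\rho)^{n-j}$ (a fixed smooth $(n-j,n-j)$-form, not necessarily positive), the plurifine locality provided by \cref{max_local} applied to the bounded auxiliary currents -- and extended in the standard Cegrell-theoretic way to $\mathcal{E}(B)$ -- gives the termwise identities
$$
\mathds{1}_{\{u\geq v\}}(dd^c\max(\tilde u,\tilde v))^j\wedge T_j=\mathds{1}_{\{u\geq v\}}(dd^c\tilde u)^j\wedge T_j,
$$
$$
\mathds{1}_{\{u<v\}}(dd^c\max(\tilde u,\tilde v))^j\wedge T_j=\mathds{1}_{\{u<v\}}(dd^c\tilde v)^j\wedge T_j,
$$
exactly in the spirit of the computation carried out in the proof of \cref{prop: max principle for beta}. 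Combining the strict inequality at $j=n$ with these equalities at lower degrees, summing over $j$, and collecting the $\mathds{1}_{\{u\geq v\}}$ and $\mathds{1}_{\{u<v\}}$ parts recovers the desired inequality on $B$; since $B$ was arbitrary, the inequality holds globally.

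The main point to check is the applicability of plurifine locality for mixed products involving currents of the form $(dd^c\tilde u)^j$ with $\tilde u\in\mathcal{E}(B)$ unbounded, wedged with the signed smooth form $T_j$. This is not covered verbatim by the classical Bedford-Taylor treatment, but it follows from \cref{max_local}, which is stated precisely in the Cegrell class; the signed nature of $T_j$ causes no issue because the identity holds term-by-term (as in the proof of \cref{prop: max principle for beta}) rather than from positivity, and can be reduced to the positive case by decomposing $T_j$ as a difference of smooth positive $(n-j,n-j)$-forms. Thus the whole argument is a careful transposition of the bounded-potential argument of \cite[Proposition 4.3]{KH09} (already used in \cref{max_global}) to the setting of minimal singularities, made legitimate by the non-pluripolarity of $(\beta+dd^cu)^n$ established in \cref{non-pluripolar}.
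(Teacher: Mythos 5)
Your overall strategy --- reduce to a coordinate ball $B$, pass to $\tilde u=u+\rho,\tilde v=v+\rho\in\mathcal{E}(B)$, expand $(\beta+dd^c\cdot)^n$ via \cref{DMA}, and feed the local Cegrell-class lemmas into each degree --- is indeed the right way to unpack the paper's one-line hint (``the arguments in \cite[Proposition~4.3]{KH09} can now be carried over''). However, there is a genuine gap at the contact set $\{u=v\}$: \cref{max_local} only yields
$$
\mathds{1}_{\{u>v\}}(dd^c\max(\tilde u,\tilde v))^j\wedge T_j
  = \mathds{1}_{\{u>v\}}(dd^c\tilde u)^j\wedge T_j
$$
on the \emph{strict} plurifine open set $\{u>v\}$, not on $\{u\geq v\}$ as you assert for $j<n$. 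Since the forms $T_j=(\beta-dd^c\rho)^{n-j}$ alternate in sign, the lower-degree contributions on $\{u=v\}$ cannot be dominated by the degree-$n$ inequality coming from \cref{max2_local}, and there is no reason for the signed measures $\mathds{1}_{\{u=v\}}(dd^c\max(\tilde u,\tilde v))^j\wedge T_j$ and $\mathds{1}_{\{u=v\}}(dd^c\tilde u)^j\wedge T_j$ to coincide when $j<n$ (they typically do not, because $\max(u,v)$ carries extra ``curvature'' concentrated on the contact set). Consequently, the final summation you describe does not close up.

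The missing ingredient --- which is what \cite[Proposition~4.3]{KH09} really does --- is to apply the locality to $\max(u,v-\varepsilon)$ instead: the set $\{u>v-\varepsilon\}$ is plurifine open and contains $\{u\geq v\}$, so the termwise identity holds there for \emph{every} degree $j\le n$, giving
$$
\mathds{1}_{\{u>v-\varepsilon\}}\bigl(\beta+dd^c\max(u,v-\varepsilon)\bigr)^n
  = \mathds{1}_{\{u>v-\varepsilon\}}(\beta+dd^cu)^n .
$$
One then tests against nonnegative continuous $\chi$, uses $\max(u,v-\varepsilon)\nearrow\max(u,v)$ together with the increasing convergence of $(\beta+dd^c\cdot)^n$ on $\mathcal E$ (recorded right after \cref{DMA}, via \cite{Ceg12}), and passes to the limit using monotone convergence for $\{u>v-\varepsilon\}\searrow\{u\geq v\}$ and $\{u<v-\varepsilon\}\nearrow\{u<v\}$. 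A secondary remark: ``tracing through \cref{DMA}'' does not by itself show that $(dd^c\tilde u)^n$ charges no pluripolar set, since the terms in that expansion carry alternating signs and cannot be peeled off individually; the claim is true but needs the mass-comparison argument of Step~1 in the proof of \cref{thm: Demailly Second comparison_global}. In any case, the $\varepsilon$-shift argument works directly with $(\beta+dd^cu)^n$, whose non-pluripolarity is exactly \cref{non-pluripolar}, so the detour through $(dd^c\tilde u)^n$ can be avoided altogether.
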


Here and after, by the notion "converge in capacity", we always mean converge with respect to $\operatorname{Cap}_\omega$, which locally is equivalent to the Bedford-Taylor capacity.
The following weak convergence lemmas will be crucial in the resolution of Complex Monge-Amp\`ere equations:

\begin{lemma} \label{continuity}
      Let $\{\beta\}\in BC^{1,1}(X)$ be a nef Bott-Chern class such that  $ D(X,\beta)\neq\emptyset$. Let  $u, u_k \in \operatorname{PSH}\left(X, \beta\right)$ be such that $u_k\rightarrow u $  in capacity as $k \rightarrow \infty$ . Moreover, assume that there exists a uniform constant $C$ such that 
    $|u-V_{\beta}|\leq C, |u_k-V_{\beta}|\leq C$. Assume also that $L(V_\beta)$ is a pluripolar set. Let $\chi_k, \chi \geq 0$ be quasi-continuous and uniformly bounded functions such that $\chi_k \rightarrow \chi$ in capacity. Then
$$
\liminf _{k \rightarrow \infty} \int_X \chi_k (\beta+dd^c u_k)^n \geq \int_X \chi (\beta+dd^c u)^n.
$$
If additionally,
$$
\int_X (\beta+dd^cu)^n \geq \limsup _{k \rightarrow \infty} \int_X (\beta+dd^c u_k)^n,
$$
then $\chi_k (\beta+dd^c u_k)^n \rightarrow \chi (\beta+dd^c u)^n$ in the weak sense of measures on $X$. 

Note that when $\beta$ is a closed form, the second condition is always achieved because $u_k,u$ have the same singularity type as $V_{\beta}$, whose Lelong number vanishes everywhere by \cref{non-pluripolar}.
\end{lemma}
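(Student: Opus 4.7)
The approach combines two key facts established earlier. First, by \cref{intro:non-pluripolar}, the Monge--Amp\`ere measures $\mu := (\beta+dd^c u)^n$ and $\mu_k := (\beta+dd^c u_k)^n$ are non-pluripolar, hence neither charges the pluripolar set $L := L(V_\beta)$. Second, outside $L$ the functions $V_\beta, u, u_k$ are locally bounded (as $u, u_k$ share the singularity type of $V_\beta$), so the classical Bedford--Taylor weak convergence \cref{thm: weak convergence} applies locally there. The strategy is thus to localize via quasi-continuous cutoffs vanishing where $V_\beta$ is very negative, and to exploit the fact that $\mu$ places no mass on the exceptional locus $L$.

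Concretely, for each $j \geq 1$ define the quasi-continuous cutoff
$$
\eta_j := \max\bigl(0,\ \min(1,\ V_\beta + j + 1)\bigr),
$$
which takes values in $[0,1]$, vanishes on $\{V_\beta \leq -j-1\}$, and increases pointwise to $\mathds{1}_{\{V_\beta > -\infty\}}$ as $j \to \infty$. On the open set $\tilde U_j := \{V_\beta > -j-2\}$, the inequalities $-j-2 < V_\beta \leq 0$ force $u, u_k$ to be uniformly bounded (in $k$). Since $u_k \to u$ globally in $\operatorname{Cap}_\omega$, \cref{comparison of capacity} yields convergence in local Bedford--Taylor capacity on relatively compact subsets of $\tilde U_j$, and similarly $\chi_k \eta_j \to \chi \eta_j$ in capacity. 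Applying \cref{thm: weak convergence} on coordinate balls covering $\tilde U_j$ (using local smooth potentials $\rho$ with $dd^c\rho \geq \beta$, so that $u_k + \rho$ is bounded psh) and gluing via a partition of unity, one obtains $\chi_k \eta_j\, \mu_k \to \chi \eta_j\, \mu$ weakly on $\tilde U_j$. Since $\eta_j$ vanishes outside the compact subset $\{V_\beta \geq -j-1\}$ of $\tilde U_j$, multiplying by a continuous bump $\phi_j \in C_c(\tilde U_j)$ equal to $1$ on this set gives
$$
\int_X \chi_k \eta_j \, d\mu_k \ \longrightarrow\ \int_X \chi \eta_j \, d\mu \qquad (k \to \infty).
$$
Because $0 \leq \eta_j \leq 1$ and $\chi_k \geq 0$, this yields $\liminf_k \int_X \chi_k \, d\mu_k \geq \int_X \chi \eta_j \, d\mu$; letting $j \to \infty$ and invoking monotone convergence together with $\mu(L) = 0$ produces the first inequality.

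For the second statement, the lower semicontinuity applied with $\chi_k = \chi \equiv 1$ combined with the extra hypothesis $\int d\mu \geq \limsup_k \int d\mu_k$ forces convergence of total masses $\int d\mu_k \to \int d\mu$. Running the argument a second time on $M - \chi_k$ (for $M \geq \sup_k \|\chi_k\|_\infty$) delivers the matching upper bound $\limsup_k \int \chi_k\, d\mu_k \leq \int \chi\, d\mu$, whence $\int \chi_k\, d\mu_k \to \int \chi\, d\mu$. Full weak convergence against an arbitrary $\psi \in C(X)$ then follows by splitting $\psi = \psi_+ - \psi_-$ and applying the result to $\psi_\pm \chi_k$. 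The principal technical obstacle I anticipate lies in the local-to-global transfer of \cref{thm: weak convergence} on $\tilde U_j$: one must expand $(\beta + dd^c u_k)^n$ via Newton's binomial into mixed products of $dd^c(u_k + \rho)$ with the smooth forms $(\beta - dd^c \rho)^{n-j}$, verify uniform mass bounds across coordinate patches, and check that the quasi-continuous--rather than continuous--nature of $\eta_j$ and the $\chi_k$ passes through the gluing. The equivalence of capacities from \cref{comparison of capacity} together with the uniform $L^\infty$-bound of all potentials on $\tilde U_j$ should serve as the main anchors making this go through.
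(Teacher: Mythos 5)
Your strategy matches the paper's proof in all essential respects: exploit the non-pluripolarity of $(\beta+dd^c u)^n$ (so $\mu$ ignores $L(V_\beta)$), localize to where $V_\beta$ is bounded, run the Bedford--Taylor weak convergence \cref{thm: weak convergence} there with quasi-continuous cutoffs in $[0,1]$, and pass to the exhaustion. The paper simply chooses a different cutoff, $f^{k,\varepsilon}=\tfrac{u_k-V_\beta}{u_k-V_\beta+\varepsilon}$ with $\varepsilon\to 0$, and works on an arbitrary relatively compact open $U\Subset X\setminus L(V_\beta)$, whereas you tie both the cutoff $\eta_j$ and the localization set to sublevel parameters of $V_\beta$. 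The second part of both proofs (mass convergence via $\chi\equiv 1$, then $M-\chi_k$, then split a test function) is identical.

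There is, however, a concrete topological slip in your localization that would make the argument break down as written: the set $\tilde U_j:=\{V_\beta>-j-2\}$ need not be open. Since $V_\beta$ is only upper semicontinuous, the \emph{sublevel} sets $\{V_\beta<c\}$ are open, but the superlevel sets $\{V_\beta>c\}$ are not; in fact if $x_0\in L(V_\beta)$ has $V_\beta(x_0)>-j-2$ finite, then $x_0\in\tilde U_j$ yet every neighborhood of $x_0$ contains points where $V_\beta$ is very negative, so $\tilde U_j$ is not a neighborhood of $x_0$. Consequently you cannot cover $\operatorname{supp}\eta_j\subset\{V_\beta\geq -j-1\}$ by coordinate balls contained in $\tilde U_j$, and the application of \cref{thm: weak convergence} on balls of $\tilde U_j$ loses its footing. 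The fix is what the paper does: drop the sublevel parametrization and instead take any exhausting sequence of relatively compact open sets $U_j\Subset X\setminus L(V_\beta)$ (recall $L(V_\beta)$ is closed and $V_\beta\in L^\infty_{\mathrm{loc}}(X\setminus L(V_\beta))$), with continuous cutoffs $\phi_j\in C_c(U_j)$, $0\leq\phi_j\leq 1$, $\phi_j\nearrow \mathds 1_{X\setminus L(V_\beta)}$. Then $u,u_k$ are uniformly bounded on each $U_j$, the local weak convergence applies on coordinate balls inside $U_j$, and letting $j\to\infty$ together with $\mu(L(V_\beta))=0$ yields the first inequality exactly as you intended. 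With that replacement your argument is complete and correct.
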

\begin{proof}
we may assume that $u_k,u>V_{\beta}$ after shifting a constant.
Fix $\varepsilon>0$ and consider
$$
f^{k, \varepsilon}:=\frac{u_k-V_{\beta}}{u_k-V_{\beta}+\varepsilon},  k \in \mathbb{N}^*.
$$

Since we assume that $L(V_{\beta})$ is pluripolar, \cref{thm: Demailly Second comparison_global} yields that $(\beta+dd^c {u})^n$ puts no mass on $L(V_{\beta})$.

Now we will work on a relatively compact subset $U$ of $X-L(V_{\beta})$, where $V_\beta$ is bounded.
Observe that the functions $u_k \geq V_{\beta}$ are uniformly bounded in $U$  and converges in capacity to $u$ as $k \rightarrow \infty$.   
For each $ \varepsilon$ fixed the functions $f^{k, \varepsilon}$ are quasi-continuous, uniformly bounded (with values in $[0,1]$ ) and converge in capacity to $f^{ \varepsilon}$, where $f^{ \varepsilon}$ is defined by
$$
f^{\varepsilon}:=\frac{u-V_{\beta}}{u-V_{\beta}+\varepsilon}.
$$
With the information above we can apply  \cref{thm: weak convergence} to deduce that
$$
f^{k,  \varepsilon} \chi_k \beta_{{u}_{k}}^n\rightarrow f^{ \varepsilon} \chi \beta_{{u}}^n \text { as } k \rightarrow \infty
$$
in the weak sense of measures on $U$. In particular since $0 \leq f^{k, \varepsilon} \leq 1$, we can use the lower semi-continuity of weak convergence on open sets to write
$$
\begin{aligned}
\liminf _{k \rightarrow \infty} \int_X \chi_k \beta_{{u}_k}^n & \geq \liminf _{k \rightarrow \infty} \int_U f^{k, \varepsilon} \chi_k \beta_{{u}_k}^n \\
& \geq \int_U f^{\varepsilon} \chi \beta_{{u}}^n.
\end{aligned}
$$
Letting $\varepsilon\rightarrow0^+$, we derive
$$
\liminf _{k \rightarrow \infty} \int_X \chi_k \beta_{{u}_k}^n  \geq \int_U \chi\beta_{{u}}^n.
$$
Now, letting $U \rightarrow X-L(V_{\beta})$ and taking into account the fact that $(\beta+dd^c {u})^n$ puts no mass on $L(V_{\beta})$,  we arrive at the result.

Now we turn to prove the second statement.
Take $\chi_k=\chi=1$ in the first statement, we get:
\begin{equation}\label{equ_lim}
    \lim_{k\rightarrow+\infty}\int_{X}(\beta+dd^c u_k)^n=\int_{X}(\beta+dd^cu)^n.
\end{equation}
Now, choose a test function $g\in C^0(X)$ and let $B \in \mathbb{R}$ be such that $g\chi, g\chi_k \leq B$. By the first statement of this theorem, we get that
$$
\liminf _{k \rightarrow \infty} \int_X\left(B-g\chi_k\right) (\beta+dd^c u_k)^n\geq \int_X(B-g\chi) (\beta+dd^c u)^n.
$$
Flipping the signs and using \eqref{equ_lim} , we obtain the following inequality:
$$
\limsup _{k \rightarrow \infty} \int_X g\chi_k (\beta+dd^c u_k)^n \leq \int_X g\chi (\beta+dd^c u)^n.
$$
For the reverse direction, when $g\geq0$, we may use the first statement to get
$$
\liminf _{k \rightarrow \infty} \int_X g\chi_k (\beta+dd^c u_k)^n \geq \int_X g\chi (\beta+dd^c u)^n,
$$
and hence
$$
\lim_{k \rightarrow \infty} \int_X g\chi_k (\beta+dd^c u_k)^n = \int_X g\chi (\beta+dd^c u)^n.
$$
When $g\in C^0(X)$ is arbitrary, we may write $g=g^+-g^-$ to conclude the proof.
\end{proof}

We next remove the lower semi-continuous condition of $h$ in \cref{lem: contact for beta lsc MA}. We need the following continuity property, which will be useful when taking limit.
\begin{theorem}\label{lem: contact for beta bounded MA}
     Let $\{\beta\}\in BC^{1,1}(X)$ be a nef Bott-Chern class such that  $ D(X,\beta)\neq\emptyset$ and that $L(V_\beta)$ is a pluripolar set.  If $h$ is a bounded Lebesgue measurable quasi-continuous function, then the Monge-Amp\`ere  measure $\left(\beta+d d^c P_{\beta}(h)\right)^n$ puts no mass on the quasi-open set $\left\{P_{\beta}(h)<h\right\}$. 
\end{theorem}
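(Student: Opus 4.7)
The plan is to mirror the strategy of \cref{contact} (the Hessian analogue) in the Monge--Amp\`ere setting, replacing the Hessian tools by their Monge--Amp\`ere counterparts established earlier in this section. First, I would use the quasi-continuity of $h$ to build a decreasing sequence $\{h_j\}$ of bounded lower semi-continuous quasi-continuous functions converging to $h$ in capacity, exactly as in \cref{contact}: extract an increasing sequence of compact sets $\widetilde K_j$ with $\mathrm{Cap}_\omega(X\setminus\widetilde K_j)\le 2^{-j}$ on which $h$ is continuous, extend $h|_{\widetilde K_j}$ to a continuous function $H_j$ on $X$ with the same bounds via Tietze--Urysohn, and set $h_j:=\sup\{H_l:l\ge j\}$.

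Next, set $\hat h_j:=P_\beta(h_j)$ and $\hat h:=P_\beta(h)$; by \cref{thm: expression of envelopes}, $\hat h_j\searrow\hat h$. Since $\|h\|_\infty,\|h_j\|_\infty\le M$, the function $P_\beta(\cdot)-M$ is a non-positive $\beta$-psh function and hence dominated by $V_\beta$, giving the sandwich $V_\beta-M\le\hat h,\hat h_j\le V_\beta+M$. Thus $\hat h,\hat h_j$ have minimal singularity type uniformly, lie in $D(X,\beta)$ by \cref{Blocki thm 1.2}, and their Monge--Amp\`ere measures are well defined and non-pluripolar by \cref{non-pluripolar}. Applying \cref{lem: contact for beta lsc MA} to each lower semi-continuous $h_j$ yields $\mathbf 1_{\{\hat h_j<h_j\}}(\beta+dd^c\hat h_j)^n=0$, whence
\begin{equation*}
\int_X\chi_j^\varepsilon\,(\beta+dd^c\hat h_j)^n=0,\qquad\chi_j^\varepsilon:=\frac{\max(h_j-\hat h_j,0)}{\max(h_j-\hat h_j,0)+\varepsilon}\in[0,1].
\end{equation*}

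Finally, I would pass to the limit via \cref{continuity}, whose hypothesis that $L(V_\beta)$ be pluripolar is precisely the one imposed in the theorem. The cutoffs $\chi_j^\varepsilon$ are quasi-continuous, uniformly bounded, and converge in capacity to $\chi^\varepsilon:=\frac{\max(h-\hat h,0)}{\max(h-\hat h,0)+\varepsilon}$ once one knows $\hat h_j\to\hat h$ in capacity. \Cref{continuity} then gives
\begin{equation*}
\int_X\chi^\varepsilon(\beta+dd^c\hat h)^n\le\liminf_{j\to\infty}\int_X\chi_j^\varepsilon(\beta+dd^c\hat h_j)^n=0,
\end{equation*}
and letting $\varepsilon\searrow 0$ yields $\mathbf 1_{\{\hat h<h\}}(\beta+dd^c\hat h)^n=0$, as desired. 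The main technical obstacle is the verification that $\hat h_j\to\hat h$ in capacity when the potentials are unbounded: one reduces to the bounded case of \cref{prop: monotone_capcity convergence} on $X\setminus L(V_\beta)$, where $V_\beta$ (and hence $\hat h_j,\hat h$ up to bounded error) is locally bounded, and then absorbs the pluripolar set $L(V_\beta)$, which carries zero outer capacity, to conclude convergence in capacity on all of $X$.
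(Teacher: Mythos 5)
Your proof is correct and follows essentially the same route as the paper: approximate $h$ by lower semi-continuous quasi-continuous $h_j$ as in \cref{contact}, apply \cref{lem: contact for beta lsc MA} to each $h_j$, and pass to the limit via \cref{continuity}, which is exactly the mechanism the paper invokes. One small streamlining: since $V_\beta - M \le \hat h_j \le V_\beta + M$ and likewise for $\hat h$, the differences $\hat h_j - \hat h$ form a decreasing sequence of bounded quasi-continuous functions converging pointwise to $0$, so $\hat h_j \to \hat h$ in capacity follows directly from a Dini argument off a small-capacity open set, without needing to first localize to $X \setminus L(V_\beta)$ and absorb that pluripolar set.
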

\begin{proof}
    We can just follow the lines of \cref{contact}. In the last step, we need to show that 
    \begin{align*}
            & \int_{X} \frac{\max ( -\hat{h}+h,0)}{ \max (- \hat{h}+h,0) +\varepsilon   } (\beta+dd^c \hat{h})^n\\
            \leq & \liminf_{k\rightarrow +\infty} 
     \int_{X} \frac{\max ( -\hat{h}_k+h_k,0)}{ \max (- \hat{h_k}+h_k,0) +\varepsilon   } (\beta+dd^c \hat{h}_k)^n,
    \end{align*}
    which follows from \cref{continuity}.
\end{proof}
\begin{remark}\label{bounded above reduction and decreasing}
1).  If $h$ is a quasi-continuous Lebesgue measurable function which is bounded from below and $P_\beta(h) \in \operatorname{PSH}(X, \beta)$. Then \cref{lem: contact for beta bounded MA} still holds. Actually, one can replace $h$ by $\min (h, C)$ for some constant $C>\sup _X P_\beta(h)$. Then we have
$$
P_\beta(h)=P_\beta(\min (h, C)),
$$
and the claim follows from \cref{lem: contact for beta bounded MA}. 

2). If  $h_j\downarrow h$ and $h_1$ is bounded from above, we have $P_{\beta}(h_j)\downarrow P_{\beta}(h)$.

We give a short proof. When $h$ is quasi-continuous (no lower bound condition) and $P_{\beta}(h)\in PSH(X,\beta)$, write $G:=\sup\{\varphi|\varphi\in\operatorname{PSH}(X,\beta),\;\varphi\leq h \quad \text{q.e.} \}$. We know that $G\leq G^*=P_{\beta}(h)\leq C$, hence $G$ is bounded above; From Choquet's lemma, there exists a countable sequence $\varphi_j\leq h \quad \text{q.e.}$ such that $G^*=\left( \sup \varphi_j\right)^*$. By \cite[Lemma 2.1]{SW25}, we have $P_{\beta}(h)=G^*\leq h \quad \text{q.e.}$ (We get $P_{\beta}(h)=P_{\beta}(\min(h,C))$).   As a result, we get that $G^*\leq G$, which implies that $G^*=G$. Thus if  $h_j\downarrow h$ and $h_1$ is bounded from above, we have $P_{\beta}(h_j)\downarrow P_{\beta}(h)$.

\end{remark}

\begin{theorem}\label{envelope for minimal sing type}
      Let $\{\beta\}\in BC^{1,1}(X)$ be a nef Bott-Chern class such that  $ D(X,\beta)\neq\emptyset$ and that $L(V_\beta)$ is a pluripolar set. Assume that $h$ is quasi-continuous on $X$ and that $P_\beta(h) \in \operatorname{PSH}(X, \beta)$ is a $\beta$-psh function with minimal singularity. Then $P_\beta(h) \leq h$ outside a pluripolar set, and moreover we have
$$
\int_{\left\{P_\beta(h)<h\right\}}\left(\beta+d d^c P_\beta(h)\right)^n=0 .
$$
\end{theorem}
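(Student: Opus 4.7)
The plan separates the two assertions: the quasi-everywhere inequality $P_\beta(h) \leq h$ follows from a direct Choquet-type argument, while the mass statement requires a bounded-truncation plus maximum principle argument adapted to the minimal singularity setting.

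\emph{Reduction and Part 1.} By Choquet's lemma, extract a countable subfamily $\{u_k\} \subset \operatorname{PSH}(X,\beta)$ with $u_k \leq h$ quasi-everywhere such that $P_\beta(h) = (\sup_k u_k)^\ast$. Letting $N_k$ be the pluripolar exceptional set for each $u_k$ and $N_0 := \{(\sup_k u_k)^\ast > \sup_k u_k\}$, which is pluripolar by \cref{lem: negligible is polar}, one obtains $P_\beta(h) \leq h$ on $X \setminus (N_0 \cup \bigcup_k N_k)$, settling Part 1. The same pointwise bound $u_k \leq \sup_k u_k \leq P_\beta(h) \leq C_0 := \sup_X P_\beta(h) + 1 < \infty$ (finite by minimal singularity) shows that $P_\beta(\min(h,C_0)) = P_\beta(h)$; replacing $h$ by $\min(h,C_0)$, which is still quasi-continuous, we may therefore assume $h \leq C_0$ throughout Part 2.

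\emph{Truncation and mass estimate.} Set $h_M := \max(h,-M)$, a bounded quasi-continuous function with $h_M \searrow h$. Applying \cref{lem: contact for beta bounded MA} to $h_M$ yields both $P_\beta(h_M) \leq h_M$ q.e.\ and
$$
\int_{\{P_\beta(h_M) < h_M\}}(\beta+dd^c P_\beta(h_M))^n = 0.
$$
Uniting over $M$ the pluripolar sets where $P_\beta(h_M) \leq h_M$ fails shows that the decreasing limit $w := \lim_M P_\beta(h_M) \geq P_\beta(h)$ satisfies $w \leq h$ q.e., forcing $w = P_\beta(h)$. Each $P_\beta(h_M)$ is sandwiched between $P_\beta(h)$ and $C_0$, hence has minimal singularity, lies in $D(X,\beta)$ by \cref{Blocki thm 1.2}, and its Monge-Amp\`ere measure is non-pluripolar by \cref{non-pluripolar}. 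Using this non-pluripolarity and the exhaustion $\{h \geq -M\} \nearrow X$, on which $h = h_M$, one rewrites
$$
\int_{\{P_\beta(h) < h\}}(\beta+dd^c P_\beta(h))^n = \lim_{M\to\infty}\int_{\{P_\beta(h) < h_M\}}(\beta+dd^c P_\beta(h))^n.
$$
Decomposing the integration set, modulo a pluripolar exception, as $\{P_\beta(h) < P_\beta(h_M)\} \cup \{P_\beta(h) = P_\beta(h_M) < h_M\}$, the first piece's mass tends to $0$ as $M \to \infty$, since these sets decrease to $\emptyset$ and the total Monge-Amp\`ere mass on $X$ is finite; for the second piece, the generalized maximum principle \cref{max_global2} applied to $P_\beta(h) \leq P_\beta(h_M)$ gives
$$
\mathds{1}_{\{P_\beta(h) = P_\beta(h_M)\}}(\beta+dd^c P_\beta(h))^n \leq \mathds{1}_{\{P_\beta(h) = P_\beta(h_M)\}}(\beta+dd^c P_\beta(h_M))^n,
$$
whence the contact identity for $h_M$ makes this contribution vanish.

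The principal obstacle is the passage from bounded to unbounded $\beta$-psh potentials: the classical contact lemma applies only to bounded $h_M$, and the maximum principle comparing the two Monge-Amp\`ere measures on their contact set must be invoked in the minimal singularity version (\cref{max_global2}), while the non-pluripolarity of these measures (\cref{non-pluripolar}) is what permits discarding pluripolar exceptional sets throughout. The hypothesis that $L(V_\beta)$ is pluripolar enters precisely through \cref{lem: contact for beta bounded MA} when applied to $h_M$.
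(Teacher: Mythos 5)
Your proof is correct, and it takes a genuinely different — and arguably cleaner — route than the paper's. Both proofs begin with the same reduction to $h$ bounded above and use the truncations $h_M = \max(h,-M)$ together with \cref{lem: contact for beta bounded MA}. The divergence is in how the bounded-case information is passed to the unbounded envelope. The paper truncates the Monge--Amp\`ere potential by $\max(P_\beta(h), V_\beta - C)$, encodes the indicator functions in an auxiliary weight $f_k$ (a product of two increments), and invokes Kh\^ue--Hi\d{\^e}p's locality lemma plus Bedford--Taylor weak convergence on $X\setminus L(V_\beta)$, eventually sending $k\to\infty$ and $C\to\infty$. You instead decompose $\{P_\beta(h) < h_M\}$ as the disjoint union of $\{P_\beta(h) < P_\beta(h_M)\}$ and the contact set $\{P_\beta(h) = P_\beta(h_M) < h_M\}$, kill the contact-set contribution directly via the maximum-principle comparison on contact sets (which follows from \cref{max_global2} applied to $u = P_\beta(h) \leq v = P_\beta(h_M)$, $\max(u,v) = v$, restricted to $\{u\ge v\}=\{u=v\}$) plus the vanishing from \cref{lem: contact for beta bounded MA}, and let the first piece disappear by the monotone convergence of measures since $\{P_\beta(h)<P_\beta(h_M)\}\searrow\emptyset$. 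This avoids the weak-convergence machinery and the $C$-truncation of the measure entirely; the non-pluripolarity of $(\beta+dd^cP_\beta(h))^n$ (from \cref{non-pluripolar}) and the hypothesis that $L(V_\beta)$ is pluripolar are still used, but only through \cref{lem: contact for beta bounded MA} and in discarding the pluripolar exceptional sets. One minor remark: your exhaustion identity $\int_{\{P_\beta(h)<h\}} = \lim_M \int_{\{P_\beta(h)<h_M\}}$ is slightly more than you need — since $\{P_\beta(h)<h\}\subset\{P_\beta(h)<h_M\}$ for every $M$, the desired conclusion already follows from the upper bound $\int_{\{P_\beta(h)<h\}} \leq \int_{\{P_\beta(h)<P_\beta(h_M)\}}\to 0$.
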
    
\begin{proof}
The idea of proof is adapted from \cite{ALS24}.
By \cref{bounded above reduction and decreasing}, we may assume that $h$ is bounded above. Set
$$
h_j:=\max (h,-j),
$$
which is a sequence of quasi-continuous bounded functions, hence $P_\beta\left(h_j\right) \in \operatorname{PSH}(X, \beta)$ and $P_\beta\left(h_j\right) \leq h_j$  quasi-everywhere. Since $P_\beta(h) \in \operatorname{PSH}(X, \beta)$, we conclude that $P_\beta(h) \leq h$ quasi-everywhere by \cref{bounded above reduction and decreasing} again.

Next, we prove the second result.
By \cref{lem: contact for beta bounded MA}
$$
\int_{\left\{P_\beta\left(h_j\right)<h_j\right\}}\left(\beta+d d^c P_\beta\left(h_j\right)\right)^n=0, \quad \forall j \geq 1.
$$

Fix $k \in \mathbb{N}$. For every $j \geq k$, we have $\left\{P_\beta\left(h_k\right)<h\right\} \subseteq\left\{P_\beta\left(h_j\right)<h_j\right\}$, which implies that
$$
\int_{\left\{P_\beta\left(h_k\right)<h\right\}}\left(\beta+d d^c P_\beta\left(h_j\right)\right)^n=0.
$$

Now fix $C>0$. From \cref{max_global} we can write

$$
\int_{\left\{P_\beta\left(h_k\right)<h\right\} \cap\left\{P_\beta(h)>V_{\beta}-C\right\}}\left(\beta+d d^c \max \left(P_\beta\left(h_j\right), V_{\beta}-C\right)\right)^n=0.
$$

Set
$$
f_k:=\left[\max \left(h, P_\beta\left(h_k\right)\right)-P_\beta\left(h_k\right)\right] \times\left[\max \left(P_\beta(h), V_{\beta}-C\right)-(V_{\beta}-C)\right],
$$
which is a non-negative, quasi-continuous bounded function. By \cite[Lemma 4.2]{KH09}, we have
$$
\int_X f_k\left(\beta+d d^c \max \left(P_\beta\left(h_j\right), V_{\beta}-C\right)\right)^n=0, \quad \forall j \geq k.
$$

By weak convergence theorem of Bedford and Taylor,
$$
f_k\left(\beta+d d^c \max \left(P_\beta\left(h_j\right), V_{\beta}-C\right)\right)^n \rightarrow f_k\left(\beta+d d^c \max \left(P_\beta(h), V_{\beta}-C\right)\right)^n,
$$
on $U:=X-L(V_\beta)$, which implies that
$$
\begin{aligned}
\int_X f_k\left(\beta+d d^c \max \left(P_\beta(h), V_{\beta}-C\right)\right)^n & \leq \liminf _{j \rightarrow+\infty} \int_X f_k\left(\beta+d d^c \max \left(P_\beta\left(h_j\right), V_{\beta}-C\right)\right)^n,\\
& =0.
\end{aligned}
$$
Here the first inequality holds true on any compact subset of $U$ and we conclude by invoking that both measures put no mass on $L(V_\beta)$. The above is equivalent to
$$
\int_{\left\{P_\beta\left(h_k\right)<h\right\} \cap\left\{P_\beta(h)>V_{\beta}-C\right\}}\left(\beta+d d^c \max \left(P_\beta(h), V_{\beta}-C\right)\right)^n=0 
$$
by \cite[Lemma 4.2]{KH09} again. By \cref{max_global}, 
$$
\int_{\left\{P_\beta\left(h_k\right)<h\right\} \cap\left\{P_\beta(h)>V_{\beta}-C\right\}}\left(\beta+d d^c P_\beta(h)\right)^n=0 .
$$
Firstly let $k\rightarrow +\infty$, then let $C\rightarrow+\infty$, because $(\beta+dd^cP_{\beta}(h))^n$ puts no mass on $\{V_{\beta}=-\infty\}$, we arrive at the final conclusion.  
\end{proof}

\subsubsection{Domination principle}
In this subsection, we establish the domination principle analogous to \cref{domination for beta}, which is an extension of the corresponding result in \cite{GL22}, \cite{GL23}, \cite{SW25}, \cite{PSW25}.
\begin{theorem}\label{domination of MA}
    Let $\{\beta\}\in BC^{1,1}(X)$ be a nef Bott-Chern class such that  $ D(X,\beta)\neq\emptyset$. Moreover, we assume that $L(V_\beta)$ is a pluripolar set and that $\underline{\operatorname{Vol}}(\beta)>0$. Fix $c\in[0,1)$, then for $u,v\in \operatorname{PSH}(X,\beta)$ with minimal singularity types satisfying
    $\mathds 1_{\{u<v\}}\left(\beta+d d^c u\right)^n \leq c \mathds 1_{\{u<v\}}\left(\beta+d d^c v\right)^n$, we have $u \geq v$.
\end{theorem}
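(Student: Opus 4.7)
The plan is to mirror the proof of \cref{domination for beta} in the unbounded minimal-singularity setting, substituting the $(\omega,m)$-non-collapsing hypothesis by $\underline{\operatorname{Vol}}(\{\beta\})>0$ and relying on the envelope mass-concentration theorem \cref{envelope for minimal sing type}, the mass-comparison result \cref{thm: Demailly Second comparison_global}, and the unbounded maximum principle \cref{max_global2}. Suppose, for contradiction, that the set $E_\delta := \{u < v-\delta\}$ has strictly positive Lebesgue measure for some $\delta > 0$. For each $b>1$ define the rooftop envelope
$$
u_b := P_\beta\bigl(bu-(b-1)v\bigr) = P_\beta\bigl(v+b(u-v)\bigr).
$$
Since $u$ and $v$ share the same minimal singularity type, $u-v$ is globally bounded, so $u_b$ is again of minimal singularity type, belongs to $D(X,\beta)$, and by \cref{non-pluripolar} its measure $(\beta+dd^c u_b)^n$ is non-pluripolar. \cref{envelope for minimal sing type} guarantees that this measure is supported on the contact set $D_b := \{u_b = bu-(b-1)v\}$ up to a pluripolar set.

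Set $w_b := b^{-1}u_b + (1-b^{-1})v \in \operatorname{PSH}(X,\beta)$; then $w_b \leq u$ everywhere with equality on $D_b$. Because $\{\beta\}$ is nef and all of $u,v,u_b,w_b$ have minimal singularity type, their total Monge--Amp\`ere masses coincide (truncating by $\max(\,\cdot\,,V_\beta-k)$ and passing to the limit via the weak-convergence theorem \cref{continuity}, where the pluripolarity of $L(V_\beta)$ is essential, reduces the claim to the bounded case). Combined with the comparison principle \cref{comparison_unbounded2} applied in local coordinate balls, this yields
$$
\int_{D_b}(\beta+dd^c u)^n \;\geq\; \int_{D_b}(\beta+dd^c w_b)^n \;\geq\; b^{-n}\int_{D_b}(\beta+dd^c u_b)^n + (1-b^{-1})^n\int_{D_b}(\beta+dd^c v)^n,
$$
the second inequality being the positive multilinear expansion of $(\beta+dd^c w_b)^n$. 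Choosing $b$ so large that $(1-b^{-1})^n \geq c$ and combining with the hypothesis $\mathds 1_{\{u<v\}}(\beta+dd^c u)^n \leq c\,\mathds 1_{\{u<v\}}(\beta+dd^c v)^n$ forces $(\beta+dd^c u_b)^n(D_b \cap \{u<v\}) = 0$. Hence $(\beta+dd^c u_b)^n$ is supported on $D_b \cap \{u \geq v\}$, and the assumption $\underline{\operatorname{Vol}}(\{\beta\}) > 0$ ensures this support carries strictly positive mass.

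Finally, on $D_b \cap \{u\geq v\}$ one has $u_b \geq u$, and an equi-absolute-continuity argument (exploiting the non-pluripolarity of $(\beta+dd^c u_b)^n$ together with the pluripolarity of $L(V_\beta)$ to restrict attention to a fixed compact subset of $X\setminus L(V_\beta)$ on which $V_\beta$, hence $u$, is bounded from below) provides a uniform lower bound for $\sup_X u_b$ independent of $b$. Then $\tilde u_b := u_b - \sup_X u_b$ lies in the relatively $L^1$-compact family $\{\varphi \in \operatorname{PSH}(X,\beta):\sup_X\varphi = 0\}$ of \cref{L^1 compactness}. On the positive-measure set $E_\delta \cap \{v > -\infty\}$ we have $u_b \leq v - b\delta$, so $\tilde u_b \to -\infty$ pointwise almost everywhere as $b\to\infty$; any $L^1$-limit of the sequence is therefore identically $-\infty$ on a set of positive Lebesgue measure, contradicting membership in $\operatorname{PSH}(X,\beta)$.

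The main obstacle will be the last uniform control of $\sup_X u_b$: the family $(\beta+dd^c u_b)^n$ has uniformly positive total mass by $\underline{\operatorname{Vol}}(\{\beta\})>0$, but a priori could concentrate on ever-smaller neighbourhoods of $L(V_\beta)$ as $b \to \infty$. Ruling this out is precisely where the hypothesis that $L(V_\beta)$ is pluripolar and the non-pluripolarity result \cref{non-pluripolar} combine to yield a contact point $x_0 \in D_b \cap \{u \geq v\}$ at which $V_\beta(x_0)$ is bounded below uniformly in $b$, thereby giving $u_b(x_0) \geq u(x_0) \geq V_\beta(x_0) - \|u-V_\beta\|_\infty$ and the required uniform estimate on $\sup_X u_b$.
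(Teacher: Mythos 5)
Your overall architecture is the right one and agrees with the paper through the envelope and maximum-principle steps: replacing $v$ by $\max(u,v)$, forming $u_b = P_\beta(bu-(b-1)v)$, concentrating $(\beta+dd^c u_b)^n$ on the contact set via \cref{envelope for minimal sing type}, expanding $(\beta+dd^c w_b)^n$ to kill the mass on $D_b\cap\{u<v\}$ for $b$ large, and using $\underline{\operatorname{Vol}}(\beta)>0$ to keep the total mass of $(\beta+dd^c u_b)^n$ away from zero. All of that matches the paper.

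The gap is exactly at the point you flag yourself: obtaining a $b$-independent lower bound on $\sup_X u_b$. Your argument invokes ``equi-absolute continuity'' of the family $\{(\beta+dd^c u_b)^n\}_{b>1}$ and then asserts the existence of a contact point $x_0\in D_b\cap\{u\geq v\}$ with $V_\beta(x_0)$ bounded below uniformly in $b$. Neither of these is established. \cref{non-pluripolar} tells you that \emph{each} measure $(\beta+dd^c u_b)^n$ individually puts no mass on pluripolar sets, but it gives no uniformity in $b$: on the contact set you only get $\mathds 1_{D_b}(\beta+dd^c u_b)^n \leq b^n\,\mathds 1_{D_b}(\beta+dd^c u)^n$, so the measures can in principle concentrate on shrinking neighbourhoods of $L(V_\beta)$ as $b\to\infty$, and no compact $K\subset X\setminus L(V_\beta)$ need carry a uniformly positive fraction of the mass. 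Your proposal names this as the main difficulty but does not close it.

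The paper resolves precisely this point with a different device. Since $u_b\leq v$ and $(\beta+dd^c u_b)^n$ is carried by the contact set $D_b\cap\{u=v\}\subset\{u_b=v\}$, one gets $\mathds 1_{\{u_b=v\}}(\beta+dd^c u_b)^n\leq(\beta+dd^c v)^n$ by \cref{max_global2}, and because $(\beta+dd^c v)^n$ charges no pluripolar set, there is a concave increasing $h:\mathbb R^+\to\mathbb R^+$ with $h(+\infty)=+\infty$ and $h(|v|)\in L^1((\beta+dd^c v)^n)$. This yields the $b$-uniform bound
$$
\int_X h(|u_b|)\,(\beta+dd^c u_b)^n \;\leq\; \int_X h(|v|)\,(\beta+dd^c v)^n \;<\;+\infty,
$$
and combined with $\int_X(\beta+dd^c u_b)^n\geq\underline{\operatorname{Vol}}(\beta)>0$ this forbids $\sup_X u_b\to-\infty$. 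This is the step you would need to supply; as written, your proposal's uniform lower bound on $\sup_X u_b$ is unsupported.
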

\begin{proof}
    The proof is inspired by the recent work \cite[Theorem 2.11]{ALS24} (see also \cite{PSW25}).
By \cref{max_global}, we have
$$c\left(\beta +d d^c \max (u,v)\right)^n=c\left(\beta+d d^c v\right)^n$$
on $\{u<v\}$. Therefore, we can replace $v$ by $\max (u, v)$ and assume without loss of generality that $u\leq v$. Our goal is to prove that $u=v$.

    Fix $b>1$, we set $u_b:=P_{\beta}\left(bu-(b-1)v\right)$ on $X$, which is clearly $\beta$- psh with minimal singularity type.
Using \cref{envelope for minimal sing type}, we know that $(\beta+dd^c u_b)^n$ is concentrated on the contact set $D:=\{u_b= bu-(b-1)v\}$.
Since $b^{-1} u_b+\left(1-b^{-1}\right) v\leq u$ on $X$ with equality on $D$, by the maximum principle ( \cref{max_global2}),  we get
$$
\begin{aligned}
& \mathds{1}_D\left(\beta + d d^c u\right)^n \geq \mathds{1}_D\left(\beta+  d d^c\left(b^{-1} u_b+\left(1-b^{-1}\right) v\right)\right)^n \\
\geq & \mathds{1}_D b^{-n}\left(\beta + d d^c u_b\right)^n+\mathds{1}_D\left(1-b^{-1}\right)^n\left(\beta+  d d^c v\right)^n \\
\geq &\mathds{1}_D b^{-n}\left(\beta+  d d^c u_b\right)^n+\mathds{1}_D \cdot c\cdot \left(\beta+ d d^c v\right)^n,
\end{aligned}
$$
if we take $b$ sufficiently large. By our assumption, we have $\mathds 1_{D \cap\{u<v\}}\left(\beta+ d d^c u_b\right)^n=0$.

Recall that  $V_{\beta}$ has zero Lelong number everywhere by \cref{non-pluripolar}, so does $u_b$ since $u_b$ shares the same singularity type with $V_\beta$. By Demailly's Approximation Theorem, there exists smooth $\psi_j\in \operatorname{PSH}(X,\beta+\frac{1}{j}\omega)$ such that 
\begin{align*}
&\int_{X}(\beta+dd^c u_{b})^n\\
=&\lim_{j} \int_{X}(\beta+\frac{1}{j}\omega+dd^c \psi_j)^n\\
\geq &\lim_{j} \underline{\operatorname{Vol}}(\beta+\frac{1}{j}\omega)=\underline{\operatorname{Vol}}(\beta)>0.
\end{align*}

Since $\left(\beta+d d^c v\right)^n$ does not charge $\{v=-\infty\}$ by \cref{non-pluripolar}, one can construct a concave increasing function $h: \mathbb{R}^{+} \rightarrow \mathbb{R}^{+}$such that $h(+\infty)=+\infty$ and $h(|v|) \in L^1\left(\left(\beta+d d^c v\right)^n\right)$. Thus, by \cref{max_global2}, 
$$
\begin{aligned}
\int_X h\left(\left|u_b\right|\right) 
\left(\beta+d d^c u_b\right)^n & =\int_{D \cap\{u=v\}} h(|v|)\left(\beta+d d^c u_b\right)^n \\
& \leq \int_X h(|v|)\left(\beta+d d^c v\right)^n<+\infty.
\end{aligned}
$$

If $\left(\sup _X u_b\right)_{b>1}$ has a subsequence converging to $-\infty$, then for every positive scalar $\alpha$, one can find $b$ such that $\sup _X u_b \leq-\alpha$. This implies
\begin{equation} \label{eq nonvanising}
    \begin{aligned}
\int_X h\left(\left|u_b\right|\right)\left(\beta +d d^c u_b\right)^n\ & \geq h(\alpha) \int_X\left(\beta+d d^c u_b\right)^n \\
& \geq h(\alpha) \underline{\operatorname{Vol}}(\beta),
\end{aligned}
\end{equation}
which is impossible because $\sup _{b \geq 1} \int_X h\left(\left|u_b\right|\right)\left(\beta+d d^c u_b\right)^n<+\infty$. Therefore, $\left(\sup _Xu_b\right)_{b>1}$ is uniformly bounded.
 The sequence $u_b$ converges in $L^1$ and almost everywhere to a function $w \in\operatorname{PSH}(X, \beta)$.

 Fix $a>0$. On $\{u<v-a\}$, we have $u_b \leq v-a b$, hence
$$
\begin{aligned}
\int_{\{u<v-a\}} w \cdot \omega^n & =\lim _{j \rightarrow+\infty} \int_{\{u<v-a\}} u_{b_j} \cdot \omega^n \\
& \leq \lim _{j \rightarrow+\infty}\left(-a b_j\right) \int_{\{u<v-a\}} \omega^n+\int_{\{u<v-a\}} v \cdot \omega^n.
\end{aligned}
$$
Since $w \in L^1(X)$, we infer that
$$
\int_{\{u<v-a\}} \omega^n=0. 
$$
Thus we have $u\geq v-a$, letting $a\rightarrow 0$, we finish the proof.

\end{proof}
\begin{remark}
    when $\beta$ is closed, the condition $\underline{\operatorname{Vol}}(\beta)>0$ can be weakened as $\int_{X}\beta^n>0$. In fact, arguements in \eqref{eq nonvanising} will also work due to the integration by part formula  
    $$\int_{X}(\beta+dd^c u_b)^n = \int_{X} \beta^n.$$
\end{remark}

The same arguments as in \cref{domination for beta 2} and \cref{domination for beta 3} give the following corollary.
\begin{corollary}
    Let $\{\beta\}\in BC^{1,1}(X)$ be a nef Bott-Chern class such that $D(X,\beta)\neq\emptyset$ and $L(V_\beta)$ is a pluripolar set. Assume moreover that $\underline{\operatorname{Vol}}(\beta)>0$. Then for $u,v\in \operatorname{PSH}(X,\beta)$ with minimal singularities satisfying
    $e^{-u}\left(\beta+d d^c u\right)^n \leq e^{-v}\left(\beta+d d^c v\right)^n$, we have $u \geq v$.
\end{corollary}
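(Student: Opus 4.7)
The plan is to reduce the exponentially weighted comparison to the unweighted domination principle \cref{domination of MA} by a standard constant-shift argument, exactly parallel to the derivation of \cref{domination for beta 2} from \cref{domination for beta}. Since $v$ has minimal singularities, so does $v-\delta$ for every $\delta>0$, and $(\beta+dd^c(v-\delta))^n = (\beta+dd^cv)^n$ as Radon measures; moreover, by \cref{non-pluripolar}, both measures $(\beta+dd^cu)^n$ and $(\beta+dd^cv)^n$ are non-pluripolar, which is the regularity needed to invoke the maximum principle \cref{max_global2} inside \cref{domination of MA}.

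Fix $\delta>0$. On the quasi-open set $\{u<v-\delta\}$ one has $u-v<-\delta$, so multiplying the hypothesis
\[
e^{-u}(\beta+dd^cu)^n \leq e^{-v}(\beta+dd^cv)^n
\]
by $e^{u}$ yields, on this set,
\[
(\beta+dd^cu)^n \;\leq\; e^{u-v}(\beta+dd^cv)^n \;\leq\; e^{-\delta}(\beta+dd^cv)^n \;=\; e^{-\delta}(\beta+dd^c(v-\delta))^n.
\]
In other words,
\[
\mathds{1}_{\{u<v-\delta\}}(\beta+dd^cu)^n \;\leq\; e^{-\delta}\,\mathds{1}_{\{u<(v-\delta)\}}(\beta+dd^c(v-\delta))^n,
\]
with multiplicative constant $c:=e^{-\delta}\in[0,1)$.

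Applying \cref{domination of MA} to the pair $(u, v-\delta)$ (whose hypotheses $D(X,\beta)\neq\emptyset$, $L(V_\beta)$ pluripolar, and $\underline{\operatorname{Vol}}(\beta)>0$ are inherited verbatim from the assumptions of the present corollary) gives $u\geq v-\delta$ on $X$. Since $\delta>0$ was arbitrary, letting $\delta\to 0^+$ yields $u\geq v$, completing the proof. The only conceptual content is the exponential-to-multiplicative reduction; there is no genuine obstacle, as all the hard analytic work, namely the existence of a concave weight $h$ integrable against $(\beta+dd^cv)^n$ and the mass lower bound $\int_X(\beta+dd^cu_b)^n\geq \underline{\operatorname{Vol}}(\beta)$, has already been absorbed into \cref{domination of MA}.
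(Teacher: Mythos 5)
Your proof is correct and follows exactly the route the paper intends: reduce to the unweighted domination principle \cref{domination of MA} by passing to $v-\delta$, picking up the multiplicative constant $c=e^{-\delta}<1$ on the set $\{u<v-\delta\}$, and letting $\delta\to 0^+$. This is precisely the "same argument as in \cref{domination for beta 2}" that the paper refers to, transplanted to the Monge-Amp\`ere setting.
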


\begin{corollary}
   Let $\{\beta\}\in BC^{1,1}(X)$ be a nef Bott-Chern class such that $D(X,\beta)\neq\emptyset$ and $L(V_\beta)$ is a pluripolar set. Assume moreover that $\underline{\operatorname{Vol}}(\beta)>0$. Then for   $u,v\in \operatorname{PSH}(X,\beta)$ such that with minimal singularities satisfying
    $$ (\beta+dd^c u)^n \leq c (\beta+dd^c v)^n$$ for some constant $c>0$, then $c\geq 1$.
\end{corollary}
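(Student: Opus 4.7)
The plan is to argue by contradiction, leveraging the domination principle for minimal-singularity potentials that was just established (\cref{domination of MA}). Specifically, assume for contradiction that $c < 1$. The key observation is that $v + C$ also has minimal singularity type for any constant $C > 0$, and $(\beta + dd^c(v+C))^n = (\beta + dd^c v)^n$ since adding a constant does not affect the Monge-Amp\`ere measure.

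Given the hypothesis $(\beta + dd^c u)^n \leq c(\beta + dd^c v)^n$, I would restrict this inequality to the (quasi-open) set $\{u < v + C\}$, obtaining
$$\mathds{1}_{\{u < v + C\}}(\beta + dd^c u)^n \leq c\, \mathds{1}_{\{u < v + C\}}(\beta + dd^c (v + C))^n.$$
Since $u$ and $v+C$ both have minimal singularities and $c \in [0,1)$, the hypotheses of \cref{domination of MA} are satisfied (using that $\underline{\operatorname{Vol}}(\beta) > 0$, $D(X,\beta) \neq \emptyset$, and $L(V_\beta)$ is pluripolar). Therefore $u \geq v + C$ on $X$.

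Since $C > 0$ was arbitrary, letting $C \to +\infty$ yields a contradiction: indeed, because $u$ and $v$ both share the singularity type of $V_\beta$, the difference $u - v$ is bounded, so $u \geq v + C$ cannot hold for all $C$. Hence $c \geq 1$. No step here is substantial: the work has all been done in the proof of \cref{domination of MA}, and the present corollary is a clean formal consequence. The only point worth verifying carefully is that the measure identity $(\beta + dd^c(v+C))^n = (\beta + dd^c v)^n$ is legitimate for minimal-singularity potentials in the sense of \cref{DMA}, which is immediate from the local definition of the Monge-Amp\`ere measure via the Cegrell class.
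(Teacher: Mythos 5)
Your proof is correct and takes essentially the same route the paper uses (the paper itself omits the proof, but the argument is the exact analogue of its proof of Corollary~\ref{domination for beta 3} in the Hessian setting: assume $c<1$, apply the domination principle to $u$ and $v+C$, and contradict the boundedness of $u-v$ arising from the shared minimal-singularity type). No gaps; the observation that $(\beta+dd^c(v+C))^n=(\beta+dd^cv)^n$ is indeed immediate from the local definition.
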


\begin{corollary}\label{contact MA 3}
    Let $\{\beta\}\in BC^{1,1}(X)$ be a nef Bott-Chern class such that  $ D(X,\beta)\neq\emptyset$.   Let $\lambda \geq 0$ be a constant and let $u, v\in \operatorname{PSH}(X,\beta)$ be $\beta$- psh functions with minimal singularity types . Fix two smooth $(1,1)$-forms $\beta_1, \beta_2$ such that $\beta_1 \geq \beta, \beta_2 \geq \beta$.
     
(i) If $\left(\beta_1+d d^c u\right)^n \leq e^{\lambda u} f \omega^n$ and $\left(\beta_2+d d^c v\right)^n \leq e^{\lambda v} g \omega^n$, then
$$
\left(\beta+d d^c P_\beta(\min (u, v))\right)^n \leq e^{\lambda P_\beta(\min (u, v))} \max (f, g) \omega^n ;
$$

(ii) If $\left(\beta+d d^c u\right)^n \geq e^{\lambda u} f \omega^n$ and $\left(\beta+d d^c v\right)^n \geq e^{\lambda v} g \omega^n$, then
$$
\left(\beta+d d^c \max (u, v)\right)^n \geq e^{\lambda \max (u, v)} \min (f, g) \omega^n .
$$
\end{corollary}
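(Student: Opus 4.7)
The plan is to mirror the proof of the Hessian analog \cref{contact3}, replacing the Hessian mass concentration result (\cref{mass concentration 2}) by its Monge-Amp\`ere counterpart (\cref{envelope for minimal sing type}) and the Hessian maximum principle (\cref{cor of max principle}) by the version derived from \cref{max_global2}.

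For (ii), I would proceed directly: setting $\psi:=\max(u,v)$, the maximum principle \cref{max_global2} gives
$$(\beta+dd^c\psi)^n\geq \mathds{1}_{\{u\geq v\}}(\beta+dd^cu)^n+\mathds{1}_{\{u<v\}}(\beta+dd^cv)^n\geq \mathds{1}_{\{u\geq v\}}e^{\lambda u}f\,\omega^n+\mathds{1}_{\{u<v\}}e^{\lambda v}g\,\omega^n.$$
Since $u=\max(u,v)$ on $\{u\geq v\}$, $v=\max(u,v)$ on $\{u<v\}$, and $f,g\geq\min(f,g)$, the right-hand side dominates $e^{\lambda\max(u,v)}\min(f,g)\,\omega^n$, giving (ii).

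For (i), I would set $\varphi:=P_\beta(\min(u,v))$ and first check that $\varphi$ has minimal singularity type: since $u,v$ do, there is $C>0$ with $V_\beta-C\leq\min(u,v)\leq V_\beta$, so $V_\beta-C$ lies in the defining family of $\varphi$, yielding $V_\beta-C\leq\varphi\leq V_\beta$. In particular $(\beta+dd^c\varphi)^n$ is a well-defined non-pluripolar Radon measure by \cref{non-pluripolar}. The envelope mass concentration theorem \cref{envelope for minimal sing type} then shows that $(\beta+dd^c\varphi)^n$ is supported, modulo a pluripolar set, on the contact set $\{\varphi=\min(u,v)\}=\{\varphi=u<v\}\cup\{\varphi=v\leq u\}$. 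On each piece I would chain three inequalities: first $(\beta+dd^c\varphi)^n\leq(\beta_i+dd^c\varphi)^n$, obtained by telescoping the mixed product using that $\beta_i-\beta\geq0$ is smooth and $\varphi$ has minimal singularity; second the Monge-Amp\`ere touching estimate $\mathds{1}_{\{\varphi=u\}}(\beta_1+dd^c\varphi)^n\leq\mathds{1}_{\{\varphi=u\}}(\beta_1+dd^cu)^n$, obtained by applying \cref{max_global2} to the pair $(\varphi,u)$ with $\max(\varphi,u)=u$, exactly as in the proof of \cref{cor of max principle}; and third the pointwise hypothesis on $u$. An analogous chain for $v$ yields
$$(\beta+dd^c\varphi)^n\leq\mathds{1}_{\{\varphi=u<v\}}e^{\lambda u}f\,\omega^n+\mathds{1}_{\{\varphi=v\leq u\}}e^{\lambda v}g\,\omega^n\leq e^{\lambda\varphi}\max(f,g)\,\omega^n,$$
since $\varphi$ coincides with $u$ or $v$ on the respective piece.

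The main technical hurdle is the mass concentration step: \cref{envelope for minimal sing type} as stated requires $L(V_\beta)$ to be pluripolar, an assumption not written into the corollary. This should either be added as a standing hypothesis (consistent with the rest of \cref{sigularities}) or circumvented by an independent argument—approximating $\min(u,v)$ by a decreasing sequence of lower semi-continuous majorants, applying the unconditional version \cref{lem: contact for beta lsc MA} to each, and passing to the limit by the monotone convergence of Monge-Amp\`ere measures on minimal singularity potentials (itself based on \cref{continuity}, which again rests on this pluripolarity condition). The plurifine-locality step partitioning the contact set into $\{\varphi=u<v\}$ and $\{\varphi=v\leq u\}$ poses no difficulty since both are quasi-open and the measure $(\beta+dd^c\varphi)^n$ is non-pluripolar.
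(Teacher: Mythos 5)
The proposal is correct and follows the paper's own proof essentially line by line: part (ii) is the direct application of the maximum principle \cref{max_global2}, and part (i) sets $\varphi=P_\beta(\min(u,v))$, reduces to the contact set by mass concentration of the envelope, and chains $(\beta+dd^c\varphi)^n\le(\beta_i+dd^c\varphi)^n\le(\beta_i+dd^c\,\cdot\,)^n$ exactly as the paper's display does. Two small but worthwhile observations you make beyond the paper: the paper cites \cref{lem: contact for beta bounded MA}, which is stated only for bounded $h$, whereas $\min(u,v)$ is unbounded here, so your substitution of \cref{envelope for minimal sing type} is in fact the cleaner citation; and you correctly flag that the mass concentration lemma carries the hypothesis that $L(V_\beta)$ is pluripolar, which does not appear explicitly in the corollary's statement — that same dependency is present in the paper's own proof, and is silently absorbed by the blanket remark at the start of Section~\ref{sigularities} that pluripolarity of $L(V_\beta)$ is assumed where technically needed, so your proposal to make it a standing hypothesis is exactly the reading the authors intend.
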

\begin{proof}
The second statement is a direct consequence of \cref{max_global2}. We now prove the first one. Set $\varphi:=P_\beta(\min (u, v))$, which is also $\beta$- psh with minimal singularity types. Using \cref{lem: contact for beta bounded MA} and \cref{max_global2} we can write
\begin{align*}
        (\beta+dd^c\varphi)^n&\leq\mathds{1}_{\{\varphi=u<v\}}(\beta+dd^c\varphi)^n +\mathds{1}_{\{\varphi=v\}}(\beta+dd^c\varphi)^n\\
        &\leq\mathds{1}_{\{\varphi=u<v\}} (\beta_1+dd^c\varphi)^n+\mathds{1}_{\{\varphi=v\}} (\beta_2+dd^c\varphi)^n\\
        &\leq\mathds{1}_{\{\varphi=u<v\}} (\beta_1+dd^cu)^n+\mathds{1}_{\{\varphi=v\}} (\beta_2+dd^cv)^n\\
        &\leq\mathds{1}_{\{\varphi=u<v\}} e^{\lambda u}f\omega^n+\mathds{1}_{\{\varphi=v\}} e^{\lambda v}g\omega^n\\
        &\leq e^{\lambda P_{\beta}(\min(u,v))}\max(f,g)\omega^n
    \end{align*}
\end{proof}
\subsubsection{The final solution}
Now, we are in a position to solve the non-twist CMA equations in nef classes:
\begin{theorem}\label{small unbounded locus non-twist equation}
     Let $\{\beta\}\in BC^{1,1}(X)$ be a nef Bott-Chern class satisfying $\underline{\operatorname{Vol}}(\beta)>0$ and $ D(X,\beta)\neq\emptyset$. Assume moreover that $L(V_\beta)$ is a pluripolar set. Then, for any $f\in L^p(X), p>1$ such that $\int_Xf\omega^n>0$, there exists a function $\varphi$ of minimal singularity type  and a unique constant $c>0$ solving the following equation:
    $$
    (\beta+dd^c \varphi)^n =c f\omega^n.
    $$
\end{theorem}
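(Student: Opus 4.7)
The plan is to follow the strategy of \cite[Theorem 3.4]{GL23}, now adapted to the nef Bott-Chern setting with minimal singularity potentials: solve the exponentially twisted equations using \cref{thm: small unbounded locus twist equation} for twist parameters $\lambda_k \searrow 0$, renormalize so that $\sup = 0$, and pass to the limit using the weak convergence theorem \cref{continuity} and the domination principle \cref{domination of MA}. Since $\underline{\operatorname{Vol}}(\{\beta\}) = SL_{\omega,1}(\{\beta\}) > 0$ by \cref{Vol=slope 2}, the right continuity of slopes \cref{right continuity} furnishes $a \in (1, p)$ with $SL_{\omega,a}(\{\beta\}) > 0$, so \cref{thm: small unbounded locus twist equation} applied with $\lambda = 1/k$ produces minimal singularity solutions $\varphi_k \in \operatorname{PSH}(X,\beta)$ of $(\beta + dd^c \varphi_k)^n = e^{\varphi_k/k} f\omega^n$. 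Writing $M_k := \sup_X \varphi_k$, $\tilde\varphi_k := \varphi_k - M_k$, and $c_k := e^{M_k/k}$, the sequence $\tilde\varphi_k$ is normalized by $\sup_X \tilde\varphi_k = 0$ and satisfies $(\beta + dd^c \tilde\varphi_k)^n = c_k e^{\tilde\varphi_k/k} f\omega^n$.

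The next step is to prove $c_k$ is bounded above and below away from zero, and $\tilde\varphi_k$ has uniformly minimal singularity. The upper bound on $c_k$ comes from Garding's inequality $(\beta + dd^c \tilde\varphi_k) \wedge \omega^{n-1} \geq (c_k e^{\tilde\varphi_k/k} f)^{1/n}\omega^n$, integrating against the Gauduchon weight $e^G \omega^{n-1}$ and invoking Jensen's inequality together with the uniform $L^1$-compactness of $\beta$-psh functions normalized to $\sup = 0$, exactly as in the proof of \cref{main thm twist for m-positive beta}. For the lower bound on $c_k$, I will construct a minimal singularity subsolution $v \in \operatorname{PSH}(X,\beta)$ with $(\beta + dd^c v)^n \geq c_0 f\omega^n$ by adapting \cref{subsolution 2} (using $\underline{\operatorname{Vol}}(\{\beta\})>0$ in place of the $(\omega,m)$-positivity) and then apply \cref{domination of MA} to a suitable comparison between $\tilde\varphi_k$ and $v$. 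The uniform minimal singularity estimate $|\tilde\varphi_k - V_\beta| \leq C$ is obtained by running a two-parameter regularization: solve smooth twisted equations in the Hermitian classes $\beta_j := \beta + 2^{-j}\omega$ with smooth positive data $f_l \to f$ in $L^p(\omega^n)$, and apply \cref{thm: a priori in nef class for MA} to $\tilde\varphi_{k,j,l}$; since $\tilde\varphi_{k,j,l} \leq 0$, the right-hand side is bounded by $c_{k,j,l} f_l$ and the required $L^1(\log L)^p$- and lower-bound hypotheses are uniform by the preceding bounds on $c_{k,j,l}$. Letting $l \to \infty$ and then $j \to \infty$, monotonicity from \cref{domination of MA} propagates the estimate to $\tilde\varphi_k$.

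Finally, extract a subsequence along which $c_k \to c \in (0, \infty)$ and $\tilde\varphi_k \to \varphi$ in $L^1(\omega^n)$ and almost everywhere; by the uniform minimal singularity bound, $\varphi \in \operatorname{PSH}(X,\beta)$ is of minimal singularity type. Setting
\[
\psi_j := (\sup_{k \geq j} \tilde\varphi_k)^*, \qquad \Phi_j := P_\beta(\inf_{k \geq j} \tilde\varphi_k),
\]
the Hartogs-type analysis from the proof of \cref{solve nef 1} gives $\psi_j \searrow \varphi$ and $\Phi_j \nearrow \varphi$ (using \cref{intro:non-pluripolar} so that both sequences belong to $D(X,\beta)$ and the maximum principle \cref{max_global2} applies). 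Combining \cref{max_global2}, \cref{contact MA 3}, and the weak convergence theorem \cref{continuity} (whose hypotheses are met since $L(V_\beta)$ is pluripolar and the family is uniformly of minimal singularity type) yields the two one-sided limits $(\beta + dd^c \varphi)^n \geq c f \omega^n$ and $e^{-\varphi}(\beta + dd^c \varphi)^n \leq e^{-\varphi} c f \omega^n$, which by \cref{domination of MA} force equality. The uniqueness of $c$ is a direct corollary of \cref{domination of MA}.

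The main obstacle will be the uniform minimal singularity step: one must transport the a priori $L^\infty$-bound of \cref{thm: a priori in nef class for MA} through both the Hermitian regularization $j \to \infty$ (in which the reference class itself varies) and the twist degeneration $k \to \infty$, while verifying at every stage that the integrability functional $\int_X e^{aF_{k,j,l}}[\log(1+e^{aF_{k,j,l}})]^p \omega^n$ and the lower bound for $\int_X e^{F_{k,j,l}/n}\omega^n$ remain uniform in all three parameters; without this uniformity, $\varphi$ would not inherit minimal singularity, and the convergence \cref{continuity} (which crucially uses that $L(V_\beta)$ is pluripolar) would fail to identify the limit as a solution in the non-pluripolar sense.
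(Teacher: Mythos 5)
Your proposal takes a genuinely different approximation route from the paper's. You degenerate the exponential twist: apply \cref{thm: small unbounded locus twist equation} on the class $\{\beta\}$ itself with $\lambda_k=1/k\searrow0$, renormalize, and pass to the limit. The paper instead approximates the \emph{class}: for each $j$ it solves the \emph{non-twisted} equation $(\beta+\tfrac{1}{j}\omega+dd^cu_j)^n=c_jf_j\omega^n$ directly via Tosatti--Weinkove in the Hermitian class $\{\beta+\tfrac{1}{j}\omega\}$, obtains the uniform estimate $|u_j-V_{\beta+\frac{1}{j}\omega}|\leq C$ from \cref{a priori in nef class for MA}, and then runs the same Hartogs/envelope/domination argument at the end. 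The paper's route keeps the twist out of the picture entirely, which has a concrete advantage: in your scheme the right-hand side carries the factor $e^{\tilde\varphi_k/k}$, and since $\tilde\varphi_k$ is only bounded relative to $V_\beta$ (not uniformly from below), $e^{\tilde\varphi_k/k}$ is not uniformly close to $1$ near $P(V_\beta)$ for fixed $k$; one must observe that it still converges to $1$ pointwise away from a pluripolar set and that $f\omega^n$ does not charge pluripolar sets, an extra technicality absent from the paper's non-twist scheme. In exchange, your scheme has the mild advantage that all the approximants $\tilde\varphi_k$ already live in $\operatorname{PSH}(X,\beta)$, so you never need to transfer potentials between classes $\beta_j\to\beta$.

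Two points in your proposal need repair. First, your lower bound on $c_k$ via a subsolution ``adapting \cref{subsolution 2}'' is not justified here: \cref{subsolution 2} requires an $(\omega,m)$-big potential with analytic singularities, which is not among your hypotheses. The correct route (and the one the paper uses for $c_j$) is to integrate the equation: $c_k\int_Xe^{\tilde\varphi_k/k}f\omega^n=\int_X(\beta+dd^c\tilde\varphi_k)^n\geq\underline{\operatorname{Vol}}(\beta)>0$, where the mass lower bound comes from Demailly approximation and zero Lelong numbers of minimal-singularity potentials, exactly as in the proof of \cref{domination of MA}; together with $\int_Xe^{\tilde\varphi_k/k}f\omega^n\leq\int_Xf\omega^n$ this gives $c_k\geq\underline{\operatorname{Vol}}(\beta)/\int_Xf\omega^n$. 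Second, your statement ``$\psi_j\searrow\varphi$ and $\Phi_j\nearrow\varphi$'' overstates what Hartogs' lemma gives: one gets $\psi_j\searrow\psi$ and $\Phi_j\nearrow\Phi$ with $\Phi\leq\varphi\leq\psi$ a priori, and the identification $\Phi=\psi=\varphi$ is precisely the content of the final domination step, not an input to it. Finally, a cosmetic remark: the appeal to \cref{right continuity} is superfluous; monotonicity of $p\mapsto SL_{\omega,p}(\{\beta\})$ already gives $SL_{\omega,a}(\{\beta\})\geq SL_{\omega,1}(\{\beta\})=\underline{\operatorname{Vol}}(\beta)>0$ for any $a>1$.
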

\begin{proof}
The uniqueness of the constant $c$ follows from the domination principle. Choose a sequence of smooth positive functions $f_j$ such that $f_j\rightarrow f$ in $L^p(X,\omega^n)$. For each $j \in \mathbb{N}^*$ we use the main theorem of \cite{TW10} to find $u_j \in \operatorname{PSH}(X, \beta+\frac{1}{j}\omega)\cap C^\infty(X)$ and $c_j>0$ solving
$$
\left(\beta+\frac{1}{j}\omega+d d^c u_j\right)^n=c_j f_j \omega^n,\quad\sup_Xu_j=0.
$$

Since it is standard that $\int_X\left(\beta+\frac{1}{j}\omega+d d^c u_j\right)^n\geq\underline{\operatorname{Vol}}(\beta+\frac{1}{j}\omega)\geq\underline{\operatorname{Vol}}(\beta)>0$, integrating both sides we get a uniform lower bound of $c_j$. On the other hand, the mixed type inequality gives an upper bound for $c_j$, as illustrated in \cref{main thm for m-positive beta}, so up to extracting a subsequence we may assume that $c_j\rightarrow c>0$.

By the a priori estimate \cref{a priori in nef class for MA}, there is a uniform constant $C$ independent of $j$ such that 
\begin{equation}\label{uniform estimate}
-C\leq u_j-V_{\beta+\frac{1}{j}\omega}\leq0.
\end{equation}
Assume that $u_j\rightarrow u\in\operatorname{PSH}(X,\beta)$ in $L^1(X)$ and consider the following sequences
$$
\varphi_j=\mathrm{P}_\beta\left(\inf _{k \geq j} u_k\right), \psi_j=(\sup _{k \geq j} u_k)^*.
$$
The uniform estimate \eqref{uniform estimate} yields that 
$$-C\leq\varphi_j-V_\beta\leq u-V_\beta\leq\psi_j-V_\beta\leq0.$$
The Hartogs' lemma implies that $\psi_j$ decreases to $\psi$ and we also assume that $\varphi_j$ increases almost everywhere to a function $\varphi\in\operatorname{PSH}(X,\beta)$.

For each fixed $j$ and $l\geq j$,
$$
(\beta+\frac{1}{j}\omega+dd^cu_l)^n\geq(\beta+\frac{1}{l}+dd^cu_l)^n=c_lf_l\omega^n.
$$
According to the maximum principle (\cref{max_global2}),
\begin{align*}
   (\beta+\frac{1}{j}\omega+dd^c\max(u_j, \cdots, u_{j+l}))^n\geq \min{(c_j,\cdots,c_{j+l})}\min(f_j,\cdots,f_{j+l})\omega^n\geq(\inf_{l\geq j}c_l)(\inf_{l\geq j}f_l)\omega^n.
\end{align*}
Taking limit of $l$ on the left hand side, by Bedford-Taylor's increasing convergence theorem we get on $X-L(V_\beta)$,
\begin{equation}\label{eq 22 MA}
    (\beta+\frac{1}{j}\omega+dd^c\psi_j)^n\geq(\inf_{l\geq j}c_l)(\inf_{l\geq j}f_l)\omega^n.
\end{equation}
Since both sides are positive Radon measures putting no mass on $L(V_\beta)$ (see \cref{non-pluripolar}), \eqref{eq 22 MA} holds true on the whole $X$. Now since $c_j\rightarrow c>0$ and $f_j\rightarrow f$ almost everywhere, we have $\inf_{l\geq j}c_l$ increases to $c$ and $\inf_{l\geq j}f_l$ increases to $f$ almost everywhere, which in turn implies the weak convergence $(\inf_{l\geq j}c_l)(\inf_{l\geq j}f_l)\omega^n\rightarrow cf\omega^n$. Letting $j\to\infty$ in \eqref{eq 22 MA},  the standard monotone convergence theorem yields that
\begin{equation}\label{eq 25 MA}
(\beta + dd^c \varphi)^n \geq cf\omega^n.
\end{equation}

On the other hand, since $\beta\leq\beta_j$ and $(\beta_j+dd^cu_j)^n=e^{u_j-u_j}c_jf_j\omega^n$, we can use \cref{contact MA 3} to write
$$
(\beta+dd^cP_{\beta}(u_j,u_{j+1}))^n\leq e^{P_{\beta}(u_j,u_{j+1})}\max(c_je^{-u_j}f_j,c_{j+1}e^{-u_{j+1}}f_{j+1})\omega^n.
$$
By using the decreasing convergence theorem and an easy induction argument, we can write
\begin{equation}\label{eq 23 MA}
    (\beta+dd^c\varphi_j)^n\leq e^{\psi_j-\inf_{l\geq j}\varphi_l}(\sup_{l\geq j}c_l)(\sup_{l\geq j}f_l)\cdot\omega^n.
\end{equation}
It is clear that $\sup_{l\geq j}c_l\rightarrow c$, $\sup_{l\geq j}f_l$ decreases to $f$ almost everywhere hence weakly. Letting $j\rightarrow\infty$ in \eqref{eq 23  MA} we get on $X-L(V_\beta)$,
$$
  (\beta+dd^c\varphi)^n\leq e^{\psi-\varphi}cf\omega^n,
$$
by Bedford-Taylor's increasing convergence theorem again. Since both sides are positive Radon measures on $X$ vanishing on $L(V_\beta)$, \eqref{eq 24 MA} holds true on the whole $X$. This combined with \eqref{eq 25 MA} yields that
$$
e^{-\psi}\beta_\psi^n\leq e^{-\varphi}cf\omega^n\leq e^{-\varphi}\beta_\varphi^n.
$$
Now the domination principle \cref{domination of MA}  gives that $\varphi\leq\psi$. The reverse inequality $\varphi\geq\psi$ is clear from the definition, thus we obtain $\varphi=\psi$ and 
\begin{equation}\label{eq 24 MA}
(\beta+dd^c\varphi)^n=cf\omega^n.
\end{equation}
The proof is therefore concluded.
\end{proof}
\begin{remark}
There are two directions to generalize \cref{small unbounded locus non-twist equation}. For one thing, it is very desirable to remove the assumption that $L(V_\beta)$ is a pluripolar set, as can be done in the twist setting (see \cref{small unbounded locus twist equation}). For this issue, we could probably try to generalize the convergence theorem \cref{lem: weak convergence1} to the setting in \cite{Ceg12}.

   For another, it seems that we can even drop the assumption $D(X,\beta)\neq\emptyset$ in \cref{small unbounded locus non-twist equation} and \cref{small unbounded locus twist equation} and just assume that $L(V_\beta)$ is pluripolar. That is, we define Monge-Amp\`ere measures of potentials with minimal singularities outside $L(V_\beta)$ and extend by zero to $X$. We leave these as a future project.
\end{remark}

\subsection{Applications}
By applying \cref{a priori in nef class for MA}, \cref{small unbounded locus twist equation} and \cref{small unbounded locus non-twist equation} within the framework of \cite{Ch16, Pop16, Ngu16, LWZ24, SW25}, partial resolutions to the Tosatti-Weinkove and Demailly-Păun conjectures are obtained. Their proofs are omitted and left to the interested reader.
\begin{theorem}\label{thm:TW}
Let $X$ be a compact Hermitian manifold.    Let $\{\beta\}\in BC^{1,1}(X)$ be a nef Bott-Chern class satisfying $\underline{\operatorname{Vol}}(\beta)>0$ and $ D(X,\beta)\neq\emptyset$. Assume moreover that $L(V_\beta)$ is a pluripolar set.   Let $x_1, \ldots, x_N$ be fixed points and $\tau_1, \ldots, \tau_N$ be positive real numbers such that
$$
\sum_{i=1}^N \tau_i^n<\underline{\operatorname{Vol}}(\beta).
$$
Then there exists a $\beta$-psh function $\varphi$ with logarithmic poles at $x_1, \ldots, x_N$ :
$$
\varphi(z) \leq c \tau_j \log |z|+O(1)
$$
in a coordinate ball $\left(z_1, \ldots, z_n\right)$ centered at $x_j$, where $c \geq 1$ is a uniform constant.
\end{theorem}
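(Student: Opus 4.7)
The plan is to follow the standard Chen--Popovici--Nguyen strategy (see \cite{Ch16, Pop16, Ngu16, LWZ24, SW25}), adapted to the nef Bott--Chern setting with mild singularities developed in Section~\ref{sigularities}. The idea is to encode the desired logarithmic poles as point masses on the right-hand side of a Monge--Amp\`ere equation, solve it via Theorem~\ref{small unbounded locus non-twist equation}, and then recover the singularities of the solution through Demailly's Lelong-number comparison.

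Concretely, I would fix a constant $a$ with $\sum_{i=1}^N \tau_i^n < a < \underline{\operatorname{Vol}}(\beta)$ and, for each integer $k \geq 1$, construct an $L^p$-density (for some fixed $p > 1$)
\[
f_k := \sum_{j=1}^N \tau_j^n h_{k,j} + \Big(a - \sum_{j=1}^N \tau_j^n\Big) g,
\]
where $g$ is a smooth positive function on $X$ with $\int_X g\,\omega^n = 1$ and each $h_{k,j}$ is a nonnegative $L^p$-function supported in a shrinking coordinate ball around $x_j$ and normalized so that $h_{k,j}\omega^n \rightharpoonup \delta_{x_j}$ weakly. Theorem~\ref{small unbounded locus non-twist equation} then produces $\varphi_k$ of minimal singularity type and $c_k > 0$ solving
\[
(\beta + dd^c \varphi_k)^n = c_k f_k \omega^n, \qquad \sup_X \varphi_k = 0.
\]
Integrating both sides and using $\int_X (\beta + dd^c \varphi_k)^n \geq \underline{\operatorname{Vol}}(\beta)$ (which follows by approximating $\varphi_k$ from above by bounded minimal-singularity potentials and invoking the definition of $\underline{\operatorname{Vol}}$) gives the crucial lower bound $c_k \geq \underline{\operatorname{Vol}}(\beta)/a > 1$. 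A mixed-type inequality argument as in the proof of Theorem~\ref{main thm for m-positive beta} furnishes an upper bound, so along a subsequence $c_k \to c > 1$.

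By the standard $L^1$-compactness of $\{u \in \operatorname{PSH}(X, \beta) : \sup_X u = 0\}$ and the uniform a priori estimate of Theorem~\ref{a priori in nef class for MA}, extract $\varphi_k \to \varphi$ in $L^1$ and almost everywhere, with $\varphi \in \operatorname{PSH}(X, \beta)$ bounded below by $V_\beta - C$ for some uniform $C$. The heart of the proof is mass concentration: for each $j$ at which $x_j \notin L(V_\beta)$ (this is where pluripolarity of $L(V_\beta)$ is used to reduce the analysis), pick a neighborhood $U_j$ of $x_j$ on which $V_\beta$, and hence the $\varphi_k$, are uniformly bounded. Bedford--Taylor's classical weak continuity then yields $(\beta + dd^c \varphi_k)^n \rightharpoonup (\beta + dd^c \varphi)^n$ on $U_j$, and since $c_k f_k \omega^n \rightharpoonup c\left(\tau_j^n \delta_{x_j} + g\,\omega^n\right)$ there, one concludes $(\beta + dd^c \varphi)^n(\{x_j\}) \geq c\,\tau_j^n$. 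Locally writing $\beta = dd^c \rho + (\beta - dd^c \rho)$ with $\rho$ smooth and $dd^c \rho \geq \beta$, the Dirac component is inherited by $(dd^c(\varphi + \rho))^n$, and Demailly's comparison between Monge--Amp\`ere mass at a point and Lelong numbers \cite{Dem93-1} gives $\nu(\varphi, x_j) \geq c^{1/n} \tau_j$. Hence $\varphi(z) \leq c^{1/n} \tau_j \log|z - x_j| + O(1)$ near $x_j$ with $c^{1/n} \geq 1$, as required.

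The main obstacle, I expect, lies in making the mass-concentration argument work when a prescribed point $x_j$ happens to belong to the unbounded locus $L(V_\beta)$: classical Bedford--Taylor weak continuity is then unavailable because the $\varphi_k$ are no longer uniformly bounded near $x_j$. To deal with this, one has to invoke the convergence-in-capacity machinery of Theorem~\ref{continuity} and the envelope analysis of Theorem~\ref{lem: contact for beta bounded MA}, or alternatively combine the maximum principle (\cref{max_global2}) with the monotone envelopes $\psi_j := \big(\sup_{l \geq j} \varphi_l\big)^*$ appearing in the proof of Theorem~\ref{small unbounded locus non-twist equation}, exploiting that $\varphi_k - V_\beta$ is uniformly bounded. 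The pluripolarity of $L(V_\beta)$ should allow one to bypass this locus in the weak convergence while still capturing the Dirac contributions at each $x_j$.
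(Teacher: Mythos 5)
The paper omits the proof of Theorem~\ref{thm:TW}, so there is no canonical argument to compare against; the intended route is the framework of \cite{Ch16, Pop16, Ngu16, LWZ24, SW25} combined with the tools of Section~\ref{sigularities}, which is indeed the framework you set up. Your construction of $f_k$, the normalization $\int_X f_k\omega^n = a$, the appeal to Theorem~\ref{small unbounded locus non-twist equation}, and the lower bound $c_k\geq \underline{\operatorname{Vol}}(\beta)/a > 1$ (using the mass bound $\int_X(\beta+dd^c\varphi_k)^n\geq \underline{\operatorname{Vol}}(\beta)$ from the proof of Theorem~\ref{domination of MA}) are all the right opening moves.

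However, the final step is wrong, and it is a wrong direction rather than a missing detail. You concentrate mass and argue that $(\beta+dd^c\varphi)^n(\{x_j\})\geq c\,\tau_j^n$, and then invoke ``Demailly's comparison between Monge--Amp\`ere mass at a point and Lelong numbers'' to deduce $\nu(\varphi,x_j)\geq c^{1/n}\tau_j$. Demailly's comparison theorem gives the \emph{opposite} inequality: for $u\in D(\Omega)$ one has
\[
(dd^cu)^n(\{0\})\ \geq\ \nu(u,0)^n,
\]
so a large point mass does not force a large Lelong number. The standard counterexample is $u=\max\bigl(N\log|z_1|,\log|z_2|,\dots,\log|z_n|\bigr)$: here $(dd^cu)^n(\{0\})=N$ while $\nu(u,0)=1$, so your claimed implication would give $\nu\geq N^{1/n}>1$, which is false. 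What is actually needed is the local model comparison of Demailly--P\u aun (Section~3 of \cite{DP04}) or the Tosatti--Weinkove lemma \cite{TW12}: one compares $\varphi_k+\rho$ (with $\rho$ a local smooth potential $dd^c\rho\geq\beta$) on a fixed coordinate ball $B(x_j,r_0)$ with the explicit model $v_{j,r}(z):=\tau_j\max\bigl(\log(|z|/r_0),\log(r/r_0)\bigr)$, whose Monge--Amp\`ere mass is an explicit multiple of surface measure on $\{|z|=r\}$; choosing the density $f_k$ to be a smoothing of exactly this surface measure, the inequality $c_k>1$ gives $(dd^c(\varphi_k+\rho))^n\geq (dd^cv_{j,r})^n$ in the annulus, and then the local comparison principle (boundary inequality on $\partial B(x_j,r_0)$ plus Monge--Amp\`ere inequality inside) forces $\varphi_k\leq v_{j,r}+O(1)$ with bounds uniform in $k$ and $r$. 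Sending $r\to 0$ is what produces the logarithmic pole. This is the non-trivial heart of the argument and cannot be replaced by the Lelong-number comparison, which points the wrong way.

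There is also a secondary gap: you assert that $L^1$-convergence of $\varphi_k\to\varphi$ plus uniform boundedness away from $L(V_\beta)$ yields $(\beta+dd^c\varphi_k)^n\rightharpoonup(\beta+dd^c\varphi)^n$ by Bedford--Taylor. Bedford--Taylor weak continuity is a monotone (or convergence-in-capacity) statement; $L^1$-convergence of uniformly bounded psh functions does not by itself imply weak convergence of Monge--Amp\`ere measures. The fix is the same sandwiching device used in the proof of Theorem~\ref{small unbounded locus non-twist equation}, working with $\psi_j=(\sup_{l\geq j}\varphi_l)^*\searrow\varphi$ and $P_\beta(\inf_{l\geq j}\varphi_l)\nearrow\varphi$ together with Lemma~\ref{continuity}. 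If you follow the local-model route sketched above, this issue disappears, because you do not need the limit measure at all: the inequality $\varphi_k\leq v_{j,r}+O(1)$ is uniform and passes directly to any $L^1$-subsequential limit $\varphi$, giving $\varphi(z)\leq \tau_j\log|z|+O(1)$ near $x_j$.
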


\begin{theorem}\label{thm:DP}
     Let ($X, \omega$) is a compact Hermitian manifold, with $\omega$ a pluriclosed Hermitian metric, i.e. $d d^c \omega=0$. Let $\{\beta\} \in H^{1,1}(X, \mathbb{R})$ be a closed real nef class with smooth representative $\beta$, such that $D(X,\beta))\neq\emptyset$ and $\underline{Vol}(\beta)>0$. Assume moreover that $L(V_\beta)$ is a pluripolar set. Then $\{\beta\}$ contains a Kähler current. 
\end{theorem}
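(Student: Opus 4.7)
The plan is to adapt the Chiose--Popovici--Nguyen strategy (used in \cite{Ch16, Pop16, Ngu16, LWZ24, SW25}) to the mild-singularity setting developed in Section \ref{sigularities}. The goal is to produce a potential $\varphi$ of minimal singularity type in $\operatorname{PSH}(X,\beta)$ and a constant $\delta>0$ such that $\beta+dd^c\varphi\geq \delta\omega$ as currents, i.e., a K\"ahler current in $\{\beta\}$.

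First, for each $\epsilon>0$, the class $\{\beta+\epsilon\omega\}$ is Hermitian and hence automatically satisfies $D(X,\beta+\epsilon\omega)\neq\emptyset$ (containing even smooth potentials), with $\underline{\operatorname{Vol}}(\beta+\epsilon\omega)\geq\underline{\operatorname{Vol}}(\beta)>0$ by \cref{monotone of slope 2}. Applying \cref{small unbounded locus non-twist equation} (or directly \cite{TW10}, since the class is Hermitian) produces smooth $\varphi_\epsilon\in\operatorname{PSH}(X,\beta+\epsilon\omega)\cap C^\infty(X)$ and $c_\epsilon>0$ with
\begin{equation*}
(\beta+\epsilon\omega+dd^c\varphi_\epsilon)^n=c_\epsilon\omega^n,\qquad \sup_X\varphi_\epsilon=0.
\end{equation*}
The heart of the argument is to establish a uniform lower bound $c_\epsilon\geq c_0>0$ independent of $\epsilon$.

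For this, I would simultaneously solve the auxiliary equation
\begin{equation*}
(\omega+dd^c\psi_\epsilon)^n=A_\epsilon\, e^{\varphi_\epsilon}\omega^n,\qquad \sup_X\psi_\epsilon=0.
\end{equation*}
Since $\varphi_\epsilon\leq 0$, the density $e^{\varphi_\epsilon}$ lies uniformly in $L^\infty$, so Ko\l odziej-type estimates give $\|\psi_\epsilon\|_{L^\infty}\leq C$. G\aa rding's mixed inequality yields
\begin{equation*}
(\beta+\epsilon\omega+dd^c\varphi_\epsilon)\wedge(\omega+dd^c\psi_\epsilon)^{n-1}\geq c_\epsilon^{1/n}A_\epsilon^{(n-1)/n}\,e^{\frac{n-1}{n}\varphi_\epsilon}\,\omega^n.
\end{equation*}
Because $\omega$ is pluriclosed, $dd^c\omega^k=0$ for every $k$, so $dd^c((\omega+dd^c\psi_\epsilon)^{n-1})=0$ and integration by parts against the smooth form $dd^c\varphi_\epsilon$ eliminates the potential: the left side integrates to $\int_X(\beta+\epsilon\omega)\wedge(\omega+dd^c\psi_\epsilon)^{n-1}$, which is uniformly bounded in $\epsilon$ (using $\|\psi_\epsilon\|_\infty\leq C$ and the closedness of $\beta$ plus cohomological bounds). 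Combined with a uniform lower bound on $\int_X e^{\frac{n-1}{n}\varphi_\epsilon}\omega^n$ (which follows from the fact that $\varphi_\epsilon$ has minimal singularities, hence zero Lelong number everywhere by \cref{non-pluripolar}, together with Skoda's uniform integrability theorem) and an upper bound on $A_\epsilon$ extracted from $\underline{\operatorname{Vol}}(\beta)>0$ via $\int_X(\omega+dd^c\psi_\epsilon)^n=A_\epsilon\int_X e^{\varphi_\epsilon}\omega^n$, this produces the desired $c_\epsilon\geq c_0>0$.

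Finally, after extracting a weak limit $\varphi_\epsilon\to\varphi\in\operatorname{PSH}(X,\beta)$ of minimal singularity type, the current $T:=\beta+dd^c\varphi$ is pseudoeffective and satisfies $T^n\geq c_0\omega^n$ in the pluripotential sense by the convergence results of Section \ref{sigularities}. To upgrade this mass inequality to a K\"ahler current, I would employ a Demailly-type perturbation: choosing a small $\eta>0$ and setting $\tilde\varphi:=(1-\eta)\varphi+\eta V_\beta$, the mixed determinant inequality applied to $\beta+dd^c\tilde\varphi$ shows that $\beta+dd^c\tilde\varphi-\delta\omega$ remains semipositive for suitably small $\delta=\delta(\eta, c_0)>0$, giving the K\"ahler current. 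The main obstacle will be this final step: in the Hermitian setting Boucksom's volume-equals-positivity characterization of big classes is not directly available, and one must ensure the perturbation $\tilde\varphi$ both preserves the minimal-singularity type and produces a strict pointwise lower bound of the associated form. The pluripolarity hypothesis on $L(V_\beta)$ is essential here, as it underlies the domination principle \cref{domination of MA} and the mass concentration \cref{lem: contact for beta lsc MA} needed to validate the limiting arguments.
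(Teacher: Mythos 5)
There are several genuine gaps in this proposal; the most serious is the final step, which does not in fact produce a K\"ahler current.

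\textbf{The perturbation step cannot give strict positivity.} Your proposed correction $\tilde\varphi := (1-\eta)\varphi + \eta V_\beta$ yields $\beta + dd^c\tilde\varphi = (1-\eta)(\beta+dd^c\varphi) + \eta(\beta + dd^cV_\beta)$, a convex combination of two positive currents. This is again positive, but there is no reason it should dominate $\delta\omega$. Knowing $T^n \geq c_0\omega^n$ pointwise controls the \emph{product} of eigenvalues of $T$ relative to $\omega$, not the smallest one; the implication ``nef class with positive Monge--Amp\`ere volume $\Rightarrow$ big'' is precisely the content of the theorem and cannot be smuggled in via a convex combination. The Chiose--Popovici--Nguyen strategy works instead by \emph{contradiction}: one assumes $\{\beta\}$ contains no K\"ahler current, invokes Lamari's duality to produce a sequence of normalized Gauduchon metrics $g_\delta$ with $\int_X \beta\wedge g_\delta^{n-1}\to 0$, solves Monge--Amp\`ere equations whose right-hand side is adapted to $g_\delta$, and combines G\aa rding's mixed inequality with a carefully controlled integration by parts to contradict $\underline{\operatorname{Vol}}(\beta)>0$. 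That contradiction/duality scaffolding, and the appearance of the Gauduchon metrics $g_\delta$, are entirely missing from your argument.

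\textbf{The claim $dd^c\omega=0 \Rightarrow dd^c\omega^k = 0$ is false.} In fact for $k\geq 2$,
\begin{equation*}
dd^c(\omega^k) = k(k-1)\,\omega^{k-2}\wedge d\omega\wedge d^c\omega \quad\text{when } dd^c\omega = 0,
\end{equation*}
and the $d\omega\wedge d^c\omega$ term is generically nonzero in complex dimension $n\geq 3$. Consequently $dd^c\bigl((\omega + dd^c\psi_\epsilon)^{n-1}\bigr)\neq 0$ in general, and the integration by parts you invoke to ``eliminate the potential'' produces error terms involving $\omega^{n-3-k}\wedge d\omega\wedge d^c\omega\wedge(dd^c\psi_\epsilon)^k$. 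Controlling exactly these error terms (via a Cauchy--Schwarz argument that uses $dd^c\omega=0$ on the cross terms $dd^c\varphi\wedge dd^c\psi\wedge\omega^{n-2}$) is the technical heart of Chiose's lemma; it cannot simply be waved away. Finally, the auxiliary equation and G\aa rding step you set up are unnecessary for the lower bound on $c_\epsilon$: integrating the main equation gives $c_\epsilon\int_X\omega^n = \int_X(\beta+\epsilon\omega+dd^c\varphi_\epsilon)^n\geq\underline{\operatorname{Vol}}(\beta+\epsilon\omega)\geq\underline{\operatorname{Vol}}(\beta) > 0$ directly, so $c_\epsilon\geq c_0>0$ follows immediately from the hypothesis.
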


\end{document}